	\newcommand\Graph{\Gamma} %%symbol denoting a graph
	\newcommand\rGraph{\widetilde\Graph} %%symbol denoting a reduction graph
	\newcommand\Cpt{C} %%2-connected component
	\newcommand\rCpt{\widetilde C}
	\newcommand\gGamma{{g(\Gamma )}}
	\renewcommand\Vert[1]{{V(#1)}} %%vertices
	\newcommand\Edge[1]{{E(#1)}} %%edges
	\newcommand\plEdge[1]{{E(#1)^+}} %%unoriented edges: edges modulo e^{-1} = e
	\newcommand\mnEdge[1]{{E(#1)^-}} %%edges modulo e^{-1} = -e
	\newcommand\HEdge[1]{{D(#1)}} %%half-edges/punctures
	\newcommand\plMaxCut[1]{{\mathfrak E(#1)^+}} %%unoriented maximal cut systems
	\newcommand\QVert[1]{{#1_\Q}} %%rational vertices
	\newcommand\source{\partial_0}
	\newcommand\target{\partial_1}
	\newcommand\length{l}
	\newcommand{\val}{d} %%formlerly {\mathrm{val}}
	\newcommand\Meas{\mathrm{Meas}} %measures
	\newcommand\bMeas{\mathrm{Meas}_{\log}}
	\newcommand\Div{\mathrm{Div}}
	\newcommand\Omlog{\Omega^0_{\log}}
	\newcommand\Lapl{{\nabla^2}}
	\newcommand\dLapl{L} %%Laplacian matrix
	\newcommand\invLapl{{\nabla^{-2}}}
	\newcommand\can{\mathrm{can}}
	\newcommand\tree{\mathrm{tree}}
	\newcommand\classify{j} %non-abelian Kummer map for a graph
	\newcommand\classifyleq[1]{\classify_{\leq #1}} %...in weight \geq -n
	\newcommand\classifyeq[1]{\classify_{#1}}
	\newcommand\classifyeqe[2]{\classify_{#1,#2}}
	\newcommand\e{\mathfrak e} %maximal cut systems
	\newcommand\bc{\mathrm{bc}}
	\newcommand\1{\mathbbm{1}}
		\font\sixcyr=wncyr6
		\font\eightcyr=wncyr8
	\def\littlesha{\text{\sixcyr\cyracc{Sh}}}
	\def\shuf{\mathbin{\text{\eightcyr\cyracc{Sh}}}}
	\DeclareMathOperator*\CFree{\widehat{\scalebox{1.8}{\raisebox{-0.2ex}{$\ast$}}}}
	\newcommand\Sym{\mathrm{Sym}}
	\newcommand\CSym{\mathrm{S\widehat{y}m}}
	\newcommand\HAlg{\mathcal H}
	\newcommand\oHAlg{\mathring{\mathcal H}}
	\newcommand\LAlg{\mathcal L}
	\newcommand\aHAlg{H}
	\newcommand\aHIdeal{J}
	\newcommand\aLAlg{L}
	\newcommand\acoAlg{C}
	\newcommand\n{\mathfrak n}
	\newcommand\gplike{\mathrm{gplike}}
	\newcommand\prim{\mathrm{prim}}
	\newcommand\CUEnv{\widehat\UEnv}
	\newcommand\UEnv{\mathcal{U}}
	\newcommand\Lie{\mathrm{Lie}}
	\newcommand\FLie{\mathrm{FrLie}}
	\newcommand\CFLie{\widehat\FLie}
	\newcommand\Fhat{\widehat\F}
	\newcommand\Shuf{T^\otimes_{\littlesha}} %shuffle algebra
	\newcommand\Shufleq[1]{T^{\otimes\leq#1}_{\littlesha}}
	\newcommand\CTensor{\widehat T^\otimes}
	\newcommand\CCTensor{\widehat T^{\hatotimes}}
	\newcommand\CTensorleq[1]{\widehat T^{\otimes\leq#1}}
	\newcommand\ad{\mathrm{ad}}
	\newcommand\ab{\mathrm{ab}}
	\DeclareMathOperator\alog{\Hiso\!\log}
	\newcommand\dual{*}
	\newcommand\reddual{\circ}
	\newcommand\hatotimes{\mathbin{\widehat\otimes}}
	\def\Spec{\mathrm{Spec}}
	\def\Spf{\mathrm{Spf}}
	\renewcommand\O{\mathcal O}
	\def\G{\mathbb G}
	\def\A{\mathbb{A}}
	\renewcommand\P{\mathbb{P}}
	\def\ns{\mathrm{ns}}
	\def\Zar{\mathrm{Zar}}
	\def\Sel{\mathrm{Sel}}
	\def\sh{\mathrm{sh}}
\renewcommand\d{\mathrm d}
\newcommand\dby[1]{\frac{\d}{\d #1}}
\newcommand\V{\mathcal V}
\newcommand\s{\mathfrak s}
\newcommand\bigwedgesquare{\mathchoice{\bigwedge\nolimits^{\!\!2}}{\bigwedge\nolimits^{\!2}}{\bigwedge\nolimits^2}{\bigwedge\nolimits^2}}
\newcommand{\Gal}{\mathrm{Gal}}
\newcommand{\et}{\mathrm{\acute{e}t}}
\newcommand{\Cov}{\mathrm{Cov}}
\newcommand{\fin}{\mathrm{fin}}
\newcommand{\Fet}{\mathrm{F\acute{e}t}}
\newcommand{\red}{\mathrm{red}}
\newcommand{\sgn}{\mathrm{sgn}}
\newcommand{\Tr}{\mathrm{Tr}}
\DeclareMathOperator{\rk}{\mathrm{rk}}
\newcommand{\ob}{\mathrm{ob}}
\newcommand{\Jac}{\mathrm{Jac}}
\newcommand{\NS}{\mathrm{NS}}
\newcommand\Hiso{\alpha}
	\newcommand\Disc{\mathbb D}
	\newcommand\Disce[1]{\Disc(#1)}
	\newcommand\Discx{\Disc^\times}
	\newcommand\Discex[1]{\Disc(#1)^\times}
	\newcommand\mM{\mathbb M}
	\renewcommand{\H}{\mathrm H}
	\def\loc{\mathrm{loc}}
           \newcommand{\Fil}{\mathrm{Fil}}
	\def\gr{\mathrm{gr}}
	\newcommand\M{M}
	\newcommand\W{W}
	\newcommand\coM{M^c}
	\newcommand\Cent{C} %%descending central series
	\newcommand\WM{\ensuremath{\mathrm{(WM_{<0})}}\xspace}
	\newcommand\condn{\normalfont{(\ref{condn:2-conn})}}
	\newcommand\transp[1]{{}^t#1}
          \newcommand{\an}{\mathrm{an}}
          \newcommand{\Frac}{\mathrm{Frac}}
          \newcommand{\dR}{\mathrm{dR}}
        \newcommand{\sep}{\mathrm{sep}}
\def\N{\mathbb N}
\def\Z{\mathbb Z}
\def\Zhat{\widehat\Z}
\def\Q{\mathbb Q}
\def\Qhat{\widehat\Q}
\def\R{\mathbb R}
\def\C{\mathbb C}
\def\F{\mathbb F}
\def\Adele{\mathbb A}
\def\nr{\mathrm{nr}}
\newcommand\SET{\mathbf{Set}}
\newcommand\AFF{\mathbf{Aff}}
\newcommand\VEC{\mathbf{Vec}}
\newcommand\fVEC{\f\VEC}
\newcommand\grVEC{\gr\VEC}
\newcommand\CVEC{\widehat{\mathbf{Vec}}}
\newcommand\fleqCVEC{\fleq\CVEC}
\newcommand\fCVEC{\f\CVEC}
\newcommand\fsCVEC{\fs\CVEC}
\newcommand\grCVEC{\gr\CVEC}
\newcommand\GPD{\mathbf{Gpd}}
\newcommand\fleqGPD{\fleq\GPD}
\newcommand\coALG{\mathbf{coAlg}}
\newcommand\HALG{\mathbf{HAlg}}
\newcommand\fleqHALG{\fleq\HALG}
\newcommand\HGPD{\mathbf{HGpd}}
\newcommand\fleqHGPD{\fleq\HGPD}
\newcommand\fHGPD{\f\HGPD}
\newcommand\grHGPD{\gr\HGPD}
\newcommand\CHGPD{\widehat{\mathbf{HGpd}}}
\newcommand\LIE{\mathbf{Lie}}
\newcommand\CLIE{\widehat\LIE}
\newcommand\f{\mathrm{f}}
\newcommand\fs{\f^{\mathrm{s}}}
\newcommand\fgeq{\f_{\geq0}}
\newcommand\fleq{\f_{\leq0}}
\newcommand\Cat{\mathcal C}
\newcommand\Dat{\mathcal D}
\newcommand\TCat{\Cat^{\otimes}}
\newcommand\TDat{\Dat^{\otimes}}
\newcommand\fleqTCat{\fleq\TCat}
\newcommand\carTCat{\Cat^{\times}}
\newcommand\CTCat{\Cat^{\hatotimes}}
\newcommand\CTDat{\Dat^{\hatotimes}}
\newcommand\fleqCTCat{\fleq\CTCat}
\newcommand\nil{\mathrm{nil}}
\newcommand\uni{\mathrm{uni}}
\newcommand\op{\mathrm{op}}
\def\Hom{\mathrm{Hom}}
\def\Aut{\mathrm{Aut}}
\def\Out{\mathrm{Out}} %don't make this one an operator!
\def\End{\mathrm{End}}
\def\im{\mathrm{im}}
\def\GL{\mathrm{GL}}
\def\GSp{\mathrm{GSp}}
\def\Sp{\mathrm{Sp}}
\def\MCG{\mathrm{MCG}}
\def\coker{\mathrm{coker}}
\DeclareMathOperator\liminv{\underset\longleftarrow\lim}
\newcommand\iso{\cong} %%canonical iso
\newcommand\nciso{\simeq} %%non-canonical iso
\newcommand\isoarrow{\xrightarrow{\scalebox{0.6}{$\sim$}}}
\newcommand\longisoarrow{\overset\sim\longrightarrow}
\newcommand\xisoarrow[1]{\xrightarrow[{\raisebox{.6ex}{$\scriptstyle\sim$}}]{#1}}
\newcommand\acts{\curvearrowright}
\newcommand\actsarrow{\overset\acts\rightarrow}
\newcommand\hiso{\sim} %%horizontal iso symbol in commuting diagrams
\newcommand\viso{\wr} %%vertical iso symbol in commuting diagrams
\numberwithin{equation}{subsection}
\theoremstyle{plain}
\newcounter{theoremcount}[subsection]
\newtheorem{theorem}[theoremcount]{Theorem}
\newtheorem*{theorem*}{Theorem}
\newtheorem{lemma}[theoremcount]{Lemma}
\newtheorem{proposition}[theoremcount]{Proposition}
\newtheorem*{proposition*}{Proposition}
\newtheorem{corollary}[theoremcount]{Corollary}
\theoremstyle{definition}
\newtheorem{definition}[theoremcount]{Definition}
\newtheorem{definition-lemma}[theoremcount]{Definition--Lemma}
\newtheorem{example}[theoremcount]{Example}
\newtheorem{remark}[theoremcount]{Remark}
\newtheorem{notation}[theoremcount]{Notation}
\newtheorem{construction}[theoremcount]{Construction}
\title[The local theory of unipotent Kummer maps]{The local theory of unipotent Kummer maps and refined Selmer schemes}
\author{L.\ Alexander Betts and Netan Dogra}
\address{Max-Planck Institut f\"ur Mathematik. Vivatsgasse 7, 53111 Bonn, Germany}
\email{betts@mpim-bonn.mpg.de}
\address{Mathematical Institute, University of Oxford. Andrew Wiles Building, Woodstock Rd, Oxford OX2 6GG, United Kingdom}
\email{netan.dogra@maths.ox.ac.uk}
\date\today
\begin{document}
\maketitle

\begin{abstract}
We study the Galois action on paths in the $\Q_\ell$-pro-unipotent \'etale fundamental groupoid of a hyperbolic curve $X$ over a $p$-adic field with $\ell\neq p$. We prove an Oda--Tamagawa-type criterion for the existence of a Galois-invariant path in terms of the reduction of $X$, as well as an anabelian reconstruction result determining the stable reduction type of $X$ in terms of its fundamental groupoid. We give an explicit combinatorial description of the non-abelian Kummer map of $X$ in arbitrary depth, and deduce consequences for the non-abelian Chabauty method for affine hyperbolic curves and for explicit quadratic Chabauty.
\end{abstract}

\tableofcontents

\section{Introduction}
\label{s:intro}

\subsection{Non-abelian Kummer maps}
For $X$ a geometrically connected variety over a field $K$, and $b$ an $K^{\sep }$-point of $X$, the \'etale homotopy exact sequence
\begin{equation}\label{vapid}
1\to \pi _1 ^{\et }(X_{K^{\sep }},b)\to \pi _1 ^{\et }(X,b)\to \Gal (K^{\sep }|K)\to 1,
\end{equation}
allows one to study the $K$-points of $X$ by group-theoretic or cohomological means. More precisely, to each $K$-point one may associate a conjugacy class of sections of \eqref{vapid}. When $X$ is a smooth projective curve of genus $g>1$ and $K$ is a number field, Grothendieck's Section Conjecture posits that this association is bijective.

Whilst the section conjecture is in general completely open, in recent years significant progress has been by replacing $\pi _1 ^{\et }(X_{K^{\sep }},b)$ with various coarser quotients or completions. In this paper, we focus on the exact sequence
\begin{equation}\label{vapid2}
1\to \pi _1 ^{\Q _{\ell } }(X_{\overline K},b)(\Q _{\ell })\to \pi _1 ^{\Q_{\ell } }(X,b)\to \Gal (\overline{K}|K)\to 1,
\end{equation}
where $\pi _1 ^{\Q _{\ell } }(X_{\overline K},b)$ is the continuous $\Q _{\ell }$-Mal\u cev completion of $\pi_1^\et(X_{\overline K},b)$, and $\pi _1 ^{\Q _{\ell }}(X_K ,b)$ is defined by pushing out \eqref{vapid} along $\pi _1 ^{\et }(X_{\overline{K}},b)\to \pi _1 ^{\Q _{\ell } }(X_{\overline K},b)$. In this case it is not expected that the map from sections will be bijective in general. Instead, the section construction is used as an \textit{obstruction} to an adelic point being global. Kim's conjecture \cite[Conjecture 3.1]{BDCKW} states that when $\ell$ is a prime of good reduction (which we henceforth assume) this obstruction should also recover the set $X(\Q )$, or more precisely if $(x_v )\in X(\Adele_\Q^f)$ is an adelic point whose associated tuple of local sections lifts to a global section, then $x_{\ell }\in X(\Q _{\ell })$ should in fact lie in $X(\Q )$.
In many cases, the Chabauty--Kim method gives a practical method for algorithmically determining the set $X(\Q )$, or for bounding its size.

The Chabauty--Kim method hence depends on two things: an understanding of the image of \textit{global} sections in \textit{local} sections, and an understanding of the local maps to sections. This paper focuses on the latter, taking $K$ to be a finite extension of $\Q _p $, which we fix from now on. To study the local theory, we can reduce to the case where $b$ is a $K$-point of $X$, in which case we identify conjugacy classes of sections with equivalence classes of torsors in $\H ^1 (G_K ,\pi _1 ^{\Q _{\ell }}(X_{\overline K} ,b))$. When $\ell =p$, the \textit{unipotent Kummer map}
\[
X(\Q _p )\to \H ^1 (G_K ,\pi _1 ^{\Q _{\ell }}(X_{\overline{K}},b))
\]
may be described using non-abelian $p$-adic Hodge theory and iterated Coleman integration (recall $\ell$ is a prime of good reduction for $X$).

A striking feature of the unipotent Kummer map is that, unlike the $\Q _{\ell }$-linear Kummer map for abelian varieties, its local description when $p\neq \ell $ is non-trivial. If $X$ is proper and $p$ is a prime of potential good reduction, then it is straightforward to show that the map is trivial. In general, however, this map has finite image when $X$ is proper \cite{kimtamagawa}, and can have infinite image when $X$ is affine. In \cite{BalakrishnanDogra1}, certain quotients of this map were computed in special cases, utilising a description in terms of local height pairings.

The main result of this paper is an essentially complete description of the unipotent Kummer map\[\classify\colon X(K)\rightarrow\H^1(G_K,\pi_1^{\Q_\ell}(X_{\overline K},b))\]when $K$ is a finite extension of $\Q_p$, $X/K$ is a smooth geometrically connected hyperbolic curve (see \S\ref{ss:nonab_Kummer} for the definition) and $p\neq \ell$. 
In fact, we will work geometrically: for a finite extension $L/K$, the restriction map\[\H^1(G_K,\pi_1^{\Q_\ell}(X_{\overline K},b))\hookrightarrow\H^1(G_L,\pi_1^{\Q_\ell}(X_{\overline K},b))\]is injective (Lemma \ref{lem:finite_index}), and the non-abelian Kummer map for $X_L$ is compatible for that for $X$, so that the non-abelian Kummer maps for varying $L$ induce a natural map\[\classify\colon X(\overline{K})\rightarrow\varinjlim_L\H^1(G_L,\pi_1^{\Q_\ell}(X_{\overline K},b)).\]We will describe this map explicitly in terms of terms of harmonic analysis on the stable reduction graph $\rGraph$ of $X$ (see Definition \ref{def:reduction_graphs_of_curves}); these are concretely related by a certain \emph{reduction map} whose definition we now recall.

\begin{definition}[Reduction map]\label{def:reduction_map}\index{reduction\dots!\dots map $\red$}
Let $X/K$ be a smooth geometrically connected hyperbolic curve and $x\in X(\overline K)$. We may choose a regular semistable model $\mathcal X/\O_L$ over a finite extension $L/K$ such that $x\in\mathcal X(\O_L)$, and we denote by $\red(x)$ the irreducible component of the geometric special fibre $\mathcal X_{\overline k}$ containing the reduction of $x$. The point $\red(x)\in\rGraph$ corresponding to this component is independent of the choice of model, and we thus obtain a map\[\red\colon X(\overline K)\rightarrow\rGraph\]called the \emph{reduction map}. The image of this map is exactly the set $\QVert{\Graph}$ of points a rational distance from vertices.
\end{definition}

Using this map, we can now state our explicit calculation of the non-abelian Kummer map, at least in outline.

\begin{theorem}\label{thm:description_of_kummer}
Let $X/K$ be a smooth geometrically connected hyperbolic curve with a geometric basepoint $b\in X(\overline K)$ and write $\rGraph$ for the stable reduction graph of $X$. We will define a certain $\Q$-vector space\[\V=\prod_{n>0}\gr^\W_{-n}\V,\]where each of the graded pieces $\gr^\W_{-n}\V$ is finite-dimensional, along with a series $(\mu_n)_{n>0}$ of $\gr^\W_{-n}\V$-valued piecewise polynomial measures on $\rGraph$ with log poles on half-edges, each of total mass $0$. Both $\V$ and the measures $\mu_n$ depend only on the reduction graph $\rGraph$ (see Definition~\ref{def:V} and Construction~\ref{cons:measures} for the constructions).

Now for any prime $\ell\neq p$, there is an injection\[\iota\colon\varinjlim_L\H^1(G_L,\pi_1^{\Q_\ell}(X_{\overline K},b))\hookrightarrow\V_{\Q_\ell}:=\prod_{n>0}\left(\Q_\ell\otimes\gr^\W_{-n}\V\right)\]identifying $\varinjlim_L\H^1(G_L,\pi_1^{\Q_\ell}(X_{\overline K},b))$ as a graded pro-finite-dimensional $\Q_\ell$-subspace of $\V_{\Q_\ell}$. The composite map\[X(\overline K)\overset\classify\rightarrow\varinjlim_L\H^1(G_L,\pi_1^{\Q_\ell}(X_{\overline K},b))\overset\iota\hookrightarrow\V_{\Q_\ell}=\prod_{n>0}(\Q_\ell\otimes\gr^\W_{-n}\V)\]is given by\[x\mapsto(\langle\red(x)-\red(b),\mu_n\rangle)_{n>0},\]where $\langle\cdot,\cdot\rangle$ denotes the height pairing on measures of total mass $0$.
\end{theorem}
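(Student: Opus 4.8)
The plan is to make the non-abelian Kummer map concrete by identifying $\classify(x)$ with the class of the $\Q_\ell$-pro-unipotent path torsor $\pi_1^{\Q_\ell}(X_{\overline K};b,x)$, and then to compute the resulting Galois-cohomology class in the colimit by unwinding the action of tame inertia. The first point is that, since $\ell\neq p$, the $G_{\overline K}$-action on any $\Q_\ell$-pro-unipotent group factors, after replacing $K$ by a finite extension $L$, through the tame quotient, with a generator $\tau$ of tame inertia acting unipotently; writing $N=\log\tau$ for the associated nilpotent monodromy operator and passing to the colimit over $L$, the Frobenius contribution drops out. Concretely, because all weights on $\pi_1^{\Q_\ell}(X_{\overline K},b)$ are strictly negative, Frobenius has no invariants on the weight-graded pieces, so the ``unramified'' part of $\H^1$ vanishes in the colimit and $\varinjlim_L\H^1(G_L,\pi_1^{\Q_\ell}(X_{\overline K},b))$ is controlled entirely by $N$. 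This is what produces, and pins down the image of, the injection $\iota$: on each graded piece $\gr^\W_{-n}$ a class is detected by a ``monodromy coinvariant'', and the combinatorial model of $\pi_1^{\Q_\ell}(X_{\overline K},b)$ in terms of $\rGraph$ established in the earlier part of the paper identifies the totality of these coinvariants with the graded subspace of $\V_{\Q_\ell}$.

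The core of the argument is then to compute the monodromy on the path torsor, weight by weight. The case $n=1$ is essentially classical: the weight-$(-1)$ quotient of $\pi_1^{\Q_\ell}(X_{\overline K},b)$ is $\H^\et_1(X_{\overline K},\Q_\ell)$, on which $\classify$ is the $\ell$-adic Abel--Jacobi map, and Grothendieck's description of the monodromy pairing on the (generalised) N\'eron model of $\Jac(X)$ --- together with the identification of its toric part and component group in terms of cycles on $\rGraph$ --- shows that the monodromy coinvariant of $\pi_1^{\Q_\ell}(X_{\overline K};b,x)/W_{-2}$ is the harmonic cochain obtained by pairing the divisor $\red(x)-\red(b)$ against the graph Laplacian $\Lapl$ of $\rGraph$; this is $\langle\red(x)-\red(b),\mu_1\rangle$, with the poles along half-edges accounting for the loops around the punctures of the affine curve. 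For $n>1$ I would induct along the central extensions $1\to\gr^\W_{-n}\to\pi_1^{\Q_\ell}(X_{\overline K},b)/W_{-n-1}\to\pi_1^{\Q_\ell}(X_{\overline K},b)/W_{-n}\to1$, using compatibility of the Kummer map with the connecting maps in non-abelian cohomology together with the explicit cocycle for a path torsor coming from iterated integrals. The combinatorics should show that at each stage the monodromy is again governed by $\Lapl$, so that unwinding the recursion introduces the inverse Laplacian $\invLapl$ --- equivalently the Green's function, i.e.\ the height pairing $\langle\cdot,\cdot\rangle$ on measures of total mass $0$ --- and matching the solution against the recursive construction of the $\mu_n$ in Construction~\ref{cons:measures} yields the asserted formula $x\mapsto(\langle\red(x)-\red(b),\mu_n\rangle)_{n>0}$.

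The main obstacle is this inductive harmonic-analytic computation: one needs a model of $\pi_1^{\Q_\ell}(X_{\overline K},b)$ that simultaneously tracks the weight filtration, the operator $N$, and the dependence on $x$ through $\red(x)$, and then enough control over the piecewise-polynomial measures, the operator $\invLapl$, and the log poles on half-edges to recognise the solution of the monodromy recursion as precisely $\mu_n$. Two structural features should be isolated to keep the computation honest: that the answer depends on $x$ only through $\red(x)$ --- which reflects the fact that $N$ is pulled back from the dual graph and so cannot see the position of $x$ inside its component --- and that the apparent dependence on a choice of path from $b$ to $x$ evaporates, since a monodromy coinvariant of a torsor depends only on its endpoints. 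A final point to verify is compatibility under base change $L'/L$, which is what makes the colimit over $L$ (hence $\iota$) well behaved; once this, the $n=1$ case, and the combinatorial $\pi_1$-model are in hand, the inductive step should close the argument.
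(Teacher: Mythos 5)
Your high-level plan --- pass to the colimit over $L$ so that everything is governed by the monodromy operator $N$, transport the problem to the reduction graph via the combinatorial model of the fundamental groupoid, and compute weight by weight until Green's functions appear --- has the same overall shape as the paper's argument. But there are genuine gaps. The most concrete one is your base case: in weight $-1$ the Kummer map is identically zero. Weight--monodromy forces $N\colon\gr^\M_0\gr^\W_{-1}\rightarrow\gr^\M_{-2}\gr^\W_{-1}$ to be an isomorphism, so $\H^1(G_L,\gr^\W_{-1}U)=0$ for every finite $L$ (the familiar fact that the $\Q_\ell$-linear Kummer map of an abelian variety is trivial when $\ell\neq p$); correspondingly $\gr^\W_{-1}\V=0$ and $\mu_1=0$ in Construction~\ref{cons:measures}. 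The harmonic cochain $\invLapl(\red(x)-\red(b))$ you extract from Grothendieck's monodromy pairing is not the value of $\classifyeq1$; it is the $\gr^\M_0$-correction used to put a cocycle in canonical form, and it only becomes visible in weight $-2$, where $\mu_2$ already contains genuinely non-abelian terms ($\ad_{e^*}N(e^*)$, the genus classes $\log(\delta_v)$, the half-edge classes $\log(\delta_e)$). An induction anchored on your stated $n=1$ case cannot get started.

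The second gap is that the two steps you defer with ``should'' are precisely the substantive content of the theorem, and they require ideas not supplied in the proposal. To make $\iota$ well defined you need a canonical, basepoint- and twist-independent identification of $\varinjlim_L\H^1(G_L,U)$ with a fixed graded subspace of $\gr^\M_{-2}\Lie(U)$: for non-abelian $U$ the fibres of $\H^1(G,U/\W_{-n-1})\rightarrow\H^1(G,U/\W_{-n})$ are orbits of \emph{twisted} cohomology groups, so ``the monodromy coinvariant in $\gr^\W_{-n}$'' is not well defined without something like the canonical-cocycle representatives in $\Lie(U)^\can$ of Theorem~\ref{thm:cohomology_is_vector_space}, realised combinatorially by the canonical paths $\gamma^\can$ and the trivialisation $\Hiso$ of the $\M$-graded groupoid (Theorem~\ref{thm:description_of_M-graded}, Theorem~\ref{thm:curve_kummer_is_graph_kummer}). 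And even granting this, identifying $\Hiso N(\gamma^\can_{\red(b),\red(x)})$ with $(\langle\red(x)-\red(b),\mu_n\rangle)_{n>0}$ is not a formal unwinding: one needs the explicit formula for $\classify$ along edges, the resulting second-order differential equation, and the elimination of the basepoint-dependent operator $N_b$ by an integral identity before one can recognise $\Lapl(\classifyeq n)=\mu_n$ and invert via the height pairing (Theorem~\ref{thm:description_of_graph_kummer}). As written, the proposal names the right objects but leaves the core computation, and the structure that makes it well posed, unproved.
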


\begin{remark}
A consequence of Theorem \ref{thm:description_of_kummer} is that the non-abelian Kummer map $\classify$ factors through a map $\QVert{\Graph}\rightarrow\V$ determined solely by $\rGraph$. Thus, this theorem provides an independence of $\ell$ result for the non-abelian Kummer map.
\end{remark}

\begin{example}
We defer the general definitions of $\V$ and of the $\mu_n$, which although conceptually rather straightforward are somewhat elaborate, to the main body of the paper (Definition~\ref{def:V} and Construction~\ref{cons:measures}). However, we illustrate these constructions by stating a simple special case.

Suppose that $X$ is projective and that its Jacobian has potentially totally toric reduction -- in other words, assume that $\rGraph$ has no half-edges and has genus $0$ at every vertex. We write $\H_1(\Graph)=\H_1(\Graph,\Q)$ for the $\Q$-linear homology of the underlying graph of $\rGraph$, and $\beta\colon\Sym^2\H_1(\Graph)\rightarrow\Q$ for the cycle pairing given by the length of the intersection of cycles. There is then an isomorphism\[\gr^\W_{-2}\V\iso\coker\left(\beta^*\colon\Q\rightarrow\Sym^2\H^1(\Graph)\right)\]and the $\gr^\W_{-2}\V$-valued measure $\mu_2$ is given by\[\mu_2=\sum_{e\in\plEdge{\rGraph}}(e^*)^2\cdot\d s_e,\]where $\d s_e$ denotes arc-length along an edge $e$ and $e^*\in\H^1(\Graph)=\Hom(\H_1(\Graph),\Q)$ denotes the functional sending a cycle $\gamma$ to the multiplicity of $e$ in $\gamma$.
\end{example}

\subsection{Applications to the Chabauty--Kim method}\label{ss:application_CK}

Our main Theorem \ref{thm:groupoid_oda} has consequences for the Chabauty--Kim method. This method, developed in \cite{siegel,selmer_varieties,kim-coates,BDCKW}, seeks to determine the integral points of hyperbolic curves $X/\Q$ by means of the \emph{non-abelian Kummer map}\[\classify\colon X(\Q)\rightarrow\H^1(G_\Q,\pi_1^{\Q_\ell}(X_{\overline\Q},b))\]which sends a rational point $x\in X(\Q)$ to the class of the $G_\Q$-equivariant right $\pi_1^{\Q_\ell}(X_{\overline\Q},b)$-torsor $\pi_1^{\Q_\ell}(X_{\overline\Q};b,x)$, where $b\in X(\Q)$ is a fixed rational basepoint. More precisely, if $S$ is a finite set of rational primes and $\mathcal X/\Z_S$ is a model of $X$, not necessarily minimal, such that $b$ extends to a smooth $\Z_S$-point of $\mathcal X$, the Chabauty--Kim method aims to locate $\mathcal X(\Z_S)$ inside $\prod_{p\notin S}\mathcal X(\Z_p)\times\prod_{p\in S}X(\Q_p)$ via the commuting square
\begin{center}
\begin{tikzcd}\label{the_commutative_diagram}
\mathcal X(\Z_S) \arrow{r}{\classify}\arrow{d} & \H^1(G_\Q,U) \arrow{d} \\
\prod_{p\notin S}\mathcal X(\Z_p)\times\prod_{p\in S}X(\Q_p) \arrow{r}{\prod_p\classify_p} & \prod_p\H^1(G_{\Q_p},U)
\end{tikzcd}
\end{center}
where $U$ is a finite dimensional $G_{\Q }$-stable quotient of $\pi _1 ^{\Q _\ell }(X_{\overline \Q },b)$ and $\classify_p\colon X(\Q_p)\rightarrow\H^1(G_{\Q_p},U)$ is the local non-abelian Kummer map, defined in exactly the same way as $\classify$. One typically assumes in the above that $\ell\notin S$ is a prime of good reduction for the model $\mathcal X$.

Under reasonable assumptions\footnote{Precisely, under the assumption that $U$ satisfies condition~\WM from \S\ref{ss:gal_coh_field}.} on $U$ (for instance if $U=U_n$ is the largest $n$-step unipotent quotient of $\pi_1^{\Q_\ell}(X_{\overline\Q},b)$), all of the non-abelian cohomology sets involved are the $\Q_\ell$-points of pointed affine schemes over $\Q_\ell$ and the localisation map is a scheme morphism\footnote{Technically, one should replace $\H^1(G_\Q,U)$ with $\H^1(G_{\Q,T},U)$, where $T$ is a finite set of primes containing $S,p$ and all primes of bad reduction.}. One defines the \emph{Selmer scheme} $\Sel_{S,U}(\mathcal X)\subseteq\H^1(G_\Q,U)$ to be the subscheme of cohomology classes which are \emph{locally geometric} outside $S$, i.e.\ classes $\xi$ such that $\xi|_{G_{\Q_p}}\in\classify_p(\mathcal X(\Z_p))^\Zar$ for all $p\notin S$ \cite[\S8]{BDCKW}, where $(-)^\Zar$ denotes the Zariski closure. The nature of these local geometricity conditions is clarified by the following qualitative description of the images of the local non-abelian Kummer maps.

\begin{proposition*}[{\cite[Corollary 0.2]{kimtamagawa},\cite[Theorem 1]{selmer_varieties},\cite[\S4]{faltings_around}}]
\leavevmode
\begin{itemize}
	\item If $p\notin S\cup\{\ell\}$, then $\classify_p(\mathcal X(\Z_p))^\Zar=\classify_p(\mathcal X(\Z_p))$ is finite. If moreover $p$ is of potentially good reduction for $\mathcal X$, i.e.\ $\mathcal X_{\O_L}$ is dominated by a good model of $X_L$ for a finite extension $L/\Q_p$, then $\classify_p(\mathcal X(\Z_p))=\{\ast\}$ is the basepoint of $\H^1(G_{\Q_p},U)$.
	\item If $p=\ell$ then our assumptions that $\ell\notin S$ is a prime of good reduction ensure that $\classify_\ell(\mathcal X(\Z_\ell))^\Zar=\H^1_f(G_{\Q_\ell},U)$ is the non-abelian Bloch--Kato Selmer scheme defined in \cite[\S2]{selmer_varieties}.
\end{itemize}
\end{proposition*}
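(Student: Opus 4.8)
The plan is to treat the two claims separately: the first is controlled by the insensitivity of $\Q_\ell$-\'etale homotopy ($\ell\ne p$) to infinitesimal deformations, the second by non-abelian $p$-adic Hodge theory.

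\emph{The case $p\notin S\cup\{\ell\}$.} I would first recall that $\classify_p$ factors through the reduction map $\red\colon X(\Q_p)\to\rGraph$ — this is immediate from Theorem~\ref{thm:description_of_kummer} — and then observe that it therefore suffices to show $\red(\mathcal X(\Z_p))$ is finite. This in turn holds because integral points are captured by a fixed model: after base change to a finite $L/\Q_p$ over which $X$ acquires semistable reduction, $\mathcal X(\Z_p)$ lies in $\mathcal X'(\O_L)$ for a fixed regular semistable model $\mathcal X'$ of a projective completion of $X_L$, and the reduction of a point of $\mathcal X'(\O_L)$ is determined by its specialisation in $\mathcal X'_{\overline k}$ together with, at a node locally of the form $\O_L[u,v]/(uv-\pi^e)$, the valuation of its $u$-coordinate; since $\mathcal X'$ is fixed the thicknesses $e$ are bounded, so only finitely many reductions occur. (This is the finiteness of \cite[Corollary 0.2]{kimtamagawa}.) Since finite subsets of $\H^1(G_{\Q_p},U)(\Q_\ell)$ are Zariski closed, this gives $\classify_p(\mathcal X(\Z_p))^\Zar=\classify_p(\mathcal X(\Z_p))$, finite.

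\emph{The potentially-good-reduction refinement.} Here I would argue cohomologically rather than via $\red$. If $\mathcal X_{\O_L}$ is dominated by a good model $\mathcal Y$ of $X_L$, then $b$ — which is itself an integral point — and every $x\in\mathcal X(\Z_p)$ lift to $\O_L$-points of $\mathcal Y$ reducing into its smooth special fibre, so by smooth base change the lisse $\Q_\ell$-sheaves cutting out $\pi_1^{\Q_\ell}(X_{\overline L})$ are unramified and the path torsor $\pi_1^{\Q_\ell}(X_{\overline L};b,x)$ is unramified; hence $\classify_p(x)\in\H^1_\nr(G_L,U)=\H^1(\Zhat,U)$. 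Every graded piece $\gr^\W_{-n}U$ ($n\ge1$) has strictly negative weight, so $\Frob-1$ acts invertibly on it; by successive approximation along the lower central series of $U$ the coboundary $g\mapsto g^{-1}\Frob(g)$ is then surjective on $U$, so the pointed set $\H^1(\Zhat,U)$ is trivial. By injectivity of $\H^1(G_{\Q_p},U)\hookrightarrow\H^1(G_L,U)$ (Lemma~\ref{lem:finite_index}) we conclude $\classify_p(x)=\ast$.

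\emph{The case $p=\ell$.} The inclusion $\classify_\ell(\mathcal X(\Z_\ell))\subseteq\H^1_f(G_{\Q_\ell},U)$ is the statement that the path torsor of a $\Z_\ell$-integral point of the good model $\mathcal X_{\Z_\ell}$ is crystalline, which is non-abelian $p$-adic Hodge theory (Olsson's comparison theorem, as used by Kim); since $\H^1_f(G_{\Q_\ell},U)$ is Zariski closed, so is the Zariski closure of the image, which therefore lies inside it. For the reverse inclusion I would use Kim's non-abelian Bloch--Kato exponential, identifying $\H^1_f(G_{\Q_\ell},U)$ with the quotient $U^{\dR}/F^0$ of the de Rham fundamental group by its Hodge filtration — an \emph{irreducible} affine space — under which $\classify_\ell$ becomes the de Rham unipotent Albanese map, whose coordinates on a residue disc are iterated Coleman integrals. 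The crux, and the step I expect to be the main obstacle, is the non-degeneracy of this $p$-adic period map: that the relevant iterated Coleman integrals satisfy no algebraic relation on a residue disc. Granting this (this is essentially \cite[Theorem~1]{selmer_varieties}, and follows from the unipotent differential system the integrals satisfy), the image of $\mathcal X(\Z_\ell)$ is Zariski dense in $\H^1_f(G_{\Q_\ell},U)$, which is the non-abelian Bloch--Kato Selmer scheme by definition.
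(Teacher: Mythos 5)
Your proposal is correct in outline, but it is worth being clear that the paper itself offers no proof of this proposition: it is quoted as background, with the first bullet attributed to Kim--Tamagawa and Faltings and the second to Kim's \cite[Theorem~1]{selmer_varieties}. What you do for $p\notin S\cup\{\ell\}$ is therefore a genuinely different route from the cited sources, and it is in fact the route the paper itself takes later for its \emph{refinements}: factoring $\classify_p$ through the reduction map is exactly Proposition~\ref{prop:kummer_bounds}\eqref{proppart:size_bound} and the proposition proved in \S3.7 via Theorem~\ref{thm:graph_comparison} and Corollary~\ref{cor:G-invt_is_I-invt}, so invoking Theorem~\ref{thm:description_of_kummer} here is legitimate and non-circular (its proof nowhere uses the quoted proposition), whereas Kim--Tamagawa's original argument gets finiteness by Galois-cohomological/weight considerations without any groupoid-level Picard--Lefschetz description. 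Your finiteness step is sound once you note, as the paper does, that $\mathcal X(\Z_p)\subseteq\mathcal X'(\O_L)$ for a fixed regular semistable model and that on a \emph{regular} model integral sections reduce into the smooth locus, so $\red(\mathcal X(\Z_p))$ lands in the finite vertex set (your aside about nodes $uv=\pi^e$ is only relevant for non-regular models and is not needed). Your treatment of the potentially-good-reduction case (unramifiedness of the path torsor over $L$, vanishing of unramified $\H^1$ by strictly negative Frobenius weights as in the proof of Lemma~\ref{lem:description_of_cohomology_basic}, then injectivity of restriction from Lemma~\ref{lem:finite_index}) is the standard argument and is fine; a slightly shorter in-paper alternative is to observe that the reduction graph of a good model has a single vertex, so $\red(x)=\red(b)$ forces $\classify_p(x)=\ast$ over $L$.

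For $p=\ell$ you have not really given a proof but rather re-derived the citation: crystallinity of the path torsor is Olsson/Kim's non-abelian $p$-adic Hodge theory, and the Zariski density of the unipotent Albanese image in $\H^1_f(G_{\Q_\ell},U)\simeq U^{\dR}/F^0$ \emph{is} \cite[Theorem~1]{selmer_varieties}, which you grant rather than prove. Since the statement in the paper is itself presented as a quotation of exactly these results, this is acceptable, but you should flag it as a citation rather than suggest it follows easily from the differential system; the linear independence of iterated Coleman integrals over algebraic functions is the substantive input there.
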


The output of the Chabauty--Kim method is a subset $\mathcal X(\Z_\ell)_{S,U}\subseteq\mathcal X(\Z_\ell)$ containing $\mathcal X(\Z_S)$, namely the set of all $x\in\mathcal X(\Z_\ell)$ such that $\classify_\ell(x)$ lies in the scheme-theoretic image of the localisation map $\Sel_{S,U}(\mathcal X)\rightarrow\H^1_f(G_{\Q_\ell},U)$ \cite[\S8]{BDCKW}. The set $\mathcal X(\Z_\ell)_{S,U}$ is in theory (and often in practice) computable and conjecturally is equal to $\mathcal X(\Z_S)$ for $U$ sufficiently large \cite[Segment~3.1]{BDCKW}. The set $\mathcal X(\Z_\ell)_{S,U}$, and hence $\mathcal X(\Z_S)$, is finite as soon as the localisation map is not scheme-theoretically dominant, for example if the inequality
\begin{equation}\label{eq:selmer_ineq}
\sum_{i>0}\dim_{\Q_\ell}\H^1_{f,S}(G_\Q,\gr_\Cent^iU) < \sum_{i>0}\dim_{\Q_\ell}\H^1_f(G_{\Q_\ell},\gr_\Cent^iU)
\end{equation}
holds. Here, $\H^1_{f,S}(G_\Q,-)$ denotes the subspace of global continuous Galois cohomology classes which are crystalline at $\ell$ and unramified at all $p\notin S\cup\{\ell\}$, and $\gr_\Cent^i U$ denotes the graded pieces of the descending central series filtration\footnote{The particular choice of filtration here is not especially important; one could equally use the analogue of~\eqref{eq:selmer_ineq} with $\Cent^\bullet U$ replaced by, for example, the weight filtration $\W_\bullet U$ defined in Definition~\ref{def:wt1}.} on $U$ (so in particular are abelian).

\smallskip

The techniques we develop in this paper will allow us to precisely determine the images of the local non-abelian Kummer maps at all primes $p\neq\ell$, even primes in $S$ where the image need not be finite. For example, we can refine and extend the above descriptions of the local images.

\begin{proposition}\label{prop:kummer_bounds}
\leavevmode
\begin{enumerate}
	\item\label{proppart:size_bound} Let $p\notin S\cup\{\ell\}$ be a prime and let $\mathcal X'/\O_L$ be a regular semistable model of $X$ over the ring of integers of a finite extension $L/\Q_p$ which dominates $\mathcal X_{\O_L}$, with geometric special fibre $\mathcal X'_{\overline{\F}_p}$. Then $\#\classify_p(\mathcal X(\Z_p))$ is at most the number of irreducible components of $\mathcal X'_{\overline{\F}_p}$ containing the reduction of a point in $\mathcal X(\Z_p)\subseteq\mathcal X'(\O_L)$. If $U$ dominates $U_3$, then we have equality.
	\item\label{proppart:dimension_bound} Let $p\in S\setminus\{\ell\}$. Then the $\Q_\ell$-dimension of $\classify_p(X(\Q_p))^\Zar$ is at most $1$. If $U$ dominates $U_2$, then the dimension is equal to $1$ if and only if $X$ has a $\Q_p$-rational cusp.
\end{enumerate}
In both cases, the image $\classify_p(\mathcal X(\Z_p))$ or $\classify_p(X(\Q_p))$ is explicitly computable, in a sense we shall make precise later.
\end{proposition}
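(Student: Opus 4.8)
The plan is to reduce both parts to the explicit formula of Theorem~\ref{thm:description_of_kummer}. For the full fundamental group, that theorem says the composite $X(\overline{\Q_p})\overset\classify\rightarrow\varinjlim_L\H^1(G_L,\pi_1^{\Q_\ell}(X_{\overline{\Q_p}},b))\overset\iota\hookrightarrow\V_{\Q_\ell}$ equals $\chi\circ\red$, where $\red\colon X(\overline{\Q_p})\to\QVert\Graph$ is the reduction map and $\chi\colon\QVert\Graph\to\V$ sends $v$ to $(\langle v-\red(b),\mu_n\rangle)_{n>0}$; moreover $\chi$ is piecewise polynomial, with one polynomial per edge or half-edge of $\rGraph$. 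Since the restriction $\H^1(G_{\Q_p},\pi_1^{\Q_\ell})\hookrightarrow\varinjlim_L\H^1(G_L,\pi_1^{\Q_\ell})$ is injective (Lemma~\ref{lem:finite_index}) and, for a quotient $U$, the map $\classify_p\colon X(\Q_p)\to\H^1(G_{\Q_p},U)$ is the push-forward along $\H^1(G_{\Q_p},\pi_1^{\Q_\ell})\to\H^1(G_{\Q_p},U)$ of the corresponding map into $\H^1(G_{\Q_p},\pi_1^{\Q_\ell})$, for any subset $Z\subseteq X(\Q_p)$ the image $\classify_p(Z)$ is a set-theoretic quotient of $\chi(\red(Z))$; in particular $\#\classify_p(Z)\leq\#\red(Z)$. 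Conversely, if $U$ dominates $U_m$ then $\classify_p(Z)$ surjects onto the image of $Z$ in $\H^1(G_{\Q_p},U_m)$, which by the same theorem is computed by the analogous formula with $(\mu_n)_n$ replaced by the finite subfamily of measures visible in the $U_m$-quotient. So everything reduces to describing $\red(\mathcal X(\Z_p))$ and $\red(X(\Q_p))$ and to understanding how much the truncated family of measures separates.

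For the first assertion, a point $x\in\mathcal X(\Z_p)$ base-changes to an $\O_L$-point of $\mathcal X_{\O_L}$ and hence, by the valuative criterion for the proper morphism $\mathcal X'\to\mathcal X_{\O_L}$, to an $\O_L$-point of $\mathcal X'$. Since a node of a \emph{regular} semistable model over $\O_L$ has formal local equation $uv=\pi_L$, no $\O_L$-point can specialise to a node — the valuations of $u$ and $v$ would then sum to $\geq2>1$ — so the reduction of $x$ lies in the smooth locus of $\mathcal X'_{\overline{\F}_p}$, and $\red(x)$ is the point of $\rGraph$ attached to the unique component through it. Thus $\red(\mathcal X(\Z_p))$ is exactly the set of those components, and $\#\classify_p(\mathcal X(\Z_p))\leq\#\red(\mathcal X(\Z_p))$ is the bound of the first paragraph. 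For the equality when $U$ dominates $U_3$, I would show that the subfamily of measures visible in the $U_3$-quotient already separates the points of $\QVert\Graph$ occurring as such reductions: for two distinct such points $v\neq v'$ one produces, from the explicit shape of $\mu_1,\mu_2,\mu_3$ (Construction~\ref{cons:measures}) and the Green's-function description of the height pairing on $\rGraph$, an index $n\leq3$ in the relevant weight range with $\langle v-v',\mu_n\rangle\neq0$. Granting this, $\chi$ is injective on $\red(\mathcal X(\Z_p))$ and the upper bound is attained for every $U$ dominating $U_3$.

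For the second assertion, $\rGraph$ is a finite union of closed edges and of half-edges, and on each such one-dimensional cell $\classify_p$ restricts (by Theorem~\ref{thm:description_of_kummer}) to a polynomial map into the affine scheme $\H^1(G_{\Q_p},U)$; hence the image of each cell is a point or an irreducible affine curve, and since $\red(X(\Q_p))$ meets only finitely many cells, $\classify_p(X(\Q_p))^\Zar$ is a finite union of points and curves, so has dimension $\leq1$. If $X$ has a $\Q_p$-rational cusp $c$, with associated half-edge $h$, then $X(\Q_p)$ accumulates $p$-adically at $c$, and the reductions of points tending to $c$ run out to the infinite end of $h$; since by Construction~\ref{cons:measures} the measures visible already in the $U_2$-quotient acquire a nonzero logarithmic singularity along $h$, the restriction of $\chi$ to $h$ is non-constant, so its image is an affine curve and $\dim\classify_p(X(\Q_p))^\Zar=1$ for every $U$ dominating $U_2$. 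Conversely, if $X$ has no $\Q_p$-rational cusp then $\red(X(\Q_p))$ is finite: by compactness of $\overline X(\Q_p)$ (with $\overline X$ the smooth compactification of $X$) it stays within a bounded part of $\rGraph$, and on each edge and half-edge it meets only finitely many points, since a $\Q_p$-point passing through the corresponding node realises only finitely many valuations. Hence $\classify_p(X(\Q_p))$ is finite and $\classify_p(X(\Q_p))^\Zar$ is $0$-dimensional, which together with the previous case shows that, for $U$ dominating $U_2$, the dimension is $1$ if and only if $X$ has a $\Q_p$-rational cusp.

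For the computability assertions, $\rGraph$ and the measures $\mu_n$ are computable from a defining equation of $X$ (the latter by Construction~\ref{cons:measures}); the set of components of $\mathcal X'_{\overline{\F}_p}$ meeting $\mathcal X(\Z_p)$ and the set of $\Q_p$-rational cusps of $X$ are effectively determined; and each height pairing $\langle v-\red(b),\mu_n\rangle$ is obtained by solving a linear system for a potential on $\rGraph$ and integrating piecewise polynomials — this is the sense in which the images are ``explicitly computable''. \textbf{The main obstacle} I expect is the separation statement underlying the first assertion: one must read off from Construction~\ref{cons:measures} enough of the structure of the low-weight measures $\mu_1,\mu_2,\mu_3$ — in particular how they transform under automorphisms of $\rGraph$, so as to rule out the symmetry-induced coincidences $\chi(v)=\chi(v')$ that could otherwise occur — and match this against the precise bookkeeping of which graded pieces $\gr^\W_{-n}\V$ are visible in $U_3$, where because of the weight--monodromy structure of $\pi_1^{\Q_\ell}$ the relevant range of $n$ is not simply $n\leq3$. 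The analogous non-vanishing needed for the second assertion — a genuine logarithmic singularity of the $U_2$-visible measures along each half-edge, which for a curve with a single cusp already fails for $U_1$ — is of the same nature but milder.
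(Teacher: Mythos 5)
Your two inequalities are fine and essentially follow the paper's own route: the bound in part~(1) amounts to the fact that $\classify_p$ factors through the reduction map on $\mathcal X(\Z_p)$ (the paper deduces this directly from Theorem~\ref{thm:graph_comparison} together with Corollary~\ref{cor:G-invt_is_I-invt} and Lemma~\ref{lem:finite_index}, rather than via the full Theorem~\ref{thm:description_of_kummer}, but your argument, including the observation that an $\O_L$-point of a regular semistable model cannot reduce to a node, is correct), and the bound in part~(2) is exactly Corollary~\ref{cor:j_piecewise_polynomial} combined with Theorem~\ref{thm:curve_kummer_is_graph_kummer}.

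The genuine gap is the equality statement in part~(1), which you yourself flag as ``the main obstacle'' and for which you offer no argument. What is needed is precisely the Injectivity Theorem~\ref{thm:combinatorial_injectivity}: for a stable reduction graph, $\classifyleq3$ is injective on $\QVert{\Graph}$. This is not something one can ``read off'' from Construction~\ref{cons:measures} and the Green's-function description of the height pairing: in weight $-2$ the map $\classifyeq2$ genuinely fails to separate points exchanged by isometric involutions of $2$-connected components (banana graphs already exhibit this), so one must first characterise exactly when $\classifyleq2(x)=\classifyleq2(y)$ (Lemma~\ref{lem:harmonic_condition}, Propositions~\ref{prop:depth_2_bananas} and~\ref{prop:partial_inj_for_2-conn_graphs}, whose proofs depend on the whole reduction apparatus of Section~\ref{s:graph_ops}: elimination of half-edges, resistance reduction, the block--cutvertex decomposition and maximal cut systems) and then show that weight $-3$ breaks the residual involutive symmetry, which the paper does by reducing to two explicit model graphs and analysing leading coefficients via Proposition~\ref{prop:leading_coeffs}. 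Without this, only the inequality of~(1) is established. (Your worry that the relevant weight range exceeds $n\leq3$ is unfounded: since $\Cent^4U\subseteq\W_{-4}U$, any $U$ dominating $U_3$ dominates $U/\W_{-4}$, so separation by $\classifyleq3$ suffices.) Similarly, in part~(2) both implications of the ``iff'' are asserted rather than proved: the ``if'' direction needs $\Hiso\log(\delta_e)\neq0$ in the $U_2$-visible part of $\gr^\W_{-2}\V$ for the half-edge of a rational cusp (true for hyperbolic $X$, but it requires checking that the surface relation cannot kill $\log(\delta_e)$), and the ``only if'' direction needs finiteness of $\red(X(\Q_p))$ when there is no $\Q_p$-rational cusp; these are much easier than the weight~$-3$ injectivity, but as written they remain sketches.
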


%\begin{proposition}\label{prop:kummer_bounds}
%Let $X$ be a hyperbolic curve over a finite extension $K$ of $\Q_p$; let $\mathcal X/\O_K$ be a model of $X$ over the ring of integers and $b\in\mathcal X(\O_K)$ a basepoint. Let $U$ be a $G_K$-equivariant quotient of $\pi_1^{\Q_\ell}(X_{\overline K},b)$ for some $\ell\neq p$, and assume that $U$ satisfies condition~\WM from \S\ref{ss:gal_coh_field}.
%\begin{enumerate}
%	\item\label{proppart:size_bound} Let $\mathcal X'/\O_L$ be a regular semistable model of $X$ over the ring of integers of a finite extension $L/K$ which dominates $\mathcal X_{\O_L}$. Then the image of the non-abelian Kummer map
%	\[
%	\classify\colon\mathcal X(\O_K) \rightarrow \H^1(G_K,U)
%	\]
%	is finite, of size at most the number of irreducible components of the geometric special fibre $\mathcal X'_{\overline k}$ containing the reduction of a point in $\mathcal X(\O_K)\subseteq\mathcal X'(\O_L)$. If $U$ dominates $U_3$, then we have equality.
%	\item The Zariski-closure of the image of the non-abelian Kummer map
%	\[
%	\classify\colon X(K) \rightarrow \H^1(G_K,U)
%	\]
%	has dimension at most $1$. If $U$ dominates $U_2$, then the dimension is equal to $1$ if and only if $X$ has a $\Q_p$-rational cusp.
%\end{enumerate}
%\end{proposition}

In fact, we will prove the evident analogue of Proposition~\ref{prop:kummer_bounds} over arbitrary number fields in place of $\Q$. Part~\eqref{proppart:size_bound} is used in work of the second author \cite[Lemma~2.2]{BDeff}.

%\begin{corollary}\label{cor:bound}
%Suppose that the model $\mathcal X/\Z_S$ is regular semistable at $p\notin S\cup\{\ell\}$ and write $\mathcal X_{\F_p}$ for its special fibre. Then we have\[\#\classify_p(\mathcal X(\Z_p))\leq\#\left\{\text{irreducible components of $\mathcal X_{\F_p}$}\right\}.\]
%\end{corollary}

%The greater control of local Selmer schemes afforded by Proposition \ref{prop:kummer_bounds} in turn provides more constraints on the sets $\Sel_{S,U}(\mathcal X)$.

\subsubsection{A refined Selmer scheme}

The fact that Proposition~\ref{prop:kummer_bounds} also constrains the images of local non-abelian Kummer maps at places in $S$ enables us to refine the Chabauty--Kim method in the case that $S\neq\emptyset$ (so this is only of interest for affine curves $X$). We do this via the following refinement of the Selmer scheme from \cite{BDCKW}.

\begin{definition}[Refined Selmer scheme]
Keep notation as above, and assume that $\ell\notin S$ is a prime of good reduction. We define a refined Selmer scheme $\Sel_{S,U}^{\min}(\mathcal X)\subseteq\H^1(G_\Q,U)$ to be the subscheme consisting of cohomology classes that are everywhere locally geometric in the sense that $\xi|_{G_{\Q_p}}\in\classify_p(\mathcal X(\Z_p))^\Zar$ for all $p\notin S$ and $\xi|_{G_{\Q_p}}\in\classify_p(X(\Q_p))^\Zar$ for all $p\in S$. This is a subscheme of the usual Selmer scheme $\Sel_{S,U}(\mathcal X)$ containing the image of the global non-abelian Kummer map $\classify$.
\end{definition}

Using this \textit{refined} Selmer scheme, one can define refined subsets $\mathcal X(\Z_\ell)_{S,U}^{\min}\subseteq\mathcal X(\Z_\ell)$ in exactly the same way as for $\mathcal X(\Z_\ell)_{S,U}$, which by definition sit in a sequence of containments\[\mathcal X(\Z_S)\subseteq\mathcal X(\Z_\ell)_{S,U}^{\min}\subseteq\mathcal X(\Z_\ell)_{S,U}\subseteq\mathcal X(\Z_\ell).\]The set $\mathcal X(\Z_\ell)_{S,U}^{\min}$ is finite as soon as the localisation map $\Sel_{S,U}^{\min}(\mathcal X)\rightarrow\H^1_f(G_{\Q_p},U)$ is not scheme-theoretically dominant, for example if the inequality
\begin{equation}\label{eq:modified_selmer_ineq}
\#S+\sum_{i>0}\dim_{\Q_\ell}\H^1_f(G_\Q,\gr_\Cent^iU) < \sum_{i>0}\dim_{\Q_\ell}\H^1_f(G_{\Q_\ell},\gr_\Cent^iU)
\end{equation}
holds. Here, $\H^1_f(G_\Q,-)$ denotes the subspace of global continuous Galois cohomology classes which are crystalline at $\ell$ and unramified at all $p\neq\ell$.

\smallskip

In general, we expect~\eqref{eq:modified_selmer_ineq} to be satisfied more often than~\eqref{eq:selmer_ineq}; we give simple examples in \S\ref{sss:P1_finite} and \S\ref{sss:E_finite} where~\eqref{eq:modified_selmer_ineq} holds but~\eqref{eq:selmer_ineq} does not.

\subsubsection{Applications to quadratic Chabauty}

A particularly tractable case for making the non-abelian Chabauty method explicit and computable is when $X$ is projective and satisfies the inequality
\[
\rk J(\Q ) < \rho (J) +g-1,
\]
where $J$ is the Jacobian of $X$ and $\rho $ is the Picard number.
In this case $X(\Q_\ell)_2=\mathcal X(\Z_\ell)_{\emptyset,U_2}$ is finite, and in \cite{Balakrishnanetc} an algorithm is given for computing a finite set containing it in certain cases. A major restriction of this algorithm is the need for $X$ to have potential good reduction at all primes. In this paper we explain how to remove this restriction. As explained in \S \ref{s:examples}, this amounts to giving an explicit formula for certain functions $j_F :X(\Q _p )\to \Q _{\ell }$ arising from a choice of base-point $b\in X(\Q _p )$ and an endomorphism $F$ of the Jacobian of $X$ satisfying certain conditions. In Corollary \ref{cor:explicit_formula}, we give an explicit formula for $j_F $ in terms of height pairings $\langle\cdot,\cdot\rangle $ on the dual graph of $X$.

\subsection{An analogue of Oda's theorem for fundamental groupoids}

Let $K$ be a finite extension of $\Q_p$, and $A$ an abelian variety over $K$. The Serre--Tate criterion \cite[Theorem 1]{serre-tate} says that we can detect good reduction of $A$ from the Galois action on the $\ell$-adic Tate module of $A$, or equivalently from the action on $\H^1_\et(A_{\overline{K}},\Q_\ell)$, for $\ell$ a prime different from $p$. Specifically, $A$ has good reduction if and only if this Galois action is unramified.

If we replace $A$ with a smooth geometrically connected curve $X$ over $K$, then the Galois action on cohomology no longer detects good reduction of $X$. However, by replacing the cohomology of $X$ by a finer arithmetic-homotopical invariant -- its \'etale fundamental group -- one can indeed detect good reduction of $X$. When $X$ is projective this is due to Oda \cite{oda}, and for $X$ affine this is due to Tamagawa \cite{tamagawa_grothendieck_conjecture}.

\begin{theorem*}[{\cite[Theorem 3.2]{oda}}, {\cite[Theorem 0.8]{tamagawa_grothendieck_conjecture}}]
A smooth geometrically connected hyperbolic curve $X/K$ has good reduction if and only if the outer action of $G_K=\Gal(\overline{K}|K)$ on the pro-$\ell$ \'etale fundamental group $\pi_1^{\et,\ell}(X_{\overline{K}},b)$ is unramified (for any or every geometric basepoint $b$).
\end{theorem*}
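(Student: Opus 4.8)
The plan is to prove the two implications separately; the forward one is a formal consequence of specialisation, while the converse carries all of the content.

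\emph{Good reduction $\Rightarrow$ unramified outer action.} If $X$ has good reduction, pick a smooth model $\mathcal X/\mathcal O_K$ of $X$ --- in the affine case, a smooth model of the smooth compactification over which the cuspidal divisor spreads out to a disjoint union of $\mathcal O_K$-sections, with that divisor deleted --- and let $\mathcal X_k$ be its special fibre over the residue field $k$. The specialisation homomorphism $\pi_1^{\et,\ell}(X_{\overline K},b)\to\pi_1^{\et,\ell}(\mathcal X_{\overline k},\overline b)$ is an isomorphism, since the prime-to-$p$ (in particular the pro-$\ell$) \'etale fundamental group specialises isomorphically along a smooth model, and it is $G_K$-equivariant once $G_K$ acts on the target through the projection $G_K\twoheadrightarrow G_k$ (because $\mathcal X_k$ is already defined over $k$). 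Hence the outer $G_K$-action on $\pi_1^{\et,\ell}(X_{\overline K},b)$ factors through $G_k$, i.e. it is unramified.

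\emph{Unramified outer action $\Rightarrow$ good reduction.} Assume the outer action of $G_K$, and hence of the inertia subgroup $I_K$, is trivial. In particular $I_K$ acts trivially on the abelianisation $\pi_1^{\et,\ell}(X_{\overline K},b)^{\ab}$, so on $\H^1(X_{\overline K},\Q_\ell)$; by Grothendieck's criterion for semistable reduction of curves, $X$ therefore admits a semistable model over $\mathcal O_K$, which after blowing up we may take to be regular. Let $Y$ be its geometric special fibre, $\Graph$ the dual graph, and $C_i$ the (smooth) components of the normalisation of $Y$. Since the inertia action on $\H^1$ is not merely unipotent but trivial, the monodromy weight filtration on $\H^1$ is pure, so the toric part of $\H^1$ vanishes and $\Graph$ is a tree. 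It remains to show that $\Graph$ is a single point.

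This is the crux. Each node of $Y$ --- equivalently each edge $e$ of $\Graph$ --- determines a \emph{vanishing cycle} $\delta_e\in\pi_1^{\et,\ell}(X_{\overline K},b)$, the class of a small loop around the annular neighbourhood of that node in the rigid-analytic fibre. By Grothendieck's local monodromy theorem the outer action of $I_K$ on $\pi_1^{\et,\ell}(X_{\overline K},b)$ factors through the tame pro-$\ell$ quotient $I_K\twoheadrightarrow\Z_\ell(1)$ and is pro-unipotent, with associated nilpotent monodromy operator $N$; the Picard--Lefschetz formula identifies $N=\sum_e N_{\delta_e}$, the sum of the infinitesimal pro-$\ell$ Dehn twists about the vanishing cycles, and the hypothesis forces $N\equiv0$ modulo inner automorphisms. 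Because $\Graph$ is a tree, every $\delta_e$ is a \emph{separating} loop, so each $N_{\delta_e}$ acts trivially on $\H^1$, and even on the second graded piece of the lower central series --- this is precisely why $\H^1$ alone cannot detect these nodes --- yet $N_{\delta_e}$ is a genuinely nonzero outer operator, visible on the third graded piece (the pro-$\ell$ shadow of the nonvanishing of the second Johnson homomorphism on a separating Dehn twist). The principal difficulty, and the step I expect to require real work, is the \emph{non-cancellation} statement: that $\sum_e N_{\delta_e}$ is nonzero as soon as $\Graph$ has an edge. This is a positivity phenomenon --- on $\H^1$ it is the positive-definiteness of the monodromy pairing on the toric lattice, and on the deeper layers of the lower central series it is the refinement of that pairing analysed in this paper; it is the combinatorial core isolated by Theorem~\ref{thm:groupoid_oda}. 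Granting it, $\Graph$ has no edge, $Y$ is smooth, and the semistable model is a smooth model of $X$ over $\mathcal O_K$, so $X$ has good reduction.

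\emph{The affine case.} For $X$ affine one argues in the same way, with $\pi_1^{\et,\ell}(X_{\overline K},b)$ now a free pro-$\ell$ group and the semistable model replaced by a semistable model of the pair $(\overline X,D)$ consisting of the compactification and the cuspidal divisor. The vanishing cycles now also record the collisions of the cuspidal sections with one another and with the nodes, so $\Graph$ acquires half-edges; but the shape of the argument --- local monodromy theorem, Picard--Lefschetz, non-cancellation --- is unchanged, with the half-edge contributions entering the same positivity. This extra bookkeeping is the substance of Tamagawa's generalisation of Oda's theorem, and it is subsumed, together with the proper case, in Theorem~\ref{thm:groupoid_oda}.
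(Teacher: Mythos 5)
The paper does not prove this statement at all: it is quoted as background with citations to Oda and Tamagawa, so there is no in-paper argument to compare yours against, and your proposal has to stand on its own. Your forward direction (specialisation along a smooth model, tame $\pi_1$ in the affine case) is fine, and the first reductions of the converse are essentially right, though the step ``unramified on $\H^1$ $\Rightarrow$ semistable model over $\O_K$ with tree dual graph'' should be routed through N\'eron--Ogg--Shafarevich for $\Jac(X)$ plus Deligne--Mumford (good reduction of the Jacobian $\Rightarrow$ stable reduction of $X$ over $K$ itself, with toric rank $0$, hence tree), rather than ``Grothendieck's criterion'' applied directly to the curve.

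The genuine gap is that the entire content of the converse is the step you label ``Granting it'': that for a stable model with at least one (necessarily separating) node, the inertia action on the pro-$\ell$ fundamental group is non-trivial \emph{as an outer action}, i.e.\ the product of the pro-$\ell$ Dehn twists about the vanishing cycles is not inner, which one must detect in depth $3$ of the lower central series since it is invisible in depths $1$ and $2$. That non-vanishing, including the non-cancellation between the various $\delta_e$ and against inner automorphisms, \emph{is} Oda's theorem (and Tamagawa's extension), so as written you are assuming the thing to be proved. Your appeal to Theorem~\ref{thm:groupoid_oda} does not close this: that result is the relative statement about inertia-invariant paths between two rational points, and to extract the outer-action statement from it you would additionally need suitable points reducing to distinct components (available only after an unramified extension, which must be justified) and an argument converting ``no $I_K$-invariant path'' into ``the outer $I_K$-action is ramified'' --- invariance of the path-torsor class concerns the genuine action on the torsor, not merely the outer action on the group, so this translation is not formal. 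Finally, the ``positivity'' you invoke is, in the tree case, identically zero in weight $-2$ (that is exactly why $\H^1$ is blind here); the positivity that does the work lives in weight $-3$ (in the paper's relative setting this is the weight $-3$ injectivity step of Theorem~\ref{thm:combinatorial_injectivity}), and your sketch neither states nor proves the depth-$3$ computation (Oda's analogue of the second Johnson homomorphism calculation) that would supply it.
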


In fact, Oda proves a more refined version of this theorem. If we let $U_n$ denote the maximal $n$-step unipotent quotient of the $\Q_\ell$-pro-unipotent \'etale fundamental group of $X_{\overline K}$ -- i.e.\ the continuous $\Q_\ell$-Mal\u cev completion of $\pi_1^{\et,\ell}(X_{\overline K},b)$ -- then $X$ has good reduction if and only if the induced outer Galois action on $U_n$ is unramified, for any $n\geq3$.

\smallskip

The techniques in this paper also prove a relative analogue of the Oda--Tamagawa criterion for $\Q_\ell$-pro-unipotent \'etale path-torsors. Recall that to any two $K$-rational points $x,y\in X(K)$, one can associate a \emph{$\Q_\ell$-pro-unipotent torsor of \'etale paths} $\pi_1^{\Q_\ell}(X_{\overline K};x,y)$, which is a certain affine $\Q_\ell$-scheme (which is isomorphic to an infinite-dimensional affine space) endowed with an action of the Galois group $G_K$. There are two possibilities for the scheme $\pi_1^{\Q_\ell}(X_{\overline K};x,y)^{I_K}$ of inertia-invariant paths: it is either empty or a torsor under $\pi_1^{\Q_\ell}(X_{\overline K},x)^{I_K}$. Our main result is an Oda--Tamagawa-type criterion describing which of these occurs in terms of the reduction of $X$.

\begin{theorem}[Relative Oda--Tamagawa criterion for path-torsors]\label{thm:groupoid_oda}
Let $X$ be a smooth geometrically connected hyperbolic curve over $K$ and $x,y\in X(K)$ rational points. Let $\mathcal X/\O_L$ be a regular semistable model of $X$ over the ring of integers of a finite extension $L/K$ such that $x$ and $y$ extend to $\O_L$-integral points of $\mathcal X$. Then the following are equivalent:
\begin{itemize}
	\item the reductions of $x$ and $y$ lie on the same irreducible component of the special fibre of $\mathcal X$;
	\item there is an inertia-invariant $\Q_\ell$-pro-unipotent \'etale path from $x$ to $y$, i.e.\ $\pi_1^{\Q_\ell}(X_{\overline K};x,y)^{I_K}\neq\emptyset$;
	\item\label{thmpart:gal_invt_path} there is a Galois-invariant $\Q_\ell$-pro-unipotent \'etale path from $x$ to $y$, i.e.\ $\pi_1^{\Q_\ell}(X_{\overline K};x,y)^{G_K}\neq\emptyset$.
\end{itemize}

In the above, by a \emph{model}\index{model of a curve $\mathcal X$} $\mathcal X$ over $\O_L$, we mean the complement in a proper flat generically smooth $\O_L$-scheme $\overline{\mathcal X}$ of a divisor $\mathcal D$ which is \'etale over $\O_L$, along with an isomorphism $X_L\isoarrow(\overline{\mathcal X}\setminus\mathcal D)_L$. We say that $\mathcal X$ is \emph{regular} just when $\overline{\mathcal X}$ is a regular scheme, and we say that $\mathcal X$ is \emph{semistable} just when the special fibre of $\overline{\mathcal X}$ is a reduced normal crossings divisor, each of whose smooth geometric components meet the remaining geometric components and $\mathcal D$ in $\geq2$ points. Every hyperbolic curve $X/K$ has a regular semistable model over $\O_L$ for a finite extension $L/K$.
\end{theorem}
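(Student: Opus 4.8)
The plan is to prove the three conditions equivalent by separating a ``soft'' part --- the equivalence of the second and third bullets --- from the ``hard'' part, namely their equivalence with the first. The third bullet trivially implies the second. For the converse I would argue as follows: if $P:=\pi_1^{\Q_\ell}(X_{\overline K};x,y)^{I_K}$ is non-empty, then by the discussion preceding the theorem it is a torsor under $H:=\pi_1^{\Q_\ell}(X_{\overline K},x)^{I_K}$ carrying an action of $G_K/I_K\cong\widehat\Z$, topologically generated by Frobenius $\varphi$; and I claim $\H^1(\widehat\Z,H(\Q_\ell))=\{\ast\}$, so that $P$ has a $\varphi$-fixed $\Q_\ell$-point, which --- being also $I_K$-fixed --- is $G_K$-fixed, giving the third bullet. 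The claim rests on the one genuinely geometric input in this half of the argument: since $\ell\neq p$, the inertia acts on $\pi_1^{\Q_\ell}(X_{\overline K},x)$ through its tame quotient by $\exp(tN)$ for a nilpotent ``monodromy'' derivation $N$, so that $H$ corresponds to a subgroup of $\ker N$ in the Lie algebra; and a relative monodromy--weight statement for $\pi_1^{\Q_\ell}$ of semistable curves, in the spirit of Oda's theorem, shows $\ker N$ has strictly negative weights, i.e. $\gr^\W_0(\ker N)=0$. Hence each graded piece of $H$ for the weight (equivalently lower central) filtration is a finite-dimensional $\Q_\ell$-vector space on which $\varphi$ acts with eigenvalues of absolute value $<1$, so $\varphi-1$ is invertible there and $\H^1(\widehat\Z,-)$ vanishes on it; the usual pro-unipotent dévissage then gives $\H^1(\widehat\Z,H)=\{\ast\}$.

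To link these conditions to the first I would apply Theorem~\ref{thm:description_of_kummer} with basepoint $b:=y$, so $\classify(x)=[\pi_1^{\Q_\ell}(X_{\overline K};y,x)]$ and $\classify(y)=\ast$. If the first bullet holds --- which, by the bookkeeping noted below, amounts to $\red(x)=\red(y)$ in $\rGraph$ --- then $\langle\red(x)-\red(y),\mu_n\rangle=0$ for all $n$, so $\iota(\classify(x))=0$, hence $\classify(x)=\classify(y)$ in $\varinjlim_L\H^1(G_L,\pi_1^{\Q_\ell}(X_{\overline K},y))$ since $\iota$ is injective; thus the path-torsor $\pi_1^{\Q_\ell}(X_{\overline K};y,x)$ acquires a $G_L$-fixed point over some finite $L/K$, which we may take Galois. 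Then $\Gal(L/K)$ acts on the non-empty $\Q_\ell$-scheme $\pi_1^{\Q_\ell}(X_{\overline K};y,x)^{G_L}$, a torsor under the $\Q_\ell$-pro-unipotent group $\pi_1^{\Q_\ell}(X_{\overline K},y)^{G_L}$, whose $\Q_\ell$-points are uniquely divisible; as $\Gal(L/K)$ is finite, the obstruction in $\H^1(\Gal(L/K),\pi_1^{\Q_\ell}(X_{\overline K},y)^{G_L})$ to a fixed point vanishes by dévissage, so there is a $\Gal(L/K)$-fixed point --- a $G_K$-invariant path --- and the third bullet holds. Conversely, if the second bullet holds, then by the soft part so does the third, so $\classify(x)=\ast$, $\iota(\classify(x))=0$, and $\langle\red(x)-\red(y),\mu_n\rangle=0$ for all $n$ by Theorem~\ref{thm:description_of_kummer}; to conclude $\red(x)=\red(y)$ it then suffices to prove that the induced map $\QVert{\Graph}\to\V$, $z\mapsto(\langle z-z_0,\mu_n\rangle)_{n>0}$ --- well-defined by the remark after Theorem~\ref{thm:description_of_kummer} --- is injective.

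That injectivity is the crux, and the step I expect to be the main obstacle; it is also where hyperbolicity is indispensable (it fails, for instance, for an elliptic curve with multiplicative reduction). I would treat two cases for $z\neq z'$ in $\QVert{\Graph}$. If $z$ and $z'$ do not lie in the interior of a common edge, the depth-two measure --- in the projective case $\mu_2=\sum_{e\in\plEdge{\rGraph}}(e^*)^2\,\d s_e$, with analogous half-edge corrections in general --- should already separate them, since negative Euler characteristic forces $\rGraph$ to carry enough homology and/or half-edges to produce a cycle or half-edge functional pairing non-trivially against $z-z'$. If instead $z,z'$ lie in the interior of one edge $e$, then $\mu_2$ restricted to $e$ is a constant multiple of arc-length and $\langle z-z',\mu_2\rangle$ can vanish, so one must use the depth-three measure $\mu_3$, whose logarithmic poles at the ends of $e$ (or at neighbouring half-edges) detect the position along $e$; hyperbolicity again guarantees $\gr^\W_{-3}\V$ is rich enough. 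Verifying these two points against the explicit piecewise-polynomial measures of Construction~\ref{cons:measures} is the technical heart of the proof. Finally, the remaining bookkeeping is routine: a section of a regular semistable model meets the smooth locus of exactly one irreducible component, which corresponds precisely to the point $\red(\cdot)\in\rGraph$ of Definition~\ref{def:reduction_map} and is unchanged under enlarging $L$, so the first bullet is literally the equality $\red(x)=\red(y)$. (For ``first $\Rightarrow$ third'' one could alternatively bypass Theorem~\ref{thm:description_of_kummer} and build a canonical $G_K$-equivariant path directly, by specialising the pro-unipotent fundamental groupoid to the common component of the special fibre.)
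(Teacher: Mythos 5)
Your architecture matches the paper's: the equivalence of the second and third bullets is exactly the weight argument behind Corollary~\ref{cor:G-invt_is_I-invt} (your vanishing of $\H^1(\widehat\Z,\,\pi_1^{\Q_\ell}(X_{\overline K},x)^{I_K})$ is the same computation --- Frobenius eigenvalues on $\ker N$ are Weil numbers of strictly negative weight by weight--monodromy, cf.\ Lemma~\ref{lemma:sigma_map}, the only slip being that you call this a $\gr^\W_0$ statement when it is the Frobenius/$\M$-weight that matters); the implication ``first $\Rightarrow$ third'' works either your way, via Theorem~\ref{thm:description_of_kummer} plus the vanishing of $\H^1(\Gal(L/K),\cdot)$ for $\Q_\ell$-unipotent coefficients, or more directly, as the paper does in \S\ref{s:oda_reduction}, from the Picard--Lefschetz Theorem~\ref{thm:graph_comparison} together with Lemma~\ref{lem:finite_index}; and you correctly reduce ``second/third $\Rightarrow$ first'' to injectivity of the combinatorial Kummer map $\QVert{\Graph}\rightarrow\V$.

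That injectivity, however, is the actual content of the theorem --- in the paper it is Theorem~\ref{thm:combinatorial_injectivity}, occupying \S\ref{s:graph_ops}--\S\ref{s:injectivity} --- and you have not proved it; worse, the strategy you sketch for it fails as stated. Your case (a) claim, that $\mu_2$ already separates any two points not lying in the interior of a common edge, is false: for the $k$-banana graph with $k\geq3$ (the stable reduction graph of a totally degenerate curve of genus $k-1\geq2$), the two vertices are exchanged by the isometric involution reversing every edge, the quotient is a tree, and hence $\classifyleq2$ takes the same value at the two vertices (Proposition~\ref{prop:iota} and the corollary following it; this is the ``if'' direction of Theorem~\ref{thm:combinatorial_injectivity}). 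In general the fibres of $\classifyleq2$ are governed by such hyperelliptic-type involutions of $2$-connected components, which the paper pins down via the harmonic-analysis reformulation of Lemma~\ref{lem:harmonic_condition}, maximal cut systems, and an induction on Euler characteristic using the reduction operations of \S\ref{s:graph_ops} (Lemmas~\ref{lem:half-edge_reduction}, \ref{lem:resistance_reduction}, \ref{lem:block_reduction}). Consequently the weight-$3$ input is needed not only for two points interior to one edge but for all these symmetric configurations, and it is not the local ``log poles detect position along $e$'' statement you envisage: $\classifyeq3$ restricted to an edge is a cubic polynomial with explicit leading term (Proposition~\ref{prop:leading_coeffs}), and the paper's argument uses the involution to force $\classifyeq3$ to be odd, reduces by the operations above to two explicit graphs (the $3$-banana and the loop with one half-edge), and derives a contradiction by counting zeros of cubics. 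Without this analysis, or a genuine substitute for it, your proof of ``second $\Rightarrow$ first'', and hence of the theorem, is incomplete.
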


\begin{remark}
Akin to the more refined statement of Oda's criterion, we will also prove a finite level version of our relative Oda--Tamagawa criterion. If we let $P_n(x,y)$ denote the $\Q_\ell$-pro-unipotent \'etale torsor of paths in depth $n$ -- i.e.\ the pushout of $\pi_1^{\Q_\ell}(X_{\overline K};x,y)$ along the quotient map $\pi_1^{\Q_\ell}(X_{\overline K},x)\twoheadrightarrow U_n$ -- then the equivalent conditions of Theorem \ref{thm:groupoid_oda} are also equivalent to the condition $P_n(x,y)^{I_K}\neq\emptyset$, or that $P_n(x,y)^{G_K}\neq\emptyset$, for any $n\geq3$.

In terms of the \textit{non-abelian Kummer map} (whose definition is recalled in Section \ref{ss:application_CK}), condition~\eqref{thmpart:gal_invt_path} of Theorem \ref{thm:groupoid_oda} is equivalent to the class of $y$ in $\H^1(G_K,\pi_1^{\Q_\ell}(X_{\overline K},x))$ being trivial. The finite-level version of this stated in the previous paragraph is equivalent to the class of $y$ in $\H^1 (G_K ,U_3 )$ being trivial.

The equivalence of the second and third conditions in Theorem \ref{thm:groupoid_oda} is a special case of a Galois cohomological result (the general result is Corollary \ref{cor:G-invt_is_I-invt}).
\end{remark}

The Oda--Tamagawa criterion is best understood in the wider context of anabelian reconstruction theorems: results which aim to reconstruct certain properties of suitable varieties from the Galois action on their \'etale fundamental groups. Most famously, a theorem of Mochizuki assures us that a hyperbolic curve is already determined by its pro-$p$ fundamental group.

\begin{theorem*}[$\subseteq${\cite[Theorem A]{mochizuki}}]
Let $X$ and $X'$ be hyperbolic curves over $K$. Then the pro-$p$ \'etale fundamental groups $\pi_1^{\et,p}(X_{\overline K},x)$ and $\pi_1^{\et,p}(X'_{\overline K},x')$ are $G_K$-equivariantly isomorphic (for the outer Galois action) if and only if $X$ and $X'$ are isomorphic over $K$.
\end{theorem*}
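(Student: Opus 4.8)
The plan is as follows. One implication is formal: an isomorphism $X\isoarrow X'$ of curves over $K$ induces, by functoriality of $\pi_1^{\et,p}$ and the homotopy exact sequence \eqref{vapid}, a $G_K$-equivariant isomorphism of the geometric pro-$p$ fundamental groups for the outer action. For the converse, first observe that since the geometric group $\Delta_X:=\pi_1^{\et,p}(X_{\overline K},x)$ is center-free (it is either a pro-$p$ surface group of genus $\geq2$ or a free pro-$p$ group of rank $\geq2$), an extension of $G_K$ by $\Delta_X$ is determined up to isomorphism by its outer $G_K$-action; hence a $G_K$-equivariant outer isomorphism $\Delta_X\cong\Delta_{X'}$ is the same datum, up to conjugacy, as an isomorphism $\Pi_X\isoarrow\Pi_{X'}$ of the arithmetic pro-$p$ fundamental groups compatible with the projections to $G_K$, and conversely. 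The resulting map from $\Iso_K(X,X')$ to the set of such arithmetic isomorphisms modulo conjugacy is readily seen to be injective (a $K$-automorphism of a hyperbolic curve acting trivially on \'etale homology and on the cuspidal data must be the identity), so the substance of the theorem is surjectivity: from an abstract isomorphism $\Pi_X\isoarrow\Pi_{X'}$ over $G_K$ one must produce an isomorphism $X\cong X'$ over $K$. The strategy is to do this by reconstructing $X$ itself, functorially, from the profinite datum $\Pi_X\to G_K$.

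\textbf{Step 1 (Hodge-theoretic data).} From $\Delta_X$ one reads off $\Delta_X^{\ab}$, hence the $G_K$-representation $\H^1_{\et}(X_{\overline K},\Q_p)$; separating its weight filtration --- the weight-$1$ quotient being $\H^1$ of the smooth compactification $\overline X$, the weight-$2$ part carrying residue data at the cusps, with the monodromy behaviour at bad primes governed as in Theorem~\ref{thm:groupoid_oda} --- recovers $\H^1_{\et}(\overline X_{\overline K},\Q_p)$ and Galois-equivariant cuspidal data. By Faltings' $p$-adic Hodge theory these representations are de Rham, so applying $D_{\dR}$ produces $\H^1_{\dR}(\overline X/K)$ with its Hodge filtration $\H^0(\overline X,\Omega^1_{\overline X})\subseteq\H^1_{\dR}(\overline X/K)$, together with $\H^0(\overline X,\Omega^1_{\overline X}(\text{cusps}))$ and the residue maps. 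Pushing the same comparison through the lower central series of $\Delta_X$ in fact reconstructs the pro-unipotent de Rham fundamental group of $X$ with its Hodge filtration; in particular one recovers the differentials $\omega_X=\H^0(\overline X,\Omega^1_{\overline X})$ and the cup-product/bracket structure relating them to $\H^1_{\dR}$.

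\textbf{Step 2 (reconstructing the curve).} From $\H^1_{\dR}(\overline X/K)$ and its Hodge filtration one builds, functorially, the universal vectorial extension $E^\natural$ of the Jacobian of $\overline X$; the curve $\overline X$ admits a canonical morphism into a torsor under $E^\natural$, and its image is cut out using the Hodge filtration. The cusps are then recovered as a Galois-stable finite subset via the cuspidal decomposition groups inside $\Pi_X$, so that $X=\overline X\setminus\{\text{cusps}\}$ is reconstructed. The crucial point one must establish is that this whole picture is already visible in $\Pi_X$: the outer $G_K$-action on $\Delta_X$ should determine, via an analysis of the Hodge--Tate decomposition of $\Delta_X^{\ab}\otimes\C_p$ and of its ``internal structure'' (in the spirit of Mochizuki's argument), a group-theoretic avatar of $\omega_X$ and of the relevant $E^\natural$-torsor; feeding in the non-abelian information carried by $\Pi_X$ --- essentially the coordinate ring of $X$ --- should then pin down the subvariety $\overline X$, hence $X$. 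Finally one verifies that the construction is Galois-equivariant, so that it descends from $\overline K$ to produce $X$ as a $K$-scheme.

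\textbf{Main obstacle.} Granting the $p$-adic comparison theorems, Step 1 is essentially linear algebra; the genuinely hard part is Step 2 --- showing that the pro-$p$ arithmetic fundamental group controls the $p$-adic analytic geometry of $X$ finely enough to recover the curve, i.e.\ that the Hodge--Tate decomposition attached to $\Delta_X^{\ab}$, together with its compatibility with the curve structure, is itself reconstructible from $\Pi_X$ and rigid enough to be uniquely determined. This is exactly where the $G_K$-equivariance hypothesis is indispensable: over $\overline K$ the group $\Delta_X$ depends only on the type $(g,n)$ of $X$, so no reconstruction is possible without the arithmetic action. Reconstructing the cuspidal decomposition groups in the affine case and carrying out the Galois descent from $\overline K$ to $K$ are further technical layers on top of this core.
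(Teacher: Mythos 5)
First, a point of framing: the paper does not prove this statement at all --- it is quoted as background from Mochizuki (\cite[Theorem A]{mochizuki}), so there is no ``paper's proof'' to compare against; what has to be judged is whether your sketch would itself constitute a proof. It does not. Your Step 2 is precisely the entire content of the theorem, and you leave it as something that ``should'' work (``should then pin down the subvariety $\overline X$'', ``the crucial point one must establish''), i.e.\ the hard core is named as an obstacle rather than overcome. The formal reductions you do carry out (centre-freeness of the geometric pro-$p$ group so that outer $G_K$-isomorphisms correspond to isomorphisms of arithmetic fundamental groups over $G_K$; injectivity of $\Iso_K(X,X')$ into such isomorphisms) are standard and fine, but they only set the stage.

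Moreover, the specific route you propose for Step 2 cannot work as stated. From $\Delta_X^{\ab}$ with its $G_K$-action and $D_{\dR}$ you can indeed extract the filtered vector space $\H^1_{\dR}(\overline X/K)$, but you cannot functorially recover the Jacobian (hence not its universal vectorial extension, nor any torsor under it): over a $p$-adic local field the Tate-conjecture-type statement fails, i.e.\ $\Hom_{G_K}(T_pJ,T_pJ')$ is in general strictly larger than $\Hom(J,J')\otimes\Z_p$, so Galois-equivariant isomorphisms of the abelianized data need not come from geometry, and an abstract filtered vector space does not determine an abelian variety. Likewise there is no recipe that ``cuts out'' the Abel--Jacobi image of $\overline X$ from the Hodge filtration alone; the abelian-level invariants are insensitive to which curve in the Torelli locus one has, and locating the curve requires genuinely non-abelian input. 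This is exactly why Mochizuki's argument is of a different nature: he works with the full (geometrically pro-$p$) arithmetic fundamental group, passes to the function field of one curve and shows, via Faltings' $p$-adic Hodge theory applied to sections satisfying a Hodge--Tate-type condition, that such sections arise from rational points, thereby producing the morphism of curves directly rather than reconstructing $\overline X$ inside an ambient group variety. So the proposal has a genuine gap: the mechanism by which $\Pi_X\rightarrow G_K$ determines the curve is neither supplied nor correctly identified.
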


\begin{remark}
Mochizuki's Theorem is much more general than the version we present above. Most notably, $K$ can be taken to be any field contained in a finitely generated extension of $\Q_p$. In particular, taking $K$ to be a finitely generated extension of $\Q$, Mochizuki's Theorem resolves the anabelian conjecture of Grothendieck on determination of hyperbolic curves by their fundamental groups.
\end{remark}

In contrast to Mochizuki's Theorem, there is no hope of recovering a hyperbolic curve $X/K$ from the Galois action on its pro-$\ell$ or $\Q_\ell$-pro-unipotent fundamental group, since these vary locally constantly in algebraic families (or see Remark \ref{rmk:no_pro-l_reconstruction} for a specific example). Nonetheless, the Oda--Tamagawa criterion suggests that the pro-$\ell$ fundamental group should at least recover some information about the reduction type of $X$.

This is exactly the content of our second main theorem -- a consequence of the first -- which asserts that one can recover the stable reduction type of a hyperbolic curve from its $\Q_\ell$-pro-unipotent fundamental groupoid. In order to make the notion of stable reduction types precise, we record the following standard definition, which will be used throughout this paper.

\begin{definition}[Stable reduction graphs]\label{def:reduction_graphs_of_curves}
Let $X/K$ be a smooth geometrically connected hyperbolic curve, choose a regular semistable model $\mathcal X=\overline{\mathcal X}\setminus\mathcal D$ of $X$ over the ring of integers $\O_L$ of a finite extension $L/K$, and write $\mathcal X_{\overline k}=\overline{\mathcal X}_{\overline k}\setminus\mathcal D_{\overline k}$ for the geometric special fibre of $\mathcal X$. The \emph{stable reduction graph} $\rGraph=\rGraph(X)$ of $X$ is defined to be the dual graph of $\mathcal X_{\overline k}$, i.e.\ the graph consisting of:
\begin{itemize}
	\item a vertex for each irreducible component of $\overline{\mathcal X}_{\overline k}$;
	\item an (unoriented) edge for each singular point of $\overline{\mathcal X}_{\overline k}$, whose endpoints are the two irreducible components containing it (these two components may coincide, in which case the edge is a loop); and
	\item a `half-edge' for each point of $\mathcal D_{\overline k}$, whose endpoint is the irreducible component containing it.
\end{itemize}
We regard $\rGraph$ as a metric graph by assigning each finite edge a length $\frac1{e(L/K)}$ and each half-edge an infinite length, and endow it with a \emph{genus function} $g\colon\Vert{\rGraph}\rightarrow\N_0$ assigning to each vertex the geometric genus of the corresponding irreducible component. Up to subdivision of edges and half-edges, the reduction graph $\rGraph$ is canonically independent of the choice of model $\mathcal X/\O_L$.
\end{definition}

We will recover the stable reduction graph of our hyperbolic curve $X$ not from its $\Q_\ell$-pro-unipotent \'etale fundamental group $\pi_1^{\Q_\ell}(X_{\overline K},b)$ but its $\Q_\ell$-pro-unipotent \'etale fundamental group\emph{oid}\index{fundamental groupoid!\dots of a curve $\pi_1^{\Q_\ell}(X_{\overline K})$} $\pi_1^{\Q_\ell}(X_{\overline K})$; that is, the structure comprised of the path-spaces $\pi_1^{\Q_\ell}(X_{\overline K};x,y)$ for points $x,y\in X(\overline K)$ together with the path-composition
\[\pi_1^{\Q_\ell}(X_{\overline K};y,z)\times\pi_1^{\Q_\ell}(X_{\overline K};x,y)\rightarrow\pi_1^{\Q_\ell}(X_{\overline K};x,z)\]
and path-reversal maps
\[\pi_1^{\Q_\ell}(X_{\overline K};x,y)\longisoarrow\pi_1^{\Q_\ell}(X_{\overline K};y,x)\]
for $x,y,z\in X(\overline K)$. All of these path-spaces are affine $\Q_\ell$-schemes (and the structure maps are morphisms thereof), and there is an action of $G_K$ on $\pi_1^{\Q_\ell}(X_{\overline K})$ extending the action on $X(\overline K)$ which is continuous in the sense that an open subgroup of $G_K$ acts continuously on the $\Q_\ell$-points of $\pi_1^{\Q_\ell}(X_{\overline K};x,y)$ for every $x,y\in X(\overline K)$. Our second theorem asserts that the Galois action on this fundamental groupoid is enough to recover its stable reduction type.

\begin{theorem}[Anabelian reconstruction of the reduction graph]\label{thm:anabelian_reconstruction}
Let $X$ and $X'$ be two smooth geometrically connected hyperbolic curves over $K$. If the $\Q_\ell$-pro-unipotent \'etale fundamental groupoids $\pi_1^{\Q_\ell}(X_{\overline K})$ and $\pi_1^{\Q_\ell}(X'_{\overline K})$ are $G_K$-equivariantly isomorphic, then the stable reduction graphs $\rGraph(X)$ and $\rGraph(X')$ are isometric compatibly with the genus function.
\end{theorem}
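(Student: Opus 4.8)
The plan is to combine the relative Oda--Tamagawa criterion (Theorem~\ref{thm:groupoid_oda}) with the explicit computation of the non-abelian Kummer map (Theorem~\ref{thm:description_of_kummer}). First I would unwind what a $G_K$-equivariant isomorphism $\pi_1^{\Q_\ell}(X_{\overline K})\cong\pi_1^{\Q_\ell}(X'_{\overline K})$ of fundamental groupoids supplies: in particular a $G_K$-equivariant bijection $\sigma\colon X(\overline K)\xrightarrow{\sim}X'(\overline K)$ on objects, together with $G_K$-equivariant isomorphisms of $\Q_\ell$-schemes $\pi_1^{\Q_\ell}(X_{\overline K};x,y)\cong\pi_1^{\Q_\ell}(X'_{\overline K};\sigma x,\sigma y)$ compatible with path-composition and path-reversal. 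Restricting to a finite extension $L/K$, the bijection $\sigma$ carries $X(L)$ onto $X'(L)$, the path-torsor isomorphisms become $G_L$-equivariant, and $\sigma$ intertwines the non-abelian Kummer maps of $X$ and $X'$ based at any chosen geometric point $b$ and its image $b':=\sigma(b)$.

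Next I would convert this into a statement about reduction maps. Given $x,y\in X(L)$, enlarge $L$ so that $X_L$ and $X'_L$ admit regular semistable models to which $x$, $y$, $\sigma x$, $\sigma y$ all extend. By Theorem~\ref{thm:groupoid_oda}, the fixed-point scheme $\pi_1^{\Q_\ell}(X_{\overline K};x,y)^{G_L}$ is nonempty if and only if $\red_X(x)=\red_X(y)$ in $\QVert{\Graph(X)}$, and likewise for $X'$. Since $\sigma$ matches up these fixed-point schemes, we get $\red_X(x)=\red_X(y)\iff\red_{X'}(\sigma x)=\red_{X'}(\sigma y)$; as the reduction maps are surjective onto $\QVert{\Graph(X)}$ and $\QVert{\Graph(X')}$, the bijection $\sigma$ descends to a bijection $\bar\sigma\colon\QVert{\Graph(X)}\xrightarrow{\sim}\QVert{\Graph(X')}$ satisfying $\bar\sigma\circ\red_X=\red_{X'}\circ\sigma$.

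It then remains to promote $\bar\sigma$ to a genus-compatible isometry of the full metric graphs. Here I would feed in Theorem~\ref{thm:description_of_kummer}: the induced isomorphism $\pi_1^{\Q_\ell}(X_{\overline K},b)\cong\pi_1^{\Q_\ell}(X'_{\overline K},b')$ of vertex groups yields a $\Q_\ell$-linear isomorphism $\varinjlim_L\H^1(G_L,\pi_1^{\Q_\ell}(X_{\overline K},b))\cong\varinjlim_L\H^1(G_L,\pi_1^{\Q_\ell}(X'_{\overline K},b'))$ intertwining the two Kummer maps, hence, via the injections $\iota$ of Theorem~\ref{thm:description_of_kummer}, intertwining the piecewise-polynomial systems $x\mapsto(\langle\red(x)-\red(b),\mu_n\rangle)_n$. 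The task is now to show that the reduction map together with this harmonic-analytic data determines the pair $(\rGraph,g)$: the low-depth part should recover the metric graph itself (as illustrated by the totally toric example above, where $\mu_2$ encodes the edges and their lengths), while the weight filtration on the vertex group recovers the total genus, whose distribution over the vertices is then pinned down using the groupoid structure in depth $\geq 3$. Granting this reconstruction, $\bar\sigma$ is a genus-compatible isometry of $\QVert{\Graph(X)}$ onto $\QVert{\Graph(X')}$ and so extends by density to the required isometry $\rGraph(X)\cong\rGraph(X')$, after the harmless rescaling of edge lengths coming from the auxiliary extensions.

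The main obstacle is precisely this last reconstruction step. Theorem~\ref{thm:groupoid_oda} on its own recovers only the fibres of the reduction map, which carry no metric information whatsoever, so one genuinely has to exploit the finer structure of the path-torsors — through the explicit Kummer map of Theorem~\ref{thm:description_of_kummer}, or equivalently through a length-sensitive sharpening of the monodromy analysis behind Theorem~\ref{thm:groupoid_oda} — in order to see the edge lengths and to locate the genus function vertex by vertex. A secondary point is to verify that the various identifications do not depend on the auxiliary basepoints, models and extensions, so that the resulting isometry genuinely lives at the level of the stable reduction graphs.
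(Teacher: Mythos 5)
Your overall strategy is the paper's: a $G_K$-equivariant isomorphism of groupoids intertwines the non-abelian Kummer maps, the relative Oda--Tamagawa criterion identifies the fibres of $\classify$ with the fibres of $\red$, and one then tries to read off $(\rGraph,\length,g)$ from the image of $\classify$. Your Step 2 (the induced bijection $\bar\sigma\colon\QVert{\Graph(X)}\isoarrow\QVert{\Graph(X')}$ compatible with the reduction maps) is correct and is essentially the content of the paper's first reconstruction proposition, where the graph is recovered as the closure of the image of $\classify$, using that $\classify$ is a closed immersion on the underlying metric space (the full Injectivity Theorem~\ref{thm:combinatorial_injectivity}, not just depth~$2$ --- note that $\classifyeq2$ has genuine failures of injectivity coming from involutions of $2$-connected components, so the ``low-depth part'' alone does not even see the graph faithfully).

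The genuine gap is exactly the step you defer with ``granting this reconstruction'': recovering the \emph{metric} and the \emph{genus function} is the content of the theorem beyond matching reduction fibres, and your proposal supplies no mechanism for either. For the metric, the image of $\classify$ sits in $\V_{\Q_\ell}$, which carries no canonical metric determined by the groupoid, and the measures $\mu_n$ are defined on $\rGraph$ itself, i.e.\ on the object being reconstructed --- so saying that $\mu_2$ ``encodes the edges and their lengths'' is circular as stated. The paper recovers $\length(e)$ by a separate arithmetic count: $\length(e)=\lim_{L/K}\#\bigl(\classify(\ob(\pi_1^{\Q_\ell}(X_{\overline K}))^{G_L})\cap e\bigr)/e(L/K)$, i.e.\ by counting Galois-fixed objects over finite extensions reducing into the edge, an ingredient absent from your sketch. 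For the genus, ``the weight filtration on the vertex group recovers the total genus, pinned down in depth $\geq3$'' is not an argument: the paper's recipe is $g(v)=\tfrac12\dim_{\Q_\ell}\bigl((\gr^\M_{-1}\O(\pi_1^{\Q_\ell}(X_{\overline K},x))^\dual)^{N=0}\bigr)$ for $x$ reducing to $v$, and proving this formula is delicate --- one must show the $N$-invariants of $\gr^\M_{-1}$ at basepoint $x$ receive no contribution from positive-genus vertices $u\neq v$, which the paper does via an $\Sp_{2g(u)}(\Z)$ (mapping class group) equivariance argument combined once more with the Injectivity Theorem. Without these two reconstructions your $\bar\sigma$ is only a bijection of rational points compatible with reduction, not an isometry compatible with the genus function, so the proof is incomplete at its central point.
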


\begin{remark}
An alternative version of this reconstruction theorem is proved by Mochizuki in \cite[Corollary 3.11]{anabelioids}, which implies that the stable reduction graph of $X$ (without its metric) is determined by its geometric tempered fundamental group. Our reconstruction result is somewhat orthogonal to Mochizuki's in that while we work with a coarser version of the fundamental group, we are very much concerned with its Galois action and groupoid structure.

Another partial reconstruction result along these lines appears in \cite[Theorem 1.1]{AMO}, which shows that many graph-theoretic invariants of $\rGraph$ are determined from the outer mapping class group action on the pro-$\ell$ fundamental group of a totally degenerate curve.
\end{remark}

\begin{remark}\label{rmk:no_pro-l_reconstruction}
Somewhat surprisingly, Theorem \ref{thm:anabelian_reconstruction} fails if we replace the $\Q_\ell$-pro-unipotent fundamental groupoid with the $\Q_\ell$-pro-unipotent or even pro-$\ell$ fundamental group, say based at rational points $b\in X(K)$ and $b'\in X'(K)$. To sketch the argument, suppose that $X$ is a once-punctured Tate elliptic curve $(\G_m/q^\Z)\setminus\{1\}$ and that the rational basepoint $b$ corresponds to a point in $\O_K^\times$ not in the residue disc of $1$. It then follows from the calculations in \S \ref{s:oda_reduction} that the pro-$\ell$ \'etale fundamental group of $(X_{\overline K},b)$ is free on two generators $x,y$, with inertia acting via $\sigma(x)=x$ and $\sigma(y)=yx^{v_K(q)\cdot t_\ell(\sigma)}$ where $t_\ell\colon I_K\rightarrow\Z_\ell^\times$ is the tame character.

In particular, the pro-$\ell$ fundamental groups of two such curves with parameters $q$ and $q^p$ are $I_K$-equivariantly isomorphic, via the isomorphism $(x,y)\mapsto(x^p,y)$. However, their reduction graphs are easily seen to be non-isometric.
\end{remark}

\begin{remark}\label{rmk:mono-anabelian}
In the proof of Theorem \ref{thm:anabelian_reconstruction} in \S\ref{s:reconstruction}, we will in fact describe an explicit recipe to read off the stable reduction graph of a hyperbolic curve $X$ from the $G_K$-action on its fundamental groupoid. In the language of \cite{mono-anabelian}, Theorem \ref{thm:anabelian_reconstruction} is a mono-anabelian reconstruction result.
\end{remark}

\subsection{Overview of proofs and sections}

The bulk of this paper will be devoted to proving Theorem \ref{thm:description_of_kummer}. We begin in Section \ref{s:galois_cohomology} by recalling some facts about continuous Galois cohomology of $\Q_\ell$-pro-unipotent groups $U$. Although much of this is standard, we already begin to see some new structure in Lemma \ref{lem:description_of_cohomology_basic}, where we show that $\H^1(G_K,U)$ admits a description as a graded $\Q_\ell$-vector space rather than just an affine space.

Section \ref{s:oda_reduction} then applies deformation techniques of \cite{oda,AMO} to explicitly describe the $\Q_\ell$-pro-unipotent fundamental groupoid of a hyperbolic curve $X$ as the $\Q_\ell$-Mal\u cev completion of a certain (discrete) fundamental groupoid $\pi_1(\rGraph)$ associated to its reduction graph. This section contains the majority of the number-theoretic content of the paper, and effectively reduces our main problems to ones in combinatorics.

The proofs of these combinatorial analogues are then accomplished in the following sections. In Section \ref{s:M-trivialisation}, we set up our general framework for tackling such problems, by giving an explicit and basepoint-independent description of the $\M$-graded fundamental groupoid of $\rGraph$, for a certain filtration $\M_\bullet$. The key ingredient in the proof is a certain \emph{canonical} choice of $\Q$-pro-unipotent path $\gamma_{x,y}^\can$ between any two points of a rationally metrised graph $\Graph$, use of which ensures that our constructions are basepoint-independent. The canonical paths $\gamma_{x,y}^\can$ owe their origin to the theory of iterated integration on graphs due to Cheng and Katz; this theory is reviewed in Section \ref{s:cheng-katz}.

The proof of Theorem \ref{thm:description_of_kummer} is then accomplished in Sections \ref{s:n-a_kummer_for_graphs}, \ref{s:harmonic_analysis} and \ref{s:computation}. The short Section \ref{s:n-a_kummer_for_graphs} uses the description of the $\M$-graded fundamental groupoid from Section \ref{s:M-trivialisation} to give a reinterpretation of the non-abelian Kummer map of a curve in terms of the action of a certain monodromy operator on the canonical paths $\gamma_{x,y}^\can$. This purely combinatorially defined map is then studied in Section \ref{s:computation}, where it is shown to satisfy a certain second-order ordinary differential equation. This allows us to compute the graph-theoretic Laplacian of the non-abelian Kummer map, and hence write it as a height pairing against a certain measure. The relevant aspects of harmonic analysis on graphs are recalled in Section \ref{s:harmonic_analysis}.

Sections \ref{s:graph_ops} and \ref{s:injectivity} are concerned with the proof of our relative Oda--Tamagawa criterion (Theorem \ref{thm:groupoid_oda}), by now rephrased as an injectivity result for the graph-theoretic non-abelian Kummer map. The proof proceeds in two steps. Firstly, one uses the explicit form of the measure $\mu_2$ to show that the map given by the height pairing against the measure $\mu_2$ is almost injective, with the only failures of injectivity arising from certain symmetries of the $2$-connected components of the reduction graph. Exploiting these symmetries, one then is able to show that the maps given by height pairing against the measures $\mu_2$ and $\mu_3$ are jointly injective. Many of the proofs in Section \ref{s:injectivity} utilise certain simplification operations on reduction graphs inspired by the theory of electrical circuits, and these operations are studied in Section \ref{s:graph_ops}.

The proof of the anabelian reconstruction theorem (Theorem \ref{thm:anabelian_reconstruction}) is undertaken in Section \ref{s:reconstruction}. The key observation here is that our relative Oda--Tamagawa criterion ensures that the non-abelian Kummer map of a hyperbolic curve $X$ embeds its reduction graph inside an affine space, and hence we can recover the graph as the image of this map. The recovery of the metric and genus function are then straightforward from this.

In the final Section \ref{s:examples}, we survey several applications of these techniques to explicit quadratic Chabauty. Some other applications to explicit quadratic Chabauty are given elsewhere in the paper: in particular the description of the non-abelian Kummer map for certain genus two curves given in Lemma \ref{lemma:used_in_BD} is used in \cite{BD} to implement the quadratic Chabauty method in explicit examples, and the bound on the size of the image of the non-abelian Kummer map given in the first part of Proposition \ref{prop:kummer_bounds} is used in \cite{BDeff} to bound the size of $X(\Q )$ for certain hyperbolic curves. As explained in Section \ref{s:examples}, Theorem \ref{thm:description_of_graph_kummer} can be used to improve on these bounds, giving results which seem to be more amenable to proving \textit{uniform} effective Chabauty--Kim results analogous to \cite{KRZB}.

We also include a lengthy Appendix \ref{appx:hopf_gpds} of background material on affine groupoid-schemes, pro-nilpotent Lie algebras and completed Hopf groupoids, as well as their derivations and filtrations. Despite being well-known to experts, the precise results we need are not available in the literature to the best of our knowledge, so we include them in the appendix for completeness.

\subsection*{Notation and conventions}\label{ss:conventions}

Throughout the paper, $K$ will denote a finite extension of $\Q_p$, with ring of integers and residue field $\O_K$ and $k$ respectively. We write $q=\# k$ for the residue cardinality, and fix a choice of uniformiser $\varpi$\index{uniformiser $\varpi$} of $K$. We also fix a prime $\ell$ different from $p$.

We fix an algebraic closure $\overline K/K$ and write $G_K=\Gal(\overline K|K)$\index{Galois group $G_K$} for the absolute Galois group of $K$ and $I_K\unlhd G_K$\index{inertia\dots!\dots group $I_K$} for its inertia subgroup. We fix a choice of geometric Frobenius $\varphi\in G_K$\index{Frobenius $\varphi$} and an inertia element $\sigma$\index{inertia\dots!\dots generator $\sigma$} such that $t_\ell(\sigma)$ is a generator of $\Z_\ell(1)$, where $t_\ell\colon I_K\twoheadrightarrow\Z_\ell(1)$\index{tame character $t_\ell$} denotes the tame character. Thus $\sigma$ is a generator of the maximal pro-$\ell$ quotient of $I_K$.

Curves, usually denoted $X$\index{curve $X$}, will always be assumed smooth and geometrically connected, but not necessarily projective; families of curves will be generically smooth with connected geometric fibres. The smooth completion of a curve $X$ will be denoted $\overline X$, and $D=\overline X\setminus X$ will denote the complementary divisor. A curve is said to be hyperbolic if its Euler characteristic is less than 0.

We strongly advise readers on a first reading to assume that every hyperbolic curve $X/K$ is projective and has semistable reduction, and that its reduction graph $\rGraph$ has no half-edges, has genus $0$ at every vertex, and is $2$-connected. This simplifies many of the constructions developed throughout the paper.

\smallskip

An index of notation is included for ease of reference.

\subsection*{Acknowledgements}

We are extremely grateful to Minhyong Kim for initiating this collaboration, and for several helpful suggestions. We would also like to thank David Holmes, Steffen M\"uller and Matthew Baker for helpful discussions during the preparation of this paper. Part of this research was carried out at the workshop ``Arithmetic of Hyperelliptic Curves" in Baskerville Hall.
\section{Galois cohomology of local fields}\label{s:galois_cohomology}

In this section we recall the results about non-abelian Galois cohomology of local fields which will be needed later. 
In \S \ref{ss:gal_coh_field}, we recall foundational results about the Galois cohomology of $\ell $-adic representations of $p$-adic fields (when $p\neq \ell $) focusing on the case of representations satisfying a suitable weight--monodromy property. In \S\ref{ss:can_rep}, we describe how, if $U$ is a $\Q_\ell$-unipotent group with a continuous action of the Galois group of a local field $K$ and $U$ satisfies a weight--monodromy condition (and some other assumptions) we have an explicit identification of $\H^1(G_K,U)$ with a certain subquotient of $\Lie (U)$. In \S\ref{ss:nonab_Kummer}, we recall the definition of the non-abelian Kummer map from $X(K)$ to a non-abelian cohomology variety and use the previous section to identify it with a certain sub-quotient of the Lie algebra of $U$. Roughly speaking, we show that associated to each point in $X(K)$ there is a `canonical cocycle' representing its image in cohomology. 

We adopt the following group cohomological conventions. We always work with continuous group cohomology. The topology can always be understood from context: the only groups which arise are finitely presented (discrete) groups, profinite groups, or the $R$-points of unipotent groups for $R$ a $\Q_\ell$-algebra. Unless otherwise stated, when $U$ is a unipotent group over $\Q_\ell$ with a continuous action of a profinite group, the non-abelian cohomology set $\H ^1(G,U)$ will refer to $\H^1(G,U(\Q_\ell))$. 
When we say that $G$ acts continuously on a unipotent group over $\Q_\ell$, we mean the representation on $\Lie(U)$ is a continuous $\ell$-adic representation.

\subsection{Weight filtrations, monodromy filtrations and Galois cohomology of local fields}\label{ss:gal_coh_field}

Let $V$ be a $\Q_\ell$-vector space with a continuous action of $G_K$. Grothendieck's Monodromy Theorem \cite[Appendix]{serre-tate} ensures that a positive power $\sigma^e$ of $\sigma$ acts unipotently on $V$, and the \emph{monodromy operator} $N$\index{monodromy operator $N$} is defined to be the nilpotent endomorphism of $V$ given by $N:=\frac1e\log(\sigma^e)$. The \index{filtration!monodromy $\M_\bullet$}\emph{monodromy filtration} $\M_\bullet V$ is defined by setting $\M_iV_{\overline\Q_\ell}$ to be the span of the generalised Frobenius eigenspaces whose corresponding eigenvalues are $q$-Weil numbers of weight $\leq i$. This filtration is separated, increasing, $G_K$-stable, and satisfies $N(\M_iV)\leq\M_{i-2}V$ for all $i$.

We say that a $G_K$-representation $V$ with an exhaustive separated $G_K$-stable increasing filtration \index{filtration!weight $\W_\bullet$}$\W_\bullet V$ satisfies property \index{weight--monodromy \WM}\WM if:
\begin{enumerate}
\item $\W _{-1}V=V$;
\item $\M _0 V=V$; and
\item for all $k\in\Z$ and $i>0$, the maps
\[
N^i\colon\gr^\M_{k+i}\gr^\W_k V \longisoarrow \gr^\M_{k-i}\gr^\W_kV
\]are isomorphisms.
\end{enumerate} 
We sometimes simply say that $V$ is a $G_K$-representation satisfying \WM, leaving the $\W$-filtration implicit.

If $U$ is a unipotent group over $\Q_\ell$ with a continuous action of $G_K$, we define subgroup-schemes $\M_{-i}U\subseteq U$ for $i\geq0$ by
\[
\M_{-i}U:=\exp\left(\M_{-i}\Lie(U)\right).
\]
The fact that these are subgroups follows from the Baker--Campbell--Hausdorff formula, and each $\M_{-i}U$ is normal in $\M_0U$. In particular, each $\M_{-i}U$ is normal in $\M_0U$. If $U$ carries an exhaustive separated $G_K$-stable increasing filtration\footnote{Here a \emph{filtration} on a group consists of an increasing sequence of subgroups satisfying $[\W_iU,\W_jU]\leq\W_{i+j}U$ for all $i,j$. The collection of subgroups $\M_{-i}U$ also forms a filtration in this sense, indexed by non-positive integers.} $\W_\bullet U$, we say that it satisfies \WM if and only if $\Lie(U)$ does.

The class of filtered unipotent groups with continuous $G_K$-actions satisfying \WM is closed under products, kernels and cokernels (of filtered $G_K$-equivariant homomorphisms), $\W$-strict extensions, and twists, as well as tensor products of abelian unipotent groups (i.e.\ vector spaces). If $U$ satisfies \WM then so does each $\W_{-i}U$, as well as each term of the descending and ascending central series.

\smallskip

The following lemma is proved in \cite{serre} in the case where $U$ has a discrete $G$-action, but the proof also applies in the continuous case.
\begin{lemma}[\cite{serre}, I.5.7 Proposition 43, 1.5.4 Corollary 2]\label{lem:7term}
\leavevmode
\begin{enumerate}
\item\label{lempart:6term}
If $G$ is a topological group and 
\[
1\to Z \to U \to U/Z\to 1
\]
is a extension of unipotent groups with continuous $G$-action, then there is an exact sequence of pointed sets
\[
1 \!\rightarrow\! \H^0(G,Z) \!\rightarrow\! \H^0(G,U) \!\rightarrow\! \H^0(G,U/Z) \!\actsarrow\! \H^1(G,Z) \!\rightarrow\! \H^1(G,U) \!\rightarrow\! \H^1(G,U/Z).
\]
\item\label{lempart:7term}
If in the above $Z$ is central in $U$ then this extends to an exact sequence
\[
\scalebox{0.94}{$\displaystyle{
1 \!\rightarrow\! \H^0\!(G,Z) \!\rightarrow\! \H^0\!(G,U) \!\rightarrow\! \H^0\!(G,U/Z) \!\rightarrow\! \H^1\!(G,Z) \!\actsarrow\! \H^1\!(G,U) \!\rightarrow\! \H^1\!(G,U/Z) \!\rightarrow\! \H^2\!(G,Z).
}$}
\]
\end{enumerate}
(Here, the decorated arrow $\actsarrow$ serves as a mnemonic for the following extra properties of these exact sequences: the terms to the left of this arrow form an exact sequence of groups, and there is a right action of the group on the left of the $\actsarrow$ on the set on the right whose orbits are the the fibres of the next map on the right and whose point-stabiliser is the image of the previous map on the left.)
\end{lemma}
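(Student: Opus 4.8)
The statement is the non-abelian analogue of the long exact cohomology sequence for an extension of groups with continuous $G$-action, and the reference is to Serre's \emph{Galois Cohomology}, I.5.7 Prop.~43 and I.5.4 Cor.~2, which treat the case of a discrete $G$-action. The plan is to observe that all of Serre's arguments are formal manipulations with cocycles valued in $U(\Q_\ell)$, $Z(\Q_\ell)$, $(U/Z)(\Q_\ell)$, and that continuity of the cochains involved is preserved by every construction, so the discrete proof transports verbatim. I would therefore structure the proof as a checklist rather than a rederivation.

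First, for part~\eqref{lempart:6term}: I would recall the definitions of the connecting map and of the twisting/action $\actsarrow$. Given $\bar u\in\H^0(G,U/Z)$, lift it to $u\in U(\Q_\ell)$ (not $G$-invariant in general); then $\sigma\mapsto u^{-1}\,{}^\sigma u$ is a continuous $1$-cocycle valued in $Z(\Q_\ell)$, and its class is $\delta(\bar u)$. The continuity of this cocycle follows because the action of $G$ on $U(\Q_\ell)$ is continuous — which in our setting is exactly the hypothesis that the representation on $\Lie(U)$ is a continuous $\ell$-adic representation — so $\sigma\mapsto{}^\sigma u$ is continuous and hence so is $\sigma\mapsto u^{-1}\,{}^\sigma u$. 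Exactness at each of the six spots (as pointed sets, with the extra group/action structure to the left of $\actsarrow$ as described in the parenthetical remark) is then checked by the standard cocycle computations: a class in $\H^1(G,Z)$ dies in $\H^1(G,U)$ iff the corresponding $U$-torsor has a $G$-fixed point, which happens iff the cocycle is of the form $u^{-1}\,{}^\sigma u$, i.e.\ iff it is in the image of $\delta$; a class in $\H^1(G,U)$ dies in $\H^1(G,U/Z)$ iff it is represented by a $Z$-valued cocycle; and the orbits of the right $\H^1(G,Z)$-action on $\H^1(G,U)$ (twisting a $U$-cocycle by a $Z$-cocycle) are exactly the fibres of $\H^1(G,U)\to\H^1(G,U/Z)$, with point-stabiliser the image of $\H^0(G,U/Z)$. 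None of this uses discreteness of $G$; one only needs that composites and inverses of continuous cochains are continuous, and that a continuous cochain that is a coboundary is so via a continuous $0$-cochain (automatic, since $0$-cochains are just elements).

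Second, for part~\eqref{lempart:7term}: when $Z$ is central, $\H^2(G,Z)$ is defined using continuous $Z(\Q_\ell)$-valued $2$-cocycles (an abelian group since $Z$ is a $\Q_\ell$-vector space, so in particular a topological $G$-module), and the secondary connecting map $\H^1(G,U/Z)\to\H^2(G,Z)$ sends a cocycle $c\colon G\to(U/Z)(\Q_\ell)$, lifted to a continuous map $\tilde c\colon G\to U(\Q_\ell)$, to the continuous $2$-cocycle $(\sigma,\tau)\mapsto\tilde c(\sigma)\,{}^\sigma\tilde c(\tau)\,\tilde c(\sigma\tau)^{-1}\in Z(\Q_\ell)$. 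One checks this is independent of the lift up to coboundary, is a $2$-cocycle, vanishes exactly when $c$ lifts to a $U$-cocycle, and that its image consists of all of the obstruction classes; again these are the same computations as in Serre, now with "continuous" inserted throughout and justified by continuity of the $G$-action on $U(\Q_\ell)$ together with the (topological, here trivial) fact that $(U/Z)(\Q_\ell)\to$ can be set-theoretically split by a continuous section since $U\to U/Z$ is a $Z$-torsor which, $Z$ being a successive extension of $\G_a$'s, is Zariski-locally trivial, hence admits a continuous section on $\Q_\ell$-points.

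\textbf{Main obstacle.} There is essentially no serious obstacle; the one point requiring a word of care is the existence of a \emph{continuous} set-theoretic lift $\tilde c\colon G\to U(\Q_\ell)$ of a continuous cocycle $c\colon G\to(U/Z)(\Q_\ell)$ (needed for the connecting maps), and dually that continuity is inherited at every stage. This is handled by noting that $U\twoheadrightarrow U/Z$ admits a continuous section on $\Q_\ell$-points — for instance via the exponential/logarithm identification $U(\Q_\ell)\cong\Lie(U)$, under which $Z(\Q_\ell)$ is a closed $\Q_\ell$-subspace with a continuous linear complement — so that $G\xrightarrow{c}(U/Z)(\Q_\ell)\to U(\Q_\ell)$ is the desired continuous lift. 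With this in hand, every construction in Serre's proof is manifestly continuity-preserving, and the lemma follows.
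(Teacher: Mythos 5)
Your proposal is correct and matches the paper's treatment: the paper simply cites Serre (I.5.7 Prop.~43, I.5.4 Cor.~2) and remarks that the discrete-action proof carries over verbatim to the continuous setting, which is exactly the transport-with-continuity-checklist you carry out. Your extra care about continuous lifts via the exponential identification $U(\Q_\ell)\cong\Lie(U)$ is a sound way to justify the one point the paper leaves implicit.
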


\begin{remark}
If $c$ is a continuous cocycle for a unipotent group $U$ with a continuous $G$-action, one can form the \emph{twist} $U^c$ of $U$ by $c$, and there is a bijection
\[
\H^1(G,U^c) \longisoarrow \H^1(G,U)
\]
taking the basepoint of $\H^1(G,U^c)$ to the class $[c]\in\H^1(G,U)$. These twists are compatible with all the standard constructions. For instance the exact sequences from Lemma \ref{lem:7term}\eqref{lempart:6term} fit into a commuting diagram
\begin{center}
\begin{tikzcd}
\H^0(G,U^c/Z^c) \arrow{r}{\acts} & \H^1(G,Z^c) \arrow{r} & \H^1(G,U^c) \arrow{r}\arrow{d}{\viso} & \H^1(G,U^c/Z^c) \arrow{d}{\viso} \\
\H^0(G,U/Z) \arrow{r}{\acts} & \H^1(G,Z) \arrow{r} & \H^1(G,U) \arrow{r} & \H^1(G,U/Z)
\end{tikzcd}
\end{center}
where $Z^c$ is $Z$ with the $c$-twisted Galois action, so that the fibre of $\H^1(G,U)\rightarrow\H^1(G,U/Z)$ containing $[c]$ is identified as $\H^1(G,Z^c)/\H^0(G,U^c/Z^c)$. We will use compatibilities such as this without explicit comment.
\end{remark}

Recall that, given a short exact sequence 
\[
1\to H \to G \to G/H \to 1
\]
of profinite groups and a $\Q_\ell$-unipotent group $U$ with a continuous action of $G$, 
the action of $g\in G$ on the set of $U$-valued cocycles $H\to U$ given by
\[
c\mapsto (h\mapsto g\cdot c(g^{-1}hg))
\]
induces an action of $G/H$ on $\H^1 (H,U)$. We obtain an exact sequence of pointed sets (non-abelian inflation--restriction) \cite[I.5.8]{serre}
\begin{equation}\label{eq:nonab_HS}
1\to \H ^1 (G/H ,U^H )\to \H ^1 (G,U)\to \H ^1 (H,U)^{G/H}.
\end{equation}

Using this exact sequence, we can pass up and down along finite index subgroups.
\begin{lemma}\label{lem:finite_index}
Let $G$ be a profinite group acting continuously on a $\Q_\ell$-unipotent group $U$, and $H\leq G$ an open subgroup. Then the restriction map $\H^1(G,U)\hookrightarrow\H^1(H,U)$ is injective. If additionally $H$ is normal in $G$, then this restriction map sets up a bijection
\[
\H^1(G,U)\longisoarrow\H^1(H,U)^{G/H}.
\]
\end{lemma}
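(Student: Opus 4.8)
The plan is to deduce both statements from the non-abelian inflation--restriction sequence~\eqref{eq:nonab_HS}, reducing to the case where $H$ is normal and then handling the general case by passing to a normal open subgroup contained in $H$.

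First I would treat the case where $H \unlhd G$ is normal. Applying~\eqref{eq:nonab_HS} to the extension $1 \to H \to G \to G/H \to 1$ gives an exact sequence of pointed sets
\[
1 \to \H^1(G/H, U^H) \to \H^1(G,U) \xrightarrow{\mathrm{res}} \H^1(H,U)^{G/H}.
\]
Exactness at $\H^1(G,U)$ says that the fibre of $\mathrm{res}$ over the basepoint is the image of $\H^1(G/H,U^H)$, so I need this to be trivial. Here $U^H$ is the (scheme-theoretic) fixed subgroup of $U$ under $H$; since $H$ is open in $G$ and $G$ acts continuously, $H$ acts through a finite quotient... actually more usefully: $\Lie(U)^H$ is a $\Q_\ell$-subspace on which the finite group $G/H$ acts — wait, $G/H$ need not be finite. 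The right way: $U^H$ is a $\Q_\ell$-unipotent group with a continuous action of $G/H$, and crucially $H$ acts \emph{trivially} on $U^H$, so the action factors through $G/H$, which is... still profinite, not finite. So I cannot immediately invoke triviality of $\H^1$ of a finite group with $\Q_\ell$-coefficients in the naive way. Instead I would argue by dévissage on the unipotent group: filter $U^H$ by its descending central series, so that each graded piece $\gr^i$ is a $\Q_\ell$-vector space with a continuous $G/H$-action, and $\H^1(G/H, \gr^i(U^H))$ is continuous group cohomology of a profinite group with $\Q_\ell$-coefficients. The key input is that for a profinite group $\Pi$ acting continuously on a finite-dimensional $\Q_\ell$-vector space $W$ through a finite quotient — which is the case here, since $G$ acting continuously on the finite-dimensional $\gr^i\Lie(U)$ and $H$ acting trivially on $\gr^i\Lie(U^H)$ forces $G/H$ to act on each $\gr^i\Lie(U^H)$ through a finite quotient — we have $\H^1(\Pi, W) = 0$ because $\#(\text{finite image})$ is invertible in $\Q_\ell$ (standard averaging/transfer argument). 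By the six-term sequence of Lemma~\ref{lem:7term}\eqref{lempart:6term} applied inductively up the central series of $U^H$, we conclude $\H^1(G/H, U^H) = \{\ast\}$. Hence $\mathrm{res}$ has trivial kernel.

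To upgrade "trivial kernel" to "injective" for pointed sets, I would use the standard twisting trick: for any class $\xi \in \H^1(G,U)$, represented by a cocycle $c$, the twist $U^c$ carries a continuous $G$-action, and restriction fits into a commuting square identifying the fibre of $\H^1(G,U) \to \H^1(H,U)$ over the image of $\xi$ with the fibre of $\H^1(G,U^c) \to \H^1(H,U^c)$ over the basepoint, compatibly with the bijections $\H^1(G,U^c)\isoarrow\H^1(G,U)$ and $\H^1(H,U^c)\isoarrow\H^1(H,U)$ sending basepoints to $\xi$. Since the argument above applies verbatim to $U^c$ in place of $U$ (the twisted action is still continuous, and $H$ still acts trivially on $(U^c)^H$), every such fibre is a singleton, so $\mathrm{res}$ is injective. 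Moreover, when $H$ is normal, exactness at $\H^1(H,U)^{G/H}$ is not part of~\eqref{eq:nonab_HS}, so surjectivity onto $\H^1(H,U)^{G/H}$ requires a further argument: I would show that any $G/H$-invariant class on $H$ extends, again by dévissage up the central series of $U$, using the seven-term sequence of Lemma~\ref{lem:7term}\eqref{lempart:7term} and the vanishing of $\H^2(G/H, W)$ for $W$ a $\Q_\ell$-vector space with finite-image continuous action — the obstruction to lifting a $G/H$-invariant class on an abelian layer lives in such an $\H^2$, which vanishes by the same order-invertibility argument.

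For the general case where $H$ is merely open (not normal), pick an open normal subgroup $N \unlhd G$ with $N \leq H$ (which exists as $G$ is profinite). Then $\mathrm{res}^G_N = \mathrm{res}^H_N \circ \mathrm{res}^G_H$, and $\mathrm{res}^G_N$ is injective by the normal case; hence $\mathrm{res}^G_H$ is injective. I expect the main obstacle to be the bookkeeping in the dévissage arguments — carefully checking that the relevant $\H^1$ and $\H^2$ vanishings of the profinite group $G/H$ (resp. $G/N$) with coefficients in the graded pieces of $\Lie(U)$ really do hold, i.e.\ that the $G/H$-action on each finite-dimensional graded piece factors through a finite quotient (so that averaging over that quotient is legitimate over $\Q_\ell$), and that these vanishings propagate correctly up the central series via the (seven-term) exact sequences without the pointed-set subtleties causing trouble; the twisting trick should take care of the latter, but it needs to be threaded through each inductive step.
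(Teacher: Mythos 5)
Your overall strategy is the same as the paper's: non-abelian inflation--restriction~\eqref{eq:nonab_HS} plus the twisting trick to identify the fibres of restriction with sets $\H^1(G/H,(U^c)^H)$, d\'evissage up the central series to kill these using characteristic-$0$ vanishing, and reduction of the general open $H$ to a normal open subgroup. One correction before the real issue: your worry that ``$G/H$ need not be finite'' is unfounded --- an open subgroup of a profinite (compact) group automatically has finite index, so $G/H$ \emph{is} finite, and the vanishing $\H^i(G/H,W)=0$ for $i>0$ and $W$ a $\Q_\ell$-vector space is immediate. The patch you offer instead (``continuity of the $G$-action together with $H$ acting trivially on $\gr^i\Lie(U^H)$ forces the $G/H$-action to factor through a finite quotient'') is not a valid inference in general: a continuous action of an infinite profinite group on a finite-dimensional $\Q_\ell$-vector space need not factor through a finite quotient (think of the cyclotomic character), and indeed if $G/H$ could be infinite the lemma would be false in the form stated --- this is precisely why the paper needs separate arguments such as Lemma~\ref{lem:nonab_H2=1} and Lemma~\ref{lem:euler-char} for genuinely profinite quotients. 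Your argument is rescued only because $[G:H]$ is finite, which should be said outright.

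The genuine gap is in the surjectivity half. Asserting that ``the obstruction to lifting a $G/H$-invariant class on an abelian layer lives in $\H^2(G/H,W)$, which vanishes'' proves exactly the abelian case (via the five-term inflation--restriction sequence), but it does not carry the induction over a central extension $1\to Z\to U\to U/Z\to1$, because the relevant ``sequences'' are only of pointed sets and the fibres are orbit spaces rather than groups. Two further ingredients are needed, and neither appears in your sketch. First, given $\eta\in\H^1(H,U)^{G/H}$, one must produce $\xi\in\H^1(G,U)$ having the same image as $\eta$ in $\H^1(H,U/Z)$; the paper's diagram chase gets this from the inductive hypothesis for $U/Z$ together with the injectivity of $\H^2(G,Z)\to\H^2(H,Z)$ (Hochschild--Serre plus the finite-quotient vanishing). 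Second, after twisting so that $\xi$ becomes the basepoint, the fibre of $\H^1(H,U)\to\H^1(H,U/Z)$ containing $\eta$ is the quotient $\H^1(H,Z)/\H^0(H,U/Z)$ for the twisted action, and one must show that taking $G/H$-invariants commutes with this quotient, i.e.\ that a $G/H$-invariant orbit contains an invariant class; the paper does this by noting that $\H^0(H,U/Z)\to\H^1(H,Z)$ is a $G/H$-equivariant homomorphism from a unipotent group to a $\Q_\ell$-vector space, so its image is a subrepresentation and the needed $\H^1(G/H,-)$ vanishes. Without these two steps, ``threading the twisting trick through each inductive step'' does not by itself yield the bijection onto $\H^1(H,U)^{G/H}$.
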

\begin{proof}
It suffices to prove the latter statement. Suppose first that $G$ is finite and $H=1$. If $V$ is any $\Q_\ell$-linear representation of $G$, then we have $\H^i(G,V)=1$ for all $i>0$ since $\Q_\ell$ has characteristic $0$. It then follows from Lemma \ref{lem:7term} that $\H^1(G,U)=1$ for all representations of $G$ on $\Q_\ell$-unipotent groups.

Now in general, the inflation--restriction sequence \eqref{eq:nonab_HS} implies that the fibres of the restriction map $\H^1(G,U)\rightarrow\H^1(H,U)^{G/H}$ are given by $\H^1(G/H,(U^c)^H)$ for twists $U^c$ of $U$. These vanish by the above and so the restriction map is injective.

For surjectivity, when $U$ is abelian the cokernel of $\H^1(G,U)\rightarrow\H^1(H,U)^{G/H}$ is contained in $\H^2(G/H,U)=1$ and we are done in this case. In general, we proceed by induction, writing $U$ as a central extension
\[
1 \rightarrow Z \rightarrow U \rightarrow U/Z \rightarrow 1
\]
where we know the desired result for $U/Z$ and $Z$. From Lemma \ref{lem:7term}\ref{lempart:7term} the restriction maps fit into a morphism of exact sequences
\begin{center}
\begin{tikzcd}[column sep=small]
\H^0(G,U/Z) \arrow{r}\arrow[hook]{d} & \H^1(G,Z) \arrow{r}{\acts}\arrow[hook]{d} & \H^1(G,U) \arrow{r}\arrow[hook]{d} & \H^1(G,U/Z) \arrow{r}\arrow[hook]{d} & \H^2(G,Z) \arrow[hook]{d} \\
\H^0(H,U/Z) \arrow{r} & \H^1(H,Z) \arrow{r}{\acts} & \H^1(H,U) \arrow{r} & \H^1(H,U/Z) \arrow{r} & \H^2(H,Z),
\end{tikzcd}
\end{center}
the lower of which is $G/H$-equivariant. The right-hand vertical map is injective by the Hochschild--Serre spectral sequence.

Consider any element $\eta\in\H^1(H,U)^{G/H}$. An easy diagram-chase shows that there is a $\xi\in\H^1(G,U)$ having the same image in $\H^1(H,U/Z)$ as $\eta$. Replacing $U$ with a twist if necessary, we may assume that $\xi=1$, so that $\eta$ lies in $\H^1(H,Z)/\H^0(H,U/Z)$. But the map $\H^0(H,U/Z)\rightarrow\H^1(H,Z)$ is a $G/H$-equivariant homomorphism from a unipotent group to a vector space, and therefore (e.g.\ by considering the maps on Lie algebras) we have
\[
\left(\H^1(H,Z)/\H^0(H,U/Z)\right)^{G/H} = \H^1(H,Z)^{G/H}/\H^0(H,U/Z)^{G/H}.
\]
Thus $\eta$ lifts to $\H^1(G,Z)=\H^1(H,Z)^{G/H}$, and hence to $\H^1(G,U)$.
\end{proof}

We will also need the following similar lemma, which is proved via an entirely different technique. When $U$ is abelian, this is a consequence of the Hochschild--Serre spectral sequence and the fact that $\widehat{\mathbb{Z}}$ has cohomological dimension one.

\begin{lemma}\label{lem:nonab_H2=1}
Let $G$ be a profinite group acting continuously on a $\Q_\ell$-unipotent group $U$, and $H\unlhd G$ a closed normal subgroup with $G/H\nciso\Zhat$. Then the morphism
\[
\H^1 (G,U)\to \H ^1 (H,U)^{G/H}
\]
is surjective.
\end{lemma}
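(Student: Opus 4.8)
The plan is to upgrade the abelian argument (which, as the remark notes, rests on $\mathrm{cd}(\Zhat)=1$) to a statement at the level of cocycles, and then to dévissage in $U$. First I would fix $\phi\in G$ mapping to a topological generator of $G/H\cong\Zhat$ and analyse the closed subgroup $\overline{\langle\phi\rangle}\leq G$. It is procyclic and surjects onto $\Zhat$; since every procyclic pro-$q$ group admitting a surjection onto $\Z_q$ is isomorphic to $\Z_q$ (as $\Z_q$ is Hopfian), the map $\overline{\langle\phi\rangle}\to G/H$ is in fact an isomorphism. Hence $\overline{\langle\phi\rangle}\cap H=1$, and as $H\cdot\overline{\langle\phi\rangle}$ is a compact (hence closed) subgroup of $G$ containing $H$ and surjecting onto $G/H$, we get $G=H\rtimes\overline{\langle\phi\rangle}$.

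Next comes the cocycle reformulation. Given a class in $\H^1(H,U)^{G/H}$, represent it by a continuous cocycle $c\colon H\to U$. Invariance under the image of $\phi$ means the twisted cocycle $({}^\phi c)(k):=\phi\cdot c(\phi^{-1}k\phi)$ is cohomologous to $c$, so there is $u_0\in U(\Q_\ell)$ with $({}^\phi c)(k)=u_0^{-1}\cdot c(k)\cdot(k\cdot u_0)$ for all $k\in H$. The point of this identity is that in the semidirect product $E:=U\rtimes G$ the element $\tilde\phi:=(u_0,\phi)$ conjugates $s_c(H):=\{(c(h),h):h\in H\}\cong H$ onto itself, sending $(c(h),h)$ to $(c(\phi h\phi^{-1}),\phi h\phi^{-1})$. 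Consequently $\Sigma:=s_c(H)\cdot\overline{\langle\tilde\phi\rangle}$ is a subgroup of $E$ lying over $G$, and it suffices to prove that the projection $\Sigma\to G$ is an isomorphism: its inverse is then a continuous homomorphic section of $E\to G$ extending $s_c$, i.e.\ a continuous cocycle $\tilde c\colon G\to U$ with $\tilde c|_H=c$, which exhibits our class in the image of restriction. (Since $\tilde c$ restricts to $c$ on the nose, this argument is uniform in $U$, and in particular passes to quotients like the $U_n$.)

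The main obstacle — and where $\mathrm{cd}(\Zhat)=1$ really enters — is showing that $\Sigma$ is compact, equivalently that $\overline{\langle\tilde\phi\rangle}$ is compact, equivalently that $n\mapsto\tilde\phi^n=\bigl(u_0^{(n)},\phi^n\bigr)$ with $u_0^{(n)}:=\prod_{k=0}^{n-1}\phi^k\cdot u_0$ extends to a continuous homomorphism $\Zhat\to E$. The key input is an elementary lemma that plays the role of the vanishing of $\H^2(\Zhat,-)$ at the level of cocycles: for any continuous function $f\colon\Zhat\to V$ into a finite-dimensional $\Q_\ell$-vector space, the partial-sum function $n\mapsto\sum_{k=0}^{n-1}f(k)$ on $\Z$ extends continuously to $\Zhat$. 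This is proved by uniformly approximating $f$ by functions factoring through a finite quotient $\Z/m$ — for which the partial sums visibly extend by writing $n=qm+r$ — and noting that the non-archimedean estimate $\bigl\|\sum_{k<n}(f-f_j)(k)\bigr\|\leq\|f-f_j\|$ makes the approximation uniform in $n$, so the extensions converge. Feeding this into a dévissage along the lower central series of $U$ — the abelian case is the lemma applied to $f(k)=\phi^k\cdot u_0$, and for a central extension $1\to Z\to U\to\bar U\to1$ one lifts $\bar u_0^{(n)}$ via a continuous section and is reduced, after a short computation using centrality of $Z$, to extending another $Z$-valued partial-sum function — shows that $n\mapsto u_0^{(n)}$ extends to $\Zhat$; the pro-nilpotent case follows by passing to the inverse limit.

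With compactness of $\Sigma$ in hand the rest is formal. The projection $\Sigma\to G$ is surjective because $\Sigma$ is compact, so its image is a closed subgroup, and that image contains $H$ and $\overline{\langle\phi\rangle}$, hence equals $G$. It is injective because an element $s_c(h)\cdot\tilde\phi^x$ that lies in $U$ must map to $1$ in $G$, forcing $\phi^x=h^{-1}\in H\cap\overline{\langle\phi\rangle}=1$, so $x=0$, $h=1$ and the element is trivial. A continuous bijective homomorphism of compact Hausdorff groups is an isomorphism, which completes the proof.
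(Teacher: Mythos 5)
Your proof is correct, and at its core it implements the same idea as the paper's: the invariance of $[c]$ produces an element $u_0$ (the paper's $\gamma$) satisfying the twisted-conjugation identity, and the extended cocycle is the one determined by $c$ on $H$ together with the value $u_0$ at a lift $\phi$ of the topological generator. Where you genuinely add something is in justifying that this prescription really yields a \emph{continuous} cocycle on all of $G$: the paper's proof simply asserts that a lift of $c$ is determined by $\widetilde c(\alpha)$ and that the single relation \eqref{eq:cocyc} suffices, leaving the topological extension problem implicit, whereas you make it precise by (i) splitting $G\cong H\rtimes\overline{\langle\phi\rangle}$ with $\overline{\langle\phi\rangle}\cong\Zhat$, and (ii) proving compactness of $\overline{\langle\tilde\phi\rangle}$ in $U\rtimes G$ via the ultrametric partial-sum lemma (uniform approximation by locally constant functions plus completeness of $\Q_\ell$) and a dévissage along a central series. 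That lemma is exactly where the non-archimedean nature of the coefficients enters — it fails over $\R$, consistent with the fact that the statement is special to $\ell$-adic coefficients — so your argument isolates the analytic input more transparently than the paper does. Two small points: your parenthetical ``every procyclic pro-$q$ group admitting a surjection onto $\Z_q$'' is misstated — the correct argument is Sylow-by-Sylow: a procyclic group surjecting onto $\Zhat$ has each pro-$p$ component surjecting onto $\Z_p$, forcing it to be $\Z_p$ and the map an isomorphism since $\Z_p$ is Hopfian (this is clearly what you intend, and it is fine); and in the non-abelian setting $u_0^{(n)}$ should be read as the ordered product $u_0\cdot(\phi\cdot u_0)\cdots(\phi^{n-1}\cdot u_0)$, which your dévissage handles correctly even though the displayed formula suggests an unordered sum.
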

\begin{proof}
Let $\alpha $ be a lift of a topological generator of $G/H\nciso\Zhat$ to $G$. Let $c$ be a continuous cocycle $H\to U$ whose class in $\H^1 (H,U)$ is $G/H$-invariant.
Then a lift of $c$ to a $U$-valued cocycle $\widetilde{c}\colon G\to U$ is uniquely determined by where it sends $\alpha $, and $\widetilde{c}(\alpha ) \in U$ defines a cocycle if and only if it satisfies
\begin{equation}\label{eq:cocyc}
c(h)(h\cdot \widetilde{c}(\alpha ))=\widetilde{c}(\alpha )(\alpha \cdot c(\alpha ^{-1}h\alpha )).
\end{equation}
for $h$ in $H$.
Since the class of $c$ in $\H^1 (H,U)$ is $G/H$-invariant, there exists $\gamma \in U$ such that, for all $h$ in $H$,
\[
\alpha \cdot c(\alpha ^{-1}h\alpha )=\gamma^{-1} c(h)(h\cdot \gamma ).
\]
Hence taking $\widetilde c(\alpha )=\gamma $ satisfies \eqref{eq:cocyc}, and hence defines a lift of the cocycle to~$U$.
\end{proof}

\begin{lemma}[{\cite[\S3.2.4 and \S3.3.11]{autour}}]\label{lem:euler-char}
Let $V$ be a $\Q_\ell$-vector space with a continuous action of $G_K$. Then we have
\begin{align*}
\dim_{\Q_\ell}\H^1(G_K,V) &= \dim_{\Q_\ell}\H^0(G_K,V) + \dim_{\Q_\ell}\H^0(G_K,V^*(1)) \\
\dim_{\Q_\ell}\H^2(G_K,V) &= \dim_{\Q_\ell}\H^0(G_K,V^*(1)) \\
\dim_{\Q_\ell}\H^1(G_K/I_K,V^{I_K}) &= \dim_{\Q_\ell}\H^0(G_K,V).
\end{align*}
\end{lemma}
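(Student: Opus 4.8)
The plan is to reduce all three formulas to the local Euler characteristic formula and local Tate duality for $p$-adic fields, combined with the structure of $G_K/I_K\cong\Zhat$. For the first two formulas: the local Euler characteristic formula of Tate states that $\sum_i(-1)^i\dim_{\Q_\ell}\H^i(G_K,V)=-[K:\Q_p]\dim_{\Q_\ell}V\cdot\delta$, where the term vanishes because $\ell\neq p$; thus $\dim\H^0-\dim\H^1+\dim\H^2=0$, since $G_K$ has cohomological dimension $2$ and the higher groups vanish. Local Tate duality gives a perfect pairing $\H^i(G_K,V)\times\H^{2-i}(G_K,V^*(1))\to\H^2(G_K,\Q_\ell(1))\cong\Q_\ell$, so $\dim\H^2(G_K,V)=\dim\H^0(G_K,V^*(1))$. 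Combining these two facts yields both the displayed formula for $\dim\H^1(G_K,V)$ and the formula for $\dim\H^2(G_K,V)$ simultaneously. First I would just cite \cite{autour} (as the lemma statement already does) or else spell out this two-line deduction.

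For the third formula, the point is that $G_K/I_K\cong\Zhat$ is topologically generated by (the image of) Frobenius $\varphi$, which acts on the $\Q_\ell$-vector space $V^{I_K}$. For a procyclic group $\Zhat$ acting continuously on a finite-dimensional $\Q_\ell$-vector space $W$ via an operator $\varphi$, continuous cohomology is computed by the two-term complex $W\xrightarrow{\varphi-1}W$, so $\H^0(\Zhat,W)=\ker(\varphi-1)$ and $\H^1(\Zhat,W)=\coker(\varphi-1)$, and these have equal dimension since $\varphi-1$ is an endomorphism of the finite-dimensional space $W$. Applying this with $W=V^{I_K}$ gives $\dim_{\Q_\ell}\H^1(G_K/I_K,V^{I_K})=\dim_{\Q_\ell}\H^0(G_K/I_K,V^{I_K})=\dim_{\Q_\ell}(V^{I_K})^\varphi=\dim_{\Q_\ell}\H^0(G_K,V)$, where the last equality is because $V^{G_K}=(V^{I_K})^{G_K/I_K}$.

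The one subtlety worth checking, rather than a genuine obstacle, is the continuity bookkeeping: one must know that continuous cohomology of $\Zhat$ with coefficients in the $\ell$-adic representation $V^{I_K}$ really is computed by the naive complex $V^{I_K}\xrightarrow{\varphi-1}V^{I_K}$. This holds because $V^{I_K}$ is a finite-dimensional $\Q_\ell$-vector space with continuous $\Zhat$-action, hence (after choosing a stable $\Z_\ell$-lattice and passing to the limit) its cohomology is the continuous cohomology of a procyclic group, which is the cohomology of that two-term complex; alternatively this is exactly \cite[\S3.3.11]{autour}. Since finite-dimensionality of $V$ is assumed, $\ker(\varphi-1)$ and $\coker(\varphi-1)$ are finite-dimensional of equal dimension by rank-nullity, and there is nothing further to prove. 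Given that the statement is attributed to \cite{autour}, the honest write-up is simply to point to those references; I would include the short derivations above only if a self-contained account is wanted.
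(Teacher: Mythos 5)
Your proposal is correct, and it matches the paper's treatment: the paper offers no argument beyond the citation to \cite[\S3.2.4 and \S3.3.11]{autour}, and your sketch is exactly the standard derivation behind that reference (Tate's local Euler characteristic formula with $\ell\neq p$ plus local duality for the first two equalities, and the computation of continuous $\Zhat\cong G_K/I_K$-cohomology of the finite-dimensional space $V^{I_K}$ by the complex $\varphi-1$, with rank--nullity, for the third). The continuity point you flag is handled as you say, by choosing a $\varphi$-stable $\Z_\ell$-lattice and passing to the limit, so nothing further is needed.
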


\begin{corollary}\label{cor:mod_M-3}
Let $U$ be a $\Q_\ell$-unipotent group with a continuous action of $G_K$. Then the natural map
\[
\H^1(G_K,U) \longisoarrow \H^1(G_K,U/\M_{-3})
\]
is bijective.
\end{corollary}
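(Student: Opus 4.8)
The plan is a dévissage along the monodromy filtration $\M_\bullet\Lie(U)$, reducing the statement to the Euler characteristic formula of Lemma~\ref{lem:euler-char}. Since $\M_0\Lie(U)=\Lie(U)$, the subspace $\M_{-3}\Lie(U)$ is an ideal, so $\M_{-3}U$ is normal in $U$, the quotient $U/\M_{-3}U$ carries a natural continuous $G_K$-action, and the filtration presents $U$ as the inverse limit $\varprojlim_{i\geq3}U/\M_{-i}U$. For $i\geq3$ the surjection $U/\M_{-i-1}U\twoheadrightarrow U/\M_{-i}U$ has kernel the vector group $V_i:=\M_{-i}U/\M_{-i-1}U\iso\gr^\M_{-i}\Lie(U)$, which by the definition of the monodromy filtration is a finite-dimensional $G_K$-representation pure of weight $-i$. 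Since $\Lie(U)$ has weights $\leq-1$, the adjoint action of $U$ on each $V_i$ is trivial ($\mathrm{Ad}(u)=\exp(\ad\log u)$ preserves $\M_\bullet$ and induces the identity on each graded piece), so each of these extensions is \emph{central}.

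Next I would compute $\H^\bullet(G_K,V_i)$ for $i\geq3$. As $V_i$ is pure of weight $-i<0$, Frobenius acts on $V_i$, and hence on the $\varphi$-stable subspace $V_i^{I_K}$, with eigenvalues of absolute value $q^{-i/2}\neq1$, so $\H^0(G_K,V_i)=(V_i^{I_K})^{\varphi=1}=0$. Dually $V_i^{*}(1)$ is pure of weight $i-2\geq1>0$, so by the same argument $\H^0(G_K,V_i^{*}(1))=0$; Lemma~\ref{lem:euler-char} then gives $\H^2(G_K,V_i)=0$ and $\dim_{\Q_\ell}\H^1(G_K,V_i)=\dim_{\Q_\ell}\H^0(G_K,V_i)+\dim_{\Q_\ell}\H^0(G_K,V_i^{*}(1))=0$. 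Since the extensions are central, the same vanishing holds for every twist of $V_i$ by a $U$-valued cocycle, as such a twist leaves the $G_K$-module $V_i$ unchanged.

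The dévissage is then formal. For each $i\geq3$, applying Lemma~\ref{lem:7term}\eqref{lempart:7term} to $1\to V_i\to U/\M_{-i-1}U\to U/\M_{-i}U\to1$ and using $\H^1(G_K,V_i)=\H^2(G_K,V_i)=1$ for $V_i$ and all its twists shows that $\H^1(G_K,U/\M_{-i-1}U)\to\H^1(G_K,U/\M_{-i}U)$ is a bijection: vanishing of $\H^1$ makes every fibre (trivial over the basepoint, the general case by twisting) a point, and vanishing of $\H^2$ gives surjectivity. The same vanishing forces the transition maps $\H^0(G_K,U/\M_{-i-1}U)\to\H^0(G_K,U/\M_{-i}U)$ to be surjective, so the tower $\bigl(\H^1(G_K,U/\M_{-i}U)\bigr)_{i\geq3}$ is Mittag--Leffler and $\H^1(G_K,U)=\varprojlim_{i\geq3}\H^1(G_K,U/\M_{-i}U)$. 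Composing the bijections and passing to the limit gives the desired isomorphism $\H^1(G_K,U)\isoarrow\H^1(G_K,U/\M_{-3}U)$.

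The only non-formal input is the weight-theoretic fact used in the second paragraph --- that $\gr^\M_{-i}\Lie(U)$ is pure of weight $-i$, so that for $i\geq3$ both $\H^0(G_K,V_i)$ and $\H^0(G_K,V_i^{*}(1))$ vanish, the bound $i\geq3$ being exactly what forces $-i<0$ and $i-2>0$ simultaneously. Everything else --- the non-abelian cohomology sequences of Lemma~\ref{lem:7term}, the twist formalism, and the passage to an inverse limit --- is routine; the one point deserving care is the centrality of the successive extensions (equivalently, the triviality of the adjoint $U$-action on the monodromy graded pieces), which is what allows the clean application of Lemma~\ref{lem:7term}\eqref{lempart:7term} and of the twisting arguments.
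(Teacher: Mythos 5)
Your dévissage along the $\M$-filtration has a genuine gap: the successive extensions $1\to V_i\to U/\M_{-i-1}\to U/\M_{-i}\to 1$ are not central in general, and the parenthetical you offer in their support is where the argument fails. $\mathrm{Ad}(u)=\exp(\ad_{\log u})$ does preserve $\M_\bullet$, but on the graded piece $\gr^\M_{-i}\Lie(U)$ it induces $\exp(\ad_{\bar x_0})$, where $\bar x_0$ is the image of $\log u$ in $\gr^\M_0\Lie(U)$; this is the identity only if $\gr^\M_0\Lie(U)$ brackets trivially with $\gr^\M_{-i}\Lie(U)$. Since the $\M$-filtration here is the Frobenius-weight filtration with $\M_0=U$ (not $\M_{-1}=U$), one only has $[\M_0,\M_{-i}]\leq\M_{-i}$, never $\M_{-i-1}$. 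In the situations this paper is built for, $\gr^\M_0$ is large (it is the ``graph part'' of the fundamental group -- compare the twisted conjugation action of $\gr^\M_0U$ on $\gr^\M_{-2}U$ in Lemma~\ref{lem:description_of_cohomology_basic}); concretely, if $\Lie(U)$ is free on $x$ of weight $0$ and $y$ of weight $-2$, then $[x,[y,[x,y]]]\neq0$ in $\gr^\M_{-4}$, so conjugation acts nontrivially on $V_4$. This breaks both halves of your argument: the seven-term sequence of Lemma~\ref{lem:7term}\eqref{lempart:7term}, and in particular the $\H^2$ term you need for surjectivity, is only available for central kernels; and your injectivity step explicitly invokes centrality when you claim that twisting by a $U$-valued cocycle ``leaves the $G_K$-module $V_i$ unchanged'' -- for a non-central kernel the twist $V_i^c$ carries a genuinely different Galois action, and you have not shown its $\H^1$ still vanishes.

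The repair is exactly the extra reduction the paper makes: for a normal subgroup $Z\leq\M_{-3}$, dévisse further along $Z\supseteq[U,Z]\supseteq[U,[U,Z]]\supseteq\cdots$, so that each step has \emph{central} kernel, which still has all Frobenius weights $\leq-3$ because it is a subquotient of $\M_{-3}$. For a central kernel the twist is harmless, and your weight computation via Lemma~\ref{lem:euler-char} then applies verbatim to give vanishing of $\H^1$ and $\H^2$ of the kernel, hence bijectivity at each stage. Two smaller points: since $U$ is finite-dimensional the filtration terminates, so no Mittag--Leffler or inverse-limit argument is needed; and your opening assertion that $\M_0\Lie(U)=\Lie(U)$ (which you use to see that $\M_{-3}$ is an ideal) is an additional hypothesis implicit in the corollary's use, not a consequence of its stated hypotheses.
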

\begin{proof}
It suffices to prove that if $Z$ is a normal subgroup of $U$ contained in $\M_{-3}$, then the natural map $\H^1(G_K,U) \longisoarrow \H^1(G_K,U/Z)$ is bijective. Proceeding inductively, we may assume that $Z$ is central in $U$, so that we have an exact sequence
\[
\H^1(G_K,Z) \actsarrow \H^1(G_K,U) \rightarrow \H^1(G_K,U/Z) \rightarrow \H^2(G_K,Z)
\]
in which the outer terms vanish by Lemma \ref{lem:euler-char} and the fact that all Frobenius eigenvalues of $Z$ are Weil numbers of weight $<-2$. It follows that $\H^1(G_K,U) \rightarrow \H^1(G_K,U/Z)$ is bijective, as claimed. 
\end{proof}

\begin{lemma}\label{lem:description_of_cohomology_basic}
Let $U$ be a $\Q_\ell$-unipotent group with a continuous action of $G_K$ satisfying \WM and such that the action of $I_K$ on $\Lie(U)$ is unipotent. Then the twisted conjugation action of $U$ on $\gr^\M_{-2}U$ given by
\[
u\colon v\mapsto u^{-1}v\sigma(u)
\]
is well-defined and factors through an action of $\gr^\M_0U$. If $\gr^\M_{-2}U(-1)$ denotes the twist of $\gr^\M_{-2}U$ by the inverse of the cyclotomic character, then this action of $\gr^\M_0U$ on $\gr^\M_{-2}U(-1)$ is $G_K$-equivariant, and there is a bijection
\[
\H^1(G_K,U) \longisoarrow \left(\gr^\M_{-2}U(-1)/\gr^\M_0U\right)^{G_K}
\]
given by evaluating a cocycle at $\sigma\in I_K$.
\end{lemma}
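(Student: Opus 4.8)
The plan is to analyze the Galois cohomology $\H^1(G_K,U)$ by reducing modulo $\M_{-3}U$ and then working with the central extension $1\to\gr^\M_{-2}U\to U/\M_{-3}U\to\gr^\M_0U\to 1$, using the assumption $\M_0U=U$ (which holds since $U$ satisfies \WM, so $\W_{-1}\Lie(U)=\Lie(U)$ and $\M_0\Lie(U)=\Lie(U)$). By Corollary~\ref{cor:mod_M-3} we may replace $U$ by $U/\M_{-3}$ throughout, so $U$ becomes a central extension of the abelian group $A:=\gr^\M_0U$ (a $\Q_\ell$-vector space of weight $0$) by the abelian group $Z:=\gr^\M_{-2}U$ (a $\Q_\ell$-vector space on which Frobenius has weight $-2$; note there is no weight $-1$ part since $\gr^\M_{-1}\gr^\W_{-1}=\gr^\M_{-1}\Lie(U)$ must vanish by the \WM isomorphism $N\colon\gr^\M_1\gr^\W_1\to\gr^\M_{-1}\gr^\W_1$ applied in the correct range — more carefully, one checks $\gr^\M_{-1}U=0$ using the \WM condition and $\W_{-1}U=U$, so indeed $U$ sits in this two-step filtration after modding out $\M_{-3}$).

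**First** I would use the inflation--restriction sequence for $I_K\unlhd G_K$ together with Lemma~\ref{lem:finite_index} and Lemma~\ref{lem:nonab_H2=1}: since $G_K/I_K\nciso\Zhat$, the map $\H^1(G_K,U)\to\H^1(I_K,U)^{G_K/I_K}$ is surjective, and since the action of $I_K$ on $\Lie(U)$ is unipotent, the same is true for the $I_K$-action on the $\Q_\ell$-points, so $\H^1(I_K,U)$ is computed by the continuous cohomology of $I_K$ with unipotent coefficients. The maximal pro-$\ell$ quotient of $I_K$ is topologically generated by $\sigma$, and the full inertia $I_K$ acts through this quotient in a way that, for the purposes of cohomology with pro-unipotent $\Q_\ell$-coefficients, reduces to the cohomology of the procyclic group $\overline{\langle\sigma\rangle}$. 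Concretely, I would show that restriction-to-$\sigma$ identifies $\H^1(I_K,U)$ with $(Z/A)$ where $A$ acts on $Z$ by the twisted conjugation $u\colon v\mapsto u^{-1}v\,\sigma(u)$: a cocycle on the procyclic group $\langle\sigma\rangle$ is determined by its value $c(\sigma)\in U$, every such value defines a cocycle (no cocycle condition in the procyclic case beyond continuity), and two values give cohomologous cocycles iff they differ by $u\mapsto u^{-1}c(\sigma)\sigma(u)$; projecting $c(\sigma)$ to $Z$ (using that the image in $A=\gr^\M_0U$ must be $\sigma$-invariant hence — since $\sigma$ acts unipotently and $A$ has weight $0$, actually trivially — lies in the right place) gives the identification $\H^1(I_K,U)\iso Z/A$, and one checks the residual $A$-action is by the stated formula, which is well-defined precisely because $\sigma$ acts trivially on $A$ modulo the relevant terms.

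**Next** I would compute the $G_K/I_K$-invariants. The Frobenius $\varphi$ acts on $Z/A$; to match the claimed answer $\bigl(\gr^\M_{-2}U(-1)/\gr^\M_0U\bigr)^{G_K}$ one uses that the relation defining the cocycle lift (equation analogous to \eqref{eq:cocyc}, with $\alpha=\varphi$) together with the fact that $\varphi\sigma\varphi^{-1}=\sigma^q$ in the tame quotient introduces exactly a twist by $q$, i.e.\ by the cyclotomic character, into the Frobenius action on the $\sigma$-component; this is the source of the $(-1)$ Tate twist. So the Frobenius-invariants of $Z/A$ with its natural action equal the Frobenius-invariants of $Z(-1)/A$, and since $I_K$ acts unipotently these $\varphi$-invariants are the full $G_K$-invariants. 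Combining: $\H^1(G_K,U)\twoheadrightarrow\H^1(I_K,U)^{G_K/I_K}=\bigl(Z(-1)/A\bigr)^{G_K}$, and injectivity of this map follows from Lemma~\ref{lem:finite_index} (applied with $H=I_K$, though $I_K$ is not open — so instead one uses the inflation part: the kernel is $\H^1(G_K/I_K,U^{I_K})$, which one shows vanishes because $U^{I_K}$ has weight $\leq 0$ and $H^1(\Zhat,-)$ of a weight-$0$ unipotent group with no invariants... more precisely $\H^1(G_K/I_K,U^{I_K})$ vanishes by the third formula in Lemma~\ref{lem:euler-char} combined with an induction on the central series of $U^{I_K}$, since each graded piece has $\H^0(G_K,\gr)$ contributing and one tracks that these assemble correctly — the cleanest route is to note $\H^1(G_K/I_K,V)\iso\H^0(G_K,V)$ for a weight-$0$ representation $V$ dimension-wise and chase this through the central extension).

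**The main obstacle** I expect is the careful bookkeeping of the $G_K$-action versus $I_K$-action and the emergence of the Tate twist: one must verify that evaluating at $\sigma$ is not merely a bijection of sets but intertwines the residual Frobenius action on $\H^1(I_K,U)$ with the cyclotomically-twisted action on $\gr^\M_{-2}U$, and that passing to $G_K/I_K$-invariants (rather than naively to $\varphi$-fixed points) is legitimate — this last point uses that $I_K$ acts unipotently so that $H^*(I_K,-)$ and $H^*(G_K/I_K,(-)^{I_K})$ interact as expected via Hochschild--Serre, together with the vanishing of the inflation term. A secondary technical point is confirming that modding out by $\M_{-3}U$ genuinely reduces us to the two-step situation, i.e.\ that $\gr^\M_{-1}U=0$; this is forced by \WM and $\W_{-1}U=U$ but deserves an explicit sentence. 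The well-definedness of the twisted conjugation action and its factoring through $\gr^\M_0U$ is then essentially the Baker--Campbell--Hausdorff computation showing $\M_{-3}$-terms and inner terms die, which is routine given the filtration estimates $N(\M_i)\subseteq\M_{i-2}$ and $[\M_i,\M_j]\subseteq\M_{i+j}$.
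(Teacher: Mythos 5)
Your overall skeleton (reduce modulo $\M_{-3}$, use Lemma~\ref{lem:nonab_H2=1} for surjectivity of restriction to inertia, extract the Tate twist from $\varphi\sigma\varphi^{-1}=\sigma^q$, dispose of the twisted-conjugation claims by Baker--Campbell--Hausdorff) matches the paper, but two of your key reductions fail. First, $\gr^\M_{-1}U$ is \emph{not} zero in general: condition \WM{} only gives isomorphisms $N^i\colon\gr^\M_{-1+i}\gr^\W_{-1}\isoarrow\gr^\M_{-1-i}\gr^\W_{-1}$, which say nothing about the middle piece $\gr^\M_{-1}\gr^\W_{-1}$ (for the fundamental group of a semistable curve this piece is the contribution of the positive-genus components of the special fibre, and $\gr^\M_0U$ is moreover non-abelian in general). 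So $U/\M_{-3}$ is not a central extension of $\gr^\M_0U$ by $\gr^\M_{-2}U$. Relatedly, your identification $\H^1(I_K,U)\iso Z/A$ is false even granting that reduction: a cocycle value $c(\sigma)$ has an arbitrary image in $U/\M_{-2}$, giving a generally nontrivial class in $\H^1(I_K,U/\M_{-2})$; this component is only killed \emph{after} taking $G_K/I_K$-invariants, by the weight mismatch between $\Q_\ell(1)$ (weight $-2$) and $U/\M_{-2}$ (weights $0,-1$). Your parenthetical about the image in $\gr^\M_0U$ being ``$\sigma$-invariant hence in the right place'' does not yield this. The paper instead applies the exact sequence of Lemma~\ref{lem:7term} to $1\to\gr^\M_{-2}U\to U/\M_{-3}\to U/\M_{-2}\to1$ and uses $\H^1(I_K,U/\M_{-2})^{G_K/I_K}=1$.

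Second, your injectivity argument for the restriction map has a genuine gap. The fibres of $\H^1(G_K,U/\M_{-3})\to\H^1(I_K,U/\M_{-3})^{G_K/I_K}$ are $\H^1(G_K/I_K,(U^c/\M_{-3})^{I_K})$ for \emph{twists} $U^c$ of $U$ (you only treat the kernel), and these do not vanish ``because the coefficients have weight $\leq0$'': by Lemma~\ref{lem:euler-char}, $\dim\H^1(G_K/I_K,V^{I_K})=\dim\H^0(G_K,V)$, which is nonzero exactly when there are weight-$0$ Frobenius invariants (e.g.\ $\H^1(\Zhat,\Q_\ell)\neq0$), so a weight-$0$ piece is precisely what you cannot tolerate. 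The missing point is that \WM{} forces $N$ to be injective on $\gr^\M_0$ of each $\gr^\W_k$ with $k\leq-1$ (and $\W_{-1}U=U$), so the inertia invariants $(U^c/\M_{-3})^{I_K}$ are contained in $\M_{-1}U^c/\M_{-3}$ and have Frobenius weights only in $\{-1,-2\}$; then $\H^1(G_K/I_K,-)$ of such a unipotent group vanishes by Lemma~\ref{lem:euler-char} and induction via Lemma~\ref{lem:7term}. Without this step (and the twisting argument to handle all fibres) injectivity is not established, and the final identification of $\H^1(G_K,U)$ with $\left(\gr^\M_{-2}U(-1)/\gr^\M_0U\right)^{G_K}$ does not go through.
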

\begin{proof}
The claims about the twisted conjugation action are easy to verify from the fact that $v^{-1}u^{-1}vu,u^{-1}\sigma(u)\in\M_{-i-2}U$ for $v\in\M_{-2}U$ and $u\in\M_{-i}U$.

For the description of the cohomology, we first claim that the restriction map
\[
\H^1(G_K,U/\M_{-3}) \longisoarrow \H^1(I_K,U/\M_{-3})^{G_K/I_K}
\]
is bijective. Surjectivity is given by Lemma \ref{lem:nonab_H2=1}. For injectivity, it follows from the inflation--restriction sequence \eqref{eq:nonab_HS} and Corollary \ref{cor:mod_M-3} that the fibres of the restriction map are given by $\H^1(G_K/I_K,(U^c/\M_{-3})^{I_K})$ for twists $U^c$ of $U$. Since $U^c$ also satisfies \WM, it follows that all the Frobenius weights of $(U^c/\M_{-3})^{I_K}\leq\M_{-1}U^c/\M_{-3}$ are in $\{-1,-2\}$. But any representation of $G_K/I_K$ on a $\Q_\ell$-unipotent group $\overline U$ with Frobenius weights in $\{-1,-2\}$ automatically has $\H^1(G_K/I_K,\overline U)=1$ -- for $\overline U$ abelian this follows from Lemma \ref{lem:euler-char} and for $\overline U$ general one inducts using Lemma \ref{lem:7term}. This completes the proof of bijectivity of the restriction map.

Combined with Corollary \ref{cor:mod_M-3}, we see that the natural map $\H^1(G_K,U)\longisoarrow\H^1(I_K,U/\M_{-3})^{G_K/I_K}$ is bijective. To compute this latter set, we use the $G_K/I_K$-equivariant exact sequence
\[
\H^0(I_K,U/\M_{-2}) \actsarrow \H^1(I_K,\gr^\M_{-2}U) \rightarrow \H^1(I_K,U/\M_{-3}) \rightarrow \H^1(I_K,U/\M_{-2}).
\]
Since $\H^1(I_K,U/\M_{-2})^{G_K/I_K}=\Hom^\Out(\Q_\ell(1),U/\M_{-2})^{G_K/I_K}=1$ by Frobenius weight considerations, we have that
\[
\H^1(I_K,U/\M_{-3})^{G_K/I_K}=\left(\Hom(\Q_\ell(1),\gr^\M_{-2}U)/(U/\M_{-2})\right)^{G_K/I_K}.
\]
Identifying $\Hom(\Q_\ell(1),\gr^\M_{-2}U)$ with the twist of $\gr^\M_{-2}U$ by the inverse of the cyclotomic character, we obtain the desired description.
\end{proof}

\begin{remark}\label{rmk:presheaf_1}
In \cite{siegel}, the continuous Galois cohomology set $\H^1(G_K,U)$ is extended to a presheaf
\[
\Spec(\Lambda) \mapsto \H^1(G_K,U(\Lambda))
\]
on affine $\Q_\ell$-schemes, and (under certain assumptions weaker than \WM) it is shown that this presheaf is represented by an affine $\Q_\ell$-scheme of finite type, also denoted $\H^1(G_K,U)$. All of our arguments also apply on the level of presheaves, so that for example we have an isomorphism
\[
\H^1(G_K,U) \longisoarrow\left(\gr^\M_{-2}U(-1)/\gr^\M_0U\right)^{G_K}
\]
of presheaves.
\end{remark}

\begin{corollary}[to the proof of Lemma \ref{lem:description_of_cohomology_basic}]\label{cor:G-invt_is_I-invt}
Let $U$ be a $\Q_\ell$-unipotent group with a continuous action of $G_K$ satisfying \WM, and let $P$ be a $U$-torsor with a compatible continuous action of $G_K$. Then $P(\Q_\ell)^{G_K}\neq\emptyset$ if and only if $P(\Q_\ell)^{I_K}\neq\nobreak\emptyset$.
\begin{proof}
If the action of $I_K$ on $\Lie(U)$ is unipotent, the proof of Lemma \ref{lem:description_of_cohomology_basic} shows that $\H^1(G_K,U)\hookrightarrow\H^1(I_K,U)$ is injective, which proves the result in this case. In general we reduce to the case of unipotent inertia action via Lemma \ref{lem:finite_index}.
\end{proof}
\end{corollary}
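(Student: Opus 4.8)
The plan is to translate the statement into one about non-abelian cohomology and then feed it into the analysis already performed in the proof of Lemma~\ref{lem:description_of_cohomology_basic}. Recall that for a $U$-torsor $P$ with compatible continuous $G_K$-action, the set $P(\Q_\ell)^{G_K}$ is non-empty precisely when the class $[P]\in\H^1(G_K,U)$ is trivial, and likewise $P(\Q_\ell)^{I_K}\neq\emptyset$ precisely when the restricted class $[P]|_{I_K}\in\H^1(I_K,U)$ is trivial. One implication is immediate (a $G_K$-invariant point is $I_K$-invariant), so the content is the converse: if $[P]|_{I_K}$ is trivial then $[P]$ itself is trivial. Equivalently, it suffices to show that the restriction map $\H^1(G_K,U)\to\H^1(I_K,U)$ has trivial kernel as a map of pointed sets.

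First I would treat the case where $I_K$ acts unipotently on $\Lie(U)$. Here the chain of identifications set up in the proof of Lemma~\ref{lem:description_of_cohomology_basic} already suffices: combining Corollary~\ref{cor:mod_M-3} with the bijectivity of $\H^1(G_K,U/\M_{-3})\to\H^1(I_K,U/\M_{-3})^{G_K/I_K}$ established there, one sees that the composite
\[\H^1(G_K,U)\to\H^1(I_K,U)\to\H^1(I_K,U/\M_{-3})\]
is injective; in particular the first arrow has trivial kernel. No new work is required beyond quoting that argument. The genuinely substantive input hiding inside it is the vanishing $\H^1(G_K/I_K,\overline U)=1$ for unipotent groups $\overline U$ with Frobenius weights in $\{-1,-2\}$, which rests in turn on the local Euler characteristic formula (Lemma~\ref{lem:euler-char}) together with the $U/\M_{-3}$-reduction of Corollary~\ref{cor:mod_M-3}.

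For the general case, I would pass to a finite extension $L/K$ over which the inertia action becomes unipotent; such an $L$ exists by Grothendieck's Monodromy Theorem, and the \WM{} condition for $U$ is inherited by the restricted $G_L$-action, since the weight filtration is automatically $G_L$-stable, the conditions $\W_{-1}=\Lie(U)$ and $\M_0=\Lie(U)$ are unchanged, and the monodromy filtration and operator over $L$ agree with those over $K$ up to rescaling, so that the $N^i$-isomorphism condition survives. Lemma~\ref{lem:finite_index} then gives injections $\H^1(G_K,U)\hookrightarrow\H^1(G_L,U)$ and $\H^1(I_K,U)\hookrightarrow\H^1(I_L,U)$ (using that $I_L$ is open in $I_K$), and these sit in a square commuting with the two restriction maps. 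A class $[P]\in\H^1(G_K,U)$ with $[P]|_{I_K}=\ast$ therefore maps to $\ast$ in $\H^1(I_L,U)$, so $[P]|_{G_L}$ has trivial restriction to $I_L$; the unipotent-inertia case over $L$ forces $[P]|_{G_L}=\ast$, and injectivity of $\H^1(G_K,U)\hookrightarrow\H^1(G_L,U)$ gives $[P]=\ast$.

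The only real obstacle is the unipotent-inertia case, and that has effectively already been dispatched inside the proof of Lemma~\ref{lem:description_of_cohomology_basic}; the reduction step is formal, the one mild point of care being the stability of \WM{} under restriction to $G_L$, which I would check directly from the definitions.
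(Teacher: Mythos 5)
Your proof is correct and follows essentially the same route as the paper: in the unipotent-inertia case you extract injectivity of $\H^1(G_K,U)\to\H^1(I_K,U)$ from the proof of Lemma \ref{lem:description_of_cohomology_basic} (via Corollary \ref{cor:mod_M-3} and the bijection onto $\H^1(I_K,U/\M_{-3})^{G_K/I_K}$), and in general you reduce to that case over a finite extension using Lemma \ref{lem:finite_index}. The extra details you supply (torsor-to-class translation, stability of \WM{} under restriction to $G_L$) are correct and merely make explicit what the paper leaves implicit.
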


\subsection{Canonical representatives}\label{ss:can_rep}

The following lemma describes the key calculation which enables us to describe the non-abelian cohomology varieties arising from fundamental groups. The calculation will show up again in a slightly different guise in the context of Cheng--Katz integration in \S \ref{s:cheng-katz}.

\begin{lemma}\label{lemma:sigma_map}
Let $V$ be a finite dimensional $\Q_\ell$-vector space with a continuous action of $G_K$ satisfying \WM. Let $\pi_k$ denote the projection $V\to V/W_{-k-1}V$. Then
\[
\prod_{k>0}N^k \circ \pi _k :V\to \prod _{k>0}\gr_{-2k}^\M\gr_{-k}^\W V
\]
is surjective, with kernel $\M_{-1}$, inducing an isomorphism
\begin{equation}\label{eq:induce_iso}
\gr _0 ^{\M }V \longisoarrow\prod_{k>0}\gr_{-2k}^\M\gr_{-k}^\W V.
\end{equation}
\end{lemma}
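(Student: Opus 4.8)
The plan is to exploit the interaction between the weight filtration $\W_\bullet V$ and the monodromy filtration $\M_\bullet V$ of the nilpotent operator $N$, using the property \WM which says that on each graded piece $\gr^\W_{-k}V$ the associated monodromy filtration is the one centred at $-k$ with $N^k$ inducing isomorphisms $\gr^\M_{-k+i}\gr^\W_{-k}V\isoarrow\gr^\M_{-k-i}\gr^\W_{-k}V$. The key structural fact I would extract is that $\M_\bullet V$ is the monodromy filtration relative to $\W_\bullet V$ in the sense of Deligne, so that $\M_0V=V$, $\M_{-1}V$ is the ``non-trivial'' part, and $\gr^\M_0V$ captures exactly the ``top'' of each weight-graded piece where $N$ acts injectively enough to be detected by the iterated powers $N^k$.

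First I would reduce to understanding the map $\prod_{k>0}N^k\circ\pi_k$ one weight-graded piece at a time. Since $\W_\bullet V$ is finite (exhaustive and separated on a finite-dimensional space) and $\W_{-1}V=V$, write $V$ as a successive extension of the pieces $\gr^\W_{-k}V$ for $k\geq1$. On $\gr^\W_{-k}V$, property \WM gives that $N^k\colon\gr^\M_{-k+k}\gr^\W_{-k}V=\gr^\M_0\gr^\W_{-k}V\longisoarrow\gr^\M_{-2k}\gr^\W_{-k}V$ is an isomorphism; this is precisely the $k$-th factor of the target. So the map $N^k\circ\pi_k$, restricted to the associated graded, is the projection $V\to\gr^\W_{-k}V\to\gr^\M_0\gr^\W_{-k}V$ followed by this isomorphism. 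I would then check that $\prod_k N^k\circ\pi_k$ kills $\M_{-1}V$: on each $\gr^\W_{-k}$ piece, $\M_{-1}\gr^\W_{-k}V$ is by definition the sum of the $N$-primitive-generated pieces of weight $\leq k-1$ in the centred monodromy filtration, and $N^k$ annihilates everything in filtration $\leq -1$ at that stage below $\gr^\M_0$ --- more precisely $N^k(\M_{-1}\gr^\W_{-k}V)\subseteq\M_{-2k-1}\gr^\W_{-k}V=0$ by the \WM isomorphism being centred exactly at $0$. Conversely, if $v\notin\M_{-1}V$, then $v$ has nonzero image in $\gr^\M_0V$, hence in $\gr^\M_0\gr^\W_{-k}V$ for some $k$, and so some $N^k\circ\pi_k(v)\neq0$.

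For surjectivity, I would argue again weight piece by weight piece: each factor map $\gr^\M_0\gr^\W_{-k}V\isoarrow\gr^\M_{-2k}\gr^\W_{-k}V$ is onto by \WM, and since $\gr^\M_0V\cong\bigoplus_k\gr^\M_0\gr^\W_{-k}V$ (the monodromy filtration being compatible with the weight filtration on $\gr^\W$), lifting from $\gr^\W$ to $V$ causes no obstruction in the limit; the infinite product on the right is a genuine product over $k$, but each $V$ in sight is finite-dimensional (so only finitely many $k$ contribute). Putting the kernel computation and surjectivity together gives the factorisation through $\gr^\M_0V=V/\M_{-1}V$ and that the induced map \eqref{eq:induce_iso} is an isomorphism.

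\textbf{Main obstacle.} The one genuinely delicate point is verifying that the kernel is exactly $\M_{-1}V$ and not something larger or smaller --- i.e.\ the precise bookkeeping of how the ambient monodromy filtration $\M_\bullet V$ restricts to and interacts with the centred monodromy filtrations on the graded pieces $\gr^\W_{-k}V$. This is essentially the statement that $\M_\bullet V$ is the relative monodromy filtration à la Deligne; the \WM hypothesis is built to make this true, but one must be careful that $\pi_k$ (projection modulo $\W_{-k-1}V$) together with $N^k$ sees $\gr^\M_0\gr^\W_{-k}V$ faithfully and nothing below it. I would handle this by an induction on the length of the weight filtration, using the short exact sequences $0\to\W_{-k-1}V\to V\to V/\W_{-k-1}V\to0$ and the fact (following from \WM for the sub and quotient) that taking $\gr^\M_0$ is exact on such weight-strict sequences.
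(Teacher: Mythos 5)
Your proposal is correct and takes essentially the same route as the paper: note that $N^k(\M_{-1}V)\subseteq\M_{-2k-1}$ dies in each target, use the \WM{} isomorphisms $N^k\colon\gr^\M_0\gr^\W_{-k}V\isoarrow\gr^\M_{-2k}\gr^\W_{-k}V$ to pin down the kernel as exactly $\M_{-1}V$ by a weight-by-weight analysis (the paper phrases this as an induction showing the common kernel of the first $m$ maps is $\W_{-m-1}\gr^\M_0V$, which is the same content as your ``largest weight with nonzero class'' argument), and conclude by comparing dimensions via $\gr^\M_0V\iso\bigoplus_k\gr^\M_0\gr^\W_{-k}V$. The only soft spot is your phrase ``lifting from $\gr^\W$ to $V$ causes no obstruction'': as stated it is not an argument, but it is unnecessary, since once the kernel is identified the map \eqref{eq:induce_iso} is injective and surjectivity follows from the dimension count your decomposition already provides, exactly as in the paper.
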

\begin{proof}
It is clear that the kernel contains $\M_{-1}$, so the map factors through $\gr^\M_0V$. The weight--monodromy condition ensures that each
\[
N^k\colon \gr_0^\M\gr_{-k}^\W V \longisoarrow \gr_{-2k}^\M\gr_{-k}^\W V
\]
is an isomorphism; proceeding inductively, we find that the common kernel of the maps $N^k\circ\pi_k$ for $k\leq m$ is $\W_{-m-1}\gr^\M_0V$. It follows that \eqref{eq:induce_iso} is injective, and hence surjective for dimension reasons.
\end{proof}

This motivates the following definition.

\begin{definition}\label{def:canonical}\index{canonical\dots!\dots subquotient $\Lie(U)^\can$}
Let $U$ be a unipotent group over $\Q_\ell$ with a continuous action of $G_K$ satisfying \WM.
Define $\Lie(U)^\can\leq\gr^\M_{-2}\Lie(U)$ to be the intersection of the kernels of the maps
\begin{equation}\label{eqn:canonical_sigma_map}
N^{k-1}\circ\pi_k\colon \gr^\M_{-2}\Lie(U) \to \gr^\M_{-2k}\gr^\W_{-k}\Lie(U).
\end{equation}
for $k>0$.
\end{definition}

\begin{lemma}\label{lem:canonical_cocycle}
Let $U$ be a unipotent group over $\Q_\ell$ with a continuous action of $G_K$ satisfying \WM, such that the action of $I_K$ on $\Lie(U)$ is unipotent. Then the map
\[
\Lie(U)^\can\times\gr^\M_0U\rightarrow\gr^\M_{-2}U
\]
given by $(v,u)\mapsto u^{-1}\exp(v)\sigma(u)$ is bijective. In particular, the composite map
\[
\Lie(U)^\can \hookrightarrow \gr^\M_{-2}\Lie(U) \overset\exp\longrightarrow\gr^\M_{-2}U \twoheadrightarrow \gr^\M_{-2}U/\gr^\M_0U
\]
is bijective, where the quotient is as in Lemma \ref{lem:description_of_cohomology_basic}.
\end{lemma}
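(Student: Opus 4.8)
The plan is to deduce both statements from Lemma \ref{lemma:sigma_map} (applied to $V = \Lie(U)$) together with a d\'evissage along the weight filtration. First observe that the ``in particular'' is immediate from the first assertion: setting $u = 1$ restricts the map $(v,u)\mapsto u^{-1}\exp(v)\sigma(u)$ to $v\mapsto\exp(v)$, and bijectivity of the full map says precisely that every coset of $\gr^\M_0 U$ in $\gr^\M_{-2}U$ meets the image of $\exp$ on $\Lie(U)^\can$ in exactly one point (here I use that, by the twisted conjugation action analysed in Lemma \ref{lem:description_of_cohomology_basic}, the orbits of $\gr^\M_0 U$ acting on $\gr^\M_{-2}U$ are exactly the cosets $\exp(v)\cdot\gr^\M_0 U$, since $u^{-1}\exp(v)\sigma(u) \equiv u^{-1}\exp(v)u \pmod{\gr^\M_0 U}$ and the Galois-twist correction $u^{-1}\sigma(u)$ already lies in the image because $I_K$ acts unipotently). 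So the heart of the matter is the first bijectivity claim.

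For that, I would argue by a filtered d\'evissage. Since everything in sight lives in $\gr^\M_{-2}$, which is $\W$-filtered with finite-dimensional graded pieces, it suffices to prove bijectivity after passing to each successive quotient by $\W_{-n}$. Concretely, write $\gr^\M_{-2}U$ with its induced $\W$-filtration; on the abelianised/graded level the map $(v,u)\mapsto u^{-1}\exp(v)\sigma(u)$ becomes, modulo $\W_{-n-1}$, a map whose ``$\exp(v)$'' part is governed by the projection $\pi_{n}$ and whose ``$u$'' part contributes via the twisted conjugation, i.e.\ via the monodromy operator $N$ coming from $\sigma$. The key point, exactly as in the proof of Lemma \ref{lemma:sigma_map}, is that the weight--monodromy isomorphisms $N^{n-1}\colon\gr^\M_0\gr^\W_{-n}\Lie(U)\isoarrow\gr^\M_{-2n}\gr^\W_{-n}\Lie(U)$ (note the shift: we are inside $\gr^\M_{-2}$, so the relevant power is $N^{n-1}$, matching the definition of $\Lie(U)^\can$ via the maps $N^{k-1}\circ\pi_k$) let us solve uniquely for the $\gr^\W_{-n}$-component of $u$ once the $\gr^\W_{-n}$-component of $v$ has been forced to vanish by membership in $\Lie(U)^\can$. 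Thus at each stage the equation $u^{-1}\exp(v)\sigma(u) = w$ has a unique solution $(v\bmod\Lie(U)^\can\text{-constraint},\, u\bmod\W)$, and the limit over $n$ gives the bijection. Equivalently, one can phrase this as: the two subspaces $\Lie(U)^\can$ and $\{u^{-1}\sigma(u) : u\in\gr^\M_0 U\}$-span are ``complementary'' inside $\gr^\M_{-2}U$ in the appropriate filtered sense, which is exactly the content of the splitting \eqref{eq:induce_iso}.

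The main obstacle, and the step requiring the most care, is bookkeeping the interaction between the group multiplication (via Baker--Campbell--Hausdorff) and the filtrations: one must check that on each graded piece the ``correction terms'' from BCH and from the non-linearity of $\sigma$ lie in strictly deeper filtration steps, so that the associated-graded computation really is linear and governed purely by $N$ and the $\pi_k$. This is the same phenomenon already used in Lemma \ref{lem:description_of_cohomology_basic} (``$v^{-1}u^{-1}vu,\ u^{-1}\sigma(u)\in\M_{-i-2}U$ for $v\in\M_{-2}U$, $u\in\M_{-i}U$''), so I would invoke those estimates, combined with the fact that $\W_\bullet$ and $\M_\bullet$ interact compatibly under \WM, to reduce cleanly to the linear statement of Lemma \ref{lemma:sigma_map}. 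Once that reduction is in place, injectivity follows from injectivity of \eqref{eq:induce_iso} and surjectivity follows by a dimension count on each finite-dimensional graded piece, just as in the proof of Lemma \ref{lemma:sigma_map}.
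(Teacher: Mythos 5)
Your proposal follows essentially the same route as the paper's proof: the paper establishes the bijection by induction on the weight truncations $U/\W_{-n-1}$, at each step using the weight--monodromy isomorphism to produce $\log(u_n)\in\gr^\M_0\gr^\W_{-n}$ with $N^n(\log u_n)=N^{n-1}(\log v_{n-1})$ and Baker--Campbell--Hausdorff to see that twisted conjugation by $u_n$ subtracts $N(\log u_n)$ modulo deeper filtration steps, which is exactly your d\'evissage-plus-BCH reduction to the linear weight--monodromy statement. Only minor slips, none affecting the strategy: the relevant power on $\gr^\M_0\gr^\W_{-n}$ is $N^n$ rather than $N^{n-1}$; membership in $\Lie(U)^\can$ forces $N^{n-1}$ of the weight-$(-n)$ component of $v$ to vanish, not the component itself; and $\gr^\M_{-2}U/\gr^\M_0U$ is a quotient by the twisted conjugation action, not a coset space, though your deduction of the ``in particular'' from the bijectivity goes through without that description.
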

\begin{proof}
Write $U^\W_n=U/\W_{-n-1}$. To show surjectivity, we inductively show that every element of $\gr^\M_{-2}U^\W_n$ is equivalent, modulo the action of $\gr^\M_0U^\W_n$, to an element of $\Lie(U^\W_n)^\can$. Suppose we have an element $v_{n-1}\in\gr^\M_{-2}U^\W_n$, whose image in $\gr^\M_{-2}U^\W_{n-1}$ lies in $\exp(\Lie(U^\W_{n-1})^\can)$. From condition \WM we find some $\log(u_n)\in\gr^\M_0\gr^\W_{-n}U^\W_n$ such that $N^n(\log(u_n))=N^{n-1}(\log(v_{n-1}))$. It then follows by Baker--Campbell--Hausdorff that
\[
v_n:=u_nv_{n-1}\sigma(u_n^{-1})=\exp\bigl(\log(v_{n-1})-N(\log(u_n))\bigr)
\]
in $U^\W_n/\M_{-3}$, so that $\log(v_n)\in\Lie(U^\W_n)^\can$ as desired. Injectivity can be proved in a similar manner.
\end{proof}

Combining this with Lemma \ref{lem:description_of_cohomology_basic}, we arrive at the main result of this section.

\begin{theorem}\label{thm:cohomology_is_vector_space}
Let $U$ be a unipotent group over $\Q_\ell$ with a continuous action of $G_K$ satisfying \WM. Then there is a bijection
\[
\H^1(G_K,U) \longisoarrow \Hom\left(\Q_\ell(1),\Lie(U)^\can\right)^{G_K}
\]
functorial in $U$, compatible with restriction to open subgroups $G_L\leq G_K$, and independent of the choice of $\varphi$ and $\sigma$.
\end{theorem}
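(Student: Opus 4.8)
The plan is to deduce Theorem~\ref{thm:cohomology_is_vector_space} directly from the results already in hand, by chaining together Lemma~\ref{lem:canonical_cocycle} and Lemma~\ref{lem:description_of_cohomology_basic} and then checking the claimed functoriality and independence properties. First I would treat the case where the action of $I_K$ on $\Lie(U)$ is unipotent. In this case, Lemma~\ref{lem:description_of_cohomology_basic} provides a bijection $\H^1(G_K,U)\longisoarrow\left(\gr^\M_{-2}U(-1)/\gr^\M_0U\right)^{G_K}$ obtained by evaluating a cocycle at $\sigma$, and Lemma~\ref{lem:canonical_cocycle} identifies $\gr^\M_{-2}U/\gr^\M_0U$ with $\Lie(U)^\can$ via $v\mapsto\exp(v)$ (modulo the twisted conjugation action). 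Twisting by the inverse cyclotomic character, the composite bijection is $\H^1(G_K,U)\longisoarrow\Hom(\Q_\ell(1),\Lie(U)^\can)^{G_K}$, and one should record that under this identification a class is sent to the homomorphism $t_\ell(\sigma)\mapsto v$ where $\exp(v)$ represents the $\sigma$-value of a cocycle in canonical form; this makes the independence of the choice of $\sigma$ manifest, since rescaling $\sigma$ rescales $t_\ell(\sigma)$ compatibly. Independence of $\varphi$ is automatic since $\varphi$ never enters the construction.

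Next I would remove the hypothesis on the inertia action. By Grothendieck's monodromy theorem a finite-index open subgroup $I_L\leq I_K$ acts unipotently on $\Lie(U)$; enlarging if necessary we may take $L/K$ finite Galois with $G_L\unlhd G_K$ and $I_L$ acting unipotently. By Lemma~\ref{lem:finite_index} restriction gives a bijection $\H^1(G_K,U)\longisoarrow\H^1(G_L,U)^{G_K/G_L}$. The case already treated applies to $G_L$, yielding $\H^1(G_L,U)\longisoarrow\Hom(\Q_\ell(1),\Lie(U)^\can)^{G_L}$. One then checks this isomorphism is $G_K/G_L$-equivariant — the key point being that $\Lie(U)^\can$, the monodromy and weight filtrations, and the twisted-conjugation action are all intrinsic to the $G_K$-representation, hence the construction over $G_L$ is visibly compatible with the residual $G_K/G_L$-action — so that taking $G_K/G_L$-invariants gives $\H^1(G_K,U)\longisoarrow\Hom(\Q_\ell(1),\Lie(U)^\can)^{G_K}$. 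I would also verify this is independent of the auxiliary choice of $L$: any two such fields are dominated by a common one, and restriction maps are compatible.

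Finally I would address functoriality in $U$ and compatibility with restriction to open subgroups $G_L\leq G_K$. Functoriality follows because every ingredient — the monodromy operator $N$, the filtrations $\M_\bullet$ and $\W_\bullet$, the subquotient $\Lie(U)^\can$ of Definition~\ref{def:canonical}, and the twisted conjugation action — is functorial for $G_K$-equivariant homomorphisms of filtered unipotent groups satisfying \WM; one checks a morphism $U\to U'$ induces a commuting square relating the two bijections, which reduces to the corresponding compatibilities already implicit in Lemmas~\ref{lem:description_of_cohomology_basic} and~\ref{lem:canonical_cocycle}. Compatibility with restriction $G_L\leq G_K$ follows similarly, using that $\Lie(U)^\can$ and the normalisation via $t_\ell$ are unchanged upon restricting the Galois action (note $t_\ell$ for $L$ and for $K$ differ by the index $e(L/K)$, which is exactly cancelled by the corresponding rescaling of $\sigma$, so the resulting map on $\Hom(\Q_\ell(1),-)$ is genuinely the identity). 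The main obstacle I anticipate is the bookkeeping in the second paragraph: verifying that the bijection of Lemma~\ref{lem:description_of_cohomology_basic}, which was stated for a fixed $\sigma$ and under a unipotency hypothesis, glues correctly along $G_K/G_L$ and does not secretly depend on the choice of $L$ or $\sigma$; everything else is a formal consequence of results already established.
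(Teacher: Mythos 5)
Your overall route is the same as the paper's: treat the case where inertia acts unipotently by composing the bijections of Lemma~\ref{lem:description_of_cohomology_basic} and Lemma~\ref{lem:canonical_cocycle}, then descend to general $U$ through a finite Galois extension $L/K$ using Lemma~\ref{lem:finite_index} and $G_{L/K}$-equivariance. The gap is your final parenthetical claim that the normalisations over $K$ and over $L$ agree on the nose because the factor $e=e(L/K)$ relating $t_\ell^K|_{I_L}$ and $t_\ell^L$ is ``cancelled by the corresponding rescaling of $\sigma$''. Within a fixed field that cancellation is real (replacing $\sigma$ by $\sigma^a$ rescales both $t_\ell(\sigma)$ and $\log c(\sigma)$ by $a$), but across the extension it fails: if $\sigma_L$ is chosen with $t_\ell^L(\sigma_L)=t_\ell^K(\sigma_K)$, then morally $\sigma_L=\sigma_K^e$, so the canonical element attached over $L$ to a restricted cocycle, namely $\tfrac1m\log c(\sigma_L^m)=\tfrac1m\log c(\sigma_K^{em})$, is $e$ times the canonical element over $K$ (cf.\ Remark~\ref{rmk:canonical_elt}), while the generator of $\Z_\ell(1)$ it is attached to is the same. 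Hence the homomorphism $\Q_\ell(1)\rightarrow\Lie(U)^\can$ produced over $L$ is $e(L/K)$ times the one produced over $K$, not equal to it. The abelian case $U=\Q_\ell(1)$ makes this concrete: there the theorem's map is the valuation coordinate of the Kummer class, and $v_L=e(L/K)\,v_K$.

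Consequently your general-case construction (apply the unipotent-inertia bijection over $L$ and take $G_{L/K}$-invariants, with no rescaling) is not well defined: using $L'\supseteq L$ multiplies the map by $e(L'/L)$, so your proposed verification that ``any two such fields are dominated by a common one, and restriction maps are compatible'' does not go through, it conflicts with your own direct construction in the case that $I_K$ already acts unipotently, and the same miscount undermines your argument for compatibility with restriction to open subgroups. The fix is exactly what the paper does: define the bijection for $K$ as $1/e(L/K)$ times the composite of restriction with the $L$-level bijection; with that factor the definition agrees with the direct construction when $I_K$ acts unipotently, is independent of the choice of $L$, and the stated compatibilities follow. Everything else in your proposal (the unipotent case, $G_{L/K}$-equivariance, functoriality in $U$, independence of $\sigma$ within a fixed field) is correct and matches the paper; note only that independence of $\varphi$ is not because $\varphi$ ``never enters'' --- the $\M$-filtration is defined through Frobenius eigenvalues --- but because that filtration is itself independent of the choice of Frobenius lift.
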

\begin{proof}
Let us suppose firstly that $I_K$ acts unipotently on $\Lie(U)$. The bijection $\Lie(U)^\can(-1) \longisoarrow \gr^\M_{-2}U(-1)/\gr^\M_0U$ from Lemma \ref{lem:canonical_cocycle} is $G_K$-equivariant, so we define the desired map to be the composite
\[
\H^1(G_K,U) \longisoarrow \left(\gr^\M_{-2}U(-1)/\gr^\M_0U\right)^{G_K} \longisoarrow \left(\Lie(U)^\can(-1)\right)^{G_K}
\]
with the first map that from Lemma \ref{lem:description_of_cohomology_basic}. This map is easily checked to be independent of the choice of $\varphi$ and $\sigma$.

In general, we choose a finite Galois extension $L/K$ such that $I_L$ acts unipotently on $\Lie(U)$. The bijection
\[
\H^1(G_L,U) \longisoarrow \left(\Lie(U)^\can(-1)\right)^{G_L}
\]
constructed above can be checked to be equivariant for the action of $G_{L/K}$, and we define the desired bijection to be the composite
\[
\H^1(G_L,U)^{G_{L/K}} \longisoarrow \left(\Lie(U)^\can(-1)\right)^{G_K} \overset{1/e}\longrightarrow \left(\Lie(U)^\can(-1)\right)^{G_K}
\]
using Lemma \ref{lem:finite_index}. This is independent of the choice of $L$, hence compatible with restriction to open subgroups, and is easily seen to be functorial in $U$.
\end{proof}

\begin{remark}\label{rmk:canonical_elt}\index{canonical\dots!\dots element $\gamma^\can$}
Theorem \ref{thm:cohomology_is_vector_space} says concretely that any class in $\H^1(G_K,U)$ can be represented by a cocycle $c$ such that $\frac1e\log(c(\sigma^e))\in\Lie(U)^\can$ for every power of $\sigma$ which acts unipotently on $\Lie(U)$, and this element of $\Lie(U)^\can$ is the one corresponding to the class of the cocycle $c$. It follows from Lemma \ref{lem:canonical_cocycle} that such a representing cocycle is unique up to the action of $\M_{-1}U$.

Put another way, this says that every $U$-torsor $P$ with compatible continuous Galois action has, up to the action of $\M_{-1}U$, a distinguished element $\gamma^\can\in P(\Q_\ell)$ characterised by the property that $\frac1e\log\left((\gamma^\can)^{-1}\sigma^e(\gamma^\can)\right)\in\Lie(U)^\can$ for $\sigma^e$ as above. When $U$ is the $\Q_\ell$-pro-unipotent \'etale fundamental group of a hyperbolic curve, we will describe these elements explicitly in Proposition~\ref{prop:canonical_path_is_canonical_path} in terms of iterated integration on reduction graphs.

These canonical elements are $\ell$-adic analogues of the elements of crystalline path-torsors from \cite[Theorem~30]{vologodsky}, and will be used in upcoming work of the first author and Daniel Litt.
\end{remark}

\begin{remark}\label{rmk:cohomology_grading}
One surprising aspect of Theorem \ref{thm:cohomology_is_vector_space} is that it provides the set $\H^1(G_K,U)$ with the structure of a $\Q_\ell$-vector space, whose addition law remains a mystery to the authors. In fact, more is true: the vector space $\H^1(G_K,U)=\left(\Lie(U)^\can(-1)\right)^{G_K}$ is canonically graded, with its grading coming from the isomorphism
\[
\prod_{k\geq2}N^{k-2}\circ\pi_k\colon\Lie(U)^\can \longisoarrow \prod_{k\geq2}\ker\left(\gr^\M_{2-2k}\gr^\W_{-k}\Lie(U)\overset N\longrightarrow\gr^\M_{-2k}\gr^\W_{-k}\Lie(U)\right).
\]
Note that the graded pieces of this filtration are $\H^1(G_K,\gr^\W_{-k}U)$.
\end{remark}

\begin{remark}\label{rmk:presheaf_2}
Just as in Remark \ref{rmk:presheaf_1}, Theorem \ref{thm:cohomology_is_vector_space} extends to a corresponding statement about presheaves, where the vector space $\Hom\left(\Q_\ell(1),\Lie(U)^\can\right)^{G_K}$ is viewed as an affine space in the usual way. This gives an alternative, much more explicit proof of representability of the presheaf $\H^1(G_K,U)$.
\end{remark}

\begin{remark}
Although we have chosen to work only with finite dimensional objects, all of the discussion in this section remains true if we permit $U$ to be a finitely generated pro-unipotent group satisfying \WM (appropriately interpreted). For example, using \cite[Lemma 4.0.5]{betts2017motivic} we can take an inverse limit to obtain an isomorphism
\[
\H^1(G_K,U) \longisoarrow \Hom\left(\Q_\ell(1),\Lie(U)^\can\right)^{G_K}
\]
also in this case, along with a product decomposition as in Remark \ref{rmk:cohomology_grading}.
\end{remark}

\begin{corollary}[to Theorem \ref{thm:cohomology_is_vector_space}]
Let $U$ be a finitely generated pro-unipotent group over $\Q_\ell$ with a continuous action of $G_K$ satisfying \WM. Then there is a finite extension $K'/K$ such that for any finite extension $L/K'$, the restriction map
\[
\H^1(G_{K'},U) \rightarrow \H^1(G_L,U)
\]
is an isomorphism.
\begin{proof}
Passing to a finite extension if necessary, we may assume that $I_K$ acts unipotently on $U^\ab$, and that the subgroup of $\overline\Q_\ell^\times$ generated by $q$ and the eigenvalues of $\varphi$ on $U^\ab$ contains no roots of unity other than $1$. We will show that $K'=K$ suffices in this case. Indeed, the assumptions guarantee that the action of $I_K$ on $\Lie(U)$ is pro-unipotent, and hence that the Galois action on $\Lie(U)^\can\leq\gr^\M_{-2}\Lie(U)$ is unramified. Our assumption on Frobenius eigenvalues ensures that $\Hom(\Q_\ell(1),\Lie(U)^\can)$ has no Frobenius eigenvalues which are roots of unity other than $1$, and hence $\Hom(\Q_\ell(1),\Lie(U)^\can)^{G_L}=\Hom(\Q_\ell(1),\Lie(U)^\can)^{G_K}$ for any finite extension, as desired.
\end{proof}
\end{corollary}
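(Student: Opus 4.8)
The plan is to deduce the statement from Theorem~\ref{thm:cohomology_is_vector_space}, which — in the finitely generated pro-unipotent setting of the remark following it — gives for every finite extension $L/K$ a bijection $\H^1(G_L,U)\longisoarrow W^{G_L}$, functorial in $U$ and compatible with restriction, where $W:=\Hom(\Q_\ell(1),\Lie(U)^\can)$ is the Tate twist $\Lie(U)^\can(-1)$, a pro-finite-dimensional $\Q_\ell$-linear $G_K$-representation sitting inside $\gr^\M_{-2}\Lie(U)(-1)$. Under these bijections the restriction map $\H^1(G_{K'},U)\to\H^1(G_L,U)$ becomes the inclusion $W^{G_{K'}}\hookrightarrow W^{G_L}$ of invariant subspaces, so it is enough to find a finite extension $K'/K$ with $W^{G_{K'}}=W^{G_L}$ inside $W$ for every finite $L/K'$.

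First I would make $W$ unramified after a finite extension. Since $U$ is finitely generated, $\Lie(U)^\ab$ is finite-dimensional, and by Grothendieck's monodromy theorem there is a finite extension $K_1/K$ over which $I_{K_1}$ acts unipotently on it. As $\Lie(U)$ is topologically generated by $\Lie(U)^\ab$ under iterated brackets, each graded piece of its lower central series is a subquotient of a tensor power of $\Lie(U)^\ab$; hence $I_{K_1}$ acts unipotently on every finite-dimensional lower central quotient, and so pro-unipotently on $\Lie(U)$ itself. With $N=\log\sigma$ we have $N(\M_i\Lie(U))\subseteq\M_{i-2}\Lie(U)$, so $\sigma=\exp(N)$ acts trivially on each $\gr^\M_i\Lie(U)$, in particular on the $G_K$-subrepresentation $\Lie(U)^\can\subseteq\gr^\M_{-2}\Lie(U)$. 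Since $\ell\neq p$, the character $\Q_\ell(1)$ is already unramified, so $W$ is an unramified $G_{K_1}$-representation and $\varphi$ acts on $W$ with eigenvalues of the form $q\cdot\lambda$, with $\lambda$ a Frobenius eigenvalue on $\Lie(U)$.

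Next I would kill the bad Frobenius eigenvalues. The same generation statement shows that every Frobenius eigenvalue on $\Lie(U)$ — hence, after twisting, every one on $W$ — lies in the subgroup $\Lambda\subseteq\overline\Q_\ell^\times$ generated by $q$ and the finitely many Frobenius eigenvalues on $\Lie(U)^\ab$. This $\Lambda$ is a finitely generated abelian group, so its torsion subgroup is finite, of order $m$ say; let $K'/K_1$ be the unramified extension of degree $m$. Then $\varphi_{K'}=\varphi^m$, and its eigenvalues on $W$ are $m$-th powers of elements of $\Lambda$; if such a power were a nontrivial root of unity, the corresponding element of $\Lambda$ would be torsion of order dividing $m$, hence killed by $m$-th powering, a contradiction. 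Thus $\varphi_{K'}$ has no eigenvalue on $W$ which is a root of unity other than $1$. For any finite $L/K'$ the representation $W$ remains unramified, so $W^{G_L}=\ker(\varphi_{K'}^f-1\mid W)$ where $f=f(L/K')$; since $\Q_\ell$ has characteristic $0$, on each finite-dimensional $\varphi_{K'}$-stable constituent of $W$ no eigenvalue of $\varphi_{K'}$ other than $1$ is an $f$-th root of unity, and on the generalised $1$-eigenspace $\varphi_{K'}$ is unipotent, so $\ker(\varphi_{K'}^f-1)=\ker(\varphi_{K'}-1)$ there. Hence $W^{G_L}=W^{G_{K'}}$, as required.

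I expect the only delicate point to be the passage from the finite-dimensional Grothendieck monodromy theorem to a uniform statement on all of $\Lie(U)$, together with the accompanying claim that the Frobenius eigenvalues lie in one fixed finitely generated group; both rest on $\Lie(U)$ being generated by $\Lie(U)^\ab$ under brackets. Once that and Theorem~\ref{thm:cohomology_is_vector_space} are in hand, the rest is routine manipulation of finitely generated subgroups of $\overline\Q_\ell^\times$, and there is no serious further obstacle — the substance of the corollary lies in the theorem it cites.
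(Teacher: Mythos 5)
Your proposal is correct and follows essentially the same route as the paper: pass to a finite extension over which inertia acts unipotently on $U^\ab$ (hence pro-unipotently on $\Lie(U)$, making $\Hom(\Q_\ell(1),\Lie(U)^\can)$ unramified) and over which the group generated by $q$ and the Frobenius eigenvalues on $U^\ab$ has no nontrivial roots of unity, then conclude that the Frobenius-invariants, and hence by Theorem \ref{thm:cohomology_is_vector_space} the cohomology, are unchanged under further finite extensions. Your only addition is to spell out details the paper leaves implicit (the explicit unramified extension of degree $m$ killing the torsion of $\Lambda$, and the $\ker(\varphi^f-1)=\ker(\varphi-1)$ computation), which is fine.
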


\subsection{Fundamental groups of curves}\label{ss:pi1_curves}

Let $X/K$ be a curve, and $U=\pi_1^{\Q_\ell}(X_{\overline K},b)$ the $\Q_\ell$-pro-unipotent \'etale fundamental group of $X$ based at a point $b\in X(K)$. In order to apply the results of the preceding section, we want to show that $U$ satisfies condition \WM. The \emph{weight filtration} $\W_\bullet$ on $U$ is described in \cite[Definition 1.5]{AMO}.

\index{filtration!weight $\W_\bullet$}
\begin{definition}\label{def:wt1}
Let $\overline X$ be the smooth compactification of $X$. We define $\W_{-1}U=U$ and define $\W_{-2}U$ to be the kernel of
\[
\pi_1^{\Q_\ell}(X_{\overline K},b) \twoheadrightarrow \pi_1^{\Q_\ell}(\overline X_{\overline K},b)^\ab.
\]
For $k>2$, we set $\W_{-k}U$ to be the (normal) subgroup subscheme generated by the commutator subgroups $[\W_{-i}U,\W_{-j}U]$ for $i+j=k$ ($i,j>0$).

In particular, if $X=\overline X$ or $X=\overline X\setminus\{\text{$K$-pt}\}$, then we have $\W_{-k}U=\Cent^kU$ where $\Cent^\bullet U$ denotes the descending central series of $U$.
\end{definition}

The weight filtration $\W_\bullet$ admits an alternative (equivalent) definition of a more arithmetic nature.

\begin{definition}\label{def:wt2}
Let $S_0$ be a integral scheme of finite type over $\Z\left[\frac1\ell\right]$ with a geometric basepoint $\bar s$ and $V$ a continuous representation of $\pi_1^\et(S_0,\bar s)$ on a finite dimensional (or pro-finite dimensional) $\Q_\ell$-vector space. Suppose that $w$ is a closed point of $S_0$ and that $\varphi_w\in\pi_1^\et(S_0,\bar s)$ is a geometric Frobenius at $w$. The \emph{weight filtration $\W_\bullet V$ relative to $w$} is defined by setting $\W_iV_{\overline\Q_\ell}$ to be the span of the generalised $\varphi_w$-eigenspaces whose corresponding eigenvalues are $q_w$-Weil numbers of weight~$\leq i$.

If the weight filtrations relative to $w$ are independent of the choice of $\varphi_w$ and agree for all $w$ in a non-empty Zariski-open subset of $S_0$, we refer to this filtration unambiguously as \emph{the} weight filtration on~$V$.
\end{definition}

\begin{proposition}\label{prop:wt_is_wt}
The $G_K$-action on $U$ factors through an action of $\pi_1^\et(S_0,\bar s)$ for $S_0$ an integral $\Z\left[\frac1\ell\right]$-scheme of finite type such that the $\W$-filtration on $U$ defined in Definition \ref{def:wt1} agrees with the $\W$-filtration on $\Lie(U)$ defined in Definition \ref{def:wt2}.
\begin{proof}
We may choose an integral $\Z\left[\frac1\ell\right]$-scheme $S_0$ of finite type and a pointed relative curve $(\mathcal X,b)$ over $S_0$ which is a complement of an \'etale divisor $\mathcal D$ in a smooth projective $S_0$-curve $\overline{\mathcal X}$, such that $X$ is the pullback of $\mathcal X$ along a morphism $\Spec(K)\rightarrow S_0$. It follows from Grothendieck's Specialisation Theorem \cite[Expos\'e XIII, Proposition 4.3 \& Exemples 4.4]{SGA1} that the $G_K$-action on $U$ factors through $\pi_1^\et(S_0,\bar s)$, and that the restriction of this action to a decomposition group $G_{\kappa(w)}$ at some closed point $w$ can be identified with the natural Galois action on $\pi_1^{\Q_\ell}(\mathcal X_{\bar w},b(\bar w))$ for any geometric point $\bar w$ lying over $w$. The $G_{\kappa(w)}$-equivariant identification
\[
\pi_1^{\Q_\ell}(X_{\overline K},b) \longisoarrow \pi_1^{\Q_\ell}(\mathcal X_{\bar w},b(\bar w))
\]
above is strictly compatible with the $\W$-filtrations of either side, defined as in Definition \ref{def:wt1}. Abusing notation slightly, we also denote the right-hand side by~$U$.

It now suffices to verify that the $\varphi_w$-eigenvalues of each $\gr^\W_{-k}U$ are $q_w$-Weil numbers of weight $-k$. When $k=1$, we have
\[
\gr^\W_{-1}U = \pi_1^{\Q_\ell}(\overline{\mathcal X}_{\bar w},b(\bar w))^\ab = \H^1_\et(\overline{\mathcal X}_{\bar w},\Q_\ell)^\dual,
\]
all of whose $\varphi_w$-eigenvalues are weight $-1$ by the Weil Conjectures.

When $k=2$, we have a $G_{\kappa(w)}$-equivariant exact sequence
\[
\gr^\W_{-1}U\otimes\gr^\W_{-1}U \rightarrow \gr^\W_{-2}U \rightarrow \Q_\ell(1)\cdot\mathcal D_{\bar w}
\]
where the left-hand map is the commutator map. It follows that the $\varphi_w$-eigenvalues of $\gr^\W_{-2}U$ are weight $-2$.

When $k>2$, we have a $G_{\kappa(w)}$-equivariant surjection
\[
\bigoplus_{\substack{i+j=k \\ i,j>0}}\gr^\W_{-i}U\otimes\gr^\W_{-j}U \twoheadrightarrow \gr^\W_{-k}U
\]
given by commutator maps. It then follows by induction that the $\varphi_w$-eigenvalues of $\gr^\W_{-k}U$ are all weight $-k$, as desired.
\end{proof}
\end{proposition}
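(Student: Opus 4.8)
The plan is to spread $X$ out over a finite-type base and compute the weight filtration from Frobenius eigenvalues on the fibres, exactly matching the recipe of Definition~\ref{def:wt2}. First I would choose an integral scheme $S_0$ of finite type over $\Z\left[\frac1\ell\right]$ with a geometric point $\bar s$, together with a pointed relative curve $(\mathcal X,b)$ over $S_0$ with $\mathcal X=\overline{\mathcal X}\setminus\mathcal D$ for $\overline{\mathcal X}\to S_0$ smooth projective and $\mathcal D\to S_0$ finite \'etale, such that $(X,b)$ is the pullback of $(\mathcal X,b)$ along a morphism $\Spec(K)\to S_0$; this can always be arranged, shrinking $S_0$ if necessary. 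By Grothendieck's Specialisation Theorem \cite[Expos\'e XIII, Proposition 4.3 \& Exemples 4.4]{SGA1}, applied after $\Q_\ell$-pro-unipotent completion, the geometric fundamental group $U=\pi_1^{\Q_\ell}(X_{\overline K},b)$ carries a natural monodromy action of $\pi_1^\et(S_0,\bar s)$ through which the $G_K$-action factors, and for every closed point $w$ of $S_0$ the restriction of this action to a decomposition group $G_{\kappa(w)}$ is identified with the usual Galois action on $\pi_1^{\Q_\ell}(\mathcal X_{\bar w},b(\bar w))$. Since the filtration $\W_\bullet$ of Definition~\ref{def:wt1} is built functorially from the curve --- via the abelianisation of the proper fundamental group and iterated commutators --- this identification is strictly compatible with the $\W$-filtrations on either side. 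This gives the first assertion of the proposition, so it remains to identify the two weight filtrations on $\Lie(U)$.

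After the shrinking above, for every closed point $w$ of $S_0$ the geometric Frobenius $\varphi_w$ is well-defined up to conjugacy in $\pi_1^\et(S_0,\bar s)$, so its eigenvalues on the lisse object $\Lie(U)$ are well-defined; and by Definition~\ref{def:wt2} the claim reduces to showing that the $\varphi_w$-eigenvalues on $\gr^\W_{-k}U$ are $q_w$-Weil numbers of weight exactly $-k$, for all $k>0$ and all such $w$. I would prove this by induction on $k$, using the inductive description of $\W_\bullet$. For $k=1$, $\gr^\W_{-1}U=\pi_1^{\Q_\ell}(\overline{\mathcal X}_{\bar w},b(\bar w))^\ab=\H^1_\et(\overline{\mathcal X}_{\bar w},\Q_\ell)^\dual$ is pure of weight $-1$ by Deligne's proof of the Weil conjectures. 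For $k=2$, the $G_{\kappa(w)}$-equivariant commutator/residue exact sequence
\[
\gr^\W_{-1}U\otimes\gr^\W_{-1}U\to\gr^\W_{-2}U\to\Q_\ell(1)\cdot\mathcal D_{\bar w}
\]
has both outer terms pure of weight $-2$ (using the previous case and the fact that $\Q_\ell(1)$ has weight $-2$ over $\kappa(w)$), hence so does $\gr^\W_{-2}U$. For $k\geq3$, $\gr^\W_{-k}U$ is a quotient, under commutator maps, of $\bigoplus_{i+j=k,\ i,j>0}\gr^\W_{-i}U\otimes\gr^\W_{-j}U$, which by the inductive hypothesis is pure of weight $-k$; hence so is $\gr^\W_{-k}U$.

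This purity statement holds uniformly for all closed points $w$ of $S_0$, so the filtration of $\Lie(U)$ by spans of generalised $\varphi_w$-eigenspaces of weight $\leq i$ --- which is the weight filtration relative to $w$ of Definition~\ref{def:wt2} --- coincides with $\W_{\leq i}\Lie(U)$ from Definition~\ref{def:wt1} for every $w$; in particular it is independent of the choice of $\varphi_w$ and agrees for all $w$, so it is unambiguously \emph{the} weight filtration in the sense of Definition~\ref{def:wt2}. I expect the only genuine subtlety to lie in the first paragraph: one must choose the spreading-out carefully enough that Grothendieck's specialisation isomorphism is simultaneously Galois-equivariant and \emph{strictly} compatible with the combinatorially-defined filtration $\W_\bullet$ of Definition~\ref{def:wt1}; granting that, the remaining weight computation is a routine induction resting on Deligne's theorem.
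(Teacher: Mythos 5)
Your argument is correct and follows essentially the same route as the paper's proof: spread $(X,b)$ out over a finite-type $\Z[\frac1\ell]$-scheme, invoke Grothendieck's Specialisation Theorem for the factorisation through $\pi_1^\et(S_0,\bar s)$ and the identification with the fibre at a closed point, and then check purity of the $\varphi_w$-eigenvalues on $\gr^\W_{-k}U$ by the same induction (Weil conjectures for $k=1$, the commutator/residue sequence for $k=2$, commutator surjections for $k>2$). The subtlety you flag about strict compatibility of the specialisation isomorphism with $\W_\bullet$ is likewise asserted rather than elaborated in the paper, so nothing is missing relative to the original.
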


To show that $U$ satisfies condition \WM, we use the following preparatory lemma.

\begin{lemma}\label{lem:summand}
Let $\Cent^\bullet U$ denote the central series of $U$. Then $\gr^k_\Cent U$ is a direct summand of $(U^\ab)^{\otimes k}$ as a $\W$-filtered representation of $G_K$, for all $k>0$.
\end{lemma}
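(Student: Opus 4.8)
The statement to be proved is that $\gr^k_\Cent U$ is a direct summand of $(U^\ab)^{\otimes k}$, compatibly with the $\W$-filtration. The natural first step is to observe that $\gr_\Cent^\bullet U := \bigoplus_{k>0}\gr_\Cent^k U$ is a graded Lie algebra over $\Q_\ell$ (via the commutator bracket), generated in degree $1$ by $\gr_\Cent^1 U = U^\ab$; this is a standard fact about the associated graded of the lower central series of a group, and here everything is a $G_K$-equivariant, $\W$-filtered statement. Consequently the commutator map furnishes a surjection $\varpi\colon L(U^\ab) \twoheadrightarrow \gr_\Cent^\bullet U$ from the free graded Lie algebra on the vector space $U^\ab$, which in degree $k$ gives a $G_K$-equivariant, $\W$-strict surjection $L_k(U^\ab) \twoheadrightarrow \gr_\Cent^k U$, where $L_k$ denotes the degree-$k$ part of the free Lie algebra. (Strictness for the $\W$-filtration holds because all of $\gr^\W_{-k}U$ is pure of weight $-k$, by Proposition~\ref{prop:wt_is_wt}, so any $G_K$-equivariant surjection between such objects is automatically strict.)

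Next I would recall that the free Lie algebra $L_k(V)$ on a vector space $V$ is, in characteristic $0$, canonically a direct summand of $V^{\otimes k}$: indeed $V^{\otimes k}$ carries a natural action of the symmetric group $S_k$, and the Dynkin (or Eulerian) idempotent $e_k \in \Q[S_k]$ projects $V^{\otimes k}$ onto $L_k(V)$. This projection is functorial in $V$, hence $G_K$-equivariant, and it respects the $\W$-grading on $(U^\ab)^{\otimes k}$ (since $(U^\ab)^{\otimes k}$ is pure of weight $-k$, there is nothing to check). So $L_k(U^\ab)$ is a $G_K$-stable, $\W$-filtered direct summand of $(U^\ab)^{\otimes k}$.

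It remains to split the surjection $L_k(U^\ab) \twoheadrightarrow \gr_\Cent^k U$. Since we are working with $\Q_\ell$-vector spaces and the kernel is $G_K$-stable and $\W$-filtered with everything pure of weight $-k$, the surjection splits $G_K$-equivariantly and $\W$-compatibly — for instance, because the category of $\W$-filtered $G_K$-representations that are ``pure'' in the relevant sense is semisimple enough that every short exact sequence of such objects splits, or more elementarily because one can choose a $G_K$-equivariant complement using complete reducibility of the relevant (finite-dimensional) Galois representation after restricting to a suitable open subgroup and averaging, together with weight-purity to descend. Composing the splitting $\gr_\Cent^k U \hookrightarrow L_k(U^\ab)$ with the inclusion $L_k(U^\ab) \hookrightarrow (U^\ab)^{\otimes k}$ exhibits $\gr_\Cent^k U$ as the desired direct summand.

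The main obstacle, I expect, is bookkeeping rather than conceptual: one must be careful that the free-Lie-algebra surjection, the Dynkin idempotent splitting, and the final splitting are all simultaneously $G_K$-equivariant \emph{and} strictly compatible with the $\W$-filtrations. The key simplification that makes this routine is the weight-purity established in Proposition~\ref{prop:wt_is_wt}: once each $\gr^\W_{-k}U$ and each $(U^\ab)^{\otimes k}$ is known to be pure of a single weight, all the maps in sight are automatically strict, and strict exact sequences of $\Q_\ell$-vector spaces with extra structure split as soon as they split as Galois representations, which they do by semisimplicity (after a finite base change if necessary, then descending by purity). So the only genuine input beyond formal nonsense is this purity, which we already have.
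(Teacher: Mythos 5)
There is a genuine gap at the final step, where you split the surjection $L_k(U^\ab)\twoheadrightarrow\gr_\Cent^kU$. Your justification — that short exact sequences of $\W$-filtered $G_K$-representations with pure graded pieces split, ``by semisimplicity'' or by restricting to an open subgroup and averaging — is not valid. Continuous $\Q_\ell$-representations of $G_K$ that are pure of a fixed weight do \emph{not} form a semisimple category: for instance, an unramified two-dimensional representation on which Frobenius acts by a nontrivial unipotent Jordan block is a non-split self-extension of the trivial representation, and all Frobenius eigenvalues are Weil numbers of weight $0$. Nor is there any finite group to average over, since the $G_K$-action on $U$ does not factor through a finite quotient. (A secondary slip: when $X$ is affine, $U^\ab\iso\H^1_\et(X_{\overline K},\Q_\ell)^\dual$ has weights $-1$ and $-2$, so $(U^\ab)^{\otimes k}$ is not pure of weight $-k$; Proposition~\ref{prop:wt_is_wt} only gives purity of the individual $\W$-graded pieces, which is what makes equivariant maps automatically $\W$-compatible — that part of your bookkeeping is fine.)

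The missing ingredient, which is exactly what the paper's proof supplies, is that the splitting is obtained not from Galois semisimplicity but from reductivity of an auxiliary algebraic group. One first observes (using Proposition~\ref{prop:wt_is_wt}) that the $G_K$-action factors through $\pi_1^\et(S_0,\bar s)$, and then that the action on $\gr_\Cent^\bullet\Lie(U)$ factors through an \emph{algebraic} action of $\GL(U^\ab)$ when $X\neq\overline X$ (where $\gr_\Cent^\bullet\Lie(U)$ is the free pro-nilpotent Lie algebra on $U^\ab$ — note that in this case your surjection $L_k(U^\ab)\rightarrow\gr_\Cent^kU$ is in fact an isomorphism, so no splitting is needed), respectively through $\GSp(U^\ab)$ when $X=\overline X$, using Hain's theorem that $\gr_\Cent^\bullet\Lie(U)$ is then the free Lie algebra modulo the ideal generated by the symplectic form. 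Since $\GL(U^\ab)$ and $\GSp(U^\ab)$ are reductive and we are in characteristic $0$, the surjection $(U^\ab)^{\otimes k}\twoheadrightarrow\gr_\Cent^kU$ splits equivariantly for the reductive group, hence $G_K$-equivariantly, and the splitting is $\W$-filtered by the weight argument. Your steps (a) generation in degree $1$ and (b) the Dynkin-idempotent splitting $L_k(U^\ab)\subseteq(U^\ab)^{\otimes k}$ are correct, but without the reductive-group (or some substitute) input, the projective case of the lemma is not proved.
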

\begin{proof}
Pick a $\Z\left[\frac1\ell\right]$-scheme $S_0$ as in Proposition \ref{prop:wt_is_wt}, so that the $G_K$-action on $U$ factors through $\pi_1^\et(S_0,\bar v)$. We write $L=\Lie(U)$ for the Lie algebra of $U$ and $\gr_\Cent^\bullet L$ for its associated graded with respect to the descending central series filtration -- we will show that $\gr_\Cent^kL$ is a $\W$-filtered and $G_K$-equivariant direct summand of $(L^\ab)^{\otimes k}$ for all $k>0$.

Suppose first that $X\neq\overline X$, so that $\gr_\Cent^\bullet L$ is the free pro-nilpotent Lie algebra on $L^\ab$. It follows that there is an algebraic action of $\GL(L^\ab)$ on $\gr_\Cent^\bullet L$ compatible with the Lie bracket, and the action of $\pi_1^\et(S_0,\bar s)$ factors through this $\GL(L^\ab)$-action via the homomorphism $\rho\colon\pi_1^\et(S_0,\bar s)\rightarrow\GL(L^\ab)$ given by the action on $L^\ab$.

Since $\GL(L^\ab)$ is reductive, the $k$-fold Lie bracket map $(L^\ab)^{\otimes k} \twoheadrightarrow \gr_\Cent^kL$ induces a $\GL(L^\ab)$-equivariant splitting
\[
(L^\ab)^{\otimes k} = \gr_\Cent^kL \oplus V_k.
\]
This splitting is automatically $\pi_1^\et(S_0,\bar s)$-equivariant, and hence $G_K$-equivariant, and $\W$-filtered by Proposition \ref{prop:wt_is_wt}.

In the case that $X=\overline X$, by \cite[Proposition 9.2]{hain:universal} $\gr_\Cent^\bullet L$ is the free pro-nilpotent Lie algebra on $L^\ab$ modulo the Lie ideal generated by a non-degenerate symplectic form on $L^\ab$. Hence there is an algebraic action of $\GSp(L^\ab)$ on $\gr_\Cent^\bullet L$ compatible with the Lie bracket, and the action of $\pi_1^\et(S_0,\bar s)$ factors through this $\GSp(L^\ab)$-action via the homomorphism $\rho\colon\pi_1^\et(S_0,\bar s)\rightarrow\GSp(L^\ab)$ given by the action on $L^\ab$. Since $\GSp(L^\ab)$ is also reductive, we conclude as in the previous case.
\end{proof}

\begin{lemma}\label{lem:weight-monodromy}
$U$, equipped with the $\W$-filtration from Definition \ref{def:wt1}, satisfies \WM.
\end{lemma}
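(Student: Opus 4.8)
The plan is to isolate the following formal principle and then reduce the lemma to a purity statement about the graded pieces. \emph{Principle:} if $V$ is a (pro-finite-dimensional) $\Q_\ell$-representation of $G_K$ with an exhaustive separated increasing $G_K$-stable filtration $\W_\bullet V$ such that $\W_{-1}V=V$ and each $\gr^\W_{-k}V$ ($k\geq1$) is pure of Frobenius weight $-k$, then $V$ satisfies \WM. Once this is established, the lemma follows from the fact, proved in Proposition~\ref{prop:wt_is_wt}, that the filtration of Definition~\ref{def:wt1} is the weight filtration on $\Lie(U)$, so that its graded piece $\gr^\W_{-k}\Lie(U)$ is pure of weight $-k$; separatedness of $\W_\bullet\Lie(U)$ is automatic from $\W_{-k}U\subseteq\Cent^kU$ and $\bigcap_k\Cent^kU=1$.

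To prove the principle I would check the three conditions of \WM directly. Condition~(1) is the hypothesis $\W_{-1}V=V$. For condition~(2), each Frobenius eigenvalue of $V$ occurs on some $\gr^\W_{-k}V$, hence is a $q$-Weil number of weight $-k\leq-1$, so $\M_0V=V$. For condition~(3), observe first that $\sigma^e$ preserves $\W_\bullet V$ (it is $G_K$-stable), so $N$ descends to each $\gr^\W_{-k}V$; since $N$ strictly lowers Frobenius weights by $2$ it must vanish on the pure representation $\gr^\W_{-k}V$. Hence the monodromy filtration of $\gr^\W_{-k}V$ is concentrated in degree $-k$, i.e.\ $\gr^\M_j\gr^\W_{-k}V=0$ for $j\neq-k$; so for every $i>0$ both the source $\gr^\M_{-k+i}\gr^\W_{-k}V$ and the target $\gr^\M_{-k-i}\gr^\W_{-k}V$ of $N^i$ are zero and the map is (vacuously) an isomorphism. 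Indices $k\leq0$ contribute nothing since then $\gr^\W_{-k}V=0$.

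The one genuinely geometric input is therefore the purity of $\gr^\W_{-k}\Lie(U)$. Beyond simply invoking Proposition~\ref{prop:wt_is_wt}, the preparatory Lemma~\ref{lem:summand} supplies a more self-contained argument: the abelianisation $U^\ab$ is a $\W$-strict extension of $\H^1_\et(\overline X_{\overline K},\Q_\ell)^\dual$ (pure of weight $-1$) by $\W_{-2}U^\ab$ (pure of weight $-2$, and zero when $X$ is projective), so every $\W$-graded piece of a tensor power $(U^\ab)^{\otimes k}$ is pure; Lemma~\ref{lem:summand} realises $\gr^k_\Cent\Lie(U)$ as a direct summand of $(U^\ab)^{\otimes k}$ compatibly with $\W_\bullet$ and $G_K$ (so its $\W$-graded pieces are pure too), and since $\Lie(U)$ is the pro-nilpotent completion of the \emph{graded} Lie algebra $\gr^\bullet_\Cent\Lie(U)$, $G_K$-equivariantly and compatibly with $\W_\bullet$, one deduces that each $\gr^\W_{-k}\Lie(U)$ is pure of weight $-k$. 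I expect the only real subtlety to be bookkeeping: for affine $X$ the weight filtration $\W_\bullet$ and the descending central series $\Cent^\bullet$ on $\Lie(U)$ are genuinely different, so the comparison of $\W$-graded pieces must be carried out on the graded Lie algebra $\gr^\bullet_\Cent\Lie(U)$ rather than piece by piece on $\Lie(U)$ itself, and one must keep careful track of which filtration purity is being asserted for.
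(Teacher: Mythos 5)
There is a genuine gap, and it lies at the heart of your ``Principle''. You assert that each $\gr^\W_{-k}\Lie(U)$ is pure of Frobenius weight $-k$ \emph{for the Frobenius of $K$}, deduce that $N$ vanishes on it, and conclude that condition (3) of \WM is vacuous. But the purity supplied by Proposition~\ref{prop:wt_is_wt} (and by Lemma~\ref{lem:summand} plus the Weil conjectures) is purity with respect to a Frobenius $\varphi_w$ at an auxiliary closed point $w$ of the parameter scheme $S_0$ — i.e.\ it is the statement that the filtration of Definition~\ref{def:wt1} is the weight filtration in the sense of Definition~\ref{def:wt2}. It says nothing about the eigenvalues of the local Frobenius $\varphi\in G_K$, which are what define the $\M$-filtration entering \WM. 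When $X$ has bad reduction — the only case in which the lemma has any content — $\gr^\W_{-1}\Lie(U)\iso\H^1_\et(\overline X_{\overline K},\Q_\ell)^\dual$ is \emph{not} pure for $\varphi$, and $N$ is \emph{not} zero on it: for a Tate elliptic curve, $\gr^\W_{-1}$ has one-dimensional pieces in $\M$-degrees $0$ and $-2$, and condition (3) demands that $N\colon\gr^\M_0\gr^\W_{-1}\to\gr^\M_{-2}\gr^\W_{-1}$ be an isomorphism. So condition (3) is far from vacuous; it is exactly the weight--monodromy theorem for $\H^1$ of curves/abelian varieties, and your argument never invokes it. (Your reasoning does happen to be harmless on the weight $-2$ part coming from the punctures, since $\Q_\ell(1)$ really is pure for $\varphi$ with $N=0$ there, but that is the trivial part of the statement.)

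The paper's proof differs precisely at this point: it uses closure of \WM under $\W$-strict extensions to reduce to the graded pieces $\gr^k_\Cent\Lie(U)$, then closure under tensor products together with Lemma~\ref{lem:summand} (each $\gr^k_\Cent\Lie(U)$ is a $\W$-filtered $G_K$-equivariant direct summand of $(U^\ab)^{\otimes k}$) to reduce to $U^\ab\iso\H^1_\et(X_{\overline K},\Q_\ell)^\dual$, and finally quotes weight--monodromy for abelian varieties \cite[Expos\'e IX]{grothendieck:1972} for the weight $-1$ part, with a duality argument for the weight $-2$ part. Your reduction to tensor powers of $U^\ab$ is in the same spirit, but the irreducible geometric input is the SGA~7 weight--monodromy statement, not purity of the $\W$-graded pieces; as written, your principle would prove the lemma only when $X$ has potentially good reduction, where \WM is automatic and useless for the purposes of this paper.
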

\begin{proof}
Since the class of $\W$-filtered $G_K$-representations satisfying \WM is closed under $\W$-strict extensions, it suffices to prove that each $\gr_\Cent^k\Lie(U)$ satisfies \WM. Since the class of $\W$-filtered $G_K$-representations satisfying \WM is closed under tensor products, it suffices by Lemma \ref{lem:summand} to prove that $\Lie(U)^\ab$ satisfies \WM. Since $\Lie(U)^\ab\iso\H^1_\et(X_{\overline K},\Q_\ell)^\dual$ as $\W$-filtered representations of $G_K$, we are done by weight--monodromy for abelian varieties \cite[Expos\'e IX, Th\'eor\`eme~4.3(b) and Corollaire~4.4]{grothendieck:1972} (technically this only proves that the weight $-1$ part of $U^{\ab }$ satisfies \WM, but the \WM property for the weight $-2$ part is a simple duality argument).
\end{proof}

\subsection{The non-abelian Kummer map}\label{ss:nonab_Kummer}

Continuing from the preceding section, let $X/K$ be a curve with basepoint $b\in X(K)$, and let $U$ be a $G_K$-equivariant quotient of $\pi_1^{\Q_\ell}(X_{\overline K},b)$ satisfying \WM (with respect to the induced $\W$-filtration). For instance, one could take $U$ to be $\pi_1^{\Q_\ell}(X_{\overline K},b)$ itself, its partial quotients with respect to the descending central series or the $\W$-filtration, or the quotients considered in \cite[\S3]{BalakrishnanDogra1}.

There is a (profinite) non-abelian Kummer map
\[
\classify\colon X(K) \rightarrow \H^1(G_K,\pi_1^\et(X_{\overline K},b))
\]
sending a point $x\!\in\! X(K)$ to the class of the profinite \'etale path-torsor $\pi_1^\et(X_{\overline K};b,x)$. We define the \emph{non-abelian Kummer map} for $U$ to be the map
\[
\classify\colon X(K) \rightarrow \H^1(G_K,U)
\]
given by composing the above map with the map $\H^1(G_K,\pi_1^\et(X_{\overline K},b))\rightarrow\H^1(G_K,U)$ induced from the group homomorphism $\pi_1^\et(X_{\overline K},b)\rightarrow U(\Q_\ell)$.

By Theorem~\ref{thm:cohomology_is_vector_space}, we can view $\classify$ as a map
\[
\classify\colon X(K) \rightarrow \Hom(\Q_\ell(1),\Lie(U)^\can)^{G_K}.
\]
According to Remark~\ref{rmk:cohomology_grading}, the codomain of $\classify$ canonically has the structure of a (pro-finite dimensional) graded $\Q_\ell$-vector space, and we write
\[
\classify_n\colon X(K)\rightarrow\Hom(\Q_\ell(1),\gr^\W_{-n}\Lie(U)^\can)^{G_K}
\]
for the $-n$th graded piece of the non-abelian Kummer map $\classify$. This notation is non-standard; in \cite{siegel,selmer_varieties,kim-coates,BDCKW}, $\classify_n$ instead denotes the non-abelian Kummer map associated to the particular quotient $U/\Cent^{n+1}U$, where $\Cent^\bullet U$ denotes the descending central series filtration.

\smallskip

As we enlarge the base field to a finite extension $L/K$, the non-abelian Kummer maps $\classify\colon X(L) \rightarrow \H^1(G_L,U) = \Hom(\Q_\ell(1),\Lie(U)^\can)^{G_L}$ are all compatible, so can be packaged together into a single map
\[
\classify\colon X(\overline K) \rightarrow \varinjlim\H^1(G_L,U) \leq \Hom(\Q_\ell(1),\Lie(U)^\can).
\]

\begin{remark}\label{rmk:abstract_kummer}
The construction of the non-abelian Kummer map is a special case of a more general construction. Given a connected topological groupoid $\pi$ with a continuous action of a topological group $G$, one obtains for every object $b\in\ob(\pi)^G$ a non-abelian Kummer map
\[
\classify\colon\ob(\pi)^G \rightarrow \H^1(G,\pi(b))
\]
sending an object $x\in\ob(\pi)^G$ to the class of the $G$-equivariant torsor $\pi(b,x)$. We will see several instances of this construction in \S\ref{s:oda_reduction} (with $\ob(\pi)$ a discrete set on which $G$ acts trivially).
\end{remark} 
\section{A combinatorial description of the fundamental groupoid}\label{s:oda_reduction}
In this section we give a purely graph-theoretic description of the action of tame inertia on the fundamental groupoid of a curve $X$, in terms of Dehn twists in the fundamental groupoid of an associated graph of groups. As well as capturing the tame inertia action, we recover the $\M $ and $\W $ filtrations from this description. The idea behind the construction is not new, and very similar ideas appear in \cite{oda} and \cite{AMO}. However, as these papers are usually interested in the outer action of the Galois group on the fundamental group, the statements of their results tend not to be enough to describe the Galois cohomology classes of path torsors.

\subsection{Reduction graphs}

\begin{definition}[Reduction graphs]\label{def:reduction_graphs}\index{graph}
For us, a \emph{graph} $\Graph$ consists of three finite sets $\Vert{\Graph}$, $\Edge{\Graph}$, $\HEdge{\Graph}$ (of \emph{vertices}, \emph{(oriented) edges} and \emph{half-edges} respectively), along with a \emph{source function} $\source\colon \Edge{\Graph}\sqcup\HEdge{\Graph}\rightarrow\Vert{\Graph}$ and an edge-inversion function
$(\cdot)^{-1}\colon\Edge{\Graph}\rightarrow\Edge{\Graph}$ which is an involution without fixed points. We write $\target(e):=\source(e^{-1})$ for the \emph{target} of an edge $e\in\Edge{\Graph}$. We will always demand that our graphs be non-empty and connected, that is, for any $u,v\in\Vert{\Graph}$, there is a sequence $e_1,\dots,e_n$ of edges such that $\source(e_1)=u$, $\target(e_n)=v$, and $\source(e_{i+1})=\target(e_i)$ for all $1\leq i<n$.

A \emph{rationally metrised graph} is a pair $(\Graph ,\length )$ consisting of a graph $\Graph$ together with an edge-length function $\length\colon\Edge{\Graph}\rightarrow\Q^{>0}$ satisfying $\length(e)=\length(e^{-1})$ for all edges $e$. We sometimes write the rationally metrised graph simply as $\Graph $.

Finally, a \emph{reduction graph}\index{reduction\dots!\dots graph $\rGraph$} is a tuple $\rGraph=(\Graph,\length,g)$ consisting of a rationally metrised graph $(\Graph,l)$ together with a \emph{genus function} $g\colon\Vert{\Graph}\rightarrow\N_0$.

We define the \textit{degree} of a vertex $v$, denoted $\deg (v)$, by
\[
\deg (v):= \# \{ e\in E(\Graph ):\source (e)=v \} +\# \{ e\in D(\Graph ):\source (e)=v \}.
\]
We say a reduction graph $\rGraph$ is \textit{stable} if every vertex satisfies $2g(v)+\deg (v) >2$, and that $\rGraph $ is \textit{semistable} if every vertex satisfies $2g(v)+\deg (v) \geq 2$.

We will write $\plEdge{\Graph}$ for the set of \emph{unoriented edges}, i.e.\ the quotient of $\Edge{\Graph}$ by identifying $e\sim e^{-1}$.
\end{definition}

\begin{remark}
The reader should think of half-edges as intervals $[0,\infty)$, attached to the rest of the graph at their single boundary points. Thus, the underlying metric space of a graph in the sense of Definition~\ref{def:reduction_graphs} is a length space which is a finite union of closed intervals $[0,\length(e)]$ (one for each unoriented edge) and intervals $[0,\infty)$ (one for each half-edge). Many of our constructions will depend only on this underlying metric space.
\end{remark}

\smallskip

For us, reduction graphs provide a convenient way of encoding groupoids. This is a special case of the more general definition of the fundamental groupoid of a graph of groups.

\begin{definition}[Fundamental groupoid, {\cite[\S2]{higgins}}]\label{def:pi1}
A \emph{graph of groups} is a tuple $\rGraph=(\Graph,(G_v)_{v\in\Vert{\Graph}},\delta_e)$ consisting of a graph $\Graph $, a group $G_v$ for each $v\in \Vert\Gamma $, and for each $e\in \Edge{\Graph}$, and an element $\delta_e\in G_{\source(e)}$.

The \emph{fundamental groupoid} of $\rGraph$, which we denote $\pi _1 (\rGraph)$, is the groupoid with objects $\Vert{\Graph }$ and with hom-sets generated by $e\in\Hom(\source(e),\target(e))$, $G_v\leq\Hom (v,v)$, modulo the \emph{edge relations}\index{relation!edge}
\[
\begin{array}{ccc}
ee^{-1} = 1, & & \delta_{e^{-1}}e=e\delta_e^{-1}, \\
\end{array}
\]
(here $e^{-1}$ denotes the inverse edge in the sense of Definition~\ref{def:reduction_graphs}, so the first equation is saying that this really is the inverse of $e$ in the fundamental groupoid of the graph of groups).

We denote the set of homomorphisms from $u$ to $v$ by $\pi _1 (\rGraph;u,v)$. When $u=v$ we denote this by $\pi _1 (\rGraph,u)$ and refer to it as the \emph{fundamental group} of $\rGraph$ at $u$.

One can equivalently view $\pi _1 (\widetilde{\Gamma };u,v)$ as the set of strings
\[
g_0 e_1 g_1 \ldots g_{n-1}e_n g_n ,
\]
where $e_i \in \Edge{\Graph }$, $g_i \in G_{v_i }$, where $v_i =\source (e_i )=\target (e_{i+1})$, and $v_1 =v,v_n =u$, modulo the relations above.

For a characteristic $0$ field $F$ (for us either $\Q$ or $\Q_\ell$) we denote by $\pi_1^F(\rGraph)$ the $F$-Mal\u cev completion of the fundamental groupoid $\pi_1(\rGraph)$ above. We similarly denote by $\widehat\pi_1(\rGraph)$ the profinite completion of the fundamental groupoid.
\end{definition}

\begin{remark}
The definition above is somewhat less general than the definition given in~\cite[\S2]{higgins} and~\cite[I.5.1]{serre:trees}, in which one also assigns a group $G_e$ to each edge $e$, such that $G_{e^{-1}}=G_e$, along with homomorphisms (usually embeddings) $\mu_e\colon G_e\rightarrow G_{\target(e)}$. The above definition corresponds to the special case where each $G_e$ is infinite cyclic, with the maps $\mu_e\colon G_e\rightarrow G_{\target(e)}$ and $\mu_{e^{-1}}\colon G_e\rightarrow G_{\source(e)}$ sending one generator to $\delta_e$ and the inverse generator to $\delta_{e^{-1}}$, cf.~\cite[Definition~2.9]{AMO}.
\end{remark}

\begin{definition}\label{def:pi1_reduction_graph}\index{fundamental groupoid!\dots of a reduction graph $\pi_1(\rGraph)$}
Given a reduction graph $\rGraph$, we associate a graph of groups  as follows:
\begin{itemize}
	\item the underlying graph of the graph of groups is the underlying graph of $\rGraph$;
	\item each vertex group $G_v$ is the group generated by elements $\beta_{v,i}$ and $\beta_{v,i}'$ for $1\leq i\leq g(v)$ along with elements $\delta_e$ for each edge or half-edge $e\in\source^{-1}(\{v\})$, subject to the single \emph{vertex relation}\footnote{Throughout this paper, sums or products indexed over ``$\source(e)=v$'' indicate sums over both edges and half-edges, unless otherwise indicated.}\index{relation!vertex}
	\[
	\prod_{i=1}^g[\beta_{v,i}',\beta_{v,i}]\cdot\prod_{\source(e)=v}\delta_e=1;
	\]
	\item the edge elements are the elements $\delta_e$ in the above presentation.
\end{itemize}
We define $\pi_1(\rGraph)$ to be the fundamental groupoid of this graph of groups. There is an implicit choice of ordering of the set $\source^{-1}(\{v\})$ in the definition of the groups $G_v$, but $\pi_1(\rGraph)$ is independent of this choice up to non-canonical isomorphism.

There is an automorphism $\sigma$ of the $\Q$-Mal\u cev completion $\pi_1^\Q(\rGraph)$, acting trivially on vertex-groups, and acting on edges via \emph{edge twists} $e\mapsto e\cdot\delta_e^{\length(e)}$ \cite[\S2.4]{AMO}\index{edge twist $\sigma$}. If $n$ is divisible by all denominators of edge-lengths in $\rGraph$, then $\sigma^n$ even acts on $\pi_1(\rGraph)$ before Mal\u cev completion.
\end{definition}

In \cite[(2.7.1) and subsequent paragraphs]{oda}, Oda shows that the outer action of $I_K$ on $\pi _1 ^{\et ,(p')} (X_{\overline{K}})$ can be recovered from the outer action of the edge-twist $\sigma$ on $\pi _1 (\rGraph)$ (a geometric analogue appears in \cite[Theorems 2.1 and 2.2]{AMO}). However, as we shall explain, the proofs of these results imply a stronger result which remembers base-points.

\begin{theorem}[Non-abelian Picard--Lefschetz]\label{thm:graph_comparison}
For any regular semistable model $\mathcal X/\O_K$ of a curve $X/K$ and any $b\in\mathcal X(\O_K)$, there is an isomorphism
\[
\vartheta\colon\pi_1^{\Q_\ell}(X_{\overline K},b) \longisoarrow \pi_1^{\Q_\ell}(\rGraph,\red(b))
\]
equivariant for the action of $\sigma$ (on the left-hand side as an element of inertia, on the right-hand side via edge twists as in Definition~\ref{def:pi1_reduction_graph}), such that the square
\begin{equation}\label{eq:pic-lef_square}
\begin{tikzcd}
\mathcal X(\O_K) \arrow{r}{\classify}\arrow{d}{\red} & \H^1(I_K,\pi_1^{\Q_\ell}(X_{\overline K},b)) \arrow{d}{\vartheta_*} \\
\Vert{\Graph} \arrow{r}{\classify} & \H^1(\Z,\pi_1^{\Q_\ell}(\rGraph,\red(b)))
\end{tikzcd}
\end{equation}
commutes. Here the right-hand vertical map is given by restriction along the homomorphism $\Z\rightarrow I_K$ sending $1\mapsto\sigma$, followed by pushout along $\vartheta$. The lower horizontal map is produced from the $\Z$-equivariant groupoid $\pi_1^{\Q_\ell}(\rGraph)$ as in Remark~\ref{rmk:abstract_kummer}.
\end{theorem}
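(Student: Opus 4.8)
The plan is to construct the isomorphism $\vartheta$ from the classical theory of vanishing cycles and the specialisation of the fundamental group, and then to check the compatibility square by tracking where a point's path-torsor goes under the construction. First I would recall the key input: for a regular semistable model $\mathcal X/\O_K$, the Galois action of the tame quotient of $I_K$ on the pro-$(p')$ \'etale fundamental group of $X_{\overline K}$ is given by the Picard--Lefschetz/Dehn-twist formula, as in Oda's work \cite[(2.7.1)]{oda} and the geometric analogue \cite[Theorems 2.1, 2.2]{AMO}. Concretely, one uses the comparison of $\pi_1^\et$ of the geometric generic fibre with the fundamental group(oid) of the dual graph $\Graph$ of the special fibre together with the genus data at vertices (this is the graph of groups $\rGraph$ from Definition \ref{def:pi1_reduction_graph}); the vanishing-cycle loops $\delta_e$ are exactly the edge elements, the hyperbolic loops at vertices of positive genus account for the $\beta_{v,i},\beta_{v,i}'$, and the vertex relations encode the topology of each component. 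The monodromy $\sigma\in I_K$ acts by the product of Dehn twists about the vanishing cycles, which under this comparison is precisely the edge twist $e\mapsto e\cdot\delta_e^{\length(e)}$ of Definition \ref{def:pi1_reduction_graph}, the length being read off from the thickness of the node (hence the $\frac1{e(L/K)}$ normalisation in Definition \ref{def:reduction_graphs_of_curves}). Applying $\Q_\ell$-Mal\u cev completion to the groupoid-level version of this comparison gives the $\sigma$-equivariant isomorphism $\vartheta\colon\pi_1^{\Q_\ell}(X_{\overline K},b)\longisoarrow\pi_1^{\Q_\ell}(\rGraph,\red(b))$, where on the left $\sigma$ acts as inertia and on the right via edge twists.

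The heart of the argument is that this comparison is a groupoid comparison, not just a group comparison, and is natural in the basepoint: for $b\in\mathcal X(\O_K)$ the reduction $\red(b)$ is a vertex of $\Graph$, and the path-torsor $\pi_1^\et(X_{\overline K};b,x)$ for another integral point $x$ specialises to the torsor $\pi_1(\rGraph;\red(b),\red(x))$ of paths in the graph of groups from $\red(b)$ to $\red(x)$. This is again contained in the cited results once one phrases Oda's and Asada--Matsumoto--Oda's deformation arguments in terms of the full fundamental groupoid rather than its vertex groups --- the nearby-cycles functor and the specialisation map for $\pi_1$ are functorial for the inclusions of the $\O_K$-points, so sections/paths specialise to sections/paths. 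I would state this as a groupoid-level refinement of the comparison and indicate that it follows from the same proofs, which is exactly the point flagged in the prose preceding the theorem (``the proofs of these results imply a stronger result which remembers base-points'').

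Granting the groupoid comparison, the square \eqref{eq:pic-lef_square} is then a formal consequence. The top map $\classify$ sends $x\in\mathcal X(\O_K)$ to the class in $\H^1(I_K,\pi_1^{\Q_\ell}(X_{\overline K},b))$ of the torsor $\pi_1^{\Q_\ell}(X_{\overline K};b,x)$; pushing forward along $\vartheta$ and using the groupoid comparison identifies this with the class of $\pi_1^{\Q_\ell}(\rGraph;\red(b),\red(x))$, now regarded as a $\Z$-equivariant torsor via $1\mapsto\sigma$. But by the abstract construction of Remark \ref{rmk:abstract_kummer} applied to the $\Z$-equivariant groupoid $\pi_1^{\Q_\ell}(\rGraph)$, that class is exactly the image of $\red(x)$ under the bottom map $\classify\colon\Vert\Graph\to\H^1(\Z,\pi_1^{\Q_\ell}(\rGraph,\red(b)))$. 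Hence $\vartheta_*\circ\classify=\classify\circ\red$, which is the commutativity asserted. The main obstacle is the first step: making sure that the deformation-theoretic arguments of \cite{oda,AMO} --- which are written for the outer action on the fundamental group --- genuinely produce a $\sigma$-equivariant isomorphism of based (indeed, of all-basepointed) fundamental groupoids with the edge-twist action matching the inertia action on the nose, rather than merely up to conjugacy. Once that is pinned down, nothing else in the theorem is difficult.
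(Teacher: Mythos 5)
Your overall strategy coincides with the paper's: upgrade the Oda/AMO Dehn-twist description of tame monodromy to a basepointed, groupoid-level statement and then deduce the square formally; your last paragraph is essentially the paper's own (routine) deduction of commutativity of \eqref{eq:pic-lef_square} from the $\sigma$-equivariant torsor comparison. The problem is that the entire substance of the theorem sits in the step you flag and then dispose of with ``this follows from the same proofs'', and invoking functoriality of nearby cycles/specialisation does not close it. The specialisation isomorphisms for $\pi_1^{\et}$ and for path-torsors are only defined after choosing \'etale paths in the base, so by themselves they give a comparison well-defined up to conjugacy/twisting --- exactly the outer statement you correctly say is insufficient. Identifying the chosen inertia generator $\sigma\in I_K$ with the edge-twist automorphism \emph{on the nose}, compatibly for all the torsors $\pi_1^{\Q_\ell}(X_{\overline K};b,x)$ simultaneously, is the actual content, and your plan contains no mechanism for it.

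Concretely, the paper closes this gap with machinery that is absent from your proposal: (i) a deformation of the given model to a regular semistable curve over $R=\Z_p^\nr[t]^\sh_t$, built from the formal structure of the universal stable curve (Lemma~\ref{lem:deformation_exists}), so that both $\O_{K^\nr}$ (via $t\mapsto\varpi$) and $\C[t]^\sh_t$ receive the same family; (ii) Abhyankar's Lemma (Proposition~\ref{prop:abhyankar}), which shows the identification of the prime-to-$p$ tame quotients $G_{K^\nr}^{(p')}\cong G_F^{(p')}$ through $\pi_1^{\et,(p')}(\Spec R[1/t])$ is independent of the path choices and, for a suitable embedding $\Z_p^\nr\hookrightarrow\C$, matches the chosen generators $\sigma$ on both sides; (iii) in equicharacteristic $0$, a comparison of fibre functors (the proposition in \S\ref{ss:pic-lef_equichar}) pinning the algebraic generator of $G_F\simeq\Zhat(1)$ to the topological monodromy generator of the punctured disc, again exactly and compatibly as the parameters vary; and (iv) in the analytic family, the basepoint bookkeeping of Lemma~\ref{lem:pic-lef_analytic}: for each holomorphic section one chooses a path to a constant section inside the part of the family trivialised by Ehresmann's theorem and checks that monodromy fixes these paths, which is how ``sections specialise to paths'' is actually made $\sigma$-equivariant, before verifying the Dehn-twist formula on the groupoid generators. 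Without (i)--(iv) your plan amounts to a restatement of the theorem rather than a proof of it.
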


\begin{remark}\label{rmk:pic-lef_for_groupoids}
The non-abelian Picard--Lefschetz Theorem above implies that the reduction map $\red\colon\mathcal X(\O_K)\rightarrow\Vert{\Graph}$ extends to a $\sigma$-equivariant equivalence
\[
\vartheta\colon\pi_1^{\Q_\ell}(X_{\overline K})|_{\mathcal X(\O_K)} \longisoarrow \pi_1^{\Q_\ell}(\rGraph)
\]
of $\Q_\ell$-pro-unipotent groupoids. Indeed, at the basepoint $b$ this equivalence can be taken to be given by the $\sigma$-equivariant isomorphism $\vartheta\colon\pi_1^{\Q_\ell}(X_{\overline K},b) \isoarrow \pi_1^{\Q_\ell}(\rGraph,\red(b))$, and then~\eqref{eq:pic-lef_square} ensures that for every $x$ there is a $\sigma$-equivariant isomorphism $\pi_1^{\Q_\ell}(X_{\overline K};b,x) \longisoarrow \pi_1^{\Q_\ell}(\rGraph;\red(b),\red(x))$ compatible with the torsor structures. It is easy to see that these choices determine a $\sigma$-equivariant equivalence on the whole fundamental groupoid.

We will see in \S\ref{ss:pic-lef_filtrations} that the isomorphism $\vartheta\colon\pi_1^{\Q_\ell}(X_{\overline K},b) \longisoarrow \pi_1^{\Q_\ell}(\rGraph,\red(b))$ is a $\W$- and $\M$-filtered isomorphism for certain filtrations on the right-hand side, and in \S\ref{ss:pic-lef_plus} that it can be chosen compatibly as we enlarge both the base field $K$ and the model $\mathcal X$. It follows that the equivalence of groupoids above can be extended to a $\sigma$-equivariant, $\W$- and $\M$-filtered equivalence
\begin{equation}\label{eq:pic-lef_for_groupoids}
\vartheta\colon\pi_1^{\Q_\ell}(X_{\overline K}) \longisoarrow \pi_1^{\Q_\ell}(\rGraph)
\end{equation}
of $\Q_\ell$-pro-unipotent groupoids lying over the reduction map $\red\colon X(\overline K)\rightarrow\QVert{\Graph}$.

Such an equivalence of groupoids even exists without assuming $X$ has semistable reduction, if we make the following minor modification. For every $x,y\in X(\overline K)$, there is a power $\sigma^e$ of $\sigma$ acting ind-unipotently on the algebra $\O(\pi_1^{\Q_\ell}(X_{\overline K};x,y))$, so that the ind-nilpotent derivation $N:=\frac1e\log(\sigma^e)$ is well-defined. This induces an automorphism $\exp(N)$ of the groupoid $\pi_1^{\Q_\ell}(X_{\overline K})$, and there is a $\W$- and $\M$-filtered equivalence~\eqref{eq:pic-lef_for_groupoids} which is equivariant for the action of $\exp(N)$ on the left-hand side and the action of $\sigma$ on the right.
\end{remark}

The proof of Theorem~\ref{thm:graph_comparison} will be completed in \S\ref{ss:arithmetic}.

\subsection{Graphs of groups in topology}
\label{ss:graphs_of_groups_topology}

The graphs of groups we consider arise naturally from topology in the following manner. Let $X$ be the complement in a compact oriented surface $\overline X$ of a finite set $D$, and let $\tau_1,\dots,\tau_n\colon S^1\times[0,1]\hookrightarrow X$ be oriented\footnote{For our convention for orienting $S^1\times[0,1]$, see \cite[\S3.1.1]{primer}.} closed immersions with pairwise disjoint images. For an embedding $\tau\colon S^1\times[0,1]\rightarrow X$, we write $\bar\tau\colon S^1\times[0,1]\hookrightarrow X$ for the embedding given by $\bar\tau(\theta,s)=\tau(-\theta,1-s)$.

There is a reduction graph $\rGraph$ naturally associated with this setup, namely the graph with vertex-set $\pi_0\left(X\setminus\bigcup_i\im(\tau_i)\right)$, edge-set $\{\tau_1,\dots,\tau_n\}\cup\{\bar\tau_1,\dots,\bar\tau_n\}$, and half-edge-set $D$. The inverse of an edge $e$ corresponding to an embedding $\tau_e$ is the edge corresponding to the embedding $\bar\tau_e$, and $\source(e)$ is the unique component whose closure contains $\tau_e(S^1\times\{0\})$. The genus of a vertex $v$ is the genus of the corresponding surface $X_v$, and the source of a half-edge $e$ is the unique component containing the corresponding puncture.

The significance of this reduction graph is that it computes the fundamental groupoid of $X$ \cite{Althoen}. Specifically, let us fix a basepoint $x_v$ in each connected component $X_v$, and for each edge $e$ with $\source(e)=v$ let us choose a path $p_e$ from $x_v$ to $\tau_e(0,0)$ inside the closure $\overline{X_v}$ of the component $X_v$. We write $\delta_e\in\pi_1(\overline{X_v},x_v)$ for the composite $p_e^{-1}\tau_e(\delta)p_e$ with $\delta=\mathbf1^{-1}_{S^1}\times\{0\}$ the inverse of the standard loop inside $S^1\times[0,1]$. Thus $\delta_e$ is conjugate to a loop running anticlockwise around one of the boundary components of $\overline{X_v}$. For each half-edge $e$ with $\source(e)=v$, we may also choose an element $\delta_e\in\pi_1(X_v,x_v)$ conjugate to a small loop anticlockwise around the corresponding puncture. The paths $p_e$ and loops $\delta_e$ may be chosen so that the group $\pi_1=(X_v,x_v)=\pi_1(\overline{X_v},x_v)$ is generated by the above elements $\delta_e$ along with $2g(v)$ auxiliary elements\footnote{One may also assume that these elements form a symplectic basis of the homology of the compactification of $X_v$.} $\beta_{v,i}$ and $\beta_{v,i}'$ ($1\leq i\leq g(v)$) subject to the single relation
\[
\prod_{i=1}^{g(v)}[\beta_{v,i}',\beta_{v,i}]\cdot\prod_{\source(e)=v}\delta_e=1.
\]

\begin{lemma}[\cite{Althoen}]\label{lem:decompose_surface}
There is an equivalence of groupoids
\[
\Theta\colon \pi_1(\rGraph) \longisoarrow \pi_1(X)
\]
sending $v\in\Vert{\Graph}$ to $x_v\in X$, acting in the obvious way on vertex groups $G_v\nciso\pi_1(X_v,x_v)$, and sending an edge $e$ to $p_{e^{-1}}^{-1}\tau_e(\rho)p_e$, where $\rho=\{0\}\times\1_{[0,1]}$ is the standard path inside $S^1\times[0,1]$.
\end{lemma}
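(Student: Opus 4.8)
The plan is to obtain this from the van Kampen theorem for the fundamental groupoid (in its groupoid form, due to Brown), applied to the decomposition of $X$ along the disjoint embedded annuli $A_i:=\im(\tau_i)$. First I would thicken this decomposition to an open cover of $X$ in the standard way: choose pairwise disjoint open neighbourhoods $\widetilde A_i\supseteq A_i$ deformation retracting onto the core circles, and open neighbourhoods $\widetilde X_v$ of the closures $\overline{X_v}$, small enough that the $\widetilde X_v$ are pairwise disjoint, each $\widetilde X_v$ is homotopy equivalent to $X_v$, and each nonempty intersection $\widetilde X_v\cap\widetilde A_i$ is an annular (hence connected) neighbourhood of a boundary circle of $\overline{X_v}$; the triple intersections are then all empty. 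Running van Kampen over the set of basepoints consisting of the $x_v$ together with one auxiliary point on each core circle presents $\pi_1(X)$ (on this set of basepoints) as the colimit of the evident diagram of groupoids: the $\widetilde X_v$ contribute vertex groups $\pi_1(X_v,x_v)\nciso G_v$, each $\widetilde A_i$ contributes an infinite cyclic group on its core loop, and the intersection circles glue the core generator of $\widetilde A_i$ to a conjugate of $\delta_e\in\pi_1(X_{\source(e)},x_{\source(e)})$, where $e\in\Edge{\Graph}$ is the edge corresponding to $\tau_i$.

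Next I would identify this colimit with $\pi_1(\rGraph)$. By the structure theory of graphs of groups (\cite[\S2]{higgins}, \cite[I.5]{serre:trees}), the fundamental groupoid of a graph of groups \emph{is} precisely such a colimit, the edge relations $ee^{-1}=1$ and $\delta_{e^{-1}}e=e\delta_e^{-1}$ of Definition~\ref{def:pi1} encoding respectively that an annulus is homotopy-trivial transverse to its core and that traversing it carries one boundary loop to the inverse of the other. The graph of groups produced by van Kampen is exactly the one of Definition~\ref{def:pi1_reduction_graph}: vertex groups $G_v$, infinite cyclic edge groups, attaching maps for $e$ sending the generator to $\delta_e$ (the auxiliary generators $\beta_{v,i},\beta_{v,i}'$ and the vertex relation come from the standard presentation of $\pi_1(X_v,x_v)$ recalled just before the lemma). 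Unwinding the colimit, the resulting comparison functor $\Theta$ sends $v\mapsto x_v$, acts by the identity on vertex groups, and sends an edge $e$ to the transverse core path $\rho=\{0\}\times\1_{[0,1]}$ of the annulus $\tau_e$, connected to $x_{\source(e)}$ and $x_{\target(e)}$ by the chosen paths, i.e.\ to $p_{e^{-1}}^{-1}\tau_e(\rho)p_e$, as claimed; essential surjectivity is automatic since every component of $X$ contains some $x_v$, and full faithfulness is part of the van Kampen output.

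The hard part here is really just the bookkeeping: matching the van Kampen colimit on the nose with the graph-of-groups fundamental groupoid, and tracking orientation and basepoint-connection conventions so that the explicit formula for $\Theta$ comes out exactly as stated. If one prefers to avoid invoking the groupoid van Kampen theorem as a black box, an alternative is to define $\Theta$ directly on generators ($v\mapsto x_v$, $G_v$ by inclusion, $e\mapsto p_{e^{-1}}^{-1}\tau_e(\rho)p_e$), check by inspection in $\pi_1(X)$ that the two edge relations hold (the first from $\tau_e(\rho)\cdot\bar\tau_e(\rho)\simeq\mathrm{const}$ inside $A_e$, the second from the elementary computation in $\pi_1(S^1\times[0,1])$), and then verify $\Theta$ is an equivalence by collapsing a spanning tree of $\Graph$ to reduce to the classical group-level van Kampen theorem for the resulting one-object presentation of $\pi_1(X)$. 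Either way the content beyond van Kampen is minimal; this is essentially the computation carried out in \cite{Althoen}.
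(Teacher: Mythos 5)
The paper does not actually prove this lemma: it is quoted from \cite{Althoen}, so there is no internal argument to compare against. Your groupoid van Kampen proof is the standard route and is essentially correct in outline (and, as you note, is in substance the computation of Althoen); the direct alternative you sketch — define $\Theta$ on generators, check the two edge relations inside the annuli, then collapse a spanning tree to reduce to the classical group-level van Kampen theorem — is also a perfectly viable way to organise it.

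Two small points of bookkeeping in your first route need repair before the groupoid van Kampen theorem literally applies. First, when $e$ is a loop (both ends of the annulus $A_i$ attach to the same component $X_v$), the intersection $\widetilde X_v\cap\widetilde A_i$ has \emph{two} components, so your parenthetical ``(hence connected)'' fails; this is harmless for Brown's theorem, but only if the basepoint set meets every path-component of every intersection. Second, with the cover as you describe it, your basepoint set (the $x_v$ together with one point on each core circle) does not meet the two-fold intersections $\widetilde X_v\cap\widetilde A_i$ at all, since those are collars of the boundary circles $\tau_i(S^1\times\{0\})$ and $\tau_i(S^1\times\{1\})$ while your auxiliary points sit on the cores. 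Either add a basepoint in each such collar (and then use the paths $p_e$ and half-core paths to transport back to the $x_v$ and core points, which is exactly what produces the stated formula $\Theta(e)=p_{e^{-1}}^{-1}\tau_e(\rho)p_e$), or use the simpler cover in which each $\widetilde X_v$ is enlarged to contain slightly more than half of each adjacent annulus, so that the only intersections are middle annuli containing the core circles and your original basepoint set already suffices. With either fix the argument goes through as you intend.
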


\subsection{The graph of groups description of monodromy: analytic setting}\label{ss:pic-lef_analytic}

Let $\Disc$ denote the disc\index{disc $\Disc$} $\{t\in \mathbb{C}:|t|<1\}$, and $\Discx:=\Disc-\{0\}$. By a \textit{proper regular semistable curve} $\overline{\mathcal X}$ over $\Disc$, we will mean a proper, flat holomorphic map
\[
\pi\colon \overline{\mathcal X}\to \Disc
\]
of complex manifolds of relative dimension one which is a submersion over $\Discx$ and the fibre $\mathcal X_0$ at $0$ is a semistable complex analytic curve. By a \emph{regular semistable curve} $\pi\colon\mathcal X\rightarrow\Disc$ we shall mean the complement, in such a $\overline{\mathcal X}$, of a finite number of holomorphic sections of $\pi\colon\overline{\mathcal X}\to\Disc$.

For a regular semistable curve $\pi\colon\mathcal X\rightarrow\Disc$, we define the \emph{fundamental groupoid} $\pi_1(\mathcal X)|_{\mathcal X(\Disc)}$ to be the groupoid whose objects are the holomorphic sections of $\pi$, and whose morphism-sets are given by $\pi_1(\mathcal X;x,y):=\pi_1(\mathcal X_t;x(t),y(t))$, where $\mathcal X_t$ denotes the fibre of $\mathcal X$ over some $t\neq0$. Since $\mathcal X\rightarrow\Disc$ is a locally trivial fibration over $\Discx$, any path $\gamma\in\pi_1(\Discx;t,t')$ induces a bijection $\pi_1(\mathcal X_t;x(t),y(t))\longisoarrow\pi_1(\mathcal X_{t'};x(t'),y(t'))$ compatible with path-composition. In particular, there is an action of $\Z=\pi_1(\Discx,t)$ on the groupoid $\pi_1(\mathcal X)|_{\mathcal X(\Disc)}$, and the groupoid $\pi_1(\mathcal X)|_{\mathcal X(\Disc)}$ does not depend on the choice of $t$ up to $\Z$-equivariant isomorphism\footnote{Technically, we need to fix identifications of the fundamental groupoids defined with different values of $t$. We do this by choosing a trivialisation of the fundamental groupoid of $\Discx$, which we fix once and for all.}.

Slightly more generally, for any $\epsilon>0$, we let $\mathcal X(\Disce{\epsilon})$ denote the set of holomorphic sections of $\pi$ over the disc of radius $\epsilon$, and let $\pi_1(\mathcal X)$ denote the $\Z$-equivariant groupoid with object-set $\varinjlim_\epsilon\mathcal X(\Disce{\epsilon})$ and morphism-sets given by $\pi_1(\mathcal X;x,y):=\pi_1(\mathcal X_t;x(t),y(t))$ for some $t\in\Discx$ sufficiently close to $0$.

\smallskip

There is a Picard--Lefschetz Theorem also for regular semistable curves over $\Disc$. Specifically, one can associate to $\mathcal X$ a reduction graph $\rGraph$, namely the dual graph of its special fibre $\mathcal X_0$ as in Definition~\ref{def:reduction_graphs_of_curves}, and there is a reduction map $\red\colon\varinjlim_\epsilon\mathcal X(\Disce{\epsilon})\rightarrow\Vert{\Graph}$ sending a section $x$ to the component of $\mathcal X_0$ containing $x(0)$. The Picard--Lefschetz Theorem says that this reduction graph determines the fundamental groupoid $\pi_1(\mathcal X)$.

\begin{lemma}[Non-abelian Picard--Lefschetz for analytic families]\label{lem:pic-lef_analytic}
For any regular semistable curve $\pi\colon\mathcal X\rightarrow\Disce{\epsilon}$ and any holomorphic section $b$ of $\pi$, there is a $\Z$-equivariant isomorphism
\[
\vartheta\colon\pi_1(\mathcal X,b) \longisoarrow \pi_1(\rGraph,\red(b))
\]
such that the square
\begin{equation}
\begin{tikzcd}
\varinjlim_{\epsilon'}\mathcal X(\Disce{\epsilon'}) \arrow{r}{\classify}\arrow{d}{\red} & \H^1(\Z,\pi_1(\mathcal X,b)) \arrow{d}{\vartheta_*} \\
\Vert{\Graph} \arrow{r}{\classify} & \H^1(\Z,\pi_1(\rGraph,\red(b)))
\end{tikzcd}
\end{equation}
commutes. The horizontal maps in this square are the ones constructed from the $\Z$-equivariant groupoids $\pi_1(\mathcal X)$ and $\pi_1(\rGraph)$, as in Remark~\ref{rmk:abstract_kummer}.
\end{lemma}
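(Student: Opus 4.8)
The plan is to present the nearby fibre $\mathcal X_t$ of the degeneration, for $0<|t|$ sufficiently small, as one of the punctured surfaces-with-annuli of \S\ref{ss:graphs_of_groups_topology} whose associated reduction graph is exactly $\rGraph$, to transport the fundamental groupoid across Althoen's equivalence (Lemma~\ref{lem:decompose_surface}), and then to match the geometric monodromy with the edge-twist automorphism $\sigma$ via Picard--Lefschetz.

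\emph{Step 1: the local model.} After shrinking $\epsilon$ I would use that the singular locus of $\pi\colon\overline{\mathcal X}\to\Disc$ consists of finitely many nodes $p_1,\dots,p_r$ of the special fibre, around each of which $\pi$ has the form $(x,y)\mapsto xy$ in suitable holomorphic coordinates, while away from these nodes (and shrinking further) $\pi$ is a submersion and hence a trivial fibration over $\Disc$. For small $t\ne0$ this writes $\overline{\mathcal X}_t$ as the union of a ``thick piece'' $\overline{X_v}(t)$ for each component $C_v$ of $\overline{\mathcal X}_0$ --- the complement of small coordinate bidiscs around the nodes on $C_v$, which deformation retracts onto the normalisation of $C_v$ --- together with an annulus $A_j(t)=\{xy=t\}\cong S^1\times[0,1]$ for each node $p_j$, whose core circle is the vanishing cycle $\gamma_j$. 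Since every holomorphic section of $\pi$ meets $\overline{\mathcal X}_0$ at a smooth point of a unique component, restoring the punctures coming from the removed sections exhibits $\mathcal X_t$ precisely as a surface $X$ in the sense of \S\ref{ss:graphs_of_groups_topology}, and by construction its associated reduction graph is the dual graph $\rGraph$ of $\mathcal X_0$.

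\emph{Step 2: the groupoid equivalence.} I would choose the auxiliary basepoints $x_v$ in the thick pieces and the connecting paths $p_e$ coherently in $t$ (using the trivialisations from Step~1 and the fixed trivialisation of $\pi_1(\Discx)$), taking in particular $x_{\red(b)}=b(t)$, which is legitimate because $b(0)$ is a smooth point of $\mathcal X_0$ so $b(t)$ lies in the thick piece over $\red(b)$ for $t$ small. Lemma~\ref{lem:decompose_surface} then yields an equivalence of groupoids $\pi_1(\rGraph)\isoarrow\pi_1(\mathcal X)$ whose value at $b$ is an isomorphism $\pi_1(\mathcal X,b)\isoarrow\pi_1(\rGraph,\red(b))$; I let $\vartheta$ be its inverse. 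Since this equivalence sends each section $x$ to the object $\red(x)$ and is compatible with torsor structures, the naturality of the abstract Kummer-map construction (Remark~\ref{rmk:abstract_kummer}) reduces the commutativity of the displayed square to the single assertion that $\vartheta$ intertwines the monodromy action on $\pi_1(\mathcal X)$ with the $\Z$-action given by the edge-twist $\sigma$ of Definition~\ref{def:pi1_reduction_graph} on $\pi_1(\rGraph)$.

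\emph{Step 3 (the crux): identifying the monodromy.} By the Picard--Lefschetz theorem for the semistable degeneration $\mathcal X\to\Disc$ --- using that the total space is regular, so each node has local equation $xy=t$ with multiplicity one --- the action of a positive generator of $\Z=\pi_1(\Discx,t)$ on $\pi_1(\mathcal X_t)$ is the composite $\prod_{j=1}^r T_{\gamma_j}$ of right-handed Dehn twists about the pairwise disjoint vanishing cycles, which commute. Transporting through Althoen's decomposition, $T_{\gamma_j}$ fixes every thick piece $X_v$ and every connecting path $p_e$, hence acts trivially on all vertex groups and on all edges other than $e_j^{\pm1}$, and it sends the edge $e_j$ crossing $A_j$ to $e_j\delta_{e_j}^{\pm1}$; matching the orientation conventions for $S^1\times[0,1]$, for the standard loop $\delta=\mathbf 1_{S^1}^{-1}\times\{0\}$, and for the loop about $0\in\Disc$ pins down the sign so that $T_{\gamma_j}$ is exactly the edge-twist on the single edge $e_j$, and the product over $j$ is $\sigma$. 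I expect this last step --- upgrading the classical \emph{outer} Picard--Lefschetz statement used in \cite{oda,AMO} to an honest $\Z$-equivariant equivalence of fundamental \emph{groupoids} with a prescribed basepoint, including the orientation bookkeeping --- to be the main obstacle; the topological ingredients themselves (the local structure of semistable degenerations, Althoen's theorem, and Picard--Lefschetz) are classical.
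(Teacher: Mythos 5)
Your proposal is correct and follows essentially the same route as the paper: decompose the nearby fibre into thick pieces plus node annuli via the local model and Ehresmann's theorem, apply Althoen's equivalence (Lemma~\ref{lem:decompose_surface}), and identify the monodromy with the edge-twist $\sigma$ by observing it is a product of right-handed Dehn twists supported on the annuli, trivial on the thick parts where all sections land. The only point to make explicit is that for an arbitrary section $x$ (not just $b$) one must also choose a path in the fibre from $x(t)$ to the reference basepoint of its thick piece and note that it is monodromy-fixed, which is exactly how the paper checks commutativity of the square on general objects of the groupoid.
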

\begin{proof}[Proof, following the proof of {\cite[Theorem 2.2]{AMO}}]
We will shrink the disc $\Disce{\epsilon}$ as needed throughout the proof, often without comment. We write
\[
\mM:=\{(t,z,w)\in\Disce{\epsilon}\times\Disce{\epsilon^{1/3}}^2 : wz=t\},
\]
which we view as a complex manifold with a map to $\Disce{\epsilon}$ given by projection onto the $t$-coordinate. For every singular point of $\mathcal X_0$ we may choose a holomorphic open immersion $\tau_i\colon\mM\hookrightarrow\mathcal X$ compatible with the projections to $\Disc$ taking $(0,0,0)$ to the singular point. We may assume that the images of the $\tau_i$ are pairwise disjoint.

Let $\mM_0=\{(t,z,w)\in\mM : |w|,|z|\leq|t|^{1/3}\}$, so that $\mM_0$ is a closed subspace of $\mM$ whose fibre over any $t\neq0$ is a closed annulus. In particular, the maps $\tau_i\colon\mM_0\hookrightarrow\mathcal X$ endow each non-special fibre of $\mathcal X$ with the structure considered in \S\ref{ss:graphs_of_groups_topology}.

The complement $\mM\setminus\mM_0$ consists of two components $\mM_-:=\{|w|>|t|^{1/3}\}$ and $\mM_+:=\{|z|>|t|^{1/3}\}$, each diffeomorphic over $\Disce{\epsilon}$ to $\Disce{\epsilon}\times\Discx$. It follows from Ehresmann's Theorem \cite[Theorem~4.1.2]{carlson-mueller-stach-peters} that the map $\pi\colon\mathcal X\setminus\bigcup_i\tau_i(\mM_0)\rightarrow\Disce{\epsilon}$ is a locally trivial fibration, so we may choose a diffeomorphism
\begin{equation}\label{eq:fibration_surgery}\tag{$\ast$}
\mathcal X\setminus\bigcup_i\tau_i(\mM_0) \nciso \coprod_{v\in\Vert{\Graph}}\Disce{\epsilon}\times X_v
\end{equation}
over $\Disce{\epsilon}$, for $X_v$ the connected components of $\mathcal X_0^\ns$. One sees from this description that the reduction graph of $\mathcal X$ is isomorphic to the reduction graph assigned to any non-special fibre $\mathcal X_t$ in \S\ref{ss:graphs_of_groups_topology}.

We now construct the desired isomorphism $\vartheta\colon\pi_1(\mathcal X,b)\longisoarrow\pi_1(\rGraph,\red(b))$. In doing so, it is convenient to enlarge the groupoid $\pi_1(\mathcal X)$ to allow as basepoints not just holomorphic sections of $\mathcal X$ (over small discs), but also continuous sections of $\mathcal X\setminus\bigcup_i\tau_i(\mM_0)$. If we fix basepoints $x\in X_v$ and paths $p_e$ inside some fibre $\mathcal X_t$ as in \S\ref{ss:graphs_of_groups_topology}, then from Lemma~\ref{lem:decompose_surface} we have an equivalence of categories $\Theta\colon\pi_1(\rGraph)\rightarrow\pi_1(\mathcal X)$, sending a vertex $v$ to the constant section with value $x_v\in X_v$ (with respect to~\eqref{eq:fibration_surgery}).

Over a sufficiently small disc $\Disce{\epsilon'}$, the holomorphic section $b$ factors through the one component $\Disce{\epsilon}\times X_{\red(b)}$ of the decomposition~\eqref{eq:fibration_surgery}, so we may choose a path $\gamma_b\in\pi_1(\mathcal X;b,x_{\red(b)})$ from $b$ to the constant section with value $x_{\red(b)}$ by choosing a path from $b(t')$ to $x_{\red(b)}$ inside $X_v\subseteq\mathcal X_{t'}$ for some non-zero $t'\in\Disce{\epsilon'}$. We thus obtain the desired isomorphism $\vartheta$ as the composite
\[
\pi_1(\mathcal X,b) \longisoarrow \pi_1(\mathcal X, x_{\red(b)}) \longisoarrow \pi_1(\rGraph,x_{\red(b)})
\]
where the first arrow is conjugation by $\gamma_b$ and the second is the inverse of $\Theta$.

To complete the proof, it remains to show that $\vartheta$ is $\sigma$-equivariant and that for every holomorphic section $y$ (maybe over a smaller disc), there is a $\sigma$-equivariant isomorphism $\vartheta_y\colon\pi_1(\mathcal X;b,y)\longisoarrow\pi_1(\rGraph;b,y)$ compatible with the torsor structures -- we will prove the latter assertion, the former being a special case. The isomorphism $\vartheta_y$ is constructed in the same way as $\vartheta$, choosing a path $\gamma_y\in\pi_1(\mathcal X;y,x_{\red(y)})$ in the same manner as $\gamma_b$, and defining $\vartheta_y(\gamma):=\Theta^{-1}(\gamma_y\gamma\gamma_b^{-1})$. This is compatible with the torsor structures, so it remains to check $\sigma$-equivariance.

It follows by construction that the action of the monodromy of the family $\mathcal X$ on the paths $\gamma_b,\gamma_y$ factors through the monodromy of the trivial family $\mathcal X\setminus\bigcup_i\tau_i(\mM_0)$, and hence they are $\sigma$-fixed. It thus suffices to show that $\Theta$ is $\sigma$-equivariant, which we check on generators of $\pi_1(\rGraph)$. For any vertex $v$, the monodromy action on $\Theta(G_v)$ is trivial by the same argument as above. For any edge $e$ corresponding one of the $\tau_i$, we may write $\Theta(e)=q_{e^{-1}}^{-1}\tau_i(\rho)q_e$ where $\rho\in\pi_1(\mM_t;s_-(t),s_+(t))$ with $s_\pm$ continuous sections of $\mM_\pm$ respectively. The monodromy action on $\rho$ is given by right-handed Dehn twist \cite[\S3.1.1]{primer}, i.e.\ $\sigma(\rho)=\rho\delta$ with $\delta$ a loop in the open annulus $\mM_t$ which winds once clockwise around the origin with respect to the $z$-coordinate. This shows that $\sigma\Theta(e)=\Theta(e)\cdot\Theta(\delta_e)=\Theta(\sigma(e))$, as desired.
\end{proof}

\subsection{The graph of groups description of monodromy: geometric setting}\label{ss:pic-lef_equichar}

Let $R=\mathbb{C}[t]^{\sh }_t$ denote the (strict) Henselisation of $\mathbb{C}[t]$ at $t$, and write $F=\Frac(R)=R\left[\frac1t\right]$ for the fraction field. An algebraic closure $\overline F$ is given by adjoining all roots of $t$, so that the absolute Galois group $G_F:=\Gal(\overline F/F)$ is canonically isomorphic to $\Zhat(1)$. We let $\sigma\in G_F$ denote the generator corresponding to the standard compatible system of roots of unity in $\C$, i.e.\ $\left(\exp\left(\frac{2\pi i}n\right)\right)_{n>0}$.

Let $\mathcal X/R$ be a regular semistable curve such that $X=\mathcal X_F$ is smooth. As in Definition~\ref{def:reduction_graphs_of_curves}, we can associate to $\mathcal X$ a reduction graph $\rGraph$, namely the dual graph of its special fibre $\mathcal X_0$. As usual, there is a reduction map $\red\colon \mathcal X(R) \to \Vert{\Graph}$ sending an $R$-point $x$ to the component of $\mathcal X_0$ containing $x(0)$.

\begin{lemma}[Non-abelian Picard--Lefschetz in equicharacteristic $0$]\label{lem:pic-lef_equichar}
For any regular semistable curve $\mathcal X/R$ and any $b\in\mathcal X(R)$, there is a $\sigma$-equivariant isomorphism
\[
\vartheta\colon\pi_1^\et(X_{\overline F},b) \longisoarrow \widehat\pi_1(\rGraph,\red(b))
\]
such that the square
\begin{equation*}
\begin{tikzcd}
\mathcal X(R) \arrow{r}{\classify}\arrow{d}{\red} & \H^1(G_F,\pi_1^\et(X_{\overline F},b)) \arrow{d}{\vartheta_*} \\
\Vert{\Graph} \arrow{r}{\classify} & \H^1(\Z,\widehat\pi_1(\rGraph,\red(b)))
\end{tikzcd}
\end{equation*}
commutes. The horizontal maps in this square are the ones constructed from the $\Zhat$-equivariant profinite groupoids $\pi_1^\et(X_{\overline F})$ and $\widehat\pi_1(\rGraph)$, as in Remark~\ref{rmk:abstract_kummer}; the vertical map is restriction along the homomorphism $\Z\rightarrow G_F$ taking $1$ to $\sigma$, followed by pushout along $\vartheta$.
\end{lemma}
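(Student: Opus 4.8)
The strategy is to deduce this from the analytic non-abelian Picard--Lefschetz theorem (Lemma~\ref{lem:pic-lef_analytic}) by spreading out and then comparing étale with topological fundamental groups. Since $R=\C[t]^{\sh}_t$ is a filtered colimit of étale $\C[t]$-algebras, standard limit arguments produce a smooth affine $\C$-curve $S$ with a marked point $s_0$ over $t=0$, a regular semistable curve $\mathcal X_S\to S$ which is smooth over $S^{\circ}:=S\setminus\{s_0\}$, and a section $b_S\in\mathcal X_S(S)$, all of which pull back along the canonical map $\Spec R\to S$ to $\mathcal X/R$ and $b$; shrinking $S$ further we may also assume that a given $x\in\mathcal X(R)$ (the one whose image in the square we want to test) extends to $x_S\in\mathcal X_S(S)$. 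Analytifying and restricting to a sufficiently small disc $\Disce{\epsilon}$ around $s_0^{\an}$ yields a regular semistable curve $\mathcal X^{\an}\to\Disce{\epsilon}$ in the sense of \S\ref{ss:pic-lef_analytic}, with holomorphic sections $b^{\an},x^{\an}$ and with special fibre — hence reduction graph $\rGraph$ and reduction map $\red$ — literally that of $\mathcal X/R$. The same construction gives a natural map $\mathcal X(R)\to\varinjlim_{\epsilon'}\mathcal X^{\an}(\Disce{\epsilon'})$ compatible with reduction maps.

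Next I would identify $\pi_1^{\et}(X_{\overline F},b)$ with the profinite completion of the topological fundamental group of a nearby fibre, monodromy-equivariantly. Since $\C(S)\subseteq F$ is an algebraic extension, $\overline F$ is an algebraic closure of $\C(S)$ and $X_{\overline F}=\mathcal X_{S,\overline\eta}$ for a geometric generic point $\overline\eta$ of $S$. By Grothendieck's specialisation theorem for fundamental groups of smooth families (\cite[Expos\'e XIII]{SGA1}; cf.\ the proof of Proposition~\ref{prop:wt_is_wt}), the $G_F$-action on $\pi_1^{\et}(X_{\overline F},b)$ factors through the action of $\pi_1^{\et}(S^{\circ})$, via the inertia subgroup at $s_0$ (which is the image of $G_F$), the section $b_S$ rigidifying this to an honest, not merely outer, action compatible with the groupoid structure; moreover the specialisation homomorphism is a $G_F$-equivariant isomorphism $\pi_1^{\et}(X_{\overline F},b)\longisoarrow\pi_1^{\et}(\mathcal X_{S,\bar t},b(\bar t))$ for any $\C$-point $t$ of $S^{\circ}$ lying in $\Disce{\epsilon}$. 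By Riemann existence (\cite[Expos\'e XII]{SGA1}) the right-hand side is the profinite completion of $\pi_1(\mathcal X^{\an}_t,b^{\an}(t))$, and under this identification the $G_F$-action corresponds to the topological monodromy action of $\pi_1(\Discex{\epsilon})$, with the chosen generator $\sigma$ — the one attached to the standard system of roots of unity $(\exp(2\pi i/n))_n$ fixed in \S\ref{ss:pic-lef_equichar} — going to the standard counterclockwise generator of $\pi_1(\Discex{\epsilon})$.

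Granting this comparison, the lemma follows formally. Applying Lemma~\ref{lem:pic-lef_analytic} to $\mathcal X^{\an}$ produces a $\Z$-equivariant isomorphism $\pi_1(\mathcal X^{\an},b^{\an})\longisoarrow\pi_1(\rGraph,\red(b))$ together with its commuting Kummer square; profinitely completing, and using that $\widehat{\pi_1(\rGraph)}=\widehat\pi_1(\rGraph)$ by definition, turns the left-hand side into $\pi_1^{\et}(X_{\overline F},b)$ (via the previous paragraph) and yields the desired $\sigma$-equivariant isomorphism $\vartheta$. The commuting square of the lemma is obtained by transporting the analytic square along these identifications, which are compatible with the groupoid structures, with path-torsors $\pi_1^{\et}(X_{\overline F};b,x)$, and with the map $\mathcal X(R)\to\varinjlim_{\epsilon'}\mathcal X^{\an}(\Disce{\epsilon'})$ and the reduction maps; independence of the auxiliary choices of $S$, $t$ and of the analytic data is routine. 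The one genuinely delicate point — and the main obstacle — is the monodromy matching asserted at the end of the second paragraph: that the algebraic local monodromy operator (the action of $\sigma\in G_F$) is carried by Riemann existence to the right-handed topological Dehn twist appearing in Lemma~\ref{lem:pic-lef_analytic}, with orientations matched so that the two Picard--Lefschetz formulas agree on the nose. This is standard but convention-sensitive, amounting to tracking a generator of $\pi_1$ of a punctured disc through the isomorphism $G_F\cong\Zhat(1)$ and the orientation conventions of \S\ref{ss:graphs_of_groups_topology}; I would isolate it as a short lemma, or cite \cite[Expos\'e XII--XIII]{SGA1} together with the monodromy discussion in \cite{carlson-mueller-stach-peters}.
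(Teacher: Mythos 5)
Your proposal follows essentially the same route as the paper's proof: spread the family out over an \'etale/algebraic neighbourhood of $0\in\A^1_\C$, use Grothendieck's specialisation theorem (with the exact sequence split by the section $b$) together with Riemann existence to identify $\pi_1^\et(X_{\overline F},b)$ with $\widehat\pi_1(\mathcal X^\an_t,b(t))$ equivariantly for $\pi_1^\et(U\setminus\{0\})$, and then compose with Lemma~\ref{lem:pic-lef_analytic}. The one point you defer as a citation or ``short lemma'' --- matching the image of $G_F$ (with $\sigma$ normalised by $(\exp(2\pi i/n))_n$) with the image of $\widehat\pi_1(\Discex{\epsilon},t)$ so that $\sigma$ goes to the standard generator --- is precisely where the paper invests its effort: it proves an auxiliary proposition comparing both fibre functors $\overrightarrow0^*$ and $t^*$ with a third fibre functor $0^*$ defined via ramification indices of the normalised cover at $0$, the $\Zhat$-equivariance of the resulting natural isomorphisms being automatic because $\Zhat$ is abelian. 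Be aware too that the ``routine'' independence of auxiliary choices you mention is part of that same proposition --- the paths $\gamma_{\overrightarrow0,t}$ must be chosen compatibly as $U$ shrinks, $\epsilon$ decreases and $t$ varies --- and this compatibility is exactly what makes the torsor comparison for a second section $x\in\mathcal X(R)$ (hence the commutativity of the square), as well as the later change-of-model arguments in \S\ref{ss:pic-lef_plus}, go through; so the deferred step carries the real weight of the proof rather than being a convention check one can simply cite.
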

\begin{proof}
There is a connected \'etale neighbourhood $U$ of $0\in\A^1_\C$ and a curve $\pi\colon\widetilde{\mathcal X}\rightarrow U$, smooth away from $0$, such that $\mathcal X$ is the pullback of $\widetilde{\mathcal X}$ along the canonical map $\Spec(R)\rightarrow U$. Shrinking $U$ if necessary, we may assume that $b\in\mathcal X(R)$ extends to a section of $\pi$. For $\epsilon$ sufficiently small, the disc $\Disce{\epsilon}\subseteq\A^{1,\an}_\C$ lifts to an open neighbourhood $\Disce{\epsilon}\subseteq U^\an$ of $0$, and we write $\pi\colon\mathcal X^\an\rightarrow \Disce{\epsilon}$ for the restriction of the analytification of $\pi\colon\widetilde{\mathcal X}\rightarrow U$.

Now by Grothendieck's Specialisation Theorem \cite[Expos\'e~XIII, Proposition~4.3 \& Exemples~4.4]{SGA1} we have that for any geometric point $s$ of $U\setminus\{0\}$ the natural sequence
\[
1 \rightarrow \pi_1^\et(\widetilde{\mathcal X}_s,b(s)) \rightarrow \pi_1^\et(\widetilde{\mathcal X}_{U\setminus\{0\}},b(s)) \rightarrow \pi_1^\et(U\setminus\{0\},s) \rightarrow 1
\]
is exact, and split by the section $b$. We apply this to two geometric points: the point $\overrightarrow0$ given by $\Spec(\overline F)\rightarrow\Spec(R)\setminus\{0\}\rightarrow U\setminus\{0\}$; and the point $t\in U(\C)\setminus\{0\}$ corresponding to some non-zero $t\in\Disce{\epsilon}$. Choosing a path $\gamma_{\overrightarrow0,t}\in\pi_1^\et(U\setminus\{0\};\overrightarrow0,t)$, we obtain an isomorphism
\begin{equation}\label{eq:pic-lef_equichar}\tag{$\ast$}
\pi_1^\et(X_{\overline F},b) \longisoarrow \pi_1^\et(\widetilde{\mathcal X}_t,b(t)) = \widehat\pi_1(\mathcal X^\an_t,b(t))
\end{equation}
which is equivariant for the conjugation action of $\pi_1^\et(U\setminus\{0\},\overrightarrow0)\simeq\pi_1^\et(U\setminus\{0\},t)=\widehat\pi_1(U^\an\setminus\{0\},t)$ (where the first identification is conjugation by $\gamma_{\overrightarrow0,t}$). Composing with the isomorphism from Lemma~\ref{lem:pic-lef_analytic}, we obtain the desired isomorphism $\vartheta\colon\pi_1^\et(X_{\overline F},b)\longisoarrow\widehat\pi_1(\rGraph,\red(b))$.

Using Lemma~\ref{lem:pic-lef_analytic}, it suffices to prove that~\eqref{eq:pic-lef_equichar} is $\Zhat$-equivariant (for an appropriate choice of $\gamma_{\overrightarrow0,t}$), and that for any $x\in\mathcal X(R)$ there is a $\Zhat$-equivariant isomorphism $\pi_1^\et(X_{\overline F};b,x)\isoarrow\widehat\pi_1(\mathcal X^\an;b,x)$ compatible with the torsor structures via~\eqref{eq:pic-lef_equichar}. As usual, we will just prove the latter, deducing the former as a special case. The desired isomorphism is constructed as above, choosing if necessary a smaller \'etale neighbourhood $U'$ of $0$ such that $x$ defines a section of $\widetilde{\mathcal X}_{U'}$, and taking the isomorphism to be
\begin{equation}\label{eq:pic-lef_equichar_torsor}\tag{$\ast\ast$}
\pi_1^\et(X_{\overline F};b,x) \longisoarrow \pi_1^\et(\widetilde X_{t'};b(t'),x(t'))=\widehat\pi_1(\mathcal X^\an_{t'};b(t'),x(t'))
\end{equation}
for some sufficiently small $t'$, where the first arrow denotes pre- and postmultiplication by $b(\gamma'_{\overrightarrow0,t'})^{-1}$ and $x(\gamma'_{\overrightarrow0,t'})$ for some $\gamma'_{\overrightarrow0,t'}\in\pi_1^\et(U'\setminus\{0\};\overrightarrow0,t')$. We want to prove that~\eqref{eq:pic-lef_equichar_torsor} is $\Zhat$-equivariant and compatible with the torsor structures for some choice of $\gamma_{\overrightarrow0,t}$ and $\gamma'_{\overrightarrow0,t'}$. This is provided by the following proposition.
\end{proof}

\begin{proposition}
For any \'etale neighbourhood $U$ of $0\in\A^1_\C$, any $\epsilon>0$ such that $\Disce{\epsilon}\subseteq\A^{1,\an}_\C$ lifts to a neighbourhood of $0\in U^\an$ and any non-zero $t\in\Disce{\epsilon}$, there is a $\gamma_{\overrightarrow0,t}\in\pi_1^\et(U\setminus\{0\};\overrightarrow0,t)$ such that the isomorphism
\[
\pi_1^\et(U\setminus\{0\},\overrightarrow0) \longisoarrow \pi_1^\et(U\setminus\{0\},t) = \widehat\pi_1(U^\an\setminus\{0\},t)
\]
identifies the images of the maps
\begin{align*}
G_F &\rightarrow \pi_1^\et(U\setminus\{0\},\overrightarrow0) \\
\widehat\pi_1(\Discex{\epsilon},t) &\rightarrow \widehat\pi_1(U^\an\setminus\{0\},t)
\end{align*}
along the isomorphism $G_F\simeq\Zhat(1)\simeq\Zhat$ sending $\sigma$ to $1\in\Zhat$. Moreover, these paths may be chosen compatibly as we shrink $U$, decrease $\epsilon$ and vary $t$ (the latter meaning compatibly with the trivialisation of the fundamental groupoid of $\Discx$ chosen in \S\ref{ss:pic-lef_analytic}).
\end{proposition}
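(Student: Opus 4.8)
The plan is to reduce the proposition to the classical comparison between the algebraic and analytic local monodromy around a smooth point, the remaining work being a coherent bookkeeping of base points. First, by Riemann's existence theorem \cite{SGA1} the analytification functor induces a canonical isomorphism $\pi_1^{\et}(U\setminus\{0\},t)\iso\widehat\pi_1(U^{\mathrm{an}}\setminus\{0\},t)$ -- no path is needed once $t$ is fixed -- and this is compatible with restriction to smaller \'etale neighbourhoods. It therefore suffices to produce $\gamma_{\overrightarrow 0,t}\in\pi_1^{\et}(U\setminus\{0\};\overrightarrow 0,t)$ such that conjugation by it carries $\im\bigl(G_F\to\pi_1^{\et}(U\setminus\{0\},\overrightarrow 0)\bigr)$ onto $\im\bigl(\widehat\pi_1(\Discex{\epsilon},t)\to\widehat\pi_1(U^{\mathrm{an}}\setminus\{0\},t)\bigr)$ and sends the image of $\sigma$ to the image of the standard positively oriented generator of $\widehat\pi_1(\Discex{\epsilon},t)\iso\Zhat$. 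Both images are topologically procyclic, so it is enough to match these two distinguished generators.

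For the core comparison, write $R=\varinjlim\O(U')$ over the cofiltered system of connected affine pointed \'etale neighbourhoods $(U',0')\to(U,0)$, so that $\Spec R=\varprojlim U'$ and $\Spec F=\varprojlim(U'\setminus\{0\})$; by the usual continuity of the \'etale fundamental group under cofiltered limits with affine transition maps, $G_F=\varprojlim\pi_1^{\et}(U'\setminus\{0\},\overrightarrow 0)$, with the map in question being the projection to the term $U'=U$. Thus the image of $G_F$ is the inertia subgroup at $0$, whose action on a connected finite \'etale cover $V\to U\setminus\{0\}$ we compute by passing to the normalisation $\overline V\to U$ of $U$ in $V$: since we are in residue characteristic $0$ this cover is tame over $0$, so its base change to $\Spec F$ is a disjoint union of $\Spec F(u^{1/e_j})$ ($u$ a uniformiser at $0$), on whose geometric fibre $\sigma$ acts by $u^{1/e_j}\mapsto\exp(2\pi i/e_j)\,u^{1/e_j}$ -- this is exactly the definition of $\sigma$ as the element of $G_F\iso\varprojlim\mu_n$ corresponding to $(\exp(2\pi i/n))_n$, together with the fact that $\sigma$ fixes roots of unity. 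On the other hand, by the Puiseux description of finite analytic covers of a disc, $\overline V^{\mathrm{an}}\to U^{\mathrm{an}}$ over a sufficiently small disc around $0$ is a disjoint union of maps $z\mapsto z^{e_j}$, and the positively oriented loop acts on the fibre over $t$ by $z\mapsto\exp(2\pi i/e_j)\,z$. Hence $\sigma$ and the standard loop act by the same permutation on every finite \'etale cover, once the fibres over $\overrightarrow 0$ and over $t$ have been identified compatibly in $V$, and this common action identifies the two procyclic images together with their distinguished generators.

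Making the phrase ``compatibly in $V$'' precise is precisely the construction of $\gamma_{\overrightarrow 0,t}$: one fixes, once and for all and for each $U'$ in the system, a lift $\Disce{\epsilon(U')}\hookrightarrow(U')^{\mathrm{an}}$ of a small disc around $0$ together with a path in the associated punctured disc from a suitable shrinking of $t$ towards $0$, and transports this datum through $\Spec F=\varprojlim(U'\setminus\{0\})$ and through Riemann existence to obtain the desired path. Since every choice has been made ``at $0$'' and then pulled back, the required compatibility of $\gamma_{\overrightarrow 0,t}$ under shrinking $U$, decreasing $\epsilon$ and varying $t$ (the last compatibly with the trivialisation of the fundamental groupoid of $\Discx$ fixed in \S\ref{ss:pic-lef_analytic}) follows automatically. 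I expect the main obstacle to lie entirely in this last step -- organising the base-point identifications over the whole cofiltered system into one coherent path and verifying functoriality in the cover -- whereas the content of the middle paragraph (agreement of algebraic and analytic local monodromy at a smooth point, with $\sigma$ matching the positively oriented loop in the conventions of \S\ref{ss:pic-lef_analytic}) is classical and could alternatively be quoted.
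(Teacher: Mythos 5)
Your reduction via Riemann existence and your middle paragraph reproduce the computational core of the paper's argument (tameness in residue characteristic zero, passage to the normalisation, Puiseux branches, and the matching $\zeta_{e_j}$-actions that pin down the orientation conventions for $\sigma$ versus the positively oriented loop). The genuine gap is at the step you yourself flag as ``the main obstacle'': actually producing the element $\gamma_{\overrightarrow0,t}\in\pi_1^\et(U\setminus\{0\};\overrightarrow0,t)$. Such an element is by definition a natural isomorphism of fibre functors $\overrightarrow0^*\Rightarrow t^*$ on $\Fet(U\setminus\{0\})$, and the proposition asks for one intertwining the two distinguished generators. Your computation shows only that on each individual connected cover the two generators act through cyclic permutations with the same explicit local description ``once the fibres over $\overrightarrow0$ and over $t$ have been identified compatibly in $V$'' --- but that compatible identification, simultaneously for all covers and naturally in $V$, is exactly what has to be constructed, so as written the key assertion is circular: cover-by-cover agreement of cycle structure does not by itself assemble into a single natural isomorphism with the required equivariance. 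Your proposed remedy (choosing discs, lifts and paths towards $0$ for each $U'$ in the pro-system and ``transporting'') is left unexecuted, and the claim that the compatibilities then ``follow automatically'' is not justified.

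The paper closes precisely this gap by a short formal device rather than an explicit transport of choices: it introduces an auxiliary fibre functor $0^*$ on $\Fet(U\setminus\{0\})$, defined on a cover $V$ by $0^*(V)=\coprod_{v\in\overline V_0}\Z/e_v\Z$ with its obvious $\Zhat$-structure (this is where your ramification/Puiseux computation is absorbed), observes that it factors through analogously defined functors on $\Cov_\fin(\Discex{\epsilon})$ and $\Fet(\Spec(F))$, and then quotes the general fact that any two fibre functors on a Galois category are naturally isomorphic (SGA1, Cor.\ V.5.7); the comparisons $t^*\simeq 0^*$ and $\overrightarrow0^*\simeq 0^*$ are then $\Zhat$-equivariant essentially for free, since the relevant automorphism groups are abelian and the $\Zhat$-structures have been matched by construction. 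The composite natural isomorphism \emph{is} the desired path $\gamma_{\overrightarrow0,t}$, with no choice of analytic paths needed, and compatibility under shrinking $U$, decreasing $\epsilon$ and varying $t$ is read off from the construction. If you want to keep your direct approach, you must actually carry out the transport argument (e.g.\ via a tangential base point or a ray towards $0$) and verify naturality in the cover; invoking the fibre-functor theorem is the cheaper route.
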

\begin{proof}[Proof (sketch)]
The first part amounts to the assertion that the diagram
\begin{center}
\begin{tikzcd}
\Fet(U\setminus\{0\}) \arrow{r}\arrow{d} & \Cov_\fin(\Discex{\epsilon}) \arrow{d}{t^*}[swap]{\viso} \\
\Fet(\Spec(F)) \arrow{r}{\overrightarrow0^*}[swap]{\sim} & \SET_\fin(\Zhat)
\end{tikzcd}
\end{center}
commutes up to natural isomorphism, where the lower and right-hand maps are the fibre functors associated to $\overline F/F$ and $t\in\Discex{\epsilon}$ respectively. To show this, we introduce another fibre functor $0^*\colon\Fet(U\setminus\{0\})\rightarrow\SET_\fin(\Zhat)$, which is defined on objects as follows. Given a finite \'etale cover $V\rightarrow U\setminus\{0\}$, we let $\overline V\rightarrow U$ denote the normalisation, and $\overline V_0$ the fibre over $0$. We define
\[
0^*(V) := \coprod_{v\in\overline V_0}\Z/e_v\Z,
\]
where $e_v$ denotes the ramification degree of $\overline V\rightarrow U$ at $v$, and the right-hand side is given the obvious structure as a finite $\Zhat$-set. This functor factors through analogously-defined functors on $\Cov_\fin(\Discex{\epsilon})$ and $\Fet(\Spec(F))$, also denoted $0^*$, so it suffices to show that the functors $t^*,0^*\colon\Cov_\fin(\Discex{\epsilon})\rightarrow\SET_\fin(\Zhat)$ are naturally isomorphic, and similarly for $\overrightarrow0^*,0^*\colon\Fet(\Spec(F))\rightarrow\SET_\fin(\Zhat)$. But it follows from the general mantra of Galois categories that the fibre functors $t^*$ and $0^*$ are naturally isomorphic as $\SET_\fin$-valued functors \cite[Corollaire~V.5.7]{SGA1}, and such an isomorphism is automatically $\Zhat$-equivariant since $\Zhat$ is abelian. The same applies between $\overrightarrow0^*$ and $0^*$.

It is easy to see that this construction of a natural isomorphism is compatible with shrinking $U$ and $\epsilon$, and can be chosen compatibly as we vary $t$.
\end{proof}

\subsection{The graph of groups description of monodromy: arithmetic setting}\label{ss:arithmetic}

We now finally come to the proof of the Picard--Lefschetz Theorem~\ref{thm:graph_comparison} for fundamental groupoids of curves in mixed characteristic. We fix a regular semistable curve $\mathcal X/\O_K$ with a basepoint $b\in\mathcal X(R)$ and write $X=\mathcal X_K$ for its generic fibre. As in \cite[\S2.3]{oda}, the approach is to deform the problem to equicharacteristic $0$, where we have already proved the analogous result (Lemma~\ref{lem:pic-lef_equichar}).

\begin{lemma}\label{lem:deformation_exists}
Let $R=\Z_p^\nr[t]^\sh_t$ denote the (strict) Henselisation of $\Z_p^\nr[t]$ at $(p,t)$. There exists a regular semistable curve $\pi\colon\widetilde{\mathcal X}\to \Spec(R)$
such that the formal neighbourhoods of all singular points of the special fibre are isomorphic, over $\widehat{\Z_p^\nr}[\![t]\!]$, to $\Spf\left(\widehat{\Z_p^\nr}[\![x,y,t]\!]/(xy-t)\right)$, and 
such that the pullback of $\widetilde{\mathcal X}$ to $\O_{K^\nr}$ via the map $t\mapsto\varpi$ is isomorphic to $\mathcal X_{\O_{K^\nr}}$.
\end{lemma}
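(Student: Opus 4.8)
The plan, a streamlined form of the deformation argument of \cite[\S2.3]{oda}, is to construct $\widetilde{\mathcal X}$ by lifting the model $\mathcal X$ directly along a surjection $R\twoheadrightarrow\O_{K^\nr}$ carrying $t$ to $\varpi$, so that the semistable local form $xy=t$ and the identification of the fibre over $t\mapsto\varpi$ are both built in. Since $\mathcal D$ is étale over $\O_K$ its points lie in the smooth locus and lift uniquely, so it contributes nothing to the obstruction theory and we may assume $\mathcal X=\overline{\mathcal X}$ is proper; base changing along the ind-étale extension $\O_K\to\O_{K^\nr}$ preserves regularity and the reduced normal-crossings special fibre, so we may work with $\overline{\mathcal X}_{\O_{K^\nr}}$. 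The strict Henselisation $R$ admits a local surjection onto the strictly Henselian DVR $\O_{K^\nr}$ with $t\mapsto\varpi$, whose kernel is generated by the Eisenstein minimal polynomial $f$ of $\varpi$ over $\Z_p^\nr$; its completion is $\widehat R=\widehat{\Z_p^\nr}[\![t]\!]$, with $\widehat R/(f)=\widehat{\O_{K^\nr}}$. Since $\overline{\mathcal X}$ is regular and semistable, the complete local ring at each node of $\overline{\mathcal X}_{\widehat{\O_{K^\nr}}}$ is $\widehat{\O_{K^\nr}}[\![x,y]\!]/(xy-\varpi)$, and the rest of $\overline{\mathcal X}_{\widehat{\O_{K^\nr}}}$ is smooth over $\widehat{\O_{K^\nr}}$.

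First I would produce a flat proper lift $\mathcal Y$ of $\overline{\mathcal X}_{\widehat{\O_{K^\nr}}}$ over $\Spf\widehat R$ with the required local form $\Spf(\widehat{\Z_p^\nr}[\![x,y,t]\!]/(xy-t))$ at each node, built as a limit along the tower $\widehat R/(f)^{k+1}\twoheadrightarrow\widehat R/(f)^k$ (successive kernels square-zero). At each stage the obstruction to lifting a flat scheme $Y/A$ lies in $\Ext^2$ of the relative cotangent complex $L_{Y/A}$ — perfect of tor-amplitude $[-1,0]$ since $Y\to\Spec A$ is lci (nodal) — with coherent coefficients, and this $\Ext^2$ vanishes because $Y$ is proper over an affine local base with fibres of dimension $\leq1$, so that $H^2$ of any coherent sheaf vanishes. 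The torsor of lifts (under the corresponding $\Ext^1$) leaves enough freedom to impose the equation $xy-t$ at the finitely many nodes: concretely, lift the smooth locus and the node-neighbourhoods separately — each unobstructed, the smooth pieces moreover rigid — and glue along the lifted punctured neighbourhoods, the gluing obstruction again lying in a coherent $H^2$ that vanishes for dimension reasons. Grothendieck's existence theorem then algebraises $\mathcal Y$ to a proper flat $\widehat R$-scheme carrying an isomorphism of its reduction modulo $(f)$ with $\overline{\mathcal X}_{\widehat{\O_{K^\nr}}}$.

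The last step — the one that needs genuine care — is to descend from $\widehat R$ to the excellent Henselian ring $R$, the wrinkle being that $(f)$ is not $\mathfrak m$-primary, so approximating modulo powers of $\mathfrak m$ does not control the reduction modulo $(f)$. I would therefore apply Artin approximation to the functor of pairs $(Y/R,\ \text{isomorphism }Y\bmod(f)\cong\overline{\mathcal X}_{\O_{K^\nr}})$ — of finite type after fixing a projective embedding — feeding in the $\widehat R$-point just obtained. Taking the approximation order large, the resulting $R$-scheme remains regular with reduced normal-crossings special fibre and with node-charts $\widehat{\Z_p^\nr}[\![x,y,t]\!]/(xy-t)$ — these properties are cut out by finitely many of the defining equations, hence survive approximation — while its base change along $t\mapsto\varpi$ is isomorphic to $\overline{\mathcal X}_{\O_{K^\nr}}$; removing the unique étale lift of $\mathcal D$ yields $\widetilde{\mathcal X}$. (One can instead argue on the versal deformation space of the nodal special fibre, as in \cite[\S2.3]{oda}; there the corresponding subtlety is that regularity of $\overline{\mathcal X}$ — i.e.\ thickness one at each node — is exactly what allows the smoothing parameters of $\mathcal X$ to be matched with the single coordinate $t$.)
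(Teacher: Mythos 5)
Your route is genuinely different from the paper's: you build the deformation by abstract deformation theory (unobstructedness of $(f)$-adic thickenings via vanishing of coherent $H^2$ on a relative curve, prescribing the node charts through the surjection of global onto local deformation classes), algebraise by Grothendieck existence, and descend from $\widehat R$ to $R$ by Artin approximation. The paper instead takes the classifying map $\Spec(\O_{K^\nr})\rightarrow M_{g,n}$, uses Knudsen's description of the universal curve in the formal neighbourhood of the stable special fibre (which supplies the charts $xy=t_i$ for free), and manufactures $R$ together with the family as an explicit quotient of $\O^\sh_{M_{g,n},\overline x}$ lifting the classifying map; no approximation is needed and the identification over $t\mapsto\varpi$ is built in. Your minor slips (the smooth locus is not affine, hence not rigid; algebraisation needs an ample line bundle lifted along the thickenings) are repairable and not the real issue.

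The genuine gap is in the last step. At a node, any flat family with nodal special fibre has completed local ring $\widehat{\Z_p^\nr}[\![x,y,t]\!]/(xy-a(t))$, and the conclusion you need is $a=t\cdot(\text{unit})$. Artin approximation gives you (i) the isomorphism $Y\bmod(f)\cong\overline{\mathcal X}_{\O_{K^\nr}}$ on the nose, because it is part of the functor, and (ii) agreement with the formal solution modulo $\mathfrak m^N$; but these only force $a\equiv\varpi\cdot(\text{unit})\bmod f$ and $a\equiv t\cdot(\text{unit})\bmod\mathfrak m^N$ up to coordinate change, which does not put $a$ in $(t)$: for example $a=t\widetilde u+bf$ with $b$ a unit satisfies both, gives a regular total space with the correct fibre over $t\mapsto\varpi$, yet its non-smooth locus does not lie over $V(t)$ and its restriction to $t=0$ is smooth at that point — precisely the property the paper needs later (the $t=0$ fibre must stay nodal with the same dual graph). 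So your justification that the node charts are ``cut out by finitely many of the defining equations, hence survive approximation'' does not work. To repair the argument you must force the condition inside the approximation functor itself, e.g.\ require that the Fitting/discriminant ideal of the non-smooth locus of $Y/A$ equal the image of $(t)$ (a limit-preserving condition satisfied by your formal solution), which together with regularity yields $a=t\cdot(\text{unit})$; alternatively, avoid approximation entirely as the paper does by pulling back the universal curve along an explicit lift $\O^\sh_{M_{g,n},\overline x}\to R$ of the classifying map.
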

\begin{proof}
We follow the construction in \cite[\S 2.3]{oda}, using the description of the universal curve in a formal neighbourhood of a stable curve. Removing an \'etale divisor from $\mathcal X$ if necessary, we may assume that $\mathcal X/\O_K$ is in fact regular stable, so that $\mathcal X_{\O_{K^\nr}}$ classified by a map $x\colon\Spec(\O_{K^\nr})\rightarrow M_{g,n}$. We write $\overline x$ for the restriction of $x$ to $\Spec(\overline k)$.

By \cite[proof of Theorem 2.7]{Knudsen83}, the formal completion of the local ring at $\overline x$ is isomorphic to $\widehat{\Z_p^\nr}[\![t_1,\ldots,t_k]\!]$ in such a way that the formal completion of the universal curve $\pi\colon M_{g,n+1}\to M_{g,n}$ is isomorphic, in a formal neighbourhood of each singular point of $\pi^{-1}(\overline x)$, to the map $\Spf(\widehat{\Z_p^\nr}[\![t_1,\ldots,\widehat{t_i},\ldots,t_k,x,y]\!]) \to \Spf(\widehat{\Z_p^\nr}[\![t_1,\ldots,t_k]\!])$ sending $t_i$ to $xy$. Hence the lemma follows from taking a suitable lift $\O_{M_{g,n},\overline x}^\sh \to R$ of the morphism $\O_{M_{g,n},\overline x}^\sh \to \O_{K^\nr} $ on strictly Henselian local rings induced by $x$, as in \cite[\S 2.3]{oda}. Concretely, we choose a regular system of parameters $\varpi,t_1,\ldots,t_{3g-3+n}$ for $\O^\sh_{M_{g,n},\overline x}$ such that the kernel of the map to $\O_{K^\nr}$ defined by $x$ is generated by $t_i-u_i\varpi$ for units $u_i$, and then define 
\[
R=\O^\sh_{M_{g,n},\overline x}/(u_1 t_2 -u_2 t_1 ,u_1 t_3 -u_3 t_1 ,\ldots ,u_1 t_{3g-3+n}-u_{3g-3+n}t_1)
\]
and $t=t_1/u_1 $. Note that $R$ is equal to the strict Henselisation of $\Z_p [t]$, since the map $\Z_p[t]^\sh_t\to R$ is an isomorphism on formal completions and $R$ is strictly Henselian.
\end{proof}

Using this deformation, we can pass from the mixed characteristic setting to the equicharacteristic $0$ setting via Abhyankar's Lemma \cite[Expos\'e VIII, \S 5]{grothendieck:1972}.

\begin{proposition}[Abhyankar's Lemma, {\cite[Lemma 2.6]{oda}}]\label{prop:abhyankar}
The morphisms $\Spec(\O_{K^\nr}) \to \Spec(R)$ and $\Spec(\C[t]^\sh_t) \to \Spec(R)$, defined by $t\mapsto\varpi$ and a choice of embedding $\Z_p^\nr\hookrightarrow\C$ respectively, induce isomorphisms
\begin{align*}
G_{K^\nr}^{(p')} &\longisoarrow\pi_1^{\et,(p')}(\Spec(R[1/t]),\eta_K) \\
G_F^{(p')} &\longisoarrow\pi_1^{\et,(p')}(\Spec(R[1/t]),\eta_F),
\end{align*}
where $F=\Frac(\C[t]^\sh_t)$ as in \S\ref{ss:pic-lef_equichar} and $\eta_K$ and $\eta_F$ denote the geometric points of $\Spec(K)$ and $\Spec(F)$ determined by their respective algebraic closures.

The identification $G_{K^\nr}^{(p')}\iso G_F^{(p')}$ given by conjugation by a choice of path $\gamma_{K,F}\in\pi_1^{\et,(p')}(\Spec(R[1/t]);\eta_K,\eta_F)$ doesn't depend on $\gamma_{K,F}$. Under the canonical isomorphisms $G_{K^\nr}^{(p')}\simeq\Zhat^{(p')}(1)(K^\nr)=\Zhat^{(p')}(1)(\Q_p^\nr)$ and $G_F^{(p')}\simeq\Zhat^{(p')}(1)(\C)$ given by the tame character, this identification is the one induced by the inclusion $\Q_p^\nr\hookrightarrow\C$. In particular, by choosing the embedding $\Z_p^\nr\hookrightarrow\C$ appropriately, we may assume that our chosen generator of tame inertia $\sigma\in I_K=G_{K^\nr}$ maps under this identification to the preferred generator $\sigma\in G_F$ from \S\ref{ss:pic-lef_equichar}.
\end{proposition}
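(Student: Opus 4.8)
The plan is to compute $\pi_1^{\et,(p')}(\Spec R[1/t])$ directly, exploiting that $\Spec R[1/t]$ is the complement in the two-dimensional regular strictly Henselian local scheme $\Spec R$ of the regular divisor $V(t)$, and then to realise both $G_{K^\nr}^{(p')}$ and $G_F^{(p')}$ as this same group via a cofinal system of Kummer covers $\Spec R[t^{1/n}][1/t]$.

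First I would show that $\pi_1^{\et,(p')}(\Spec R[1/t],*)\cong\Zhat^{(p')}(1):=\varprojlim_{(n,p)=1}\mu_n(R)$, canonically in the geometric basepoint $*$ since this group is abelian. Given a connected finite \'etale cover $Y\to\Spec R[1/t]$ of degree prime to $p$, let $\overline Y\to\Spec R$ be the normalisation; it is normal, finite over the excellent regular scheme $\Spec R$, and \'etale over $\Spec R[1/t]=\Spec R\setminus V(t)$, so Zariski--Nagata purity forces its branch locus to be pure of codimension one and contained in $V(t)$, hence empty or equal to $V(t)$. If empty, $\overline Y\to\Spec R$ is finite \'etale, hence trivial since $R$ is strictly Henselian, so $Y$ is trivial. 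Otherwise the ramification index $e$ at the generic point of $V(t)$ is prime to $p$, and since that generic point has residue characteristic $0$ the cover is automatically tame along $V(t)$; Abhyankar's Lemma \cite[Expos\'e VIII, \S5]{grothendieck:1972} then shows that $\overline Y\times_R R[t^{1/e}]$ is \'etale over the strictly Henselian $\Spec R[t^{1/e}]$, hence trivial, so $Y$ is dominated by $\Spec R[t^{1/e}][1/t]$. Since $n$ is invertible in $R$ and $R$ contains $\mu_n$, each $\Spec R[t^{1/n}][1/t]\to\Spec R[1/t]$ is Galois with group $\mu_n(R)\cong\Z/n(1)$, and the above shows that these form a cofinal system, yielding the claimed description.

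Next I would pull this system back along the two structure morphisms. Since $\Spec\O_{K^\nr}\to\Spec R$ carries $t$ to the uniformiser $\varpi$, it restricts to $\Spec K^\nr\to\Spec R[1/t]$ and pulls $\Spec R[t^{1/n}][1/t]$ back to $\Spec K^\nr[\varpi^{1/n}]$; as $K^\nr$ has algebraically closed residue field these are a cofinal system of tame finite extensions of $K^\nr$, compatibly a system of $\mu_n$-torsors, so the induced map $G_{K^\nr}^{(p')}\to\pi_1^{\et,(p')}(\Spec R[1/t])$ is an isomorphism, which under the tame character is the isomorphism $\varprojlim_n\mu_n(\O_{K^\nr})\isoarrow\varprojlim_n\mu_n(R)$ induced by functoriality of $\mu_n$ (an isomorphism because $R\to\O_{K^\nr}$ induces the identity on residue fields $\overline\F_p$). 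Symmetrically, $\Spec\C[t]^\sh_t\to\Spec R$ carries $t$ to the uniformiser of the Henselian discrete valuation ring $\C[t]^\sh_t$, whose residue field $\C$ is algebraically closed; it restricts to $\Spec F\to\Spec R[1/t]$ and pulls $\Spec R[t^{1/n}][1/t]$ back to $\Spec F(t^{1/n})$ (a field, as $t$ is a uniformiser of $F$), a cofinal system of prime-to-$p$ extensions of $F$, so that $G_F^{(p')}\isoarrow\pi_1^{\et,(p')}(\Spec R[1/t])$ is an isomorphism, which under the tame character is $\varprojlim_n\mu_n(R)\isoarrow\varprojlim_n\mu_n(\C)$, induced by the embedding $\Z_p^\nr\hookrightarrow\C$ on coefficients (an isomorphism on each $\mu_n$ since that embedding is injective and $\mu_n(\Z_p^\nr)$, $\mu_n(\C)$ are both cyclic of order $n$).

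Combining, the identification $G_{K^\nr}^{(p')}\iso\pi_1^{\et,(p')}(\Spec R[1/t];\eta_K)\iso\pi_1^{\et,(p')}(\Spec R[1/t];\eta_F)\iso G_F^{(p')}$ is independent of the path used for the middle change-of-basepoint isomorphism, because that isomorphism is canonical for an abelian fundamental group; and unwinding the tame characters it is precisely the isomorphism $\Zhat^{(p')}(1)(\Q_p^\nr)\isoarrow\Zhat^{(p')}(1)(\C)$ induced by the inclusion $\Q_p^\nr\hookrightarrow\C$. Finally I would normalise the embedding: the compatible systems of primitive prime-to-$p$ roots of unity in $\C$ form a torsor under $\Zhat^{(p')\times}$ on which the embeddings $\Z_p^\nr\hookrightarrow\C$ act transitively, so one may choose the embedding so that the system underlying our fixed $\sigma\in I_K$ maps to $(\exp(2\pi i/n))_n$, which forces $\sigma\mapsto\sigma$. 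The step demanding the most care is the application of purity and Abhyankar's Lemma in the second paragraph: one must check that a prime-to-$p$ \'etale cover of $\Spec R[1/t]$ can be ramified over $\Spec R$ only along $V(t)$ — in particular not along the characteristic-$p$ divisor $V(p)$, which does meet $\Spec R[1/t]$ — and that ramification along $V(t)$ is automatically tame because the generic point of $V(t)$ has residue characteristic $0$.
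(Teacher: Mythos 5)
The paper does not actually prove this proposition: it is quoted as Abhyankar's Lemma with a citation to Oda's Lemma 2.6, so there is no internal proof to compare against. Your sketch supplies the standard argument underlying that citation, and it is essentially sound: normalise over $\Spec(R)$, use Zariski--Nagata purity to confine the branch locus to the irreducible divisor $V(t)$, observe that ramification along $V(t)$ is automatically tame because the generic point of $V(t)$ has residue characteristic $0$ (this, together with the remark that the cover must be unramified along $V(p)$ since the generic point of $V(p)$ lies in $\Spec(R[1/t])$, is exactly the right point to isolate), apply classical Abhyankar to trivialise after a Kummer pullback, and then match the cofinal Kummer systems on the three sides via $\mu_n$, which also gives the independence of $\gamma_{K,F}$ from abelianness and the compatibility with the tame characters.

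One step deserves repair. You assert that the ramification index $e$ at the generic point of $V(t)$ is prime to $p$ because the degree of the cover is; for a connected but non-Galois cover this does not follow, since $e$ need only satisfy $\sum_ie_if_i=n$ and so need not divide $n$. As written, if $p\mid e$ then $\Spec(R[t^{1/e}][1/t])$ is not even \'etale over $\Spec(R[1/t])$ (it is ramified along $V(p)$), so ``domination'' by it does not immediately yield cofinality of the prime-to-$p$ Kummer covers. The standard fix is to compute $\pi_1^{\et,(p')}$ using connected Galois covers whose group $G$ has order prime to $p$ (these are cofinal), for which the inertia group at a height-one point over $(t)$ is a subgroup of $G$ and hence $e\mid\#G$ is prime to $p$; alternatively, keep your argument with arbitrary $e$ and note a posteriori that the function field of $Y$ embeds in $\Frac(R)(t^{1/e})$, forcing $Y\simeq\Spec(R[t^{1/m}][1/t])$ with $m$ equal to the (prime-to-$p$) degree. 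A second, purely cosmetic point: in the Abhyankar step one should pass to the normalisation of $\overline Y\times_R R[t^{1/e}]$ before asserting it is \'etale over $\Spec(R[t^{1/e}])$. With these adjustments your sketch is a correct self-contained proof of the cited lemma.
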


Let now $\pi\colon\widetilde{\mathcal X}\to \Spec(R)$ be a deformation of $\mathcal X$ as in Lemma~\ref{lem:deformation_exists} and let $\widetilde x,\widetilde y\in\widetilde{\mathcal X}(R)$ be two sections of $\pi$ deforming sections $x,y\in\mathcal X(\O_K)$. The map $\pi$ is smooth over $\Spec(R[1/t])$, and hence by applying Grothendieck's Specialisation Theorem as in the proof of Lemma~\ref{lem:pic-lef_equichar} we obtain an isomorphism
\begin{equation}\label{eq:groth_spec}
\pi_1^{\et,(p')}(X_{\overline K};x,y) \longisoarrow \pi_1^{\et,(p')}(\widetilde{\mathcal X}_{\overline F};\widetilde x(\eta_F),\widetilde y(\eta_F))
\end{equation}
given by precomposition by $\widetilde x(\gamma_{K,F})^{-1}$ and postcomposition by $\widetilde y(\gamma_{K,F})$. Using Abhyankar's Lemma, we see that the isomorphism~\eqref{eq:groth_spec} is equivariant for the action of $G_K^{(p')}\cong G_F^{(p')}$ on either side.

\begin{proof}[Proof of Theorem~\ref{thm:graph_comparison}]
We will in fact prove the corresponding result for the pro-prime-to-$p$ \'etale fundamental group, obtaining the $\Q_\ell$-pro-unipotent version by taking continuous $\Q_\ell$-Mal\u cev completions.

We fix a deformation $\widetilde{\mathcal X}/R$ as in Lemma~\ref{lem:deformation_exists}. The restriction $\widetilde{\mathcal X}_{\Z_p^\nr}$ to the $t=0$ locus is a curve over $\Z_p^\nr$ such that the local rings of all singularities in the special fibre are \'etale-locally given by an equation $xy=0$. It follows that the normalisation of $\widetilde{\mathcal X}_{\Z_p^\nr}$ is smooth over $\Spec(\Z_p^\nr)$, and so the dual graphs of the geometric generic and special fibres of $\widetilde{\mathcal X}_{\Z_p^\nr}$ are canonically identified. In particular, the reduction graphs of $\mathcal X$ and $\widetilde{\mathcal X}_{\C[t]^\sh_t}$ are the same and their reduction maps agree on $\widetilde{\mathcal X}(R)$.

Now choose a deformation $\widetilde b\in\widetilde{\mathcal X}(R)$ of the basepoint $b\in\mathcal X(\O_K)$. We define the isomorphism $\vartheta\colon\pi_1^{\et,(p')}(X_{\overline K},b)\longisoarrow\pi_1^{(p')}(\rGraph,\red(b))$ by composing the isomorphism~\eqref{eq:groth_spec} (for $\widetilde x=\widetilde y=\widetilde b$) with the isomorphism $\pi_1^{\et,(p')}(\widetilde{\mathcal X}_{\overline F},\widetilde b(\eta_F))$ provided by Lemma~\ref{lem:pic-lef_equichar}. This isomorphism $\vartheta$ is then equivariant for the action of $\sigma$ on either side, and it is easy to see, again by~\eqref{eq:groth_spec} and Lemma~\ref{lem:pic-lef_equichar}, that the square~\ref{eq:pic-lef_square} commutes.
\end{proof}

\begin{remark}
An alternative (related) approach to proving Theorem~\ref{thm:graph_comparison} is via Sa\"idi's Theorem \cite[Theorem 2.8]{saidi}. Sa\"idi's approach may be characterised as proving an equivalence of Galois categories between tame \'etale covers of $X_{\overline{K}}$ and `\'etale covers of the graph of groups $\rGraph $', for a suitable interpretation of the latter.
\end{remark}

\begin{remark}\label{rmk:too_many_choices}
Note that the isomorphism in Theorem~\ref{thm:graph_comparison} depend on several choices: the construction depends on a choice of uniformiser, a choice of deformation, a choice of orderings of the edges meeting each vertex, a choice of embedding of $\Q_p^\nr$ into $\mathbb{C}$, a choice of local coordinates at singular points of the special fibre of the associated complex analytic fibration, etc.
\end{remark}

\subsection{Compatibility with the $\W$- and $\M$-filtrations}\label{ss:pic-lef_filtrations}

In \S\ref{ss:pi1_curves}, we saw that the $\Q_\ell$-pro-unipotent \'etale fundamental group $\pi_1^{\Q_\ell}(X_{\overline K},b)$ of a hyperbolic curve carries two filtrations $\M_\bullet$ and $\W_\bullet$, the former determined by the weights of Frobenius and the latter being a variant of the descending central series which takes account of the failure of $X$ to be projective. We will want to describe these filtrations purely combinatorially via the description of $\pi_1^{\Q_\ell}(X_{\overline K},b)$ from the non-abelian Picard--Lefschetz Theorem~\ref{thm:graph_comparison}. The relevant definition on the side of reduction graphs is as follows.

\begin{definition}\label{def:filtrations_on_graph}
Let $\rGraph$ be a reduction graph and $b\in\Vert{\Graph}$ a basepoint. We endow $\pi_1(\rGraph,b)$ with increasing filtrations $\W_\bullet$ and $\M_\bullet$ as follows.

We set $\W_{-1}\pi_1(\rGraph,b)=\pi_1(\rGraph,b)$, and define $\W_{-2}\pi_1(\rGraph,b)$ to be the subgroup spanned by the commutator subgroup $[\W_{-1},\W_{-1}]$ and conjugates of the elements $\delta_e$ for $e$ a half-edge of $\Graph$. For $k\geq3$, we define $\W_{-k}\pi_1(\rGraph,b)$ to be the subgroup spanned by $[\W_{-i},\W_{-j}]$ for positive integers $i,j$ with $i+j=k$.

We set $\M_0\pi_1(\rGraph,b)=\pi_1(\rGraph,b)$, and define $\M_{-1}\pi_1(\rGraph,b)$ to be the subgroup spanned by conjugates of the elements $\beta_{v,i}$, $\beta_{v,i}'$ and $\delta_e$ for $v$ a vertex, $1\leq i\leq g(v)$ an integer and $e$ an edge or half-edge. We define $\M_{-2}\pi_1(\rGraph,b)$ to be the subgroup spanned by $[\M_{-1},\M_{-1}]$ and conjugates of the elements $\delta_e$ for $e$ an edge or half-edge of $\Graph$. For $k\geq3$, we define $\M_{-k}\pi_1(\rGraph,b)$ to be the subgroup spanned by $[\M_{-i},\M_{-j}]$ for positive integers $i,j$ with $i+j=k$.

Throughout the above, we adopt the notation from Definition~\ref{def:pi1_reduction_graph}. By a conjugate of an element of $\pi_1(\rGraph,v)$, we mean a conjugate by an element of $\pi_1(\rGraph;b,v)$.
\end{definition}

\begin{remark}\label{rmk:interpretation_of_quotients}
The filtrations defined above are independent of $b$ in the sense that the isomorphisms $\pi_1(\rGraph,b)\longisoarrow\pi_1(\rGraph,b')$ induced by conjugation by any element of $\pi_1(\rGraph;b,b')$ are $\W$- and $\M$-filtered isomorphisms. In particular, for any $k$ one can make sense of the quotient groupoids $\pi_1(\rGraph)/\W_{-k}$ and $\pi_1(\rGraph)/\M_{-k}$. The first few quotients can be described explicitly:
\begin{itemize}
	\item $\pi_1(\rGraph)/\M_{-1}=\pi_1(\Graph)$ is the fundamental groupoid of the underlying graph of~$\rGraph$;
	\item $\pi_1(\rGraph)/\W_{-2}=\pi_1(\rGraph')^\ab$, where $\rGraph'$ denotes the reduction graph formed by eliminating all half-edges from $\rGraph$.
\end{itemize}
\end{remark}

\begin{lemma}\label{lem:pic-lef_filtered}
The isomorphism $\vartheta\colon\pi_1^{\Q_\ell}(X_{\overline K},b)\longisoarrow\pi_1^{\Q_\ell}(\rGraph,\red(b))$ from Theorem~\ref{thm:graph_comparison} is a $\W$- and $\M$-filtered isomorphism, for the filtrations defined in \S\ref{ss:gal_coh_field}, Definition~\ref{def:wt1} and Definition~\ref{def:filtrations_on_graph}.
\end{lemma}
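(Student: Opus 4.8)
The plan is to reduce the statement to the already-proven compatibility on the abelianisation (weight $-1$) and then propagate it through the filtrations by functoriality and generation. First, I observe that both filtrations $\W_\bullet$ and $\M_\bullet$ — on the curve side (Definitions \ref{def:wt1} and from \S\ref{ss:gal_coh_field}) and on the graph side (Definition \ref{def:filtrations_on_graph}) — are defined inductively from their degree-$(-1)$ and degree-$(-2)$ terms by the same commutator recipe: $\W_{-k}$ (resp.\ $\M_{-k}$) for $k\geq 3$ is generated by $[\W_{-i},\W_{-j}]$ (resp.\ $[\M_{-i},\M_{-j}]$) with $i+j=k$. Since $\vartheta$ is a group isomorphism, it automatically carries a subgroup generated by commutators of given subgroups to the subgroup generated by the corresponding commutators. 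So it suffices to check that $\vartheta$ identifies $\W_{-1},\W_{-2}$ with $\W_{-1},\W_{-2}$ and $\M_0,\M_{-1},\M_{-2}$ with $\M_0,\M_{-1},\M_{-2}$.

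The degree-$(-1)$ step is the substance. For $\W_{-1}$ and $\M_0$ there is nothing to prove, as both are the whole group. For $\M_{-1}$: on the graph side $\M_{-1}\pi_1(\rGraph,\red(b))$ is generated by conjugates of the $\beta_{v,i},\beta_{v,i}',\delta_e$, and the quotient $\pi_1(\rGraph)/\M_{-1}=\pi_1(\Graph)$ is the fundamental groupoid of the underlying graph (Remark \ref{rmk:interpretation_of_quotients}). On the curve side, $\M_{-1}\pi_1^{\Q_\ell}(X_{\overline K},b)$ is the kernel of the map to the associated graded $\gr^\M_0$, which by Grothendieck's monodromy theorem is where $\sigma$ acts trivially. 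The key point, which follows directly from the non-abelian Picard--Lefschetz Theorem \ref{thm:graph_comparison} together with the $\sigma$-equivariance of $\vartheta$: under $\vartheta$ the monodromy operator on the curve side corresponds to the edge-twist automorphism $\sigma$ on $\pi_1(\rGraph)$, whose logarithm visibly kills all vertex-group generators and acts on an edge $e$ by $e\mapsto e\cdot\delta_e^{\length(e)}$. Hence the $\sigma$-invariants on the graph side are exactly $\pi_1(\Graph)$, matching the description of $\gr^\M_0$ on the curve side; this gives $\vartheta(\M_{-1})=\M_{-1}$. One should also note $N(\M_iU)\subseteq\M_{i-2}U$ holds on both sides, so the monodromy filtration is the one determined by the weights, exactly as in \S\ref{ss:gal_coh_field}.

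For $\W_{-2}$: on the graph side it is generated by $[\W_{-1},\W_{-1}]$ together with conjugates of the half-edge elements $\delta_e$, and $\pi_1(\rGraph)/\W_{-2}=\pi_1(\rGraph')^\ab$ where $\rGraph'$ deletes the half-edges. On the curve side $\W_{-2}\pi_1^{\Q_\ell}(X_{\overline K},b)$ is by Definition \ref{def:wt1} the kernel of $\pi_1^{\Q_\ell}(X_{\overline K},b)\twoheadrightarrow\pi_1^{\Q_\ell}(\overline X_{\overline K},b)^\ab$. I would match these by using the functoriality in Theorem \ref{thm:graph_comparison}: the open immersion $X\hookrightarrow\overline X$ (filling in the punctures, which on the model side corresponds to deleting the \'etale divisor, hence to deleting the half-edges of $\rGraph$) induces a commuting square relating $\vartheta$ for $X$ and $\vartheta$ for $\overline X$; since $\pi_1^{\Q_\ell}(\overline X_{\overline K},b)$ is the $\Q_\ell$-Mal\v cev completion of $\pi_1(\rGraph')$ and its abelianisation is $\H_1(\rGraph')\otimes\Q_\ell$ placed in weight $-1$ together with the genus contributions, the isomorphism $\vartheta$ carries the kernel of one abelianised quotient to the kernel of the other. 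Equivalently and more cleanly, $\gr^\W_{-1}$ on the curve side is $\H^1_\et(\overline X_{\overline K},\Q_\ell)^\dual$, which under $\vartheta$ matches $\H_1(\Graph)\otimes\Q_\ell$ together with a rank-$2g(v)$ piece at each vertex coming from the $\beta_{v,i},\beta_{v,i}'$ — this identification is precisely the classical combinatorial description of the weight filtration on $\H^1$ of a semistable curve, and it is exactly the content built into $\pi_1(\rGraph)$ via Definition \ref{def:pi1_reduction_graph}. Taking preimages of $\gr^\W_{-1}$ under the surjection to the abelianisation gives $\vartheta(\W_{-2})=\W_{-2}$, completing the degree-$(-2)$ step and hence the lemma.

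\textbf{Main obstacle.} I expect the genuinely delicate point to be the degree-$(-2)$ weight step, specifically verifying that $\vartheta$ sends conjugates of loops around punctures on the curve to conjugates of the half-edge elements $\delta_e$ — i.e.\ tracking how the inertia/monodromy around a missing section is encoded in the graph of groups. This requires unwinding the construction of $\vartheta$ through the analytic and equicharacteristic-$0$ steps (Lemmas \ref{lem:pic-lef_analytic} and \ref{lem:pic-lef_equichar}), where the punctures become additional marked sections and the reduction graph acquires its half-edges; the bookkeeping of which topological loop maps to which generator $\delta_e$ is where care is needed. Everything above degree $-2$ is then formal.
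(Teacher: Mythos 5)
Your treatment of the $\W$-filtration is essentially the paper's argument and is fine: both $\W_{-2}$'s are characterised as the kernel of the map to $\pi_1^{\Q_\ell}(\overline X_{\overline K},b)^\ab$ resp.\ $\pi_1^{\Q_\ell}(\rGraph',\red(b))^\ab$, the identification follows from the compatibility of $\vartheta$ with compactification (the commuting square coming from the construction in Theorem~\ref{thm:graph_comparison}), and the propagation to $\W_{-k}$ for $k\geq3$ is legitimate because on \emph{both} sides $\W_{-k}$ is defined by the commutator recipe (Definition~\ref{def:wt1} and Definition~\ref{def:filtrations_on_graph}). Incidentally, the "main obstacle" you flag (tracking puncture loops to the $\delta_e$) is bypassed exactly by this kernel characterisation, so that part is less delicate than you fear.

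The genuine gap is in the $\M$-filtration. Your opening claim that both $\M$-filtrations are "defined inductively from their degree $(-1)$ and $(-2)$ terms by the same commutator recipe" is false on the curve side: there, $\M_\bullet$ is the monodromy filtration of \S\ref{ss:gal_coh_field}, defined via Frobenius weights (equivalently, characterised as the unique filtration satisfying the weight--monodromy condition \WM relative to $\W_\bullet$ and $N$), not by generation from $\M_{-1}$ and $\M_{-2}$. Consequently, even a correct verification of $\vartheta(\M_{-1})=\M_{-1}$ and $\vartheta(\M_{-2})=\M_{-2}$ would not propagate to $\M_{-k}$ for $k\geq3$; your reduction step simply does not apply. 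Moreover the identifications you offer in low degree are themselves wrong: $\gr^\M_0$ is not "where $\sigma$ acts trivially" (e.g.\ the abelian-variety part of $\gr^\W_{-1}$ is inertia-invariant yet has Frobenius weight $-1$, so lies in $\M_{-1}$), and $\pi_1(\Graph)$ is the quotient $\pi_1(\rGraph)/\M_{-1}$, not a $\sigma$-fixed subgroup, so the claimed matching of $\M_{-1}$'s does not stand as written. The paper's route is different and uses a non-trivial input you are missing: having shown $\vartheta$ is $\W$-filtered and $N$-equivariant, one invokes Deligne's uniqueness of the monodromy filtration satisfying \WM relative to $(\W_\bullet,N)$, together with the fact (Corollary~\ref{cor:graph_w-m}, resting on the explicit $\M$-graded description of $\O(\pi_1^\Q(\rGraph))^\dual$ in Theorem~\ref{thm:description_of_M-graded} and Lemma~\ref{lem:equivariance}) that the combinatorially defined $\M$-filtration of Definition~\ref{def:filtrations_on_graph} also satisfies \WM; then $\vartheta$ is automatically $\M$-filtered. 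Without an argument of this kind (or some other proof that the graph-side filtration is the monodromy filtration of $N$ relative to $\W$), the $\M$-part of your proof does not go through.
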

\begin{proof}
It follows from the proof of Theorem~\ref{thm:graph_comparison} that the isomorphism $\vartheta$ fits into a commuting square
\begin{center}
\begin{tikzcd}
\pi_1^{\Q_\ell}(X_{\overline K},b) \arrow[two heads]{r}\arrow{d}{\vartheta_*}[swap]{\viso} & \pi_1^{\Q_\ell}(\overline X_{\overline K},b) \arrow{d}{\vartheta_*}[swap]{\viso} \\
\pi_1^{\Q_\ell}(\rGraph,\red(b)) \arrow[two heads]{r} & \pi_1^{\Q_\ell}(\rGraph',\red(b))
\end{tikzcd}
\end{center}
where $\overline X$ is the smooth compactification of $X$ and $\rGraph'$ its reduction graph. Now $\W_{-2}\pi_1^{\Q_\ell}(X_{\overline K},b)$ is the kernel of the map $\pi_1^{\Q_\ell}(X_{\overline K},b)\twoheadrightarrow\pi_1^{\Q_\ell}(\overline X_{\overline K},b)^\ab$; since the same is true of $\W_{-2}\pi_1^{\Q_\ell}(\rGraph,\red(b))$ (see Remark~\ref{rmk:interpretation_of_quotients}), it follows that $\vartheta_*(\W_{-2})=\W_{-2}$. It then follows from the definition that $\vartheta_*(\W_{-k})=\W_{-k}$ for all $k$.

Now the $\M$-filtration on $\Lie(\pi_1^{\Q_\ell}(X_{\overline K},b))$ is uniquely characterised, in terms of the $\W$-filtration and monodromy operator $N=\log(\sigma)$, by the fact that it satisfies the weight--monodromy condition~\WM \cite[Proposition~1.6.13]{weilii}. We will see in Corollary~\ref{cor:graph_w-m} that the $\M$-filtration on $\Lie(\pi_1^{\Q_\ell}(\rGraph,\red(b)))$ satisfies this same condition; it follows that the $N$-equivariant and $\W$-filtered isomorphism $\vartheta_*\colon\Lie(\pi_1^{\Q_\ell}(X_{\overline K},b))\longisoarrow\Lie(\pi_1^{\Q_\ell}(\rGraph,\red(b)))$ is automatically $\M$-filtered, as desired.
\end{proof}

\begin{remark}
If $X/K$ is a semistable curve, then $\gr^\M_0\pi_1^{\Q_\ell}(X_{\overline K},b)$ is the $\Q_\ell$-Mal\u cev completion of a discrete group $\pi_1(\Graph,\red(b))$. This discrete group has a geometric interpretation as the rigid fundamental group in the sense of \cite[\S5.7]{FvdP}, i.e.\ the group of deck transformations of a rigid analytic universal cover.
\end{remark}

\subsection{Change of model}\label{ss:pic-lef_plus}

As stated, our non-abelian Picard--Lefschetz Theorem only describes the non-abelian Kummer map on $\O_K$-integral points of a single model of the curve $X/K$. In order to describe the non-abelian Kummer map on $K$-rational points, we will want to vary the model $\mathcal X=\overline{\mathcal X}\setminus\mathcal D$ in Theorem~\ref{thm:graph_comparison}, blowing up points in $\mathcal D_k$. In parallel, we will want to apply Theorem~\ref{thm:graph_comparison} simultaneously over finite extensions of $K$, by base-changing the model and performing an appropriate blow-up. Both of these constructions change the reduction graph $\rGraph$ of the model in the following very mild way.

\begin{definition}[Edge subdivision and rational vertices]\label{def:rational_vertices}
Let $\Graph$ be a rationally metrised graph, $e$ an edge of $\Graph$, and $0<s<\length(e)$ a rational number. The \emph{rational subdivision of $\Graph$ at $v_{e,s}$} is the rationally metrised graph $\Graph_{\{v_{e,s}\}}$ with vertex-set $\Vert{\Graph}\sqcup\{v_{e,s}\}$, edge-set $\Edge{\Graph}\sqcup\{e_1^{\pm1},e_2^{\pm1}\}\setminus\{e^{\pm1}\}$ and half-edge set $\HEdge{\Graph}$, where $e_1$ has source $\source(e)$, target $v_{e,s}$ and length $s$, and $e_2$ has source $v_{e,s}$, target $\target(e)$ and length $\length(e)-s$. Similarly, if $e$ is a half-edge of $\Graph$ and $0<s$ is a rational number, we define the \emph{rational subdivision of $\Graph$ at $v_{e,s}$} to be the rationally metrised graph $\Graph_{\{v_{e,s}\}}$ with vertex-set $\Vert{\Graph}\sqcup\{v_{e,s}\}$, edge-set $\Edge{\Graph}\sqcup\{e_1^{\pm1}\}$ and half-edge set $\HEdge{\Graph}\sqcup\{e_2\}\setminus\{e\}$, where $e_1$ has source $\source(e)$, target $v_{e,s}$ and length $s$, and $e_2$ has source $v_{e,s}$.

In general, a \emph{rational subdivision} of $\Graph$ is a rationally metrised graph $\Graph'$ obtained from $\Graph$ by a finite sequence of such operations. The vertices of rational subdivisions of $\Graph$ can be identified with finite sets $\Vert{\Graph}\subseteq\Vert{\Graph'}\subseteq\QVert{\Graph}$, where $\QVert{\Graph}$ denotes the set of \emph{rational vertices}\index{rational vertex $v_{e,s}$, $\QVert{\Graph}$}, i.e.\ points of the underlying metric space of $\Graph$ lying a rational distance from vertices.

\smallskip

If $\rGraph$ is a reduction graph, then we make any subdivision $\Graph'$ of $\Graph$ into a reduction graph $\rGraph'$ by declaring that $g(v)=0$ for all $v\in\Vert{\Graph'}\setminus\Vert{\Graph}$. The inclusion $\Vert{\Graph}\hookrightarrow\Vert{\Graph'}$ extends to an injective equivalence
\[
\pi_1(\rGraph) \hookrightarrow \pi_1(\rGraph')
\]
by making the obvious definitions. This equivalence is equivariant for the edge-twisting automorphism from Definition~\ref{def:pi1_reduction_graph}. We will sometimes also write $\pi_1^\Q(\rGraph)$ for the $\Z$-equivariant groupoid $\varinjlim\pi_1^\Q(\rGraph')$, where the directed colimit is taken over rational subdivisions -- in effect, we enlarge the object-set of $\pi_1^\Q(\rGraph)$ from $\Vert{\Graph}$ to~$\QVert{\Graph}$.
\end{definition}

Suppose now that $\mathcal X/\O_K$ is a regular semistable model of a curve $X/K$. Blowing up points in $\mathcal D_k$ changes the reduction graph $\rGraph$ of $\mathcal X$ by subdividing half-edges, while base-changing from $\O_K$ to $\O_L$ and performing appropriate blow-ups changes the reduction graph by subdividing each edge into $e(L/K)$ equal segments (where we follow the conventions on the metric from Definition~\ref{def:reduction_graphs_of_curves}). These operations are compatible with the identification from the Picard--Lefschetz Theorem~\ref{thm:graph_comparison}.

\begin{lemma}\label{lem:graph_comparison_plus}
Let $\vartheta\colon\pi_1^{\Q_\ell}(X_{\overline K},b) \longisoarrow \pi_1^{\Q_\ell}(\rGraph,\red(b))$ be the isomorphism from Theorem~\ref{thm:graph_comparison}. Then the square
\begin{equation}\label{eq:pic-lef_square_plus}
\begin{tikzcd}
X(\overline K) \arrow{r}{\classify}\arrow{d}{\red} & \varinjlim\H^1(I_L,\pi_1^{\Q_\ell}(X_{\overline K},b)) \arrow{d}{\vartheta_*} \\
\QVert{\Graph} \arrow{r}{\classify} & \varinjlim\H^1(e\Z,\pi_1^{\Q_\ell}(\rGraph,\red(b)))
\end{tikzcd}
\end{equation}
also commutes, where the direct limits\footnote{In fact, these direct limits are not particularly complicated objects: all the restriction maps involved are isomorphisms.} are taken over finite extensions $L/K$ contained in $\overline K$ and over $e\in\N$ respectively. Here the right-hand vertical map is induced by restriction along the maps $e\Z\rightarrow I_L$ sending $e$ to $\sigma^e$ with $e=e(L/K)$, followed by pushout along $\vartheta$.
\end{lemma}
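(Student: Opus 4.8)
The plan is to reduce the commutativity of~\eqref{eq:pic-lef_square_plus} to the commutativity of the square~\eqref{eq:pic-lef_square} already established in Theorem~\ref{thm:graph_comparison}, by analysing separately the two kinds of operation that transform $\mathcal X$ (and hence $\rGraph$): base-change from $\O_K$ to $\O_L$ followed by a blow-up, and blow-up of closed points of $\mathcal D_k$. First I would fix $x\in X(\overline K)$ and choose a finite extension $L/K$ such that $x\in\overline{\mathcal X}_L'(\O_L)$ for some regular semistable model $\mathcal X_L'=\overline{\mathcal X}_L'\setminus\mathcal D_L'$ dominating the base-change $\mathcal X_{\O_L}$; such a model exists by the standard theory of semistable reduction, and $\red(x)\in\QVert{\Graph}$ is by construction the vertex of the reduction graph $\rGraph'$ of $\mathcal X_L'$ corresponding to the component containing the reduction of $x$. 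The key point is then that $\rGraph'$ is, canonically up to the identifications of Definition~\ref{def:rational_vertices}, a rational subdivision of $\rGraph$: base-change multiplies all denominators of edge-lengths by $e(L/K)$ and subdivides each edge of $\Graph$ into $e(L/K)$ equal segments (this is exactly the metric convention of Definition~\ref{def:reduction_graphs_of_curves}, since $e(L/K)^{-1}$ is the new edge-length), while blowing up a point of $\mathcal D_k$ subdivides the corresponding half-edge. Both operations are the ones described just before the statement, and by the last sentence of Definition~\ref{def:rational_vertices} they induce injective $\sigma$-equivariant equivalences $\pi_1^{\Q_\ell}(\rGraph)\hookrightarrow\pi_1^{\Q_\ell}(\rGraph')$ compatible with edge-twisting.

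The second step is to observe that, because these subdivision equivalences are injective and $\sigma$-equivariant, one may run the construction of Theorem~\ref{thm:graph_comparison} for the model $\mathcal X_L'$ over $\O_L$ directly: one obtains a $\sigma$-equivariant isomorphism $\vartheta'\colon\pi_1^{\Q_\ell}(X_{\overline K},b)\isoarrow\pi_1^{\Q_\ell}(\rGraph',\red(b))$ (here $\sigma$ acts on the right via edge-twists on the subdivided graph, which restrict to the edge-twists on $\rGraph$ on the image of the subdivision embedding) making the square~\eqref{eq:pic-lef_square} for $\mathcal X_L'$ over $\O_L$ commute, where the right-hand vertical map is now restriction along $e\Z\to I_L$, $e\mapsto\sigma^{e(L/K)}$. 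I would then need to check that $\vartheta'$ agrees with the composite of the original $\vartheta$ from Theorem~\ref{thm:graph_comparison} and the subdivision embedding $\pi_1^{\Q_\ell}(\rGraph)\hookrightarrow\pi_1^{\Q_\ell}(\rGraph')$, at least up to the ambiguities listed in Remark~\ref{rmk:too_many_choices}; concretely, one chooses the deformation $\widetilde{\mathcal X}_L'/R_L$ of $\mathcal X_L'$ compatibly with a deformation of $\mathcal X$, using that the deformations of Lemma~\ref{lem:deformation_exists} are obtained by base-change along $t\mapsto\varpi$ and that Abhyankar's Lemma (Proposition~\ref{prop:abhyankar}) identifies the relevant tame fundamental groups compatibly under the inclusion $G_L\hookrightarrow G_K$. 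Granting this compatibility of isomorphisms, the commutativity of~\eqref{eq:pic-lef_square_plus} for the chosen $x$ follows by evaluating the commuting square~\eqref{eq:pic-lef_square} for $\mathcal X_L'$ at the point $x\in\mathcal X_L'(\O_L)$ and pushing forward along the subdivision embedding, using that the bottom horizontal map $\classify$ is, by Remark~\ref{rmk:abstract_kummer}, functorial for injective equivalences of the $\Z$-equivariant groupoid $\pi_1^{\Q_\ell}(\rGraph)$ — here we also need that the lower map in~\eqref{eq:pic-lef_square_plus} sends $\red(x)$ to the class of the torsor $\pi_1^{\Q_\ell}(\rGraph;\red(b),\red(x))$, which matches the image under the same construction for $\rGraph'$ because $\red(x)$ is a vertex of $\rGraph'$.

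The main obstacle I anticipate is the bookkeeping of choices: the isomorphism $\vartheta$ of Theorem~\ref{thm:graph_comparison} depends on a uniformiser, a deformation, orderings of edges at each vertex, an embedding $\Q_p^\nr\hookrightarrow\C$, and local analytic coordinates (Remark~\ref{rmk:too_many_choices}), and to splice the two squares I must verify that all of these can be chosen for $\mathcal X_L'/\O_L$ so as to be compatible with a fixed set of choices for $\mathcal X/\O_K$ under base-change and blow-up. For the base-change direction this is essentially automatic because $R_L$ is obtained from $R$ by the integral closure in a tame extension and the construction of Lemma~\ref{lem:deformation_exists} commutes with $t\mapsto\varpi$; for the blow-up direction one notes that blowing up a section in $\mathcal D$ does not touch the singular points of the special fibre, so the analytic surgery of \S\ref{ss:pic-lef_analytic} and the deformation of Lemma~\ref{lem:deformation_exists} carry over unchanged, and on the graph side it merely appends a half-edge subdivision, which is visibly compatible with the edge-twist action. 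Once this compatibility is in hand, everything else is formal, and since the restriction maps in the direct limits are isomorphisms (as remarked in the footnote to the statement) the result for a single sufficiently large $L$ gives the result in the limit. \qed
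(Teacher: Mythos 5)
Your proposal follows essentially the same route as the paper: reduce to a single finite Galois extension $L/K$ and a regular semistable model $\mathcal X'/\O_L$ dominating $\mathcal X_{\O_L}$, observe that Theorem~\ref{thm:graph_comparison} applied to $\mathcal X'$ produces a $\sigma^e$-equivariant isomorphism $\vartheta'$ making the corresponding square commute, and then argue that the choices entering the construction (Remark~\ref{rmk:too_many_choices}) can be made so that $\vartheta'$ agrees with $\vartheta$ composed with the subdivision embedding, so that commutativity transfers; the paper additionally notes that since the induced $I_K$-action on $\H^1(I_L,\pi_1^{\Q_\ell}(X_{\overline K},b))$ is trivial, agreement up to the action of $I_K$ suffices, which is the precise form of the ``up to ambiguities'' relaxation you gesture at.

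The one place where your argument is genuinely weaker is exactly the step the paper singles out as worth writing down: compatibility of the deformations of Lemma~\ref{lem:deformation_exists}. Your justification --- that $R_L$ is obtained from $R$ by integral closure in a tame extension and that the construction of Lemma~\ref{lem:deformation_exists} ``commutes with $t\mapsto\varpi$'' --- does not work as stated: the map $R\rightarrow R'$ one needs sends $t\mapsto\widetilde u\,t'^e$ (where $\varpi=u\varpi_L^e$ and $\widetilde u\in R'^\times$ lifts $u$), this extension is not tame when $p\mid e(L/K)$, and the deformation of $\mathcal X'$ does not come for free by base change, since $\widetilde{\mathcal X}_{R'}$ is no longer regular semistable (its singular sections have local equation $xy=t'^e$). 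The paper instead constructs a deformation $\widetilde{\mathcal X}'$ of $\mathcal X'$ over $R'$ dominating $\widetilde{\mathcal X}_{R'}$ explicitly, by blowing up each singular section over the $t'=0$ locus $\lfloor e/2\rfloor$ times and then blowing up sections of the divisor, mirroring how $\mathcal X'$ is produced from $\mathcal X_{\O_L}$. With that construction substituted for your assertion, the remainder of your argument (the $\sigma$-equivariant subdivision embeddings, functoriality of the abstract Kummer map, and the fact that the restriction maps in the colimits are isomorphisms) matches the paper's proof.
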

\begin{proof}[Proof (sketch)]
It suffices to show that for any finite Galois extension $L/K$ contained in $\overline K$ and any regular semistable model $\mathcal X'/\O_L$ of $X_L$ dominating $\mathcal X_{\O_L}$, the square
\begin{equation}\label{eq:pic-lef_square_prime}
\begin{tikzcd}
\mathcal X'(\O_L) \arrow{r}{\classify}\arrow{d}{\red} & \H^1(I_L,\pi_1^{\Q_\ell}(X_{\overline K},b)) \arrow{d}{\vartheta_*} \\
\Vert{\Graph'} \arrow{r}{\classify} & \H^1(e\Z,\pi_1^{\Q_\ell}(\rGraph',\red(b)))
\end{tikzcd}
\end{equation}
commutes, where $e=e(L/K)$ and $\rGraph'$ denotes the reduction graph of $\mathcal X'$ endowed with the metric whereby each edge has length $\frac1e$. There certainly is a $\sigma^e$-equivariant isomorphism
\[
\vartheta'\colon \pi_1^{\Q_\ell}(X_{\overline K},b) \longisoarrow \pi_1^{\Q_\ell}(\rGraph',\red(b))
\]
making~\eqref{eq:pic-lef_square_prime} commute, namely the isomorphism provided by Theorem~\ref{thm:graph_comparison} for the model $\mathcal X'$, so it suffices to prove that $\vartheta'=\vartheta$. More precisely, Theorem~\ref{thm:graph_comparison} provides a method for constructing such an isomorphism depending on a number of choices (see Remark~\ref{rmk:too_many_choices}), and it suffices to show that having fixed a sequence of choices defining $\vartheta$, there is a sequence of choices defining $\vartheta'$ such that $\vartheta'=\vartheta$. In fact, since the induced action of $I_K$ on $\H^1(I_L,\pi_1^{\Q_\ell}(X_{\overline K},b))$ is trivial, it suffices to prove that these choices can be made in such a way that $\vartheta'$ is equal to $\vartheta$ up to the action of $I_K$.

Verifying that these choices can be made in a suitably compatible manner is mostly a routine exercise; in the interests of brevity, we will merely outline how one should make a single choice compatibly, namely the deformation of the model $\mathcal X'/\O_L$ over the ring $R':=\Z_p^\nr[t']^\sh_{t'}$ from Lemma~\ref{lem:deformation_exists}. To do this, choose a uniformiser $\varpi_L$ of $L$ and write $\varpi=u\varpi_L^e$ for $u\in\O_L^\times$. We may lift $u$ to a unit $\widetilde u\in R'^\times$ along the map $R'\twoheadrightarrow\O_{L^\nr}$ sending $t'$ to $\varpi_L$. We thus have a commuting diagram
\begin{center}
\begin{tikzcd}
\Spec(\O_{L^\nr}) \arrow{r}\arrow{d} & \Spec(R') \arrow{d} & \Spec(\C[t']^\sh_{t'}) \arrow{l}\arrow{d} \\
\Spec(\O_{K^\nr}) \arrow{r} & \Spec(R) & \Spec(\C[t]^\sh_t) \arrow{l},
\end{tikzcd}
\end{center}
where the central and right-hand vertical arrows are given by $t\mapsto\widetilde u t'^e$. We will show that for any deformation $\widetilde{\mathcal X}$ of $\mathcal X$ over $\Spec(R)$, there is a deformation $\widetilde{\mathcal X}'$ of $\mathcal X'$ over $\Spec(R')$ dominating $\widetilde{\mathcal X}_{R'}$.

To do this, we note that the model $\mathcal X'$ is produced from $\mathcal X_{\O_L}$ by blowing up each singular point on the special fibre $\lfloor e/2\rfloor$ times (e.g.\ by~\cite[Example~8.3.53 and Proposition~8.1.12(c)]{liu}), and then possibly blowing up points in the reduction of the divisor $\mathcal D_{\O_L}$. The singular locus of $\widetilde{\mathcal X}_{R'}$ consists of a finite number of sections over the $t'=0$ locus $\Spec(\Z_p^\nr)$, with the formal neighbourhood of any such section being isomorphic to $\Spf\left(\Z_p^\nr[\![x,y,t']\!]/(xy-t'^e)\right)$. Blowing up along each such section $\lfloor e/2\rfloor$ times, and then possibly blowing up sections of $\widetilde{\mathcal D}_{\Z_p^\nr}$, we arrive at a semistable curve $\widetilde{\mathcal X}'$ over $R'$ which dominates $\mathcal X_{R'}$ and satisfies the conditions of Lemma~\ref{lem:deformation_exists}.
\end{proof}

\subsection{Application to the Chabauty--Kim method}

As a result of Theorem~\ref{thm:graph_comparison}, we are now able to prove the inequality from Proposition~\ref{prop:kummer_bounds}\eqref{proppart:size_bound} (without the equality condition), which is used in \cite{BDeff}. This amounts to showing that the fibres of the non-abelian Kummer map from \S\ref{ss:nonab_Kummer} are unions of fibres of the reduction map from Definition~\ref{def:reduction_map}.

\begin{proposition}
If $x,y\in X(K)$ satisfy $\red(x)=\red(y)$, then they also satisfy $\classify(x)=\classify(y)$ in $\H^1(G_K,\pi_1^{\Q_\ell}(X_{\overline K},b))$.
\begin{proof}
Replacing $K$ with a finite extension if necessary and using Lemma~\ref{lem:finite_index}, we may assume that $X$ has semistable reduction. By Theorem~\ref{thm:graph_comparison}, there is a $\sigma$-invariant element of $\pi_1^{\Q_\ell}(X_{\overline K};x,y)$. Since the action of $I_K$ on $\pi_1^{\Q_\ell}(X_{\overline K};x,y)$ factors through a continuous action of $\Z_\ell$ with generator $\sigma$, this path is even $I_K$-invariant. By Corollary~\ref{cor:G-invt_is_I-invt}, there is even a $G_K$-invariant element of $\pi_1^{\Q_\ell}(X_{\overline K};x,y)$, which ensures that $\classify(x)=\classify(y)$.
\end{proof}
\end{proposition}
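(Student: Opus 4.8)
The plan is to reduce the statement to the combinatorial non-abelian Picard--Lefschetz Theorem~\ref{thm:graph_comparison} together with the Galois-cohomological comparison between inertia-invariance and Galois-invariance (Corollary~\ref{cor:G-invt_is_I-invt}).

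First I would reduce to the case of semistable reduction. After replacing $K$ by a finite extension $L$, the curve $X_L$ acquires a regular semistable model $\mathcal X'/\O_L$ in whose special fibre $x$ and $y$ reduce, and since $\red(x)=\red(y)$ and the reduction map is compatible with base change and domination of models (cf.\ Lemma~\ref{lem:graph_comparison_plus}), the reductions of $x$ and $y$ still lie on the same component of the special fibre of $\mathcal X'$. By Lemma~\ref{lem:finite_index} the restriction map $\H^1(G_K,\pi_1^{\Q_\ell}(X_{\overline K},b))\hookrightarrow\H^1(G_L,\pi_1^{\Q_\ell}(X_{\overline K},b))$ is injective, and the non-abelian Kummer maps for $X$ and $X_L$ are compatible, so it suffices to prove $\classify(x)=\classify(y)$ after this base change. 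Hence we may assume $X$ has semistable reduction with a chosen regular semistable model $\mathcal X/\O_K$ and that $\red(x)=\red(y)\in\Vert{\Graph}$.

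Next I would unwind the meaning of $\classify(x)=\classify(y)$. Writing $U=\pi_1^{\Q_\ell}(X_{\overline K},x)$, these classes agree in $\H^1(G_K,U)$ exactly when the $G_K$-equivariant $U$-torsor of paths $\pi_1^{\Q_\ell}(X_{\overline K};x,y)$ has a $G_K$-fixed point. Since $U$ satisfies \WM by Lemma~\ref{lem:weight-monodromy}, Corollary~\ref{cor:G-invt_is_I-invt} reduces this to producing an $I_K$-fixed point. In the semistable case the $I_K$-action on the path-torsor is ind-unipotent and factors through the procyclic quotient $\Z_\ell$ topologically generated by $\sigma$, so an $I_K$-fixed point is the same as a $\sigma$-fixed point. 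Thus the whole claim is reduced to exhibiting a $\sigma$-invariant $\Q_\ell$-pro-unipotent \'etale path from $x$ to $y$.

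This last point is precisely what Theorem~\ref{thm:graph_comparison} supplies. Through the $\sigma$-equivariant equivalence of groupoids $\vartheta\colon\pi_1^{\Q_\ell}(X_{\overline K})|_{\mathcal X(\O_K)}\isoarrow\pi_1^{\Q_\ell}(\rGraph)$ of Remark~\ref{rmk:pic-lef_for_groupoids}, which lies over the reduction map, the torsor $\pi_1^{\Q_\ell}(X_{\overline K};x,y)$ is carried $\sigma$-equivariantly onto $\pi_1^{\Q_\ell}(\rGraph;\red(x),\red(y))$. As $\red(x)=\red(y)$ is a single vertex of $\rGraph$, this groupoid-set contains the identity path, which is fixed by the edge-twist $\sigma$; transporting it back along $\vartheta$ gives the desired $\sigma$-fixed element of $\pi_1^{\Q_\ell}(X_{\overline K};x,y)$. (Equivalently, one may argue directly with the commuting square~\eqref{eq:pic-lef_square}: $\classify(\red(x))=\classify(\red(y))$ trivially in $\H^1(\Z,\pi_1^{\Q_\ell}(\rGraph,\red(b)))$, and the relevant vertical map is injective.) I expect no real mathematical obstacle here; the only point requiring care is the bookkeeping in the first step, namely checking that $\red(x)=\red(y)$ persists after base change and passage to a dominating semistable model, which is exactly the content imported from Lemma~\ref{lem:graph_comparison_plus}.
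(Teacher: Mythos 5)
Your proposal is correct and follows essentially the same route as the paper's own proof: reduce to semistable reduction via Lemma~\ref{lem:finite_index}, extract a $\sigma$-invariant path from Theorem~\ref{thm:graph_comparison} using $\red(x)=\red(y)$, note that the inertia action factors through the pro-$\ell$ quotient generated by $\sigma$ so the path is $I_K$-invariant, and upgrade to $G_K$-invariance via Corollary~\ref{cor:G-invt_is_I-invt}. The extra detail you supply (the \WM hypothesis from Lemma~\ref{lem:weight-monodromy} and the base-change bookkeeping via Lemma~\ref{lem:graph_comparison_plus}) is consistent with what the paper leaves implicit.
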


\subsection{The Hopf groupoid of a reduction graph}

In subsequent sections, we will perform detailed computations with the $\Q$-pro-unipotent fundamental groupoid $\pi_1^\Q(\rGraph)$, with its $\W$- and $\M$-filtrations from \S\ref{ss:pic-lef_filtrations} and its edge-twist automorphism $\sigma$ from Definition~\ref{def:pi1_reduction_graph}. In order to facilitate these calculations, it will be useful to replace pro-unipotent groupoids with a linear algebraic analogue, similar to replacing a group $G$ with its group algebra $\Q G$. Thus, our calculations will mainly take place in the \emph{complete Hopf groupoid} $\O(\pi_1^\Q(\rGraph))^\dual$, i.e.\ the structure comprised of the complete cocommutative coalgebras $\O(\pi_1^\Q(\rGraph;u,v))^\dual=\Hom(\O(\pi_1^\Q(\rGraph;u,v),\Q)$ for (rational) vertices $u,v$, together with the natural path-composition maps
\[
\O(\pi_1^\Q(\rGraph;v,w))^\dual\hatotimes\O(\pi_1^\Q(\rGraph;u,v))^\dual\rightarrow\O(\pi_1^\Q(\rGraph;u,w))^\dual
\]
and path-reversal maps
\[
\O(\pi_1^\Q(\rGraph;u,ve^{-1}))^\dual \rightarrow \O(\pi_1^\Q(\rGraph;v,u))^\dual.
\]
Thus, for a vertex $u$, $\O(\pi_1^\Q(\rGraph,u))^\dual$ is the complete cocommutative Hopf algebra dual to the commutative Hopf algebra $\O(\pi_1^\Q(\rGraph,u))$. (For more background on complete Hopf groupoids, see Appendix~\ref{appx:hopf_gpds}.)

The filtrations $\W_\bullet$ and $\M_\bullet$ on $\pi_1^\Q(\rGraph)$ induce corresponding filtrations on each complete coalgebra $\O(\pi_1^\Q(\rGraph;u,v))^\dual$. The edge-twist automorphism $\sigma$ acts pro-unipotently on each $\O(\pi_1^\Q(\rGraph;u,v))^\dual$, and hence has a well-defined logarithm $N=\log(\sigma)$, which is a $\W$- and $\M$-filtered endomorphism of degree $0$ and $-2$ respectively. All of these structures are compatible with the coalgebra structure on $\O(\pi_1^\Q(\rGraph;u,v))^\dual$, as well as the path-composition and path-reversal maps (e.g.\ $N$ is a coderivation and satisfies the Leibniz rule with respect to composition). In particular, for each $k\geq0$, the collection of (pro-finite dimensional) subspaces $\W_{-k}\O(\pi_1^\Q(\rGraph;u,v))^\dual\leq\O(\pi_1^\Q(\rGraph;u,v))^\dual$ forms an \emph{ideal} of $\O(\pi_1^\Q(\rGraph))^\dual$, i.e.\ is closed under path-composition with arbitrary elements of $\O(\pi_1^\Q(\rGraph))^\dual$.

For the benefit of the reader, we will recall here the explicit definition of these structures, setting up the notation for later sections.

\begin{proposition}\label{prop:explicit_hopf_gpd}
Let $\rGraph$ be a reduction graph. Its pro-nilpotent fundamental Hopf groupoid $\O(\pi_1^\Q(\rGraph))^\dual$ is the pro-nilpotent Hopf groupoid on object-set $\Vert{\Graph}$ (or $\QVert{\Graph}$, cf.\ Definition~\ref{def:rational_vertices}) generated by:
\begin{itemize}
	\item grouplike elements $e\in\O(\pi_1^\Q(\rGraph;\source(e),\target(e)))^\dual$ for every edge $e$;
	\item primitive elements $\log(\beta_{v,i}),\log(\beta_{v,i}')\in\O(\pi_1^\Q(\rGraph,v))^\dual$ for every vertex $v$ and integer $1\leq i\leq g(v)$; and
	\item primitive elements $\log(\delta_e)\in\O(\pi_1^\Q(\rGraph,\source(e)))^\dual$ for every edge or half-edge $e$;
\end{itemize}
subject to the following relations:
\begin{itemize}
	\item (edge relations) $ee^{-1}=1$ and $\log(\delta_{e^{-1}})\cdot e+e\cdot\log(\delta_e)=0$ for every edge $e$; and
	\item (vertex relations) $\prod_{i=1}^{g(v)}[\beta_{v,i}',\beta_{v,i}]\cdot\prod_{\source(e)=v}\delta_e=1$ for every vertex $v$, where $\delta_e$ denotes the exponential of $\log(\delta_e)$.
\end{itemize}
The coalgebra structure on each $\O(\pi_1^\Q(\rGraph;u,v))^\dual$ is uniquely determined by specifying that each element $e$ is grouplike, and the other generators are primitive.

\smallskip

The pro-nilpotent Hopf derivation $N$ is uniquely specified by the fact that it sends all generators $\log(\beta_{v,i})$, $\log(\beta_{v,i}')$ and $\log(\delta_e)$ to $0$, and acts on edges $e$ via
\[
N(e) = \length(e)\cdot e\log(\delta_e).
\]

\smallskip

The $\W$-filtration on $\O(\pi_1^\Q(\rGraph))^\dual$ is defined as follows. We have $\W_0\O(\pi_1^\Q(\rGraph))^\dual=\O(\pi_1^\Q(\rGraph))^\dual$, and $\W_{-1}\O(\pi_1^\Q(\rGraph))^\dual$ is the augmentation ideal, i.e.\ the kernel of the evident map $\O(\pi_1^\Q(\rGraph))^\dual\rightarrow\Q$. The ideal $\W_{-2}\O(\pi_1^\Q(\rGraph))^\dual$ is generated by $\W_{-1}^2$ (i.e.\ by pairwise composites of elements of $\W_{-1}$) and the elements $\log(\delta_e)$ for half-edges $e$. For $k\geq3$, $\W_{-k}\O(\pi_1^\Q(\rGraph))^\dual$ is the ideal generated by $\W_{-i}\W_{-j}$ for positive integers $i,j$ with $i+j=k$.

The $\M$-filtration on $\O(\pi_1^\Q(\rGraph))^\dual$ is defined as follows. We have $\M_0\O(\pi_1^\Q(\rGraph))^\dual=\O(\pi_1^\Q(\rGraph))^\dual$, and $\M_{-1}\O(\pi_1^\Q(\rGraph))^\dual$ is the ideal generated by the elements $\log(\beta_{v,i})$, $\log(\beta_{v,i}')$ and $\log(\delta_e)$ for vertices $v$, integers $1\leq i\leq g(v)$ and edges or half-edges $e$. The ideal $\M_{-2}\O(\pi_1^\Q(\rGraph))^\dual$ is generated by $\M_{-1}^2$ and the elements $\log(\delta_e)$ for edges or half-edges $e$. For $k\geq3$, $\M_{-k}\O(\pi_1^\Q(\rGraph))^\dual$ is the ideal generated by $\M_{-i}\M_{-j}$ for positive integers $i,j$ with $i+j=k$.
\end{proposition}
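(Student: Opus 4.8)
The plan is to verify that each of the four claimed descriptions — the presentation of the Hopf groupoid, the coalgebra structure, the formula for $N$, and the two filtrations — is a formal consequence of the corresponding assertion on the level of the pro-unipotent groupoid $\pi_1^\Q(\rGraph)$ together with its edge-twist automorphism $\sigma$, as set up in Definitions~\ref{def:pi1},~\ref{def:pi1_reduction_graph} and~\ref{def:filtrations_on_graph} and the appendix on complete Hopf groupoids. Throughout, the mechanism is the anti-equivalence between pro-unipotent affine groupoid-schemes and complete cocommutative Hopf groupoids (Appendix~\ref{appx:hopf_gpds}), under which a pro-unipotent group $G$ corresponds to its completed group algebra $\O(G)^\dual$, grouplike elements correspond to $\Q$-points, primitive elements to $\Lie(G)$, and the exponential/logarithm are mutually inverse bijections between $\Lie(G)$ and the grouplikes. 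The strategy is therefore: translate each generator and relation across this anti-equivalence, and check compatibility with the extra structures.

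First I would treat the presentation. The generators of $\pi_1(\rGraph)$ are the edges $e$ (hom-set generators, hence grouplike in $\O(\pi_1^\Q(\rGraph;\source(e),\target(e)))^\dual$), and the vertex-group generators $\beta_{v,i},\beta_{v,i}',\delta_e$ of Definition~\ref{def:pi1_reduction_graph}, whose logarithms are primitive elements of $\O(\pi_1^\Q(\rGraph,\source(e)))^\dual$. Applying $\O(-)^\dual$ to the edge relations $ee^{-1}=1$ and $\delta_{e^{-1}}e=e\delta_e^{-1}$ of Definition~\ref{def:pi1}: the first gives $ee^{-1}=1$ directly, and taking logarithms of the second (using that $e$ is grouplike and $\delta_e,\delta_{e^{-1}}$ are exponentials of primitives, so that $\log(\delta_{e^{-1}})\cdot e = -e\cdot\log(\delta_e)$ follows by conjugating) yields the stated relation $\log(\delta_{e^{-1}})\cdot e + e\cdot\log(\delta_e)=0$. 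The vertex relation is literally the image of $\prod_i[\beta_{v,i}',\beta_{v,i}]\cdot\prod_{\source(e)=v}\delta_e=1$. That these generators and relations give a \emph{presentation} of the Hopf groupoid — i.e.\ that no further relations are needed — follows because Mal\u cev completion and the passage to the completed group algebra are both right exact in the appropriate sense (they preserve colimits of pro-unipotent groupoids), so a presentation of the discrete groupoid $\pi_1(\rGraph)$ by generators and relations passes to one of $\O(\pi_1^\Q(\rGraph))^\dual$; this is exactly the content recorded in the appendix. The coalgebra structure is then forced: a map of completed Hopf groupoids is determined by its values on generators, grouplikes must be sent to grouplikes, and the comultiplication is the unique one making the $e$ grouplike and the $\log(\beta_{v,i})$, $\log(\beta_{v,i}')$, $\log(\delta_e)$ primitive — consistency with the relations being automatic since the relations are built from grouplikes and exponentials of primitives.

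Next, the monodromy coderivation $N$. By Definition~\ref{def:pi1_reduction_graph}, $\sigma$ acts trivially on the vertex groups and sends each edge $e\mapsto e\cdot\delta_e^{\length(e)}$. Dualising, $\sigma$ fixes $\log(\beta_{v,i}),\log(\beta_{v,i}'),\log(\delta_e)$ and acts on the grouplike $e$ by right multiplication by $\exp(\length(e)\log(\delta_e))$. Since this is a pro-unipotent automorphism, $N=\log(\sigma)$ is well-defined; it kills the primitive generators, and on $e$ we compute $N(e)=\tfrac{d}{ds}\big|_{s=0}\,e\cdot\exp(s\,\length(e)\log(\delta_e)) = \length(e)\cdot e\log(\delta_e)$, which is the stated formula. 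That $N$ is a coderivation satisfying the Leibniz rule for composition, and is $\W$- and $\M$-filtered of the stated degrees, then follows either directly from the formula or, more cleanly, from the fact that $\sigma$ is a filtered automorphism of bialgebras and $N$ is its logarithm. Finally, the two filtrations: the $\W$- and $\M$-filtrations on $\O(\pi_1^\Q(\rGraph))^\dual$ are by definition the images of the group-level filtrations of Definition~\ref{def:filtrations_on_graph} under $\O(-)^\dual$, so one only needs to check that ``subgroup generated by conjugates of $X$'' dualises to ``ideal generated by $\log(X)$'' (for $X$ among the listed generators) and that ``subgroup generated by $[\W_{-i},\W_{-j}]$'' dualises to ``ideal generated by $\W_{-i}\W_{-j}$'' — both standard facts about completed group algebras, the latter because in a completed Hopf algebra the commutator subgroup corresponds to the ideal generated by products $xy-yx$, which for elements of the augmentation ideal coincides with the ideal generated by the pairwise products of the corresponding ideals modulo lower terms, reconciled using the relation $\W_{-k}=\sum_{i+j=k}\W_{-i}\W_{-j}$ for $k\ge 3$. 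The half-edge generators $\log(\delta_e)$ already lie in $\W_{-2}$ (resp.\ $\M_{-2}$) by their geometric meaning as loops around punctures (resp.\ nodes), matching Definition~\ref{def:filtrations_on_graph} verbatim.

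I do not expect any genuine obstacle here: the proposition is a bookkeeping translation, and the only points requiring care are (i) making sure the logarithm of the edge relation comes out with the correct sign and with multiplication on the correct side — handled by remembering that $\log(\delta_{e^{-1}})\cdot e = e\cdot e^{-1}\log(\delta_{e^{-1}})\,e = e\cdot\log(e^{-1}\delta_{e^{-1}}e) = -e\cdot\log(\delta_e)$ using the group-level edge relation — and (ii) citing the appendix for the compatibility of Mal\u cev completion and the completed-group-algebra functor with generators-and-relations presentations, with conjugation-generated normal subgroups, and with the central-series-type filtrations. If a subtlety arises it will be the interplay between the two conventions for ``filtration of a group'' (the commutator condition $[\W_i,\W_j]\le\W_{i+j}$) versus ``filtration of an algebra'' (the multiplicative condition $\W_i\W_j\subseteq\W_{i+j}$) when passing through $\O(-)^\dual$; this is exactly the sort of compatibility the appendix is designed to supply, so I would simply invoke it.
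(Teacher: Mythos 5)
Your proposal is correct and matches the paper's (implicit) treatment: the paper states Proposition~\ref{prop:explicit_hopf_gpd} without proof, presenting it as a recollection of the explicit structures obtained by translating Definition~\ref{def:pi1_reduction_graph}, Definition~\ref{def:filtrations_on_graph} and the edge-twist $\sigma$ across the dictionary of Appendix~\ref{appx:hopf_gpds} between pro-unipotent groupoids and pro-nilpotent complete Hopf groupoids (in particular Lemma~\ref{lem:J-adic_and_malcev_completion} and Lemma~\ref{lem:filtrations_are_filtrations}). Your write-up simply supplies that routine unwinding --- the sign check for the logged edge relation, the computation of $N=\log(\sigma)$ on generators, and the translation of the group-level filtrations into ideal-generated ones --- which is exactly the bookkeeping the paper leaves implicit.
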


\begin{remark}\label{rmk:easier_vertex_relation}
Our calculations will predominantly take place in the associated $\M$-graded $\gr^\M_\bullet\O(\pi_1^\Q(\rGraph))^\dual$. The vertex relation in Proposition~\ref{prop:explicit_hopf_gpd} implies that the simpler relation
\[
\sum_{i=1}^{g(v)}[\log(\beta_{v,i}'),\log(\beta_{v,i})] + \sum_{\source(e)=v}\log(\delta_e) = 0
\]
holds in $\gr^\M_{-2}\O(\pi_1^\Q(\rGraph,v))^\dual$ for every vertex $v$, where $[x,y]=xy-yx$ denotes the algebra commutator. We will often adopt the shorthand notation that $\log(\delta_v):=\sum_{i=1}^{g(v)}[\log(\beta_{v,i}'),\log(\beta_{v,i})]$.
\end{remark}
\section{Cheng--Katz integration on graphs}
\label{s:cheng-katz}

In this section, we introduce what will be one of the main combinatorial tools utilised in this paper: canonical choices of $\Q$-pro-unipotent paths between any two rational vertices of a rationally metrised graph $\Graph$. We will see in Proposition~\ref{prop:canonical_path_is_canonical_path} that these \emph{canonical paths} correspond to the canonical elements of torsors from Remark~\ref{rmk:cohomology_grading}, and so are closely tied to non-abelian Kummer maps. On the other hand, the explicit combinatorial description of these canonical paths renders them inherently computable, and will ultimately lead to our computation of the non-abelian Kummer map in \S\ref{s:computation}.

\smallskip

These canonical paths are defined via the \emph{higher cycle pairing} of Cheng and Katz \cite{cheng2017higher}, whose definition we now briefly recall. Fix a rationally metrised graph\footnote{According to our definition, graphs are permitted to have half-edges -- these will make no difference during this section.} $\Graph$. The $\Q$-rational homology $\H_1(\Graph):=\H_1(\Graph,\Q)$ carries a positive-definite \emph{cycle pairing}\footnote{The symbol $\int$ is used for several different notions in this paper: here for the (higher) cycle pairing; in Definition \ref{def:height_pairing} for the integration of functions over measures; and in Proposition \ref{prop:loop_integral_along_edge} for a genuine line integral.} or \emph{intersection-length pairing}
\begin{align*}
\int\colon\H_1(\Graph)\otimes\H_1(\Graph)&\rightarrow\Q \\
\gamma\otimes\omega &\mapsto\int_\gamma\omega,
\end{align*}
namely the restriction of the positive-definite pairing\[\int\colon\Q\cdot\mnEdge{\Graph}\otimes\Q\cdot\mnEdge{\Graph}\rightarrow\Q\]defined by\[\int_ee':=\begin{cases}\pm\length(e)&\text{if $e'=e^{\pm1}$,}\\0&\text{else.}\end{cases}\](Here $\Q\cdot\mnEdge{\Graph}$\index{graph!edges $\Q\cdot\mnEdge{\Graph}$} denotes the $\Q$-vector space generated by edges of $\Graph$ modulo the relation $e^{-1}=-e$ for every edge $e$ of $\Graph$.) We will write\[N\colon\H^1(\Graph)\isoarrow\H_1(\Graph)\]for the isomorphism induced by the cycle pairing.

In \cite[Segment 3.3]{cheng2017higher}, the cycle pairing is extended to a \emph{higher cycle pairing}, which takes the form of $\Q$-linear pairings
\begin{align*}
\int\colon\Q\pi_1(\Graph;u,v)\otimes\Shuf\H_1(\Graph) &\rightarrow\Q \\
\gamma\otimes\underline\omega\mapsto\int_\gamma\underline\omega
\end{align*}
for each pair of vertices $u,v\in\Vert{\Graph}$, where $\Q\pi_1(\Graph;u,v)$ denotes the free $\Q$-vector space on the set $\pi_1(\Graph;u,v)$ of homotopy classes of paths from $u$ to $v$ in $\Graph$ and $\Shuf\H_1(\Graph):=\bigoplus_{n\geq0}\H_1(\Graph)^{\otimes n}$ denotes the shuffle algebra on $\H_1(\Graph)$. When $\underline\omega=\omega_1\dots\omega_n\in\Shuf\H_1(\Graph)$ is a fixed pure tensor of length $n$ and $\gamma$ is a path in $\Graph$, the higher cycle pairing $\int_\gamma\underline\omega$ is defined recursively by specifying that\[\int_e\underline\omega:=\frac1{n!}\prod_{i=1}^n\int_e\omega_i\]for all edges $e$, and\[\int_{e\gamma}\underline\omega=\sum_{i=0}^n\int_e\omega_{i+1}\dots\omega_n\cdot\int_\gamma\omega_1\dots\omega_i\]for every path $\gamma$ and edge $e$ with $\source(e)=\target(\gamma)$. The higher cycle pairing between general elements of $\Shuf\H_1(\Graph)$ and $\Q\pi_1(\Graph;u,v)$ is then produced by extending bilinearly.

\begin{remark}
Subdividing edges and half-edges of a rationally metrised graph $\Graph$ as in Definition \ref{def:rational_vertices} doesn't affect the higher cycle pairings $\int\colon\Q\pi_1(\Graph;u,v)\otimes\Shuf\H_1(\Graph)\rightarrow\Q$, and hence these pairings are naturally extended to the case when $u$ and $v$ are rational vertices of $\Graph$. When proving results for this more general case, we will often tacitly make an appropriate subdivision of $\Graph$ so that all the rational vertices appearing are in fact vertices.
\end{remark}

The higher cycle pairings enjoy a number of useful properties \cite[Proposition 2.4 \& Segment 2.8]{cheng2017higher}.

\smallskip

\noindent\textbf{(Composition.)} Given composable paths $\gamma',\gamma$ in $\Graph$, we have\[\int_{\gamma'\gamma}\omega_1\dots\omega_n=\sum_{i=1}^n\int_{\gamma'}\omega_{i+1}\dots\omega_n\cdot\int_\gamma\omega_1\dots\omega_i.\]

\smallskip

\noindent\textbf{(Identity.)} If $1$ denotes the constant path at any rational vertex of $\Graph$, then\[\int_1\omega_1\dots\omega_n=\begin{cases}1&\text{if $n=0$,}\\0&\text{else.}\end{cases}\]

\smallskip

\noindent\textbf{(Comultiplication.)} For any path $\gamma$ we have\[\int_\gamma\underline\omega_1\shuf\underline\omega_2=\int_\gamma\underline\omega_1\cdot\int_\gamma\underline\omega_2,\]where $\shuf$ denotes the shuffle product on $\Shuf\H_1(\Graph)$.

\smallskip

\noindent\textbf{(Counit.)} For any path $\gamma$ we have\[\int_\gamma1=1.\]

\smallskip

\noindent\textbf{(Antipode.)} For any path $\gamma$ we have\[\int_{\gamma^{-1}}\omega_1\dots\omega_n=(-1)^n\int_\gamma\omega_1\dots\omega_n.\]

\smallskip

\noindent\textbf{(Nilpotence.)} Suppose that we have a sequence $u_0,\dots,u_r$ of vertices of $\Graph$ and elements $\gamma_i\in\Q\pi_1(\Graph;u_{i-1},u_i)$ for $1\leq i\leq r$ lying in the kernel of the augmentation map $\Q\pi_1(\Graph;u_{i-1},u_i)\rightarrow\Q$ (for instance, each $\gamma_i$ could be the difference of parallel paths). Then we have\[\int_{\gamma_r\dots\gamma_1}\omega_1\dots\omega_n=\begin{cases}\frac1{n!}\prod_{i=1}\int_{\gamma_i}\omega_i&\text{if $n=r$,}\\0&\text{if $n<r$.}\end{cases}\]

\smallskip

The nilpotence property above allows one to make precise the assertion that the higher cycle class pairing is perfect. For us, the importance of this result is that the spaces $\Q\pi_1(\Graph;u,v)$ are canonically independent of $u$ and $v$ (up to a certain completion operation).

\begin{theorem}[Duality Theorem]\label{thm:c-k_duality}\index{trivialisation $\Hiso$}
Let $u,v$ be two vertices of $\Graph$ and let $\W_\bullet$ denote the filtration of the vector space $\Q\pi_1(\Graph;u,v)$ defined by
\[
\W_{-k}\Q\pi_1(\Graph;u,v) = \Q\pi_1(\Graph;u,v)\cdot J^k
\]
where $J^k$ denotes the $k$th power of the augmentation ideal of $\Q\pi_1(\Graph,u)$. Then the higher cycle class pairing induces a perfect pairing\[\int\colon\Q\pi_1(\Graph;u,v)/\W_{-n-1}\otimes\Shufleq n\H_1(\Graph)\rightarrow\Q\]for all $n$. The corresponding isomorphisms $\Q\pi_1(\Graph;u,v)/\W_{-n-1}\isoarrow\CTensorleq n\H^1(\Graph)$ induce an isomorphism\[\Hiso\colon\Qhat\pi_1(\Graph;u,v):=\liminv\Q\pi_1(\Graph;u,v)/\W_{-n-1}\isoarrow\CTensor\H^1(\Graph)\]where $\CTensor\H^1(\Graph):=\prod_{n\geq0}\H^1(\Graph)^{\otimes n}$ denotes the complete tensor algebra on $\H^1(\Graph)$.
\end{theorem}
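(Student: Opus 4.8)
The plan is to deduce the Duality Theorem from the Nilpotence property stated just above, which is really the heart of the matter. First I would reduce to the case where $u,v$ are genuine vertices (not merely rational vertices) by the subdivision remark, and fix them once and for all. The key combinatorial input is that the associated graded $\gr^\W_{-k}\Q\pi_1(\Graph;u,v) = J^k/J^{k+1}$ (with $J$ the augmentation ideal of $\Q\pi_1(\Graph,u)$, acting on the right) is spanned by $k$-fold products of differences of parallel paths. Concretely, choosing any spanning tree of $\Graph$ gives a free generating set $\gamma_e$ for $\pi_1(\Graph,u)$ indexed by the non-tree edges $e$, and $J$ is the two-sided ideal generated by the $\gamma_e - 1$; thus $\gr^\W_{-k}\Q\pi_1(\Graph;u,v)$ is spanned by the images of the monomials $\gamma_{e_1}\cdots\gamma_{e_k}$ (times a fixed path from $u$ to $v$). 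On the dual side, $\Shufeq{k}\H_1(\Graph) = \H_1(\Graph)^{\otimes k}$ has a basis of pure tensors $\omega_1\otimes\cdots\otimes\omega_k$ where the $\omega_i$ run over a basis of $\H_1(\Graph)$, e.g.\ the basis dual (under the cycle pairing) to the non-tree edges.

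The main step is then to show that the pairing $\int\colon \gr^\W_{-k}\Q\pi_1(\Graph;u,v) \otimes \H_1(\Graph)^{\otimes k} \to \Q$ induced by the higher cycle pairing is perfect. This is exactly where the Nilpotence property is used: taking each $\gamma_i$ to be one of the augmentation-trivial elements $\gamma_{e_i} - 1$, the formula $\int_{\gamma_r\cdots\gamma_1}\omega_1\cdots\omega_n$ vanishes for $n < r$ and equals $\tfrac{1}{n!}\prod_i \int_{\gamma_i}\omega_i$ when $n = r$. The vanishing for $n<r$ shows that the higher cycle pairing descends to the associated graded (i.e.\ $\W_{-k-1}$ pairs trivially with $\Shufleq{k}$), and the product formula identifies the induced pairing on $\gr^\W_{-k}$ with (a nonzero scalar multiple of) the $k$-fold tensor power of the cycle pairing $\int\colon \H_1(\Graph)\otimes\H_1(\Graph)\to\Q$. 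Since the cycle pairing is positive-definite, hence perfect, its tensor powers are perfect, so the pairing on $\gr^\W_{-k}$ is perfect; in particular $\gr^\W_{-k}\Q\pi_1(\Graph;u,v)$ is finite-dimensional of the same dimension as $\H_1(\Graph)^{\otimes k}$. A short induction up the finite filtration $\W_{-\bullet}$ on the finite-dimensional quotient $\Q\pi_1(\Graph;u,v)/\W_{-n-1}$ then upgrades perfectness on the graded pieces to perfectness of $\int\colon \Q\pi_1(\Graph;u,v)/\W_{-n-1} \otimes \Shufleq{n}\H_1(\Graph)\to\Q$, which is the stated pairing. (The Comultiplication and Composition properties ensure everything is compatible with the algebra structures, so the resulting isomorphisms $\Q\pi_1(\Graph;u,v)/\W_{-n-1} \isoarrow \CTensorleq{n}\H^1(\Graph)$ are, as claimed, the truncations of a single isomorphism.)

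Finally I would pass to the inverse limit over $n$. Perfectness of each truncated pairing gives isomorphisms $\Q\pi_1(\Graph;u,v)/\W_{-n-1} \isoarrow \CTensorleq{n}\H^1(\Graph) = \bigoplus_{k\le n}\H^1(\Graph)^{\otimes k}$ compatible with the transition maps (on the left, the quotient maps; on the right, truncation), and taking $\liminv$ yields the desired $\Hiso\colon \Qhat\pi_1(\Graph;u,v) \isoarrow \CTensor\H^1(\Graph) = \prod_{k\ge 0}\H^1(\Graph)^{\otimes k}$.

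I expect the main obstacle to be the bookkeeping in the reduction to associated graded: one must verify cleanly that $\W_{-k}\Q\pi_1(\Graph;u,v) = \Q\pi_1(\Graph;u,v)\cdot J^k$ is spanned modulo $\W_{-k-1}$ exactly by the monomials in augmentation-trivial elements to which the Nilpotence property applies, and that no $\W_{-k}$-class pairs nontrivially with a tensor of length $< k$ nor (after passing to the graded) with one of length $> k$. The length-$>k$ vanishing is automatic since $\W_{-k}\Q\pi_1 = \Q\pi_1 \cdot J^k$ and the cycle pairing of a product of $k$ augmentation-trivial elements against a tensor of length $k' > k$ is computed, via the Composition property, to land in products involving $\int_{(\text{aug-trivial})} 1 = 0$; the length-$<k$ vanishing is the Nilpotence property directly. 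Everything else — positive-definiteness of the cycle pairing, the inductive step, the limit — is formal.
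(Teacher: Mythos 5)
Your proposal follows essentially the same route as the paper: the paper's own proof is a brief sketch which cites the Cheng--Katz duality theorem (proved there for $u=v$, unit edge lengths and $\R$-coefficients) and then says one may instead ``mimic the proof'', namely observe that the nilpotence property makes the higher cycle pairing induce a map $\Q\pi_1(\Graph;u,v)/\W_{-n-1}\rightarrow\CTensorleq n\H^1(\Graph)$ which is an isomorphism on $\W$-graded pieces and hence an isomorphism. That is exactly the argument you flesh out, and your main line --- spanning tree/free group description of $J^k/J^{k+1}$, nilpotence to descend the pairing and to compute it on graded pieces as (a nonzero multiple of) the $k$-th tensor power of the positive-definite cycle pairing, then the filtered-to-graded induction and the inverse limit --- is correct.

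One correction to your last paragraph: the ``length $>k$ vanishing'' you assert is false, and the justification via the Composition property does not work. For instance, with $k=1$, take $\gamma$ a loop consisting of a single loop-edge $e$; then $\int_{\gamma-1}\omega_1\omega_2=\int_\gamma\omega_1\omega_2=\tfrac12\int_e\omega_1\cdot\int_e\omega_2$, which is nonzero in general. The point is that when a product of $k$ augmentation-trivial factors is paired against a tensor of length $k'>k$, the Composition expansion distributes the $\omega$'s into $k$ consecutive blocks, and none of these blocks is forced to be empty, so no factor $\int 1=0$ need appear. Fortunately this claim is not needed anywhere: all the argument requires is the nilpotence vanishing against tensors of length $<k$, which shows the map $\Q\pi_1(\Graph;u,v)/\W_{-n-1}\rightarrow\CTensorleq n\H^1(\Graph)$ is filtered when the target is filtered by tensor length $\geq k$. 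The components of the image in lengths $>k$ lie deeper in that filtration and are killed upon passing to the associated graded of the target, so the induced map $\gr^\W_{-k}\Q\pi_1(\Graph;u,v)\rightarrow\H^1(\Graph)^{\otimes k}$ is precisely ``pair against $\H_1(\Graph)^{\otimes k}$'', and your identification of it with the tensor power of the cycle pairing, the induction up the finite filtration, and the passage to the inverse limit all go through unchanged once the spurious claim is deleted.
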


\begin{remark}\label{rmk:c-k_duality_posh}
It follows from the properties of the higher cycle pairing that the isomorphisms $\Hiso\colon\Qhat\pi_1(\Graph;u,v)\isoarrow\CTensor\H^1(\Graph)$ are compatible with all the natural structures on the sets $\Qhat\pi_1(\Graph;u,v)$. To make this precise, we observe that the pro-finite dimensional vector spaces $\Qhat\pi_1(\Graph;u,v)$ naturally assemble into a \emph{complete Hopf groupoid} $\Qhat\pi_1(\Graph)$ on vertex-set $\Vert{\Graph}$, whose composition and antipode are the maps
\begin{align*}
\Qhat\pi_1(\Graph;v,w)\hatotimes\Qhat\pi_1(\Graph;u,v) &\rightarrow \Qhat\pi_1(\Graph;u,w) \\
\Qhat\pi_1(\Graph;u,v) &\rightarrow \Qhat\pi_1(\Graph;v,u)
\end{align*}
induced by path-composition and path-reversal, and whose completed comultiplication 
\begin{align*}
\Qhat\pi_1(\Graph;u,v) &\rightarrow \Qhat\pi_1(\Graph;u,v)\hatotimes\Qhat\pi_1(\Graph;u,v)
\end{align*}
is the map induced by the diagonal. The affine groupoid-scheme corresponding to $\Qhat\pi_1(\Graph)$ is the $\Q$-Mal\u cev completion $\pi_1^\Q(\Graph)$ of the fundamental groupoid $\pi_1(\Graph)$ of $\Graph$.

The compatibility of the isomorphisms $\Hiso$ can be expressed as saying that they constitute an isomorphism
\[
\Qhat\pi_1(\Graph) \isoarrow \CTensor\H^1(\Graph)
\]
of complete Hopf groupoids, where the complete Hopf algebra $\CTensor\H^1(\Graph)$ is regarded as a constant complete Hopf groupoid in the usual way.

An outline of the basic properties of complete Hopf groupoids, which will be used throughout this paper, can be found in Appendix \ref{appx:hopf_gpds}.
\end{remark}

\begin{remark}
Under the isomorphism $\Hiso$ from Theorem \ref{thm:c-k_duality}, the $\W$-filtration on $\Qhat\pi_1(\Graph;u,v)$ corresponds to the natural filtration on $\CTensor\H^1(\Graph)$ by tensor-length. In particular, the $\W$-filtration on $\Qhat\pi_1(\Graph;u,v)$ has a canonical splitting as a product $\prod_{k>0}\gr^\W_{-k}\Qhat\pi_1(\Graph;u,v)$.
\end{remark}

\begin{proof}[Proof of Theorem \ref{thm:c-k_duality}]
\cite[Theorem 2.13]{cheng2017higher} proves this assertion with $\R$ coefficients in the special case that $u=v$ and all edges of $\Graph$ have length $1$. One can deduce Theorem \ref{thm:c-k_duality} from this case, or can mimic the proof in our setting, observing that the nilpotence property ensures that the higher cycle pairing induces a map
\[
\Q\pi_1(\Graph;u,v)/\W_{-n-1} \rightarrow \CTensorleq n\H^1(\Graph)
\]
which becomes an isomorphism when passing to graded pieces on either side. This ensures that the map was already an isomorphism, and hence that the pairing was perfect.
\end{proof}

As a consequence of the Duality Theorem, we find the promised preferred choice of $\Q$-pro-unipotent paths in the graph $\Graph$.

\begin{definition}[Canonical paths]\label{def:can_paths}\index{canonical\dots!\dots path $\gamma_{x,y}^\can$}
Let $u,v$ be two rational vertices in the rationally metrised graph $\Graph$. We define the \emph{canonical path} $\gamma_{u,v}^\can\in\Qhat\pi_1(\Graph;u,v)$ from $u$ to $v$ to be the preimage of $1\in\CTensor\H^1(\Graph)$ under the duality isomorphism\[\Hiso\colon\Qhat\pi_1(\Graph;u,v)\isoarrow\CTensor\H^1(\Graph)\]from Theorem \ref{thm:c-k_duality}. In other words, $\gamma_{u,v}^\can$ is the unique element such that\[\int_{\gamma_{u,v}^\can}\omega_1\dots\omega_n=\begin{cases}1&\text{if $n=0$,}\\0&\text{if $n>0$,}\end{cases}\]for all $\omega_1,\dots,\omega_n\in\H_1(\Graph)$.
\end{definition}

\begin{example}\label{ex:bridges_are_canonical}
If $e$ is a bridge in the rationally metrised graph $\Graph$ (an edge whose removal disconnects $\Graph$), then $e$ is the canonical path from $\source(e)$ to $\target(e)$.

\end{example}

\begin{lemma}[Basic properties of canonical paths]\label{lem:basic_can_path}
For any two rational vertices $u,v$ in a rationally metrised graph $\Graph$, the canonical path $\gamma_{u,v}^\can\in\Qhat\pi_1(\Graph;u,v)$ is grouplike, so constitutes a $\Q$-point of $\pi_1^\Q(\Graph;u,v)$. Moreover:
\begin{itemize}
	\item for any rational vertex $u$, we have $\gamma_{u,u}^\can=1$ is the identity loop at $u$;
	\item for any rational vertices $u,v$, we have $\gamma_{u,v}^\can=(\gamma_{v,u}^\can)^{-1}$; and
	\item for any rational vertices $u,v,w$, we have $\gamma_{u,w}^\can=\gamma_{v,w}^\can\gamma_{u,v}^\can$.
\end{itemize}
\begin{proof}
These all follow immediately from the observation in Remark \ref{rmk:c-k_duality_posh} that the duality isomorphism is an isomorphism of complete Hopf groupoids.
\end{proof}
\end{lemma}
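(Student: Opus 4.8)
The plan is to deduce every assertion from the functoriality statement recorded in Remark~\ref{rmk:c-k_duality_posh}: the duality isomorphisms $\Hiso$ assemble into an isomorphism $\Qhat\pi_1(\Graph)\isoarrow\CTensor\H^1(\Graph)$ of complete Hopf groupoids, where the target is the constant complete Hopf groupoid on the complete tensor Hopf algebra $\CTensor\H^1(\Graph)$. Under this identification the canonical path $\gamma_{u,v}^\can$ is, by Definition~\ref{def:can_paths}, precisely the element $1\in\CTensor\H^1(\Graph)$ (the multiplicative unit, equivalently the degree-$0$ basis vector), placed in the $(u,v)$-component. Now $1$ is grouplike in $\CTensor\H^1(\Graph)$ — its image under the completed comultiplication is $1\hatotimes1$ and under the counit is $1$ — and $\Hiso$ intertwines comultiplications and counits, so $\gamma_{u,v}^\can$ is grouplike in $\Qhat\pi_1(\Graph;u,v)$. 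The grouplike elements of the complete cocommutative coalgebra $\O(\pi_1^\Q(\Graph;u,v))^\dual$ are exactly the $\Q$-algebra homomorphisms $\O(\pi_1^\Q(\Graph;u,v))\to\Q$, i.e.\ the $\Q$-points of $\pi_1^\Q(\Graph;u,v)$; this is the standard dictionary recalled in Appendix~\ref{appx:hopf_gpds}, and it yields the first assertion.

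For the three displayed identities the argument is uniform: apply $\Hiso$ to both sides, invoke its compatibility with the relevant structure map of the complete Hopf groupoid, observe that the corresponding computation is trivial in the constant groupoid $\CTensor\H^1(\Graph)$, and conclude by injectivity of $\Hiso$. Concretely, the identity loop $1_u\in\pi_1(\Graph,u)$ satisfies $\int_{1_u}\omega_1\cdots\omega_n=\delta_{n,0}$ by the Identity property of the higher cycle pairing, so $\Hiso(1_u)=1=\Hiso(\gamma_{u,u}^\can)$ and hence $\gamma_{u,u}^\can=1_u$. Path-reversal corresponds to the antipode of the Hopf groupoid, and the antipode of $1$ in $\CTensor\H^1(\Graph)$ is $1$; since for a grouplike element the antipode is literally the inverse, $(\gamma_{v,u}^\can)^{-1}$ is the correct name and $\Hiso\bigl((\gamma_{v,u}^\can)^{-1}\bigr)=1=\Hiso(\gamma_{u,v}^\can)$, giving $\gamma_{u,v}^\can=(\gamma_{v,u}^\can)^{-1}$. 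Finally, path-composition in $\Qhat\pi_1(\Graph)$ corresponds to multiplication in $\CTensor\H^1(\Graph)$, where $1\cdot1=1$, so $\Hiso(\gamma_{v,w}^\can\gamma_{u,v}^\can)=1=\Hiso(\gamma_{u,w}^\can)$ and thus $\gamma_{u,w}^\can=\gamma_{v,w}^\can\gamma_{u,v}^\can$.

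There is essentially no serious obstacle here: the content has already been absorbed into the Duality Theorem~\ref{thm:c-k_duality} and the functoriality assertion of Remark~\ref{rmk:c-k_duality_posh}. The only points requiring care are bookkeeping ones — checking that ``$1$'' in $\CTensor\H^1(\Graph)$ is simultaneously the multiplicative unit, the image of each identity loop, grouplike, and fixed by the antipode (all immediate from the definition of the tensor Hopf algebra), and recalling from the appendix the equivalence between grouplike elements of a complete cocommutative Hopf groupoid algebra and $\Q$-points of the corresponding affine groupoid-scheme, which is what legitimises the claim that $\gamma_{u,v}^\can$ is a genuine point of $\pi_1^\Q(\Graph;u,v)$. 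If one preferred not to invoke the full Hopf-groupoid functoriality, each identity could instead be checked directly from the recursive definition of the higher cycle pairing together with its Composition, Identity, and Antipode properties — for instance $\int_{\gamma_{v,w}^\can\gamma_{u,v}^\can}\omega_1\cdots\omega_n$ expands by Composition into a sum of products $\int_{\gamma_{v,w}^\can}(\cdots)\cdot\int_{\gamma_{u,v}^\can}(\cdots)$, every term of which vanishes unless $n=0$ — but routing through Remark~\ref{rmk:c-k_duality_posh} is cleaner and is the approach I would take.
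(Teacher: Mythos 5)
Your proposal is correct and follows exactly the paper's route: the paper's proof is the one-line observation that everything follows from Remark~\ref{rmk:c-k_duality_posh} (the duality isomorphism being an isomorphism of complete Hopf groupoids), and your argument is simply that observation spelled out, with $\gamma_{u,v}^\can$ corresponding to the grouplike unit $1$ so that identities, antipode and composition transfer back along $\Hiso$.
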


\begin{remark}
The canonical paths $\gamma_{u,v}^\can$ constitute a non-abelian generalisation of current flows on circuits. Specifically, the pushout of $\pi_1^\Q(\Graph;u,v)$ along the abelianisation map $\pi_1^\Q(\Graph,u)\twoheadrightarrow\pi_1^\Q(\Graph,u)^\ab$ can be identified as the set of $\Q$-linear unit flows on $\Graph$ from $u$ to $v$, i.e.\ the set of formal $\Q$-linear sums of edges $\gamma$ such that $\partial(\gamma)=v-u$ (modulo edge-reversal). If we view $\Graph$ as a circuit where each edge $e$ is a resistor of resistance $\length(e)$, then it follows from Kirchoff's Laws that the image of $\gamma_{u,v}^\can$ is the current flow induced by putting a potential difference of $R$ across $\Graph$, where $R$ is the total resistance between $u$ and $v$ in $\Graph$.
\end{remark}

We conclude this section with a few more properties of the duality isomorphism which will be necessary in later calculations. In order to express these properties, we will adopt the following notation.

\begin{notation}\label{notn:e*}
Let $e$ be an edge of a rationally metrised graph $\Graph$. We denote by $e^*$ the element of $\H^1(\Graph)=\Hom(\H_1(\Graph),\Q)$ sending a homology class $\gamma$ to the multiplicity of $e$ in $\gamma$. If $e$ is instead a half-edge of $\gamma$, we adopt the convention that $e^*=0$.
\end{notation}

\begin{remark}\label{rmk:e*e_is_identity}
The elements $e^*\in\H^1(\Graph)$ defined above satisfy $(e^{-1})^*=-e^*$ for every edge $e$ and $\sum_{\source(e)=v}e^*=0$ for every vertex $v$. In particular, we have\[\sum_{e\in\plEdge{\Graph}}e^*\otimes e\in\H^1(\Graph)\otimes\H_1(\Graph).\]Moreover, under the usual isomorphism $\H^1(\Graph)\otimes\H_1(\Graph)\iso\End(\H_1(\Graph))$, this element is easily seen to correspond to the identity map on $\H_1(\Graph)$.
\end{remark}

\begin{lemma}\label{lem:C-K_on_edges}
Let $e$ be an edge of a rationally metrised graph $\Graph$. Then the image of $e$ under the duality isomorphism $\Hiso\colon\Qhat\pi_1(\Graph;\source(e),\target(e))\isoarrow\CTensor\H^1(\Graph)$ is\[\Hiso(e)=\exp\left(\length(e)e^*\right):=\sum_{r\geq0}\frac1{r!}(\length(e)e^*)^r.\]
\begin{proof}
It follows from the definition of the higher cycle pairing that\[\int_e\omega_1\dots\omega_n=\frac{\length(e)^n}{n!}\prod_{i=1}^ne^*(\omega_i)=\exp(\length(e)e^*)(\omega_1\dots\omega_n)\]for all $\omega_1,\dots,\omega_n\in\H_1(\Graph)$, where we identify $\CTensor\H^1(\Graph)=\Hom(\Shuf\H_1(\Graph),\Q)$ as usual. This is what we wanted to prove.
\end{proof}
\end{lemma}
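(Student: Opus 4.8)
The plan is to evaluate both sides as functionals on the shuffle algebra, using the characterisation of the trivialisation $\Hiso$ from the Duality Theorem~\ref{thm:c-k_duality}: under the identification $\CTensor\H^1(\Graph)=\Hom(\Shuf\H_1(\Graph),\Q)$, the map $\Hiso$ sends a class $\gamma\in\Qhat\pi_1(\Graph;\source(e),\target(e))$ to the functional $\underline\omega\mapsto\int_\gamma\underline\omega$. Since pure tensors span $\Shuf\H_1(\Graph)$, it therefore suffices to compute $\int_e\omega_1\dots\omega_n$ for an arbitrary $\omega_1\dots\omega_n\in\H_1(\Graph)^{\otimes n}$ and to check that the answer coincides with the value of $\exp(\length(e)e^*)$ on that tensor.

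First I would unwind the recursive definition of the higher cycle pairing restricted to a single edge. By definition $\int_e\omega_1\dots\omega_n=\frac1{n!}\prod_{i=1}^n\int_e\omega_i$, and $\int_e\omega_i$ is given by the intersection--length pairing, which on a cycle returns $\length(e)$ times the (signed) multiplicity of $e$ in $\omega_i$, i.e.\ $\int_e\omega_i=\length(e)e^*(\omega_i)$ with $e^*$ as in Notation~\ref{notn:e*}. Hence
\[\int_e\omega_1\dots\omega_n=\frac{\length(e)^n}{n!}\prod_{i=1}^ne^*(\omega_i).\]
On the other hand $\exp(\length(e)e^*)=\sum_{r\geq0}\frac1{r!}(\length(e)e^*)^r$, with the $r$-th summand lying in $\H^1(\Graph)^{\otimes r}$; pairing it against the length-$n$ pure tensor $\omega_1\dots\omega_n$ under the diagonal pairing $\H^1(\Graph)^{\otimes n}\times\H_1(\Graph)^{\otimes n}\to\Q$ annihilates all summands with $r\neq n$ and leaves $\frac1{n!}\prod_{i=1}^n(\length(e)e^*)(\omega_i)$, which is the same expression. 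As the two functionals agree on every pure tensor they are equal, which is exactly the assertion. (Equivalently, one can first observe that $e$ is grouplike and $\Hiso$ an isomorphism of complete Hopf groupoids by Remark~\ref{rmk:c-k_duality_posh}, so that $\Hiso(e)=\exp(\xi)$ for its primitive part $\xi\in\H^1(\Graph)$, and then only identify $\xi$ on degree-$1$ elements via $\int_e\omega=\length(e)e^*(\omega)$.)

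I do not expect any genuine obstacle: the proof is a direct unwinding of definitions. The only point requiring care is the bookkeeping around the identification $\CTensor\H^1(\Graph)=\Hom(\Shuf\H_1(\Graph),\Q)$ and the $\frac1{n!}$ normalisation built into the higher cycle pairing on edges, which is precisely what produces the exponential series. As sanity checks one can note that $(e^{-1})^*=-e^*$ together with the antipode property $\int_{e^{-1}}\underline\omega=(-1)^n\int_e\underline\omega$ renders the formula compatible with path-reversal, and that a bridge $e$ has $e^*=0$ in $\H^1(\Graph)$, so the formula gives $\Hiso(e)=1$, in agreement with Example~\ref{ex:bridges_are_canonical} and Definition~\ref{def:can_paths}.
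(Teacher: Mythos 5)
Your proof is correct and is essentially the paper's own argument: both unwind the definition of the higher cycle pairing on a single edge to get $\int_e\omega_1\dots\omega_n=\frac{\length(e)^n}{n!}\prod_{i=1}^ne^*(\omega_i)$ and then recognise this as the value of $\exp(\length(e)e^*)$ under the identification $\CTensor\H^1(\Graph)=\Hom(\Shuf\H_1(\Graph),\Q)$. The additional remarks about grouplikeness and compatibility with path-reversal are fine but not needed.
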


\begin{corollary}[Variation of canonical paths along edges]\label{cor:can_path_along_edge}
Let $e$ be an edge (resp.\ half-edge) of a rationally metrised graph $\Graph$, let $v_{e,s}$ denote the rational vertex a distance $s\in[0,\length(e)]\cap\Q$ (resp.\ $s\in[0,\infty)\cap\Q$) along $e$, and let $e_s$ denote the rational edge\footnote{By a \emph{rational edge}, we just mean an edge of some subdivision of $\Graph$, most often viewed as an element of the fundamental groupoid.}\index{rational edge, $e_s$} from $\source(e)$ to $v_{e,s}$ inside $e$.

Then for any rational vertex $b$, the canonical path from $b$ to $v_{e,s}$ is given by\[\gamma_{b,v_{e,s}}^\can=e_s\gamma_{b,\source(e)}^\can\Hiso_b^{-1}\left(\exp(-se^*)\right),\]where $\Hiso_b\colon\Qhat\pi_1(\Graph,b)\isoarrow\CTensor\H^1(\Graph)$ denotes the duality isomorphism at basepoint~$b$.
\begin{proof}
By Lemma \ref{lem:C-K_on_edges}, we know that $\Hiso(e_s)=\exp(se^*)$ for all $s$. Hence
\[\Hiso\left(e_s\gamma_{b,\source(e)}^\can\Hiso_b^{-1}\left(\exp(-se^*)\right)\right)=\exp(se^*)\cdot1\cdot\exp(-se^*)=1,\]so $e_s\gamma_{b,\source(e)}^\can\Hiso_b^{-1}\left(\exp(-se^*)\right)$ is the canonical path, as desired.
\end{proof}
\end{corollary}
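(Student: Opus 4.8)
The plan is to verify directly, from the characterisation of canonical paths in Definition~\ref{def:can_paths}, that the element $e_s\,\gamma_{b,\source(e)}^\can\,\Hiso_b^{-1}(\exp(-se^*))$ is carried to $1\in\CTensor\H^1(\Graph)$ by the duality isomorphism $\Hiso$. By Remark~\ref{rmk:c-k_duality_posh}, $\Hiso$ is an isomorphism of complete Hopf groupoids, and $\CTensor\H^1(\Graph)$ is a constant Hopf groupoid, so $\Hiso$ carries a composite of paths to the corresponding product in the completed tensor algebra. Hence it suffices to compute $\Hiso$ on each of the three composable factors: the loop $\Hiso_b^{-1}(\exp(-se^*))$ based at $b$, the canonical path $\gamma_{b,\source(e)}^\can$ from $b$ to $\source(e)$, and the rational edge $e_s$ from $\source(e)$ to $v_{e,s}$. (In particular, this composite genuinely is a path from $b$ to $v_{e,s}$, as it must be.)

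For the first factor, $\Hiso$ of $\Hiso_b^{-1}(\exp(-se^*))$ is $\exp(-se^*)$ by the very definition of $\Hiso_b$; for the middle factor, $\Hiso(\gamma_{b,\source(e)}^\can)=1$ by Definition~\ref{def:can_paths}. For the last factor, I would pass to the subdivision $\Graph_{\{v_{e,s}\}}$ of $\Graph$ (which leaves the higher cycle pairing, and hence $\Hiso$, unchanged), in which $e_s$ is an honest edge of length $s$; Lemma~\ref{lem:C-K_on_edges} then gives $\Hiso(e_s)=\exp\bigl(s\,(e_s)^*\bigr)$. The one point requiring a word of care is that $(e_s)^*=e^*$ under the canonical isomorphism $\H^1(\Graph)\iso\H^1(\Graph_{\{v_{e,s}\}})$: since the new vertex $v_{e,s}$ has degree two, every cycle through $e_s$ continues along the complementary sub-edge of $e$ with the same multiplicity, so a cycle uses $e$ with exactly the multiplicity that its subdivision uses $e_s$. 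Multiplying the three images, which all lie in the commutative subalgebra of $\CTensor\H^1(\Graph)$ generated by $e^*$, yields $\exp(se^*)\cdot 1\cdot\exp(-se^*)=1$; by Definition~\ref{def:can_paths} the element is therefore the canonical path $\gamma_{b,v_{e,s}}^\can$.

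I do not expect any real obstacle here: the argument is essentially a one-line formal computation with the Hopf-groupoid compatibility of $\Hiso$ together with Lemma~\ref{lem:C-K_on_edges}, and the only thing to check carefully — the invariance $(e_s)^*=e^*$ of the edge functional under subdivision — is immediate from the fact that subdivision does not change the homology of the graph.
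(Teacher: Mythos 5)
Your proposal is correct and is essentially the paper's own argument: apply $\Hiso$ to the composite, use the Hopf-groupoid compatibility together with $\Hiso(\gamma^\can_{b,\source(e)})=1$ and $\Hiso(e_s)=\exp(se^*)$ from Lemma~\ref{lem:C-K_on_edges}, and conclude from the characterisation in Definition~\ref{def:can_paths}. The only difference is that you spell out the subdivision-invariance $(e_s)^*=e^*$, which the paper leaves implicit; this is a fine (and harmless) extra check.
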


\section{The $\M$-graded fundamental groupoid of a reduction graph}
\label{s:M-trivialisation}

We saw in the previous section that the $\Q$-pro-unipotent fundamental groupoid of a rationally metrised graph $\Graph$ admits a canonical and basepoint-independent description in the form of the isomorphism
\[
\Hiso\colon\O(\pi_1^\Q(\Graph))^\dual \isoarrow \CTensor\H^1(\Graph)
\]
of complete Hopf groupoids induced from Theorem \ref{thm:c-k_duality} (from here we write the completion $\Qhat\pi_1(\Graph)$ of the groupoid-algebra of $\pi_1(\Graph)$ as the dual affine ring $\O(\pi_1^\Q(\Graph))^\dual$ of the $\Q$-Mal\u cev completion, see Lemma~\ref{lem:J-adic_and_malcev_completion}). Our aim in this section is to carry out the analogous calculation for reduction graphs $\rGraph$, producing a canonical isomorphism
\[
\Hiso\colon\gr^\M_\bullet\O(\pi_1^\Q(\rGraph))^\dual \isoarrow \HAlg
\]
for an explicit $\M$-graded complete Hopf algebra $\HAlg$. As well as giving us a canonical setting for performing computations in $\pi_1^\Q(\rGraph)$, this complete Hopf algebra $\HAlg$ will be used in \S\ref{s:n-a_kummer_for_graphs} to define the pro-finite dimensional vector space $\V$ from Theorem~\ref{thm:description_of_kummer}.

\smallskip

To begin this section, let us give the definition of the complete Hopf algebra $\HAlg$. As part of the definition, we will endow $\HAlg$ with two gradings $\gr^\W_\bullet$ and $\gr^\M_\bullet$. The latter is supposed to correspond to the tautological $\M$-grading on $\gr^\M_\bullet\O(\pi_1^\Q(\rGraph))^\dual$, while the former is supposed to correspond to the $\W$-filtration, in that its underlying filtration recovers the $\W$-filtration.

\begin{construction}\label{cons:HAlg}\index{Hopf algebras:!$\HAlg$, $\oHAlg$}\index{relation!surface $\sigma$}
Let $\rGraph$ be a reduction graph. Let $\oHAlg=\oHAlg(\rGraph)$ be the $(\W,\M)$-bigraded complete tensor algebra generated by\footnote{When we refer to the tensor algebra generated by a collection of vector spaces and elements, we of course mean the tensor algebra on the direct sum of the given vector spaces with the vector space with basis the given elements.}:
\begin{itemize}
	\item the $\Q$-linear cohomology $\H^1(\Graph)$ of (the underlying graph of) $\rGraph$ in $(\W,\M)$-bidegree $(-1,0)$;
	\item formal symbols\footnote{Despite the long name, $\alog(\beta_{v,i})$ is intended to be read as a single formal symbol. Of course, the choice of symbol is deliberately evocative -- $\alog(\beta_{v,i})$ will indeed be the image of $\log(\beta_{v,i})\in\gr^\M_{-1}\O(\pi_1^\Q(\rGraph))^\dual$ under the map $\Hiso$.} $\alog(\beta_{v,i})$ and $\alog(\beta_{v,i}')$ in $(\W,\M)$-bidegree $(-1,-1)$ for each vertex $v\in\Vert{\Graph}$ and each integer $1\leq i\leq g(v)$;
	\item the $\Q$-linear homology\footnote{Of course, the homology and cohomology of $\Graph$ are canonically identified with one another via the cycle pairing, so our choice to identify $\gr^\M_0\gr^\W_{-1}\oHAlg$ with the cohomology and $\gr^\M_{-2}\gr^\W_{-1}\oHAlg$ with the homology is largely a mnemonic device to make a distinction between how we write elements of these two subspaces. Note that our conventions are somewhat non-standard, in that it would be more usual to interchange the roles of $\H_1$ and $\H^1$.} $\H_1(\Graph)$ of (the underlying graph of) $\rGraph$ in $(\W,\M)$-bidegree $(-1,-2)$; and
	\item a formal symbol $\alog(\delta_e)$ in $(\W,\M)$-bidegree $(-2,-2)$ for each half-edge $e\in\HEdge{\Graph}$.
\end{itemize}
The Hopf algebra structure is determined by specifying that each of these generators is primitive (i.e.\ the comultiplication is defined on these generators by $\Delta(x)=1\otimes x+x\otimes1$).

\smallskip

There is a primitive element $\sigma$, bihomogenous of $(\W,\M)$-bidegree $(-2,-2)$, called the \emph{surface relation} and defined to be equal to $\sigma_0+\sigma_1+\sigma_2$ where
\begin{align*}
\sigma_0 &= \sum_{i=1}^\gGamma [\xi_i',\xi_i], \\
\sigma_1 &= \sum_v\sum_{i=1}^{g(x)}[\alog(\beta_{v,i}'),\alog(\beta_{v,i})], \\
\sigma_2 &= \sum_{e\in\HEdge{\Graph}}\alog(\delta_e),
\end{align*}
where $(\xi_i)_{i=1}^\gGamma$ is a basis of $\H^1(\Graph)$ with dual basis $(\xi_i')$ ($\sigma_0$ is independent of this choice). We then define the complete bigraded Hopf algebra $\HAlg=\HAlg(\rGraph)$ to be the quotient\[\HAlg:=\oHAlg/(\sigma).\]As usual, we denote the first and second gradings on $\oHAlg$ and $\HAlg$ by $\gr^\W_\bullet$ and $\gr^\M_\bullet$ respectively, and their associated filtrations by $\W_\bullet$ and $\M_\bullet$ respectively.
\end{construction}

The complete Hopf algebra $\HAlg$ will provide the desired trivialisation of the $\M$-graded fundamental groupoid of the reduction graph $\rGraph$.

\begin{theorem}\label{thm:description_of_M-graded}\index{trivialisation $\Hiso$}
Let $\rGraph$ be a reduction graph, and let $\pi_1^\Q(\rGraph)$ be the $\Q$-Mal\u cev completion of its fundamental groupoid (Definition~\ref{def:pi1_reduction_graph}). Then the complete Hopf groupoid $\gr^\M_\bullet\O(\pi_1^\Q(\rGraph))^\dual$ is constant, and there is a canonical $\W$-filtered and $\M$-graded isomorphism
\[\Hiso\colon\gr^\M_\bullet\O(\pi_1^\Q(\rGraph))^\dual\isoarrow\HAlg.\]

In particular, the $\W$-filtration on $\gr^\M_\bullet\O(\pi_1^\Q(\rGraph))^\dual$ canonically splits.
\end{theorem}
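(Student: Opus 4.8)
The plan is to first reduce the groupoid statement to a computation with a single vertex Hopf algebra by trivialising via the Cheng--Katz canonical paths, and then to identify that vertex Hopf algebra with $\HAlg$ by comparing the graph-of-groups presentation of $\pi_1(\rGraph)$ with the generators and the surface relation defining $\HAlg$.

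For the first step, recall from Remark~\ref{rmk:interpretation_of_quotients} that $\gr^\M_0\O(\pi_1^\Q(\rGraph))^\dual=\O(\pi_1^\Q(\Graph))^\dual$, which by the Duality Theorem~\ref{thm:c-k_duality} and Remark~\ref{rmk:c-k_duality_posh} is \emph{already} the constant Hopf groupoid on $\CTensor\H^1(\Graph)$. In particular the canonical paths $\gamma^\can_{u,v}$ of Definition~\ref{def:can_paths} may be regarded, via the inclusion of the degree-$0$ part, as grouplike elements of $\gr^\M_\bullet\O(\pi_1^\Q(\rGraph;u,v))^\dual$ satisfying the cocycle identities of Lemma~\ref{lem:basic_can_path}. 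Conjugation by these elements then trivialises the entire graded Hopf groupoid: it produces isomorphisms of each $\gr^\M_\bullet\O(\pi_1^\Q(\rGraph;u,v))^\dual$ with a common vertex Hopf algebra $H:=\gr^\M_\bullet\O(\pi_1^\Q(\rGraph,b))^\dual$, compatibly with composition, coproduct and antipode, and independent of the auxiliary basepoint $b$ up to the canonical conjugation by $\gamma^\can_{b,b'}$. This shows that $\gr^\M_\bullet\O(\pi_1^\Q(\rGraph))^\dual$ is constant and reduces the theorem to constructing a canonical $(\W,\M)$-bigraded isomorphism $H\isoarrow\HAlg$.

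For the second step I construct a map $\oHAlg(\rGraph)\to H$ on the generators of Construction~\ref{cons:HAlg}. The bidegree $(-1,0)$ generators $\H^1(\Graph)$ are sent to $\gr^\W_{-1}\gr^\M_0H=\gr^\W_{-1}\O(\pi_1^\Q(\Graph,b))^\dual$ via Cheng--Katz. The symbols $\alog(\beta_{v,i})$, $\alog(\beta_{v,i}')$ are sent to the classes of the $\gamma^\can$-conjugates to $b$ of the primitives $\log(\beta_{v,i})$, $\log(\beta_{v,i}')$, which lie in $\M_{-1}\cap\W_{-1}$ by Definition~\ref{def:filtrations_on_graph} and remain primitive since conjugation by a grouplike is an inner Hopf automorphism; likewise a half-edge symbol $\alog(\delta_e)$ is sent to the $\gamma^\can$-conjugate of $\log(\delta_e)\in\M_{-2}\cap\W_{-2}$. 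Finally a homology class $\omega\in\H_1(\Graph)$ is sent to the matching element of $\gr^\W_{-1}\gr^\M_{-2}H$: using the edge and vertex relations in $\gr^\W_{-1}$ one checks this graded piece is spanned by the conjugated classes $\log(\delta_e)$ for edges $e$, subject only to $\log(\delta_{e^{-1}})=-\log(\delta_e)$ and $\sum_{\source(e)=v}\log(\delta_e)=0$, and hence is identified with $\H_1(\Graph)$ once the edge-lengths are used to normalise (this is the only place the metric enters, through $e-1\equiv\length(e)\,e^*$ and $N(e)=\length(e)\,e\log(\delta_e)$ from Proposition~\ref{prop:explicit_hopf_gpd}). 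One then checks that this map kills the surface relation: by Remark~\ref{rmk:easier_vertex_relation} the vertex relation at $v$ degenerates in $\gr^\M_{-2}$ to $\log(\delta_v)+\sum_{\source(e)=v}\log(\delta_e)=0$, and summing over all $v$ while expanding the edge relations to replace each $\log(\delta_{e^{-1}})$ by $-\log(\delta_e)$ produces precisely $\sigma_1+\sigma_2+\sigma_0=0$, the $\sigma_0$-term being the total of the resulting $\gr^\W_{-2}$ correction terms and coinciding with $\sum_{i=1}^{\gGamma}[\xi_i',\xi_i]$ by Remark~\ref{rmk:e*e_is_identity}. Thus the map descends to a $(\W,\M)$-bigraded, coproduct- and composition-compatible morphism $\HAlg\to H$, canonical because only canonical paths and intrinsic filtrations were used.

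It remains to prove $\HAlg\to H$ is bijective; since both sides are complete and $(\W,\M)$-graded it suffices to check bijectivity in each bidegree. Surjectivity is clear: by Bass--Serre theory $\pi_1(\rGraph,b)$ is generated by $\gamma^\can$-conjugates of the vertex-group generators together with the canonical paths, and in the associated $\M$-graded these reduce to the images above. The main obstacle is injectivity, i.e.\ the statement that passing to $\gr^\M_\bullet$ introduces no relations beyond (conjugates of) the surface relation; the clean way to see this is to realise $\pi_1(\rGraph)$ as the fundamental groupoid of the glued punctured surface of \S\ref{ss:graphs_of_groups_topology} (Lemma~\ref{lem:decompose_surface}), so that the graph-of-groups presentation consists of the edge and vertex relations alone, and then to compute the $(\W,\M)$-bigraded Hilbert series of $H$ directly from this presentation and check that it agrees with that of the free construction $\oHAlg$ modulo the single bigraded ideal $(\sigma)$ --- a finite count in each bidegree, most of whose content is the non-redundancy of the surface relation. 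This is also where one confirms that the length data of $\rGraph$ influences only the $\H_1(\Graph)$-generators. Granting injectivity, the desired isomorphism $\Hiso\colon\gr^\M_\bullet\O(\pi_1^\Q(\rGraph))^\dual\isoarrow\HAlg$ follows, and the final ``in particular'' is immediate, since $\HAlg$ carries the honest grading $\gr^\W_\bullet$ of Construction~\ref{cons:HAlg}, which pulls back along $\Hiso$ to a canonical splitting of the $\W$-filtration.
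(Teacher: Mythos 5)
Your overall outline (trivialise by canonical paths, map the generators of $\oHAlg$ into the groupoid, kill the surface relation, then prove isomorphy) is the paper's outline, but two of your steps have genuine gaps. First, your definition of the map on $\H_1(\Graph)$ is only made at the level of the associated $\W$-graded: you send a homology class to ``the matching element of $\gr^\W_{-1}\gr^\M_{-2}H$''. The theorem, however, asserts a $\W$-\emph{filtered} isomorphism onto $\gr^\M_\bullet\O(\pi_1^\Q(\rGraph))^\dual$, and the ``in particular'' clause (the canonical splitting of the $\W$-filtration) is exactly the content that a graded map cannot supply --- producing it is equivalent to choosing a canonical lift of each homology class into $\W_{-1}\gr^\M_{-2}\O(\pi_1^\Q(\rGraph))^\dual$ itself. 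The paper's lift is the specific element $\sum_e\lambda_e\frac{\exp(\length(e)\ad_{e^*})-1}{\ad_{e^*}}(\log(\delta_e))$ of Definition~\ref{def:inverse_iso_on_H_1}; its well-definedness under edge reversal (Proposition~\ref{prop:loop_integral_along_edge} and its corollary) and the fact that it kills the surface relation \emph{exactly}, not merely modulo lower $\W$-filtration (Proposition~\ref{prop:sigma_0}), are genuine computations that your ``normalise by edge lengths'' and ``$\gr^\W_{-2}$ correction terms'' do not address. Without such a lift your argument only produces a $\W$-graded statement and the splitting is assumed rather than proved.

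Second, your injectivity step --- ``compute the $(\W,\M)$-bigraded Hilbert series of $H$ directly from the presentation and check it agrees with $\oHAlg/(\sigma)$, a finite count in each bidegree, most of whose content is the non-redundancy of the surface relation'' --- is not a routine count; it is the hard point of the theorem. Knowing a one-relator presentation of $\pi_1(\rGraph,b)$ does not tell you the dimensions of the bigraded pieces of its completed groupoid algebra: the statement that passing to the associated graded introduces no relations beyond the ideal generated by the (image of the) relator is precisely what must be proved, and mere non-redundancy of $\sigma$ does not determine the Hilbert series of the quotient by the ideal $(\sigma)$. The paper supplies this input from Labute's theorem on one-relator groups \cite{labute}, after first building a one-relator presentation whose relator is adjusted (the integers $a_{ij}$) so as to agree with $\sigma$ modulo $\W_{-3}$, and then comparing that non-canonical presentation with the canonical map (Propositions~\ref{prop:one-relator_description}, \ref{prop:lift_of_alpha} and~\ref{prop:lift_of_alpha_is_iso}). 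Some substitute for this inertness-type result is needed; as written, your Hilbert-series comparison presupposes the answer it is meant to establish.
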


\begin{remark}\label{rmk:N_on_HAlg}
In addition to the $\W$- and $\M$-gradings, the complete Hopf algebra $\HAlg$ from Construction \ref{cons:HAlg} carries an additional piece of structure, namely a \emph{monodromy operator} $N$, which is a Hopf derivation of $\HAlg$, bigraded of $(\W,\M)$-bidegree $(0,-2)$. However, the interaction between the monodromy operators on $\HAlg$ and $\gr^\M_\bullet\O(\pi_1^\Q(\rGraph))^\dual$ is somewhat complicated: the isomorphism $\Hiso$ from Theorem~\ref{thm:description_of_M-graded} only becomes $N$-equivariant after passing to the associated $\W$-graded on either side. We will thus omit discussion of monodromy operators for the majority of this section, returning in \S\ref{ss:equivariance} to prove some weak $N$-equivariance statements that will be used in the sequel.
\end{remark}

The proof of Theorem~\ref{thm:description_of_M-graded} will occupy most of the remainder of this section.

\subsection{Trivialisation of the $\M$-graded fundamental groupoid}
\label{ss:trivialisation}

Let us fix for the rest of the section a reduction graph $\rGraph$. The first step in our proof of Theorem~\ref{thm:description_of_M-graded} is to construct a trivialisation of the complete Hopf groupoid $\gr^\M_\bullet\O(\pi_1^\Q(\rGraph))^\dual$. This fundamentally arises from the trivialisation of the $\Q$-pro-unipotent fundamental groups of graphs arising from Cheng--Katz integration. More precisely, we know that $\gr^\M_0\O(\pi_1^\Q(\rGraph))^\dual$ is canonically identified with $\O(\pi_1^\Q(\Graph))^\dual$ with $\Graph$ the underlying graph of $\rGraph$ (Remark~\ref{rmk:interpretation_of_quotients}), and hence we have canonical paths\[\gamma_{u,v}^\can\in\gr^\M_0\O(\pi_1^\Q(\rGraph;u,v))^\dual\]for all vertices $u,v$ of $\rGraph$. Conjugation by these elements provides the desired trivialisation of $\gr^\M_\bullet\O(\pi_1^\Q(\rGraph))^\dual$.

\begin{proposition}\label{prop:trivialisation}
The maps
\begin{align*}
\psi_{u,v,u',v'}\colon\gr^\M_\bullet\O(\pi_1^\Q(\rGraph;u,v))^\dual &\isoarrow \gr^\M_\bullet\O(\pi_1^\Q(\rGraph;u',v'))^\dual \\
\gamma &\mapsto \gamma^\can_{v,v'}\cdot\gamma\cdot\gamma^\can_{u',u}
\end{align*}
for every four vertices $u,v,u',v'\in\Vert{\Graph}$ constitute a $\W$-filtered and $\M$-graded trivialisation of $\gr^\M_\bullet\O(\pi_1^\Q(\rGraph))^\dual$. That is, these maps are $\W$-filtered and $\M$-graded isomorphisms of complete coalgebras, are compatible with the path-composition, path-reversal and identity paths, and satisfy $\psi_{u,v,u'',v''}=\psi_{u',v',u'',v''}\psi_{u,v,u',v'}$ and $\psi_{u,v,u,v}=1$ for all vertices $u,v,u',v',u'',v''\in\Vert{\Graph}$
\begin{proof}
This is a straightforward consequence of Lemma~\ref{lem:basic_can_path}.
\end{proof}
\end{proposition}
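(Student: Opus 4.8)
The plan is to deduce the proposition directly from the groupoid identities for canonical paths recorded in Lemma~\ref{lem:basic_can_path}, together with the identification $\gr^\M_0\O(\pi_1^\Q(\rGraph))^\dual\iso\O(\pi_1^\Q(\Graph))^\dual$ of Remark~\ref{rmk:interpretation_of_quotients}, which is what lets us view the canonical paths $\gamma^\can_{u,v}$ as $\M$-homogeneous elements of degree $0$ inside the $\M$-graded Hopf groupoid $\gr^\M_\bullet\O(\pi_1^\Q(\rGraph))^\dual$. First I would observe that the formula defining $\psi_{u,v,u',v'}$ makes sense: the path-composition of $\gr^\M_\bullet\O(\pi_1^\Q(\rGraph))^\dual$ is a degree-preserving morphism of complete coalgebras, so pre- and postcomposing a general element $\gamma\in\gr^\M_\bullet\O(\pi_1^\Q(\rGraph;u,v))^\dual$ with the degree-$0$ grouplike elements $\gamma^\can_{u',u}$ and $\gamma^\can_{v,v'}$ yields a well-defined element of $\gr^\M_\bullet\O(\pi_1^\Q(\rGraph;u',v'))^\dual$ in the same $\M$-degree; and since $\gamma^\can_{u,v}$ is grouplike, left- and right-multiplication by it is a morphism of complete coalgebras. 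Thus each $\psi_{u,v,u',v'}$ is an $\M$-graded morphism of complete coalgebras. It is moreover $\W$-filtered because each $\W_{-k}\O(\pi_1^\Q(\rGraph))^\dual$ is a two-sided ideal of the complete Hopf groupoid (as recalled in \S\ref{s:oda_reduction}), so composition with arbitrary elements preserves it; the same applies to the putative inverse $\psi_{u',v',u,v}$, so $\psi_{u,v,u',v'}$ is a strictly $\W$-filtered isomorphism, the identity $\psi_{u',v',u,v}\circ\psi_{u,v,u',v'}=\mathrm{id}$ being an instance of $\gamma^\can_{v',v}\gamma^\can_{v,v'}=\gamma^\can_{v',v'}=1$ and $\gamma^\can_{u,u'}\gamma^\can_{u',u}=\gamma^\can_{u,u}=1$.

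The remaining assertions are all bookkeeping with the identities of Lemma~\ref{lem:basic_can_path}. Compatibility with identity paths is the computation $\psi_{u,u,u',u'}(1)=\gamma^\can_{u,u'}\gamma^\can_{u',u}=1$; compatibility with path-composition follows, for $\gamma\in\O(\pi_1^\Q(\rGraph;u,v))^\dual$ and $\gamma'\in\O(\pi_1^\Q(\rGraph;v,w))^\dual$, by expanding $\psi_{v,w,v',w'}(\gamma')\cdot\psi_{u,v,u',v'}(\gamma)$ and cancelling the middle factor $\gamma^\can_{v,v'}\gamma^\can_{v',v}=1$; compatibility with path-reversal follows from the antipode being an anti-homomorphism together with $S(\gamma^\can_{u,v})=(\gamma^\can_{u,v})^{-1}=\gamma^\can_{v,u}$. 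Finally, $\psi_{u,v,u,v}=\mathrm{id}$ because $\gamma^\can_{u,u}=\gamma^\can_{v,v}=1$, and the cocycle relation $\psi_{u',v',u'',v''}\circ\psi_{u,v,u',v'}=\psi_{u,v,u'',v''}$ follows by expanding and using $\gamma^\can_{v',v''}\gamma^\can_{v,v'}=\gamma^\can_{v,v''}$ together with $\gamma^\can_{u',u}\gamma^\can_{u'',u'}=\gamma^\can_{u'',u}$, both instances of the composition rule in Lemma~\ref{lem:basic_can_path}.

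I do not expect a genuine obstacle: the argument is entirely formal once the canonical paths are in place. The one point meriting care is the very first step — that the canonical paths, defined a priori only in $\gr^\M_0\O(\pi_1^\Q(\rGraph))^\dual$, may be composed with general $\M$-graded elements and that this composition is degree-preserving — which is immediate upon noting that $\gr^\M_0$ is precisely the degree-zero part of the $\M$-graded Hopf groupoid and that its path-composition respects the $\M$-grading.
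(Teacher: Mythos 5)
Your proof is correct and follows exactly the route the paper intends: the paper's own proof is the one-line remark that the statement is a straightforward consequence of Lemma~\ref{lem:basic_can_path}, and your write-up simply supplies the details of that deduction (grouplikeness of the canonical paths in $\M$-degree $0$, the ideal property of the $\W$-filtration, and the groupoid identities for $\gamma^\can$). No gaps; the only point needing care — composing general $\M$-graded elements with the degree-$0$ canonical paths — is handled exactly as the paper's setup in \S\ref{ss:trivialisation} and Remark~\ref{rmk:interpretation_of_quotients} requires.
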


\subsection{Construction of $\Hiso^{-1}$}

We will construct the isomorphism $\Hiso$ from Theorem~\ref{thm:description_of_M-graded} by first constructing its inverse, a $\W$-filtered and $\M$-graded map\[\Hiso^{-1}\colon\oHAlg\rightarrow\gr^\M_\bullet\O(\pi_1^\Q(\rGraph))^\dual\]of complete Hopf algebras, where we use the trivialisation from \S\ref{ss:trivialisation} to view the codomain of $\Hiso^{-1}$ as a constant complete Hopf groupoid, i.e.\ as a Hopf algebra. We will then show that $\Hiso^{-1}$ factors through $\HAlg$, and that the factored map is a $\W$-filtered and $\M$-graded isomorphism.

Since $\oHAlg$ is free as a complete Hopf algebra, to define this $\W$-filtered and $\M$-graded map $\Hiso^{-1}$ we need only specify linear maps and elements
\begin{align*}
\Hiso^{-1}\colon\H^1(\Graph) &\rightarrow \W_{-1}\gr^\M_0\O(\pi_1^\Q(\rGraph))^{\dual,\prim}, \\
\Hiso^{-1}(\alog(\beta_{v,i})) &\in \W_{-1}\gr^\M_{-1}\O(\pi_1^\Q(\rGraph))^{\dual,\prim}, \\
\Hiso^{-1}(\alog(\beta_{v,i}')) &\in \W_{-1}\gr^\M_{-1}\O(\pi_1^\Q(\rGraph))^{\dual,\prim}, \\
\Hiso^{-1}(\alog(\delta_e)) &\in\W_{-2}\gr^\M_{-2}\O(\pi_1^\Q(\rGraph))^{\dual,\prim}, \\
\Hiso^{-1}\colon\H_1(\Graph) &\rightarrow \W_{-1}\gr^\M_{-2}\O(\pi_1^\Q(\rGraph))^{\dual,\prim},
\end{align*}
landing in the primitive elements of $\gr^\M_\bullet\O(\pi_1^\Q(\rGraph))^\dual$. We now construct these maps and elements in turn.

\subsubsection{Generators in $\H^1(\Graph)$}
\label{sss:action_on_H^1}

We define $\Hiso^{-1}\colon\H^1(\Graph)\rightarrow\W_{-1}\gr^\M_0\O(\pi_1^\Q(\rGraph))^\dual$ to be the composite\[\H^1(\Graph)\hookrightarrow\CTensor\H^1(\Graph)\isoarrow\gr^\M_0\O(\pi_1^\Q(\rGraph))^\dual,\]where the latter map is the inverse of the duality isomorphism from Theorem~\ref{thm:c-k_duality} (cf.\ Remark~\ref{rmk:interpretation_of_quotients}). It is clear that this map lands in the primitive elements of $\gr^\M_\bullet\O(\pi_1^\Q(\rGraph))^\dual$.

\subsubsection{Generators arising from genus and punctures}
\label{sss:action_on_punctures}

For each vertex $v$ and integer $1\leq i\leq g(v)$, and for each half-edge $e$, we define
\begin{align*}
\Hiso^{-1}(\alog(\beta_{v,i})) &= \log(\beta_{v,i})\in\W_{-1}\gr^\M_{-1}\O(\pi_1^\Q(\rGraph,v))^\dual, \\
\Hiso^{-1}(\alog(\beta_{v,i}')) &= \log(\beta_{v,i}')\in\W_{-1}\gr^\M_{-1}\O(\pi_1^\Q(\rGraph,v))^\dual, \\
\Hiso^{-1}(\alog(\delta_e)) &= \log(\delta_e)\in\W_{-2}\gr^\M_{-2}\O(\pi_1^\Q(\rGraph,\source(e)))^\dual,
\end{align*}
where $\log(\beta_{v,i})$, $\log(\beta_{v,i}')$ and $\log(\delta_e)$ denote the image of the logarithm of the corresponding generator of $\pi_1^\Q(\rGraph)$ in the appropriate $\M$-graded piece of $\O(\pi_1^\Q(\rGraph))^\dual$ (see Definition~\ref{def:filtrations_on_graph}). It is clear that these elements of $\gr^\M_\bullet\O(\pi_1^\Q(\rGraph))^\dual$ are all primitive.

\subsubsection{Generators in $\H_1(\Graph)$}
\label{sss:action_on_H_1}

The most technical part of the definition of the map $\Hiso^{-1}\colon\oHAlg\rightarrow\gr^\M_\bullet\O(\pi_1^\Q(\rGraph))^\dual$ is specifying its action on $\H_1(\Graph)$. The reader interested purely in the technical definition may skip ahead to Definition~\ref{def:inverse_iso_on_H_1}, but we will for now give a preparatory calculation that will also feature in our computations of non-abelian Kummer maps.

\begin{proposition}\label{prop:loop_integral_along_edge}
Let $e$ be an edge of the reduction graph $\rGraph$, and by abuse of notation write $e^*=\Hiso^{-1}e^*\in\gr^\W_{-1}\gr^\M_0\O(\pi_1^\Q(\rGraph,\source(e)))^\dual$ for the element corresponding to $e^*\in\H^1(\Graph)$ from Notation~\ref{notn:e*} under the duality isomorphism from Theorem~\ref{thm:c-k_duality}. Writing $\log(\delta_e)\in\gr^\M_{-2}\O(\pi_1^\Q(\rGraph,\source(e)))^\dual$ for the image in $\gr^\M_{-2}$ of the logarithm of the generator $\delta_e$ from Definition~\ref{def:pi1_reduction_graph}, the components of the map\[s\mapsto\exp(se^*)\log(\delta_e)\exp(-se^*)\]are polynomials in $s$, and we have
\[
\scalebox{0.96}{$\displaystyle{
\int_0^{\length(e)}\!\!\!\!\!\exp(se^*)\log(\delta_e)\exp(-se^*)\d s=\frac{\exp(\length(e)\ad_{e^*})-1}{\ad_{e^*}}\log(\delta_e):=\sum_{r\geq0}\frac{(\length(e)e^*)^r}{(r+1)!}(\log(\delta_e)).
}$}
\]
\begin{proof}
The fact that the coefficients are polynomials in $s$ follows from the corresponding fact for $s\mapsto\exp(se^*)=\sum_{r\geq0}\frac{s^r(e^*)^r}{r!}$ (note that $(e^*)^r$ is eventually zero in any finite-dimensional quotient of $\gr^\M_0\O(\pi_1^\Q(\rGraph,\source(e)))^\dual$). The evaluation of the integral is then purely formal.
\end{proof}
\end{proposition}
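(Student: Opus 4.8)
\textbf{Proof plan for Proposition~\ref{prop:loop_integral_along_edge}.}

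The statement has two assertions: (i) that the components of $s\mapsto\exp(se^*)\log(\delta_e)\exp(-se^*)$, relative to a suitable topological basis of $\gr^\M_{-2}\O(\pi_1^\Q(\rGraph,\source(e)))^\dual$, are polynomial functions of $s$; and (ii) the closed-form evaluation of the integral. My plan is to treat these in order, with the second being essentially a formal manipulation once the first is in place.

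For (i), the key point is that $e^*\in\gr^\W_{-1}\gr^\M_0\O(\pi_1^\Q(\rGraph,\source(e)))^\dual$, so left- or right-multiplication by $e^*$ raises the $\W$-degree by one (and preserves $\M$-degree). Since $\log(\delta_e)$ sits in $\W_{-2}$, the expression $(e^*)^r\log(\delta_e)$ lies in $\W_{-r-2}$, hence vanishes in any finite-dimensional quotient $\O(\pi_1^\Q(\rGraph,\source(e)))^\dual/\W_{-m}$ once $r\geq m-1$. Therefore, working in such a quotient, the series $\exp(se^*)=\sum_{r\geq0}\frac{s^r}{r!}(e^*)^r$ is actually a \emph{finite} sum when applied on the left of $\log(\delta_e)$ (and likewise $\exp(-se^*)$ truncates when applied on the right), so that $\exp(se^*)\log(\delta_e)\exp(-se^*)$ is a polynomial in $s$ with coefficients in that finite-dimensional quotient. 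Passing to the inverse limit over $m$ then shows that each coordinate of the expression, with respect to a basis adapted to the $\W$-filtration, is given by a genuine polynomial in $s$ (of bounded degree once one fixes the $\W$-level one is looking at). This legitimises term-by-term integration.

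For (ii), once we know the integrand is (in each $\W$-quotient) a polynomial in $s$, we may integrate term by term. Rewriting $\exp(se^*)\log(\delta_e)\exp(-se^*)=\exp(s\,\ad_{e^*})(\log(\delta_e))$ via the standard conjugation-equals-adjoint-exponential identity in an (associative, hence Lie) algebra, and noting that $\ad_{e^*}$ also raises $\W$-degree by one so is locally nilpotent on $\log(\delta_e)$, we get
\[
\int_0^{\length(e)}\exp(s\,\ad_{e^*})(\log(\delta_e))\,\d s
=\sum_{r\geq0}\frac{\length(e)^{r+1}}{(r+1)!}\ad_{e^*}^r(\log(\delta_e))
=\frac{\exp(\length(e)\,\ad_{e^*})-1}{\ad_{e^*}}(\log(\delta_e)),
\]
which is exactly the claimed formula (the division by $\ad_{e^*}$ being interpreted, as the displayed expansion makes explicit, via the power series $\frac{e^x-1}{x}=\sum_{r\geq0}\frac{x^r}{(r+1)!}$, so no actual invertibility of $\ad_{e^*}$ is needed). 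The only thing to double-check is the bookkeeping that $(\length(e)e^*)^r(\log(\delta_e))$ as written in the proposition statement means $\ad_{e^*}^r$ applied $r$ times scaled by $\length(e)^r$ — i.e.\ the notation $\frac{(\length(e)e^*)^r}{(r+1)!}(\log(\delta_e))$ is shorthand for $\frac{\length(e)^r}{(r+1)!}\ad_{e^*}^r(\log(\delta_e))$ — which is consistent with the $\ad$ notation on the left-hand side.

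\textbf{Main obstacle.} There is no real obstacle here: the entire content is the observation that multiplication by $e^*$ strictly increases the $\W$-weight, so all the relevant power series truncate in each finite-dimensional quotient. The only place requiring mild care is making the word ``polynomial'' precise — one must phrase it as: the image of the expression in each $\O(\pi_1^\Q(\rGraph,\source(e)))^\dual/\W_{-m}$ is a $\gr^\W$-valued polynomial of degree $\leq m-3$ in $s$ — and then note that the inverse limit of such is what is meant by a ``piecewise-polynomial'' (here genuinely polynomial) path. The evaluation of the integral is then purely formal, as the proof in the excerpt already asserts.
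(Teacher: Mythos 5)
Your proposal is correct and follows essentially the same route as the paper's own (very terse) proof: polynomiality comes from the local nilpotence of multiplication by $e^*$ (equivalently, from the $\W$-filtration truncating in finite-dimensional quotients), and the integral is then evaluated formally via $\exp(se^*)\log(\delta_e)\exp(-se^*)=\exp(s\,\ad_{e^*})(\log(\delta_e))$ and term-by-term integration. One small remark: your computed value $\sum_{r\geq0}\frac{\length(e)^{r+1}}{(r+1)!}\ad_{e^*}^r(\log(\delta_e))$ agrees with the operator expression $\frac{\exp(\length(e)\ad_{e^*})-1}{\ad_{e^*}}(\log(\delta_e))$, which is the form used later in the paper, whereas the displayed shorthand $\sum_{r\geq0}\frac{(\length(e)e^*)^r}{(r+1)!}(\log(\delta_e))$ in the statement is off by one factor of $\length(e)$ — a typo in the source, not a gap in your argument.
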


\begin{corollary}
If $e$ is an edge as above, then\[\frac{\exp(\length(e)e^*)-1}{\ad_{e^*}}\log(\delta_e)+\frac{\exp(\length(e^{-1})(e^{-1})^*)-1}{\ad_{(e^{-1})^*}}\log(\delta_{e^{-1}})=0.\](Here we identify the two summands as elements of a common vector space using the trivialisation from Proposition~\ref{prop:trivialisation}.)
\begin{proof}
Under the trivialisation from Proposition~\ref{prop:trivialisation}, we can use Lemma~\ref{lem:C-K_on_edges} to rewrite the edge relation from Definition~\ref{def:pi1} as
\[
\log(\delta_{e^{-1}})=-\exp(\length(e)e^*)\log(\delta_e)\exp(-\length(e)e^*).
\]
Plugging this into the integral relation from Proposition~\ref{prop:loop_integral_along_edge} and using that $(e^{-1})^*=-e^*$ gives the desired equality.
\end{proof}
\end{corollary}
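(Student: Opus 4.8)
The plan is to deduce the identity formally from the edge relation of Definition~\ref{def:pi1}, transported through the trivialisation of Proposition~\ref{prop:trivialisation}, together with the integral formula of Proposition~\ref{prop:loop_integral_along_edge}.

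First I would unwind the edge relation. In the complete Hopf groupoid $\O(\pi_1^\Q(\rGraph))^\dual$ (Proposition~\ref{prop:explicit_hopf_gpd}) it reads $\log(\delta_{e^{-1}})\cdot e + e\cdot\log(\delta_e) = 0$, and since $e$ has $\M$-filtration degree $0$ while $\log(\delta_e)$, $\log(\delta_{e^{-1}})$ have $\M$-filtration degree $-2$, passing to $\gr^\M_{-2}$ gives the corresponding relation between graded images. Applying the trivialisation $\psi$ of Proposition~\ref{prop:trivialisation} moves the loop $\log(\delta_{e^{-1}})$ at $\target(e)$ to a loop at $\source(e)$ (abusively still denoted $\log(\delta_{e^{-1}})$, as in the statement) and replaces the transported edge $e$ by its image under the Cheng--Katz duality isomorphism; since canonical paths are sent to $1$ (Definition~\ref{def:can_paths}, Lemma~\ref{lem:basic_can_path}) and $\Hiso(e) = \exp(\length(e)e^*)$ by Lemma~\ref{lem:C-K_on_edges}, the relation becomes, inside the constant Hopf algebra $\gr^\M_{-2}\O(\pi_1^\Q(\rGraph))^\dual$,
\[
\log(\delta_{e^{-1}})\cdot\exp(\length(e)e^*) + \exp(\length(e)e^*)\cdot\log(\delta_e) = 0 ,
\]
equivalently $\log(\delta_{e^{-1}}) = -\exp(\length(e)\ad_{e^*})(\log(\delta_e))$. (Here one uses that $\ad_{e^*}$ strictly lowers $\W$-degree on $\gr^\M_{-2}$ and is therefore $\W$-pro-nilpotent, so every $\exp$ and every operator of the shape $\tfrac{\exp(X)-1}{X}$ appearing below is well-defined.)

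Second I would feed this into Proposition~\ref{prop:loop_integral_along_edge} applied both to $e$ and to $e^{-1}$. Writing $T = \ad_{e^*}$, $L = \length(e)$, $x = \log(\delta_e)$, and using $(e^{-1})^* = -e^*$, $\length(e^{-1}) = \length(e)$ from Remark~\ref{rmk:e*e_is_identity}, the two summands of the claimed identity are $\tfrac{e^{LT}-1}{T}(x)$ and $\tfrac{e^{-LT}-1}{-T}(\log(\delta_{e^{-1}})) = \tfrac{e^{-LT}-1}{-T}\bigl(-e^{LT}(x)\bigr)$. Since $\tfrac{e^{-LT}-1}{-T}$, $e^{LT}$ and $\tfrac{e^{LT}-1}{T}$ are all power series in the single operator $T$ and hence commute, the second summand collapses to $\tfrac{(e^{-LT}-1)e^{LT}}{T}(x) = \tfrac{1-e^{LT}}{T}(x)$, which is exactly the negative of the first, and the sum vanishes.

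The argument is essentially mechanical, Proposition~\ref{prop:loop_integral_along_edge} doing most of the work; the only step demanding genuine care is the bookkeeping in the first paragraph — verifying that, after pushing the Definition~\ref{def:pi1} edge relation through both $\psi$ and $\Hiso$, one lands on exactly $\log(\delta_{e^{-1}}) = -\exp(\length(e)\ad_{e^*})(\log(\delta_e))$, with the correct basepoint conjugation, composition order, and sign, rather than some conjugate or sign variant of it. Once that normalisation is fixed, the rest is a one-line power-series computation, so I do not expect a substantive obstacle.
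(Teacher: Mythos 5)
Your proposal is correct and follows essentially the same route as the paper: rewrite the edge relation via the trivialisation and Lemma~\ref{lem:C-K_on_edges} as $\log(\delta_{e^{-1}})=-\exp(\length(e)\ad_{e^*})(\log(\delta_e))$, then substitute into the two operator expressions (the right-hand sides of Proposition~\ref{prop:loop_integral_along_edge}) and cancel, using $(e^{-1})^*=-e^*$. Your extra care about basepoint conjugation and signs in the first paragraph is just an expansion of what the paper compresses into one line.
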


This corollary ensures that the following definition of $\Hiso^{-1}$ on $\H_1(\Graph)$ is well-defined.

\begin{definition}\label{def:inverse_iso_on_H_1}
We define the map
\begin{align*}
\Hiso^{-1}\colon\H_1(\Graph) &\rightarrow\W_{-1}\gr^\M_{-2}\O(\pi_1^\Q(\rGraph))^\dual \\
\Hiso^{-1}\colon\sum_{e\in\plEdge{\Graph}}\lambda_ee &\mapsto \sum_{e\in\plEdge{\Graph}}\lambda_e\frac{\exp(\length(e)\ad_{e^*})-1}{\ad_{e^*}}(\log(\delta_e)),
\end{align*}
where we view the summands as elements of a common vector space using the trivialisation from Proposition~\ref{prop:trivialisation}. It is clear that this map lands in the primitive elements of $\gr^\M_\bullet\O(\pi_1^\Q(\rGraph))^\dual$.
\end{definition}

\subsection{The surface relation}\index{relation!surface $\sigma$}

We have now constructed a $\W$-filtered and $\M$-graded morphism $\Hiso^{-1}\colon\oHAlg\rightarrow\gr^\M_\bullet\O(\pi_1^\Q(\rGraph))^\dual$ of constant complete Hopf groupoids, by specifying where it sends the generators of the complete tensor algebra $\oHAlg$. To show that this map factors through $\HAlg$, we will now show that $\Hiso^{-1}(\sigma)=0$, where $\sigma\in\gr^\M_{-2}\oHAlg$ is the surface relation from Construction~\ref{cons:HAlg}. For this, we use the following preliminary calculation.

\begin{proposition}\label{prop:sigma_0}
Let $(\xi_i)$ be a basis of $\H^1(\Graph)$ with dual basis $(\xi_i')$ of $\H_1(\Graph)$. Then\[\Hiso^{-1}\left(\sum_i[\xi_i',\xi_i]\right)=\sum_{e\in\Edge{\Graph}}\log(\delta_e),\]where we view the summands as lying in a common vector space via the trivialisation from Proposition~\ref{prop:trivialisation} as usual.
\begin{proof}
The left-hand side is the image of $1\in\End(\H_1(\Graph))\iso\H^1(\Graph)\otimes\H_1(\Graph)$ under the map $\xi\otimes\xi'\mapsto-\ad_{\Hiso^{-1}\xi}(\Hiso^{-1}\xi')$. Hence by Remark~\ref{rmk:e*e_is_identity}, we know that it is equal to\[-\sum_{e\in\plEdge{\Graph}}\ad_{e^*}\left(\frac{\exp(\length(e)\ad_{e^*})-1}{\ad_{e^*}}\log(\delta_e)\right)=\sum_{e\in\plEdge{\Graph}}(1-\exp(\length(e)\ad_{e^*}))\log(\delta_e).\]But the edge relations from Definition~\ref{def:pi1} ensure that $\exp(\length(e)\ad_{e^*})\log(\delta_e)=\exp(\length(e)e^*)\log(\delta_e)\exp(-\length(e)e^*)=-\log(\delta_{e^{-1}})$ for every edge $e$, and hence this quantity is equal to $\sum_{e\in\Edge{\Graph}}\log(\delta_e)$, as desired.
\end{proof}
\end{proposition}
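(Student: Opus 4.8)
The plan is to identify the left-hand side, after applying the Hopf-algebra morphism $\Hiso^{-1}$, with the image of the identity endomorphism of $\H_1(\Graph)$ under a basis-independent linear map, re-express that identity in the ``edge basis'' furnished by Remark~\ref{rmk:e*e_is_identity}, and then telescope the resulting sum using the edge relations.

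First, since $\Hiso^{-1}\colon\oHAlg\to\gr^\M_\bullet\O(\pi_1^\Q(\rGraph))^\dual$ is a morphism of complete Hopf algebras (hence of associative algebras), it carries the commutator $\sum_i[\xi_i',\xi_i]$ to $\sum_i[\Hiso^{-1}\xi_i',\Hiso^{-1}\xi_i]=-\sum_i\ad_{\Hiso^{-1}\xi_i}(\Hiso^{-1}\xi_i')$ in the trivialised groupoid. Because $(\xi_i)$ and $(\xi_i')$ are dual bases, the tensor $\sum_i\xi_i\otimes\xi_i'\in\H^1(\Graph)\otimes\H_1(\Graph)$ is precisely $\mathrm{id}_{\H_1(\Graph)}$ under $\H^1(\Graph)\otimes\H_1(\Graph)\iso\End(\H_1(\Graph))$, so the displayed sum is the image of $\mathrm{id}_{\H_1(\Graph)}$ under the linear map $\xi\otimes\xi'\mapsto-\ad_{\Hiso^{-1}\xi}(\Hiso^{-1}\xi')$, which no longer references the chosen basis.

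Second, I would feed into this map the alternative presentation $\mathrm{id}_{\H_1(\Graph)}=\sum_{e\in\plEdge{\Graph}}e^*\otimes e$ from Remark~\ref{rmk:e*e_is_identity}. Plugging in $\Hiso^{-1}e^*=e^*$ from \S\ref{sss:action_on_H^1} and the formula $\Hiso^{-1}e=\frac{\exp(\length(e)\ad_{e^*})-1}{\ad_{e^*}}\log(\delta_e)$ from Definition~\ref{def:inverse_iso_on_H_1}, the outer factor $\ad_{e^*}$ cancels the denominator, leaving $\sum_{e\in\plEdge{\Graph}}\bigl(1-\exp(\length(e)\ad_{e^*})\bigr)\log(\delta_e)$; as in Proposition~\ref{prop:loop_integral_along_edge}, these exponential series make sense because $(e^*)^r$ vanishes in every finite-dimensional quotient.

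Finally, I would invoke the edge relation in the form recorded in the corollary to Proposition~\ref{prop:loop_integral_along_edge}, namely $\exp(\length(e)\ad_{e^*})\log(\delta_e)=-\log(\delta_{e^{-1}})$ under the trivialisation of Proposition~\ref{prop:trivialisation}; each summand then becomes $\log(\delta_e)+\log(\delta_{e^{-1}})$, and summing over unoriented edges reassembles exactly $\sum_{e\in\Edge{\Graph}}\log(\delta_e)$. There is no real conceptual obstacle here — the argument is a short formal computation — and the only point needing care is bookkeeping: keeping the sign in $[\xi_i',\xi_i]=-\ad_{\xi_i}(\xi_i')$ consistent, checking that it is $\sum_i\xi_i\otimes\xi_i'$ (and not $\sum_i\xi_i'\otimes\xi_i$) that represents the identity, and remembering that the various $\log(\delta_e)$ a priori live in different hom-spaces of the groupoid and are only being added after transport along the canonical-path trivialisation.
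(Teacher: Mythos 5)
Your proposal is correct and follows essentially the same route as the paper's own proof: identify $\sum_i[\xi_i',\xi_i]$ with the image of $\mathrm{id}_{\H_1(\Graph)}$ under the basis-free map $\xi\otimes\xi'\mapsto-\ad_{\Hiso^{-1}\xi}(\Hiso^{-1}\xi')$, substitute the edge presentation $\sum_e e^*\otimes e$ of the identity from Remark~\ref{rmk:e*e_is_identity} together with Definition~\ref{def:inverse_iso_on_H_1}, and conclude via the rewritten edge relation $\exp(\length(e)\ad_{e^*})\log(\delta_e)=-\log(\delta_{e^{-1}})$. The sign and bookkeeping points you flag are exactly the ones the paper's computation handles, so nothing is missing.
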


\begin{corollary}
Let $\rGraph$ be a reduction graph. Then $\Hiso^{-1}\colon\oHAlg\rightarrow\gr^\M_\bullet\O(\pi_1^\Q(\rGraph))^\dual$ sends the surface relation $\sigma$ from Construction~\ref{cons:HAlg} to $0$; this map factors uniquely through a map\[\Hiso^{-1}\colon\HAlg\rightarrow\gr^\M_\bullet\O(\pi_1^\Q(\rGraph))^\dual.\]
\begin{proof}
Writing $\sigma=\sigma_0+\sigma_1+\sigma_2$ as in Construction~\ref{cons:HAlg}, we have that $\Hiso^{-1}(\sigma_0)$ is given by Proposition~\ref{prop:sigma_0}, and hence
\begin{align*}
\Hiso^{-1}(\sigma) &= \sum_{e\in\Edge{\Graph}}\log(\delta_e)+\sum_{v\in\Vert{\Graph}}\sum_{i=1}^{g(v)}[\log(\beta_{v,i}'),\log(\beta_{v,i})]+\sum_{e\in\HEdge{\Graph}}\log(\delta_e) \\
 &= \sum_{v\in\Vert{\Graph}}\left(\sum_{i=1}^{g(v)}[\log(\beta_{v,i}'),\log(\beta_{v,i})]+\sum_{\source(e)=v}\log(\delta_e)\right).
\end{align*}
But each of the summands vanishes in $\gr^\M_{-2}\O(\pi_1^\Q(\rGraph))^\dual$ by the vertex relations from Definition~\ref{def:pi1_reduction_graph}.
\end{proof}
\end{corollary}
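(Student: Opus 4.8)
The plan is to evaluate $\Hiso^{-1}(\sigma)$ directly, using the term-by-term description of the algebra homomorphism $\Hiso^{-1}$ on the free generators of $\oHAlg$ assembled in the preceding subsections, together with Proposition~\ref{prop:sigma_0}. First I would split $\sigma=\sigma_0+\sigma_1+\sigma_2$ as in Construction~\ref{cons:HAlg}. Proposition~\ref{prop:sigma_0} already gives $\Hiso^{-1}(\sigma_0)=\sum_{e\in\Edge{\Graph}}\log(\delta_e)$, the sum running over \emph{oriented} edges (with the summands transported into a common vector space via the trivialisation of Proposition~\ref{prop:trivialisation}). For the remaining two pieces: since $\Hiso^{-1}$ is an algebra homomorphism and sends the generators $\alog(\beta_{v,i})$, $\alog(\beta_{v,i}')$ and $\alog(\delta_e)$ (for $e$ a half-edge) to $\log(\beta_{v,i})$, $\log(\beta_{v,i}')$ and $\log(\delta_e)$ respectively (\S\ref{sss:action_on_punctures}), one reads off immediately that $\Hiso^{-1}(\sigma_1)=\sum_{v\in\Vert{\Graph}}\sum_{i=1}^{g(v)}[\log(\beta_{v,i}'),\log(\beta_{v,i})]$ and $\Hiso^{-1}(\sigma_2)=\sum_{e\in\HEdge{\Graph}}\log(\delta_e)$.

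Next I would add these three contributions and regroup the terms by the source vertex of each oriented edge or half-edge, obtaining
\[
\Hiso^{-1}(\sigma)=\sum_{v\in\Vert{\Graph}}\left(\sum_{i=1}^{g(v)}[\log(\beta_{v,i}'),\log(\beta_{v,i})]+\sum_{\source(e)=v}\log(\delta_e)\right),
\]
the inner sum ranging over all edges and half-edges of source $v$. The one thing needing care is that this regrouping is legitimate: the primitive elements $\log(\delta_e)$ live a priori in the various coalgebras $\gr^\M_{-2}\O(\pi_1^\Q(\rGraph,\source(e)))^\dual$, and one must check that they have been carried into a single common space consistently. This holds because the trivialising isomorphisms $\psi_{u,v,u',v'}$ of Proposition~\ref{prop:trivialisation} are $\M$-graded, multiplicative and compatible with path-composition, so transporting the whole bracketed expression attached to a fixed vertex $v$ to the common basepoint commutes with the brackets and sums. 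But for each $v$ the bracketed expression is, by Remark~\ref{rmk:easier_vertex_relation}, precisely the left-hand side of the relation $\sum_{i=1}^{g(v)}[\log(\beta_{v,i}'),\log(\beta_{v,i})]+\sum_{\source(e)=v}\log(\delta_e)=0$ that holds in $\gr^\M_{-2}\O(\pi_1^\Q(\rGraph,v))^\dual$; hence it vanishes, and so $\Hiso^{-1}(\sigma)=0$.

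Finally, for the factorisation statement: $\sigma$ is primitive, so the two-sided ideal $(\sigma)\subseteq\oHAlg$ is automatically a Hopf ideal and $\HAlg=\oHAlg/(\sigma)$ by Construction~\ref{cons:HAlg}; any algebra homomorphism out of $\oHAlg$ killing $\sigma$ kills $(\sigma)$, hence factors through $\HAlg$, and uniquely so since $\oHAlg\twoheadrightarrow\HAlg$ is surjective. As the quotient map is a morphism of complete Hopf algebras which is $\W$-filtered and $\M$-graded, so is the induced map $\Hiso^{-1}\colon\HAlg\to\gr^\M_\bullet\O(\pi_1^\Q(\rGraph))^\dual$. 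I do not expect a serious obstacle here: the substance of the argument is really contained in Proposition~\ref{prop:sigma_0} (and the edge relations it invokes), which is what makes the oriented-edge sum $\Hiso^{-1}(\sigma_0)$ match exactly the edge-contributions $\sum_{\source(e)=v}\log(\delta_e)$ needed to complete each vertex relation; the only delicate step at this stage is the trivialisation bookkeeping that lets the three separately computed sums be added and regrouped inside one vector space before the vertex relations are applied.
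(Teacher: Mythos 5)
Your argument is correct and follows essentially the same route as the paper: split $\sigma=\sigma_0+\sigma_1+\sigma_2$, compute $\Hiso^{-1}(\sigma_0)$ via Proposition~\ref{prop:sigma_0}, regroup the total by source vertex, and invoke the vertex relations (Remark~\ref{rmk:easier_vertex_relation}), with the factorisation through $\HAlg=\oHAlg/(\sigma)$ being formal. The extra care you take with the trivialisation bookkeeping of Proposition~\ref{prop:trivialisation} is exactly what the paper leaves implicit, so nothing is missing.
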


\subsection{Proof of isomorphy}

The final and most technical step of our proof of Theorem~\ref{thm:description_of_M-graded} is to show that the map $\Hiso^{-1}\colon\HAlg\rightarrow\gr^\M_\bullet\O(\pi_1^\Q(\rGraph))^\dual$ constructed above is a $\W$-filtered and $\M$-graded isomorphism. To accomplish this, we will use the description in Definition~\ref{def:pi1_reduction_graph} to exhibit a non-canonical one-relator presentation of $\O(\pi_1^\Q(\rGraph,b))^\dual$ for some basepoint $b$, which will take the form of a $\W$- and $\M$-filtered isomorphism $\check\Hiso^{-1}\colon\HAlg\isoarrow\O(\pi_1^\Q(\rGraph,b))^\dual$. This isomorphism is not quite a lift of the $\M$-graded map $\Hiso^{-1}$; in fact we will see in Proposition~\ref{prop:lift_of_alpha} that $\gr^\W_\bullet\Hiso^{-1}=\gr^\M_\bullet\gr^\W_\bullet\check\Hiso^{-1}$, which is enough to show that $\Hiso^{-1}$ is an isomorphism.

To define the map $\check\Hiso^{-1}$, we fix some notation. Fix a basepoint $b\in\Vert{\Graph}$, and choose distinct\footnote{When we refer to edges being distinct, we mean that no two edges in the list are equal to or inverses of one another.} edges $e_1,\dots,e_\gGamma$ such that their complement $\Graph\setminus\{e_1^{\pm1},\dots,e_\gGamma ^{\pm1}\}$ in the underlying graph of $\rGraph$ is a tree. For any element $\gamma\in\O(\pi_1^\Q(\rGraph;u,v))^\dual$, we will write $\check\gamma=\gamma_{v,b}^\tree\gamma\gamma_{b,u}^\tree\in\O(\pi_1^\Q(\rGraph,b))^\dual$, where $\gamma_{b,u}^\tree$ (resp.\ $\gamma_{v,b}^\tree$) denotes the unique path from $b$ to $u$ (resp.\ $v$ to $b$) in the underlying graph of $\rGraph$.

We write $\log(\gamma_1),\dots,\log(\gamma_\gGamma)$ for the basis of $\H^1(\Graph)$ corresponding to the loops $\check e_1,\dots,\check e_\gGamma \in\H_1(\Graph)$ under the isomorphism $\H^1(\Graph)\isoarrow\H_1(\Graph)$ induced from the cycle pairing on $\H_1(\Graph)$. We also write $\log(\delta_1),\dots,\log(\delta_\gGamma )$ for the basis of $\H_1(\Graph)$ corresponding to the elements $e_1^*,\dots,e_\gGamma ^*\in\H^1(\Graph)$ under this same isomorphism. The elements $\log(\gamma_i)$, $\alog(\beta_{v,j})$, $\alog(\beta_{v,j}')$, $\log(\delta_i)$ and $\alog(\delta_e)$ (for $1\leq i\leq \gGamma$, $v\in\Vert{\Graph}$, $1\leq j\leq g(v)$ and $e\in\HEdge{\Graph}$) are all primitive and bihomogenous in $\oHAlg$, and generate it freely as a complete algebra.

Finally, we fix integers $a_{ij}$ for $1\leq i,j\leq \gGamma$ to be determined later, and define the morphism\[\check\Hiso^{-1}\colon\oHAlg\rightarrow\O(\pi_1^\Q(\rGraph,b))^\dual\]of complete Hopf algebras by
\begin{align*}
\check\Hiso^{-1}(\gamma_i) &= \check e_i\cdot\prod_{j=1}^\gGamma\check\delta_{e_j}^{a_{ij}}, \\
\check\Hiso^{-1}(\Hiso\beta_{v,i}) &= \check\beta_{v,i}, \\
\check\Hiso^{-1}(\Hiso\beta_{v,i}') &= \check\beta_{v,i}', \\
\check\Hiso^{-1}(\delta_i) &= \check\delta_{e_i}, \\
\check\Hiso^{-1}(\Hiso\delta_e) &= \check\delta_e,
\end{align*}
where $\Hiso\beta_{v,i}$ denotes the exponential of the formal symbol $\alog(\beta_{v,i})$ (and similarly for $\Hiso\beta_{v,i}'$ and $\Hiso\delta_e$).

\begin{proposition}\label{prop:one-relator_description}
The morphism $\check\Hiso^{-1}\colon\oHAlg\rightarrow\O(\pi_1^\Q(\rGraph,b))^\dual$ is surjective, simultaneously strict\footnote{By this, we mean that every element of $\W_{-i}\M_{-j}\O(\pi_1^\Q(\rGraph,b))^\dual$ in the image of $\check\Hiso^{-1}$ is the image of an element of $\W_{-i}\M_{-j}\oHAlg$. This is a stronger property than being strict for the $\W$- and $\M$-filtrations simultaneously.} for the $\W$- and $\M$-filtrations, and its kernel is the ideal generated by $\log(\rho)$, where $\rho\in\oHAlg$ is a grouplike element given up to reordering of terms by the product\[\rho=\prod_{i=1}^\gGamma \delta_i\cdot\prod_{i=1}^\gGamma \left(\gamma_i\left(\prod_{j=1}^\gGamma \delta_j^{a_{ij}}\right)^{-1}\!\!\!\!\!\!\delta_i^{-1}\left(\prod_{j=1}^\gGamma \delta_j^{a_{ij}}\right)\!\gamma_i^{-1}\!\right)\cdot\!\!\prod_{v\in\Vert{\Graph}}\prod_{i=1}^{g(v)}[\Hiso\beta_{v,i}',\Hiso\beta_{v,i}]\cdot\!\!\!\prod_{e\in\HEdge{\Graph}}\Hiso\delta_e.\]
In particular, $\check\Hiso^{-1}$ induces a $\W$- and $\M$-filtered isomorphism\[\check\Hiso^{-1}\colon\oHAlg/(\log(\rho))\isoarrow\O(\pi_1^\Q(\rGraph,b))^\dual\]of complete Hopf algebras.
\begin{proof}[Proof (sketch)]
Definition~\ref{def:pi1_reduction_graph} tells us that $\pi_1^\Q(\rGraph,b)$ is the $\Q$-pro-unipotent group generated by $\check e_i$, $\check\beta_{v,j}$, $\check\beta_{v,j}$ and $\check\delta_e$ for $1\leq i\leq \gGamma$, $v\in\Vert{\Graph}$, $1\leq j\leq g(v)$, and $e\in\Edge{\Graph}\cup\HEdge{\Graph}$, subject to the relations:
\begin{itemize}
	\item (edge relations)
	\begin{itemize}
		\item for every edge $e\in\Edge{\Graph}\setminus\{e_1^{\pm1},\dots,e_\gGamma ^{\pm1}\}$ we have $\check\delta_{e^{-1}}=\check\delta_e^{-1}$;
		\item for every $1\leq i\leq \gGamma$ we have $\check\delta_{e_i^{-1}}=\check e_i\check\delta_{e_i}^{-1}\check e_i^{-1}$; and
	\end{itemize}
	\item (vertex relations) for every vertex $v\in\Vert{\Graph}$ we have\[\prod_{i=1}^{g(v)}[\check\beta_{v,i}',\check\beta_{v,i}]\cdot\prod_{\source(e)=v}\check\delta_e=1.\]
\end{itemize}

We may now formally manipulate this presentation to eliminate the generators $\check\delta_e$ for $e\in\Edge{\Graph}\setminus\{e_1,\dots,e_\gGamma \}$. The end result of this elimination process is that $\pi_1^\Q(\rGraph,b)$ is generated by $\check e_i$, $\check\beta_{v,j}$, $\check\beta_{v,j}$, $\check\delta_{e_i}$ and $\check\delta_e$ for $1\leq i\leq \gGamma$, $v\in\Vert{\Graph}$, $1\leq j\leq g(v)$, and $e\in\HEdge{\Graph}$, subject to the single relation $\check\rho=1$, where\[\check\rho=\prod_{i=1}^\gGamma \check\delta_{e_i}\cdot\prod_{i=1}^\gGamma \left(\check e_i\check\delta_{e_i}^{-1}\check e_i^{-1}\right)\cdot\prod_{v\in\Vert{\Graph}}\prod_{i=1}^{g(v)}[\check\beta_{v,i}',\check\beta_{v,i}]\cdot\prod_{e\in\HEdge{\Graph}}\check\delta_e,\]up to reordering of terms. Translated into the language of complete Hopf algebras and adjusting the generators according to the integers $a_{ij}$, this says that $\check\Hiso^{-1}$ is surjective, with kernel the ideal generated by $\log(\rho)$.

It remains to check simultaneous strictness of $\check\Hiso^{-1}$. Recall from Proposition~\ref{prop:explicit_hopf_gpd} that the filtrations on $\O(\pi_1^\Q(\rGraph,b))^\dual$ are the ones induced from declaring each $\log(\check e_i)$ to lie in $\W_{-1}\M_0$, each $\log(\check\beta_{v,i})$ and $\log(\check\beta_{v,i}')$ to lie in $\W_{-1}\M_{-1}$, and each $\log(\check\delta_e)$ to lie in $\W_{-1}\M_{-2}$ or $\W_{-2}\M_{-2}$ according as $e\in\Edge{\Graph}$ or $e\in\HEdge{\Graph}$. Since $\check\Hiso^{-1}$ is a surjective algebra homomorphism, we need only check that each generator of $\O(\pi_1^\Q(\rGraph,b))^\dual$ lying in $\W_{-i}\M_{-j}$ has a lift to $\W_{-i}\M_{-j}\oHAlg$; this is immediate for all generators save the $\log(\check\delta_e)$ for $e\in\Edge{\Graph}\setminus\{e_1,\dots,e_\gGamma \}$. But lifts of these generators can be found by using the edge and vertex relations to recursively express each such $\check\delta_e$ as a product of terms already in the image of $1+\W_{-1}\M_{-2}\oHAlg$, completing the proof.
\end{proof}
\end{proposition}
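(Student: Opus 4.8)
\emph{Proof proposal.} The plan is to exhibit the asserted one‑relator description by performing a Tietze‑type transformation on the presentation of $\pi_1^\Q(\rGraph,b)$ from Definition~\ref{def:pi1_reduction_graph}, transported into the complete‑Hopf‑algebra language of Proposition~\ref{prop:explicit_hopf_gpd}. By that proposition, $\O(\pi_1^\Q(\rGraph,b))^\dual$ is freely generated as a complete Hopf algebra by the grouplike elements $\check e$ for $e\in\Edge{\Graph}$ together with the primitive logarithms $\log(\check\beta_{v,i}),\log(\check\beta_{v,i}')$ and $\log(\check\delta_e)$, modulo the edge relations $\check\delta_{e^{-1}}=\check e\check\delta_e^{-1}\check e^{-1}$ and the vertex relations $\prod_{i=1}^{g(v)}[\check\beta_{v,i}',\check\beta_{v,i}]\cdot\prod_{\source(e)=v}\check\delta_e=1$. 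The point is that, using the spanning tree $T=\Graph\setminus\{e_1^{\pm1},\dots,e_\gGamma^{\pm1}\}$ rooted at $b$, one can eliminate all the generators $\check\delta_e$ for $e$ not among $e_1,\dots,e_\gGamma$.

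The elimination itself is the crux of the argument. For the back‑edges $e_i$ the edge relation simply identifies $\check\delta_{e_i^{-1}}$ with $\check e_i\check\delta_{e_i}^{-1}\check e_i^{-1}$, so only $\check\delta_{e_i}$ survives; for the tree edges I would process the vertices $v\neq b$ of $T$ from the leaves inward, using the vertex relation at $v$ to solve for the generator $\check\delta_{e_v}$ attached to the unique tree edge $e_v$ pointing towards $b$ in terms of the remaining surviving generators. Collapsing all vertex relations into the one at $b$ and reordering then leaves a presentation with generators $\check e_i$, $\check\beta_{v,i},\check\beta_{v,i}'$, $\check\delta_{e_i}$ ($1\le i\le\gGamma$) and $\check\delta_e$ ($e\in\HEdge{\Graph}$) and the single relation $\check\rho=1$, where up to reordering
\[
\check\rho=\prod_{i=1}^\gGamma\check\delta_{e_i}\cdot\prod_{i=1}^\gGamma\bigl(\check e_i\check\delta_{e_i}^{-1}\check e_i^{-1}\bigr)\cdot\prod_{v\in\Vert{\Graph}}\prod_{i=1}^{g(v)}[\check\beta_{v,i}',\check\beta_{v,i}]\cdot\prod_{e\in\HEdge{\Graph}}\check\delta_e .
\]
The delicate point — and the source of the ``simultaneous strictness'' in the statement — is to organise this elimination so that each substituted $\check\delta_e$ is expressed as a product of elements of $1+\W_{-1}\M_{-2}\O(\pi_1^\Q(\rGraph,b))^\dual$, i.e.\ the substitutions never leave the single bidegree $(-1,-2)$.

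Next I would compare this description with $\oHAlg/(\log\rho)$ through $\check\Hiso^{-1}$. Since $\oHAlg$ is free on the listed primitive generators and $\check\Hiso^{-1}$ sends $\gamma_i\mapsto\check e_i\prod_j\check\delta_{e_j}^{a_{ij}}$, $\Hiso\beta_{v,i}\mapsto\check\beta_{v,i}$, $\delta_i\mapsto\check\delta_{e_i}$, and so on, its image contains all the surviving generators, hence it is surjective. A short computation shows the conjugating powers $\prod_j\check\delta_{e_j}^{a_{ij}}$ cancel, giving $\check\Hiso^{-1}\bigl(\gamma_i(\prod_j\delta_j^{a_{ij}})^{-1}\delta_i^{-1}(\prod_j\delta_j^{a_{ij}})\gamma_i^{-1}\bigr)=\check\delta_{e_i^{-1}}$ and hence $\check\Hiso^{-1}(\rho)=\check\rho$; as $\rho$ is grouplike, $\log\rho$ is primitive and the two‑sided ideal it generates is a Hopf ideal, so, $\check\rho=1$ being the sole relation, the kernel of $\check\Hiso^{-1}$ is exactly that ideal. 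This produces the isomorphism $\check\Hiso^{-1}\colon\oHAlg/(\log\rho)\isoarrow\O(\pi_1^\Q(\rGraph,b))^\dual$ of complete Hopf algebras.

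Finally, for the filtered statement: by Proposition~\ref{prop:explicit_hopf_gpd} the $\W$‑ and $\M$‑filtrations on the target are induced by declaring $\log(\check e_i)\in\W_{-1}\M_0$, $\log(\check\beta_{v,i}),\log(\check\beta_{v,i}')\in\W_{-1}\M_{-1}$, and $\log(\check\delta_e)\in\W_{-1}\M_{-2}$ or $\W_{-2}\M_{-2}$ according as $e$ is an edge or a half‑edge. Since $\check\Hiso^{-1}$ is a surjective homomorphism of complete algebras, simultaneous strictness reduces to showing that every generator of the target lying in $\W_{-i}\M_{-j}$ admits a lift to $\W_{-i}\M_{-j}\oHAlg$; this is immediate for $\check e_i,\check\beta_{v,i},\check\beta_{v,i}',\check\delta_{e_i}$ and the $\check\delta_e$ with $e$ a half‑edge (lift them by $\gamma_i,\Hiso\beta_{v,i},\Hiso\beta_{v,i}',\delta_i,\Hiso\delta_e$, which sit in bidegrees $(-1,0),(-1,-1),(-1,-1),(-1,-2),(-2,-2)$ in $\oHAlg$), while for the eliminated $\check\delta_e$ the recursion from the second step supplies a lift in $\W_{-1}\M_{-2}\oHAlg$. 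The main obstacle throughout is thus the bookkeeping of the Tietze elimination in the completed setting — keeping the relator in the stated form up to reordering, and, more importantly, confining every substitution to bidegree $(-1,-2)$, which is precisely what distinguishes ``simultaneously strict'' from being strict for $\W$ and for $\M$ separately.
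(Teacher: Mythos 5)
Your proposal is correct and follows essentially the same route as the paper: transport the presentation of Definition~\ref{def:pi1_reduction_graph} to the basepoint via the spanning tree, Tietze-eliminate the tree-edge generators $\check\delta_e$ to get the single relator $\check\rho$, translate into complete Hopf algebras (where the $a_{ij}$-twisted generators make $\check\Hiso^{-1}(\rho)=\check\rho$), and verify simultaneous strictness by lifting each generator, handling the eliminated $\check\delta_e$ by the same recursion through $1+\W_{-1}\M_{-2}$. The only differences are expository (you spell out the leaf-inward elimination order and the cancellation of the conjugating powers, which the paper leaves implicit).
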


The connection between this presentation of $\O(\pi_1^\Q(\rGraph,b))^\dual$ and the desired canonical presentation of $\gr^\M_\bullet\O(\pi_1^\Q(\rGraph))^\dual$ is provided by the following proposition.

\begin{proposition}\label{prop:lift_of_alpha}
We have $\gr^\M_\bullet\gr^\W_\bullet\check\Hiso^{-1}=\gr^\W_\bullet\Hiso^{-1}$.
\begin{proof}
We may check this on the standard generators of $\oHAlg$. For the generators $\log(\beta_{v,i})$ for $v\in\Vert{\Graph}$ and $1\leq i\leq g(v)$, we have\[\gr^\M_\bullet\gr^\W_\bullet\check\Hiso^{-1}(\log(\beta_{v,i}))\!=\!\gamma_{v,b}^\tree\log(\beta_{v,i})\gamma_{b,v}^\tree\!=\!\gamma_{v,b}^\can\log(\beta_{v,i})\gamma_{b,v}^\can\!=\!\gr^\W_\bullet\Hiso^{-1}(\log(\beta_{v,i}))\]in $\gr^\M_{-1}\gr^\W_{-1}\O(\pi_1^\Q(\rGraph,b))^\dual$, since $\gamma_{b,v}^\tree=\gamma_{b,v}^\can=1$ in $\gr^\M_0\gr^\W_0\O(\pi_1^\Q(\rGraph;b,v))^\dual=\Q$. The same argument applies to the generators $\log(\beta_{v,i}')$, and to the generators $\log(\delta_e)$ for $e\in\HEdge{\Graph}$. Equality on the generators $\log(\gamma_i)$ follows essentially by definition.

Finally, to check equality on the generators $\log(\delta_i)$, we note that we may write $\log(\delta_i)=\frac1{\length(e_i)}e_i+\sum_{v\in\Vert{\Graph}}\lambda_v\cdot\left(\sum_{\source(e)=v}\frac1{\length(e)}e\right)$ for some $\lambda_v\in\Q$, and hence we have\[\gr^\W_\bullet\Hiso^{-1}(\log(\delta_i))=\log(\delta_{e_i})+\sum_{v\in\Vert{\Graph}}\lambda_v\cdot\sum_{\source(e)=v}\log(\delta_e)=\log(\delta_{e_i})\]in $\gr^\M_{-2}\gr^\W_{-1}\O(\pi_1^\Q(\rGraph))^\dual$, since by the vertex relations (Definition~\ref{def:pi1_reduction_graph}) we have $\sum_{\source(e)=v}\log(\delta_e)=0$ in $\gr^\M_{-2}\gr^\W_{-1}\O(\pi_1^\Q(\rGraph))^\dual$ for every $v$. But the right-hand side is equal to $\gamma_{\source(e_i),b}^\tree\log(\delta_{e_i})\gamma_{b,\source(e_i)}^\tree=\gr^\M_\bullet\gr^\W_\bullet\check\Hiso^{-1}(\log(\delta_i))$ in $\gr^\M_{-2}\gr^\W_{-1}\O(\pi_1^\Q(\rGraph,b))^\dual$, and we are done.
\end{proof}
\end{proposition}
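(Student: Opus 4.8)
The plan is to prove the identity by testing it on generators. Both $\gr^\M_\bullet\gr^\W_\bullet\check\Hiso^{-1}$ and $\gr^\W_\bullet\Hiso^{-1}$ (the latter precomposed with the quotient $\oHAlg\twoheadrightarrow\HAlg$) are morphisms of complete Hopf algebras out of $\oHAlg$, so since $\oHAlg$ is free as a complete Hopf algebra it suffices to check agreement on the bihomogeneous generators of Construction~\ref{cons:HAlg}: the elements of $\H^1(\Graph)$ in bidegree $(-1,0)$; the symbols $\alog(\beta_{v,i})$, $\alog(\beta_{v,i}')$ in bidegree $(-1,-1)$; the elements of $\H_1(\Graph)$ in bidegree $(-1,-2)$; and the symbols $\alog(\delta_e)$, $e\in\HEdge{\Graph}$, in bidegree $(-2,-2)$. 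To make the comparison, one identifies $\gr^\M_\bullet\gr^\W_\bullet\oHAlg$ with $\oHAlg$ (bigraded) and, using the canonical-path trivialisation of Proposition~\ref{prop:trivialisation} together with the tree-path trivialisation $\gamma\mapsto\gamma_{v,b}^\tree\gamma\gamma_{b,u}^\tree$ of $\O(\pi_1^\Q(\rGraph,b))^\dual$, identifies $\gr^\M_\bullet\gr^\W_\bullet\O(\pi_1^\Q(\rGraph,b))^\dual$ with $\gr^\M_\bullet\O(\pi_1^\Q(\rGraph))^\dual$. The key elementary point is that $\gamma_{b,v}^\tree=\gamma_{b,v}^\can=1$ already in $\gr^\M_0\gr^\W_0\O(\pi_1^\Q(\rGraph;b,v))^\dual=\Q$, so the ratio of the canonical and tree paths is a loop congruent to $1$ modulo $\W_{-1}$; conjugation by such a loop is trivial on every $\gr^\W$-graded piece, and hence the two trivialisations induce the same trivialisation after passing to $\gr^\W$ (a fortiori after $\gr^\M_\bullet\gr^\W_\bullet$).

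For three of the four families the comparison is then immediate. By construction both maps send $\alog(\beta_{v,i})$, $\alog(\beta_{v,i}')$ and $\alog(\delta_e)$ ($e$ a half-edge) to the corresponding generator $\log(\beta_{v,i})$, $\log(\beta_{v,i}')$, $\log(\delta_e)$ of $\O(\pi_1^\Q(\rGraph))^\dual$, conjugated by tree paths in one case and by canonical paths in the other; by the previous paragraph these conjugates coincide on the relevant bigraded piece. For the generators coming from $\H^1(\Graph)$ one unwinds the definitions: $\Hiso^{-1}$ is defined on $\H^1(\Graph)$ through the Cheng--Katz duality isomorphism $\H^1(\Graph)\hookrightarrow\CTensor\H^1(\Graph)\isoarrow\gr^\M_0\O(\pi_1^\Q(\rGraph))^\dual$ of Theorem~\ref{thm:c-k_duality}, while the basis $\log(\gamma_i)$ was chosen to be the one corresponding under the cycle pairing to the fundamental cycles $\check e_i$; both constructions are set up via Cheng--Katz integration so that both sides record the class of the loop $\check e_i$, with Lemma~\ref{lem:C-K_on_edges} (which computes $\Hiso(e)=\exp(\length(e)e^*)$) as the computational input.

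The substantive case is the family of generators $\log(\delta_i)\in\H_1(\Graph)$. Here I would first express $\log(\delta_i)$, viewed as a homology class, in terms of the edge generators: unwinding the cycle-pairing isomorphism $\H^1(\Graph)\isoarrow\H_1(\Graph)$ sending $e_i^*$ to $\log(\delta_i)$, and using $\sum_{\source(e)=v}e^*=0$ from Remark~\ref{rmk:e*e_is_identity}, one writes $\log(\delta_i)=\tfrac1{\length(e_i)}e_i+\sum_{v}\lambda_v\bigl(\sum_{\source(e)=v}\tfrac1{\length(e)}e\bigr)$ for suitable $\lambda_v\in\Q$. Next, apply the formula for $\Hiso^{-1}$ on $\H_1(\Graph)$ from Definition~\ref{def:inverse_iso_on_H_1} and extract the $\gr^\W_{-1}$ leading term, which is $\sum_e\lambda_e\log(\delta_e)$, since all higher terms $\tfrac{(\length(e)e^*)^r}{(r+1)!}\log(\delta_e)$ with $r\geq1$ lie in $\W$-degree $\leq-2$. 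Finally, observe that each vertex-star contribution $\sum_{\source(e)=v}\log(\delta_e)$ vanishes in $\gr^\W_{-1}\gr^\M_{-2}\O(\pi_1^\Q(\rGraph))^\dual$ by the vertex relation of Remark~\ref{rmk:easier_vertex_relation}, because the genus commutator term $\log(\delta_v)=\sum_i[\log(\beta_{v,i}'),\log(\beta_{v,i})]$ lies in $\W$-degree $-2$; what survives is exactly $\log(\delta_{e_i})$, which is also $\gr^\M_\bullet\gr^\W_\bullet\check\Hiso^{-1}(\log\delta_i)$ because $\check\Hiso^{-1}(\delta_i)=\check\delta_{e_i}$ and, again, the tree-path conjugation is trivial on this piece.

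The main obstacle I anticipate is this last step: making the inverse cycle-pairing computation explicit enough to pin down the coefficients, and then carefully tracking $\W$- and $\M$-degrees so as to isolate precisely $\log(\delta_{e_i})$ in $\gr^\W_{-1}\gr^\M_{-2}$ — the intervention of the vertex relations (equivalently, of the surface relation $\sigma$ in the target) is genuinely needed here and is what distinguishes this case from the others. Everything else — that the canonical-path and tree-path trivialisations agree on the associated bigraded, and compatibility with coalgebra, composition and reversal structures — is routine given Proposition~\ref{prop:trivialisation} and Lemma~\ref{lem:basic_can_path}.
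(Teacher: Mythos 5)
Your proposal is correct and follows essentially the same route as the paper: checking the identity on the free generators of $\oHAlg$, using that $\gamma_{b,v}^\tree=\gamma_{b,v}^\can=1$ in $\gr^\M_0\gr^\W_0$ so the two trivialisations agree on bigraded pieces, and handling the $\H_1(\Graph)$-generators $\log(\delta_i)$ by the orthogonal decomposition $\log(\delta_i)=\tfrac1{\length(e_i)}e_i+\sum_v\lambda_v\bigl(\sum_{\source(e)=v}\tfrac1{\length(e)}e\bigr)$ together with the vertex relations killing the vertex-star sums in $\gr^\M_{-2}\gr^\W_{-1}$. The extra detail you give on the $\H^1(\Graph)$-generators (via Lemma~\ref{lem:C-K_on_edges}) is a harmless expansion of what the paper dismisses as ``by definition''.
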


In light of this proposition, in order to prove that $\Hiso^{-1}\colon\HAlg\rightarrow\gr^\M_\bullet\O(\pi_1^\Q(\rGraph))^\dual$ is a $\W$-filtered isomorphism, it will suffice to prove that $\gr^\M_\bullet\gr^\W_\bullet\check\Hiso^{-1}$ sets up an isomorphism $\HAlg=\oHAlg/(\sigma)\isoarrow\gr^\M_\bullet\gr^\W_\bullet\O(\pi_1^\Q(\rGraph))^\dual$. This is fundamentally a question of interchanging the operator $\gr^\W_\bullet$ with quotients by single relations, for which we follow \cite{labute}.

\begin{proposition}\label{prop:lift_of_alpha_is_iso}
For a suitable choice of integers $a_{ij}$ in Proposition~\ref{prop:one-relator_description}, the element $\log(\rho)\in\oHAlg$ is congruent to the surface relation $\sigma$ mod $\W_{-3}$. The induced morphism\[\gr^\W_\bullet\check\Hiso^{-1}\colon\HAlg=\oHAlg/(\sigma)\rightarrow\gr^\W_\bullet\O(\pi_1^\Q(\rGraph,b))^\dual\]is a $\W$-graded and $\M$-filtered isomorphism.
\begin{proof}
One verifies by direct calculation that
\begin{align*}
\log(\rho) &\equiv \sum_{i=1}^\gGamma [\log(\delta_i),\log(\gamma_i)] + \sum_{v\in\Vert{\Graph}}\sum_{i=1}^{g(v)}[\log(\beta_{v,i}'),\log(\beta_{v,i})] \\
 & \:\:\:+ \sum_{e\in\HEdge{\Graph}}\log(\delta_e) + \sum_{1\leq i,j\leq \gGamma}(b_{ij}-a_{ij})[\log(\delta_i),\log(\delta_j)]
\end{align*}
modulo $\W_{-3}$ for some integers $b_{ij}$ depending only on the ordering of the terms in the expression for $\rho$ in Proposition~\ref{prop:one-relator_description} (in particular, independent of the $a_{ij}$). Since the $\log(\gamma_i)$ and $\log(\delta_i)$ comprise dual bases of $\H^1(\Graph)$ and $\H_1(\Graph)$, setting $a_{ij}=b_{ij}$ ensures that $\log(\rho)=\sigma$ in $\gr^\W_{-2}\oHAlg$.

It thus follows from Proposition~\ref{prop:one-relator_description} that\[\gr^\W_\bullet\check\Hiso^{-1}\colon\oHAlg\rightarrow\gr^\W_\bullet\O(\pi_1^\Q(\rGraph,b))^\dual\]is surjective, $\M$-strict and contains the ideal generated by $\sigma$ in its kernel. It remains to show that the kernel is exactly this ideal, for which we will use the main result of \cite{labute}.

For this, let $\pi_1(\rGraph)$ denote the (discrete) fundamental groupoid of $\rGraph$ (Definition~\ref{def:pi1_reduction_graph}). The \emph{graded $\Q$-Lie algebra $\gr^\W_\bullet\Lie(\pi_1)$ of $\pi_1=\pi_1(\rGraph,b)$} (with respect to the $\W$-filtration) is, by definition, the pro-nilpotent Lie algebra\[\gr^\W_\bullet\Lie_\Q(\pi_1)=\prod_{n>0}\left(\Q\otimes\frac{\W_{-n}\pi_1}{\W_{-n-1}\pi_1}\right),\]with Lie bracket induced by the commutator bracket on $\pi_1$. The completed universal enveloping algebra of $\gr^\W_\bullet\Lie_\Q(\pi_1(\rGraph,b))$ is isomorphic to $\gr^\W_\bullet\O(\pi_1^\Q(\rGraph,b))^\dual$ as a $\W$-graded complete Hopf algebra (see Proposition~\ref{prop:graded_malcev_liegebra} and Example~\ref{ex:graded_CUEnv}).

If we let $F$ denote the free discrete group on formal symbols $\gamma_i$, $\beta_{v,j}$, $\beta_{v,j}'$, $\delta_i$ and $\delta_e$ (for $1\leq i\leq \gGamma$, $v\in\Vert{\Graph}$, $1\leq j\leq g(v)$ and $e\in\HEdge{\Graph}$), then $\oHAlg$ is similarly the completed universal enveloping algebra of the $\W$-graded $\Q$-Lie algebra $\gr^\W_\bullet\Lie_\Q(F)$. The map $\check\Hiso^{-1}\colon\oHAlg\rightarrow\O(\pi_1^\Q(\rGraph,b))^\dual$ is induced by a surjection $F\twoheadrightarrow\pi_1(\rGraph,b)$, and the same proof as in Proposition~\ref{prop:one-relator_description} establishes that the kernel of this map is the normal subgroup generated by the element $\rho\in F$. It follows from \cite[Theorem on p17]{labute} (see \cite[Remarks on p22]{labute}) that the induced map $\gr^\W_\bullet\Lie_\Q(F)\rightarrow\gr^\W_\bullet\Lie_\Q(\pi_1)$ is surjective, with kernel the Lie ideal generated by the image of $\rho$ in $\gr^\W_{-2}\Lie(F)=\W_{-2}F/\W_{-3}F$. Hence the map $\check\Hiso^{-1}\colon\oHAlg\rightarrow\O(\pi_1^\Q(\rGraph,b))^\dual$ is also surjective, with kernel the ideal generated by $\log(\rho)=\sigma$. This completes the proof.
\end{proof}
\end{proposition}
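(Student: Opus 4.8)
The plan is to prove the two assertions in turn: first the congruence $\log(\rho)\equiv\sigma\pmod{\W_{-3}}$ for a suitable choice of the integers $a_{ij}$, and then, granting it, the isomorphy of $\gr^\W_\bullet\check\Hiso^{-1}$. For the congruence I would expand $\log(\rho)$ by Baker--Campbell--Hausdorff, working modulo $\W_{-3}\oHAlg$, where only single commutators survive. In the product defining $\rho$ in Proposition~\ref{prop:one-relator_description}, the leading block $\prod_i\delta_i$ contributes $\sum_i\log(\delta_i)$ in $\W$-weight $-1$, while the $i$th conjugation block $\gamma_i(\prod_j\delta_j^{a_{ij}})^{-1}\delta_i^{-1}(\prod_j\delta_j^{a_{ij}})\gamma_i^{-1}$, being a conjugate of $\delta_i^{-1}$, contributes $-\log(\delta_i)$ in weight $-1$ together with $[\log(\delta_i),\log(\gamma_i)]-\sum_j a_{ij}[\log(\delta_i),\log(\delta_j)]$ in weight $-2$. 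Thus the weight $-1$ terms cancel, so $\rho\in\W_{-2}F$, and collecting the weight $-2$ contributions --- including the commutators among the $\log(\delta_i)$ produced by reordering the factors of $\prod_i\delta_i$, which give a term $\sum_{i,j}b_{ij}[\log(\delta_i),\log(\delta_j)]$ whose coefficients $b_{ij}$ depend only on the fixed ordering and not on the $a_{ij}$ --- one obtains
\[
\log(\rho)\equiv\sum_i[\log(\delta_i),\log(\gamma_i)]+\sigma_1+\sigma_2+\sum_{i,j}(b_{ij}-a_{ij})[\log(\delta_i),\log(\delta_j)]\pmod{\W_{-3}}.
\]
Since $(\log(\gamma_i))$ and $(\log(\delta_i))$ are dual bases of $\H^1(\Graph)$ and $\H_1(\Graph)$, the first sum is $\sigma_0$ (cf.\ Remark~\ref{rmk:e*e_is_identity}), so choosing $a_{ij}=b_{ij}$ gives $\log(\rho)\equiv\sigma\pmod{\W_{-3}}$.

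With this choice, Proposition~\ref{prop:one-relator_description} already yields that $\gr^\W_\bullet\check\Hiso^{-1}\colon\oHAlg\to\gr^\W_\bullet\O(\pi_1^\Q(\rGraph,b))^\dual$ is surjective and $\M$-strict, and that $\sigma$, being the image in $\gr^\W_{-2}$ of $\log(\rho)$, lies in its kernel; hence the ideal generated by $\sigma$ is contained in the kernel. The only substantive point left is that the kernel is \emph{exactly} this ideal, i.e.\ that the induced map $\HAlg=\oHAlg/(\sigma)\to\gr^\W_\bullet\O(\pi_1^\Q(\rGraph,b))^\dual$ is injective. Granting that, the map is a $\W$-graded isomorphism, and it is automatically $\M$-filtered: a surjection that is $\M$-strict and whose kernel is generated by the $\M$-homogeneous element $\sigma$ descends to an $\M$-strict isomorphism on the quotient.

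To prove injectivity I would pass from the associative to the Lie picture. By Proposition~\ref{prop:graded_malcev_liegebra} and Example~\ref{ex:graded_CUEnv}, $\gr^\W_\bullet\O(\pi_1^\Q(\rGraph,b))^\dual$ is the completed universal enveloping algebra of $\gr^\W_\bullet\Lie_\Q(\pi_1(\rGraph,b))$ and $\oHAlg$ that of $\gr^\W_\bullet\Lie_\Q(F)$, where $F$ is the free discrete group on the chosen generators; the map $\check\Hiso^{-1}$ is induced by the surjection $F\twoheadrightarrow\pi_1(\rGraph,b)$ whose kernel is the normal closure of $\rho$ --- this is precisely the one-relator presentation distilled in Proposition~\ref{prop:one-relator_description}. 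Since $\rho$ lies in $\W_{-2}F$ with initial form $\sigma$ in $\gr^\W_{-2}\Lie_\Q(F)$, I would invoke Labute's theorem on the associated graded Lie algebra of a one-relator group \cite[Theorem on p17]{labute} (see \cite[Remarks on p22]{labute}) to conclude that $\gr^\W_\bullet\Lie_\Q(F)\to\gr^\W_\bullet\Lie_\Q(\pi_1(\rGraph,b))$ is surjective with kernel the Lie ideal generated by $\sigma$; taking completed enveloping algebras identifies the kernel of $\check\Hiso^{-1}$ with the associative ideal $(\sigma)$, as required. Combined with Proposition~\ref{prop:lift_of_alpha}, this also gives that $\Hiso^{-1}\colon\HAlg\to\gr^\M_\bullet\O(\pi_1^\Q(\rGraph))^\dual$ is an isomorphism, completing the proof of Theorem~\ref{thm:description_of_M-graded}.

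The step I expect to be the main obstacle is the application of Labute's theorem: one must verify that $\rho$ really has $\W$-weight exactly two and that its initial form $\sigma$ satisfies the admissibility hypothesis of \cite{labute} (the weight-two case requires some care), which here comes down to $\sigma$ being nonzero in $\gr^\W_{-2}\Lie_\Q(F)$ --- true unless $\rGraph$ is a tree without half-edges, in which case both sides of the claimed isomorphism are trivial anyway. A secondary subtlety, already flagged in the statement, is that $\pi_1(\rGraph)$ is a groupoid rather than a group; this is dealt with exactly as in the construction of $\check\Hiso^{-1}$, by fixing a spanning tree of $\Graph$ and the basepoint $b$ so as to replace $\pi_1(\rGraph)$ by the vertex group $\pi_1(\rGraph,b)$, and then transporting the conclusion back along the basepoint-independent trivialisations of Proposition~\ref{prop:trivialisation} and Remark~\ref{rmk:interpretation_of_quotients}. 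The remaining work --- the Baker--Campbell--Hausdorff bookkeeping and the standard passage between Lie ideals and associative ideals in completed enveloping algebras (Appendix~\ref{appx:hopf_gpds}) --- is routine.
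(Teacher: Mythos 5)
Your proposal is correct and follows essentially the same route as the paper's own proof: the Baker--Campbell--Hausdorff computation of $\log(\rho)$ modulo $\W_{-3}$ with the choice $a_{ij}=b_{ij}$, followed by the identification of $\oHAlg$ and $\gr^\W_\bullet\O(\pi_1^\Q(\rGraph,b))^\dual$ with completed universal enveloping algebras of $\gr^\W_\bullet\Lie_\Q(F)$ and $\gr^\W_\bullet\Lie_\Q(\pi_1(\rGraph,b))$ and an appeal to Labute's theorem to pin down the kernel as the ideal generated by $\sigma$. Your extra remarks on the admissibility hypothesis in \cite{labute} and the degenerate tree case are sensible precautions but not part of the paper's argument.
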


We now prove Theorem~\ref{thm:description_of_M-graded}, with $\Hiso $ the inverse of the map $\Hiso ^{-1}$ above.
\begin{proof}[Proof of Theorem~\ref{thm:description_of_M-graded}]
Fix integers $a_{ij}$ as in Proposition~\ref{prop:lift_of_alpha_is_iso}. We know that $\Hiso^{-1}$ induces a $\W$-filtered and $\M$-graded morphism $\HAlg\rightarrow\gr^\M_\bullet\O(\pi_1^\Q(\rGraph))^\dual$ of complete Hopf algebras. To show that this is an $\W$-filtered isomorphism, it suffices to show that $\gr^\W_\bullet\Hiso^{-1}\colon\HAlg\rightarrow\gr^\M_\bullet\gr^\W_\bullet\O(\pi_1^\Q(\rGraph))^\dual$ is an isomorphism. But this map is equal to $\gr^\M_\bullet\gr^\W_\bullet\check\Hiso^{-1}\colon\HAlg\rightarrow\gr^\M_\bullet\gr^\W_\bullet\O(\pi_1^\Q(\rGraph,b))^\dual$ by Proposition~\ref{prop:lift_of_alpha}, and this map is an isomorphism by Proposition~\ref{prop:lift_of_alpha_is_iso}.
\end{proof}

\subsection{Weak $N$-equivariance}
\label{ss:equivariance}

To conclude this section, let us discuss to what extent the monodromy operator $N$ on the complete Hopf groupoid $\gr^\M_\bullet\O(\pi_1^\Q(\rGraph))^\dual$ can be controlled using the isomorphism $\Hiso\colon\gr^\M_\bullet\O(\pi_1^\Q(\rGraph))^\dual\isoarrow\HAlg$ from Theorem~\ref{thm:description_of_M-graded}. As mentioned in Remark~\ref{rmk:N_on_HAlg}, this isomorphism is not in general $N$-equivariant (for any Hopf algebra derivation $N$ on $\HAlg$), but we will need certain partial $N$-equivariance results at several points in the coming arguments. To this end, we endow $\HAlg$ with the following monodromy operator.

\begin{definition}\label{def:N_on_HAlg}
Let $\oHAlg$ and $\HAlg$ denote the complete Hopf algebras from Construction~\ref{cons:HAlg}. We define a bigraded derivation $N$ of $(\W,\M)$-bidegree $(0,-2)$ on $\oHAlg$ by specifying that $N$ vanishes on the generators $\alog(\beta_{v,i})$, $\alog(\beta_{v,i}')$ and $\alog(\delta_e)$ for all vertices $v$, integers $1\leq i\leq g(v)$ and half-edges $e$, and specifying that the maps
\[
\gr^\M_0\gr^\W_{-1}\oHAlg \overset N\longrightarrow \gr^\M_{-2}\gr^\W_{-1}\oHAlg \overset N\longrightarrow \gr^\M_{-4}\gr^\W_{-1}\oHAlg
\]
are given by
\[
\H^1(\Graph) \longisoarrow \H_1(\Graph) \longrightarrow 0
\]
with the left-hand arrow the isomorphism induced by the cycle pairing. One checks straightforwardly, e.g.\ by choosing a basis of $\H^1(\Graph)$ which is orthogonal with respect to the cycle pairing, that $N$ vanishes on the surface relation $\sigma$ from Construction~\ref{cons:HAlg}, and hence induces a derivation $N$ on $\HAlg$. Both of these derivations are Hopf derivations (i.e.\ are also compatible with the comultiplication).
\end{definition}

The basic significance of this monodromy operator $N$ on $\HAlg$ is that it captures the monodromy operator $N$ on the fundamental groupoid after passing to bigraded pieces: the isomorphism
\[
\gr^\W_\bullet\Hiso\colon \gr^\W_\bullet\gr^\M_\bullet\O(\pi_1^\Q(\rGraph))^\dual \isoarrow \HAlg
\]
arising from Theorem~\ref{thm:description_of_M-graded} is $N$-equivariant. Put another way, the map $N\Hiso-\Hiso N\colon \gr^\M_\bullet\O(\pi_1^\Q(\rGraph))^\dual \rightarrow \HAlg$ is $\W$-filtered of degree $-1$ (i.e.\ takes each $\W_{-n}$ into $\W_{-n-1}$). In fact, we will need the following stronger result, to be proved at the end of this section.

\begin{lemma}\label{lem:equivariance}
Let $\Hiso\colon\gr^\M_\bullet\O(\pi_1^\Q(\rGraph))^\dual\!\isoarrow\!\HAlg$ be the isomorphism from Theorem~\ref{thm:description_of_M-graded}. Then for all integers $k>0$, the map
\[
\ad_N^k\Hiso:=\sum_{i+j=k}{k\choose i}(-1)^jN^i\circ\Hiso\circ N^j\colon\gr^\M_\bullet\O(\pi_1^\Q(\rGraph))^\dual\rightarrow\HAlg
\]
is $\W$-filtered of degree $-k-1$.
\end{lemma}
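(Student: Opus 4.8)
The plan is to exploit that $\Hiso$ is a morphism of complete Hopf algebras and that both monodromy operators are Hopf derivations, which pins down $\ad_N^k\Hiso$ by a generalised Leibniz rule and reduces the claim to a computation on algebra generators. Writing $\delta(f):=N_\HAlg\circ f-f\circ N$ for the operation with which $\ad_N$ is formed (here $N_\HAlg$ is the monodromy on $\HAlg$ from Definition~\ref{def:N_on_HAlg} and $N$ that on $\gr^\M_\bullet\O(\pi_1^\Q(\rGraph))^\dual$ from Proposition~\ref{prop:explicit_hopf_gpd}), one has $\ad_N^k\Hiso=\delta^k(\Hiso)$, and an easy induction on $k$ using that $\Hiso$ is multiplicative and the monodromy operators are derivations yields
\[
(\ad_N^k\Hiso)(xy)=\sum_{a+b=k}\binom{k}{a}(\ad_N^a\Hiso)(x)\cdot(\ad_N^b\Hiso)(y),
\]
together with its multinomial generalisation for products of several factors. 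By Theorem~\ref{thm:description_of_M-graded} the algebra $\gr^\M_\bullet\O(\pi_1^\Q(\rGraph))^\dual$ is $\W$-graded and is generated by the $\W$-homogeneous elements $\Hiso^{-1}(\H^1(\Graph))$, $\log(\beta_{v,i})$, $\log(\beta_{v,i}')$ and $\Hiso^{-1}(\H_1(\Graph))$ of $\W$-degree $-1$, together with $\log(\delta_e)$ for half-edges $e$ of $\W$-degree $-2$; since for $k\geq1$ every term of the multinomial expansion has at least one factor of the form $(\ad_N^{\geq1}\Hiso)(x_i)$, it suffices to prove that $\ad_N^k\Hiso$ carries each generator of $\W$-degree $w$ into $\W_{w-k-1}$, for every $k\geq1$.

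For the generators $\log(\beta_{v,i})$, $\log(\beta_{v,i}')$ and $\log(\delta_e)$ ($e$ a half-edge), $N$ annihilates them by Proposition~\ref{prop:explicit_hopf_gpd}, and $N_\HAlg$ annihilates their images $\alog(\beta_{v,i})$, $\alog(\beta_{v,i}')$, $\alog(\delta_e)$ by Definition~\ref{def:N_on_HAlg}; hence in $\ad_N^k\Hiso=\sum_{i+j=k}\binom{k}{i}(-1)^j N_\HAlg^i\circ\Hiso\circ N^j$ every term kills such a generator for $k\geq1$ (if $j\geq1$ the inner $N^j$ does, and if $j=0$ then $i=k\geq1$ and $N_\HAlg^k$ kills its $\Hiso$-image). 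So these generators are immediate, and only the generators coming from $\H^1(\Graph)$ and $\H_1(\Graph)$ remain.

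For these I would pass to a closed form. Set $\bar N:=\Hiso^{-1}\circ N_\HAlg\circ\Hiso$, which by the $\gr^\W_\bullet$-equivariance recorded before the statement is exactly the $\W$-graded part $\gr^\W_\bullet N$ of $N$, and set $R:=N-\bar N$, so that $R$ is $\W$-filtered of degree $\leq-1$ by that same equivariance. From the recursion $\ad_N^{k+1}\Hiso=N_\HAlg\circ\ad_N^k\Hiso-\ad_N^k\Hiso\circ N$ one reads off that $\Hiso^{-1}\circ\ad_N^k\Hiso$ satisfies $A_{k+1}=\bar N A_k-A_k N$ with $A_0=\mathrm{id}$, and hence that $\sum_{k\geq0}\tfrac{t^k}{k!}\,\Hiso^{-1}\circ\ad_N^k\Hiso=e^{t\bar N}e^{-tN}$; expanding via the Dyson series expresses $\ad_N^k\Hiso$ as $\Hiso$ applied to a fixed universal sum of iterated $\ad_{\bar N}$-brackets and products of the operator $R$, a term with $r$ copies of $R$ carrying a $\W$-degree drop of $\geq 2r$. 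The conclusion then follows from two properties of $R$, evaluated on the Cheng--Katz image of $\H^1(\Graph)$ and on $\Hiso^{-1}(\H_1(\Graph))$: first, that $R$ is in fact $\W$-filtered of degree $\leq-2$ — equivalently, that $N$ has no component lowering the $\W$-degree by exactly $1$, which is precisely the $k=1$ case $\ad_N\Hiso=-\Hiso\circ R$; and second, that the successive $\ad_{\bar N}$-brackets of $R$ keep lowering the $\W$-degree, so that every Dyson term with too few copies of $R$ still lands in $\W_{-k-1}$. Both are verified from the explicit formulas of \S\ref{s:M-trivialisation}: $N(e)=\length(e)\,e\log(\delta_e)$ and $N(\log(\delta_e))=0$ (Proposition~\ref{prop:explicit_hopf_gpd}), the identity $\Hiso(e_s)=\exp(s\,e^*)$ (Lemma~\ref{lem:C-K_on_edges}) combined with the variation-along-edges relations (Corollary~\ref{cor:can_path_along_edge}, Proposition~\ref{prop:loop_integral_along_edge}) to evaluate $N$ on canonical paths and rational edges, and the first-order correction $\tfrac{\exp(\length(e)\ad_{e^*})-1}{\ad_{e^*}}\log(\delta_e)$ appearing in Definition~\ref{def:inverse_iso_on_H_1}, which is exactly what cancels the $\W$-degree $-2$ discrepancy between $N$ and $\bar N$ (and whose graded shadow in Proposition~\ref{prop:lift_of_alpha} pins down the relevant graded pieces).

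The genuinely hard part is this last computation — establishing the degree-drop behaviour of $R$ and of its iterated $\ad_{\bar N}$-brackets on the homological generators. One cannot conclude by a soft truncation, since $N_\HAlg$ (hence $\bar N$) is not nilpotent on $\HAlg$: it has unbounded $\M$-degree, so the requisite $\W$-degree drops must be extracted filtration step by filtration step from the explicit shapes of $\Hiso^{-1}$ and of the edge-twist action. Keeping track of which terms of the Leibniz/Dyson expansion carry too few copies of $R$, and verifying that those nonetheless vanish to the needed order in $\W_\bullet$, is the laborious bookkeeping component of the argument.
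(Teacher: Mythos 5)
Your reduction to generators via the Leibniz identity and your dismissal of the generators $\log(\beta_{v,i})$, $\log(\beta_{v,i}')$ and $\log(\delta_e)$ (half-edges $e$) match the paper's first steps, and the Dyson-series rewriting $\sum_k\tfrac{t^k}{k!}\Hiso^{-1}\ad_N^k\Hiso=e^{t\bar N}e^{-tN}$ is a correct formal repackaging. But the proposal has a genuine gap: the two ``properties of $R$'' on which everything rests are not established, and the second of them is essentially the lemma itself. A Dyson term in the coefficient of $t^k$ with $r$ copies of $R$ and $k-r$ applications of $\ad_{\bar N}$ drops $\W$-degree by only about $2r$, so for the terms with few copies of $R$ (e.g.\ $\ad_{\bar N}^{k-1}(R)$) you need precisely the statement that each further $\ad_{\bar N}$ gains one unit of $\W$-degree --- which is not ``laborious bookkeeping'' verifiable from the displayed formulas, but the entire content of Lemma~\ref{lem:equivariance} on the nontrivial generators. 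There is also a circularity in the setup: the identification of $\bar N=\Hiso^{-1}N\Hiso$ with the $\W$-graded part of $N$, i.e.\ that $R$ is $\W$-filtered of degree $\leq-1$, is in the paper a \emph{consequence} of this lemma (it is stated just before it as motivation, not proved independently), and the degree $\leq-2$ statement you invoke is exactly the $k=1$ case, which you do not prove.

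For comparison, the paper closes this gap by a strong induction on $k$ carried out directly on edges, loops and the elements $\log(\delta_e)$ for genuine edges, which is where all the analytic content lives. The key moves are: (i) the coordinatewise-polynomial ODE~\eqref{eq:differentiate_adjoint_along_edge} for $s\mapsto\ad_N^k\Hiso(e_s)$ along an edge, in which the coefficient $N^{i-1}\bigl(\exp(se^*)\alog(\delta_e)\exp(-se^*)\bigr)$ is recognised as $\ad_N^{i-1}\Hiso(e_s\log(\delta_e)e_s^{-1})$ so that the inductive hypothesis applies; (ii) the base computation $\alog(\delta_e)\equiv N(e^*)$ modulo $\W_{-2}$ (Proposition~\ref{prop:alog(delta)}), itself proved by checking edge, vertex and cycle-pairing conditions; (iii) integrating via Proposition~\ref{prop:loop_integral_along_edge} and cancelling over loops using the homology identity~\eqref{eq:homology_classes}; and (iv) an argument modelled on Proposition~\ref{prop:alog(delta)} to show $N^k\alog(\delta_e)\in\W_{-k-2}$, again by verifying edge, vertex and loop relations. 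Your proposal names some of these ingredients but carries out none of the cancellations; until you do (in particular until you prove the $k=1$ case from scratch and then the degree gain under iterated $\ad_{\bar N}$, or equivalently the paper's assertions for edges, loops and $\log(\delta_e)$), the argument is not complete.
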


\begin{corollary}\label{cor:graph_w-m}
For all non-negative integers $n$ and $i$, the map
\begin{equation}\label{eq:weight-monodromy_maps}
N^i\colon\gr^\M_{-n+i}\gr^\W_{-n}\O(\pi_1^\Q(\rGraph))^\dual \isoarrow \gr^\M_{-n-i}\gr^\W_{-n}\O(\pi_1^\Q(\rGraph))^\dual
\end{equation}
is an isomorphism. The same is true with $\O(\pi_1^\Q(\rGraph))^\dual$ replaced with $\Lie(\pi_1^\Q(\rGraph,b))$.
\begin{proof}
Since $\gr^\W_\bullet\Hiso\colon\gr^\W_\bullet\gr^\M_\bullet\O(\pi_1^\Q(\rGraph))^\dual\isoarrow\HAlg$ is $N$-equivariant by Lemma~\ref{lem:equivariance}, this condition may be checked just for $\HAlg$ and its Lie algebra $\LAlg=\HAlg^\prim$ of primitive elements. That the corresponding maps~\eqref{eq:weight-monodromy_maps} for $\oHAlg$ are all isomorphisms is immediate, since $\oHAlg$ is the complete tensor algebra on a vector space for which the maps~\eqref{eq:weight-monodromy_maps} are isomorphisms by construction. This already implies that the corresponding maps~\eqref{eq:weight-monodromy_maps} for $\HAlg$ are surjective. That they are bijective follows from a dimension-count, e.g.\ by writing down a bigraded automorphism of $\HAlg$ interchanging $\H^1(\Graph)$ and $\H_1(\Graph)$. One passes from $\HAlg$ to the subspace $\LAlg$ via a similar argument.
\end{proof}
\end{corollary}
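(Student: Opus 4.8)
The plan is to deduce the statement from the combinatorial model. Since the isomorphism $\gr^\W_\bullet\Hiso\colon\gr^\W_\bullet\gr^\M_\bullet\O(\pi_1^\Q(\rGraph))^\dual\isoarrow\HAlg$ is $N$-equivariant (Lemma~\ref{lem:equivariance}, as recorded in the discussion preceding it), it identifies the maps~\eqref{eq:weight-monodromy_maps} with the analogous maps $N^i\colon\gr^\M_{-n+i}\gr^\W_{-n}\HAlg\to\gr^\M_{-n-i}\gr^\W_{-n}\HAlg$, and it carries the subspace $\Lie(\pi_1^\Q(\rGraph,b))=\O(\pi_1^\Q(\rGraph,b))^{\dual,\prim}$ onto $\LAlg:=\HAlg^\prim$. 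So it suffices to prove the weight--monodromy isomorphism property for the pair $(\HAlg,N)$, and then for the bigraded sub-Lie-algebra $(\LAlg,N)$.

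First I would treat the free object $\oHAlg$. Writing $W$ for the $(\W,\M)$-bigraded generating vector space of Construction~\ref{cons:HAlg}, one checks directly that $(W,N)$ satisfies weight--monodromy: on $\gr^\W_{-1}W$ the $\M$-grading occupies degrees $0,-1,-2$, with $N\colon\gr^\M_0\isoarrow\gr^\M_{-2}$ the cycle-pairing isomorphism $\H^1(\Graph)\isoarrow\H_1(\Graph)$ and the span of the symbols $\alog(\beta_{v,i}),\alog(\beta_{v,i}')$ a middle summand killed by $N$, while on $\gr^\W_{-2}W$ only $\M$-degree $-2$ occurs and $N$ vanishes. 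Equivalently, each $\gr^\W_{-n}W$ is a Lefschetz module with $\M$-grading centred at $-n$. This property is stable under tensor products — $N$ acts on $\oHAlg$ as a derivation, hence diagonally through the coproduct, so each $\gr^\W_{-n}(W^{\otimes k})$ is again Lefschetz centred at $-n$ — and under finite direct sums; since $\gr^\W_{-n}\oHAlg=\bigoplus_{k\le n}\gr^\W_{-n}(W^{\otimes k})$ is such a sum, the maps $N^i$ on the bigraded pieces of $\oHAlg$ are isomorphisms.

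Next I would pass to $\HAlg=\oHAlg/(\sigma)$. The surface relation $\sigma$ is $(\W,\M)$-bihomogeneous with $N\sigma=0$ (Definition~\ref{def:N_on_HAlg}), so $(\sigma)$ is a bigraded $N$-stable ideal and each $\gr^\M_{-m}\gr^\W_{-n}\oHAlg\twoheadrightarrow\gr^\M_{-m}\gr^\W_{-n}\HAlg$ is $N$-equivariant; hence $N^i$ stays surjective on the (finite-dimensional) bigraded pieces of $\HAlg$. For bijectivity it then suffices to show $\dim\gr^\M_{-n+i}\gr^\W_{-n}\HAlg=\dim\gr^\M_{-n-i}\gr^\W_{-n}\HAlg$, which I would get from a $\W$-graded automorphism $\iota$ of $\HAlg$ reversing the $\M$-grading about the centre on each $\gr^\W_{-n}$. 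It is induced by the linear automorphism of $W$ that is the cycle-pairing isomorphism $\H^1(\Graph)\isoarrow\H_1(\Graph)$ on the first summand, the negative of its inverse on $\H_1(\Graph)$, and the identity on the symbols $\alog(\beta_{v,i}),\alog(\beta_{v,i}'),\alog(\delta_e)$; with this sign normalisation one verifies that $\iota$ fixes each of $\sigma_0,\sigma_1,\sigma_2$, so $\iota$ descends to $\HAlg$ and carries $\gr^\M_{-n+i}\gr^\W_{-n}\HAlg$ isomorphically onto $\gr^\M_{-n-i}\gr^\W_{-n}\HAlg$. Finally, $\LAlg=\HAlg^\prim$ is a bigraded subspace stable under both $N$ (a Hopf derivation) and $\iota$ (a Hopf automorphism), and the same three steps apply: surjectivity for $\oHAlg^\prim=\CFLie(W)$ holds because $\CFLie(W)$ is cut out of the $W^{\otimes k}$ by the Dynkin idempotent, which commutes with the diagonal $N$, exhibiting it as a direct summand of weight--monodromy modules.

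The routine-but-delicate points are the sign bookkeeping showing $\iota$ preserves $\sigma$, and the verification that $(W,N)$ is weight--monodromy with the grading centred correctly. The only genuine content is the quotient step: surjectivity of $N^i$ on $\HAlg$ is automatic, but injectivity is not, and is forced precisely by the symmetry $\iota$ interchanging $\H^1(\Graph)$ and $\H_1(\Graph)$; everything else is standard $\mathfrak{sl}_2$-representation theory (semisimplicity, and closure of Lefschetz modules under tensor products, submodules and quotients).
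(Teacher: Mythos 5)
Your proposal is correct and follows essentially the same route as the paper's proof: reduce via the $N$-equivariance of $\gr^\W_\bullet\Hiso$ to $\HAlg$, verify the statement for the free object $\oHAlg$ from the bigraded generating space, deduce surjectivity for the quotient $\HAlg$, obtain injectivity by a dimension count using a bigraded automorphism interchanging $\H^1(\Graph)$ and $\H_1(\Graph)$, and pass to $\LAlg$ by the same pattern. You merely make explicit what the paper leaves as an indication (the Lefschetz verification on generators, the sign normalisation making the automorphism fix $\sigma$, and the Dynkin-idempotent argument for the Lie algebra case), and these details check out.
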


Before we prove Lemma~\ref{lem:equivariance}, let us unpack a few further consequences, which will help us relate the constructions of this section to those in \S\ref{s:galois_cohomology}. These are most naturally phrased in terms of an auxiliary filtration, defined in terms of the existing filtrations and the monodromy operator $N$.

\begin{definition}\label{def:coM}
For all rational vertices $u,v$, we define an increasing $N$-stable filtration $\coM_\bullet$ on $\gr^\M_\bullet\O(\pi_1^\Q(\rGraph;u,v))^\dual$ by $\M$-graded subspaces, by declaring that an element $\gamma\in\gr^\M_{-n}\O(\pi_1^\Q(\rGraph;u,v))^\dual$ lies in $\coM_{-k}$ just when
\[
N^j(\gamma)\in\W_{-\lceil\frac{n+k}2\rceil-j}\gr^\M_{-n-2j}\O(\pi_1^\Q(\rGraph;u,v))^\dual
\]
for all $j\geq0$.

We also let $\coM_\bullet$ denote the $N$-stable filtration of $\HAlg$ defined analogously. This is a filtration by $(\W,\M)$-bigraded subspaces, and $\gamma\in\gr^\W_{-m}\gr^\M_{-n}\HAlg$ lies in $\coM_{-k}$ just when
\[
N^{\lfloor m-\frac{n+k}2\rfloor+1}(\gamma)=0.
\]

Both of these filtrations are compatible with path-composition/multiplication and path-reversal/antipode, but not with comultiplication.
\end{definition}

The $\coM$-filtration is related to the other two filtrations as follows.

\begin{proposition}\label{prop:coM_basics}
We have $\coM_0\gr^\M_\bullet\O(\pi_1^\Q(\rGraph))^\dual=\gr^\M_\bullet\O(\pi_1^\Q(\rGraph))^\dual$. For all integers $k$ we have $\coM_{-k}\gr^\M_\bullet\O(\pi_1^\Q(\rGraph))^\dual\leq\M_{-k}\gr^\M_\bullet\O(\pi_1^\Q(\rGraph))^\dual$ and $\coM_{-k}\gr^\M_{-n}\O(\pi_1^\Q(\rGraph))^\dual\leq\W_{-\lceil\frac n2\rceil-k}\gr^\M_{-n}\O(\pi_1^\Q(\rGraph))^\dual$. The same is true with $\O(\pi_1^\Q(\rGraph))^\dual$ replaced with $\HAlg$.
\begin{proof}
In the interests of brevity, we give a proof for $\HAlg$ that also applies to the complete Hopf groupoid $\gr^\M_\bullet\O(\pi_1^\Q(\rGraph))^\dual$. The first assertion follows from the fact that $\W_{-\lceil\frac n2\rceil}\gr^\M_{-n}\HAlg=\gr^\M_{-n}\HAlg$ for all $n$. The final assertion is immediate from the definition.

For the second assertion, it suffices to prove that $\coM_{-k-1}\gr^\M_{-k}\HAlg=0$ for all $k$. Suppose that $\gamma\in\gr^\M_{-k}\HAlg$ lies in $\coM_{-k-1}$ for some $k$. We show by induction that $\gamma\in\W_{-k-j-1}$ for all $j$, so that $\gamma=0$ as desired. The base case $j=0$ follows from the definition of $\coM_\bullet$. For the inductive step, we have $N^j(\gamma)\in\W_{-k-j-1}\gr^\M_{-k-2j}\HAlg$. But the proof of Corollary~\ref{cor:graph_w-m} shows that the map $N^j\colon\gr^\M_{-k}\gr^\W_{-k-j}\HAlg\isoarrow\gr^\M_{-k-2j}\gr^\W_{-k-j}\HAlg$ is an isomorphism, and hence $\gamma\in\W_{-k-j-1}$ as desired.
\end{proof}
\end{proposition}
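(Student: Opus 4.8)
The plan is to reduce the whole statement to the complete Hopf algebra $\HAlg$ of Construction~\ref{cons:HAlg}, arranging the argument so that it applies verbatim to the complete Hopf groupoid $\gr^\M_\bullet\O(\pi_1^\Q(\rGraph))^\dual$ as well. The only structural inputs I would use are the fact that $\W_{-\lceil n/2\rceil}\gr^\M_{-n}$ exhausts $\gr^\M_{-n}$, and the hard-Lefschetz isomorphisms from (the proof of) Corollary~\ref{cor:graph_w-m}; both are available on either side, and the transfer to $\Lie(\pi_1^\Q(\rGraph,b))$ is then obtained by restricting to primitive elements.

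First I would dispose of the assertion $\coM_0\gr^\M_\bullet\HAlg=\gr^\M_\bullet\HAlg$ together with the bound on the $\W$-filtration. Unwinding Definition~\ref{def:coM} (with $k=0$, and $n$ replaced by $n+2j$ in the $N^j$-condition), the first assertion is equivalent to the vanishing $\gr^\W_{-m}\gr^\M_{-r}\HAlg=0$ whenever $r>2m$. This is immediate from Construction~\ref{cons:HAlg}: every bihomogeneous generator of $\oHAlg$ has $(\W,\M)$-bidegree $(-w,-m)$ with $m\leq 2w$, the four possibilities being $(-1,0)$, $(-1,-1)$, $(-1,-2)$, $(-2,-2)$; hence the inequality ``$\M$-depth $\leq$ twice the $\W$-depth'' is stable under products and descends to the quotient $\HAlg=\oHAlg/(\sigma)$. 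The bound on the $\W$-filtration of $\coM_{-k}\gr^\M_{-n}$ is then a formal consequence of the $j=0$ instance of Definition~\ref{def:coM}.

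The remaining, slightly more substantial, point is the containment $\coM_{-k}\leq\M_{-k}$. Since both $\coM_\bullet$ and $\M_\bullet$ are filtrations by $\M$-graded subspaces, this is automatic in $\M$-degree $-n$ once $n\geq k$, so it reduces to proving $\coM_{-n-1}\gr^\M_{-n}\HAlg=0$ for every $n\geq 0$. I would fix $\gamma$ in this space and show by induction on $j\geq 0$ that $\gamma\in\W_{-n-1-j}\gr^\M_{-n}\HAlg$; since $\W_\bullet$ is separated, this forces $\gamma=0$. The base case $j=0$ is the $j=0$ clause of Definition~\ref{def:coM} (note $\lceil\tfrac{n+(n+1)}{2}\rceil=n+1$). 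For the inductive step, Definition~\ref{def:coM} gives $N^j(\gamma)\in\W_{-n-1-j}\gr^\M_{-n-2j}\HAlg$, so the $\gr^\W_{-n-j}$-component of $N^j(\gamma)$ vanishes; as $N$ preserves the $\W$-grading, this component equals $N^j$ applied to the $\gr^\W_{-n-j}$-component of $\gamma$, and the proof of Corollary~\ref{cor:graph_w-m} exhibits $N^j\colon\gr^\M_{-n}\gr^\W_{-n-j}\HAlg\isoarrow\gr^\M_{-n-2j}\gr^\W_{-n-j}\HAlg$ as an isomorphism, in particular injective; hence that component of $\gamma$ is zero and $\gamma\in\W_{-n-1-j}$.

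I do not expect a genuine obstacle here; the delicate parts are purely bookkeeping — matching the hard-Lefschetz statement of Corollary~\ref{cor:graph_w-m} with the exact shape $N^j\colon\gr^\M_{-n}\gr^\W_{-n-j}\isoarrow\gr^\M_{-n-2j}\gr^\W_{-n-j}$ needed in the induction, tracking the ceilings, and confirming that every generator in Construction~\ref{cons:HAlg} (equivalently, in Proposition~\ref{prop:explicit_hopf_gpd} for the groupoid side) satisfies $m\leq 2w$. For the groupoid version one may either transport along the isomorphism of Theorem~\ref{thm:description_of_M-graded}, or run the identical argument using that Corollary~\ref{cor:graph_w-m} and the generator bidegrees are both stated directly for $\gr^\M_\bullet\O(\pi_1^\Q(\rGraph))^\dual$.
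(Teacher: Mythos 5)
Your proposal is correct and follows essentially the same route as the paper: the same reduction of $\coM_{-k}\leq\M_{-k}$ to the vanishing $\coM_{-k-1}\gr^\M_{-k}=0$, the same induction on the $\W$-filtration using injectivity of $N^j\colon\gr^\M_{-k}\gr^\W_{-k-j}\isoarrow\gr^\M_{-k-2j}\gr^\W_{-k-j}$ from the proof of Corollary~\ref{cor:graph_w-m}, and the same dispatching of the other two assertions from the exhaustion $\W_{-\lceil n/2\rceil}\gr^\M_{-n}=\gr^\M_{-n}$ and the $j=0$ clause of Definition~\ref{def:coM}. The only difference is cosmetic: you spell out, via the generator bidegrees of Construction~\ref{cons:HAlg}, the exhaustion fact that the paper simply asserts.
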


For our applications, it will suffice to know the following compatibility of the isomorphism $\Hiso$ with the $\coM$-filtration, which in particular implies that $\Hiso$ induces an $N$-equivariant isomorphism on $\coM$-graded pieces.

\begin{lemma}\label{lem:equivariance_coM}
The isomorphism $\Hiso\colon\gr^\M_\bullet\O(\pi_1^\Q(\rGraph))^\dual\isoarrow\HAlg$ is a $\coM$-filtered isomorphism, and $\ad_N\Hiso=(N\Hiso-\Hiso N)$ is $\coM$-filtered of degree $-2$.
\begin{proof}
That $\Hiso$ and its inverse are $\coM$-filtered maps follows from Lemma~\ref{lem:equivariance} and the identity
\[
N^j\Hiso(\gamma)-\Hiso N^j(\gamma) = \sum_{i=1}^j{j\choose i}\ad_N^i\Hiso\left(N^{j-i}(\gamma)\right).
\]
A similar argument establishes the final part.
\end{proof}
\end{lemma}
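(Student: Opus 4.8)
The plan is to deduce both assertions formally from Lemma~\ref{lem:equivariance}, with no further input from the geometry of $\rGraph$: all the substance is in that lemma, and what remains is bookkeeping with the filtration indices in Definition~\ref{def:coM}. The only conceptual point to settle first is that the two superficially different descriptions of $\coM_\bullet$ agree. On $\HAlg$ the filtration is defined by vanishing of a power of $N$ on bihomogeneous elements, while on $\gr^\M_\bullet\O(\pi_1^\Q(\rGraph))^\dual$ it is given by a condition of the shape ``$N^j(\gamma)\in\W_{-\lceil(n+k)/2\rceil-j}$''. Since the $\W$-filtration on $\HAlg$ is graded and $N$ preserves $\W$-degree, for a $\W$-homogeneous $\gamma$ of $\W$-degree $-m$ and $\M$-degree $-n$ the condition $N^{\lfloor m-(n+k)/2\rfloor+1}(\gamma)=0$ is equivalent to $N^j(\gamma)\in\W_{-\lceil(n+k)/2\rceil-j}\gr^\M_{-n-2j}\HAlg$ for all $j\geq0$ (for $j\leq m-\lceil(n+k)/2\rceil$ the $\W$-condition is automatic, $N^j(\gamma)$ having $\W$-degree $-m$; for larger $j$ it forces $N^j(\gamma)=0$). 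Theorem~\ref{thm:description_of_M-graded} splits the $\W$-filtration on the groupoid side too, so by linearity I may work on both sides with the single defining condition ``$N^j(\gamma)\in\W_{-\lceil(n+k)/2\rceil-j}\gr^\M_{-n-2j}$ for all $j$''.

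Next I would record the combinatorial identity $N^j\circ f=\sum_{i=0}^j\binom{j}{i}(\ad_N^i f)\circ N^{j-i}$, valid for any $\Q$-linear map $f$ compatible with the $N$-actions, where $\ad_N^i f$ is the $i$-fold commutator; this is a one-line induction on $j$, and specialising $f=\Hiso$ recovers the identity displayed in the statement. Applying it to $f=\Hiso$ and $\gamma\in\coM_{-k}\gr^\M_{-n}\O(\pi_1^\Q(\rGraph))^\dual$ gives $N^j(\Hiso\gamma)=\Hiso(N^j\gamma)+\sum_{i=1}^j\binom{j}{i}(\ad_N^i\Hiso)(N^{j-i}\gamma)$: the first term lies in $\W_{-\lceil(n+k)/2\rceil-j}\gr^\M_{-n-2j}\HAlg$ because $\Hiso$ is $\M$-graded and $\W$-filtered of degree $0$ (Theorem~\ref{thm:description_of_M-graded}), and each remaining summand lies in $\W_{-\lceil(n+k)/2\rceil-(j-i)-(i+1)}\gr^\M_{-n-2j}\HAlg=\W_{-\lceil(n+k)/2\rceil-j-1}\gr^\M_{-n-2j}\HAlg$, since $\ad_N^i\Hiso$ is $\W$-filtered of degree $-i-1$ (Lemma~\ref{lem:equivariance}) and $\M$-graded of degree $-2i$, while $N^{j-i}\gamma\in\W_{-\lceil(n+k)/2\rceil-(j-i)}\gr^\M_{-n-2(j-i)}$. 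Hence $N^j(\Hiso\gamma)\in\W_{-\lceil(n+k)/2\rceil-j}\gr^\M_{-n-2j}\HAlg$ for all $j$, i.e.\ $\Hiso\gamma\in\coM_{-k}$. For $\Hiso^{-1}$ one first bootstraps: applying $\ad_N^i$ to $\Hiso^{-1}\Hiso=\mathrm{id}$ and using the Leibniz rule $\ad_N^i(fg)=\sum_{a+b=i}\binom{i}{a}\ad_N^a(f)\ad_N^b(g)$ yields, by induction on $i$, that $\ad_N^i(\Hiso^{-1})$ is $\W$-filtered of degree $-i-1$ for $i\geq1$; the same computation as above with $\Hiso$ replaced by $\Hiso^{-1}$ then shows $\Hiso^{-1}$ is $\coM$-filtered.

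For the final clause I would apply the same identity to $f=\ad_N\Hiso$, noting $\ad_N^i(\ad_N\Hiso)=\ad_N^{i+1}\Hiso$, to get $N^j\big((\ad_N\Hiso)(\gamma)\big)=\sum_{i=0}^j\binom{j}{i}(\ad_N^{i+1}\Hiso)(N^{j-i}\gamma)$. For $\gamma\in\coM_{-k}\gr^\M_{-n}$, each summand lies in $\W_{-\lceil(n+k)/2\rceil-(j-i)-(i+2)}\gr^\M_{-(n+2)-2j}\HAlg=\W_{-\lceil(n+k)/2\rceil-j-2}\gr^\M_{-(n+2)-2j}\HAlg$, using that $\ad_N^{i+1}\Hiso$ is $\W$-filtered of degree $-(i+2)$ and $\M$-graded of degree $-2(i+1)$; since $\lceil((n+2)+(k+2))/2\rceil=\lceil(n+k)/2\rceil+2$, this is exactly the condition placing $(\ad_N\Hiso)(\gamma)$ in $\coM_{-k-2}\gr^\M_{-(n+2)}\HAlg$, which is $\coM$-filteredness of degree $-2$. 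The only steps requiring genuine thought beyond turning the crank are the $\Hiso^{-1}$ direction — a filtered isomorphism need not have filtered inverse, so one really must run the induction on $\ad_N^i(\Hiso^{-1})$ — and keeping the ceilings and floors of Definition~\ref{def:coM} consistent; there is no essential obstacle once Lemma~\ref{lem:equivariance} is in hand, which is where the real work of the section lies.
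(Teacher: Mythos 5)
Your proposal is correct and follows essentially the same route as the paper: both deduce everything from Lemma~\ref{lem:equivariance} via the binomial identity relating $N^j\Hiso$ and $\Hiso N^j$ through the operators $\ad_N^i\Hiso$, with only index bookkeeping against Definition~\ref{def:coM} remaining. Your explicit bootstrapping of $\ad_N^i(\Hiso^{-1})$ from $\Hiso^{-1}\Hiso=\mathrm{id}$ just fills in a detail the paper's one-line proof leaves implicit.
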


We now turn to the proof of Lemma~\ref{lem:equivariance}, beginning with a preliminary calculation which illustrates the kinds of calculations involved.

\begin{proposition}\label{prop:alog(delta)}
Let $\rGraph$ be a reduction graph. Then for any edge $e$ of $\rGraph$ we have
\[\alog(\delta_e)\equiv N(e^*)\text{ modulo $\W_{-2}\HAlg$.}\]
\begin{proof}
Write $\tau_e = \alog(\delta_e) - N(e^*) \in \gr^\M_{-2}\gr^\W_{-1}\HAlg$, so that we want to show that $\tau_e=0$ for every edge $e$. It suffices to prove:
\begin{enumerate}[label=\alph*),ref=\alph*]
	\item\label{proppart:alog_edge} $\tau_e+\tau_{e^{-1}}=0$ for all edges $e$;
	\item\label{proppart:alog_vertex} $\sum_{e\in\Edge{\Graph}\,,\,\source(e)=v}\tau_e=0$ for all vertices $v$; and
	\item\label{proppart:alog_loop} $\sum_{e\in\plEdge{\Graph}}\lambda_e\length(e)\tau_e\!=\!0$ for all homology classes $[\gamma]\!=\!\sum_{e\in\plEdge{\Graph}}\lambda_ee\in\H_1(\Graph)$.
\end{enumerate}
Indeed, the first two conditions say that $\sum_{e\in\plEdge{\Graph}}\tau_e\otimes e \in \gr^\W_{-1}\HAlg\otimes\H_1(\Graph)$ is a $\gr^\W_{-1}\HAlg$-valued homology class, and the third condition says that this lies in the kernel of the cycle pairing, and hence is zero.

\smallskip

Condition~\eqref{proppart:alog_edge} follows from the edge relations (Definition~\ref{def:pi1_reduction_graph}), which imply that $\alog(\delta_{e^{-1}})=-\Hiso(e)\alog(\delta_e)\Hiso(e)^{-1}\equiv-\alog(\delta_e)$ modulo $\W_{-2}\HAlg$. Condition~\eqref{proppart:alog_vertex} follows from the vertex relation (Remark~\ref{rmk:easier_vertex_relation}), which implies that
\[
\sum_{\substack{e\in\Edge{\Graph}\\\source(e)=v}}\tau_e=\sum_{\substack{e\in\Edge{\Graph}\\\source(e)=v}}\alog(\delta_e)=-\sum_{\substack{e\in\HEdge{\Graph}\\\source(e)=v}}\alog(\delta_e)-\alog(\delta_v)\in\W_{-2}\gr^\M_{-2}\HAlg.
\]

Finally, we note from Definition~\ref{def:inverse_iso_on_H_1} that we have an equality
\begin{equation}\label{eq:homology_classes}
[\gamma] = \sum_{e\in\plEdge{\Graph}}\lambda_eN(e^*) = \sum_{e\in\plEdge{\Graph}}\lambda_e\frac{\exp(\length(e)\ad_{e^*})-1}{\ad_{e^*}}(\alog(\delta_e))
\end{equation}
in $\gr^\M_{-2}\HAlg$. Condition~\eqref{proppart:alog_loop} follows by taking the image of this identity modulo~$\W_{-2}$.
\end{proof}
\end{proposition}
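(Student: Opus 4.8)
The plan is to fix an edge $e$ and prove that $\tau_e:=\alog(\delta_e)-N(e^*)$, which a priori lies in $\gr^\M_{-2}\gr^\W_{-1}\HAlg$, vanishes. The structural input is that by Construction~\ref{cons:HAlg} this space is canonically the homology $\H_1(\Graph)$, so once we know $\tau_{e^{-1}}=-\tau_e$ the elements $\tau_e$ assemble into a single tensor $T:=\sum_{e\in\plEdge{\Graph}}\tau_e\otimes e$ in $\H_1(\Graph)\otimes(\Q\cdot\mnEdge{\Graph})$. I will then force $T=0$ by verifying three conditions: (a) $\tau_{e^{-1}}+\tau_e=0$ for all edges $e$ (so that $T$ is a well-defined cochain on unoriented edges); (b) $\sum_{\source(e)=v,\,e\in\Edge{\Graph}}\tau_e=0$ for all vertices $v$ (so that $T$ is a cycle in its second factor, i.e.\ $T\in\H_1(\Graph)\otimes\H_1(\Graph)$); and (c) $\sum_{e\in\plEdge{\Graph}}\lambda_e\length(e)\tau_e=0$ for every homology class $[\gamma]=\sum_e\lambda_e e$ (so that $T$ pairs to zero against all of $\H_1(\Graph)$ via the cycle pairing on the second factor). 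Since that pairing is positive-definite, (a)--(c) force $T=0$, and hence $\tau_e=0$ for each $e$ because the unoriented edges form a basis of $\Q\cdot\mnEdge{\Graph}$.

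Each of (a), (b), (c) should follow by transporting a relation in $\O(\pi_1^\Q(\rGraph))^\dual$ through the isomorphism $\Hiso$ of Theorem~\ref{thm:description_of_M-graded} and reducing modulo $\W_{-2}\HAlg$. For (a), the edge relation of Definition~\ref{def:pi1_reduction_graph} becomes, after applying $\Hiso$, the identity $\alog(\delta_{e^{-1}})=-\Hiso(e)\,\alog(\delta_e)\,\Hiso(e)^{-1}$; since $\Hiso(e)$ is grouplike, conjugation by it acts trivially on $\gr^\W_{-1}$, so $\alog(\delta_{e^{-1}})\equiv-\alog(\delta_e)$ modulo $\W_{-2}$, and this matches $N((e^{-1})^*)=-N(e^*)$ because $(e^{-1})^*=-e^*$. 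For (b), the $\gr^\M_{-2}$ form of the vertex relation (Remark~\ref{rmk:easier_vertex_relation}) gives $\sum_{\source(e)=v,\,e\in\Edge{\Graph}}\alog(\delta_e)=-\alog(\delta_v)-\sum_{\source(e)=v,\,e\in\HEdge{\Graph}}\alog(\delta_e)$, a sum of terms of $\W$-degree $\le-2$, hence $\equiv0$ in $\gr^\W_{-1}$; the corresponding cancellation for the $N(e^*)$ terms is $\sum_{\source(e)=v}e^*=0$ from Remark~\ref{rmk:e*e_is_identity}.

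For (c), the relevant identity is the one implicit in Definition~\ref{def:inverse_iso_on_H_1} together with Definition~\ref{def:N_on_HAlg}: for a homology class $[\gamma]=\sum_e\lambda_e e$ one has, in $\gr^\M_{-2}\HAlg$,
\[
[\gamma]=\sum_{e\in\plEdge{\Graph}}\lambda_e\,N(e^*)=\sum_{e\in\plEdge{\Graph}}\lambda_e\,\frac{\exp(\length(e)\ad_{e^*})-1}{\ad_{e^*}}\bigl(\alog(\delta_e)\bigr),
\]
the first equality expressing that $N$ on $\HAlg$ is the cycle-pairing isomorphism and the second being the formula for $\Hiso^{-1}|_{\H_1(\Graph)}$ pushed through $\Hiso$. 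Reducing modulo $\W_{-2}$, and noting that $\ad_{e^*}$ has $\W$-degree $-1$ so that the series on the right has leading term $\alog(\delta_e)$, one obtains $\sum_e\lambda_e\alog(\delta_e)\equiv\sum_e\lambda_e N(e^*)$; after tracking the $\length(e)$ normalisations relating these coefficients to those in (c), this is exactly the asserted vanishing $\sum_e\lambda_e\length(e)\tau_e\equiv0$.

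I expect the real work to be bookkeeping rather than anything conceptual. One must keep scrupulous track of the several meanings of ``$e$'' — an oriented edge of $\Q\cdot\mnEdge{\Graph}$, the grouplike generator $\Hiso(e)$ of the fundamental groupoid, and a class in $\H_1(\Graph)\subseteq\HAlg$ — and of the $\length(e)$ factors built into the definitions of $\Hiso^{-1}|_{\H_1(\Graph)}$, of $N$ on $\HAlg$, and of the cycle (intersection-length) pairing, so that (a)--(c) line up precisely as the antisymmetry, cycle, and closedness conditions cutting out $0$ inside $\H_1(\Graph)\otimes(\Q\cdot\mnEdge{\Graph})$. Once the normalisations are pinned down, everything reduces to taking already-established relations modulo $\W_{-2}\HAlg$.
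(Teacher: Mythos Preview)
Your proposal is correct and follows essentially the same approach as the paper: reduce to the three conditions (a)--(c) via the homology/cycle-pairing argument, then derive them from the edge relation, the vertex relation, and the defining formula for $\Hiso^{-1}|_{\H_1(\Graph)}$ reduced modulo $\W_{-2}$. Your explicit identification $\gr^\M_{-2}\gr^\W_{-1}\HAlg\cong\H_1(\Graph)$ and your flagging of the $\length(e)$ bookkeeping in (c) are exactly the right level of care---indeed, the leading term of $\frac{\exp(\length(e)\ad_{e^*})-1}{\ad_{e^*}}(\alog(\delta_e))$ is $\length(e)\alog(\delta_e)$, which is what matches the $\length(e)$ in condition (c).
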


\begin{proof}[Proof of Lemma~\ref{lem:equivariance}]
We proceed by strong induction on $k$. One readily verifies that for composable elements $\gamma,\gamma'$ of $\gr^\M_\bullet\O(\pi_1^\Q(\rGraph))^\dual$ we have
\begin{equation}\label{eq:equivariance_composition}
\ad_N^k\Hiso(\gamma'\gamma) = \sum_{i+j=k}{k\choose i}\ad_N^i\Hiso(\gamma')\cdot\ad_N^j\Hiso(\gamma).
\end{equation}
Since by assumption $\ad_N^i\Hiso$ is $\W$-filtered of degree $-i-1$ for all $0<i<k$, it suffices to check that $\ad_N^k\Hiso$ is $\W$-filtered of degree $-k-1$ on generators, i.e.\ to verify the following:
\begin{enumerate}
	\item\label{lempart:equivariance_edges} $\ad_N^k\Hiso(e)\in\W_{-k-1}\HAlg$ for all rational edges $e$;
	\item\label{lempart:equivariance_loops} $\ad_N^k\Hiso(\gamma)\in\W_{-k-2}\HAlg$ for all loops $\gamma\in\pi_1^\Q(\Graph)$ in the underlying graph $\Graph$;
	\item\label{lempart:equivariance_deltas} $\ad_N^k\Hiso(\log(\delta_e))\in\W_{-k-2}\HAlg$ for all rational edges $e$;
	\item\label{lempart:equivariance_betas} $\ad_N^k\Hiso(\log(\beta_{v,i})),\ad_N^k\Hiso(\log(\beta_{v,i}'))\in\W_{-k-2}\HAlg$ for all vertices $v$ and all integers $1\leq i\leq g(v)$; and
	\item\label{lempart:equivariance_half-edge_deltas} $\ad_N^k\Hiso(\log(\delta_e))\in\W_{-k-3}\HAlg$ for all half-edges $e$.
\end{enumerate}
Of these, parts~\eqref{lempart:equivariance_betas} and~\eqref{lempart:equivariance_half-edge_deltas} are immediate, since the relevant quantities are $0$.

We begin by proving~\eqref{lempart:equivariance_edges}. For an edge $e$ and $s\in[0,\length(e)]\cap\Q$, we write $e_s$ for the rational edge from $\source(e)$ to the rational vertex a distance $s$ along $e$. It follows from Lemma~\ref{lem:C-K_on_edges} that the function $s\mapsto\ad_N^k\Hiso(e_s)$ is coordinatewise polynomial in $s$, and that its derivative is given by
\begin{equation}\label{eq:differentiate_adjoint_along_edge}
\scalebox{0.83}{
$\displaystyle{\dby{s}\ad_N^k\Hiso(e_s) = \sum_{i+j=k}{k\choose i}\Bigl(N^i(e^*)-iN^{i-1}\bigl(\exp(se^*)\alog(\delta_e)\exp(-se^*)\bigr)\Bigr)\cdot\ad_N^j\Hiso(e_s).}$
}
\end{equation}
(For instance, one can verify~\eqref{eq:differentiate_adjoint_along_edge} at $s=0$ using Lemma~\ref{lem:C-K_on_edges} and the description of $N$ in Proposition~\ref{prop:explicit_hopf_gpd} and then use~\eqref{eq:equivariance_composition}.)

Now our inductive hypothesis implies that $\ad_N^j\Hiso(e_s)\in\W_{-j}$ for all $0\leq j\leq k$. We also have $N^i(e^*)-iN^{i-1}\left(\exp(se^*)\alog(\delta_e)\exp(-se^*)\right)\in\W_{-i-1}$ for all $0\leq i\leq k$: for $i=0$ this is automatic; for $i=1$ this follows from Proposition~\ref{prop:alog(delta)}; and for $2\leq i\leq k$ this follows from our inductive assumption since $N^{i-1}\left(\exp(se^*)\alog(\delta_e)\exp(-se^*)\right)=\ad_N^{i-1}\Hiso(e_s\log(\delta_e)e_s^{-1})$. Putting this together with~\eqref{eq:differentiate_adjoint_along_edge} shows that $\dby{s}\ad_N^k\Hiso(e_s)\in\W_{-k-1}$, and hence we have $\ad_N^k\Hiso(e)\in\W_{-k-1}$ also. Thus, we have proven~\eqref{lempart:equivariance_edges}.

Next, we prove~\eqref{lempart:equivariance_loops}. For this, we note that for any edge $e$ we have $\ad_N^j\Hiso(e_s)\in\W_{-j-1}$ for all $0<j\leq k$, by our inductive hypothesis and part~\eqref{lempart:equivariance_edges}. It thus follows from~\eqref{eq:differentiate_adjoint_along_edge} that
\[
\dby{s}\ad_N^k\Hiso(e_s) \equiv N^k(e^*) - kN^{k-1}\left(\exp(se^*)\alog(\delta_e)\exp(-se^*)\right)
\]
modulo $\W_{-k-2}$. Integrating up and using Proposition~\ref{prop:loop_integral_along_edge}, we see that
\begin{equation}\label{eq:differential_of_adjoint_integrated}
\ad_N^k\Hiso(e) \equiv \length(e) N^k(e^*) - kN^{k-1}\left(\frac{\exp(\length(e)\ad_{e^*})-1}{\ad_{e^*}}\left(\alog(\delta_e)\right)\right)
\end{equation}
modulo $\W_{-k-2}$, for all rational edges $e$.

Now it follows from~\eqref{eq:equivariance_composition} that, modulo $\W_{-k-2}$, $\ad_N^k\Hiso$ is additive with respect to composition of paths in $\Graph$. Hence if $\gamma$ is a loop in $\Graph$ with associated homology class $[\gamma]=\sum_{e\in\plEdge{\Graph}}\lambda_ee$, we have
\[
\ad_N^k\Hiso(\gamma) \equiv \sum_{e\in\plEdge{\Graph}}\lambda_e\ad_N^k\Hiso(e) \equiv N^{k-1}([\gamma])-k N^{k-1}([\gamma]) = 0
\]
modulo $\W_{-k-2}$, using~\eqref{eq:differential_of_adjoint_integrated} and~\eqref{eq:homology_classes}. Thus, we have proven~\eqref{lempart:equivariance_loops}.

Finally, we prove~\eqref{lempart:equivariance_deltas}. We follow a similar strategy to Proposition~\ref{prop:alog(delta)}, showing that the quantities
\[
\tau_e := \ad_N^k\Hiso(\log(\delta_e)) = N^k\alog(\delta_e)
\]
in $\HAlg/\W_{-k-2}$ are all zero by verifying the equalities~\eqref{proppart:alog_edge}--\eqref{proppart:alog_loop}. Applying the operator $N$ to identity~\eqref{eq:differential_of_adjoint_integrated} shows that
\[
N\ad_N^k\Hiso(e) \equiv -k\length(e)N^k\alog(\delta_e)-k\frac{\length(e)^{k+1}}{k+1}\ad_{N(e^*)}^k\left(\alog(\delta_e)\right) \equiv -k\length(e)\tau_e
\]
modulo $\W_{-k-2}$, using Proposition~\ref{prop:alog(delta)}. Since $N\ad_N^k\Hiso$ is additive, modulo $\W_{-k-2}$, with respect to composition of paths, we obtain~\eqref{proppart:alog_edge} immediately, and~\eqref{proppart:alog_loop} follows from~\eqref{lempart:equivariance_loops}. Condition~\eqref{proppart:alog_vertex} follows from the vertex relations (cf.\ Remark \ref{rmk:easier_vertex_relation}), which imply that
\[
\sum_{\substack{e\in\Edge{\Gamma}\\\source(e)=v}}\tau_e = -N^k(\alog(\delta_v)) - \sum_{\substack{e\in\HEdge{\Gamma}\\\source(e)=v}}N^k(\alog(\delta_e)) = 0
\]
for all vertices $v$. Thus, we have proven~\eqref{lempart:equivariance_deltas}.
\end{proof}

\section{The non-abelian Kummer map for reduction graphs}
\label{s:n-a_kummer_for_graphs}

In this section, we finally relate our combinatorial constructions to the non-abelian Kummer map for curves, as defined in \S\ref{ss:nonab_Kummer}. To do so, we will first give an explicit combinatorial definition of a ``non-abelian Kummer map'' for a reduction graph, which we will show agrees with the corresponding map for curves in Theorem~\ref{thm:curve_kummer_is_graph_kummer}. This reduces the computation of the non-abelian Kummer map for a curve (as in Theorem~\ref{thm:description_of_M-graded}) to a purely combinatorial explicit calculation, which will be carried out in \S\ref{s:computation}.

\smallskip

To begin with, we define the space which will be the codomain of the non-abelian Kummer map, the space $\V$ appearing in Theorem~\ref{thm:description_of_kummer}.

\begin{definition}\label{def:V}
Let $\rGraph$ be a reduction graph, and let $\LAlg=\HAlg^\prim$ denote the bigraded Lie algebra of primitive elements of $\HAlg$. Thus $\LAlg$ is the bigraded pro-nilpotent Lie algebra generated by the same elements as $\HAlg$ in Construction~\ref{cons:HAlg}, subject to the same surface relation. We define the $\W$-graded subspace $\V\leq\gr^\M_{-2}\LAlg$ where $\gr^\W_{-k}\V$ consists of those elements such that $N^{k-1}(\gamma)=0$. In other words, $\V=\coM_{-2}\gr^\M_{-2}\LAlg$, where $\coM_\bullet$ is the filtration defined in Definition~\ref{def:coM}, restricted to~$\LAlg$.
\end{definition}

The definition of the non-abelian Kummer map for a reduction graph derives from the following easy observation.

\begin{proposition}
Let $\rGraph$ be a reduction graph $\gamma_{x,y}^\can\in\gr^\M_0\O(\rGraph;x,y)^\dual$ the canonical path between two rational vertices $x,y\in\QVert{\Graph}$. Then $\Hiso N(\gamma_{x,y}^\can)\in\V$, where $\Hiso\colon\gr^\M_{-2}\O(\pi_1^\Q(\rGraph;x,y))^\dual\isoarrow\gr^\M_{-2}\HAlg$ is the isomorphism from Theorem~\ref{thm:description_of_M-graded}.
\begin{proof}
There are two things to prove: that $\Hiso N(\gamma_{x,y}^\can)=\Hiso(\gamma_{y,x}^\can N(\gamma_{x,y}^\can))$ is a primitive element of $\HAlg$, and that it lies in $\coM_{-2}$. The former follows formally from the fact that $\gamma_{x,y}^\can$ is grouplike and $N$ is a coderivation. The latter follows from Lemma~\ref{lem:equivariance_coM} since $\Hiso N(\gamma_{x,y}^\can)=-(N\Hiso-\Hiso N)(\gamma_{x,y}^\can)\in\coM_{-2}\gr^\M_{-2}\HAlg$.
\end{proof}
\end{proposition}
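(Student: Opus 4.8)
The plan is to verify the two conditions defining membership in $\V = \coM_{-2}\gr^\M_{-2}\LAlg$ (Definition~\ref{def:V}) separately: that $\Hiso N(\gamma^\can_{x,y})$ is a primitive element of $\gr^\M_{-2}\HAlg$ (i.e.\ lies in $\LAlg=\HAlg^\prim$), and that it lies in the subspace $\coM_{-2}$.

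Primitivity is purely formal. Since $\gamma^\can_{x,y}$ is grouplike (Lemma~\ref{lem:basic_can_path}) and $N$ is a coderivation, the induced comultiplication on the torsor element $N(\gamma^\can_{x,y})\in\gr^\M_{-2}\O(\pi_1^\Q(\rGraph;x,y))^\dual$ satisfies $\Delta(N(\gamma^\can_{x,y}))=N(\gamma^\can_{x,y})\otimes\gamma^\can_{x,y}+\gamma^\can_{x,y}\otimes N(\gamma^\can_{x,y})$; pre- and post-composing with $\gamma^\can_{y,x}=(\gamma^\can_{x,y})^{-1}$ and using $\gamma^\can_{y,x}\gamma^\can_{x,y}=1$ (Lemma~\ref{lem:basic_can_path}) turns $N(\gamma^\can_{x,y})$ into a primitive element of the based Hopf algebra $\gr^\M_{-2}\O(\pi_1^\Q(\rGraph,x))^\dual$. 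Because $\Hiso$ is an isomorphism of complete Hopf groupoids (Theorem~\ref{thm:description_of_M-graded}), and because the conjugation just carried out is exactly the trivialisation of Proposition~\ref{prop:trivialisation} that is built into the definition of $\Hiso$ on torsors, the image $\Hiso N(\gamma^\can_{x,y})$ is a primitive element of $\HAlg$.

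For the $\coM_{-2}$ statement the key input is Lemma~\ref{lem:equivariance_coM}, which says that $\ad_N\Hiso=N\Hiso-\Hiso N$ is $\coM$-filtered of degree $-2$. First I would note that $\Hiso(\gamma^\can_{x,y})=1\in\gr^\M_0\HAlg$: under the trivialisation of Proposition~\ref{prop:trivialisation} the canonical path becomes the identity loop (Lemma~\ref{lem:basic_can_path}), and $\Hiso$ restricts on $\gr^\M_0$ to the Cheng--Katz duality isomorphism, which by construction sends the canonical path to $1$ (Definition~\ref{def:can_paths}). Since $N$ is a derivation, $N\Hiso(\gamma^\can_{x,y})=N(1)=0$, so $\Hiso N(\gamma^\can_{x,y})=-(\ad_N\Hiso)(\gamma^\can_{x,y})$. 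As $\gamma^\can_{x,y}\in\coM_0$ by Proposition~\ref{prop:coM_basics}, Lemma~\ref{lem:equivariance_coM} gives $(\ad_N\Hiso)(\gamma^\can_{x,y})\in\coM_{-2}\gr^\M_{-2}\HAlg$, hence the same for $\Hiso N(\gamma^\can_{x,y})$. Together with primitivity this places $\Hiso N(\gamma^\can_{x,y})$ in $\coM_{-2}\gr^\M_{-2}\LAlg=\V$.

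None of this requires any calculation beyond what is already packaged in Lemma~\ref{lem:equivariance_coM} (and, behind it, the inductive estimate of Lemma~\ref{lem:equivariance}); the only thing demanding care is consistency of notation across the two parts -- specifically making sure that the conjugations-by-canonical-paths hidden inside ``$\Hiso$ applied to a torsor element'' are the same in the primitivity argument and in the $\coM$-degree computation, so that the element shown to be primitive is literally the one shown to lie in $\coM_{-2}$.
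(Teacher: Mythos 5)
Your proposal is correct and follows essentially the same route as the paper's proof: primitivity from the grouplike property of $\gamma_{x,y}^\can$ together with $N$ being a coderivation, and membership in $\coM_{-2}$ via the identity $\Hiso N(\gamma_{x,y}^\can)=-(N\Hiso-\Hiso N)(\gamma_{x,y}^\can)$ combined with Lemma~\ref{lem:equivariance_coM}. You simply make explicit the steps ($\Hiso(\gamma_{x,y}^\can)=1$, hence $N\Hiso(\gamma_{x,y}^\can)=0$) that the paper leaves implicit.
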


\begin{definition}[Non-abelian Kummer map]\label{def:n-a_kummer_graph}\index{non-abelian Kummer map $\classify$}
Let $\rGraph$ be a reduction graph with a basepoint $b\in\QVert{\Graph}$. We define a map\[\classify\colon\QVert{\Graph}\rightarrow\V,\]called the \emph{non-abelian Kummer map}, by\[\classify(x):=\Hiso N(\gamma_{b,x}^\can).\]

For a positive integer $n$, we denote by $\classifyeq n\colon\QVert{\Graph}\rightarrow\gr^\W_{-n}\V$ the composite of $\classify$ with the projection $\V\twoheadrightarrow\gr^\W_{-n}\V$ coming from the $\W$-grading on $\V$, so that $\classify$ is the product of the maps $\classifyeq n$. We also denote by $\classifyleq n\colon\QVert{\Graph}\rightarrow\V/\W_{-n-1}=\prod_{r=1}^n\gr^\W_{-r}\V$ the composite of $\classify$ with the quotient map $\V\rightarrow\V/\W_{-n-1}$, so that $\classifyleq n$ is the product of the maps $\classifyeq r$ for $1\leq r\leq n$.
\end{definition}

\begin{remark}\label{rmk:n-a_kummer_indept_basepoint}
We are choosing to suppress the basepoint $b$ from the notation for the non-abelian Kummer map, since changing the basepoint only changes $\classify$ by an additive constant. This follows from the easily verified identity $\Hiso N(\gamma_{u,w}^\can)=\Hiso N(\gamma_{u,v}^\can)+\Hiso N(\gamma_{v,w}^\can)$, so that changing the basepoint from $b$ to $b'$ changes the non-abelian Kummer map by $\Hiso N(\gamma_{b',b}^\can)=-\classify(b')$.
\end{remark}

Before we embark on relating this graph-theoretically defined map $\classify$ to the usual non-abelian Kummer map for curves from \S\ref{ss:nonab_Kummer}, let us briefly record the following proposition, interpreting the non-abelian Kummer map as the map controlling the existence of $N$-invariant paths. This will not be directly used in the sequel, but provides some purely combinatorial motivation for the definition of $\classify$.

\begin{proposition}\label{prop:invt_paths_are_canonical}
Let $\rGraph$ be a reduction graph with a basepoint $b\in\QVert{\Graph}$, and let $n\in\N$. Then the map\[N\colon\gr^\M_0\O(\pi_1^\Q(\rGraph;x,y))^\dual/\W_{-n-1}\rightarrow\gr^\M_{-2}\O(\pi_1^\Q(\rGraph;x,y))^\dual/\W_{-n-1}\]has non-trivial kernel if and only if $\classifyleq n(x)=\classifyleq n(y)$, i.e.\ $\classifyeq r(x)=\classifyeq r(y)$ for all $1\leq r\leq n$. When this occurs, the kernel is one-dimensional, spanned by the canonical path $\gamma_{x,y}^\can$.
\begin{proof}
Since $\Hiso N(\gamma_{x,y}^\can)=\classifyleq n(y)-\classifyleq n(x)$, we just need to verify that the kernel of $N$ is always contained in the span of $\gamma_{x,y}^\can$. Suppose that $\gamma$ is an element of the kernel, and write $\Hiso(\gamma)=\gamma_0+\gamma_1$ with $\gamma_0\in\gr^\W_0\gr^\M_0\O(\pi_1^\Q(\rGraph;x,y))^\dual$ and $\gamma_1\in\prod_{r=1}^n\gr^\W_{-r}\gr^\M_0\O(\pi_1^\Q(\rGraph;x,y))^\dual$. It follows from Lemma~\ref{lem:equivariance_coM} that $N\Hiso(\gamma)=N(\gamma_1)\in\coM_{-2}\gr^\M_{-2}\HAlg$; this implies that $\gamma_1\in\coM_{-2}\gr^\M_0\HAlg$ by definition, and hence $\gamma_1=0$ by Proposition~\ref{prop:coM_basics}. Thus $\Hiso(\gamma)=\gamma_0\in\Q$, i.e.\ $\gamma$ is in the $\Q$-span of $\gamma_{x,y}^\can$ as desired.
\end{proof}
\end{proposition}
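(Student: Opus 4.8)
The plan is to reduce the whole statement to the single claim that the kernel of $N$ on $\gr^\M_0\O(\pi_1^\Q(\rGraph;x,y))^\dual/\W_{-n-1}$ is always contained in the line $\Q\cdot\gamma_{x,y}^\can$, and then to read everything off from the identity
\[
\Hiso N(\gamma_{x,y}^\can)=\classifyleq n(y)-\classifyleq n(x)
\]
(understood modulo $\W_{-n-1}$), which itself follows from Definition~\ref{def:n-a_kummer_graph} together with the additivity relation $\Hiso N(\gamma_{u,w}^\can)=\Hiso N(\gamma_{u,v}^\can)+\Hiso N(\gamma_{v,w}^\can)$ of Remark~\ref{rmk:n-a_kummer_indept_basepoint}, applied with $u=b$, $v=x$, $w=y$. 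Since $\gamma_{x,y}^\can$ is grouplike its $\gr^\W_0$-component is $1$, so its reduction modulo $\W_{-n-1}$ is nonzero and $\Q\cdot\gamma_{x,y}^\can$ is a genuine line; recall also that $\Hiso$ sends $\gamma_{x,y}^\can$ to $1$. Granting the containment, if $\classifyleq n(x)=\classifyleq n(y)$ then the displayed identity and the fact that $\Hiso$ is a strict $\W$-filtered isomorphism (Theorem~\ref{thm:description_of_M-graded}) give $N(\gamma_{x,y}^\can)\equiv 0\pmod{\W_{-n-1}}$, so the kernel contains and hence equals $\Q\cdot\gamma_{x,y}^\can$; conversely a nonzero kernel must be all of $\Q\cdot\gamma_{x,y}^\can$, so $\gamma_{x,y}^\can$ lies in it and $\classifyleq n(y)-\classifyleq n(x)=\Hiso N(\gamma_{x,y}^\can)=0$. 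This also yields the one-dimensionality.

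To prove the containment, I would take $\gamma$ in the kernel, apply $\Hiso$, and split $\Hiso(\gamma)=\gamma_0+\gamma_1$ along the canonical splitting of the $\W$-filtration on $\gr^\M_0\HAlg/\W_{-n-1}$, with $\gamma_0\in\gr^\W_0\gr^\M_0\HAlg=\Q$ and $\gamma_1\in\prod_{r=1}^n\gr^\W_{-r}\gr^\M_0\HAlg$. As $N$ annihilates the constant $\gamma_0$, we get $N\Hiso(\gamma)=N(\gamma_1)$; on the other hand $N\Hiso(\gamma)=(N\Hiso-\Hiso N)(\gamma)=\ad_N\Hiso(\gamma)$ because $\gamma$ lies in the kernel of $N$. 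Now Lemma~\ref{lem:equivariance_coM} says $\ad_N\Hiso$ is $\coM$-filtered of degree $-2$, and $\gamma\in\gr^\M_0\HAlg=\coM_0$ by Proposition~\ref{prop:coM_basics}, so $N(\gamma_1)\in\coM_{-2}\gr^\M_{-2}\HAlg$.

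The remaining step is a bookkeeping check with the $\coM$-filtration on $\HAlg$ from Definition~\ref{def:coM}: the membership $N(\gamma_1)\in\coM_{-2}\gr^\M_{-2}\HAlg$ forces $\gamma_1\in\coM_{-2}\gr^\M_0\HAlg$, since for $j\geq 1$ the defining condition $N^j(\gamma_1)\in\W_{-1-j}\gr^\M_{-2j}\HAlg$ is exactly the $(j-1)$-st defining condition for $N(\gamma_1)$ to lie in $\coM_{-2}\gr^\M_{-2}$, while for $j=0$ it is just $\gamma_1\in\W_{-1}$. But $\coM_{-2}\gr^\M_0\HAlg\leq\M_{-2}\gr^\M_0\HAlg=0$ by Proposition~\ref{prop:coM_basics}, so $\gamma_1=0$, whence $\Hiso(\gamma)=\gamma_0\in\Q$ and $\gamma\in\Q\cdot\gamma_{x,y}^\can$. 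I expect this last $\coM$-filtration manipulation to be the only part requiring any care; everything else is formal bookkeeping with the isomorphism $\Hiso$ and the splitting of $\W_\bullet$.
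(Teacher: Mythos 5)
Your proposal is correct and follows essentially the same route as the paper's proof: reduce to showing the kernel is contained in $\Q\cdot\gamma_{x,y}^\can$ via the identity $\Hiso N(\gamma_{x,y}^\can)=\classifyleq n(y)-\classifyleq n(x)$, split $\Hiso(\gamma)=\gamma_0+\gamma_1$ using the $\W$-splitting of $\HAlg$, use Lemma~\ref{lem:equivariance_coM} to place $N(\gamma_1)=\ad_N\Hiso(\gamma)$ in $\coM_{-2}\gr^\M_{-2}\HAlg$, deduce $\gamma_1\in\coM_{-2}\gr^\M_0\HAlg$, and kill it by Proposition~\ref{prop:coM_basics}. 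Your explicit index check unwinding the $\coM$-condition (the $j\geq1$ conditions for $\gamma_1$ matching the $(j-1)$-st conditions for $N(\gamma_1)$, plus $\gamma_1\in\W_{-1}$ for $j=0$) is exactly the step the paper compresses into ``by definition,'' and it is carried out correctly.
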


\subsection{Relation to non-abelian Kummer maps of curves}

Fix a smooth connected curve $X$ over a $p$-adic number field $K$, a basepoint $b\in X(K)$, and a prime $\ell$. Let $\rGraph$ denote the reduction graph of $X$, and choose as in Remark~\ref{rmk:pic-lef_for_groupoids} a $\W$- and $\M$-filtered and $N$-equivariant equivalence\[\vartheta\colon\pi_1^{\Q_\ell}(X_{\overline K})\isoarrow\pi_1^{\Q_\ell}(\rGraph)\]of $\Q_\ell$-pro-unipotent fundamental groupoids lying over the reduction map $X(\overline K)\rightarrow\QVert{\Graph}$, for some prime $\ell\neq p$. We write $U=\pi_1^{\Q_\ell}(X_{\overline K},b)$.

The remainder of this section will be devoted to proving the compatibility between the non-abelian Kummer maps of $X$ and $\rGraph$, proving the following result.

\begin{theorem}\label{thm:curve_kummer_is_graph_kummer}
There is an injection $\iota\colon\varinjlim_L\H^1(G_L,U))\hookrightarrow\V_{\Q_\ell}:=\prod_{n>0}(\Q_\ell\otimes\gr^\W_{-n}\V)$ fitting into a commuting square
\begin{center}
\begin{tikzcd}
X(\overline K) \arrow{r}{\classify}\arrow{d}{\red} & \varinjlim_L\H^1(G_L,U) \arrow[hook]{d}{\iota} \\
\QVert{\Graph} \arrow{r}{\classify} & \V_{\Q_\ell}
\end{tikzcd}
\end{center}
where the leftmost vertical arrow is the reduction map and the two horizontal maps are the non-abelian Kummer maps defined in \S\ref{ss:nonab_Kummer} and Definition~\ref{def:n-a_kummer_graph}.

The map $\iota$ is $\Q_\ell$-linear and $\W$-graded for graded vector space structure on the set $\varinjlim_L\H^1(G_L,U(\Q_\ell))$ described in Remark~\ref{rmk:cohomology_grading}, so in particular factors through injections $\varinjlim_L\H^1(G_L,U/\W_{-n-1})\hookrightarrow(\V/\W_{-n-1})_{\Q_\ell}$ for every $n$.
\end{theorem}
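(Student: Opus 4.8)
\textbf{Proof strategy for Theorem~\ref{thm:curve_kummer_is_graph_kummer}.}

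The plan is to build the map $\iota$ by combining the identification of Galois cohomology with canonical subquotients of the Lie algebra (Theorem~\ref{thm:cohomology_is_vector_space}) on the curve side with the $\M$-graded trivialisation of the fundamental groupoid (Theorem~\ref{thm:description_of_M-graded}) on the graph side, using the Picard--Lefschetz equivalence $\vartheta$ (Remark~\ref{rmk:pic-lef_for_groupoids}) to pass between them. Concretely: by Theorem~\ref{thm:cohomology_is_vector_space} we have a natural, restriction-compatible identification $\H^1(G_L,U)\isoarrow\Hom(\Q_\ell(1),\Lie(U)^\can)^{G_L}$, and passing to the colimit over $L$ (using that the restriction maps are eventually isomorphisms, per the Corollary to Theorem~\ref{thm:cohomology_is_vector_space}) gives $\varinjlim_L\H^1(G_L,U)\isoarrow\varinjlim_L\Hom(\Q_\ell(1),\Lie(U)^\can)^{G_L}\hookrightarrow\Hom(\Q_\ell(1),\Lie(U)^\can)$. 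The first step is therefore to reconcile $\Lie(U)^\can$ with $\V_{\Q_\ell}$. Since $\vartheta$ is a $\W$- and $\M$-filtered, $N$-equivariant isomorphism $\Lie(U)\isoarrow\Lie(\pi_1^{\Q_\ell}(\rGraph,\red(b)))$, and since $\Lie(U)^\can\leq\gr^\M_{-2}\Lie(U)$ is defined purely in terms of the $\W$-filtration and $N$ (Definition~\ref{def:canonical}), $\vartheta$ identifies $\Lie(U)^\can$ with the analogously-defined subspace of $\gr^\M_{-2}\Lie(\pi_1^{\Q_\ell}(\rGraph,\red(b)))$. Then Theorem~\ref{thm:description_of_M-graded} (more precisely the $N$-equivariant isomorphism $\gr^\W_\bullet\Hiso$ of Lemma~\ref{lem:equivariance}, or the $\coM$-graded statement of Lemma~\ref{lem:equivariance_coM}) identifies this latter subspace with $\V_{\Q_\ell}=\coM_{-2}\gr^\M_{-2}\LAlg_{\Q_\ell}$, the point being that the condition $N^{k-1}\circ\pi_k=0$ defining $\Lie(U)^\can$ matches exactly the condition $N^{k-1}(\gamma)=0$ on $\gr^\W_{-k}$ defining $\V$ (Definition~\ref{def:V}). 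This gives the $\Q_\ell$-linear, $\W$-graded injection $\iota$, with the Frobenius-invariance condition $(-)^{G_K}$ imposing no further constraint after the colimit (or, if one prefers to keep it, $\iota$ lands in the subspace cut out by Frobenius-invariance, which one checks matches the combinatorial description).

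The second and main step is the commutativity of the square, i.e.\ that for $x\in X(\overline K)$ the cohomology class $\classify(x)\in\varinjlim_L\H^1(G_L,U)$ maps under $\iota$ to the combinatorial class $\classify(\red(x))=\Hiso N(\gamma_{\red(b),\red(x)}^{\can})\in\V_{\Q_\ell}$. Here I would unwind the description of $\classify(x)$ given by Remark~\ref{rmk:canonical_elt}: the class $\classify(x)$ is represented by the canonical cocycle $c$ with $\frac1e\log(c(\sigma^e))\in\Lie(U)^\can$, equivalently the path-torsor $\pi_1^{\Q_\ell}(X_{\overline K};b,x)$ has a canonical element $\gamma^{\can}_x$ (unique up to $\M_{-1}U$) characterised by $\frac1e\log((\gamma^{\can}_x)^{-1}\sigma^e(\gamma^{\can}_x))\in\Lie(U)^\can$, and the value $\classify(x)\in\Hom(\Q_\ell(1),\Lie(U)^\can)$ is precisely $t_\ell(\sigma)\mapsto\log((\gamma^{\can}_x)^{-1}\sigma(\gamma^{\can}_x))$ read in $\gr^\M_{-2}$. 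Transporting via $\vartheta$, and recalling that $\sigma$ acts on $\pi_1^{\Q_\ell}(\rGraph)$ by edge-twists with $N=\log\sigma$ acting as in Proposition~\ref{prop:explicit_hopf_gpd}, the claim becomes: the canonical path $\gamma^{\can}_{\red(b),\red(x)}$ of Cheng--Katz (Definition~\ref{def:can_paths}) is, up to $\M_{-1}$, the canonical element of the $\Q_\ell$-pro-unipotent path-torsor in the sense of Remark~\ref{rmk:canonical_elt} — this is exactly the content promised by Proposition~\ref{prop:canonical_path_is_canonical_path} (referenced in Remark~\ref{rmk:canonical_elt}), and granting it, the value $\log((\gamma^{\can})^{-1}\sigma^e(\gamma^{\can}))/e$ read off in $\gr^\M_{-2}$ is precisely $N(\gamma^{\can}_{\red(b),\red(x)})$, whose image under $\Hiso$ is $\classify(\red(x))$ by Definition~\ref{def:n-a_kummer_graph}. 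I would organise this around the ``weak'' $N$-equivariance of $\Hiso$: $\Hiso N(\gamma^{\can})=-(N\Hiso-\Hiso N)(\gamma^{\can})\in\coM_{-2}\gr^\M_{-2}\HAlg$ (Lemma~\ref{lem:equivariance_coM}), which is exactly why the construction is consistent with the $\Lie(U)^\can$-normalisation.

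The main obstacle I anticipate is precisely the matching of the two notions of ``canonical'' — the Cheng--Katz canonical path versus the Galois-theoretic canonical cocycle of Remark~\ref{rmk:canonical_elt} — together with the bookkeeping of the mismatch between the $N$-equivariance of $\gr^\W_\bullet\Hiso$ and the mere $\coM$-compatibility of $\Hiso$ itself. These two subtleties are really the same subtlety: the canonical path is characterised by a condition on $\gr^\W$-graded pieces (vanishing of $N^{k-1}$ on the weight $-k$ part), and one must check this is compatible with the filtered (not graded) nature of both the Kummer cocycle construction and the Picard--Lefschetz isomorphism. I would handle this by working level-by-level with the quotients $U/\W_{-n-1}$ and $\pi_1^{\Q_\ell}(\rGraph)/\W_{-n-1}$, on each of which Corollary~\ref{cor:mod_M-3}, Lemma~\ref{lem:canonical_cocycle} and the $\coM$-filtration machinery of \S\ref{ss:equivariance} reduce everything to finite-dimensional linear algebra; the compatibility with restriction to finite extensions $L/K$ (hence with the colimit) follows from the corresponding compatibility built into Theorem~\ref{thm:cohomology_is_vector_space} and Lemma~\ref{lem:graph_comparison_plus}. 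Injectivity of $\iota$ is then automatic since it is (the colimit of) a composite of bijections followed by the inclusion of a $G_L$-invariant subspace. Finally, $\Q_\ell$-linearity and $\W$-gradedness of $\iota$ are inherited from the corresponding properties recorded in Remark~\ref{rmk:cohomology_grading} for the curve side and Definition~\ref{def:V} for the graph side, since all intermediate identifications are $\W$-graded (resp.\ $\W$-filtered with graded associated maps) by construction.
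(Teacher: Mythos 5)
Your proposal is correct and follows essentially the same route as the paper: construct $\iota$ by composing the identification $\H^1(G_L,U)\iso\Hom(\Q_\ell(1),\Lie(U)^\can)^{G_L}$ of Theorem~\ref{thm:cohomology_is_vector_space} with the identification $\Lie(U)^\can\iso\V_{\Q_\ell}$ obtained from $\vartheta$, $\Hiso$ and the $\coM$-filtration machinery of Lemma~\ref{lem:equivariance_coM} (the paper's Proposition~\ref{prop:V_is_canonical}), and then deduce commutativity from the matching of the Cheng--Katz canonical path with the Galois-theoretic canonical element of Remark~\ref{rmk:canonical_elt} via the identity $\Hiso N(\gamma^\can)=-(N\Hiso-\Hiso N)(\gamma^\can)\in\coM_{-2}\gr^\M_{-2}\HAlg$ (the paper's Proposition~\ref{prop:canonical_path_is_canonical_path}), whose proof you sketch in the same way. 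The only quibble is your parenthetical suggestion that Frobenius-invariance imposes no constraint after the colimit over $L$ — it generally does, but since the theorem only asserts injectivity (not surjectivity onto $\V_{\Q_\ell}$), your hedged formulation suffices.
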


As a first step towards the proof of this theorem, let us construct the map $\iota\colon\varinjlim_L\H^1(G_L,U(\Q_\ell))\hookrightarrow\V_{\Q_\ell}$. A large part of this has already been achieved in Theorem~\ref{thm:cohomology_is_vector_space}, which provides an embedding
\[
\H^1(G_L,U) \isoarrow \Hom(\Q_\ell(1),\Lie(U)^\can)^{G_L} \hookrightarrow \Lie(U)^\can,
\]
where we identify $\Q_\ell(1)\nciso\Q_\ell$ using our choice of inertia generator $\sigma$.
We now identify the right-hand side with $\V_{\Q_\ell}$.

\begin{proposition}\label{prop:V_is_canonical}
Under the $\W$-filtered and $\M$-graded isomorphisms\[\gr^\M_\bullet\Lie(U)\xisoarrow{\vartheta}\gr^\M_\bullet\Lie\left(\pi_1^{\Q_\ell}(\rGraph,\red(b))\right)\xisoarrow{\Hiso}\LAlg_{\Q_\ell}\]from Theorem~\ref{thm:graph_comparison} and Theorem~\ref{thm:description_of_M-graded}, the subspace $\Lie(U)^\can\leq\gr^\M_{-2}\Lie(U)$ corresponds to the subspace $\V_{\Q_\ell}\leq\gr^\M_{-2}\LAlg_{\Q_\ell}$. The identification $\Lie(U)^\can\isoarrow\V_{\Q_\ell}$ is $\W$-graded with respect to the grading from Remark~\ref{rmk:cohomology_grading}.
\begin{proof}
If we let $\coM_\bullet$ denote the filtration on $\gr^\M_\bullet\Lie(U)$ defined as in Definition~\ref{def:coM}, then the definition of $\Lie(U)^\can$ asserts that $\Lie(U)^\can=\coM_{-2}\gr^\M_{-2}\Lie(U)$, and $\V:=\coM_{-2}\LAlg$ also by definition. The $N$-equivariant isomorphism $\vartheta$ is automatically a $\coM$-filtered isomorphism, as is $\Hiso$ by Lemma~\ref{lem:equivariance_coM}. The first part follows.

For the second part, we consider the square
\begin{center}
\begin{tikzcd}
\Lie(U)^\can \arrow{r}{\Hiso\vartheta}[swap]{\hiso}\arrow[hook]{d} & \V_{\Q_\ell} \arrow[hook]{d} \\
\prod_{k>1}\gr^\W_{-k}\gr^\M_{2-2k}\Lie(U) \arrow{r}{\Hiso\vartheta}[swap]{\hiso} & \prod_{k>1}\gr^\W_{-k}\gr^\M_{2-2k}\LAlg_{\Q_\ell},
\end{tikzcd}
\end{center}
where the vertical maps are given by (an appropriate projection of) $\prod_{k>1}N^{k-2}$. It follows from Lemma~\ref{lem:equivariance_coM} that the difference between the two diagonal composite maps lands in $\prod_{k>1}\coM_{-4}\gr^\W_{-k}\gr^\M_{2-2k}\LAlg_{\Q_\ell}$, which is zero by Proposition~\ref{prop:coM_basics}, and hence this square commutes. The bottom, right-hand and left-hand maps are all $\W$-graded (the latter by definition in Remark~\ref{rmk:cohomology_grading}), and hence so too is the top map.
\end{proof}
\end{proposition}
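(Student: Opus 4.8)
The plan is to recognise both $\Lie(U)^\can$ and $\V$ as the $\coM_{-2}$-part of the auxiliary filtration $\coM_\bullet$ of Definition~\ref{def:coM}, and then to use that $\coM_\bullet$ is manufactured entirely out of the $\W$-filtration, the $\M$-grading and the monodromy operator $N$. So I would first record the two identifications $\Lie(U)^\can=\coM_{-2}\gr^\M_{-2}\Lie(U)$ and $\V_{\Q_\ell}=\coM_{-2}\gr^\M_{-2}\LAlg_{\Q_\ell}$. The second is immediate from Definition~\ref{def:V}. The first is a direct comparison of Definition~\ref{def:canonical} — which defines $\Lie(U)^\can$ as the intersection over $k>0$ of the kernels of $N^{k-1}\circ\pi_k$, with $\pi_k\colon V\to V/\W_{-k-1}V$ as in Lemma~\ref{lemma:sigma_map} — with the recursive condition $N^j(\gamma)\in\W_{-2-j}\gr^\M_{-2-2j}\Lie(U)$ ($j\geq0$) defining $\coM_{-2}\gr^\M_{-2}\Lie(U)$; the translation uses that $\Lie(U)$ satisfies~\WM (Lemma~\ref{lem:weight-monodromy}), so that the $\W$-filtration on $\gr^\M_\bullet\Lie(U)$ splits $\M$-gradedly and $N^j$ is injective on the bigraded pieces $\gr^\W_{-k}\gr^\M_{-2}\Lie(U)$ until the target weight is exhausted, exactly as in Lemma~\ref{lemma:sigma_map} and Corollary~\ref{cor:graph_w-m}.

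Granting these identifications, the first assertion of the proposition is almost formal. The $\coM$-filtration is defined by conditions involving only $\W$, $\M$ and $N$, so every $\W$- and $\M$-filtered, $N$-equivariant isomorphism is automatically $\coM$-filtered. The isomorphism $\vartheta$ of Theorem~\ref{thm:graph_comparison}, in the $N$-equivariant form supplied by Remark~\ref{rmk:pic-lef_for_groupoids}, is such a map; and although $\Hiso$ is \emph{not} $N$-equivariant, Lemma~\ref{lem:equivariance_coM} tells us it is nonetheless a $\coM$-filtered isomorphism. Hence $\Hiso\vartheta$ is a $\coM$-filtered isomorphism in both directions, so it carries $\coM_{-2}\gr^\M_{-2}\Lie(U)$ onto $\coM_{-2}\gr^\M_{-2}\LAlg_{\Q_\ell}$, i.e.\ $\Hiso\vartheta(\Lie(U)^\can)=\V_{\Q_\ell}$.

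For the $\W$-gradedness of this identification I would compare the two grading isomorphisms directly. By Remark~\ref{rmk:cohomology_grading} the $\W$-grading on $\Lie(U)^\can$ is the one making $\prod_{k>1}N^{k-2}\circ\pi_k\colon\Lie(U)^\can\isoarrow\prod_{k>1}\ker\bigl(N\colon\gr^\M_{2-2k}\gr^\W_{-k}\Lie(U)\to\gr^\M_{-2k}\gr^\W_{-k}\Lie(U)\bigr)$ a graded map, and likewise for $\V$ (with vertical map an appropriate projection of $\prod_{k>1}N^{k-2}$); so it suffices to check that the evident square with these two vertical grading maps, and $\Hiso\vartheta$ along both rows, commutes. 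The two diagonal composites differ by the commutator of $\Hiso\vartheta$ with $\prod_{k>1}N^{k-2}$, which — iterating Lemma~\ref{lem:equivariance_coM}, using that $\vartheta$ commutes with $N$ — sends $\coM_{-2}$ into $\coM_{-4}$; precisely, on $\Lie(U)^\can$ the difference lands in $\prod_{k>1}\coM_{-4}\gr^\W_{-k}\gr^\M_{2-2k}\LAlg_{\Q_\ell}$. But by Proposition~\ref{prop:coM_basics} one has $\coM_{-4}\gr^\M_{2-2k}\LAlg\subseteq\W_{-(k-1)-4}\gr^\M_{2-2k}\LAlg=\W_{-k-3}\gr^\M_{2-2k}\LAlg$, whose $\gr^\W_{-k}$-component is zero since $-k-3<-k$. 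Hence the square commutes and $\Lie(U)^\can\isoarrow\V_{\Q_\ell}$ is $\W$-graded.

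The only genuinely delicate point is the first step: pinning down the identity $\Lie(U)^\can=\coM_{-2}\gr^\M_{-2}\Lie(U)$ with the shifts, projections and weight ranges all aligned. Once that dictionary is in place, everything else is a formal consequence of the $\coM$-compatibility results already established (Lemma~\ref{lem:equivariance_coM} and Proposition~\ref{prop:coM_basics}), and no new idea is required.
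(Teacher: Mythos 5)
Your proposal is correct and follows essentially the same route as the paper: identify both $\Lie(U)^\can$ and $\V_{\Q_\ell}$ with $\coM_{-2}\gr^\M_{-2}$ of their respective Lie algebras, deduce the first claim from the $\coM$-filtered nature of $\vartheta$ ($N$-equivariance) and of $\Hiso$ (Lemma~\ref{lem:equivariance_coM}), and prove $\W$-gradedness by showing the square with vertical maps $\prod_{k>1}N^{k-2}$ commutes because the discrepancy lands in $\prod_{k>1}\coM_{-4}\gr^\W_{-k}\gr^\M_{2-2k}\LAlg_{\Q_\ell}=0$ via Proposition~\ref{prop:coM_basics}. The only difference is that you spell out the dictionary $\Lie(U)^\can=\coM_{-2}\gr^\M_{-2}\Lie(U)$, which the paper treats as immediate from Definitions~\ref{def:canonical} and~\ref{def:coM}; this is a harmless (and accurate) elaboration.
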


Combined with Theorem~\ref{thm:cohomology_is_vector_space} (and identifying $\Q_\ell(1)\nciso\Q_\ell$ using our choice of $\sigma\in I_K$), this produces the desired embedding $\iota\colon\varinjlim_L\H^1(G_L,U)\hookrightarrow\V_{\Q_\ell}$ as the composite
\[
\varinjlim_L\H^1(G_L,U)\hookrightarrow\Lie(U)^\can\isoarrow\V_{\Q_\ell}.
\]

The other half of the proof of Theorem~\ref{thm:curve_kummer_is_graph_kummer} is provided by the following result.

\begin{proposition}\label{prop:canonical_path_is_canonical_path}
The canonical path $\gamma_{\red(b),\red(x)}^\can\in\gr^\M_0\pi_1^{\Q_\ell}(\rGraph;\red(b),\red(x))$ as defined in Definition~\ref{def:can_paths} corresponds to the canonical element of $\gr^\M_0\pi_1^{\Q_\ell}(X_{\overline K};b,x)$ from Remark~\ref{rmk:canonical_elt} under the isomorphism
\[
\vartheta\colon\pi_1^{\Q_\ell}(X_{\overline K};b,x) \isoarrow \pi_1^{\Q_\ell}(\rGraph;\red(b),\red(x))
\]
from Theorem~\ref{thm:graph_comparison} (cf.\ Remark~\ref{rmk:pic-lef_for_groupoids}), for any $x\in X(\overline K)$.
\begin{proof}
The canonical element $\gamma_{b,x}^\can\in\gr^\M_0\pi_1^{\Q_\ell}(X_{\overline K};b,x)$ is characterised by the fact that $\log\left((\gamma_{b,x}^\can)^{-1}\exp(N)(\gamma_{b,x}^\can)\right)\in\Lie(U)^\can$. On the other hand, Proposition~\ref{prop:V_is_canonical} and an easy calculation shows that the canonical path satisfies
\[
\Hiso\log\left((\gamma_{\red(b),\red(x)}^\can)^{-1}\exp(N)\left(\gamma_{\red(b),\red(x)}^\can\right)\right) = \Hiso N(\gamma_{\red(b),\red(x)}^\can) \in \V
\]
We are then done by Proposition~\ref{prop:V_is_canonical}.
\end{proof}
\end{proposition}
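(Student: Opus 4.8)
The proof of Proposition~\ref{prop:canonical_path_is_canonical_path} amounts to showing that $\vartheta$ carries the distinguished element characterised in Remark~\ref{rmk:canonical_elt} to the one characterised in Definition~\ref{def:can_paths}, so the plan is simply to match up the two characterisations. Recall from Remark~\ref{rmk:canonical_elt} that the canonical element $\gamma_{b,x}^\can\in\gr^\M_0\pi_1^{\Q_\ell}(X_{\overline K};b,x)(\Q_\ell)$ is the unique element (up to the $\M_{-1}$-action, but working in $\gr^\M_\bullet$ this ambiguity disappears) such that the $\sigma$-cocycle it defines is ``in canonical form'', i.e.\ such that $\log\bigl((\gamma_{b,x}^\can)^{-1}\exp(N)(\gamma_{b,x}^\can)\bigr)$ lies in $\Lie(U)^\can$; here I am using that $\vartheta$ is $N$-equivariant and that $\exp(N)$ agrees with the $\sigma$-action after passing to $\gr^\M_\bullet$ (Remark~\ref{rmk:pic-lef_for_groupoids}). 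On the other side, Definition~\ref{def:can_paths} characterises $\gamma_{\red(b),\red(x)}^\can$ by vanishing of all positive-degree higher cycle pairings, equivalently as the element of $\gr^\M_0\pi_1^\Q(\rGraph;\red(b),\red(x))^\dual$ mapping to $1$ under the Cheng--Katz trivialisation $\Hiso$ of Theorem~\ref{thm:c-k_duality}; in particular it is grouplike, so $\log\bigl((\gamma_{\red(b),\red(x)}^\can)^{-1}\exp(N)(\gamma_{\red(b),\red(x)}^\can)\bigr) = N(\gamma_{\red(b),\red(x)}^\can)$ by the usual grouplike identity (since $N$ is a coderivation annihilating $1$, and $\gamma^\can$ itself is grouplike, one computes $\log$ of the product as a single $N$-term modulo nothing in $\gr^\M_\bullet$).

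First I would record the ``easy calculation'' alluded to in the statement: for a grouplike $\gamma$ one has $(\gamma)^{-1}\exp(N)(\gamma) = \exp\bigl(\gamma^{-1}N(\gamma) + (\text{higher $N$-iterates})\bigr)$, and the key point is that after applying $\Hiso$ and using Lemma~\ref{lem:equivariance_coM}, the discrepancy between $\Hiso N(\gamma^\can_{\red(b),\red(x)})$ and $\Hiso\log\bigl((\gamma^\can)^{-1}\exp(N)(\gamma^\can)\bigr)$ lands in $\coM_{-4}$ of the relevant graded piece, hence is zero by Proposition~\ref{prop:coM_basics} since everything here lives in $\coM_{-2}\gr^\M_{-2}$. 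Concretely, $\Hiso N(\gamma^\can_{\red(b),\red(x)}) = -\ad_N\Hiso(\gamma^\can_{\red(b),\red(x)})$ (as $\Hiso$ is $N$-equivariant on the image of $1$, which is fixed), and by Lemma~\ref{lem:equivariance_coM} this is $\coM_{-2}$-filtered and in fact lies in $\V$ by the Proposition preceding Definition~\ref{def:n-a_kummer_graph}. So both distinguished elements satisfy: grouplike, and their ``log-monodromy-derivative'' lies in the subspace that Proposition~\ref{prop:V_is_canonical} identifies ($\Lie(U)^\can \leftrightarrow \V_{\Q_\ell}$).

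Next I would invoke the uniqueness half of Lemma~\ref{lem:canonical_cocycle}: the map $\Lie(U)^\can\times\gr^\M_0 U\to\gr^\M_{-2}U$, $(v,u)\mapsto u^{-1}\exp(v)\sigma(u)$, is a bijection. Pulling this across $\vartheta$ and $\Hiso$ and using Proposition~\ref{prop:V_is_canonical}, this says precisely that in $\gr^\M_0\pi_1^{\Q_\ell}(X_{\overline K};b,x)$ there is a \emph{unique} grouplike element whose log-monodromy-derivative lies in $\Lie(U)^\can$, namely $\gamma^\can_{b,x}$; and the analogous statement holds on the graph side with $\V_{\Q_\ell}$, picking out $\gamma^\can_{\red(b),\red(x)}$. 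Since $\vartheta$ is a $\W$- and $\M$-filtered, $N$-equivariant groupoid isomorphism (hence sends grouplikes to grouplikes) carrying $\Lie(U)^\can$ to $\V_{\Q_\ell}$, it must carry one distinguished element to the other. That finishes the proof. The only mildly delicate point — which I would treat as the main obstacle — is the bookkeeping for the ``easy calculation'': making sure that passing from $\exp(N)$ (or $\sigma$) to $N$ on the nose, and handling the higher iterates of $N$, really does produce only a $\coM_{-4}$ error and nothing in $\W$-degree that survives; this is exactly what Lemma~\ref{lem:equivariance_coM} together with Proposition~\ref{prop:coM_basics} is designed to control, so it should be a short argument once those are in hand.
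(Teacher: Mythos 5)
Your proposal is correct and follows essentially the same route as the paper: both match the two characterisations via Proposition~\ref{prop:V_is_canonical}, verify that the log-monodromy-derivative of the graph-theoretic canonical path is $\Hiso N(\gamma^\can_{\red(b),\red(x)})\in\V$ (using Lemma~\ref{lem:equivariance_coM}, exactly as in the proposition preceding Definition~\ref{def:n-a_kummer_graph}), and conclude by the uniqueness of the canonical element from Remark~\ref{rmk:canonical_elt}/Lemma~\ref{lem:canonical_cocycle}. Your treatment of the higher $N$-iterates is slightly more elaborate than necessary (and the assertion that the discrepancy lies in $\coM_{-4}$ of a single graded piece is not quite the right bookkeeping, since those terms live in $\M$-degrees $\leq-4$): the characterising property only concerns the $\gr^\M_{-2}$ component, where $\log\bigl((\gamma^\can)^{-1}\exp(N)(\gamma^\can)\bigr)$ agrees with $(\gamma^\can)^{-1}N(\gamma^\can)$ on the nose, which is precisely the paper's ``easy calculation''.
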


\begin{proof}[Proof of Theorem~\ref{thm:curve_kummer_is_graph_kummer}]
For any $x\in X(\overline K)$, we have
\begin{align*}
\iota(\classify(x)) &= \Hiso(\vartheta((\gamma_{b,x}^\can)^{-1}\exp(N)(\gamma_{b,x}^\can))) \\
 &= \Hiso((\gamma_{\red(b),\red(x)}^\can)^{-1}N(\gamma_{\red(b),\red(x)}^\can)) = \classify(\red(x))
\end{align*}
in $\V_{\Q_\ell}$, using the calculation in the proof of Proposition~\ref{prop:canonical_path_is_canonical_path}.
\end{proof}
\section{Height pairings and harmonic analysis on graphs}
\label{s:harmonic_analysis}

To complete the proof of Theorem \ref{thm:description_of_kummer}, it remains for us to show that the non-abelian Kummer map associated to a reduction graph $\rGraph$ is computed by a height pairing against the measures on $\rGraph$ to be given in Construction~\ref{cons:measures}. For us, this height pairing will be a $\Q$-valued positive-semidefinite symmetric pairing on a certain space $\bMeas^0(\rGraph)$ of \emph{piecewise polynomial measures with log poles on $\HEdge{\rGraph}$ of total mass $0$}, generalising the more well-known height pairing on the space $\Div^0(\Graph)$ of divisors of total mass $0$. The construction of this height pairing, which we review in this section, comes from harmonic analysis on graphs, for which we follow \cite{baker-faber}\footnote{Our presentation in this section differs from that in \cite{baker-faber} in several ways. Firstly, our graphs are always \emph{rationally} metrised so that we obtain a $\Q$-valued theory, and are permitted multiple edges and loops. Secondly and more importantly, the graphs which we consider need not be compact, due to the presence of half-edges. In the interests of simple exposition, we will simply cite the corresponding analogous results from \cite{baker-faber} in this section, leaving it to the reader to make the necessary adjustments to the proofs.}.

\subsection{The height pairing on divisors}\label{ss:on_divisors}

Before constructing the height pairing in full generality, let us begin by recalling the construction of the height pairing on divisors on a rationally metrised graph $\Graph$. Recall that a \emph{divisor} on $\Graph$ is just a formal $\Q$-linear combination $\sum_{v\in\Vert{\Graph}}\lambda_v\cdot v$ of vertices of $\Graph$. The space of divisors is denoted $\Div(\Graph)=\Q\cdot\Vert{\Graph}$, and the subspace $\Div^0(\Graph)$ of \emph{divisors of total mass $0$} is defined to be the kernel of the sum-of-coordinates map $\Div(\Graph)\rightarrow\Q$.

There is a linear map $\dLapl\colon\Div(\Graph)\rightarrow\Div(\Graph)$ determined by the \emph{Laplacian matrix}\index{Laplacian\dots!\dots matrix $\dLapl$} $\dLapl$, whose entries are given by\[\dLapl_{uv}=\begin{cases}\sum_{\source(e)=u\neq\target(e)}\frac1{\length(e)}&\text{if $u=v$,}\\-\sum_{\source(e)=u,\target(e)=v}\frac1{\length(e)}&\text{if $u\neq v$.}\end{cases}\]The Laplacian is a symmetric matrix, has one-dimensional kernel spanned by $\sum_{v\in\Vert{\Graph}}v$, and has image exactly the space $\Div^0(\Graph)$ \cite[Corollary 2]{baker-faber}. The \emph{height pairing} $\langle\mathbf u,\mathbf v\rangle$ of two elements $\mathbf u,\mathbf v\in\Div^0(\Graph)$ is then defined by\[\langle\mathbf u,\mathbf v\rangle:=\transp{\mathbf u}\dLapl^{-1}\mathbf v\](where $\dLapl^{-1}\mathbf v$ denotes any preimage of $\mathbf v$ -- the value of $\langle\mathbf u,\mathbf v\rangle$ is independent of this choice). This is a positive-definite symmetric pairing on $\Div^0(\Graph)$, which moreover satisfies the following additional inequality, coming from the theory of electrical circuits \cite[\S4]{baker-faber}.

\begin{proposition}[{\cite[Exercise~9]{baker-faber}}, {\cite[Lemma~2.17]{baker-faber:extra}}]\label{prop:bounds_on_potentials}
Let $u,v,x$ be vertices of a rationally metrised graph $\Graph$. Then we have\[0\leq\langle x-u,v-u\rangle\leq\langle v-u,v-u\rangle.\]We have equality $0=\langle x-u,v-u\rangle$ (resp.\ $\langle x-u,v-u\rangle=\langle v-u,v-u\rangle$) if and only if $x=u$, $u=v$, or $x$ and $v$ lie in different connected components of $\Graph\setminus\{u\}$ (resp.\ $x=v$, $u=v$, or $x$ and $u$ lie in different connected components of $\Graph\setminus\{v\}$).
\end{proposition}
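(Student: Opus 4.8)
\textbf{Proof plan for Proposition~\ref{prop:bounds_on_potentials}.}

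The plan is to reduce the statement to classical facts about the discrete potential theory / electrical network attached to $\Graph$, where $\langle x-u, v-u\rangle$ is interpreted as the value at $x$ of the harmonic function with prescribed singularities at $u$ and $v$. Concretely, set $\mathbf{v} = v - u \in \Div^0(\Graph)$ and let $f = \dLapl^{-1}\mathbf{v}$, viewed as a $\Q$-valued function on $\Vert{\Graph}$ (well-defined up to an additive constant, which is irrelevant since we will only ever pair against divisors of total mass $0$). Then $\langle x - u, v - u\rangle = f(x) - f(u)$, and the three vertices enter asymmetrically: $f$ satisfies $\dLapl f = v - u$, so $f$ is harmonic at every vertex other than $u$ and $v$, is ``superharmonic'' at $u$ (where $\dLapl f = -1$) and ``subharmonic'' at $v$ (where $\dLapl f = +1$). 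The first step is therefore to record this dictionary carefully, using that the off-diagonal entries of $\dLapl$ are $\leq 0$ and the row-sums are $0$, so that $\dLapl f(w) = \sum_{e\,:\,\source(e)=w}\frac{1}{\length(e)}(f(w) - f(\target(e)))$ is a weighted average-difference at $w$. Since edges of $\Graph$ have positive rational length, these weights are positive rationals, and the whole discussion is manifestly $\Q$-valued; half-edges do not contribute to $\dLapl$ at all, so they are harmless here and one may as well work with the compact graph obtained by deleting them (this is the only place the non-compactness of our graphs needs a word).

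The second step is the two inequalities, which follow from the maximum principle for the operator $\dLapl$. Normalising so that $f(u) = 0$, we must show $0 \leq f(x) \leq f(v) = \langle v-u, v-u\rangle$ for all $x$. The function $f$ attains its minimum over $\Vert{\Graph}$ at some vertex; if that vertex is not $u$ then it is a point where $f$ is sub- or harmonic and a local minimum, forcing $f$ to be constant on the connected subgraph of vertices where the minimum is attained — but $f$ is harmonic away from $\{u,v\}$, so propagating the minimum value one reaches $u$ (or $v$, but $v$ is where $f$ is largest, contradiction unless $f$ is globally constant, i.e. $u=v$), giving $\min f = f(u) = 0$; similarly $f$ attains its maximum at $v$. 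This is exactly the content of \cite[\S4]{baker-faber} / \cite[Lemma~2.17]{baker-faber:extra}, and I would simply cite those, filling in only the adjustment that our edge-lengths are arbitrary positive rationals rather than $1$ and that multiple edges and loops are allowed — loops contribute nothing to $\dLapl$ and multiple edges only change the positive weights, so the maximum-principle argument is unaffected.

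The third step, and the one requiring the most care, is the equality analysis. Suppose $\langle x-u, v-u\rangle = 0$, i.e. $f(x) = f(u) = 0 = \min f$. By the maximum principle the set $S = \{w : f(w) = 0\}$ is a union of connected components of $\Graph \setminus \{v\}$ together with $u$: indeed at any $w \in S \setminus \{u,v\}$ harmonicity and minimality force all neighbours of $w$ into $S$, so $S$ is ``closed under taking neighbours'' except possibly at $u$ and $v$; since $v \notin S$ (as $f(v) = \langle v-u,v-u\rangle \geq 0$ with equality iff $u = v$), the component of $\Graph \setminus \{u\}$ containing $x$ lies entirely in $S$ and in particular does not contain $v$ — unless this component is empty of interesting vertices, which happens exactly when $x = u$ or $u = v$. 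Conversely, if $x$ and $v$ lie in different components of $\Graph \setminus \{u\}$, then removing $u$ disconnects a current-free region around $x$ and the electrical-network interpretation (or directly: the indicator-type function that is $0$ on $x$'s component and solves $\dLapl f = v-u$ elsewhere) forces $f(x) = 0$; the cases $x = u$ and $u = v$ are trivial. The second equality case, $\langle x-u, v-u\rangle = \langle v-u, v-u\rangle$, is obtained from the first by the symmetry $u \leftrightarrow v$ of the pairing, i.e. by replacing $f$ with $f(v) - f$ (which solves $\dLapl(\,\cdot\,) = u - v$ and is minimised at $v$) and repeating verbatim. The main obstacle is bookkeeping in this connectivity argument — making precise that ``the minimum set is saturated away from the source vertices'' and translating it cleanly into the statement about connected components of $\Graph \setminus \{u\}$ — but this is purely combinatorial once the maximum principle is in hand, and it is already carried out in \cite{baker-faber, baker-faber:extra}, so in the write-up I would state the proposition, give the dictionary of the first paragraph, and defer the inequalities and equality cases to those references with a remark on the (trivial) modifications needed for rational edge-lengths and non-compact graphs.
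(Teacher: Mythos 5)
Your proposal is correct and matches the paper's approach: the paper gives no proof of Proposition~\ref{prop:bounds_on_potentials} beyond citing \cite[Exercise~9]{baker-faber} and \cite[Lemma~2.17]{baker-faber:extra}, with the surrounding remarks explicitly leaving the adjustments for rational edge-lengths, multiple edges/loops, and half-edges to the reader, exactly as you do. Your maximum-principle sketch (with the observation that loops and half-edges do not enter $\dLapl$, and the $u\leftrightarrow v$ symmetry for the second equality case) is precisely the standard argument underlying the cited results, so there is nothing to add.
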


Finally, there is a close link between the height pairing on $\Div^0(\Graph)$ and the cycle pairing on $\H_1(\Graph)$. Indeed, there is a short exact sequence\[0\rightarrow\H_1(\Graph)\rightarrow\Q\cdot\mnEdge{\Graph}\overset\partial\rightarrow\Div^0(\Graph)\rightarrow0\]where $\partial(e):=\target(e)-\source(e)$. The natural pairing on $\Q\cdot\mnEdge{\Graph}$ (for which the edges $e$ form an orthogonal basis with $\langle e,e\rangle=\length(e)$, as in \S\ref{s:cheng-katz}) restricts to the cycle pairing on $\H_1(\Graph)$, and also induces the height pairing on $\Div^0(\Graph)$ as follows.

\begin{lemma}\label{lem:height_vs_cycle}
The map $\partial\colon\Q\cdot\mnEdge{\Graph}\rightarrow\Div^0(\Graph)$ is the projection onto an orthogonal direct summand, i.e.\ its adjoint $\partial^*\colon\Div^0(\Graph)\rightarrow\Q\cdot\mnEdge{\Graph}$ is a section of $\partial$, and hence induces an orthogonal direct sum decomposition\[\Q\cdot\mnEdge{\Graph}=\H_1(\Graph)\oplus\Div^0(\Graph).\]In this, $\H_1(\Graph)$, $\Div^0(\Graph)$ and $\Q\cdot\mnEdge{\Graph}$ are endowed respectively with the cycle pairing, height pairing and natural pairing from \S\ref{s:cheng-katz}.
\begin{proof}
The adjoint is given by
\begin{equation*}\label{eq:adjoint_of_bdary}
\partial^*(\mu) = \sum_{e\in\plEdge{\Graph}}\frac{\langle\partial(e),\mu\rangle}{\length(e)}e.\tag{\dag}
\end{equation*}
To verify that it is a section of $\partial$, we write $\mu=\sum_v\lambda_v\cdot v=\Lapl\Phi$, so that
\begin{align*}
\partial\partial^*(\mu) &= \sum_{e\in\plEdge{\Graph}}\frac{\langle\partial(e),\mu\rangle}{\length(e)}\partial(e) = -\sum_{e\in\Edge{\Graph}}\frac{\langle\partial(e),\mu\rangle}{\length(e)}\source(e) \\
 &= -\sum_{v\in\Vert{\Graph}}\left(\sum_{\source(e)=v}\frac{\Phi(\target(e))-\Phi(\source(e))}{\length(e)}\right)\cdot v=\sum_{v\in\Vert{\Graph}}\lambda_v\cdot v=\mu
\end{align*}
as desired.
\end{proof}
\end{lemma}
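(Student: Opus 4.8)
The plan is to prove \emph{Lemma~\ref{lem:height_vs_cycle}} by direct computation, following essentially the calculation already sketched in the excerpt. The key observation is that the adjoint $\partial^*\colon\Div^0(\Graph)\rightarrow\Q\cdot\mnEdge{\Graph}$ of the surjection $\partial$ (with respect to the height pairing on $\Div^0(\Graph)$ and the natural pairing on $\Q\cdot\mnEdge{\Graph}$) can be written down explicitly, and then one checks that $\partial\circ\partial^*=\mathrm{id}$. Once $\partial^*$ is shown to be a section of $\partial$, it is automatically the inclusion of an orthogonal direct summand: the natural pairing on $\Q\cdot\mnEdge{\Graph}$ restricts to a nondegenerate pairing on $\H_1(\Graph)=\Ker(\partial)$ (it is the cycle pairing, positive-definite by \S\ref{s:cheng-katz}), so $\Q\cdot\mnEdge{\Graph}=\H_1(\Graph)\oplus\H_1(\Graph)^\perp$, and $\partial^*$ identifies $\Div^0(\Graph)$ with $\H_1(\Graph)^\perp$ because $\langle\partial^*\mu,\gamma\rangle=\langle\mu,\partial\gamma\rangle=0$ for $\gamma\in\H_1(\Graph)$. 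That $\partial^*$ carries the height pairing to the natural pairing is then the statement that $\partial^*$ is an isometry onto its image, which follows since $\langle\partial^*\mathbf u,\partial^*\mathbf v\rangle_{\Q\cdot\mnEdge{\Graph}}=\langle\mathbf u,\partial\partial^*\mathbf v\rangle=\langle\mathbf u,\mathbf v\rangle$ by the defining property of the height pairing together with $\partial\partial^*=\mathrm{id}$.

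First I would record the formula for $\partial^*$. Since the edges $e\in\plEdge{\Graph}$ form an orthogonal basis of $\Q\cdot\mnEdge{\Graph}$ with $\langle e,e\rangle=\length(e)$, the adjoint of $\partial$ with respect to the height pairing is forced to be
\[
\partial^*(\mu) = \sum_{e\in\plEdge{\Graph}}\frac{\langle\partial(e),\mu\rangle}{\length(e)}\,e,
\]
where $\langle\cdot,\cdot\rangle$ on the right denotes the height pairing on $\Div^0(\Graph)$ (this makes sense since $\partial(e)=\target(e)-\source(e)\in\Div^0(\Graph)$). Indeed, for this $\mu'=\partial^*(\mu)$ one has $\langle\mu',e\rangle_{\Q\cdot\mnEdge{\Graph}}=\langle\partial(e),\mu\rangle$ for each edge $e$, which is exactly the adjunction identity $\langle\partial^*\mu,\cdot\rangle=\langle\mu,\partial(\cdot)\rangle$.

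Next I would verify $\partial\partial^*=\mathrm{id}$, which is the computation displayed in the excerpt. Write $\mu=\sum_v\lambda_v\cdot v\in\Div^0(\Graph)$ and pick a potential $\Phi$ with $\dLapl\Phi=\mu$, so that $\langle\partial(e),\mu\rangle=\Phi(\target(e))-\Phi(\source(e))$ by definition of the height pairing. Then
\[
\partial\partial^*(\mu)=\sum_{e\in\plEdge{\Graph}}\frac{\langle\partial(e),\mu\rangle}{\length(e)}\,\partial(e)
=-\sum_{e\in\Edge{\Graph}}\frac{\Phi(\target(e))-\Phi(\source(e))}{\length(e)}\,\source(e)
=\sum_{v\in\Vert{\Graph}}(\dLapl\Phi)_v\cdot v=\mu,
\]
using the sum-over-oriented-edges unpacking of the unoriented sum together with the definition of the Laplacian matrix $\dLapl$ from \S\ref{ss:on_divisors}. (One must be mildly careful with loops, which contribute $0$ to both $\partial(e)$ and to off-diagonal entries of $\dLapl$, and with the convention $e^{-1}=-e$; these are routine bookkeeping.) This gives that $\partial^*$ is a section, and combined with the orthogonality remarks above completes the proof.

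The only genuine subtlety — and the step I would be most careful about — is the interplay between the oriented and unoriented edge sums in the middle computation, and confirming that the height pairing really is computed by differences of potentials $\Phi$ with $\dLapl\Phi=\mu$ (as opposed to, say, $\dLapl\Phi=$ some scalar multiple of $\mu$). Both of these are settled by the normalizations already fixed in \S\ref{ss:on_divisors}: the height pairing is $\langle\mathbf u,\mathbf v\rangle=\transp{\mathbf u}\dLapl^{-1}\mathbf v$, and the Laplacian has the stated entries, so no constants intervene. Everything else is formal linear algebra about adjoints of surjections between inner-product spaces.
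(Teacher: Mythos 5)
Your proposal is correct and takes essentially the same route as the paper: write down the explicit adjoint $\partial^*(\mu)=\sum_{e\in\plEdge{\Graph}}\frac{\langle\partial(e),\mu\rangle}{\length(e)}e$ and verify $\partial\partial^*=\mathrm{id}$ via a potential $\Phi$ with $\dLapl\Phi=\mu$ and the definition of the Laplacian matrix. The extra remarks you add (that a section which is an adjoint gives an orthogonal splitting, and that $\langle\partial^*\mathbf u,\partial^*\mathbf v\rangle=\langle\mathbf u,\mathbf v\rangle$) are exactly the formal consequences the paper leaves implicit, so there is nothing further to flag.
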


As a consequence of this lemma, we obtain an explicit formula for the isomorphism $N\colon\H^1(\Graph)\isoarrow\H_1(\Graph)$. The constants $\lambda_{e,e'}$ here will appear again in Construction~\ref{cons:measures}.

\begin{corollary}\label{cor:computing_N(e*)}
For every edge $e$ of $\Graph$ we have\[N(e^*)=\sum_{e'\in\plEdge{\Graph}}\lambda_{e,e'}\cdot e'\in\Q\cdot\mnEdge{\Graph}\]where\[\lambda_{e,e'}:=\begin{cases}\frac1{\length(e)}-\frac{\langle\partial(e),\partial(e)\rangle}{\length(e)^2}&\text{if $e'=e$,}\\-\frac{\langle\partial(e),\partial(e')\rangle}{\length(e)\length(e')}&\text{if $e'\neq e^{\pm1}$.}\end{cases}\]
\begin{proof}
By definition, $N(e^*)$ is the orthogonal projection to $\H_1(\Graph)$ of the element $\frac1{\length(e)}e\in\Q\cdot\mnEdge{\Graph}$. The orthogonal projection of this element to the complementary subspace $\Div^0(\Graph)$ is $\sum_{e'\in\plEdge{\Graph}}\frac{\langle\partial(e),\partial(e')\rangle}{\length(e)\length(e')}e'$, giving the desired equality.
\end{proof}
\end{corollary}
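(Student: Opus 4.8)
The plan is to unwind the two identifications at play: the isomorphism $N\colon\H^1(\Graph)\isoarrow\H_1(\Graph)$ induced by the cycle pairing, and the orthogonal decomposition $\Q\cdot\mnEdge{\Graph}=\H_1(\Graph)\oplus\Div^0(\Graph)$ from Lemma~\ref{lem:height_vs_cycle}. First I would recall that, tracing through Notation~\ref{notn:e*} and Remark~\ref{rmk:e*e_is_identity}, the functional $e^*\in\H^1(\Graph)$ is represented under the cycle pairing by the orthogonal projection to $\H_1(\Graph)$ of the element $\frac1{\length(e)}e\in\Q\cdot\mnEdge{\Graph}$: indeed for any homology class $\gamma=\sum_{e'}\mu_{e'}e'$ we have $\langle\frac1{\length(e)}e,\gamma\rangle=\mu_e=e^*(\gamma)$, since the $e'$ form an orthogonal basis with $\langle e',e'\rangle=\length(e')$ (here I use the convention $(e^{-1})^*=-e^*$). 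Hence $N(e^*)$ is literally this orthogonal projection.

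The second step is to compute that projection via its complement. By Lemma~\ref{lem:height_vs_cycle}, the orthogonal projection onto the summand $\Div^0(\Graph)$ is the composite $\partial^*\circ\partial$, and formula~\eqref{eq:adjoint_of_bdary} gives $\partial^*(\partial(\tfrac1{\length(e)}e))=\sum_{e'\in\plEdge{\Graph}}\frac{\langle\partial(e'),\partial(e)\rangle}{\length(e)\length(e')}e'$. Therefore
\[
N(e^*)=\frac1{\length(e)}e-\sum_{e'\in\plEdge{\Graph}}\frac{\langle\partial(e),\partial(e')\rangle}{\length(e)\length(e')}e',
\]
and collecting the coefficient of $e$ (the $e'=e$ term of the sum contributes $-\frac{\langle\partial(e),\partial(e)\rangle}{\length(e)^2}e$) versus the coefficient of each $e'\neq e^{\pm1}$ yields exactly the claimed constants $\lambda_{e,e'}$.

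There is essentially no obstacle here; the only point requiring a little care is bookkeeping with orientations and the identification $e^{-1}=-e$ in $\Q\cdot\mnEdge{\Graph}$, to make sure the asserted formula for $\lambda_{e,e'}$ is stated for unoriented edges $e'\in\plEdge{\Graph}$ consistently (and that the $e'=e$ case is not double-counted against the general formula). Once the identification ``$N(e^*)=$ orthogonal projection of $\frac1{\length(e)}e$'' is pinned down, the rest is the direct substitution already carried out in the statement.
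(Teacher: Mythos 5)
Your proposal is correct and follows essentially the same route as the paper: identify $N(e^*)$ as the orthogonal projection of $\frac1{\length(e)}e$ onto $\H_1(\Graph)$, then compute its complement via the adjoint $\partial^*$ from Lemma~\ref{lem:height_vs_cycle} and subtract. The extra justification you give for why $N(e^*)$ equals that projection (pairing against homology classes) is a fine expansion of what the paper dismisses as "by definition".
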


\subsection{The height pairing on measures}
\label{ss:height_pairing_divisors}

In order to describe the non-abelian Kummer map in terms of a height pairing, we need to generalise the above pairing on divisors in two ways. Firstly, we need to extend the domain of the pairing from divisors to \emph{measures}, in essence allowing the points in a divisor to be ``spread out'' along edges of $\Graph$. Secondly, we need to take account of the half-edges of $\Graph$ by incorporating boundary conditions at $\HEdge{\Graph}$. Thus, the domain of the height pairing will be the following space.

\begin{definition}\label{def:measures}\index{measure, $\bMeas(\Graph)$}
Let $\Graph$ be a rationally metrised graph. A \emph{piecewise polynomial measure on $\Graph$ with log poles at $\HEdge{\Graph}$}\footnote{We will often simply refer to these as \emph{measures} in the interests of brevity.} is a formal sum\[\mu=\sum_{e\in\Edge{\Graph}}g_e(s_e)\cdot|\d s_e|+\sum_{v\in\Vert{\Graph}}\lambda_v\cdot v+\sum_{e\in\HEdge{\Graph}}\lambda_e\cdot e\]with $g_e\in\Q[s_e]$ and $\lambda_v,\lambda_e\in\Q$, such that $g_{e^{-1}}(s)=g_e(\length(e)-s)$ for all edges $e$. Here the $s_e$ are formal variables indexed by edges $e$: the element $|\d s_e|$ is thought of as arc length along edge $e$, and the element $v$ is thought of as a delta distribution located at $v$.

We denote by $\bMeas(\Graph)$ the $\Q$-vector space of all such measures, and define $\bMeas^0(\Graph)$ to be the codimension $1$ subspace of \emph{measures of total mass $0$}, i.e.\ the subspace consisting of elements $\mu\in\bMeas(\Graph)$ such that\[\sum_{e\in\plEdge{\Graph}}\int_0^{\length(e)}\!\!\!g_e(s_e)\d s_e+\sum_{v\in\Vert{\Graph}}\lambda_v+\sum_{e\in\HEdge{\Graph}}\lambda_e=0,\]
where the integrals here denote usual line integrals of the polynomials $g_e(s_e)$ with respect to the parameter $s_e$.
\end{definition}

As with the height pairing on divisors, the height pairing on measures will be defined via a certain Laplacian operator. However, unlike the Laplacian matrix whose domain and codomain are equal, the polynomial Laplacian we will use is best described as a map from a space of \emph{functions} to our space $\bMeas(\Graph)$ of measures.

\begin{definition}[{cf.\ \cite[Definition 5]{baker-faber}}]\label{def:poly_laplacian}\index{Laplacian\dots!\dots operator $\Lapl$}
Let $\Graph$ be a reduction graph. For a function $f\colon\QVert{\Graph}\rightarrow\Q$, we will write $f_e\colon[0,\length(e)]\cap\Q\rightarrow\Q$ (resp.\ $f_e\colon[0,\infty)\cap\Q\rightarrow\Q$) for the restriction of $f$ to an edge $e$ (resp.\ half-edge $e$). We say that $f$ is \emph{piecewise polynomial with log poles on $\HEdge{\Graph}$} just when $f_e$ is a polynomial function for every edge $e$ and $f_e$ is an affine function (polynomial function of degree $\leq1$) for every half-edge $e$. Equivalently\footnote{Strictly speaking, this alternative description is only valid when $\Graph$ is not the graph with a single vertex and no edges or half-edges.}, a piecewise polynomial function $f$ with log poles on $\HEdge{\Graph}$ is a formal linear combination $f=\sum_ef_e(s_e)\in\bigoplus_{e\in\Edge{\Graph}\cup\HEdge{\Graph}}\Q[s_e]$ such that:
\begin{itemize}
    \item for all edges or half-edges $e_0$, $e_1$ with $\source(e_0)=\source(e_1)$, we have $f_{e_0}(0)=f_{e_1}(0)$;
    \item for every edge $e$, we have $f_{e^{-1}}(s)=f_e(\length(e)-s)$; and
    \item for every half-edge $e$, $\deg(f_e)\leq1$.
\end{itemize}

We denote the space of all piecewise polynomial functions on $\Graph$ with log poles on $\HEdge{\Graph}$ by $\Omlog(\Graph)$, and define the \emph{Laplacian operator}\[\Lapl\colon\Omlog(\Graph)\rightarrow\bMeas(\Graph)\]by\[\Lapl(f):=-\sum_{e\in\Edge{\Graph}}f_e''(s_e)\cdot|\d s_e|-\sum_{v\in\Vert{\Graph}}\left(\sum_{\source(e)=v}f_e'(0)\right)\cdot v+\sum_{e\in\HEdge{\Graph}}f_e'\cdot e.\]
\end{definition}

\begin{remark}\label{rmk:compare_laplacians}
The Laplacian operator $\Lapl$ as defined in Definition~\ref{def:poly_laplacian} can be viewed as a generalisation of the Laplacian matrix $\dLapl$ in \S\ref{ss:height_pairing_divisors}. There is an embedding $F\colon\Div(\Graph)\hookrightarrow\Omlog(\Graph)$, taking a divisor $\sum_{v}\lambda_v\cdot v$ to the unique function $f\colon\QVert{\Graph}\rightarrow\Q$ which is constant on half-edges and which linearly interpolates between the values $\lambda_v$ at each vertex $v$. With respect to the evident inclusion $\Div(\Graph)\hookrightarrow\bMeas(\Graph)$, the Laplacian operator $\Lapl\colon\Omlog(\Graph)\rightarrow\bMeas(\Graph)$ restricts to the map $\dLapl\colon\Div(\Graph)\rightarrow\Div(\Graph)$ induced by the Laplacian matrix \cite[Theorem 4]{baker-faber}.
\end{remark}

\begin{remark}\label{rmk:stabilised_laplacians}
If $\Graph'$ is a subdivision of $\Graph$, then it is clear that $\Omlog(\Graph)\subseteq\Omlog(\Graph')$, and we obtain an embedding $\Meas(\Graph)\hookrightarrow\Meas(\Graph')$ by specifying that $g_e(s_e)\cdot|\d s_e|\mapsto g_e(s_{e_1})\cdot|\d s_{e_1}|+g_e(s_{e_2}+\length(e_1))\cdot|\d s_{e_2}|$ in the notation of Definition~\ref{def:rational_vertices}. It is easily checked that the Laplacian operator $\Lapl$ is compatible with these inclusions, as are the integration and height pairings to be introduced shortly. When proving results using harmonic analysis, we will thus permit ourselves to tacitly replace $\Graph$ with a subdivision, for instance allowing divisors to be linear combinations of \emph{rational} vertices. In all cases, the precise choice of subdivision of $\Graph$ in which the argument takes place is immaterial, so we will suppress if from our notation.
\end{remark}

As for the Laplacian matrix, the Laplacian operator is almost invertible.

\begin{lemma}\label{lem:laplacian_nearly_iso}
Let $\Graph$ be a rationally metrised graph. Then the Laplacian operator $\Lapl$ has one-dimensional kernel spanned by the constant function $1$, and has image exactly $\bMeas^0(\Graph)$.
\begin{proof}
It is obvious that $\Lapl(1)=0$. That the kernel is spanned by $1$ is \cite[Theorem 3]{baker-faber}. That the image is contained in $\bMeas^0(\Graph)$ follows from \cite[Corollary~1]{baker-faber}.

Since it will be useful in practical applications to be able to explicitly invert the Laplacian operator $\Lapl$, we will take a little time to explain how to produce polynomial functions $f$ such that $\Lapl(f)$ is equal to a given $\mu\in\bMeas^0(\Graph)$. Writing $\mu$ as in Definition~\ref{def:measures}, we take for each edge $e$ a double antiderivative $h_e(s_e)=\iint g_e(s_e)\d s_e\d s_e$, normalised so that $h_e(0)=h_e(\length(e))=0$. If we let\[h=\sum_{e\in\Edge{\Graph}}h_e(s_e)-\sum_{e\in\HEdge{\Graph}}\lambda_e\cdot s_e,\]then $h\in\Omlog(\Graph)$ and we have that $\mu+\Lapl(h)\in\Div^0(\Graph)$. Hence by Remark~\ref{rmk:compare_laplacians}, a preimage of $\mu$ is given by $\dLapl^{-1}(\mu+\Lapl(h))-h$, where $\dLapl$ is the Laplacian matrix and $\dLapl^{-1}(\mu+\Lapl(h))$ is interpreted as a piecewise affine function on $\QVert{\Graph}$ as in Remark~\ref{rmk:compare_laplacians}.
\end{proof}
\end{lemma}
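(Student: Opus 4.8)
The plan is to establish the three claims of the lemma separately — that $\Lapl(1)=0$, that the kernel contains nothing else, and that the image is exactly $\bMeas^0(\Graph)$ — with the real content being an explicit construction of preimages under $\Lapl$.

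First I would dispose of the easy containments. The identity $\Lapl(1)=0$ is immediate from the formula in Definition~\ref{def:poly_laplacian}. Conversely, if $\Lapl(f)=0$ then $f_e''=0$ on every edge (so $f$ is piecewise affine), $f_e'=0$ on every half-edge (so $f$ is constant along each half-edge), and $\sum_{\source(e)=v}f_e'(0)=0$ at every vertex $v$. Deleting the interiors of the half-edges leaves a compact connected metric graph on which $f$ is a continuous piecewise-affine function whose incoming slopes sum to zero at each vertex; such a function is constant by the maximum-principle argument of \cite[Theorem~3]{baker-faber} (at a vertex attaining the maximum every incoming slope is $\le 0$, hence all are zero, and constancy propagates by connectedness), and since $f$ is also constant on each half-edge it is globally constant. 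For the inclusion $\im\Lapl\subseteq\bMeas^0(\Graph)$ I would compute the total mass of $\Lapl(f)$ directly: the edge part contributes $\sum_{e\in\plEdge{\Graph}}(f_e'(0)-f_e'(\length(e)))=\sum_{e\in\Edge{\Graph}}f_e'(0)$ using $f_{e^{-1}}'(0)=-f_e'(\length(e))$, the vertex part contributes $-\sum_{e\in\Edge{\Graph}}f_e'(0)-\sum_{e\in\HEdge{\Graph}}f_e'$, and the half-edge part contributes $\sum_{e\in\HEdge{\Graph}}f_e'$, so the three sums cancel; alternatively one may cite \cite[Corollary~1]{baker-faber}.

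The substantive step is surjectivity onto $\bMeas^0(\Graph)$, which I would prove by the explicit inversion recipe indicated in the statement. Given $\mu=\sum_eg_e(s_e)\cdot|\d s_e|+\sum_v\lambda_v\cdot v+\sum_{e\in\HEdge{\Graph}}\lambda_e\cdot e$ of total mass $0$, set $h=\sum_{e\in\Edge{\Graph}}h_e(s_e)-\sum_{e\in\HEdge{\Graph}}\lambda_e\cdot s_e$, where $h_e=\iint g_e\,\d s_e\,\d s_e$ is the double antiderivative normalised so that $h_e(0)=h_e(\length(e))=0$; this normalisation is exactly what makes $h$ continuous at every vertex and compatible with edge-reversal, so $h\in\Omlog(\Graph)$. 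By construction $\Lapl(h)$ has the same edge part and half-edge part as $-\mu$, so $\mu+\Lapl(h)$ is supported on vertices, and since both $\mu$ and $\Lapl(h)$ lie in $\bMeas^0(\Graph)$ by the containment just proved, $\mu+\Lapl(h)\in\Div^0(\Graph)$. By Remark~\ref{rmk:compare_laplacians}, $\Lapl$ restricted along the embedding $F\colon\Div(\Graph)\hookrightarrow\Omlog(\Graph)$ agrees with the Laplacian matrix $\dLapl$, whose image is precisely $\Div^0(\Graph)$ as recalled in \S\ref{ss:height_pairing_divisors}; hence there is $\Phi\in\Div(\Graph)$ with $\Lapl(F(\Phi))=\mu+\Lapl(h)$, and then $f:=F(\Phi)-h$ satisfies $\Lapl(f)=\mu$, as required.

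The main obstacle I anticipate is bookkeeping rather than ideas: verifying that the function $h$ built from the antiderivatives genuinely lies in $\Omlog(\Graph)$ — i.e.\ that the normalisations chosen on distinct edges are mutually consistent at shared vertices and that $h$ is affine (not merely polynomial) on the half-edges — and, on the kernel side, carefully transplanting the Baker--Faber maximum principle from their compact setting to ours, where graphs are additionally allowed half-edges, loops and parallel edges.
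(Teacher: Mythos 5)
Your proposal is correct and follows essentially the same route as the paper: the easy containments by direct computation (or citation of Baker--Faber), and surjectivity via exactly the same explicit inversion — double antiderivatives normalised to vanish at both endpoints, subtraction of $\lambda_e\cdot s_e$ on half-edges, and reduction to the surjectivity of the Laplacian matrix $\dLapl$ onto $\Div^0(\Graph)$ via Remark~\ref{rmk:compare_laplacians}. The extra details you supply (the total-mass cancellation and the consistency of the normalised antiderivatives under edge-reversal) are sound and only make explicit what the paper leaves to the cited references.
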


Using the Laplacian operator, we may now construct the desired height pairing\[\langle\cdot,\cdot\rangle\colon\bMeas^0(\Graph)\otimes\bMeas^0(\Graph)\rightarrow\Q\]analogously to the height pairing on divisors.

\begin{definition}[Height pairing on measures]\label{def:height_pairing}
Let $\Graph$ be a rationally metrised graph. We define an integration pairing
\begin{align*}
\int\colon\Omlog(\Graph)\otimes\bMeas(\Graph)&\rightarrow\Q \\
\int\colon f\otimes\mu&\mapsto\int f\d\mu
\end{align*}
where
\[\int f\d\mu:=\sum_{e\in\plEdge{\Graph}}\int_0^{\length(e)}\!\!\!f_e(s_e)g_e(s_e)\d s_e+\sum_{v\in\Vert{\Graph}}\lambda_vf(v)+\sum_{e\in\HEdge{\Graph}}\lambda_ef(\source(e))\]in the notation of Definitions~\ref{def:measures} and~\ref{def:poly_laplacian}. Thus $\bMeas^0(\Graph)$ is the kernel of the map $\mu\mapsto\int1\d\mu$.

We define the \emph{height pairing} $\langle\cdot,\cdot\rangle\colon\bMeas^0(\Graph)\otimes\bMeas^0(\Graph)\rightarrow\Q$ to be the pairing given by\[\langle\mu,\nu\rangle:=\int\invLapl(\mu)\d\nu\](where $\invLapl(\mu)\in\Omlog(\Graph)$ denotes any preimage of $\mu$; the value of $\langle\mu,\nu\rangle$ is independent of this choice). This is a positive-semidefinite\footnote{The kernel of the height pairing is the span of the elements $e-\source(e)$ for $e$ a half-edge of $\Graph$. Hence the height pairing is positive-definite if and only if $\Graph$ has no half-edges.} symmetric pairing on $\bMeas^0(\Graph)$ \cite[Theorem 1]{baker-faber}, extending the height pairing on $\Div^0(\Graph)$.
\end{definition}

Aside from positive-semidefiniteness and symmetry of the height pairing, the only other result we will need is that the height pairing already suffices to invert the Laplacian operator $\Lapl$. This is essentially formal.

\begin{lemma}\label{lem:height_pairing_x-b}
Let $\Graph$ be a rationally metrised graph and $\mu\in\bMeas^0(\Graph)$. For any fixed $b\in\QVert{\Graph}$, the function
\begin{align*}
f\colon\QVert{\Graph}&\rightarrow\Q \\
f\colon x&\mapsto\langle x-b,\mu\rangle
\end{align*}
is piecewise polynomial with log poles on $\HEdge{\Graph}$, and $\Lapl(f)=\mu$.
\begin{proof}
By symmetry of the height pairing, we know\[f(x)=\int\invLapl(\mu)\cdot\d(x-b)=\invLapl(\mu)(x)-\invLapl(\mu)(b).\]This is piecewise polynomial in $x$ with log poles on $\HEdge{\Graph}$, and $\Lapl(f)=\mu$.
\end{proof}
\end{lemma}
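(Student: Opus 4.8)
The plan is to make the lemma essentially formal by reducing it to the defining properties of the Laplacian operator $\Lapl$ and the symmetry of the height pairing recalled above. First I would fix, once and for all, a preimage $\invLapl(\mu)\in\Omlog(\Graph)$ of $\mu$ under $\Lapl$; this exists because $\mu\in\bMeas^0(\Graph)$ and $\bMeas^0(\Graph)$ is exactly the image of $\Lapl$ by Lemma~\ref{lem:laplacian_nearly_iso}. The same lemma tells us that $\invLapl(\mu)$ is unique up to adding a constant function, and the point of working with this particular representative is that it is already a piecewise polynomial function with log poles on $\HEdge{\Graph}$, i.e.\ an element of $\Omlog(\Graph)$, by construction.

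The crux is then to rewrite $f(x)=\langle x-b,\mu\rangle$ using symmetry of the height pairing. After passing to a subdivision of $\Graph$ in which $x$ and $b$ are honest vertices (so that the difference of delta-distributions $x-b$ is a genuine element of $\Div^0(\Graph)\subseteq\bMeas^0(\Graph)$; this is harmless by Remark~\ref{rmk:stabilised_laplacians}), symmetry of the height pairing \cite[Theorem~1]{baker-faber} gives
\[
f(x)=\langle x-b,\mu\rangle=\int\invLapl(x-b)\,\d\mu=\int\invLapl(\mu)\,\d(x-b).
\]
Now the right-hand side is an integration pairing of the \emph{function} $\invLapl(\mu)$ against a measure supported at the two vertices $x$ and $b$, so unwinding Definition~\ref{def:height_pairing} it collapses to $\invLapl(\mu)(x)-\invLapl(\mu)(b)$. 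Hence $f$ coincides with the function $\invLapl(\mu)$ minus the constant $\invLapl(\mu)(b)$.

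From this identity both assertions are immediate. Since $f$ differs from $\invLapl(\mu)\in\Omlog(\Graph)$ by a constant, and constants lie in $\Omlog(\Graph)$, the function $f$ is itself piecewise polynomial with log poles on $\HEdge{\Graph}$; and $\Lapl(f)=\Lapl(\invLapl(\mu))-\Lapl(\text{const})=\mu-0=\mu$, using that $\Lapl$ annihilates constants (Lemma~\ref{lem:laplacian_nearly_iso}). I do not expect any genuine obstacle here; the only points requiring a little care are the appeal to symmetry of the height pairing (which is imported from \cite{baker-faber}, with the adjustments for half-edges and rational metrics indicated at the start of \S\ref{s:harmonic_analysis}), and the remark that the value of $\langle x-b,\mu\rangle$—and hence $f$—does not depend on the choice of $\invLapl(\mu)$, which holds because two choices differ by a constant $c$ and $\int c\,\d(x-b)=0$ as $x-b$ has total mass $0$.
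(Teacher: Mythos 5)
Your proof is correct and follows the paper's own argument: both use symmetry of the height pairing to write $f(x)=\int\invLapl(\mu)\,\d(x-b)=\invLapl(\mu)(x)-\invLapl(\mu)(b)$ and then read off both claims. The extra remarks on subdivision and independence of the choice of $\invLapl(\mu)$ are harmless elaborations of the same argument.
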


\section{Computing the non-abelian Kummer map}
\label{s:computation}

In this section, we will carry out the promised computation of local non-abelian Kummer maps of curves, by computing the non-abelian Kummer map of a reduction graph from Definition~\ref{def:n-a_kummer_graph}. Using the theory outlined in the previous section, we will express this map in terms of height pairings against certain measures. These are the measures appearing in Theorem~\ref{thm:description_of_kummer}.

\begin{construction}\label{cons:measures}
Let $\rGraph$ be a reduction graph, and let $\V$ denote the space defined in Definition~\ref{def:V}. We define a series $(\mu_n)_{n>0}$ of $\gr^\W_{-n}\V$-valued measures on $\Graph$ of total mass $0$, i.e.\ elements of $\gr^\W_{-n}\V\otimes\bMeas^0(\Graph)$, as follows. We set $\mu_1=0$ and 
\[
\mu_2:=-\sum_{e\in\Edge{\rGraph}}\ad_{e^*}(N(e^*))\cdot|\d s_e|+\sum_{v\in\Vert{\rGraph}}\log(\delta_v)\cdot v+\sum_{e\in\HEdge{\rGraph}}\log(\delta_e)\cdot e,
\]
where $\log(\delta_v):=\sum_{i=1}^{g(v)}[\log(\beta_{v,i}'),\log(\beta_{v,i})]$. For $n\geq3$, we proceed recursively, setting 
\[
\scalebox{0.93}{$\displaystyle{
\mu_n:=-\!\!\sum_{e\in\Edge{\rGraph}}\!\left(2\ad_{e^*}\classify_{n-1,e}'-\ad_{e^*}^2\classify_{n-2,e}+\sum_{e'\in\Edge{\rGraph}}\lambda_{e,e'}\ad_{e^*}\ad_{(e')^*}\int_0^{\length(e')}\!\!\!\classify_{n-2,e}\d s_e\right)\!\cdot\!|\d s_e|,
}$}
\]
where $\classify_{r,e}$ denotes the restriction to edge $e$ of the function $x\mapsto\langle x-b,\mu_r\rangle$ for some fixed basepoint $b\in\QVert{\Graph}$, $\classify_{r,e}'$ denotes its derivative with respect to arc-length, and
\[
\lambda_{e,e'}=\begin{cases}\frac1{\length(e)}-\frac{\langle\partial(e),\partial(e)\rangle}{\length(e)^2}&\text{if $e'=e$}\\-\frac{\langle\partial(e),\partial(e')\rangle}{\length(e)\length(e')}&\text{if $e'\neq e^{\pm1}$,}\end{cases}
\]
as in Corollary~\ref{cor:computing_N(e*)}.

\noindent(It is not immediately clear that these measures are of total mass $0$ or that they are independent of the basepoint $b$: this will be proved during this section.)
\end{construction}

\begin{theorem}\label{thm:description_of_graph_kummer}
Let $\rGraph$ be a reduction graph, and let $(\mu_n)_{n>0}$ be the sequence of $\gr^\W_{-n}\V$-valued measures on (the underlying graph of) $\rGraph$ constructed in Construction~\ref{cons:measures}. For any basepoint $b\in\QVert{\Graph}$, the components of the non-abelian Kummer map $\classify\colon\QVert{\Graph}\rightarrow\V$ (Definition~\ref{def:n-a_kummer_graph}) are given by\[\classifyeq n(x)=\langle x-b,\mu_n\rangle.\]
\end{theorem}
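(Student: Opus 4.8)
The strategy is to compute the graph-theoretic Laplacian $\Lapl(\classify)$ of the non-abelian Kummer map directly and to identify it, graded piece by graded piece, with the measures $\mu_n$ of Construction~\ref{cons:measures}; the theorem then follows formally from the inversion of the Laplacian by the height pairing. Concretely, I would prove by strong induction on $n$ the two assertions: (i) the component $\classifyeq n\colon\QVert{\Graph}\rightarrow\gr^\W_{-n}\V$ is piecewise polynomial with log poles on $\HEdge{\rGraph}$ in the sense of Definition~\ref{def:poly_laplacian}; and (ii) $\Lapl(\classifyeq n)=\mu_n$. Granting (i) and (ii), note that the function $x\mapsto\langle x-b,\mu_n\rangle$ is also piecewise polynomial with log poles and has Laplacian $\mu_n$ by Lemma~\ref{lem:height_pairing_x-b}, and it agrees with $\classifyeq n$ at $x=b$ (indeed $\gamma_{b,b}^\can=1$ by Lemma~\ref{lem:basic_can_path}, $N$ annihilates $1$, and $\langle b-b,\mu_n\rangle=0$); since the kernel of $\Lapl$ is spanned by the constant function (Lemma~\ref{lem:laplacian_nearly_iso}), the two functions coincide, which is the theorem. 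As a byproduct, (i) and (ii) show $\mu_n=\Lapl(\classifyeq n)\in\bMeas^0(\rGraph)$, and because changing the basepoint alters $\classify$ only by an additive constant (Remark~\ref{rmk:n-a_kummer_indept_basepoint}) they show that $\mu_n$ is independent of $b$; this discharges the parenthetical claims in Construction~\ref{cons:measures}. The base cases are $n=1$, where $\gr^\W_{-1}\V=0$ forces $\classifyeq1=0=\mu_1$, and $n=2$, which is a direct computation of the same shape as the inductive step.

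The computational core is the edge-by-edge analysis underlying (i) and (ii). Fix an edge $e$ of $\rGraph$ and write $v_{e,s}$ for the point a distance $s$ along $e$. By the variation formula for canonical paths (Corollary~\ref{cor:can_path_along_edge}) we have $\gamma_{b,v_{e,s}}^\can=e_s\cdot\gamma_{b,\source(e)}^\can\cdot\Hiso_b^{-1}(\exp(-se^*))$; applying the monodromy operator $N$ via the Leibniz rule for path-composition, using $N(e_s)=s\,e_s\log(\delta_e)$ from Proposition~\ref{prop:explicit_hopf_gpd}, and transporting the result to $\HAlg$ via $\Hiso$, one obtains an explicit expression for $\classifyeq n$ restricted to $e$ which is coordinatewise polynomial in $s$, establishing (i) on edges. (On a half-edge $e$ one has $e^*=0$, the exponential factor drops out, and one reads off $\classify(v_{e,s})=\classify(\source(e))+s\cdot\Hiso(\log(\delta_e))$, which is affine in $s$.) Differentiating the expression on a full edge twice in $s$ and simplifying produces a second-order differential equation expressing $\classify_{n,e}''$ through $\classify_{n-1,e}'$, $\classify_{n-2,e}$ and integrals of $\classify_{n-2}$ along edges — precisely the combination appearing as the coefficient of $|\d s_e|$ in $\mu_n$, with the constants $\lambda_{e,e'}$ and the integral term issuing from the identity $N(e^*)=\sum_{e'}\lambda_{e,e'}e'$ of Corollary~\ref{cor:computing_N(e*)}. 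For $n=2$ this degenerates, via $\Hiso(\log(\delta_e))\equiv N(e^*)$ modulo $\W_{-2}$ (Proposition~\ref{prop:alog(delta)}), to $\classify_{2,e}''=\ad_{e^*}(N(e^*))$.

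It remains to match the vertex and half-edge terms of the Laplacian against those of $\mu_n$; these come out of the same edge-wise expression. Summing the one-sided derivatives $\classify_{n,e}'(0)$ over the edges and half-edges emanating from a vertex $v$ and invoking the vertex relation in the form of Remark~\ref{rmk:easier_vertex_relation} (so that $\sum_{\source(e)=v}\log(\delta_e)$ is rewritten in terms of $\log(\delta_v)$) yields the $\log(\delta_v)\cdot v$ term of $\mu_2$, and nothing for $n\geq3$ — the higher contributions dying because $N$ kills $\log(\delta_e)$ and because $\log(\delta_e)$ has weight $-2$. Along a half-edge $e$ the affine behaviour $\classify(v_{e,s})=\classify(\source(e))+s\,\Hiso(\log(\delta_e))$ gives linear coefficient $\Hiso(\log(\delta_e))=\alog(\delta_e)$ for $n=2$ and $0$ for $n\geq3$, matching the $\log(\delta_e)\cdot e$ term of $\mu_2$. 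Comparing with the explicit form of $\Lapl$ in Definition~\ref{def:poly_laplacian} and with Construction~\ref{cons:measures}, these fit together to give $\Lapl(\classifyeq n)=\mu_n$, completing the induction.

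The main obstacle is the second-derivative computation on full edges: extracting the clean differential equation from $\ddby{s}\Hiso N(\gamma_{b,v_{e,s}}^\can)$ requires careful control of the non-commutativity of $N$ with the trivialisation $\Hiso$, since $N\bigl(\Hiso_b^{-1}(\exp(-se^*))\bigr)$ is not simply $\Hiso_b^{-1}$ of $N(\exp(-se^*))$. This is exactly where the weak $N$-equivariance machinery of \S\ref{ss:equivariance} is indispensable: the filtration $\coM_\bullet$ together with the estimate that $\ad_N^k\Hiso$ drops the $\W$-filtration by $k+1$ (Lemma~\ref{lem:equivariance}, Lemma~\ref{lem:equivariance_coM}) guarantees that every ``commutator'' term falls into weight high enough to be invisible in the $\gr^\W_{-n}$ component being computed, so that what survives organises itself into precisely the recursion defining $\mu_n$ rather than some a priori messier expression. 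A secondary bookkeeping point is to track how $\ad_{e^*}$ shifts the $\W$-grading, so that the single differential equation obtained on each edge resolves correctly into the depth-by-depth recursion of Construction~\ref{cons:measures}.
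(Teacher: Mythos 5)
Your proposal follows essentially the same route as the paper: the edge-wise formula for $\classify$ obtained from Corollary~\ref{cor:can_path_along_edge} (the paper's Proposition~\ref{prop:j_along_edges}), piecewise polynomiality on edges and linearity on half-edges, a second-order differential equation along each edge, identification of the vertex and half-edge terms via the vertex relations, and finally inversion of the Laplacian through the height pairing (Lemmas~\ref{lem:height_pairing_x-b} and~\ref{lem:laplacian_nearly_iso}), with the mass-zero and basepoint-independence claims of Construction~\ref{cons:measures} falling out as byproducts. That architecture is correct and is exactly what the paper does.

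The genuine gap is in your handling of the failure of $N$-equivariance of $\Hiso$. The exact edge formula involves the conjugated operator $N_b=\Hiso_b\circ N\circ\Hiso_b^{-1}$, and the exact differential equation reads $\classify_e''-2\ad_{e^*}\classify_e'+\ad_{e^*}^2\classify_e=\ad_{e^*}N_b(e^*)$ (Proposition~\ref{prop:diff_eqn_for_j}); the discrepancy $N_b(e^*)-N(e^*)$ is \emph{not} invisible in the graded piece you are computing. Its weight $-(n-1)$ component is precisely what produces the integral term in $\mu_n$ for $n\geq3$, so your appeal to Lemma~\ref{lem:equivariance}/Lemma~\ref{lem:equivariance_coM} to discard all ``commutator'' terms is both false as stated and inconsistent with the recursion you say you want to land on: taken literally it would give $\classifyeqe ne''-2\ad_{e^*}\classifyeqe{n-1}e'+\ad_{e^*}^2\classifyeqe{n-2}e=0$ for $n\geq3$, and the integral term of Construction~\ref{cons:measures} would never appear. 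The missing ingredient is the identity
\[
N_b(e^*)=N(e^*)+\sum_{e'\in\plEdge{\rGraph}}\lambda_{e,e'}\ad_{(e')^*}\int_0^{\length(e')}\!\!\!\classify_{e'}\,\d s_{e'}
\]
(the paper's Proposition~\ref{prop:integral_for_N_b}), which is obtained not from weight estimates but by integrating the once-differentiated identity $\classify_e'-\ad_{e^*}\classify_e=\exp(se^*)\Hiso\log(\delta_e)\exp(-se^*)-N_b(e^*)$ over all edges, weighting by $\lambda_{e,e'}$, and using Proposition~\ref{prop:loop_integral_along_edge}, the definition of $\Hiso^{-1}$ on $\H_1(\Graph)$ (Definition~\ref{def:inverse_iso_on_H_1}), and the orthogonality fact behind Corollary~\ref{cor:computing_N(e*)} (that $N(e^*)-\tfrac1{\length(e)}e$ is orthogonal to $\H_1(\Graph)$). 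With that identity supplied, your induction and the matching of the $|\d s_e|$, vertex and half-edge coefficients go through exactly as in the paper; the weak $N$-equivariance machinery is needed elsewhere (e.g.\ to know the Kummer map lands in $\V$), but it is not what eliminates $N_b$ here.
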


Combined with Theorem~\ref{thm:curve_kummer_is_graph_kummer}, this completes the proof of Theorem~\ref{thm:description_of_kummer}.

\begin{remark}\label{rmk:j_1}
It is easy to see that $\gr^\W_{-1}\V=0$, so that Theorem~\ref{thm:description_of_graph_kummer} only really has content for $n\geq2$.
\end{remark}

Fix a reduction graph $\rGraph$ and a basepoint $b\in\QVert{\rGraph}$, writing $\classify\colon\QVert{\rGraph}\rightarrow\V$ for the non-abelian Kummer map (Definition~\ref{def:n-a_kummer_graph}) and $(\mu_n)_{n>0}$ for the series of $\gr^\W_{-n}\V$-valued measures on $\rGraph$ constructed in Construction~\ref{cons:measures}. The remainder of this section will be devoted to proving that each $\classifyeq n$ is a piecewise polynomial function with log poles on $\HEdge{\rGraph}$, satisfying $\Lapl(\classifyeq n)=\mu_n$ for all $n>0$. This in particular proves Theorem~\ref{thm:description_of_graph_kummer}, since $\classifyeq n(x)$ and $\langle x-b,\mu_n\rangle$ differ by a constant by Lemmas~\ref{lem:height_pairing_x-b} and~\ref{lem:laplacian_nearly_iso}, and both vanish at $x=b$. Note that this also proves that the $\mu_n$ have total mass $0$ and are independent of the basepoint $b$ (using Remark~\ref{rmk:n-a_kummer_indept_basepoint}).

\subsection{The non-abelian Kummer map along edges}

We will ultimately deduce the equalities $\Lapl(\classifyeq n)=\mu_n$ from an explicit formula describing the non-abelian Kummer map $\classify$ along edges and half-edges of $\rGraph$. This is a straightforward consequence of Corollary~\ref{cor:can_path_along_edge}, describing the canonical path along edges and half-edges.

\begin{proposition}\label{prop:j_along_edges}
For any edge or half-edge $e$ of $\rGraph$, we have\[\classify_e(s)=\exp(se^*)\left\lbrack\classify(\source(e))+s\Hiso\log(\delta_e)\right\rbrack\exp(-se^*)+\exp(se^*)N_b(\exp(-se^*))\]where $N_b=\Hiso_b\circ N\circ\Hiso_b^{-1}$ with $\Hiso_b\colon\O(\pi_1^\Q(\rGraph,b))^\dual\isoarrow\HAlg$ the isomorphism from Theorem~\ref{thm:description_of_M-graded}.
\begin{proof}
Corollary~\ref{cor:can_path_along_edge} assures us that the canonical path from $b$ to the rational vertex $v_{e,s}$ a distance $s$ along $e$ is given by $e_s\gamma_{b,\source(e)}^\can\Hiso^{-1}(\exp(-se^*))$. Applying $\Hiso N$ and using that $\Hiso(e_s)=\exp(se^*)$ by Lemma~\ref{lem:C-K_on_edges}, we obtain the desired formula.
\end{proof}
\end{proposition}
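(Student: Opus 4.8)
The plan is to deduce the formula directly from the definition $\classify_e(s)=\classify(v_{e,s})=\Hiso N(\gamma_{b,v_{e,s}}^\can)$ (Definition~\ref{def:n-a_kummer_graph}), where $v_{e,s}$ denotes the rational vertex a distance $s$ along $e$ and $e_s$ the rational edge from $\source(e)$ to $v_{e,s}$, by substituting in the explicit description of the canonical path along an edge. First I would invoke Corollary~\ref{cor:can_path_along_edge}, which gives $\gamma_{b,v_{e,s}}^\can=e_s\,\gamma_{b,\source(e)}^\can\,\Hiso_b^{-1}(\exp(-se^*))$, where $\Hiso_b$ restricts to the Cheng--Katz duality isomorphism in $\M$-degree $0$ and $e^*$ is as in Notation~\ref{notn:e*} (so $e^*=0$ when $e$ is a half-edge).

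Next I would apply the operator $N$, using that it is a (Hopf) derivation satisfying the Leibniz rule with respect to path-composition, to expand $N(\gamma_{b,v_{e,s}}^\can)$ as a sum of three terms according to which of the three factors above is differentiated; then I would apply $\Hiso$, which is multiplicative since it is a morphism of complete Hopf groupoids (Theorem~\ref{thm:description_of_M-graded}) and which sends the canonical paths $\gamma_{u,v}^\can$ to $1$ (Definition~\ref{def:can_paths}). In $\M$-degree $0$ the only inputs needed are $\Hiso(e_s)=\exp(se^*)$ and $\Hiso(\Hiso_b^{-1}(\exp(-se^*)))=\exp(-se^*)$, both instances of Lemma~\ref{lem:C-K_on_edges}. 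The term where $e_s$ is differentiated then contributes $\Hiso(N(e_s))=s\exp(se^*)\Hiso\log(\delta_e)$, using the formula $N(e)=\length(e)\,e\log(\delta_e)$ from Proposition~\ref{prop:explicit_hopf_gpd} applied to the length-$s$ edge $e_s$ (for which $\delta_{e_s}=\delta_e$); the term where $\gamma_{b,\source(e)}^\can$ is differentiated contributes $\exp(se^*)\classify(\source(e))\exp(-se^*)$, again by Definition~\ref{def:n-a_kummer_graph} and multiplicativity; and the term where $\Hiso_b^{-1}(\exp(-se^*))$ is differentiated contributes $\exp(se^*)N_b(\exp(-se^*))$ straight from the definition $N_b=\Hiso_b\circ N\circ\Hiso_b^{-1}$. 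Adding the three contributions and pulling out $\exp(se^*)$ on the left and $\exp(-se^*)$ on the right gives the claimed identity; the half-edge case falls out as the specialisation $e^*=0$, so needs no separate treatment.

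The computation is essentially formal, and the only thing requiring care --- the main obstacle, such as it is --- is the base-point bookkeeping: $\Hiso$ must be read consistently as the groupoid isomorphism of Theorem~\ref{thm:description_of_M-graded}, which identifies $\gr^\M_\bullet\O(\pi_1^\Q(\rGraph;b,v_{e,s}))^\dual$ with $\gr^\M_\bullet\HAlg$ via the canonical-path trivialisation of Proposition~\ref{prop:trivialisation}, so that one should check at each occurrence that the factor being pulled outside $\Hiso$ really is a canonical path (and hence invisible). Once this is set up, the three contributions above and their sum follow by a routine manipulation.
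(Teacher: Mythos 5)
Your proof is correct and follows essentially the same route as the paper: substitute the description of $\gamma_{b,v_{e,s}}^\can$ from Corollary~\ref{cor:can_path_along_edge}, apply $\Hiso N$ using the Leibniz rule and multiplicativity of the trivialised $\Hiso$, and evaluate via $\Hiso(e_s)=\exp(se^*)$ from Lemma~\ref{lem:C-K_on_edges}. The paper simply leaves the three-term Leibniz expansion implicit, which you spell out (with only cosmetic looseness in how the undifferentiated factors are carried along), so there is nothing further to add.
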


\begin{corollary}\label{cor:j_piecewise_polynomial}
For each $n>0$, the map $\classifyeq n\colon\QVert{\rGraph}\rightarrow\gr^\W_{-n}\V$ is piecewise polynomial of degree $\leq n$ with log poles on $\HEdge{\rGraph}$. Its restriction to any bridge (edge $e$ such that $\rGraph\setminus\{e^{\pm1}\}$ is disconnected) is linear.
\begin{proof}
The class of functions $\Q\rightarrow\HAlg$ whose component in $\gr^\W_{-n}\HAlg$ is a polynomial function of degree $\leq n$ is closed under pointwise addition and multiplication. It follows from Proposition~\ref{prop:j_along_edges} that each $s\mapsto\classify_e(s)$ lies in this class. In the particular case that $e$ is a half-edge or a bridge, we have $e^*=0$ and we see that $\classify_e(s)=\classify(\source(e))+s\Hiso\log(\delta_e)$ is linear in $s$.
\end{proof}
\end{corollary}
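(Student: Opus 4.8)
The plan is to read the statement off directly from the closed-form expression for $\classify$ along edges provided by Proposition~\ref{prop:j_along_edges}, keeping track of $\W$-weights. Since $\classify$ takes values in $\V\subseteq\gr^\M_{-2}\HAlg$ and $\classifyeq n$ is its composite with the projection onto the $\gr^\W_{-n}$-graded piece, and since $\classify$ is already an honest function on $\QVert{\rGraph}$ (so the vertex-matching conditions of Definition~\ref{def:poly_laplacian} hold automatically), it suffices to prove: for every edge $e$ of $\rGraph$, the function $s\mapsto\classify_e(s)$ has the property that its component in $\gr^\W_{-n}\HAlg$ is a polynomial in $s$ of degree $\leq n$ for all $n$; and, separately, that for every half-edge $e$ the restriction $\classify_e$ is affine (this is precisely the ``log pole'' condition).

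First I would introduce the class $\mathcal F$ of functions $f\colon\Q\rightarrow\HAlg$ whose $\gr^\W_{-n}\HAlg$-component is a polynomial in $s$ of degree $\leq n$ for every $n\geq0$, and verify its closure properties: $\mathcal F$ contains the constants, is closed under pointwise sum, and is closed under pointwise product (for a product the $\gr^\W_{-n}$-component is $\sum_{i+j=n}f_ig_j$, of degree $\leq i+j=n$). Since $e^*\in\gr^\W_{-1}\HAlg$, the function $s\mapsto\exp(se^*)=\sum_{r\geq0}\frac{s^r}{r!}(e^*)^r$ lies in $\mathcal F$ --- its $\gr^\W_{-r}$-component is $\frac{s^r}{r!}(e^*)^r$ --- and so does $s\mapsto\exp(-se^*)$; and $s\mapsto s\cdot\Hiso\log(\delta_e)$ lies in $\mathcal F$ because $\Hiso\log(\delta_e)\in\W_{-1}\HAlg$. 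Finally, $\mathcal F$ is preserved by the operator $N_b=\Hiso_b\circ N\circ\Hiso_b^{-1}$ from Proposition~\ref{prop:j_along_edges}: by Proposition~\ref{prop:explicit_hopf_gpd} the monodromy operator $N$ on $\O(\pi_1^\Q(\rGraph))^\dual$ is $\W$-filtered of degree $-1$ (it annihilates the generators $\log(\beta_{v,i})$, $\log(\beta_{v,i}')$, $\log(\delta_e)$ and sends each edge $e$ to $\length(e)\,e\log(\delta_e)$, which drops the weight by one), while $\Hiso_b$ is a strictly $\W$-filtered isomorphism by Theorem~\ref{thm:description_of_M-graded}, so $N_b$ is $\W$-filtered of degree $-1$, and a short bookkeeping argument then shows it carries $\mathcal F$ into itself. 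Substituting these facts into the formula of Proposition~\ref{prop:j_along_edges} gives $s\mapsto\classify_e(s)\in\mathcal F$ for every edge $e$, which is the asserted degree bound.

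For the remaining claims, note that on a half-edge $e$ we have $e^*=0$ by the convention of Notation~\ref{notn:e*}, and on a bridge $e$ we likewise have $e^*=0$, since a bridge has multiplicity zero in every cycle. In either case the formula of Proposition~\ref{prop:j_along_edges} collapses to $\classify_e(s)=\classify(\source(e))+s\cdot\Hiso\log(\delta_e)$, which is affine in $s$; this gives both the log-pole condition on half-edges and the linearity of each $\classifyeq n$ along bridges. The main --- indeed essentially the only --- point carrying any content is the behaviour of the $N_b$-term under the weight filtration; the rest is formal manipulation of the class $\mathcal F$, and the one thing to be a little careful about is that $\Hiso_b$, and hence conjugation by it, is strictly compatible with the $\W$-filtration.
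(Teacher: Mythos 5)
Your argument is essentially the paper's own proof: both rest on Proposition~\ref{prop:j_along_edges}, on the closure of the class of functions whose $\gr^\W_{-n}\HAlg$-component is polynomial of degree $\leq n$ under pointwise sum and product, and on the observation that $e^*=0$ for half-edges and bridges, which collapses the formula to the affine function $\classify(\source(e))+s\,\Hiso\log(\delta_e)$.

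One correction: your claim that $N$ (hence $N_b$) is $\W$-filtered of degree $-1$ is false, and the justification offered is flawed, since an edge $e$ is grouplike rather than concentrated in a single $\W$-weight; for instance $N(e-1)=\length(e)\,e\log(\delta_e)$ lies in $\W_{-1}$ but not in $\W_{-2}$ in general. The paper states (in the subsection on the Hopf groupoid of a reduction graph) that $N$ is $\W$-filtered of degree $0$ and $\M$-filtered of degree $-2$, and since $\Hiso_b$ is a $\W$-filtered isomorphism, $N_b$ is $\W$-filtered of degree $0$. This weaker, correct statement already suffices for your bookkeeping: the $\gr^\W_{-n}$-component of $N_b(\exp(-se^*))$ receives contributions only from the components $\frac{(-s)^r}{r!}(e^*)^r$ with $r\leq n$, hence is a polynomial of degree $\leq n$, and the rest of your argument goes through unchanged.
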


Note that Corollary~\ref{cor:j_piecewise_polynomial}, implies that the Zariski closure of the image of $\classify\colon\QVert{\rGraph}\rightarrow\V$ has dimension $\leq1$; combined with Theorem~\ref{thm:curve_kummer_is_graph_kummer} we obtain the second part of Proposition~\ref{prop:kummer_bounds} (except the equality condition).

As a direct consequence of the formula in Proposition~\ref{prop:j_along_edges}, we obtain the following identities for $\Lapl(\classify)$ in terms of $N_b$.

\begin{proposition}\label{prop:diff_eqn_for_j}
The map $\classify\colon\QVert{\rGraph}\rightarrow\V$ satisfies the following properties:
\begin{itemize}
	\item for all half-edges $e$ we have $\classify_e'=\Hiso\log(\delta_e)$;
	\item for all vertices $v$ we have\[\sum_{\source(e)=v}\classify_e'(0)=-\log(\delta_v):=-\sum_{i=1}^{g(v)}[\log(\beta_{v,i}'),\log(\beta_{v,i})];\text{ and}\]
	\item for all edges (and half-edges) $e$ we have\[\classify_e''-2\ad_{e^*}\classify_e'+\ad_{e^*}^2\classify_e=\ad_{e^*}N_b(e^*).\]
\end{itemize}
\begin{proof}
Differentiating the identity in Proposition~\ref{prop:j_along_edges} gives
\begin{equation}\label{eq:once-differentiated_j}
\classify_e'-\ad_{e^*}\classify_e=\exp(se^*)\Hiso\log(\delta_e)\exp(-se^*)-N_b(e^*).\tag{$\ast$}
\end{equation}
The first identity is now obvious since $e^*=0$ for half-edges $e$. For the second identity, we know that\[\sum_{\source(e)=v}\classify_e'(0)=\sum_{\source(e)=v}\ad_{e^*}\classify(v)+\sum_{\source(e)=v}\Hiso\log(\delta_e)+\sum_{\source(e)=v}N_b(e^*).\]The first and last sums on the right-hand side vanish since $\sum_{\source(e)=v}e^*=0$ (Remark~\ref{rmk:e*e_is_identity}), and the middle sum is equal to $-\log(\delta_v)$ by the vertex relations from Definition~\ref{def:pi1_reduction_graph}. The third identity comes from differentiating the above expression for $\classify_e'-\ad_{e^*}\classify_e$.
\end{proof}
\end{proposition}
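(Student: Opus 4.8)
The entire argument hinges on differentiating the explicit formula for $\classify$ along edges and half-edges provided by Proposition~\ref{prop:j_along_edges}. The plan is to first establish the first-order identity
\[
\classify_e'-\ad_{e^*}\classify_e=\exp(se^*)\Hiso\log(\delta_e)\exp(-se^*)-N_b(e^*)\qquad(\ast)
\]
for every edge or half-edge $e$ of $\rGraph$, and then to read off all three asserted identities from $(\ast)$ by elementary manipulations. To make sense of the derivatives one first notes, e.g.\ via Corollary~\ref{cor:j_piecewise_polynomial} or directly from Proposition~\ref{prop:j_along_edges} (using that $(e^*)^r$ vanishes in any finite-dimensional $\M$-graded quotient of $\HAlg$), that $s\mapsto\classify_e(s)$ is coordinatewise polynomial, so term-by-term differentiation is legitimate in each such quotient.

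To prove $(\ast)$, differentiate the right-hand side of the formula in Proposition~\ref{prop:j_along_edges} term by term, using $\dby s\exp(\pm se^*)=\pm\exp(\pm se^*)e^*$. Differentiating the conjugation term $\exp(se^*)[\classify(\source(e))+s\Hiso\log(\delta_e)]\exp(-se^*)$ produces $\ad_{e^*}$ applied to itself together with $\exp(se^*)\Hiso\log(\delta_e)\exp(-se^*)$. For the term $\exp(se^*)N_b(\exp(-se^*))$ the key point is that $N_b$ is an \emph{algebra} derivation of $\HAlg$ — this holds because the monodromy operator on $\O(\pi_1^\Q(\rGraph))^\dual$ obeys the Leibniz rule for path-composition (cf.\ the discussion before Proposition~\ref{prop:explicit_hopf_gpd}) and $\Hiso_b$ is an isomorphism of complete Hopf algebras (Theorem~\ref{thm:description_of_M-graded}). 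A short computation with the Leibniz rule, cancelling the summand in which $e^*$ commutes past $\exp(se^*)$, then shows $\dby s\bigl(\exp(se^*)N_b(\exp(-se^*))\bigr)=-\exp(se^*)N_b(e^*)\exp(-se^*)$, and integrating with initial value $N_b(1)=0$ — the same formal manoeuvre as in Proposition~\ref{prop:loop_integral_along_edge} — gives $\exp(se^*)N_b(\exp(-se^*))=-\tfrac{\exp(s\ad_{e^*})-1}{\ad_{e^*}}N_b(e^*)$. Assembling the pieces, the commutator contributions of the conjugation term cancel against the non-constant part of this expression, so the $N_b$-terms telescope down to the single constant $-N_b(e^*)$, which is exactly $(\ast)$.

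With $(\ast)$ in hand the three identities are immediate. For a half-edge $e$ we have $e^*=0$ by Notation~\ref{notn:e*}, hence also $N_b(e^*)=0$, and $(\ast)$ collapses to $\classify_e'=\Hiso\log(\delta_e)$. Evaluating $(\ast)$ at $s=0$ gives $\classify_e'(0)=\ad_{e^*}\classify(\source(e))+\Hiso\log(\delta_e)-N_b(e^*)$ for every edge or half-edge $e$; summing over all $e$ with $\source(e)=v$ and using $\sum_{\source(e)=v}e^*=0$ from Remark~\ref{rmk:e*e_is_identity} to annihilate the first and third sums leaves $\sum_{\source(e)=v}\classify_e'(0)=\sum_{\source(e)=v}\Hiso\log(\delta_e)$, which equals $-\log(\delta_v)$ by the vertex relation of Definition~\ref{def:pi1_reduction_graph} read in $\gr^\M_{-2}$ (Remark~\ref{rmk:easier_vertex_relation}). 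Finally, differentiating $(\ast)$ once more gives $\classify_e''-\ad_{e^*}\classify_e'=\ad_{e^*}\bigl(\exp(se^*)\Hiso\log(\delta_e)\exp(-se^*)\bigr)$, and substituting $\exp(se^*)\Hiso\log(\delta_e)\exp(-se^*)=\classify_e'-\ad_{e^*}\classify_e+N_b(e^*)$ from $(\ast)$ turns this into $\classify_e''-2\ad_{e^*}\classify_e'+\ad_{e^*}^2\classify_e=\ad_{e^*}N_b(e^*)$.

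The one genuinely delicate point is the derivation of $(\ast)$, and within it the bookkeeping for the term $\exp(se^*)N_b(\exp(-se^*))$: one must use that $N_b$ is a derivation (not merely a coderivation), run the short ODE/integration argument, and verify that its contributions telescope against the commutator terms coming from the conjugation part to leave only the constant $-N_b(e^*)$. Everything else — differentiating the conjugation term, and all three deductions from $(\ast)$ — is purely formal.
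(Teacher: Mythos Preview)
Your proof is correct and follows the same strategy as the paper: differentiate the formula of Proposition~\ref{prop:j_along_edges} to obtain the first-order identity $(\ast)$, then read off all three claims from it. You supply considerably more detail on the derivation of $(\ast)$ --- the derivation property of $N_b$, the ODE for $\exp(se^*)N_b(\exp(-se^*))$, and the resulting telescoping --- which the paper simply asserts; the one sentence about ``commutator contributions of the conjugation term cancel against the non-constant part'' is phrased imprecisely (the conjugation and $N_b$ pieces each cancel internally, not against one another), but the underlying computation is sound.
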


To complete our description of $\Lapl(\classify)$, we want to eliminate the dependence on $N_b$ in Proposition~\ref{prop:diff_eqn_for_j}. We do this by expressing each $N_b(e^*)$ back in terms of $N(e^*)$ and $\classify$.

\begin{proposition}\label{prop:integral_for_N_b}
For every edge $e$ we have\[N_b(e^*)=N(e^*)+\sum_{e'\in\plEdge{\rGraph}}\lambda_{e,e'}\ad_{(e')^*}\int_0^{\length(e')}\!\!\!\classify_{e'}\d s_{e'},\]where\[\lambda_{e,e'}:=\begin{cases}\frac1{\length(e)}-\frac{\langle\partial(e),\partial(e)\rangle}{\length(e)^2}&\text{if $e'=e$,}\\-\frac{\langle\partial(e),\partial(e')\rangle}{\length(e)\length(e')}&\text{if $e'\neq e^{\pm1}$}\end{cases}\]as in Construction~\ref{cons:measures} and Corollary~\ref{cor:computing_N(e*)}.
\begin{proof}
In $\gr^\M_{-2}\HAlg$ we have the following equalities:
\begin{align*}
\sum_{e'\in\plEdge{\rGraph}}\lambda_{e,e'}\int_0^{\length(e')}\!\!\!\classify_e'\d s_{e'} &= 0; \\
\sum_{e'\in\plEdge{\rGraph}}\lambda_{e,e'}\int_0^{\length(e')}\!\!\!\exp(s(e')^*)\Hiso\log(\delta_{e'})\exp(-s(e')^*)\d s_{e'} &= N(e^*); \\
\sum_{e'\in\plEdge{\rGraph}}\lambda_{e,e'}\int_0^{\length(e')}\!\!\!N_b((e')^*)\d s_{e'} &= N_b(e^*).
\end{align*}
The first of these identities follows from the fact that $N(e^*)=\sum_{e'\in\plEdge{\rGraph}}\lambda_{e,e'}e'$ lies in $\H_1(\rGraph)$, while the third is a consequence of the fact that $N(e^*)-\frac1{\length(e)}e$ lies in the orthogonal complement of $\H_1(\rGraph)$ in $\Q\cdot\mnEdge{\rGraph}$ (see Corollary~\ref{cor:computing_N(e*)}). The second identity follows from Proposition~\ref{prop:loop_integral_along_edge} and Definition~\ref{def:inverse_iso_on_H_1}.

Integrating the identity (\ref{eq:once-differentiated_j}) from the proof of Proposition~\ref{prop:diff_eqn_for_j} along edges $e'$ of~$\rGraph$ and taking a $\Q$-linear combination weighted by the $\lambda_{e,e'}$, we find that\[-\sum_{e'\in\plEdge{\rGraph}}\lambda_{e,e'}\int_0^{\length(e')}\!\!\!\ad_{(e')^*}\classify_{e'}\d s_{e'}=N(e^*)-N_b(e^*),\]which rearranges to the desired expression.
\end{proof}
\end{proposition}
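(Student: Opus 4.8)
The plan is to read the formula off directly from the differentiated edge-formula for the non-abelian Kummer map. Concretely, I would start from identity~\eqref{eq:once-differentiated_j} in the proof of Proposition~\ref{prop:diff_eqn_for_j}, which for every edge $e'$ of $\rGraph$ reads
\[
\classify_{e'}'-\ad_{(e')^*}\classify_{e'}=\exp(s(e')^*)\Hiso\log(\delta_{e'})\exp(-s(e')^*)-N_b((e')^*),
\]
integrate it over $s\in[0,\length(e')]$, and then form the $\Q$-linear combination over $e'\in\plEdge{\rGraph}$ weighted by the coefficients $\lambda_{e,e'}$ of Corollary~\ref{cor:computing_N(e*)}. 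On the left this produces $\sum_{e'}\lambda_{e,e'}\int_0^{\length(e')}\classify_{e'}'\d s_{e'}-\sum_{e'}\lambda_{e,e'}\ad_{(e')^*}\int_0^{\length(e')}\classify_{e'}\d s_{e'}$; on the right it should collapse to $N(e^*)-N_b(e^*)$, and rearranging then gives exactly the claimed identity for $N_b(e^*)$. So everything comes down to three equalities in $\gr^\M_{-2}\HAlg$.

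Those equalities are: (i) $\sum_{e'}\lambda_{e,e'}\int_0^{\length(e')}\classify_{e'}'\d s_{e'}=0$; (ii) $\sum_{e'}\lambda_{e,e'}\int_0^{\length(e')}\exp(s(e')^*)\Hiso\log(\delta_{e'})\exp(-s(e')^*)\d s_{e'}=N(e^*)$; and (iii) $\sum_{e'}\lambda_{e,e'}\int_0^{\length(e')}N_b((e')^*)\d s_{e'}=N_b(e^*)$. For~(i), the integral of a derivative telescopes, giving $\sum_{e'}\lambda_{e,e'}\bigl(\classify(\target(e'))-\classify(\source(e'))\bigr)$; this vanishes because $\sum_{e'}\lambda_{e,e'}e'=N(e^*)$ is a cycle (Corollary~\ref{cor:computing_N(e*)}), so its boundary is zero. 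For~(iii), $N_b((e')^*)$ is constant along $e'$, so the sum equals $N_b\bigl(\sum_{e'}\lambda_{e,e'}\length(e')(e')^*\bigr)$, and I would check that $\sum_{e'}\lambda_{e,e'}\length(e')(e')^*=e^*$ in $\H^1(\rGraph)$: since the edges form an orthogonal basis of $\Q\cdot\mnEdge{\rGraph}$ with $\langle e',e'\rangle=\length(e')$ one has $\lambda_{e,e'}\length(e')=\langle N(e^*),e'\rangle$, and then the claim is precisely that $N\colon\H^1(\rGraph)\isoarrow\H_1(\rGraph)$ is induced by the cycle pairing (equivalently, the circuit-theoretic fact from Lemma~\ref{lem:height_vs_cycle} that $N(e^*)-\tfrac1{\length(e)}e$ lies in $\Div^0(\rGraph)=\H_1(\rGraph)^\perp$). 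For~(ii), I would evaluate the inner integral using Proposition~\ref{prop:loop_integral_along_edge}, applied inside $\HAlg$ via the fact that $\Hiso$ is an algebra isomorphism with $\Hiso(e_s)=\exp(s(e')^*)$ (Lemma~\ref{lem:C-K_on_edges}); this turns it into $\frac{\exp(\length(e')\ad_{(e')^*})-1}{\ad_{(e')^*}}\Hiso\log(\delta_{e'})$, and the $\lambda_{e,e'}$-weighted sum of these is $N(e^*)$ by the very definition of $\Hiso^{-1}$ on homology in Definition~\ref{def:inverse_iso_on_H_1} together with Corollary~\ref{cor:computing_N(e*)}.

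Most of the conceptual content has already been spent on the explicit edge-formula of Proposition~\ref{prop:j_along_edges}, so I expect no deep obstacle here; the real work is bookkeeping. The points to be careful about are: keeping the whole computation inside $\gr^\M_{-2}\HAlg$, where the operators $N$ and $N_b$ genuinely differ (on $\gr^\W$-pieces $\Hiso$ is $N$-equivariant by Lemma~\ref{lem:equivariance}, so it is exactly this $\M$-graded discrepancy that Proposition~\ref{prop:integral_for_N_b} measures); justifying the termwise integration of $\HAlg$-valued polynomial functions along edges and its interaction with $\ad_{(e')^*}$; and threading the several instances of $\Hiso$, $\Hiso^{-1}$ and the basepoint trivialisation of Proposition~\ref{prop:trivialisation} consistently so that formal identities like~(ii) are legitimate. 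Among the three equalities, (ii) is the least transparent, since it is where the combinatorial recipe for $\Hiso^{-1}$ meets the cycle-pairing description of $N$; (i) and (iii) are immediate once one knows that $N(e^*)$ is the cycle dual to $e^*$. With the three equalities in hand, the proof is the one-line rearrangement described above.
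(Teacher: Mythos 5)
Your proposal is correct and follows essentially the same route as the paper: integrate the once-differentiated identity~\eqref{eq:once-differentiated_j} along each edge, take the $\lambda_{e,e'}$-weighted combination, and reduce to the same three equalities in $\gr^\M_{-2}\HAlg$, justified by the same facts (the cycle property of $N(e^*)$, orthogonality of $N(e^*)-\tfrac1{\length(e)}e$ to $\H_1$, and Proposition~\ref{prop:loop_integral_along_edge} with Definition~\ref{def:inverse_iso_on_H_1}). The extra bookkeeping you supply for the first and third identities simply makes explicit what the paper cites from Corollary~\ref{cor:computing_N(e*)}.
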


We now prove Theorem~\ref{thm:description_of_graph_kummer}.
\begin{proof}[Proof of Theorem~\ref{thm:description_of_graph_kummer}]
Combining Propositions~\ref{prop:diff_eqn_for_j} and~\ref{prop:integral_for_N_b} shows that $\Lapl(\classify)$ is given by\[\Lapl(\classify)=\sum_{e\in\Edge{\rGraph}}g_e(s_e)\cdot|\d s_e|+\sum_{v\in\Vert{\rGraph}}\log(\delta_v)+\sum_{e\in\HEdge{\Graph}}\log(\delta_e),\]where\[g_e=-2\ad_{e^*}\classify_e'+\ad_{e^*}^2\classify_e-\ad_{e^*}N(e^*)-\sum_{e'\in\plEdge{\rGraph}}\lambda_{e,e'}\ad_{e^*}\ad_{(e')^*}\int_0^{\length(e')}\!\!\!\classify_{e'}\d s_{e'}.\]The result follows by taking $\W$-homogenous components, since all the terms $e^*$, $N(e^*)$, $\log(\delta_v)$ and $\log(\delta_e)$ appearing are $\W$-homogenous of degrees $-1$, $-1$, $-2$ and $-2$ respectively.
\end{proof}

We will also use the following consequence of these calculations, refining Corollary~\ref{cor:j_piecewise_polynomial} by determining the leading coefficients of the coefficients $\classifyeq n$ of $\classify$.

\begin{proposition}\label{prop:leading_coeffs}
For each $n>0$, the restriction of the function $\classifyeq n\colon\QVert{\rGraph}\rightarrow\gr^\W_{-n}\V$ to each edge $e$ of $\rGraph$ is a polynomial in arc-length $s_e$ of the form\[\classifyeqe ne(s_e)=\frac{n-1}{n!}\ad_{e^*}^{n-1}(N(e^*))\cdot s_e^n+O(s_e^{n-1}).\]
\begin{proof}
Taking $n$th graded pieces of the second-order differential equation in Proposition~\ref{prop:diff_eqn_for_j} and differentiating provides the identity\[\classifyeqe ne^{(n)}-2\ad_{e^*}\classifyeqe{n-1}e^{(n-1)}+\ad_{e^*}^2\classifyeqe{n-2}e^{(n-2)}=\begin{cases}\ad_{e^*}(N(e^*))&\text{if $n=2$,}\\0&\text{if $n>2$,}\end{cases}\]where we are using Proposition~\ref{prop:integral_for_N_b} to deduce that the $(-1)$th $\W$-graded piece of $N_b(e^*)$ is $N(e^*)$. Since $\classifyeq0=0$ trivially and $\classifyeq1=0$ by Remark~\ref{rmk:j_1}, we find by an easy induction that $\classifyeqe ne^{(n)}=(n-1)\ad_{e^*}^{n-1}(N(e^*))$, as desired.
\end{proof}
\end{proposition}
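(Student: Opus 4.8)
The plan is to read off the leading coefficient of $\classifyeqe ne$ directly from the second-order ODE for $\classify$ along edges proved in Proposition~\ref{prop:diff_eqn_for_j}, using the degree bound of Corollary~\ref{cor:j_piecewise_polynomial} to make sense of ``the leading coefficient''. First I would restrict the identity $\classify_e''-2\ad_{e^*}\classify_e'+\ad_{e^*}^2\classify_e=\ad_{e^*}N_b(e^*)$ to a fixed edge $e$ and extract its $\gr^\W_{-n}$-homogeneous component. Since $e^*\in\gr^\W_{-1}$, the operator $\ad_{e^*}$ has $\W$-degree $-1$, so this gives
\[
\classifyeqe ne''-2\ad_{e^*}\classifyeqe{n-1}e'+\ad_{e^*}^2\classifyeqe{n-2}e=\ad_{e^*}\bigl(\gr^\W_{-n+1}N_b(e^*)\bigr).
\]

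The key preliminary point is to pin down the right-hand side. Using Proposition~\ref{prop:integral_for_N_b} together with the fact that $\classify$ takes values in $\V$, which has no $\gr^\W_{-1}$-part by Remark~\ref{rmk:j_1}, one sees that the correction term $\sum_{e'}\lambda_{e,e'}\ad_{(e')^*}\int_0^{\length(e')}\classify_{e'}\,\d s_{e'}$ is a constant in $s_e$ lying in $\W$-degrees $\le-3$; hence $\gr^\W_{-1}N_b(e^*)=N(e^*)$ and $\gr^\W_{-2}N_b(e^*)=0$. Consequently the right-hand side of the displayed equation equals $\ad_{e^*}(N(e^*))$ for $n=2$, vanishes for $n=3$, and for $n\ge4$ is some constant element of $\gr^\W_{-n}\V$. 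Differentiating the displayed identity $n-2$ times therefore kills this constant for all $n\ge3$, leaving
\[
\classifyeqe ne^{(n)}-2\ad_{e^*}\classifyeqe{n-1}e^{(n-1)}+\ad_{e^*}^2\classifyeqe{n-2}e^{(n-2)}=\begin{cases}\ad_{e^*}(N(e^*))&n=2,\\0&n\ge3.\end{cases}
\]

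It then remains to run an induction on $n$, with $\classifyeq0=0$ and $\classifyeq1=0$ as base cases. By Corollary~\ref{cor:j_piecewise_polynomial} each $\classifyeqe ne$ has degree $\le n$ in $s_e$, so $\classifyeqe ne^{(n)}$ is a constant element of $\gr^\W_{-n}\V$ equal to $n!$ times the degree-$n$ coefficient of $\classifyeqe ne$. Assuming inductively that $\classifyeqe re^{(r)}=(r-1)\ad_{e^*}^{r-1}(N(e^*))$ for $r<n$, the recursion above gives $\classifyeqe ne^{(n)}=\bigl(2(n-2)-(n-3)\bigr)\ad_{e^*}^{n-1}(N(e^*))=(n-1)\ad_{e^*}^{n-1}(N(e^*))$, which upon dividing by $n!$ yields exactly the claimed leading term $\tfrac{n-1}{n!}\ad_{e^*}^{n-1}(N(e^*))\cdot s_e^n+O(s_e^{n-1})$; the case $n=2$ is checked directly from $\classifyeqe2e''=\ad_{e^*}(N(e^*))$.

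The only genuinely delicate step is the $\W$-degree bookkeeping on the right-hand side of the ODE: one must verify, via Proposition~\ref{prop:integral_for_N_b}, that the $N_b$-correction lives in $\W$-degrees $\le-3$ and is $s_e$-independent, so that it disappears after one differentiation and contributes nothing to the recursion for $n\ge3$. Everything after that is a routine induction and the elementary identity $2(n-2)-(n-3)=n-1$; one should also note that taking $\gr^\W$-components commutes with differentiation in $s_e$, which is immediate because the $\W$-grading on $\gr^\M_{-2}\LAlg$ is a fixed direct-sum decomposition not varying with $s_e$.
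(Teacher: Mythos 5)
Your proposal is correct and follows essentially the same route as the paper's proof: take the $\gr^\W_{-n}$-graded piece of the ODE in Proposition~\ref{prop:diff_eqn_for_j}, use Proposition~\ref{prop:integral_for_N_b} (plus $\gr^\W_{-1}\V=0$) to identify the relevant piece of $N_b(e^*)$ with $N(e^*)$ and discard the $s_e$-independent lower-weight correction by differentiating, then run the induction from $\classifyeq0=\classifyeq1=0$ using $2(n-2)-(n-3)=n-1$ and the degree bound of Corollary~\ref{cor:j_piecewise_polynomial}. Your write-up merely makes explicit the weight bookkeeping that the paper leaves implicit.
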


\subsection{A preliminary application to quadratic Chabauty}

Let $X/K$ be a hyperbolic curve with $\Q_\ell$-pro-unipotent \'etale fundamental group $U(X)$, and let $U$ be a quotient of $U(X)/\W_{-3}$ satisfying~\WM. From the explicit description of $\classify_2$ in Corollary \ref{cor:j_piecewise_polynomial}, we obtain the following property of the map
\[
\classify_U\colon X(K)\rightarrow\H^1(G_K,U)
\]
which is used in \cite{BD}. Let $q_e$ denote the function on an edge $e$
\[
x\mapsto \frac{1}{2}x(x-l(e))
\]
where we identify $e$ with $[0,l(e)]$ via $s_e$. Given a $\Q_\ell$-vector space $V$ and $D=\sum D_v\cdot v\in\Div^0(\Graph)\otimes V$, let $F(D)$ denote the unique piecewise linear function $\QVert{\Graph}\rightarrow V$ which is linear on each edge and takes value $D_v$ at $v$ as in Remark \ref{rmk:compare_laplacians}.

\begin{corollary}\label{not_even_cor}
We have $\classify_2(x)=g(x)-g(b)$, where
\begin{align*}
g & =\sum _{e\in\Edge{\Graph}}[e^*,\log(\delta_e)]\cdot q_e+\sum_{e\in\HEdge{\Graph}}\log(\delta_e)\cdot s_e \\ 
& \:+F\biggl(\dLapl ^{-1}\Bigl(\sum_{v\in\Vert{\Graph}}\bigl(\log(\delta_v)-\frac12\sum_{\substack{e\in\Edge{\Graph} \\ \source(e)=v}}\length(e)[e^*,\log(\delta_e)]+\sum_{\substack{e\in\HEdge{\Graph} \\ \source(e)=v}}\log(\delta_e)\bigr)\cdot v\Bigr)\biggr)
\end{align*}
where $\dLapl^{-1}$ is the inverse Laplacian matrix.
\end{corollary}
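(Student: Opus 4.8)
The plan is to verify that the function $g$ in the statement equals $\classifyeq2+\text{const}$ by checking that $\Lapl(g)=\mu_2$ together with the piecewise-polynomial shape of $g$. Concretely, by Theorem~\ref{thm:description_of_graph_kummer} we already know $\classifyeq2(x)=\langle x-b,\mu_2\rangle$, and by Lemmas~\ref{lem:laplacian_nearly_iso} and~\ref{lem:height_pairing_x-b} the function $x\mapsto\langle x-b,\mu_2\rangle$ is the unique element of $\Omlog(\Graph)$ (up to an additive constant) whose Laplacian is $\mu_2$. So it suffices to prove two things: first, that $g\in\Omlog(\Graph)$ (i.e.\ it is piecewise polynomial with log poles on half-edges and continuous at vertices), and second, that $\Lapl(g)=\mu_2$. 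Then $g$ and $\classifyeq2$ differ by a constant, and evaluating at $x=b$ gives $\classifyeq2(x)=g(x)-g(b)$, as claimed. (Note $\Lie(U)^\can$-valued versions of these lemmas apply verbatim to $\gr^\W_{-2}\V$-valued functions, since everything is $\Q$-linear in the target.)

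The main computation is the second point. Recall from Construction~\ref{cons:measures} that
\[
\mu_2=-\sum_{e\in\Edge{\rGraph}}\ad_{e^*}(N(e^*))\cdot|\d s_e|+\sum_{v\in\Vert{\rGraph}}\log(\delta_v)\cdot v+\sum_{e\in\HEdge{\rGraph}}\log(\delta_e)\cdot e.
\]
I would split $g=g_1+g_2+g_3$ into the three displayed summands and apply $\Lapl$ termwise, using the formula for $\Lapl$ in Definition~\ref{def:poly_laplacian}. For $g_1=\sum_{e}[e^*,\log(\delta_e)]\cdot q_e$: since $q_e''=1$ on each edge, the $|\d s_e|$-part of $\Lapl(g_1)$ contributes $-\sum_e[e^*,\log(\delta_e)]\cdot|\d s_e|$, and by Corollary~\ref{cor:computing_N(e*)} (or directly: $[e^*,\log(\delta_e)]=\ad_{e^*}N(e^*)$ modulo the $\Div^0$-correction, which is exactly what $g_3$ absorbs) this matches the edge part of $\mu_2$ up to a divisor supported on vertices. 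The vertex part of $\Lapl(g_1)$ involves $\sum_{\source(e)=v}q_e'(0)=-\tfrac12\sum_{\source(e)=v}\length(e)$, producing $+\tfrac12\sum_v\big(\sum_{\source(e)=v}\length(e)[e^*,\log(\delta_e)]\big)\cdot v$; the term $g_2=\sum_{e\in\HEdge{\Graph}}\log(\delta_e)\cdot s_e$ contributes $\sum_{e\in\HEdge{\Graph}}\log(\delta_e)\cdot e$ (the half-edge boundary term) and $-\sum_v\big(\sum_{\source(e)=v,\,e\in\HEdge{\Graph}}\log(\delta_e)\big)\cdot v$ on vertices; and $g_3=F(\dLapl^{-1}(\cdots))$ is piecewise affine, so by Remark~\ref{rmk:compare_laplacians} its Laplacian is the divisor $\sum_v\big(\log(\delta_v)-\tfrac12\sum_{\source(e)=v}\length(e)[e^*,\log(\delta_e)]+\sum_{\source(e)=v,\,e\in\HEdge{\Graph}}\log(\delta_e)\big)\cdot v$. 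Adding the vertex contributions of $g_1,g_2,g_3$, everything except $\sum_v\log(\delta_v)\cdot v$ cancels by design, and one reads off $\Lapl(g)=\mu_2$.

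For the first point — that $g\in\Omlog(\Graph)$ — one checks: $q_e$ is polynomial of degree $2$ with $q_e(0)=q_e(\length(e))=0$ and $q_{e^{-1}}(s)=q_e(\length(e)-s)$, and $[e^*,\log(\delta_e)]$ is symmetric under $e\mapsto e^{-1}$ using $(e^{-1})^*=-e^*$ and the edge relation $\log(\delta_{e^{-1}})\equiv-\log(\delta_e)$ mod $\W_{-2}$ (Proposition~\ref{prop:explicit_hopf_gpd}/Remark~\ref{rmk:easier_vertex_relation}), so $g_1$ is a well-defined continuous function vanishing at all vertices; $g_2$ is affine on half-edges and vanishes at their finite endpoints; and $g_3=F(\cdots)$ is by construction continuous, piecewise affine, and constant on half-edges. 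Hence $g$ is continuous at every vertex and has the required local form. The one slightly delicate bookkeeping point — and the place I expect to spend the most care — is making sure the identification $[e^*,\log(\delta_e)]=\ad_{e^*}(N(e^*))$ modulo the explicit $\Div^0$-valued discrepancy is handled correctly; this is precisely the content of Corollary~\ref{cor:computing_N(e*)} combined with the fact that $N(e^*)-\tfrac1{\length(e)}e$ lies in the orthogonal complement of $\H_1(\Graph)$, and it is the mechanism by which the vertex term $-\tfrac12\sum_{\source(e)=v}\length(e)[e^*,\log(\delta_e)]$ appears inside the $\dLapl^{-1}$. Everything else is a routine termwise Laplacian computation, so the proof reduces to carefully assembling these pieces and invoking uniqueness of the Laplacian preimage.
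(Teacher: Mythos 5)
Your overall strategy is the paper's: the printed proof is the one-line observation that $N(e^*)=\log(\delta_e)$ in $\gr^\W_{-2}\V$, after which Theorem~\ref{thm:description_of_graph_kummer} together with the explicit inversion of the Laplacian (Lemma~\ref{lem:laplacian_nearly_iso}, Lemma~\ref{lem:height_pairing_x-b}, Remark~\ref{rmk:compare_laplacians}) yields exactly the displayed $g$; your plan of checking $g\in\Omlog(\Graph)$ and $\Lapl(g)=\mu_2$ and invoking uniqueness of the preimage up to a constant is the same computation run in the opposite direction, and your termwise bookkeeping (the $q_e''=1$ edge term, the $q_e'(0)=-\length(e)/2$ vertex contributions, the half-edge $s_e$ term, and the divisor fed into $\dLapl^{-1}$) is correct.

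However, the step you yourself flag as the crux is justified by the wrong ingredient, and as written it does not work. You claim $[e^*,\log(\delta_e)]=\ad_{e^*}(N(e^*))$ ``modulo a $\Div^0$-correction absorbed by $g_3$'', citing Corollary~\ref{cor:computing_N(e*)} and the orthogonality of $N(e^*)-\tfrac1{\length(e)}e$ to $\H_1(\Graph)$. First, any discrepancy between these two coefficients would produce a discrepancy in the $|\d s_e|$-part of the Laplacian, i.e.\ an edge-supported measure, which the piecewise affine function $g_3=F(\dLapl^{-1}(\cdots))$ — whose Laplacian is a divisor supported on vertices — can never absorb; so if there really were such a correction your computation would fail. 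Second, the correct statement is that there is no correction at all: by Proposition~\ref{prop:alog(delta)}, $\alog(\delta_e)\equiv N(e^*)$ modulo $\W_{-2}\HAlg$, hence $\ad_{e^*}(N(e^*))=[e^*,\log(\delta_e)]$ exactly in the weight $-2$ graded piece $\gr^\W_{-2}\V$ (the difference lies in $\W_{-3}$). Corollary~\ref{cor:computing_N(e*)} computes $N(e^*)$ as a cycle in $\Q\cdot\mnEdge{\Graph}$ and is not what is needed here; likewise, the vertex term $-\tfrac12\sum_{\source(e)=v}\length(e)[e^*,\log(\delta_e)]$ inside $\dLapl^{-1}$ has nothing to do with that corollary — it comes solely from the boundary derivatives $q_e'(0)$, which you had already computed correctly. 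Replace the appeal to Corollary~\ref{cor:computing_N(e*)} by Proposition~\ref{prop:alog(delta)} (and drop the spurious correction term), and your argument is complete and agrees with the paper's.
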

\begin{proof} 
In $\gr_{-2}^\W\V$, we have $N(e^*)=\log(\delta_e)$. Hence this follows from Theorem \ref{thm:curve_kummer_is_graph_kummer} and Theorem \ref{thm:description_of_graph_kummer}.
\end{proof}

In \cite{BD}, the following result is used to determine rational points of the curve
\[
X:y^2 = x^6+ax^4+ax^2+1,
\]
for $a$ such that the elliptic curve 
\[
E:y^2 = x^3+ax^2+ax+1
\]
has rank $2$. The isogeny $\Jac(X)\sim E\times E $ induces an isomorphism
\[
\bigwedgesquare\H^1_\et(X_{\overline K},\Q_\ell)\iso\Sym^2\H^1_\et(E_{\overline K},\Q_\ell)\oplus\Q_\ell(-1)^{\oplus3}.
\]
which defines a quotient $U_{\Sym}$ of $U_2(X)$ which is an extension of $\H^1_\et(X_{\overline K},\Q_\ell)^\dual $ by $\Sym^2V_E:=\Sym^2\H^1_\et(E_{\overline K},\Q_\ell)^\dual$.

\begin{lemma}\label{lemma:used_in_BD}
Suppose $X/K$ is of the form 
\[
y^2 = x^6+ax^4+ax^2+1,
\]
that $X$ has split semi-stable reduction, and that the special fibre of a stable model is isomorphic to two elliptic curves meeting at one point. Then there is linear map $l\colon\Q\rightarrow\H^1(G_K,\Sym^2V_E)=\H^1(G_K,U_{\Sym})$ such that $\classify_{U_{\Sym}}$ is the composite of an affine embedding of the reduction graph of $X$ into $\Q$ with $l$.
\end{lemma}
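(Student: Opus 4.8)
The plan is to read off the reduction graph from the hypotheses and then apply Theorems~\ref{thm:curve_kummer_is_graph_kummer} and~\ref{thm:description_of_graph_kummer} essentially verbatim; the geometry of $X$ is so constrained that almost every term of Construction~\ref{cons:measures} vanishes. Since $X$ is projective with split semistable reduction whose stable special fibre consists of two smooth genus-$1$ curves meeting at a single node, Definition~\ref{def:reduction_graphs_of_curves} shows that (after passing to a regular semistable model, which only subdivides the graph, and using the compatibility of all our constructions with subdivision) the stable reduction graph $\rGraph$ has two vertices $v_1,v_2$ with $g(v_1)=g(v_2)=1$, a single edge $e$ joining them, and no half-edges. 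The underlying graph $\Graph$ is then a tree, so $\H^1(\Graph)=\H_1(\Graph)=0$ and hence $e^*=0$; the underlying metric space of $\rGraph$ is a closed segment of length $\length(e)$, with $\QVert{\Graph}$ its set of rational points.

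Feeding this into Construction~\ref{cons:measures}: because $e^*=0$ and there are no half-edges, every summand of $\mu_n$ carrying a factor $\ad_{e^*}$ is zero, so $\mu_n=0$ for all $n\neq 2$ and
\[
\mu_2=\log(\delta_{v_1})\cdot v_1+\log(\delta_{v_2})\cdot v_2,\qquad \log(\delta_{v_i}):=[\log(\beta_{v_i,1}'),\log(\beta_{v_i,1})].
\]
In the present case $\sigma_0=\sigma_2=0$ in Construction~\ref{cons:HAlg}, so the surface relation reads $\sigma=\sigma_1=\log(\delta_{v_1})+\log(\delta_{v_2})$, and $N\equiv 0$ on $\HAlg$ (Definition~\ref{def:N_on_HAlg}, as $\H^1(\Graph)=0$), whence $\gr^\W_{-2}\V=\gr^\W_{-2}\gr^\M_{-2}\LAlg$. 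Thus $\log(\delta_{v_1})=-\log(\delta_{v_2})$ is a nonzero element of the line $\Q\cdot\log(\delta_{v_1})\subseteq\gr^\W_{-2}\V$ (it is not a scalar multiple of $\sigma_1$ in the free Lie algebra on the four genus-generators), the measure $\mu_2=\log(\delta_{v_1})\otimes(v_1-v_2)$ has total mass $0$, and Theorem~\ref{thm:description_of_graph_kummer} gives, for any basepoint $b\in\QVert{\Graph}$,
\[
\classify(x)=\classifyeq 2(x)=\langle x-b,\,v_1-v_2\rangle\cdot\log(\delta_{v_1}).
\]
The scalar function $\phi\colon x\mapsto\langle x-b,v_1-v_2\rangle$ on $\QVert{\Graph}$ is affine along the bridge $e$ (Corollary~\ref{cor:j_piecewise_polynomial}, or by a one-line computation inverting the $2\times 2$ Laplacian matrix of the segment), and it is injective because its total variation across $e$ equals $\langle v_1-v_2,v_1-v_2\rangle=\length(e)\neq 0$ by Proposition~\ref{prop:bounds_on_potentials}; so $\phi$ is an affine embedding of $\rGraph$ into $\Q$.

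It remains to transport this to the quotient $U_{\Sym}$. By functoriality of the identification in Theorem~\ref{thm:cohomology_is_vector_space} and of the commuting square of Theorem~\ref{thm:curve_kummer_is_graph_kummer}, the surjection $\pi_1^{\Q_\ell}(X_{\overline K},b)\twoheadrightarrow U_{\Sym}$ exhibits $\classify_{U_{\Sym}}$ as the composite of the reduction map $\red\colon X(K)\to\QVert{\Graph}$, the graph-theoretic $\classify$ computed above, and a $\Q_\ell$-linear map $\gr^\W_{-2}\V_{\Q_\ell}\to\H^1(G_K,U_{\Sym})$; here one uses that $U_{\Sym}$ is $2$-step, so only the $\gr^\W_{-2}$-part of $\V$ intervenes, together with the identification $\H^1(G_K,U_{\Sym})=\H^1(G_K,\Sym^2 V_E)$. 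Letting $l\colon\Q\to\H^1(G_K,U_{\Sym})$ be the composite of $t\mapsto t\cdot\log(\delta_{v_1})$ with this linear map, we obtain $\classify_{U_{\Sym}}=l\circ\phi\circ\red$, which is the assertion.

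The only genuinely delicate point is the bookkeeping in the last step: one must confirm that passing to the quotient $U_{\Sym}$ is compatible with the identifications of Proposition~\ref{prop:V_is_canonical}/Theorem~\ref{thm:curve_kummer_is_graph_kummer}, so that $\classify_{U_{\Sym}}$ really is obtained by post-composing the graph-theoretic Kummer map with a linear map, and that the stated equality $\H^1(G_K,U_{\Sym})=\H^1(G_K,\Sym^2 V_E)$ holds --- which, since $\Jac(X)$ (hence $E$) has good reduction under our hypotheses, should follow from the weight computations of \S\ref{ss:gal_coh_field} applied to the extension $1\to\Sym^2 V_E\to U_{\Sym}\to\H^1_\et(X_{\overline K},\Q_\ell)^\dual\to 1$. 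Everything else is a direct unwinding of Construction~\ref{cons:measures}, trivialised by the vanishing of $\H^1(\Graph)$.
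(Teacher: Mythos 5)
Your proposal is correct and follows essentially the same route as the paper: the paper's own proof consists of the observation that $\rGraph$ is two genus-$1$ vertices joined by a single bridge together with a citation of Corollary~\ref{cor:j_piecewise_polynomial} (linearity of the Kummer map on bridges), which is exactly the machinery you unwind explicitly via Construction~\ref{cons:measures} and Theorem~\ref{thm:description_of_graph_kummer}. Your additional computations --- the explicit $\mu_2=\log(\delta_{v_1})\cdot(v_1-v_2)$, the nonvanishing of $\log(\delta_{v_1})$ in $\gr^\W_{-2}\V$, and the functoriality bookkeeping for passing to the quotient $U_{\Sym}$ --- are consistent with the paper and in fact substantiate the words ``embedding'' and ``linear map $l$'' that the paper's two-sentence proof leaves implicit.
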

\begin{proof}
The reduction graph of $X$ consists of two vertices of genus $1$ connected by a single edge $e$. That $\classify_{U_{\Sym}}$ is an affine function on $e$ is given by Corollary~\ref{cor:j_piecewise_polynomial}.
\end{proof}
\section{Operations on reduction graphs}
\label{s:graph_ops}

Now that we have proved Theorem~\ref{thm:description_of_kummer}, we turn our attention to its principal application: the relative Oda--Tamagawa criterion (Theorem~\ref{thm:groupoid_oda}). We will see that this follows immediately from a certain combinatorial analogue for stable reduction graphs (Theorem~\ref{thm:combinatorial_injectivity}). In order to facilitate the proof of this result, we will in this section study three simplifying operations (Lemmas~\ref{lem:half-edge_reduction},~\ref{lem:resistance_reduction} and~\ref{lem:block_reduction}) on reduction graphs which will allow us to reduce computations for general graphs to simpler subquotients. We will also introduce the notion of \emph{maximal cut sets} as in \cite{AMO}, which will play a major role in our analysis of the weight $-2$ part of the Kummer map in \S\ref{ss:partial_injectivity_weight_-2}.

\subsection{Elimination of half-edges}
\label{ss:half-edge_removal}

As an illustration of the type of simplifying operations we will be using, consider a reduction graph $\rGraph$ with a half-edge $e_0$, and write $\rGraph'=\rGraph\setminus\{e_0\}$ for the same reduction graph with $e_0$ removed. We let\[\rho\colon\QVert{\Graph}\rightarrow\QVert{\Graph'}\]denote the map sending all of $e_0$ to $\source(e_0)\in\Vert{\Graph'}$ and defined in the obvious way outside $e_0$. We also denote by $\rho_*\colon\HAlg(\rGraph)\rightarrow\HAlg(\rGraph')$ the bigraded map sending $\log(\delta_{e_0})$ to $0$ and acting in the obvious manner on all other generators of $\HAlg(\rGraph)$.

For us, the important property of the map $\rho$ is that it is compatible with the non-abelian Kummer map for graphs.

\begin{lemma}[Elimination of half-edges]\label{lem:half-edge_reduction}
The non-abelian Kummer maps for $\rGraph$ and $\rGraph'$ (at basepoints $b\in\QVert{\Graph}$ and $\rho(b)\in\QVert{\Graph'}$) fit into a commuting square
\begin{center}
\begin{tikzcd}
\QVert{\Graph} \arrow{r}{\classify}\arrow{d}{\rho} & \V(\rGraph) \arrow{d}{\rho_*} \\
\QVert{\Graph'} \arrow{r}{\classify} & \V(\rGraph')
\end{tikzcd}
\end{center}
where the right-hand vertical map is the restriction of $\rho_*\colon\HAlg(\rGraph)\rightarrow\HAlg(\rGraph')$.
\end{lemma}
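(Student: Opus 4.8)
The plan is to verify the square commutes by chasing the definition of the non-abelian Kummer map through the two ingredients it is built from: the canonical paths $\gamma^\can$ and the monodromy operator $N$, together with the isomorphism $\Hiso$ of Theorem~\ref{thm:description_of_M-graded}. Recall from Definition~\ref{def:n-a_kummer_graph} that $\classify(x)=\Hiso N(\gamma^\can_{b,x})$, so it suffices to show that the diagram
\begin{equation*}
\rho_*\Hiso_{\rGraph} N_{\rGraph}(\gamma^\can_{b,x}) = \Hiso_{\rGraph'} N_{\rGraph'}(\gamma^\can_{\rho(b),\rho(x)})
\end{equation*}
holds in $\gr^\M_{-2}\HAlg(\rGraph')$ for every $x\in\QVert{\Graph}$. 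The key observation is that, on the side of reduction graphs, deleting a half-edge $e_0$ is an extremely mild operation: the underlying graph $\Graph$ is unchanged, so $\H^1(\Graph)$, $\H_1(\Graph)$, the cycle pairing, and hence all the canonical paths $\gamma^\can$ of Definition~\ref{def:can_paths} are literally the same for $\rGraph$ and $\rGraph'$. Thus there is a canonical identification $\gr^\M_0\O(\pi_1^\Q(\rGraph))^\dual\iso\gr^\M_0\O(\pi_1^\Q(\rGraph'))^\dual$ carrying $\gamma^\can_{b,x}$ to $\gamma^\can_{\rho(b),\rho(x)}$, and similarly the monodromy operator $N$ of Proposition~\ref{prop:explicit_hopf_gpd} acts identically on edges (via $N(e)=\length(e)\cdot e\log(\delta_e)$, with the $\log(\delta_e)$ for $e_0$ playing no role) on either side.

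First I would make precise the relationship between the two Hopf algebras. There is an evident surjection $\O(\pi_1^\Q(\rGraph))^\dual\twoheadrightarrow\O(\pi_1^\Q(\rGraph'))^\dual$ of complete Hopf groupoids, induced by the homomorphism $\pi_1^\Q(\rGraph)\to\pi_1^\Q(\rGraph')$ killing $\delta_{e_0}$ (note $\rGraph'$ is obtained from $\rGraph$ by the vertex relation at $\source(e_0)$ losing one term); this descends to a map $\gr^\M_\bullet\O(\pi_1^\Q(\rGraph))^\dual\to\gr^\M_\bullet\O(\pi_1^\Q(\rGraph'))^\dual$ on associated $\M$-gradeds which, after transporting along $\Hiso$, becomes exactly the combinatorially defined $\rho_*\colon\HAlg(\rGraph)\to\HAlg(\rGraph')$ of the statement — one checks this on the free generators of $\oHAlg$ listed in Construction~\ref{cons:HAlg}, where it is immediate, and notes it respects the surface relation $\sigma$ since $\rho_*$ simply drops the term $\alog(\delta_{e_0})$ from $\sigma_2$. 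Next I would observe that both the reduction-graph quotient map $\pi_1^\Q(\rGraph)\to\pi_1^\Q(\rGraph')$ and the map $\rho_*$ are $N$-equivariant: on the generators this is clear from Proposition~\ref{prop:explicit_hopf_gpd} and Definition~\ref{def:N_on_HAlg}, since $N$ vanishes on $\log(\delta_{e_0})$ and is otherwise formally the same.

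With these compatibilities in hand, the commutativity is essentially automatic. The quotient map $q\colon\O(\pi_1^\Q(\rGraph))^\dual\twoheadrightarrow\O(\pi_1^\Q(\rGraph'))^\dual$ sends the canonical path $\gamma^\can_{b,x}$ (characterised by the vanishing of all its higher cycle pairings by Definition~\ref{def:can_paths}) to $\gamma^\can_{\rho(b),\rho(x)}$, because the higher cycle pairing only sees the underlying graph $\Graph$ and the induced map $q$ on $\gr^\M_0$ is the canonical identification; it intertwines $N$ on the two sides; and it intertwines $\Hiso_{\rGraph}$ with $\Hiso_{\rGraph'}$ up to the identification $\rho_*$ by the naturality of the trivialisation of \S\ref{ss:trivialisation} and Construction~\ref{cons:HAlg} (the trivialisation $\psi$ of Proposition~\ref{prop:trivialisation} is built from canonical paths, which match, and $\Hiso^{-1}$ is built from these plus the generators $\log(\beta_{v,i})$, $\log(\delta_e)$, $\log(\delta_i)$, $e^*$, all of which $q$ either preserves or sends to their counterparts). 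Chaining these three compatibilities gives $\rho_*\classify(x)=\rho_*\Hiso_{\rGraph}N(\gamma^\can_{b,x})=\Hiso_{\rGraph'}N(q(\gamma^\can_{b,x}))=\Hiso_{\rGraph'}N(\gamma^\can_{\rho(b),\rho(x)})=\classify(\rho(x))$, which is the assertion. Finally I would note $\rho_*$ restricts to a map $\V(\rGraph)\to\V(\rGraph')$: by Definition~\ref{def:V}, $\V$ is cut out inside $\gr^\M_{-2}\LAlg$ by the vanishing conditions $N^{k-1}(\gamma)=0$, and since $\rho_*$ is $N$-equivariant it carries these conditions to the corresponding ones on $\rGraph'$; combined with the fact that $\classify$ lands in $\V$ on both sides, the square makes sense as stated.

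\textbf{Main obstacle.} I expect the only real work to be the bookkeeping verification that the combinatorial map $\rho_*$ of the statement genuinely coincides with the map induced on $\M$-gradeds by the geometric quotient $\pi_1^\Q(\rGraph)\to\pi_1^\Q(\rGraph')$ under the trivialising isomorphisms $\Hiso$ — i.e.\ that the identification $\gr^\M_\bullet\O(\pi_1^\Q(\rGraph))^\dual\xrightarrow{\sim}\HAlg(\rGraph)$ is natural in $\rGraph$ for this particular class of maps. This is not deep but requires carefully matching generators through Construction~\ref{cons:HAlg} and \S\ref{sss:action_on_H^1}--\ref{sss:action_on_H_1}, in particular checking that the slightly involved definition of $\Hiso^{-1}$ on $\H_1(\Graph)$ (Definition~\ref{def:inverse_iso_on_H_1}) is compatible with $\rho_*$, which it is because that formula only involves $e^*$ and $\log(\delta_e)$ for genuine edges $e$, none of which is affected by deleting the half-edge $e_0$.
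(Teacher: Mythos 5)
Your proposal is correct and follows essentially the same route as the paper: you construct the pushforward on fundamental groupoids by killing $\delta_{e_0}$, check it is $N$-equivariant and compatible with the trivialising isomorphisms $\Hiso$ (first in $\M$-degree $0$ via the higher cycle pairing, which only sees the underlying graph, hence canonical paths map to canonical paths, then on the remaining generators using the trivialisation by canonical paths), and deduce the commuting square exactly as in the paper's deduction of the lemma from its compatibility proposition. The "main obstacle" you identify is precisely the content of the paper's Proposition~\ref{prop:half-edge_compatibility}, handled there by the same generator-by-generator check you outline.
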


The proof of this result (and all others like it in this section) follows from a corresponding compatibility result with the description of the $\M$-graded fundamental groupoid in Theorem~\ref{thm:description_of_M-graded}.

\begin{proposition}\label{prop:half-edge_compatibility}
The map $\rho\colon\QVert{\Graph}\rightarrow\QVert{\Graph'}$ extends to a morphism\[\rho_*\colon\pi_1^\Q(\rGraph)\rightarrow\pi_1^\Q(\rGraph')\]of $\Q$-pro-unipotent groupoids\footnote{In order to define these groupoids in Definition~\ref{def:pi1_reduction_graph}, one strictly needs to choose orderings on the sets of edges incident to vertices, and the pushforward maps on fundamental groupoids only make sense if these orderings are chosen compatibly in a certain sense. In the interests of brevity, we will omit mention of these orderings in this section, leaving it to the reader to order the products in the correct manner.} such that the induced map\[\rho_*\colon\O(\pi_1^\Q(\rGraph))^\dual\rightarrow\O(\pi_1^\Q(\rGraph'))^\dual\]of complete Hopf groupoids is $N$-equivariant and $\W$- and $\M$-filtered. The induced map $\gr^\M_\bullet\rho_*\colon\gr^\M_\bullet\O(\pi_1^\Q(\rGraph))^\dual\rightarrow\gr^\M_\bullet\O(\pi_1^\Q(\rGraph'))^\dual$ fits into a commuting square
\begin{equation}\label{eq:half-edge_compatibility}
\begin{tikzcd}
\gr^\M_\bullet\O(\pi_1^\Q(\rGraph))^\dual \arrow{r}{\Hiso}[swap]{\hiso}\arrow{d}{\gr^\M_\bullet\rho_*} & \HAlg(\rGraph) \arrow{d}{\rho_*} \\
\gr^\M_\bullet\O(\pi_1^\Q(\rGraph'))^\dual \arrow{r}{\Hiso}[swap]{\hiso} & \HAlg(\rGraph')
\end{tikzcd}
\end{equation}
where the horizontal maps are the isomorphisms from Theorem~\ref{thm:description_of_M-graded}.
\end{proposition}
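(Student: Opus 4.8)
The plan is to define $\rho_*$ by hand on the discrete fundamental groupoid $\pi_1(\rGraph)$, deduce its analytic properties from the explicit presentation of Proposition~\ref{prop:explicit_hopf_gpd}, and then check the square~\eqref{eq:half-edge_compatibility} on the topological algebra generators of $\HAlg(\rGraph)$.

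\emph{Construction of $\rho_*$.} Write $v_0=\source(e_0)$. The graphs of groups attached to $\rGraph$ and $\rGraph'=\rGraph\setminus\{e_0\}$ by Definition~\ref{def:pi1_reduction_graph} have the same underlying graph of vertices and (non-half-)edges, the same vertex groups away from $v_0$, and at $v_0$ the vertex group of $\rGraph$ admits a homomorphism onto that of $\rGraph'$ that kills the generator $\delta_{e_0}$ and fixes every other generator; this is well defined precisely because the vertex relation of $\rGraph'$ at $v_0$ is the vertex relation of $\rGraph$ with the factor $\delta_{e_0}$ deleted. Together with the identity on all edges and on all $\delta_e$ with $e$ an edge or a half-edge $\neq e_0$, these data assemble into a morphism of graphs of groups, hence into a functor $\rho_*\colon\pi_1(\rGraph)\rightarrow\pi_1(\rGraph')$ lying over $\rho$ on objects; by construction it intertwines the edge-twist automorphisms $\sigma$. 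I would then pass to $\Q$-Mal\u cev completions and continuous duals to obtain the claimed morphism $\rho_*\colon\O(\pi_1^\Q(\rGraph))^\dual\rightarrow\O(\pi_1^\Q(\rGraph'))^\dual$ of complete Hopf groupoids; being $\sigma$-equivariant, it is $N$-equivariant after passing to logarithms. For the filtrations, Proposition~\ref{prop:explicit_hopf_gpd} describes $\W_\bullet$ and $\M_\bullet$ on either side as the ideal filtrations placing the standard generators $e$, $\log(\beta_{v,i})$, $\log(\beta_{v,i}')$, $\log(\delta_e)$ in the prescribed bidegrees; since $\rho_*$ carries each such generator either to the like-named generator, in the same bidegree, or (in the one case of $\log(\delta_{e_0})$) to $0$, it is automatically $\W$- and $\M$-filtered.

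\emph{Commutativity of the square.} The key observation is that $\gr^\M_0\rho_*$ is, under the identifications $\gr^\M_0\O(\pi_1^\Q(\rGraph))^\dual=\O(\pi_1^\Q(\Graph))^\dual$ of Remark~\ref{rmk:interpretation_of_quotients}, simply the functor induced by $\rho$, which is the identity on vertices and on edges — because deleting a half-edge changes neither the vertices nor the edges of the underlying graph, hence changes neither $\pi_1(\Graph)$ nor $\H^1(\Graph)$ nor $\H_1(\Graph)$. Consequently $\gr^\M_0\rho_*$ takes canonical paths to canonical paths (using Corollary~\ref{cor:can_path_along_edge} and Example~\ref{ex:bridges_are_canonical} to handle rational vertices lying on $e_0$), so $\rho_*$ intertwines the basepoint-trivializations $\psi$ of Proposition~\ref{prop:trivialisation}. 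Since both horizontal maps in~\eqref{eq:half-edge_compatibility} are isomorphisms, the square commutes if and only if $\gr^\M_\bullet\rho_*\circ\Hiso^{-1}=\Hiso^{-1}\circ\rho_*$ as continuous algebra homomorphisms $\HAlg(\rGraph)\rightarrow\gr^\M_\bullet\O(\pi_1^\Q(\rGraph'))^\dual$, and I would check this on the topological algebra generators $\H^1(\Graph)$, the $\alog(\beta_{v,i})$, the $\alog(\beta_{v,i}')$, $\H_1(\Graph)$ and the $\alog(\delta_e)$ ($e$ a half-edge) of $\HAlg(\rGraph)$. On each one reads off both composites from the formulas defining $\Hiso^{-1}$ in \S\ref{s:M-trivialisation} (in particular Definition~\ref{def:inverse_iso_on_H_1} for the summand $\H_1(\Graph)$) and from the definition of $\rho_*$ on $\HAlg$: on $\H^1(\Graph)=\H^1(\Graph')$, on the $\alog(\beta)$'s, and on $\H_1(\Graph)=\H_1(\Graph')$ both sides return the corresponding generator of $\O(\pi_1^\Q(\rGraph'))^\dual$ (using that the classes $e^*$ and elements $\log(\delta_e)$ attached to genuine edges are untouched), on $\alog(\delta_e)$ with $e$ a half-edge $\neq e_0$ both sides return $\log(\delta_e)$, and on $\alog(\delta_{e_0})$ both sides return $0$.

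\emph{Main obstacle.} I do not expect anything deep: the only genuine content is the $\gr^\M_0$-observation guaranteeing compatibility with the trivializations, together with the mild bookkeeping — already flagged in the footnote to the statement — of choosing the orderings of the edges at each vertex of $\rGraph$ and $\rGraph'$ compatibly so that the vertex relations and the products implicit in the definition of $\rho_*$ line up. This causes no real trouble, precisely because the vertex relation of $\rGraph'$ at $v_0$ is obtained from that of $\rGraph$ by striking out the single factor $\delta_{e_0}$, independently of the chosen order.
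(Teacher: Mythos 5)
Your proposal is correct and follows essentially the same route as the paper's proof: define $\rho_*$ by killing $\delta_{e_0}$ and fixing the remaining generators (the vertex relation at $\source(e_0)$ losing exactly the factor $\delta_{e_0}$), verify $N$-equivariance and the filtration properties on generators, observe that in $\M$-degree $0$ the map preserves canonical paths because the underlying graph data (equivalently, the higher cycle pairings, with segments of $e_0$ behaving like bridges) is unchanged, and then use compatibility with the trivialisations to check the square on the generators of $\HAlg(\rGraph)$. The only cosmetic difference is that you build $\rho_*$ on the discrete graph-of-groups level before Mal\u cev completion and phrase the degree-$0$ step as invariance of $\pi_1(\Graph)$, $\H_1(\Graph)$, $\H^1(\Graph)$, whereas the paper defines $\rho_*$ directly on $\pi_1^\Q$ and verifies the degree-$0$ identity via the higher cycle pairing on edges; these are the same verification.
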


Before we prove this proposition, we briefly explain how this implies Lemma~\ref{lem:half-edge_reduction}.

\begin{proof}[Proof of Lemma~\ref{lem:half-edge_reduction}]
Commutativity of \eqref{eq:half-edge_compatibility} in $\M$-degree $0$ implies in particular that $\gr^\M_\bullet\rho_*(\gamma_{b,x}^\can)=\gamma_{\rho(b),\rho(x)}^\can$ for all rational vertices $x$. Hence by \eqref{eq:half-edge_compatibility} and $N$-equivariance we have\[\rho_*(\classify(x))=\rho_*\Hiso N(\gamma_{b,x}^\can)=\Hiso N\left(\gr^\M_\bullet\rho_*(\gamma_{b,x}^\can)\right)=\Hiso N(\gamma_{\rho(b),\rho(x)}^\can)=\classify(\rho(x))\]as desired.
\end{proof}

The proof of Proposition~\ref{prop:half-edge_compatibility} proceeds in three steps. None of these steps is overly complicated, but we will nonetheless take care to set them out clearly as this proof will be a microcosm of more complicated proofs to come.

\subsubsection*{Step $1$: Construction of $\rho_*$}

The pushforward map $\rho_*\colon\pi_1^\Q(\rGraph)\rightarrow\pi_1^\Q(\rGraph')$ is defined to send $\delta_{e_0}$ to $1$ (the identity based at $\source(e_0)$), and to act on the other generators of $\pi_1^\Q(\rGraph)$ from Definition~\ref{def:pi1_reduction_graph} in the obvious manner. One readily verifies that this map is well-defined and the induced map on complete Hopf groupoids is $N$-equivariant and $\W$- and $\M$-filtered.

\subsubsection*{Step $2$: Commutativity in $\M$-degree $0$}

Commutativity of \eqref{eq:half-edge_compatibility} in $\M$-degree $0$ is equivalent to the assertion that\[\int_{\rho_*(\gamma)}\underline\omega=\int_\gamma\underline\omega\]for all paths $\gamma$ in $\rGraph$ and all $\underline\omega\in\Shuf\H_1(\Graph')=\Shuf\H_1(\Graph)$. By the properties of the higher cycle pairing, this needs only be checked when $\gamma=e$ is an edge of (some subdivision of) $\rGraph$ and $\underline\omega=\omega_1\dots\omega_n$  is a pure tensor of length $n$. This is obvious when $e$ is an edge of $\Graph'$ already, and when $e$ is contained in $e_0$, both sides are equal to $1$ if $n=0$ and $0$ otherwise.

\subsubsection*{Step $3$: Commutativity in $\M$-degrees $<0$}

From commutativity of \eqref{eq:half-edge_compatibility} in $\M$-degree $0$, we see that $\gr^\W_\bullet\rho_*\colon\gr^\M_\bullet\O(\pi_1^\Q(\rGraph))^\dual\rightarrow\gr^\M_\bullet\O(\pi_1^\Q(\rGraph'))^\dual$ takes canonical paths in $\Graph$ to canonical paths in $\Graph'$. Hence it is compatible with the trivialisations of these complete Hopf groupoids constructed in \S\ref{ss:trivialisation}, so the objects (resp.\ morphisms) in the above square can be thought of as complete Hopf algebras (resp.\ morphisms thereof). Thus commutativity can be checked on generators of $\HAlg(\rGraph)$, identified with elements of the complete Hopf algebra $\gr^\M_\bullet\O(\pi_1^\Q(\rGraph))^\dual$ via the map $\Hiso^{-1}$ constructed in \S\ref{s:M-trivialisation}. On $\H^1(\Graph)$ (in $\M$-degree $0$) this has already been checked, and for all other generators this is obvious.

This completes the proof of Proposition~\ref{prop:half-edge_compatibility}.\qed

\subsection{Resistance reduction}
\label{ss:resistance_reduction}

The second type of reduction we will use in our arguments involves contracting down certain subgraphs to single edges. This is inspired by the theory of electrical networks, and corresponds to the operation of replacing a part of an electrial network with a single resistor with resistance equal to the total resistance across the subnetwork.

Thus consider a reduction graph $\rGraph$ containing a subgraph $\Cpt$ (of its underlying graph $\Graph$) and two distinct vertices $w_0,w_1\in\Vert{\Cpt}$ such for every vertex $w$ of $\Cpt$ other than $w_0$ and $w_1$, every edge or half-edge of $\rGraph$ incident to $w$ lies in $\Cpt$. We write $\rGraph'$ for the reduction graph formed from $\rGraph$ by replacing $\Cpt$ with a single edge $e_\Cpt$ from $w_0$ to $w_1$ (and its inverse) of length\[\length(e_\Cpt):=\langle w_1-w_0,w_1-w_0\rangle_\Cpt,\]where $\langle\cdot,\cdot\rangle_\Cpt$ denotes the height pairing on $\Cpt$. We declare the genus of a vertex $v\in\Vert{\Graph'}$ to be the same as the genus of the corresponding vertex of $\rGraph$.
\begin{figure}[!h]
\begin{center}
\begin{tikzpicture}
\draw[thick,dashed,color=darkgray,fill=lightgray,opacity=0.4] (0,1) circle (1);
\coordinate (w0) at (0,0);
\draw[fill=black] (w0) circle (2pt);
\coordinate (w1) at (1,1);
\draw[fill=black] (w1) circle (2pt);
\coordinate (o) at (0,1);
\draw[fill=black] (o) circle (2pt);
\coordinate (u) at (0,1.5);
\draw[fill=black] (u) circle (2pt);
\coordinate (v) at (-0.5,1);
\draw[fill=black] (v) circle (2pt);
\draw[thick] (u) -- (o) -- (w0) -- (v) -- (u) -- (w1) -- (o) -- (v);
\draw[thick] (w1) -- (0.5,1.5);
\draw[thick,dashed] (0.5,1.5) -- (0,2);
\coordinate (a) at (1,0);
\draw[fill=black] (a) circle (2pt);
\coordinate (b) at (2,0);
\draw[fill=black] (b) circle (2pt);
\coordinate (c) at (2,1);
\draw[fill=black] (c) circle (2pt);
\coordinate (d) at (1.5,0.5);
\draw[fill=black] (d) circle (2pt);
%%other edges
\draw[thick] (w0) -- (a) -- (b) -- (c) -- (d) -- (a) -- (w1) -- (c);
\draw[thick] (b) to[bend left] (d);
\draw[thick] (b) to[bend right] (d);
\draw[thick] (w1) -- (1.5,1.5);
\draw[thick,dashed] (1.5,1.5) -- (2,2);
%%remains of Cpt
\draw[thick,dashed,color=darkgray,fill=lightgray,opacity=0.4] (5,1) circle (1);
%%vertices of Graph'
\coordinate (w0') at (5,0);
\draw[fill=black] (w0') circle (2pt);
\coordinate (w1') at (6,1);
\draw[fill=black] (w1') circle (2pt);
\coordinate (a') at (6,0);
\draw[fill=black] (a') circle (2pt);
\coordinate (b') at (7,0);
\draw[fill=black] (b') circle (2pt);
\coordinate (c') at (7,1);
\draw[fill=black] (c') circle (2pt);
\coordinate (d') at (6.5,0.5);
\draw[fill=black] (d') circle (2pt);
%%edges of Graph'
\draw[thick] (w0') -- (a') -- (b') -- (c') -- (d') -- (a') -- (w1') -- (c');
\draw[thick] (b') to[bend left] (d');
\draw[thick] (b') to[bend right] (d');
\draw[thick] (w1') -- (6.5,1.5);
\draw[thick,dashed] (6.5,1.5) -- (7,2);
\draw[thick] (w0') to[bend left] (w1');
\end{tikzpicture}
\end{center}
\label{fig:resistance_reduction}
\caption{On the left, a graph $\Graph$ with a suitable subgraph $\Cpt$ indicated by the light grey dashed disc -- the two vertices on its circumference are $w_0$ and $w_1$. On the right, the graph $\Graph'$ formed by replacing $\Cpt$ with a single edge.}
\end{figure}
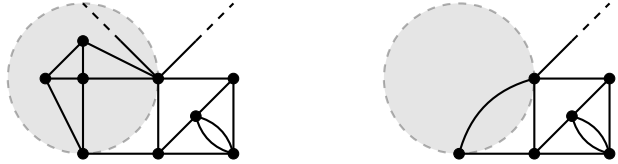

There is a map\[\rho\colon\QVert{\Graph}\rightarrow\QVert{\Graph'},\]defined in the obvious way outside $\Cpt$, and sending a rational vertex $v\in\Cpt$ to the rational vertex a distance $\langle v-w_0,w_1-w_0\rangle_\Cpt$ along $e_\Cpt$. Proposition~\ref{prop:bounds_on_potentials} ensures that $\langle w-w_0,w_1-w_0\rangle_\Cpt\in[0,\length(e_\Cpt)]$, so that this map is well-defined.

After suitable subdivision of $\rGraph$ and $\rGraph'$, the map $\rho$ is even induced by a morphism $\rho\colon\Graph\rightarrow\Graph'$ of underlying graphs (where our graph morphisms are permitted to contract edges and half-edges to single points). In particular, $\rho$ induces pushforward maps
\begin{align*}
\rho_*\colon\H_1(\Graph)&\rightarrow\H_1(\Graph'), \\
\rho_*\colon\H^1(\Graph)&\rightarrow\H^1(\Graph'),
\end{align*}
which are compatible under the isomorphisms $N\colon\H^1(\Graph)\isoarrow\H_1(\Graph)$ and $N\colon\H^1(\Graph')\isoarrow\H_1(\Graph')$ induced by the cycle pairing (see \S\ref{s:cheng-katz}). We will see later in Remark~\ref{rmk:resistance_reduction_projection} that these maps extend to a bigraded morphism\[\rho_*\colon\HAlg(\rGraph)\rightarrow\HAlg(\rGraph')\](sending generators of $\HAlg(\rGraph)$ of the form $\log(\beta_{v,i})$, $\log(\beta_{v,i}')$ or $\log(\delta_e)$ to either the corresponding generator of $\HAlg(\rGraph')$ or zero as appropriate). These pushforward maps allow us to state our desired compatibility result between the non-abelian Kummer maps associated to $\rGraph$ and $\rGraph'$, exactly as in Lemma~\ref{lem:half-edge_reduction}.

\begin{lemma}[Resistance reduction]\label{lem:resistance_reduction}
The non-abelian Kummer maps for $\rGraph$ and $\rGraph'$ fit into a commuting square
\begin{center}
\begin{tikzcd}
\QVert{\Graph} \arrow{r}{\classify}\arrow{d}{\rho} & \V(\rGraph) \arrow{d}{\rho_*} \\
\QVert{\Graph'} \arrow{r}{\classify} & \V(\rGraph')
\end{tikzcd}
\end{center}
where the right-hand vertical map is the restriction of $\rho_*\colon\HAlg(\rGraph)\rightarrow\HAlg(\rGraph')$.
\end{lemma}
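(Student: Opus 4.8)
The strategy is to imitate the proof of Lemma~\ref{lem:half-edge_reduction} essentially verbatim, reducing everything to an analogue of Proposition~\ref{prop:half-edge_compatibility} for the resistance-reduction morphism $\rho$. That is, the plan is to prove that $\rho\colon\QVert{\Graph}\rightarrow\QVert{\Graph'}$ lifts to a morphism $\rho_*\colon\pi_1^\Q(\rGraph)\rightarrow\pi_1^\Q(\rGraph')$ of $\Q$-pro-unipotent groupoids whose induced map on complete Hopf groupoids is $N$-equivariant and $\W$- and $\M$-filtered, and which fits into a commuting square
\begin{equation*}
\begin{tikzcd}
\gr^\M_\bullet\O(\pi_1^\Q(\rGraph))^\dual \arrow{r}{\Hiso}[swap]{\hiso}\arrow{d}{\gr^\M_\bullet\rho_*} & \HAlg(\rGraph) \arrow{d}{\rho_*} \\
\gr^\M_\bullet\O(\pi_1^\Q(\rGraph'))^\dual \arrow{r}{\Hiso}[swap]{\hiso} & \HAlg(\rGraph')
\end{tikzcd}
\end{equation*}
with the horizontal maps the isomorphisms of Theorem~\ref{thm:description_of_M-graded}. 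Granting this, the proof of Lemma~\ref{lem:resistance_reduction} is the same diagram-chase as for Lemma~\ref{lem:half-edge_reduction}: commutativity of the square in $\M$-degree $0$ shows $\gr^\M_\bullet\rho_*$ carries $\gamma^\can_{b,x}$ to $\gamma^\can_{\rho(b),\rho(x)}$, and then $N$-equivariance together with the square gives $\rho_*(\classify(x)) = \rho_*\Hiso N(\gamma^\can_{b,x}) = \Hiso N(\gamma^\can_{\rho(b),\rho(x)}) = \classify(\rho(x))$.

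The construction of $\rho_*\colon\pi_1^\Q(\rGraph)\rightarrow\pi_1^\Q(\rGraph')$ is where the real content lies, and I expect it to be the main obstacle. Outside $\Cpt$ the definition is obvious; the problem is to say what $\rho_*$ does to the generators $\check e$, $\beta_{w,i}$, $\beta_{w,i}'$, $\delta_e$ attached to the interior of $\Cpt$. The natural prescription is: send each $\beta_{w,i}$, $\beta_{w,i}'$ (and each $\delta_e$ for $e$ a half-edge of $\Cpt$) to $1$, since vertices in the interior of $\Cpt$ acquire genus $0$ and lose their half-edges in $\rGraph'$; send each $\delta_e$ for $e$ an interior edge of $\Cpt$ to $1$ as well; and for the remaining edges send them to the appropriate combination of $e_\Cpt$ and tree-paths so that the composite along $e_\Cpt$ is respected. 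Concretely, the cleanest way to phrase this is via the trivialisations from Proposition~\ref{prop:trivialisation}: after choosing canonical-path trivialisations of both groupoids, $\rho_*$ should be the map which on $\gr^\M_0$ is the pushforward of Cheng--Katz data — i.e.\ it is the unique map of complete Hopf groupoids compatible with $\rho_*\colon\H^1(\Graph)\rightarrow\H^1(\Graph')$ — and which on the remaining $\M$-graded generators acts as indicated. One must check (i) this is well-defined, i.e.\ respects the edge and vertex relations of Definition~\ref{def:pi1_reduction_graph}; (ii) it is $\W$- and $\M$-filtered; (iii) it is $N$-equivariant; and (iv) the square above commutes. Point (i) for the vertex relation at $w_0$ and $w_1$ is the delicate one: the product $\prod[\beta',\beta]\prod\delta_e = 1$ over edges incident to $w_0$ in $\rGraph$ must map to the corresponding relation in $\rGraph'$, and here one uses that $\log(\delta_{e_\Cpt})$ in $\gr^\M_{-2}\HAlg(\rGraph')$ should be the image of $\sum_{\source(e)=w_0, e\in\Cpt}\log(\delta_e)$ — which is where the formula $\length(e_\Cpt) = \langle w_1-w_0, w_1-w_0\rangle_\Cpt$ and the identification of $N(e_\Cpt^*)$ with the relevant homology class (Corollary~\ref{cor:computing_N(e*)}, Lemma~\ref{lem:height_vs_cycle}) must be invoked.

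For the $\M$-degree $0$ commutativity (the analogue of Step 2), I would argue that the induced map on $\gr^\M_0\O(\pi_1^\Q(\rGraph))^\dual = \O(\pi_1^\Q(\Graph))^\dual$ is compatible with the Cheng--Katz duality isomorphisms of Theorem~\ref{thm:c-k_duality}. This amounts to the identity $\int_{\rho_*\gamma}\underline\omega = \int_\gamma \rho^*\underline\omega$ for $\underline\omega\in\Shuf\H_1(\Graph')$; by the composition property of the higher cycle pairing it suffices to check it on a single edge $e$ of a subdivision of $\rGraph$, and this is immediate from the explicit formula $\Hiso(e) = \exp(\length(e)e^*)$ (Lemma~\ref{lem:C-K_on_edges}) together with the observation that for $e$ inside $\Cpt$, $\rho_*$ sends $e$ to the rational sub-edge of $e_\Cpt$ of the correct length, whose $e_\Cpt^*$-exponent matches $\rho_*(e^*)$ by the definition of the metric on $\rGraph'$ via the height pairing and the formula for $N(e^*)$. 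Steps analogous to Step 3 — passing from $\M$-degree $0$ commutativity to commutativity in lower $\M$-degrees by checking on generators using the trivialisation and the explicit map $\Hiso^{-1}$ from \S\ref{s:M-trivialisation} — are then routine, and Remark~\ref{rmk:resistance_reduction_projection} (whose existence is promised by the statement) supplies the needed fact that $\rho_*$ descends to the asserted bigraded map on $\HAlg$. The one genuinely new computation beyond the half-edge case is verifying that $\rho_*$ sends $\log(\delta_{e_\Cpt})$-type data correctly, i.e.\ the interaction of the height-pairing-defined length with the surface relation; everything else is bookkeeping parallel to \S\ref{ss:half-edge_removal}.
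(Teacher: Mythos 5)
Your overall architecture is the same as the paper's: one proves an analogue of Proposition~\ref{prop:half-edge_compatibility} (this is Proposition~\ref{prop:resistance_compatibility} in the paper) and then runs the identical diagram chase, and your Step~2/Step~3 outline matches the paper's in spirit. But the one place where you yourself locate ``the real content'' --- the definition of $\rho_*$ on the generators attached to $\Cpt$ --- is where your prescription goes wrong. You propose sending $\delta_e\mapsto 1$ for edges $e$ inside $\Cpt$ and adjusting only ``the remaining edges''. The correct (and essentially forced) definition is
\[
\rho_*(\delta_e)=\delta_{\rho(e)}^{\length(\rho(e))/\length(e)},\qquad \length(\rho(e))=\langle\partial(e),w_1-w_0\rangle_\Cpt \text{ for } e\in\Edge{\Cpt},
\]
i.e.\ every edge of $\Cpt$ carries a fractional power of the loop around $e_\Cpt$ whose exponent is the unit current flow through $e$. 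Your choice fails $N$-equivariance outright: since $N(e)=\length(e)\cdot e\log(\delta_e)$ and $\rho_*(e)$ must be the rational sub-edge of $e_\Cpt$ from $\rho(\source(e))$ to $\rho(\target(e))$ (this is forced by lying over $\rho$ on objects), one has $N(\rho_*(e))=\length(\rho(e))\cdot\rho(e)\log(\delta_{\rho(e)})\neq 0$ whenever $\length(\rho(e))>0$, whereas $\rho_*(N(e))=0$ under your prescription. It also breaks well-definedness: the image of the vertex relation at $w_0$ would be $\prod_i[\beta_{w_0,i}',\beta_{w_0,i}]\cdot\prod_{e\notin\Cpt}\delta_e$, which differs from the vertex relation at $w_0$ in $\rGraph'$ by the factor $\delta_{e_\Cpt}$; concentrating that factor on the edges at $w_0,w_1$ in some ad hoc way cannot repair $N$-equivariance on the interior edges, nor the $\M$-degree $-2$ computation on generators coming from $\H_1(\Graph)$, which needs $\Hiso\log(\rho_*(\delta_e))=\tfrac{\length(\rho(e))}{\length(e)}\Hiso\log(\delta_{\rho(e)})$ together with $\rho_*(e^*)=\tfrac{\length(\rho(e))}{\length(e)}\rho(e)^*$ to match Definition~\ref{def:inverse_iso_on_H_1} on both sides.

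With the current-flow exponents, well-definedness is exactly Kirchhoff's current law: at an interior vertex of $\Cpt$ the exponents sum to zero by harmonicity of $v\mapsto\langle v-w_0,w_1-w_0\rangle_\Cpt$, and at $w_0,w_1$ they sum to one (this is Proposition~\ref{prop:quotient_edge_lengths} in the paper), which is precisely the identification of $\log(\delta_{e_\Cpt})$ with the image of $\sum_{\source(e)=w_0,\,e\in\Cpt}\log(\delta_e)$ that you correctly flag as the delicate point. Two smaller calibration issues: your Step~2 is not ``immediate'' --- the identity $\int_{\rho_*\gamma}\underline\omega=\int_\gamma\rho^*\underline\omega$ requires knowing that $\rho^*$ carries $\H_1(\Graph')$ into $\H_1(\Graph)$, equivalently $\partial\rho^*(e_\Cpt)=w_1-w_0$, which is the harmonic computation of Proposition~\ref{prop:harmonic_quotient} (again Kirchhoff), not a formal consequence of Lemma~\ref{lem:C-K_on_edges}; and Step~3 is not pure bookkeeping, since the check on $\H_1(\Graph)$-generators is a genuine computation using the exponents above. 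So the skeleton of your argument is right, but without the fractional-power definition of $\rho_*(\delta_e)$ the construction does not exist as claimed, and that definition together with its Kirchhoff-law justification is the heart of the proof.
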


This will follow from the following compatibility result with the description of the $\M$-graded fundamental groupoid in Theorem~\ref{thm:description_of_M-graded}. The proof of Lemma~\ref{lem:resistance_reduction} from this Proposition~\ref{prop:resistance_compatibility} is exactly the same as the proof of Lemma~\ref{lem:half-edge_reduction} from Proposition~\ref{prop:half-edge_compatibility}.

\begin{proposition}\label{prop:resistance_compatibility}
The map $\rho\colon\QVert{\Graph}\rightarrow\QVert{\Graph'}$ extends to a morphism\[\rho_*\colon\pi_1^\Q(\rGraph)\rightarrow\pi_1^\Q(\rGraph')\]of $\Q$-pro-unipotent groupoids such that the induced map\[\rho_*\colon\O(\pi_1^\Q(\rGraph))^\dual\rightarrow\O(\pi_1^\Q(\rGraph'))^\dual\]of complete Hopf groupoids is $N$-equivariant and $\W$- and $\M$-filtered. The induced map $\gr^\M_\bullet\rho_*\colon\gr^\M_\bullet\O(\pi_1^\Q(\rGraph))^\dual\rightarrow\gr^\M_\bullet\O(\pi_1^\Q(\rGraph'))^\dual$ fits into a commuting square
\begin{equation}\label{eq:resistance_compatibility}
\begin{tikzcd}
\gr^\M_\bullet\O(\pi_1^\Q(\rGraph))^\dual \arrow{r}{\Hiso}[swap]{\hiso}\arrow{d}{\gr^\M_\bullet\rho_*} & \HAlg(\rGraph) \arrow{d}{\rho_*} \\
\gr^\M_\bullet\O(\pi_1^\Q(\rGraph'))^\dual \arrow{r}{\Hiso}[swap]{\hiso} & \HAlg(\rGraph')
\end{tikzcd}
\end{equation}
where the horizontal maps are the isomorphisms from Theorem~\ref{thm:description_of_M-graded}.
\end{proposition}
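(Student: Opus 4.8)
The plan is to follow the three-step pattern of the proof of Proposition~\ref{prop:half-edge_compatibility}: first construct the morphism $\rho_*\colon\pi_1^\Q(\rGraph)\to\pi_1^\Q(\rGraph')$ and check that the induced map on complete Hopf groupoids is $N$-equivariant and $\W$- and $\M$-filtered; second, verify commutativity of~\eqref{eq:resistance_compatibility} in $\M$-degree $0$; third, deduce commutativity in all $\M$-degrees. Once Proposition~\ref{prop:resistance_compatibility} is established, Lemma~\ref{lem:resistance_reduction} follows from it verbatim, just as Lemma~\ref{lem:half-edge_reduction} follows from Proposition~\ref{prop:half-edge_compatibility}. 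The essential new feature, absent in the half-edge case, is that $\rho$ does not restrict to an isometry on $\Cpt$ but collapses it onto the single edge $e_\Cpt$; the length $\length(e_\Cpt)=\langle w_1-w_0,w_1-w_0\rangle_\Cpt$ is precisely the choice that makes everything consistent, and keeping track of this is where the electrical-network side of the Baker--Faber theory from \S\ref{s:harmonic_analysis} (cf.\ \cite{baker-faber}) enters.

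For Step~1 I would first subdivide $\rGraph$ and $\rGraph'$ so that $\rho$ is realised by a graph morphism $\Graph\to\Graph'$, and then define $\rho_*$ on the generators of $\pi_1^\Q(\rGraph)$ from Definition~\ref{def:pi1_reduction_graph}: generators attached to vertices, edges or half-edges outside $\Cpt$ map to their evident counterparts; a vertex-group generator $\beta_{w,i}$ or $\beta_{w,i}'$ at an interior vertex $w$ of $\Cpt$ maps to the identity; and for an edge or half-edge $e$ of $\Cpt$ the path $\rho_*(e)$ is the sub-path of $e_\Cpt$ from $\rho(\source(e))$ to $\rho(\target(e))$ (which lies inside $e_\Cpt$ by Proposition~\ref{prop:bounds_on_potentials}, overshoots cancelling as backtracking), while $\rho_*(\delta_e)$ is the appropriate conjugate of $\delta_{e_\Cpt}^{c_e}$, where $c_e:=\langle\target(e)-\source(e),w_1-w_0\rangle_\Cpt/\length(e)$ is the current through $e$ for a unit flow from $w_0$ to $w_1$ in $\Cpt$ (so $c_e=0$, $\rho_*(\delta_e)=1$ for half-edges). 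I expect the verifications to run as follows. The edge relations are immediate. The vertex relations at interior vertices of $\Cpt$ and at $w_0,w_1$ reduce to Kirchhoff's current law $\sum_{\source(e)=w,\,e\in\Cpt}c_e\in\{0,\pm1\}$, which makes $\prod_{\source(e)=w,\,e\in\Cpt}\rho_*(\delta_e)$ equal to $1$ or $\delta_{e_\Cpt}^{\pm1}$ as appropriate (the factors for fixed $w$ are powers of a single element, hence commute, so there is no ordering issue). The $\W$- and $\M$-filteredness is clear since each generator goes to a generator, a power of one, a conjugate thereof, or the identity. Finally $N$-equivariance reduces on generators, using that $N$ kills all the $\log(\delta)$- and $\log(\beta)$-generators (Proposition~\ref{prop:explicit_hopf_gpd}), to the identity $c_e\,\length(e)=\length(\rho_*(e))$ — which is the definition of $c_e$ — together with the cancellation $N(\mathrm{Ad}_p(\log(\delta_{e_\Cpt})))=0$ for $p$ a sub-path of $e_\Cpt$ (here both terms of the Leibniz expansion cancel).

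For Step~2, commutativity of~\eqref{eq:resistance_compatibility} in $\M$-degree $0$ is the assertion that, under the Cheng--Katz duality isomorphisms of Theorem~\ref{thm:c-k_duality}, $\gr^\M_0\rho_*\colon\O(\pi_1^\Q(\Graph))^\dual\to\O(\pi_1^\Q(\Graph'))^\dual$ corresponds to $\CTensor$ of the map $\H^1(\Graph)\to\H^1(\Graph')$ sending $e^*\mapsto c_e\,e_\Cpt^*$ for $e$ in $\Cpt$ and fixing $e^*$ for $e$ outside $\Cpt$; one checks (using Lemma~\ref{lem:C-K_on_edges} and the recursive definition of the higher cycle pairing) that this map is the pushforward $\rho_*$ on cohomology introduced before Lemma~\ref{lem:resistance_reduction}. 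The only real content is then that the pushforward $\rho_*\colon\H_1(\Graph)\to\H_1(\Graph')$ is, with respect to the intersection-length pairings, an orthogonal projection with kernel $\H_1(\Cpt)$ — equivalently, that $\rho$ preserves effective resistances. This is a reformulation of the Baker--Faber theory recalled in \S\ref{s:harmonic_analysis}: one applies the orthogonal decomposition of Lemma~\ref{lem:height_vs_cycle} inside $\Cpt$ and uses that $\length(e_\Cpt)$ was chosen to be $\langle w_1-w_0,w_1-w_0\rangle_\Cpt$, the square-length of the unit $w_0$-to-$w_1$ current flow. For Step~3, Step~2 shows that $\gr^\W_\bullet\rho_*$ carries canonical paths to canonical paths, hence is compatible with the trivialisations of \S\ref{ss:trivialisation}; so~\eqref{eq:resistance_compatibility} may be treated as a square of complete Hopf algebras, and it remains to check it on the generators of $\HAlg(\rGraph)$ listed in \S\ref{s:M-trivialisation}. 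On $\H^1(\Graph)$ this is Step~2; on the $\log(\beta)$-, $\log(\beta')$- and half-edge $\log(\delta)$-generators it is immediate from the definition of $\rho_*$ on $\HAlg$ in Remark~\ref{rmk:resistance_reduction_projection}; and on $\H_1(\Graph)$ it follows from Definition~\ref{def:inverse_iso_on_H_1} together with the identities in the proofs of Proposition~\ref{prop:sigma_0} and Proposition~\ref{prop:alog(delta)}, using $\rho_*(\log(\delta_e))=c_e\log(\delta_{e_\Cpt})$ for edges $e$ of $\Cpt$ and the Kirchhoff relations among the $c_e$.

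The main obstacle is Step~1. The naive guess that $\rho_*$ is induced by a graph-of-groups morphism fails because $\rho$ distorts the metric on $\Cpt$, so one must insert the current coefficients $c_e$ by hand and then reconcile this with the vertex relations, the two filtrations, and — most delicately — the monodromy operator $N$. Everything hinges on the two electrical-network identities $c_e\,\length(e)=\length(\rho_*(e))$ and $\sum_{\source(e)=w,\,e\in\Cpt}c_e\in\{0,\pm1\}$, which together are equivalent to $\length(e_\Cpt)=\langle w_1-w_0,w_1-w_0\rangle_\Cpt$ being the correct length. By contrast, Steps~2 and~3 should go through essentially as in the half-edge case once the reformulation of Baker--Faber above is in hand.
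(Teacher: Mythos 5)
Your proposal is correct and follows essentially the same route as the paper: your current coefficients $c_e$ are precisely the paper's exponents in $\rho_*(\delta_e)=\delta_{\rho(e)}^{\length(\rho(e))/\length(e)}$, your Kirchhoff/potential-function checks correspond to Proposition~\ref{prop:quotient_edge_lengths} and the harmonicity computation, your Step~2 orthogonality claim is Proposition~\ref{prop:harmonic_quotient}, and Step~3 is then checked on generators exactly as in the paper. The only cosmetic differences are that the paper works after a subdivision making $\rho$ cellular (so no conjugations of $\delta_{e_\Cpt}^{c_e}$ are needed) and verifies the $\H_1(\Graph)$-generators by a direct computation from Definition~\ref{def:inverse_iso_on_H_1} rather than via Propositions~\ref{prop:sigma_0} and~\ref{prop:alog(delta)}.
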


Our proof of Proposition~\ref{prop:resistance_compatibility} follows the same structure as the proof of Proposition~\ref{prop:half-edge_compatibility}.

\subsubsection*{Step $1$: Construction of $\rho_*$}

To begin with, we explain how to construct the pushforward\[\rho_*\colon\pi_1^\Q(\rGraph)\rightarrow\pi_1^\Q(\rGraph')\]on $\Q$-pro-unipotent fundamental groupoids by describing the image of the generators from Definition~\ref{def:pi1_reduction_graph}. Generators coming from edges $e$ are sent to the generator coming from $\rho(e)$, and generators of the form $\beta_{v,i}$, $\beta_{v,i}'$ or $\delta_e$ for vertices $v$ or half-edges $e$ are sent to the corresponding generator of $\pi_1^\Q(\rGraph')$. Finally, for edges $e$ of $\rGraph$ we define\[\rho_*(\delta_e):=\delta_{\rho(e)}^{\length(\rho(e))/\length(e)},\]recalling that $\length(\rho(e))=\langle\partial(e),w_1-w_0\rangle_\Cpt$ when $e\in\Edge{\Cpt}$, and $\length(\rho(e))=\length(e)$ otherwise.

That this map is well-defined follows straightforwardly from the following identity.

\begin{proposition}\label{prop:quotient_edge_lengths}
For any edge $e'\in\Edge{\Graph'}$ we have\[\frac1{\length(e')}=\sum_{\rho(e)=e'}\frac1{\length(e)}.\]
\begin{proof}
If $e'$ is not the image of an edge in $\Cpt$ this is obvious. Otherwise, suppose without loss of generality that $e'$ is oriented from $w_0$ to $w_1$ in the image of $\Cpt$. Removing from $\Cpt$ all edges mapping to $e'$ disconnects it into two components\footnote{These two components are in fact connected, but the proof doesn't require this.} We let $\Phi_{e'}\in\Omlog(\Cpt)$ denote the function which is equal to $0$ (resp.\ $1$) on the component containing $w_0$ (resp.\ $0$), and linearly interpolating between these values on the edges mapping to $e'$. We see that $\Lapl(\Phi_{e'})=\sum_{\rho(e)=e'}\frac1{\length(e)}\partial(e)$ so that\[1=\Phi_{e'}(w_1)-\Phi_{e'}(w_0)=\left\langle\sum_{\rho(e)=e'}\frac1{\length(e)}\partial(e),w_1-w_0\right\rangle=\sum_{\rho(e)=e'}\frac{\length(e')}{\length(e)},\]as desired.
\end{proof}
\end{proposition}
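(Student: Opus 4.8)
The plan is to split into two cases according to whether $e'$ is a (subdivided piece of a) edge lying along the new edge $e_\Cpt$ or not. If $e'$ is disjoint from the interior of the replaced subgraph $\Cpt$, then $\rho$ is a local isomorphism near $e'$: there is a unique edge $e$ of the relevant subdivision of $\rGraph$ with $\rho(e)=e'$, and $\length(e)=\length(e')$, so the identity is trivial. The substance is therefore the case where $e'$ lies along $e_\Cpt$. Here I would first pass to a common refinement of $\rGraph$ and $\Graph'$ fine enough that $\rho$ restricts to an honest graph morphism $\Cpt\to e_\Cpt$ which is affine (hence monotone) or constant on each edge of the refined $\Cpt$; this is possible because the \emph{potential function} $\Phi\colon\Cpt\to[0,\length(e_\Cpt)]$, $\Phi(v):=\langle v-w_0,w_1-w_0\rangle_\Cpt$, lies in $\Omlog(\Cpt)$ and is piecewise affine (by Lemma~\ref{lem:height_pairing_x-b} on $\Cpt$, $\Lapl(\Phi)=w_1-w_0\in\Div^0(\Cpt)$ is supported on $\{w_0,w_1\}$, which forces $\Phi''\equiv0$ on every edge), and because, by the very definition of $\rho$, $\rho(v)$ sits at distance $\Phi(v)$ along $e_\Cpt$. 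After refinement, if $e'$ runs from its endpoint at distance $s_0$ to its endpoint at distance $s_1>s_0$ along $e_\Cpt$ (so $\length(e')=s_1-s_0$), then $\rho^{-1}(e')$ is precisely the set of edges $e$ of the refined $\Cpt$ with $\{\Phi(\source(e)),\Phi(\target(e))\}=\{s_0,s_1\}$.

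The key step is then to compute a Laplacian. Let $\Phi_{e'}\in\Omlog(\Cpt)$ be the function equal to $0$ on the locus $\{\Phi\le s_0\}$ (which contains $w_0$), equal to $1$ on $\{\Phi\ge s_1\}$ (which contains $w_1$), and affine on each edge of $\rho^{-1}(e')$; removing the edges of $\rho^{-1}(e')$ from $\Cpt$ separates these two loci, so $\Phi_{e'}$ is well defined. A direct computation from Definition~\ref{def:poly_laplacian} gives $\Lapl(\Phi_{e'})=\sum_{\rho(e)=e'}\frac1{\length(e)}\partial(e)$, where each such $e$ is oriented toward $w_1$ and $\partial(e)=\target(e)-\source(e)$. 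Both $\Phi_{e'}$ and the function $x\mapsto\langle x-w_0,\Lapl(\Phi_{e'})\rangle_\Cpt$ have this divisor as Laplacian and vanish at $w_0$, so they agree (Lemma~\ref{lem:height_pairing_x-b} together with the fact that $\Lapl$ has kernel the constants). Evaluating at $w_1$ and using bilinearity and symmetry of the height pairing,
\[
1=\Phi_{e'}(w_1)=\Bigl\langle w_1-w_0,\ \textstyle\sum_{\rho(e)=e'}\tfrac1{\length(e)}\partial(e)\Bigr\rangle_\Cpt=\sum_{\rho(e)=e'}\frac1{\length(e)}\bigl(\Phi(\target(e))-\Phi(\source(e))\bigr).
\]
Since $\Phi(\target(e))-\Phi(\source(e))=s_1-s_0=\length(e')$ for every $e$ with $\rho(e)=e'$, this rearranges to $\frac1{\length(e')}=\sum_{\rho(e)=e'}\frac1{\length(e)}$, as claimed.

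The mathematical content is light — one instance of the Laplacian computation from Section~\ref{s:harmonic_analysis} plus symmetry of the height pairing — so the main obstacle I expect is bookkeeping rather than ideas: one must verify that $\rho^{-1}(e')$ is genuinely a cut of $\Cpt$ (no edge of the refined $\Cpt$ both surjects onto $e'$ and has endpoints at the same $\Phi$-level, which follows from $\Phi$ being affine and non-constant, hence injective, on each such edge), that the loci $\{\Phi\le s_0\}$ and $\{\Phi\ge s_1\}$ really are the two sides of this cut, and that the orientation conventions implicit in the sum $\sum_{\rho(e)=e'}$ match those in $\sum_{e\in\plEdge{\Cpt}}$ so that no spurious signs appear. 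Handling these cleanly will require fixing, once and for all, a sufficiently fine subdivision and a coherent orientation of the edges mapping onto $e'$; everything else is routine.
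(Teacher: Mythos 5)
Your proposal is correct and follows essentially the same route as the paper: the identity is reduced to constructing the cut function $\Phi_{e'}\in\Omlog(\Cpt)$ (equal to $0$ on the $w_0$-side, $1$ on the $w_1$-side, affine on the edges mapping onto $e'$), computing $\Lapl(\Phi_{e'})=\sum_{\rho(e)=e'}\frac1{\length(e)}\partial(e)$, and evaluating $1=\Phi_{e'}(w_1)-\Phi_{e'}(w_0)$ via the height pairing, using $\langle\partial(e),w_1-w_0\rangle_\Cpt=\length(e')$. The extra bookkeeping you add (refining so $\rho$ is a graph morphism and using that the potential $\Phi$ is affine on edges to see that $\rho^{-1}(e')$ is a cut separating $\{\Phi\le s_0\}$ from $\{\Phi\ge s_1\}$) only makes explicit what the paper's proof leaves implicit.
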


The induced map $\rho_*\colon\O(\pi_1^\Q(\rGraph))^\dual\rightarrow\O(\pi_1^\Q(\rGraph'))^\dual$ is clearly $\W$- and $\M$-filtered, and $N$-equivariance can be verified on the standard generators of $\O(\pi_1^\Q(\rGraph))^\dual$. This is trivial for all generators except edges $e$, and for these generators $N$-equivariance follows from the definition of $\rho_*$.

\subsubsection*{Step $2$: Commutativity in $\M$-degree $0$}

The next step in our proof of Proposition~\ref{prop:resistance_compatibility} is to prove the commutativity of \eqref{eq:resistance_compatibility} in $\M$-degree $0$. As in the proof of Proposition~\ref{prop:half-edge_compatibility}, this amounts to an assertion that the adjoint $\rho^*\colon\H_1(\Graph')\rightarrow\H_1(\Graph)$ with respect to the cycle pairing on $\H_1$ is also the adjoint with respect to the higher cycle pairing on $\O(\pi_1^\Q(\Graph))^\dual$. This will follow from the following compatibility between various pushforward maps.

\begin{proposition}\label{prop:harmonic_quotient}
The pushforward map $\rho_*\colon\Q\cdot\mnEdge{\Graph}\rightarrow\Q\cdot\mnEdge{\Graph'}$ is the direct sum of the pushforward maps
\begin{align*}
\rho_*\colon\H_1(\Graph) &\rightarrow \H_1(\Graph') \\
\rho_*\colon\Div^0(\Graph) &\rightarrow \Div^0(\Graph')
\end{align*}
with respect to the orthogonal decomposition in Lemma~\ref{lem:height_vs_cycle}.
\begin{proof}
The pushforward maps fit into a morphism
\begin{center}
\begin{tikzcd}
0 \arrow{r} & \H_1(\Graph) \arrow{r}\arrow{d}{\rho_*} & \Q\cdot\mnEdge{\Graph} \arrow{r}{\partial}\arrow{d}{\rho_*} & \Div^0(\Graph) \arrow{r}\arrow{d}{\rho_*} & 0 \\
0 \arrow{r} & \H_1(\Graph') \arrow{r} & \Q\cdot\mnEdge{\Graph'} \arrow{r}{\partial} & \Div^0(\Graph) \arrow{r} & 0
\end{tikzcd}
\end{center}
of exact sequences, so it suffices to prove that the adjoint $\rho^*\colon\Q\cdot\mnEdge{\Graph'}\rightarrow\Q\cdot\mnEdge{\Graph}$ of $\rho_*$ takes $\H_1(\Graph')$ into $\H_1(\Graph)$.

Now this adjoint map is given by\[\rho^*(e')=\length(e')\sum_{\rho(e)=e'}\frac1{\length(e)}e,\]from which it is clear that homology classes in $\Graph'$ not meeting the image of $\Cpt$ are taken to homology classes in $\Graph$. To deal with the remaining classes, it suffices to prove that\[\partial\rho^*(e_\Cpt)=w_1-w_0.\]

To show this, let us write also $\rho\in\Omlog(\Cpt)$ for the function $v\mapsto\langle v-w_0,w_1-w_0\rangle_\Cpt$, so that we have\[\Lapl(\rho)=\sum_{e\in\plEdge{\Cpt}}\frac{\langle\partial(e),w_1-w_0\rangle_\Cpt}{\length(e)}\partial(e)=\partial\rho^*(e_\Cpt)\]since $\langle\partial(e),w_1-w_0\rangle_\Cpt=\epsilon_e\length(\rho(e))$, with $\epsilon_e=\sgn(\rho(\target(e))-\rho(\source(e)))\in\{\pm1,0\}$. But Lemma~\ref{lem:height_pairing_x-b} shows that $\Lapl(\rho)=w_1-w_0$, which is what we wanted to show.
\end{proof}
\end{proposition}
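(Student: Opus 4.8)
The plan is to realise $\rho_*$ as a morphism of short exact sequences and to reduce the whole statement to a single harmonic identity on $\Cpt$. First I would pass to a common subdivision of $\rGraph$ and $\rGraph'$ so that $\rho$ becomes a genuine morphism of metric graphs (contracting some edges of $\Cpt$ to points, as permitted); naturality of the boundary map then gives $\partial\circ\rho_*=\rho_*\circ\partial$, so $\rho_*$ fits into a map from the short exact sequence $0\to\H_1(\Graph)\to\Q\cdot\mnEdge{\Graph}\xrightarrow{\partial}\Div^0(\Graph)\to0$ to the analogous one for $\Graph'$. Since the pushforward of a cycle along a graph morphism is again a cycle, $\rho_*$ already carries $\H_1(\Graph)$ into $\H_1(\Graph')$, and the ladder then identifies the induced map on the cokernels $\Div^0$ with the divisor pushforward. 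By Lemma~\ref{lem:height_vs_cycle}, $\Div^0(\Graph)$ is identified with the orthogonal complement $\H_1(\Graph)^\perp$ inside $\Q\cdot\mnEdge{\Graph}$ (with the cycle pairing on the one summand and the height pairing on the other), so the Proposition is equivalent to the assertion that $\rho_*$ carries $\H_1(\Graph)^\perp$ into $\H_1(\Graph')^\perp$. Taking adjoints with respect to the natural edge-length pairings on $\Q\cdot\mnEdge{-}$, this is in turn equivalent to the statement that the adjoint $\rho^*\colon\Q\cdot\mnEdge{\Graph'}\to\Q\cdot\mnEdge{\Graph}$ sends $\H_1(\Graph')$ into $\H_1(\Graph)$.

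Next I would compute $\rho^*$ explicitly. Unwinding the defining relation $\langle\rho^*e',e\rangle_\Graph=\langle e',\rho_*e\rangle_{\Graph'}$ over edges and using the ``current-division'' identity $\tfrac1{\length(e')}=\sum_{\rho(e)=e'}\tfrac1{\length(e)}$ of Proposition~\ref{prop:quotient_edge_lengths}, one obtains $\rho^*(e')=\length(e')\sum_{\rho(e)=e'}\tfrac1{\length(e)}\,e$. For an edge $e'$ of $\Graph'$ that is not a subdivision-edge of $e_\Cpt$, the map $\rho$ is an isometry on the relevant fibre, so $\rho^*(e')=e'$ under the canonical identification of $\Graph$ and $\Graph'$ away from the interior of $\Cpt$; hence any class in $\H_1(\Graph')$ supported off $e_\Cpt$ is pulled back to a class in $\H_1(\Graph)$. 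A general $\gamma'\in\H_1(\Graph')$ differs from such a class by a multiple of $e_\Cpt$, and chasing $\partial$ through the ladder above (using that $\rho$ is the identity outside $\Cpt$) shows that $\partial\rho^*\gamma'$ is a scalar multiple of $\partial\rho^*(e_\Cpt)-(w_1-w_0)$. So everything reduces to the single identity $\partial\rho^*(e_\Cpt)=w_1-w_0$ in $\Div^0(\Cpt)$.

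The final, and conceptually the only substantive, step is this identity. Writing $\Phi\in\Omlog(\Cpt)$ for the piecewise-affine ``potential'' function $v\mapsto\langle v-w_0,w_1-w_0\rangle_\Cpt$, the explicit formula for $\rho^*(e_\Cpt)$ together with $\length(\rho(e))=\pm\langle\partial(e),w_1-w_0\rangle_\Cpt$ for $e\in\Edge{\Cpt}$ identifies $\partial\rho^*(e_\Cpt)$ with $\sum_{e\in\plEdge{\Cpt}}\tfrac{\Phi(\target(e))-\Phi(\source(e))}{\length(e)}\,\partial(e)=\Lapl(\Phi)$, the Laplacian of $\Phi$ on $\Cpt$. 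On the other hand, Lemma~\ref{lem:height_pairing_x-b} applied to $\mu=w_1-w_0\in\Div^0(\Cpt)$ with basepoint $w_0$ says precisely that $x\mapsto\langle x-w_0,w_1-w_0\rangle_\Cpt$ has Laplacian $w_1-w_0$, i.e.\ $\Lapl(\Phi)=w_1-w_0$. This gives $\partial\rho^*(e_\Cpt)=w_1-w_0$ and closes the argument.

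The genuine work is thus contained in the first two paragraphs: setting up $\rho$ as a metric-graph morphism on a suitable subdivision, verifying $\partial\rho_*=\rho_*\partial$, and pinning down the adjoint $\rho^*$ via Proposition~\ref{prop:quotient_edge_lengths}; the harmonic identity that finishes the proof is immediate from Lemma~\ref{lem:height_pairing_x-b}. I expect the main obstacle to be keeping the fibre combinatorics of $\rho$ over $\Cpt$ (and the associated edge-length rescalings) straight, rather than anything conceptual.
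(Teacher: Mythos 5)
Your proposal is correct and follows essentially the same route as the paper's proof: the ladder of short exact sequences reduces the claim to showing the adjoint $\rho^*$ carries $\H_1(\Graph')$ into $\H_1(\Graph)$, the explicit formula $\rho^*(e')=\length(e')\sum_{\rho(e)=e'}\frac1{\length(e)}e$ reduces this to the single identity $\partial\rho^*(e_\Cpt)=w_1-w_0$, and that identity is proved by recognising $\partial\rho^*(e_\Cpt)$ as the Laplacian of the potential $v\mapsto\langle v-w_0,w_1-w_0\rangle_\Cpt$ and invoking Lemma~\ref{lem:height_pairing_x-b}. Your extra bookkeeping (the subdivision, the $\partial$-chase for a general class, the orthogonal-complement reformulation) just makes explicit steps the paper leaves terse, and the appeal to Proposition~\ref{prop:quotient_edge_lengths} in deriving the adjoint formula is harmless though not needed.
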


\begin{remark}\label{rmk:resistance_reduction_projection}
Combining Proposition~\ref{prop:quotient_edge_lengths} and the formula for $\rho^*$ in the proof of Proposition~\ref{prop:harmonic_quotient} shows that the three pushforward maps $\rho_*$ in Proposition~\ref{prop:harmonic_quotient} are orthogonal projections onto direct summands, that is, satisfy $\rho^*\rho_*=1$ with their adjoints $\rho^*$. Since the pushforward $\rho_*\colon\H^1(\Graph)\rightarrow\H^1(\Graph')$ is by definition the dual to $\rho^*\colon\H_1(\Graph')\rightarrow\H_1(\Graph)$, we see that the combined pushforward map\[\rho_*\colon\H^1(\Graph)\otimes\H_1(\Graph)\rightarrow\H^1(\Graph')\otimes\H_1(\Graph')\]takes the element of $\H^1(\Graph)\otimes\H_1(\Graph)=\End(\H_1(\Graph))$ corresponding to the identity to the corresponding element of $\H^1(\Graph')\otimes\H_1(\Graph')$. It follows that the pushforward map $\oHAlg(\rGraph)\rightarrow\oHAlg(\rGraph')$ induced by the pushforward on $\H^1$ and $\H_1$ preserves the surface relation from Construction~\ref{cons:HAlg}, and hence factors through a map $\rho_*\colon\HAlg(\rGraph)\rightarrow\HAlg(\rGraph')$ as we claimed earlier.
\end{remark}

With the compatibility result from Proposition~\ref{prop:harmonic_quotient} it is now straightforward to prove commutativity of \eqref{eq:resistance_compatibility} in $\M$-degree $0$, equivalently that we have the identity\[\int_{\rho_*(\gamma)}\underline\omega'=\int_\gamma\rho^*(\underline\omega')\]for all paths $\gamma$ in $\Graph$ and all $\underline\omega\in\Shuf\H_1(\Graph')$, where $\rho^*\colon\H_1(\Graph')\rightarrow\H_1(\Graph)$ is the adjoint to $\rho_*$ (i.e.\ the dual of $\rho_*\colon\H^1(\Graph)\rightarrow\H^1(\Graph')$). Indeed, this needs only be checked when $\gamma=e$ is an edge of $\rGraph$ and $\underline\omega=\omega_1\dots\omega_n\in\H_1(\Graph')^{\otimes n}$ is a pure tensor of length $n$. But in this case the desired identity is\[\frac1{n!}\prod_i\int_{\rho_*(e)}\omega_i=\frac1{n!}\prod_i\int_e\rho^*(\omega_i),\]which holds by Proposition~\ref{prop:harmonic_quotient}.

\subsubsection*{Step $3$: Commutativity in $\M$-degrees $<0$}

It remains to show that \eqref{eq:resistance_compatibility} commutes in all $\M$-degrees. As in the proof of Proposition~\ref{prop:half-edge_compatibility}, the map\[\gr^\M_\bullet\rho_*\colon\gr^\M_\bullet\O(\pi_1^\Q(\rGraph))^\dual\rightarrow\gr^\M_\bullet\O(\pi_1^\Q(\rGraph'))^\dual\]can be viewed as a morphism of complete Hopf algebras (i.e.\ is compatible with the trivialisations of both sides provided by canonical paths as in \S\ref{ss:trivialisation}), and so we may check commutativity on generators of $\HAlg(\rGraph)$.

The generators $\H^1(\Graph)$ in $\M$-degree $0$ have already been dealt with, and the generators arising from genus of vertices and from half-edges are trivial to check. For generators arising from $\H_1(\Graph)$ in $\M$-degree $-2$, we first note that $\rho_*(e^*)=\frac{\length(\rho(e))}{\length(e)}\rho(e)^*$ -- this follows for instance from Proposition~\ref{prop:harmonic_quotient} since the homology class corresponding to $e^*$ is the projection to $\H_1(\Graph)$ of $\frac1{\length(e)}e\in\Q\cdot\mnEdge{\Graph}$ (cf.\ Corollary~\ref{cor:computing_N(e*)}). Thus if $\gamma=\sum_{e\in\plEdge{\Graph}}\lambda_e\cdot e\in\H_1(\Graph)$, then it follows from Definition~\ref{def:inverse_iso_on_H_1} that we have
\begin{align*}
\Hiso\circ\gr^\M_\bullet\rho_*\circ\Hiso^{-1}(\gamma) &= \sum_{e\in\plEdge{\Graph}}\lambda_e\frac{\exp(\length(e)\ad_{\rho_*(e^*)})-1}{\ad_{\rho_*(e^*)}}\cdot\Hiso\log(\rho_*(\delta_e)) \\
 &= \sum_{e\in\plEdge{\Graph}}\lambda_e\frac{\exp(\length(\rho(e))\ad_{\rho(e)^*})-1}{\ad_{\rho(e)^*}}\cdot\Hiso\log(\delta_{\rho(e)}) \\
 &= \rho_*(\gamma).
\end{align*}
This completes the proof of Proposition~\ref{prop:resistance_compatibility}.\qed

\subsection{Decomposition into $2$-connected components}
\label{ss:blocks}

The final simplification operation we will consider consists of breaking up a reduction graph $\rGraph$ into its $2$-connected components via an adaptation of the classical \emph{block--cutvertex decomposition} \cite[page 74]{bollobas} that takes account of the genus of vertices. This decomposition expresses a general graph as a union of certain subgraphs called \emph{blocks} by gluing them at certain vertices called \emph{cutvertices}.

\begin{definition}[Cutvertices and $2$-connected components]\label{def:2-conn_cpt}\index{$2$-connected component}
A (connected) graph $\Cpt$ is called \emph{$2$-connected} just when $\H_1(\Cpt)\neq0$ and the graph $\Cpt\setminus\{v\}$ formed by removing\footnote{To be precise, $\Cpt\setminus\{v\}$ denotes the graph formed from $\Cpt$ by removing $v$ from $\Vert{\Cpt}$, adding into $\Vert{\Cpt}$ one new vertex for every edge or half-edge $e$ with $\source(e)=v$, and adjusting the source function $\source\colon\Edge{\Cpt}\sqcup\HEdge{\Cpt}\rightarrow\Vert{\Cpt}$ accordingly.} a vertex $v$ is connected for all vertices $v$. A \emph{$2$-connected component} $\Cpt$ of $\Graph$ is a maximal $2$-connected subgraph. A \emph{block} $\Cpt$ of $\rGraph$ is a subgraph of $\Graph$ which is either:
\begin{itemize}
	\item a $2$-connected component of $\Graph$;
	\item a bridge of $\Graph$ (edge $e$ such that $\Graph\setminus\{e^{\pm1}\}$ is disconnected) together with its endpoints;
	\item a half-edge of $\Graph$ together with its endpoint; or
	\item a single vertex $\{v\}$ of $\rGraph$ with $g(v)>0$.
\end{itemize}

A vertex $v$ of $\rGraph$ is called a \emph{cutvertex} just when $g(v)>0$ or $\Graph\setminus\{v\}$ is disconnected.
\end{definition}

\begin{remark}\label{rmk:cutvertices_and_subdivision}\leavevmode
\begin{enumerate}[label=\alph*),ref=\alph*]
	\item The $2$-connected components of $\Graph$ are all compact (have no half-edges) and bridgeless.
	\item Any loop in $\Graph$ (edge $e$ such that $\source(e)=\target(e)$) is a $2$-connected component; all other $2$-connected components are loopless.
	\item Subdivision of edges and half-edges of $\rGraph$ doesn't change its $2$-connected components (except by subdividing their edges), and subdividing a bridge (resp.\ half-edge) splits that bridge into two bridges (resp.\ a half-edge and a bridge).
\end{enumerate}
\end{remark}

The notion of blocks and cutvertices of $\rGraph$ leads to the following structure theorem, saying that $\rGraph$ is built out of blocks, glued together at cutvertices in a tree-like structure.

\begin{proposition}[Block--cutvertex decomposition {\cite[page 74]{bollobas}}]\label{prop:bc_decomp}\index{block--cutvertex decomposition}
$\rGraph$ is the union of its blocks, any two different blocks intersect in $\leq1$ point, and these intersection points are exactly the cutvertices. If $\bc(\rGraph)$ denotes the bipartite graph with one vertex-class the set of blocks, the other vertex-class the set of cutvertices, and an edge connecting each cutvertex to each of the blocks containing it, then $\bc(\rGraph)$ is a tree.
\begin{figure}[!h]
\begin{center}
\begin{tikzpicture}
%%components
\draw[thick,dashed,color=darkgray,fill=lightgray,opacity=0.4] (0,0.75) circle (0.75);
\draw[thick,dashed,color=darkgray,fill=lightgray,opacity=0.4] (0,-0.75) circle (0.75);
\draw[thick,dashed,color=darkgray,fill=lightgray,opacity=0.4] (2,0) circle (0.5);
\draw[thick,dashed,color=darkgray,fill=lightgray,opacity=0.4] (4.25,0) circle (0.75);
%%vertices
\coordinate (a) at (0,0);
\draw[fill=black] (a) circle (2pt);
\coordinate (b) at (0.875,0);
\draw[fill=black] (b) circle (2pt);
\coordinate (c) at (1.5,0);
\draw[fill=black] (c) circle (2pt);
\coordinate (d) at (2,0.5);
\draw[fill=black] (d) circle (2pt);
\coordinate (e) at (2.5,0);
\draw[fill=black] (e) circle (2pt);
\coordinate (f) at (3.5,0);
\draw[fill=black] (f) circle (2pt);
\coordinate (g) at (4.375,0.5);
\draw[fill=black] (g) circle (2pt);
\coordinate (h) at (4.375,-0.5);
\draw[fill=black] (h) circle (2pt);
\coordinate (i) at (3,1);
%%edges
\draw[thick] (a) -- (b) -- (c) -- (e) -- (f) -- (g) -- (h) -- (f);
\draw[thick] (c) -- (d) -- (e);
\draw[thick] (d) -- (i);
\draw[thick] (0,0.5) circle (0.5);
\draw[thick] (0,-0.5) circle (0.5);
\draw[thick] (g) to[bend left] (h);
\draw[thick] (e) -- (2.75,-0.5);
\draw[thick,dashed] (2.75,-0.5) -- (3.25,-1.5);
%%genus of vertices
\draw[thick,fill=lightgray] (i) circle (6pt);
\node at (i) {$1$};
\end{tikzpicture}
\end{center}
\label{fig:2-connected_cpts}
\caption{A (semistable) reduction graph $\rGraph$ with its four $2$-connected components indicated by the light grey dashed discs. Together with its four bridges, single half-edge and single vertex of genus $1$, these comprise the blocks of $\rGraph$. The six cutvertices of $\Graph$ are the five vertices on the perimeter of these discs, plus the single vertex of degree $2$ and genus $0$. The block--cutvertex decomposition of this graph clearly has the structure of a tree.}
\end{figure}
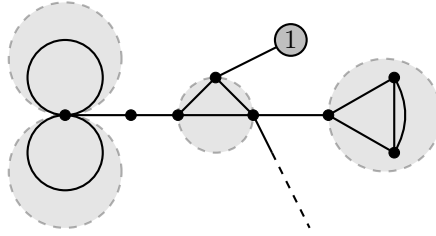
\end{proposition}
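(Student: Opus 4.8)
The statement to prove is the Block--Cutvertex Decomposition (Proposition~\ref{prop:bc_decomp}): that a reduction graph $\rGraph$ is the union of its blocks, any two distinct blocks meet in at most one point, the intersection points are precisely the cutvertices, and the auxiliary bipartite graph $\bc(\rGraph)$ is a tree.

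The plan is to reduce the statement to the classical block--cutvertex decomposition of a finite connected graph, as found in \cite[page 74]{bollobas}, by a careful bookkeeping of the two features that make our setting slightly non-standard: the presence of half-edges, and the genus function on vertices. First I would handle half-edges and bridges. Since subdivision of edges and half-edges does not affect the $2$-connected components (Remark~\ref{rmk:cutvertices_and_subdivision}), and since deleting the half-edges from $\Graph$ leaves the $2$-connected components and bridges unchanged while turning each half-edge into a leaf-type block, it suffices to observe that a half-edge $e$ with endpoint $v$ behaves formally like a bridge incident to $v$ for the purposes of the decomposition. Thus the blocks of $\rGraph$ that are $2$-connected components, bridges, or half-edges are exactly the blocks (in the classical sense) of the underlying graph $\Graph$, once one formally adjoins a pendant vertex at the far end of each half-edge. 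The classical theory then gives: $\Graph$ is the union of its classical blocks; two distinct classical blocks share at most one vertex; a vertex is shared by two blocks iff it is a cutvertex of $\Graph$ in the graph-theoretic sense (i.e.\ $\Graph\setminus\{v\}$ is disconnected); and the classical block--cutvertex graph is a tree.

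Next I would incorporate the genus function. The only new blocks in our sense are the singletons $\{v\}$ with $g(v)>0$, and the only new cutvertices are the vertices $v$ with $g(v)>0$ (on top of the graph-theoretic cutvertices). I would argue as follows: if $g(v)>0$, then $\{v\}$ is declared a block, and $v$ is declared a cutvertex; the block $\{v\}$ intersects any other block $\Cpt$ containing $v$ precisely in $\{v\}$, and no two blocks can both strictly contain $v$ without $v$ being a graph-theoretic cutvertex (by the classical statement). So the "at most one point of intersection" and "intersection points are exactly cutvertices" clauses persist. For the tree statement, passing from $\Graph$ to $\rGraph$ amounts to, for each $v$ with $g(v)>0$, adding one new leaf-block $\{v\}$ to $\bc$, attached by a single edge to the cutvertex-node $v$ (creating that cutvertex-node if it did not already exist, in which case $v$ was previously in a unique block and the operation adds a single edge joining two leaves via a new internal vertex). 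In either case, each such modification adds one vertex and one edge to a tree (or, in the degenerate case that $v$ lay in only one block and $g(v)>0$, adds two vertices and two edges while keeping connectivity and acyclicity), so $\bc(\rGraph)$ remains a tree by induction on the number of positive-genus vertices. Finally, $\rGraph$ is the union of its blocks since every edge or half-edge lies in some block (it lies in a $2$-connected component, is a bridge, or is a half-edge) and every vertex lies in some block (either in an incident non-trivial block, or, if isolated among edges with $g(v)>0$, in $\{v\}$; the semistability hypothesis $2g(v)+\deg(v)\geq2$ guarantees no vertex is simultaneously of degree $0$ and genus $0$).

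The main obstacle I expect is purely organisational rather than deep: carefully checking the degenerate edge cases where a positive-genus vertex has small degree — e.g.\ a vertex $v$ with $g(v)=1$ and $\deg(v)=0$ (an isolated genus-one vertex, so $\rGraph$ is a single point), or $g(v)>0$ with $\deg(v)=1$ — and verifying that in each such case the block $\{v\}$, the cutvertex designation, and the tree $\bc(\rGraph)$ all behave consistently with Definition~\ref{def:2-conn_cpt} and Proposition~\ref{prop:bc_decomp}. One must also be slightly careful that a loop edge (an edge $e$ with $\source(e)=\target(e)$) is its own $2$-connected component (Remark~\ref{rmk:cutvertices_and_subdivision}(b)) and that $\H_1(\Cpt)\neq 0$ is part of the definition of $2$-connectedness, so that a single vertex is never counted as a $2$-connected component. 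None of this requires new ideas; it is a matter of stating the reduction to the classical theorem cleanly and then dispatching the finitely many boundary cases.
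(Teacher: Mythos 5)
Your proposal is correct, and it matches the paper's stance: the paper gives no proof of this proposition, citing the classical block--cutvertex decomposition in Bollob\'as, and your argument is exactly the routine adaptation left implicit there (replace each half-edge by a pendant edge, then graft the singleton blocks $\{v\}$ with $g(v)>0$ onto the classical tree as leaves attached at the cutvertex node $v$). The only caveat worth recording is the degenerate case of a graph consisting of a single positive-genus vertex, where ``intersection points are exactly the cutvertices'' is vacuous on one side; you flag this, and it causes no trouble anywhere the proposition is used.
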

The block--cutvertex decomposition of $\rGraph$ provides an orthogonal decomposition\[\H_1(\Graph)=\bigoplus_{\Cpt}\H_1(\Cpt)\]of the homology of $\Graph$ into the homology of its blocks (or just its $2$-connected components). We write $\iota_*\colon\H_1(\Cpt)\hookrightarrow\H_1(\Graph)$ for the direct summand inclusion, and $\iota_*\colon\H^1(\Cpt)\hookrightarrow\H^1(\Graph)$ for its adjoint with respect to the cycle pairing (the dual map to the product projection $\iota^*\colon\H_1(\Graph)\rightarrow\H_1(\Cpt)$).

We want to extend this to a decomposition of the bigraded complete Hopf algebra $\HAlg(\rGraph)$. For this we first need to define the structure of a reduction graph $\rCpt$ on each block $\Cpt$ of $\rGraph$, which is done as follows. We firstly add to $\Cpt$ a new half-edge $e_{\Cpt,v}$ with source $v$ for each cutvertex $v$ of $\rGraph$ contained in $\Cpt$. We then endow this graph with the obvious metric inherited from $\rGraph$. Finally, if $\Cpt=\{v\}$ consists of a single vertex of genus $g(v)>0$, then we assign it genus $g(v)$ again in $\rCpt$; otherwise we assign every vertex of $\Cpt$ genus $0$.

There is then a pushforward map\[\iota_*\colon\HAlg(\rCpt)\rightarrow\HAlg(\rGraph),\]defined via the pushforward maps $\iota_*$ on $\H_1(\Cpt)$ and $\H^1(\Cpt)$, and defined to send generators $\log(\beta_{v,i})$, $\log(\beta_{v,i}')$ and $\log(\delta_e)$ to the corresponding generators of $\HAlg(\rGraph)$ when $\Cpt$ is a vertex $v$ of genus $>0$ or a half-edge $e$ respectively. In order to define $\iota_*$ on generators of the form $\log(\delta_{e_{\Cpt,v}})$, we adopt the following notation.

\begin{notation}\label{notn:surface_relation_of_blocks}
For each block $\Cpt$ of $\rGraph$, we define an element $\sigma_\Cpt\in\gr^\M_{-2}\gr^\W_{-2}\HAlg(\rGraph)$ by
\[
\sigma_\Cpt:=
\begin{cases}
\sum_i[\xi'_i,\xi_i] & \text{if $\Cpt$ is a $2$-connected component,} \\
0 & \text{if $\Cpt$ is a bridge,} \\
\log(\delta_e) & \text{if $\Cpt$ is a half-edge $e$,} \\
\sum_i[\log(\beta_{v,i}'),\log(\beta_{v,i})] & \text{if $\Cpt$ is a vertex $\{v\}$ with $g(v)>0$,}
\end{cases}
\]
where in the first point, $(\xi_i)$ is a basis of $\H^1(\Cpt)$ with dual basis $(\xi_i')$ of $\H_1(\Cpt)$. Note that $\sum_{\Cpt}\sigma_\Cpt=0$ by the surface relation in $\rGraph$.
\end{notation}

We now complete the definition of $\iota_*\colon\HAlg(\rCpt)\rightarrow\HAlg(\rGraph)$ by specifying\[\iota_*(\log(\delta_{e_{\Cpt,v}})):=\sum_{\Cpt'\sim v\sim\Cpt}\sigma_{\Cpt'}\]where the sum is taken over all components $\Cpt'$ lying in the same component of $\bc(\rGraph)\setminus\{\Cpt\}$ as $v$. This does indeed define a pushforward $\HAlg(\rCpt)\rightarrow\HAlg(\rGraph)$: the image of the surface relation in $\rCpt$ is $\sigma_\Cpt+\sum_{v\in\Cpt}\iota_*(\log(\delta_{e_{\Cpt,v}}))=0$ since $\bc(\rGraph)$ is a tree.

These pushforward maps give a suitable decomposition of $\HAlg(\rGraph)$, extending the decomposition of $\H_1(\Graph)$ above.

\begin{lemma}\label{lem:decomposition_of_HAlg}
$\HAlg(\rGraph)$, as a $\W$- and $\M$-graded complete Hopf algebra, is the quotient of the completed free product $\CFree_\Cpt\HAlg(\rCpt)$ by the ideal generated by the bihomogenous primitive elements\[\sum_{\Cpt\ni v}\log(\delta_{e_{\Cpt,v}})\]for each cutvertex $v$.
\begin{proof}
Write $\HAlg^\bc(\rGraph)$ for the quotient of $\CFree_\Cpt\HAlg(\rCpt)$ by the ideal generated by the specified elements. The map $\CFree_\Cpt\HAlg(\rCpt)\rightarrow\HAlg(\rGraph)$ induced by the maps $\iota_*\colon\HAlg(\rCpt)\rightarrow\HAlg(\rGraph)$ above takes the elements $\sum_{\Cpt\ni v}\log(\delta_{e_{\Cpt,v}})$ are taken to $(\deg_{\bc(\rGraph)}(v)-1)\cdot\sum_{\Cpt}\sigma_\Cpt=0$, and hence induces a map $\iota_*\colon\HAlg^\bc(\rGraph)\rightarrow\HAlg(\rGraph)$.

We show that $\iota_*$ is an isomorphism by constructing its inverse $\iota^*\colon\HAlg(\rGraph)\rightarrow\HAlg^\bc(\rGraph)$. This is the map defined in the obvious manner on generators of $\HAlg(\rGraph)$; for instance, $\gr^\M_0\gr^\W_{-1}\HAlg(\rGraph)=\H^1(\Graph)=\bigoplus_{\Cpt}\H^1(\Cpt)$ is mapped into $\HAlg^\bc(\rGraph)$ via the maps $\H^1(\Cpt)\hookrightarrow\HAlg(\rCpt)\rightarrow\CFree_\Cpt\HAlg(\rCpt)\rightarrow\HAlg^\bc(\rGraph)$ for each block/$2$-connected component $\Cpt$. This map is well-defined, since the image of the surface relation $\sigma=\sum_\Cpt\sigma_\Cpt$ is\[\iota^*(\sigma)=\sum_{\Cpt}\iota^*(\sigma_\Cpt)=-\sum_\Cpt\sum_{v\in\Cpt}\log(\delta_{e_{\Cpt,v}})=-\sum_v\sum_{\Cpt\ni v}\log(\delta_{e_{\Cpt,v}})=0\]since the surface relations for each $\HAlg(\rCpt)$ ensure that $\iota^*(\sigma_\Cpt)+\sum_{v\in\Cpt}\log(\delta_{e_{\Cpt,v}})$ for every block $\Cpt$.

It is clear from the construction that $\iota_*\iota^*$ is the identity on $\HAlg(\rGraph)$, and that to verify that $\iota^*\iota_*$ is the identity on $\HAlg^\bc(\rGraph)$, it suffices to verify that\[\log(\delta_{e_{\Cpt,v}})=\sum_{\Cpt'\sim v\sim\Cpt}\iota^*\sigma_{\Cpt'}\]for every cutvertex--block incidence $v\in\Cpt$. Since $\bc(\rGraph)$ is a tree, it suffices to prove this identity after summing over all cutvertices $v$ contained in a fixed $\Cpt$, and after summing over all blocks $\Cpt$ containing a fixed $v$. For the former, we have\[\sum_{v\in\Cpt}\sum_{\Cpt'\sim v\sim\Cpt}\iota^*\sigma_{\Cpt'}=\sum_{\Cpt'\neq\Cpt}\iota^*\sigma_{\Cpt'}=-\iota^*\sigma_\Cpt=\sum_{v\in\Cpt}\log(\delta_{e_{\Cpt,v}}),\]since $\sum_{\Cpt'}\sigma_{\Cpt'}=0$ by the surface relation in $\rGraph$. For the latter, we have\[\sum_{\Cpt\ni v}\sum_{\Cpt'\sim v\sim\Cpt}\iota^*\sigma_{\Cpt'}=(\deg_{\bc(\rGraph)}(v)-1)\cdot\sum_{\Cpt'}\iota^*\sigma_{\Cpt'}=0=\sum_{\Cpt\ni v}\log(\delta_{e_{\Cpt,v}}).\]
\end{proof}
\end{lemma}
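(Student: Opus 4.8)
The plan is to prove Lemma~\ref{lem:decomposition_of_HAlg} by the standard ``construct-the-inverse'' technique, which is already sketched in the statement's proof: rather than directly arguing that $\iota_*$ is an isomorphism, I would build an explicit two-sided inverse $\iota^*\colon\HAlg(\rGraph)\rightarrow\HAlg^\bc(\rGraph)$ on generators and check compatibility with all the defining relations. Recall that $\HAlg(\rGraph)=\oHAlg(\rGraph)/(\sigma)$ is presented by the generators of Construction~\ref{cons:HAlg} (classes in $\H^1(\Graph)=\bigoplus_\Cpt\H^1(\Cpt)$, symbols $\alog(\beta_{v,i}),\alog(\beta_{v,i}')$, classes in $\H_1(\Graph)=\bigoplus_\Cpt\H_1(\Cpt)$, and symbols $\alog(\delta_e)$ for half-edges $e$), modulo only the single surface relation $\sigma=\sigma_0+\sigma_1+\sigma_2$. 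So to define $\iota^*$ it suffices to send each generator into $\HAlg^\bc(\rGraph)$ in the evident manner (via the summand maps $\H^1(\Cpt)\hookrightarrow\HAlg(\rCpt)$ and so on) and then verify that the image of $\sigma$ vanishes.

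The key steps, in order, are: (1) Observe that $\CFree_\Cpt\HAlg(\rCpt)$ is itself presented by generators and the surface relations $\sigma_\Cpt+\sum_{v\in\Cpt}\log(\delta_{e_{\Cpt,v}})=0$ for each block $\Cpt$, so $\HAlg^\bc(\rGraph)$ is presented by these generators modulo both these block-surface relations \emph{and} the cutvertex relations $\sum_{\Cpt\ni v}\log(\delta_{e_{\Cpt,v}})=0$; in particular $\sum_\Cpt\sigma_\Cpt=0$ holds in $\HAlg^\bc(\rGraph)$ after summing the block-surface relations over all $\Cpt$ (each $\log(\delta_{e_{\Cpt,v}})$ appears exactly once for each incidence $v\in\Cpt$, and the cutvertex relations kill the total). (2) Check $\iota^*$ is well-defined: $\iota^*(\sigma)=\sum_\Cpt\iota^*(\sigma_\Cpt)=-\sum_\Cpt\sum_{v\in\Cpt}\log(\delta_{e_{\Cpt,v}})=-\sum_v\sum_{\Cpt\ni v}\log(\delta_{e_{\Cpt,v}})=0$, using the block-surface relations for each $\HAlg(\rCpt)$ and then the cutvertex relations. (3) Check that $\iota_*$ is well-defined: the image under $\CFree_\Cpt\HAlg(\rCpt)\rightarrow\HAlg(\rGraph)$ of each cutvertex element $\sum_{\Cpt\ni v}\log(\delta_{e_{\Cpt,v}})$ is $\sum_{\Cpt\ni v}\sum_{\Cpt'\sim v\sim\Cpt}\sigma_{\Cpt'}$; since $\bc(\rGraph)$ is a tree, removing $\Cpt$ partitions the remaining blocks by which component of $\bc(\rGraph)\setminus\{\Cpt\}$ meets $v$, and summing over the $\deg_{\bc(\rGraph)}(v)$ blocks $\Cpt$ through $v$ counts each $\sigma_{\Cpt'}$ ($\Cpt'\neq$ the blocks at $v$) with multiplicity, collapsing via $\sum_{\Cpt'}\sigma_{\Cpt'}=0$ to $(\deg_{\bc(\rGraph)}(v)-1)\sum_{\Cpt'}\sigma_{\Cpt'}=0$ as claimed. (4) Verify $\iota_*\iota^*=\mathrm{id}$: immediate on all generators since both maps act by the obvious inclusions/projections on $\H^1,\H_1$ and the genus/half-edge symbols. (5) Verify $\iota^*\iota_*=\mathrm{id}$: the only nontrivial generators are the $\log(\delta_{e_{\Cpt,v}})$, for which one must show $\log(\delta_{e_{\Cpt,v}})=\sum_{\Cpt'\sim v\sim\Cpt}\iota^*\sigma_{\Cpt'}$ holds in $\HAlg^\bc(\rGraph)$; as the statement notes, because $\bc(\rGraph)$ is a tree it is enough to check this after summing over all $v\in\Cpt$ for fixed $\Cpt$ (giving $\sum_{\Cpt'\neq\Cpt}\iota^*\sigma_{\Cpt'}=-\iota^*\sigma_\Cpt=\sum_{v\in\Cpt}\log(\delta_{e_{\Cpt,v}})$ by the block-surface relation for $\rCpt$) and after summing over all $\Cpt\ni v$ for fixed $v$ (giving $(\deg_{\bc(\rGraph)}(v)-1)\sum_{\Cpt'}\iota^*\sigma_{\Cpt'}=0=\sum_{\Cpt\ni v}\log(\delta_{e_{\Cpt,v}})$). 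Finally note all maps manifestly preserve the $\W$- and $\M$-gradings and the comultiplication (all generators on both sides are primitive or grouplike as appropriate), so $\iota_*$ is an isomorphism of $\W$- and $\M$-graded complete Hopf algebras.

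One subtlety to flag in step (5): summing the target identity over $v\in\Cpt$ or over $\Cpt\ni v$ only \emph{determines} the individual identities because the incidence graph $\bc(\rGraph)$ is a tree, so the system of linear equations (one unknown $\log(\delta_{e_{\Cpt,v}})$ per edge of $\bc(\rGraph)$, one equation per such edge, with the two aggregated relations being the ``row sums'' and ``column sums'') has a unique solution. I would phrase this cleanly by noting that for a tree, knowing the sum of edge-labels around each vertex determines all edge-labels: iteratively, a leaf vertex has a single incident edge whose label is thereby pinned down, and one removes it and induces. Since both $\log(\delta_{e_{\Cpt,v}})$ and $\sum_{\Cpt'\sim v\sim\Cpt}\iota^*\sigma_{\Cpt'}$ satisfy the same vertex-sum relations (by the two aggregate computations), they agree.

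The main obstacle I anticipate is purely bookkeeping: getting the orientation/sign and multiplicity conventions exactly right in the sum $\sum_{\Cpt'\sim v\sim\Cpt}$, particularly ensuring that the element $\sigma_\Cpt\in\gr^\M_{-2}\gr^\W_{-2}\HAlg(\rGraph)$ (from Notation~\ref{notn:surface_relation_of_blocks}) is genuinely \emph{independent} of the chosen basis $(\xi_i)$ of $\H^1(\Cpt)$ in the $2$-connected case --- this is the same ``$\sigma_0$ is independent of the choice of basis'' fact from Construction~\ref{cons:HAlg}, applied blockwise, and I would simply cite that. A secondary point needing care is confirming that the pushforward $\iota_*$ really does land in $\HAlg(\rGraph)$ and respects its grading: the homology/cohomology summand inclusions $\H_1(\Cpt)\hookrightarrow\H_1(\Graph)$, $\H^1(\Cpt)\hookrightarrow\H^1(\Graph)$ are orthogonal-summand maps for the cycle pairing by the block--cutvertex decomposition (Proposition~\ref{prop:bc_decomp}), so the images of the $\oHAlg(\rCpt)$-surface relations are exactly the block-summands $\sigma_\Cpt$ of $\sigma$, and the whole construction is compatible with $\gr^\W_\bullet$ and $\gr^\M_\bullet$ by inspection of the bidegrees assigned in Construction~\ref{cons:HAlg}. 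None of this requires genuinely new ideas beyond what is in the sketch; the proof is a careful verification.
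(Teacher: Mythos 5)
Your proposal is correct and follows essentially the same route as the paper's own proof: define $\iota^*$ on generators, check it kills $\sigma$ via the block surface relations and the cutvertex relations, check $\iota_*$ kills the cutvertex elements by the multiplicity count $(\deg_{\bc(\rGraph)}(v)-1)\sum_{\Cpt'}\sigma_{\Cpt'}=0$, and verify the two composites are the identity, using that $\bc(\rGraph)$ is a tree to reduce the edgewise identities to the block-sum and cutvertex-sum aggregates. Your leaf-removal induction is a correct (and welcome) spelling-out of the step the paper leaves implicit in the phrase ``since $\bc(\rGraph)$ is a tree, it suffices\dots''.
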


\begin{corollary}\label{cor:block_injectivity}
If $\rGraph$ is semistable, then the maps $\iota_*\colon\HAlg(\rCpt)\hookrightarrow\HAlg(\rGraph)$ are injective for all blocks $\Cpt$.
\begin{proof}
This is easiest to see on the level of pro-nilpotent Lie algebras. It follows from Lemma~\ref{lem:decomposition_of_HAlg} that, when $\rGraph$ is semistable, $\LAlg(\rGraph)$ is produced from $\LAlg(\rCpt)$ via operations of the following form:
\begin{itemize}
	\item from a pro-nilpotent Lie algebra $\LAlg_0$, forming the pro-nilpotent Lie algebra $\LAlg_0\langle\xi_1\rangle$ freely generated by $\LAlg_0$ and an element $\xi_1$; and
	\item from a pro-nilpotent Lie algebra $\LAlg_0$ and an element $\xi_0\in\LAlg_0$, forming the pro-nilpotent Lie algebra $\LAlg_0\langle\xi_1,\xi_2\:|\:[\xi_1,\xi_2]=\xi_0\rangle$ generated by $\LAlg_0$ and two elements $\xi_1$, $\xi_2$ subject to the relation $[\xi_1,\xi_2]=\xi_0$.
\end{itemize}
It suffices to prove that the maps $\LAlg_0\rightarrow\LAlg_0\langle\xi_1\rangle$ and $\LAlg_0\rightarrow\LAlg_0\langle\xi_1,\xi_2\:|\:[\xi_1,\xi_2]=\xi_0\rangle$ arising from these two operations are injective. The former is even split, and for the latter, it suffices to deal with the case that $\LAlg_0$ is finite dimensional (so nilpotent). 

Choose a faithful representation $\rho\colon\LAlg_0\hookrightarrow\n_n$ where $\n_n$ is the Lie algebra of strictly upper triangular $n\times n$ matrices, and further embed $\n_n\hookrightarrow\n_{2n}$ as the Lie subalgebra consisting of matrices with only $0$s in their odd-numbered columns and rows. If we write $\rho(\xi_1)\in\n_{2n}$ for the matrix with $1$s on the superdiagonal and $0$s elsewhere, then we easily see that there is a $\rho(\xi_2)\in\n_{2n}$ with $[\rho(\xi_1),\rho(\xi_2)]=\rho(\xi_0)$. These choices of elements $\rho(\xi_1)$, $\rho(\xi_2)$ thus provide a lifting of the map $\rho\colon\LAlg_0\hookrightarrow\n_{2n}$ through $\LAlg_0\langle\xi_1,\xi_2\:|\:[\xi_1,\xi_2]=\xi_0\rangle$, and hence the map $\LAlg_0\rightarrow\LAlg_0\langle\xi_1,\xi_2\:|\:[\xi_1,\xi_2]=\xi_0\rangle$ is injective.
\end{proof}
\end{corollary}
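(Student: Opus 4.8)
\textbf{Proof proposal for Corollary~\ref{cor:block_injectivity}.}

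The plan is to transfer the question from complete Hopf algebras to pro-nilpotent Lie algebras, where the combinatorial structure of the block--cutvertex decomposition becomes an explicit recursive construction. First I would invoke Lemma~\ref{lem:decomposition_of_HAlg}: since $\HAlg$ of a semistable reduction graph is always the enveloping algebra of its Lie algebra of primitives $\LAlg$, and the free product and quotient operations in that lemma are compatible with passing to primitives, it suffices to prove injectivity of the corresponding maps $\LAlg(\rCpt)\hookrightarrow\LAlg(\rGraph)$. The key observation is that, using the tree structure of $\bc(\rGraph)$ and semistability (which rules out vertices of genus $0$ and degree $<2$, so that the only generators of $\LAlg(\rGraph)$ beyond those coming from the blocks are introduced via the surface relations $[\xi_1,\xi_2]=\xi_0$ of the $2$-connected components and the genus-$1$-or-higher vertices), one can build $\LAlg(\rGraph)$ from any single $\LAlg(\rCpt)$ by iterating two elementary operations on pro-nilpotent Lie algebras: freely adjoining a generator $\xi_1$ (this happens when we attach a bridge or half-edge), and the ``commutator relation'' extension $\LAlg_0\rightsquigarrow\LAlg_0\langle\xi_1,\xi_2\mid[\xi_1,\xi_2]=\xi_0\rangle$ for a chosen element $\xi_0\in\LAlg_0$ (this happens when we attach a $2$-connected component or a positive-genus vertex, whose surface relation expresses a sum of its generators' brackets in terms of the $\log(\delta_e)$ for its attaching half-edges, which in turn are identified with sums of surface relations already present).

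The heart of the matter is then the injectivity of these two elementary extension maps. The free adjunction $\LAlg_0\hookrightarrow\LAlg_0\langle\xi_1\rangle$ is split by the projection killing $\xi_1$, so that case is immediate. For the commutator extension, by a limit argument it suffices to treat finite-dimensional (hence nilpotent) $\LAlg_0$, and then to exhibit, for some faithful finite-dimensional representation $\rho$ of $\LAlg_0$, an extension of $\rho$ to a representation of $\LAlg_0\langle\xi_1,\xi_2\mid[\xi_1,\xi_2]=\xi_0\rangle$: the existence of such an extension forces the structure map $\LAlg_0\to\LAlg_0\langle\xi_1,\xi_2\mid[\xi_1,\xi_2]=\xi_0\rangle$ to be injective. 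I would realise this by taking a faithful representation $\rho\colon\LAlg_0\hookrightarrow\n_n$ into strictly upper-triangular matrices, doubling up to $\n_{2n}$ by placing $\LAlg_0$ in the even-indexed rows and columns, choosing $\rho(\xi_1)$ to be the standard superdiagonal nilpotent, and then solving the linear equation $[\rho(\xi_1),\rho(\xi_2)]=\rho(\xi_0)$ for $\rho(\xi_2)\in\n_{2n}$. The ``doubling'' is precisely what creates enough room: $\ad_{\rho(\xi_1)}$ becomes surjective onto the relevant off-diagonal blocks once the matrices are spread out along the superdiagonal, so a solution $\rho(\xi_2)$ exists.

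The main obstacle I anticipate is the bookkeeping in the first paragraph: verifying carefully that Lemma~\ref{lem:decomposition_of_HAlg}, which presents $\HAlg(\rGraph)$ as a free product of the $\HAlg(\rCpt)$ modulo cutvertex relations, really does reorganise — when $\rGraph$ is semistable — into the iterated two-step construction of $\LAlg(\rGraph)$ from a single $\LAlg(\rCpt)$, with each step being exactly one of the two elementary operations and with the element $\xi_0$ in the commutator-relation step genuinely lying in the Lie algebra already constructed. This uses that $\bc(\rGraph)$ is a tree (so one can add blocks one leaf at a time), that semistability forces every block to be of one of the four listed types with no ``stray'' low-valence genus-$0$ vertices, and that the identification $\iota_*(\log(\delta_{e_{\Cpt,v}}))=\sum_{\Cpt'\sim v\sim\Cpt}\sigma_{\Cpt'}$ from Notation~\ref{notn:surface_relation_of_blocks} expresses the new half-edge generators in terms of already-present brackets. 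Once this reduction is in place, the representation-theoretic injectivity of the two elementary operations is routine linear algebra, and the proof concludes.
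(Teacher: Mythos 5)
Your proposal is correct and follows essentially the same route as the paper's proof: reduce via Lemma~\ref{lem:decomposition_of_HAlg} to the two elementary operations on pro-nilpotent Lie algebras, split off the free adjunction, and handle the commutator-relation extension by embedding a finite-dimensional nilpotent $\LAlg_0$ into $\n_n$, doubling into $\n_{2n}$, taking $\rho(\xi_1)$ to be the superdiagonal nilpotent, and solving $[\rho(\xi_1),\rho(\xi_2)]=\rho(\xi_0)$. No substantive differences to report.
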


Equipped with the map $\iota_*\colon\HAlg(\rCpt)\rightarrow\HAlg(\rGraph)$, we are now finally able to state the desired compatibility between the non-abelian Kummer maps associated to $\rGraph$ and its blocks.

\begin{lemma}[Restriction to blocks]\label{lem:block_reduction}
For every block $\Cpt$ in $\rGraph$, the non-abelian Kummer maps for $\rCpt$ and $\rGraph$ fit into a commuting square
\begin{center}
\begin{tikzcd}
\QVert{\Cpt} \arrow{r}{\classify}\arrow{d}{\iota} & \V(\rCpt) \arrow{d}{\iota_*} \\
\QVert{\Graph} \arrow{r}{\classify} & \V(\rGraph).
\end{tikzcd}
\end{center}
\end{lemma}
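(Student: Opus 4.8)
The proof will follow exactly the same three-step template as the proofs of Lemma~\ref{lem:half-edge_reduction} and Lemma~\ref{lem:resistance_reduction}: one reduces the compatibility of non-abelian Kummer maps to a compatibility statement at the level of the $\M$-graded fundamental groupoid. Precisely, I will state and prove the analogue of Propositions~\ref{prop:half-edge_compatibility} and~\ref{prop:resistance_compatibility}: that the inclusion $\iota\colon\QVert{\Cpt}\hookrightarrow\QVert{\Graph}$ extends to a morphism $\iota_*\colon\pi_1^\Q(\rCpt)\rightarrow\pi_1^\Q(\rGraph)$ of $\Q$-pro-unipotent groupoids whose induced map on complete Hopf groupoids is $N$-equivariant and $\W$- and $\M$-filtered, and that $\gr^\M_\bullet\iota_*$ is intertwined with the pushforward $\iota_*\colon\HAlg(\rCpt)\rightarrow\HAlg(\rGraph)$ via the canonical isomorphisms $\Hiso$ of Theorem~\ref{thm:description_of_M-graded}. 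Granting this, the deduction of Lemma~\ref{lem:block_reduction} is verbatim the argument given after Proposition~\ref{prop:half-edge_compatibility}: commutativity in $\M$-degree $0$ gives $\gr^\M_\bullet\iota_*(\gamma_{b,x}^\can)=\gamma_{\iota(b),\iota(x)}^\can$, and then $N$-equivariance and the square give $\iota_*(\classify(x))=\iota_*\Hiso N(\gamma_{b,x}^\can)=\Hiso N(\gamma_{\iota(b),\iota(x)}^\can)=\classify(\iota(x))$.

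\textbf{Step 1: construction of $\iota_*$ on fundamental groupoids.} The map $\iota_*\colon\pi_1^\Q(\rCpt)\rightarrow\pi_1^\Q(\rGraph)$ is defined on the generators of Definition~\ref{def:pi1_reduction_graph}: an edge $e$ of $\rCpt$ (which is genuinely an edge of $\Graph$, since blocks are subgraphs) is sent to the corresponding edge of $\rGraph$ with the same length; the generators $\beta_{v,i}$, $\beta_{v,i}'$ and $\delta_e$ for a vertex $v\in\Cpt$ of positive genus or an honest half-edge $e\in\HEdge{\Cpt}$ are sent to the corresponding generators of $\pi_1^\Q(\rGraph)$; and the generator $\delta_{e_{\Cpt,v}}$ attached to the auxiliary half-edge $e_{\Cpt,v}$ added at a cutvertex $v\in\Cpt$ is sent to the element $\exp\big(\sum_{\Cpt'\sim v\sim\Cpt}\log(\delta_{\Cpt'})\big)$, where $\log(\delta_{\Cpt'})$ is the appropriate preimage of $\sigma_{\Cpt'}$ as in Notation~\ref{notn:surface_relation_of_blocks} (so $\log(\delta_{\Cpt'})$ is a sum of commutators $[\log(\beta_{\cdot}'),\log(\beta_\cdot)]$, of $\log(\delta_e)$ for honest half-edges $e$, or of loop-commutators $\sum_i[\log(\gamma_i'),\log(\gamma_i)]$ of the $2$-connected component $\Cpt'$). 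Well-definedness amounts to checking that the vertex relation of $\rCpt$ at each cutvertex $v\in\Cpt$ maps to an identity in $\pi_1^\Q(\rGraph)$: the image of $\prod_i[\beta_{v,i}',\beta_{v,i}]\cdot\prod_{\source(e)=v,\,e\in\Cpt}\delta_e\cdot\delta_{e_{\Cpt,v}}$ is, up to the $\M_{-3}$-commutator corrections absorbed by Baker--Campbell--Hausdorff, exactly the product over all blocks $\Cpt'$ sitting on the $v$-side of $\Cpt$ in $\bc(\rGraph)$ of their surface data; since $\bc(\rGraph)$ is a tree, telescoping this against the global surface relation of $\rGraph$ shows it equals $1$. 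This is precisely the computation already done in the proof of Lemma~\ref{lem:decomposition_of_HAlg}, so the well-definedness of $\iota_*$ is really a restatement of that lemma at the group level (it is induced by an injection of discrete fundamental groups $\pi_1(\rCpt)\hookrightarrow\pi_1(\rGraph)$). That $\iota_*$ is $\W$- and $\M$-filtered is clear from the bidegrees of the generators, and $N$-equivariance need only be checked on edges $e$, where both sides equal $\length(e)\cdot e\log(\delta_e)$ (the $\delta_e$ for honest edges of the block being left untouched).

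\textbf{Steps 2 and 3: commutativity of the square.} In $\M$-degree $0$, commutativity is the statement $\int_{\iota_*(\gamma)}\underline\omega=\int_\gamma\iota^*(\underline\omega)$ for paths $\gamma$ in $\Cpt$ and $\underline\omega\in\Shuf\H_1(\Graph)$, where $\iota^*\colon\H_1(\Graph)\twoheadrightarrow\H_1(\Cpt)$ is the orthogonal projection onto the block summand. This reduces to the case $\gamma=e$ an edge of $\Cpt$ and $\underline\omega$ a pure tensor; then $\int_e\omega_i$ only sees the multiplicity of $e$, and since $e^*$ lies in the $\H^1(\Cpt)$-summand (equivalently $e^*$ annihilates cycles avoiding $\Cpt$), we have $\iota^*(\omega_i)(e^*)=\omega_i(\iota_*(e^*))$ by the orthogonality of the block decomposition $\H_1(\Graph)=\bigoplus_{\Cpt}\H_1(\Cpt)$, which is exactly what is needed. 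As in the earlier proofs, commutativity in $\M$-degree $0$ forces $\gr^\W_\bullet\iota_*$ to carry canonical paths of $\Cpt$ to canonical paths of $\Graph$, so both sides of the square are complete Hopf algebras via the trivialisation of \S\ref{ss:trivialisation} and one checks commutativity on generators of $\HAlg(\rCpt)$. The generators in $\H^1(\Cpt)$ are handled above; the generators $\log(\beta_{v,i})$, $\log(\beta_{v,i}')$ and $\log(\delta_e)$ (honest half-edge) are sent to the matching generator on both routes by construction; for a generator coming from $\H_1(\Cpt)$ one argues exactly as in Step~3 of Proposition~\ref{prop:resistance_compatibility} using that $\iota_*(e^*)=e^*$ and that Definition~\ref{def:inverse_iso_on_H_1} is formally the same on $\Cpt$ and on $\Graph$; and finally the generator $\log(\delta_{e_{\Cpt,v}})$ is, by the very definition of $\iota_*$ at the group level in Step~1 and the definition of $\iota_*$ on $\HAlg(\rCpt)$, sent to $\sum_{\Cpt'\sim v\sim\Cpt}\sigma_{\Cpt'}$ on both sides. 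The one point requiring genuine care --- and the step I expect to be the main obstacle --- is matching the \emph{group-level} image of $\delta_{e_{\Cpt,v}}$ (an exponential of a sum of $\M_{-1}$- and $\M_{-2}$-terms) with its \emph{$\M$-graded} image $\sigma_{\Cpt'}\in\gr^\M_{-2}$: one must verify that the higher (Baker--Campbell--Hausdorff) corrections lie in $\M_{-3}$ and hence vanish in $\gr^\M_{-2}$, i.e.\ that $\log(\delta_{\Cpt'})$ really is a preimage of $\sigma_{\Cpt'}$ in the sense of Remark~\ref{rmk:easier_vertex_relation} and Notation~\ref{notn:surface_relation_of_blocks}, uniformly over all four types of block. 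This is the semistability hypothesis doing its work (as in Corollary~\ref{cor:block_injectivity}), and once it is in place the remaining verifications are routine bookkeeping identical in form to \S\ref{ss:half-edge_removal} and \S\ref{ss:resistance_reduction}.
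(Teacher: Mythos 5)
Your overall template (prove the analogue of Propositions~\ref{prop:half-edge_compatibility} and~\ref{prop:resistance_compatibility}, then deduce the lemma verbatim) is the same as the paper's, but your Step~1 has a genuine gap. You define $\iota_*(\delta_{e_{\Cpt,v}})$ \emph{globally}, as $\exp\bigl(\sum_{\Cpt'\sim v\sim\Cpt}\log(\delta_{\Cpt'})\bigr)$ for group-level preimages of the classes $\sigma_{\Cpt'}$. For $\iota_*$ to be a morphism of groupoids, the vertex relation of $\rCpt$ at $v$ must be sent to the identity \emph{exactly}, not merely modulo $\M_{-3}$: your verification only produces the relation ``up to BCH corrections''. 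The exact group-level identity coming from the vertex relation of $\rGraph$ at $v$ involves only the blocks \emph{containing} $v$; passing from that local product to the global sum over all blocks on the $v$-side of $\bc(\rGraph)\setminus\{\Cpt\}$ genuinely uses vertex relations at other vertices and the tree structure of $\bc(\rGraph)$, and this manipulation is only valid in $\gr^\M_{-2}$, not in the group. Moreover, with your definition $N$-equivariance fails (or is at least unverified): for a $2$-connected block $\Cpt'$ your preimage of $\sigma_{\Cpt'}$ is a product of commutators of actual based loops, which are moved by the edge-twist derivation $N$, so $N\bigl(\iota_*(\log\delta_{e_{\Cpt,v}})\bigr)\neq0=\iota_*\bigl(N(\log\delta_{e_{\Cpt,v}})\bigr)$ in general; your claim that $N$-equivariance need only be checked on edges is therefore false for this generator. (Such preimages, and the conjugations placing them at the basepoint $v$, also involve non-canonical choices you never pin down.)

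The paper sidesteps both problems by defining $\iota_*(\delta_{e_{\Cpt,v}})^{-1}$ \emph{locally}: it is $\prod_{\source(e)=v,\,e\in\Edge{\Cpt}}\delta_e$ when $\Cpt$ is a $2$-connected component or bridge, $\delta_e$ when $\Cpt$ is a half-edge $e$, and $\prod_i[\beta'_{v,i},\beta_{v,i}]$ when $\Cpt$ is a positive-genus vertex. With this choice the vertex relation and $N$-equivariance are immediate (the image consists of $N$-fixed generators), and the identification of $\Hiso\gr^\M_\bullet\iota_*(\log\delta_{e_{\Cpt,v}})$ with $\sum_{\Cpt'\sim v\sim\Cpt}\sigma_{\Cpt'}$ — which you try to make true ``by definition'' — is exactly the nontrivial content of Step~3: it is proved in $\gr^\M_{-2}$ by summing over all cutvertices in a fixed block and over all blocks containing a fixed cutvertex, using that $\bc(\rGraph)$ is a tree together with the computation of Proposition~\ref{prop:sigma_0}. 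Two smaller inaccuracies: well-definedness of $\iota_*$ is not ``a restatement of Lemma~\ref{lem:decomposition_of_HAlg} at the group level'' (that lemma is an $\M$-graded Hopf-algebra statement), and the $\W$-filtered property is not ``clear from bidegrees'' — the paper obtains it from the topological description of \S\ref{ss:graphs_of_groups_topology}, or a posteriori from the commutativity of the square.
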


As usual, this will follow from the following compatibility result with the description of the $\M$-graded fundamental groupoid in Theorem~\ref{thm:description_of_M-graded}. The proof of Lemma~\ref{lem:block_reduction} from this Proposition~\ref{prop:block_compatibility} is exactly the same as that of Lemma~\ref{lem:half-edge_reduction} from Proposition~\ref{prop:half-edge_compatibility}.

\begin{proposition}\label{prop:block_compatibility}
The map $\iota\colon\QVert{\Cpt}\rightarrow\QVert{\Graph}$ extends to a morphism\[\iota_*\colon\pi_1^\Q(\rCpt)|_{\QVert{\Cpt}}\rightarrow\pi_1^\Q(\rGraph)\]of $\Q$-pro-unipotent groupoids such that the induced map\[\iota_*\colon\O(\pi_1^\Q(\rCpt))^\dual|_{\QVert{\Cpt}}\rightarrow\O(\pi_1^\Q(\rGraph))^\dual\]of complete Hopf groupoids is $N$-equivariant and $\W$- and $\M$-filtered. The induced map $\gr^\M_\bullet\iota_*\colon\gr^\M_\bullet\O(\pi_1^\Q(\rCpt))^\dual|_{\QVert{\Cpt}}\rightarrow\gr^\M_\bullet\O(\pi_1^\Q(\rGraph))^\dual$ fits into a commuting square
\begin{equation}\label{eq:block_compatibility}
\begin{tikzcd}
\gr^\M_\bullet\O(\pi_1^\Q(\rCpt))^\dual|_{\QVert{\Cpt}} \arrow{r}{\Hiso}[swap]{\hiso}\arrow{d}{\gr^\M_\bullet\iota_*} & \HAlg(\rCpt) \arrow{d}{\iota_*} \\
\gr^\M_\bullet\O(\pi_1^\Q(\rGraph))^\dual \arrow{r}{\Hiso}[swap]{\hiso} & \HAlg(\rGraph)
\end{tikzcd}
\end{equation}
where the horizontal maps are the isomorphisms from Theorem~\ref{thm:description_of_M-graded}.
\end{proposition}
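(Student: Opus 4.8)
The proof of Proposition~\ref{prop:block_compatibility} will follow the same three-step template as the proofs of Propositions~\ref{prop:half-edge_compatibility} and~\ref{prop:resistance_compatibility}, and in fact is conceptually the most transparent of the three, since the only genuinely new phenomenon is the bookkeeping around cutvertices. The plan is as follows.

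\textbf{Step 1: Construction of $\iota_*$.} First I would define the morphism $\iota_*\colon\pi_1^\Q(\rCpt)|_{\QVert{\Cpt}}\rightarrow\pi_1^\Q(\rGraph)$ on generators. On edges and half-edges of $\Cpt$, and on the generators $\beta_{v,i}$, $\beta_{v,i}'$ when $\Cpt$ is a genus-$>0$ vertex, the map sends each generator to the corresponding generator of $\pi_1^\Q(\rGraph)$. The only non-obvious case is the extra half-edge $e_{\Cpt,v}$ adjoined at each cutvertex $v\in\Cpt$: here one sends $\delta_{e_{\Cpt,v}}$ to the group-like element of $\pi_1^\Q(\rGraph,v)$ whose logarithm lifts (modulo higher $\W$) the element $\sum_{\Cpt'\sim v\sim\Cpt}\sigma_{\Cpt'}$ appearing in the definition of $\iota_*$ on $\HAlg$. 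Concretely, since $\bc(\rGraph)$ is a tree, for a cutvertex $v\in\Cpt$ the product $\prod_{\source(e)=v,\,e\notin\Cpt}\delta_e$ of the remaining edge/half-edge elements at $v$ (appropriately ordered, with the genus contribution $\prod_i[\beta_{v,i}',\beta_{v,i}]$ inserted when relevant) is exactly such a lift, by the vertex relation at $v$ in $\rGraph$. One then checks that the vertex relations of $\rCpt$ (which, at a cutvertex $v$, read $\prod_i[\beta_{v,i}',\beta_{v,i}]\cdot\prod_{\source(e)=v,\,e\in\Cpt}\delta_e\cdot\delta_{e_{\Cpt,v}}=1$) map to the corresponding vertex relations in $\rGraph$ — this is immediate from the choice of image of $\delta_{e_{\Cpt,v}}$ and the vertex relation in $\rGraph$. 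That the induced map on complete Hopf groupoids is $\W$- and $\M$-filtered is clear from the bidegrees, and $N$-equivariance is checked on generators: it is trivial on all generators except edges, and for edges $e$ of $\Cpt$ one has $N(e)=\length(e)\,e\log(\delta_e)$ on both sides with the lengths agreeing, while the new half-edges $e_{\Cpt,v}$ carry $N(\delta_{e_{\Cpt,v}})=0$ and $\iota_*$ of their image is a product of primitive elements each killed by $N$ (the $\sigma_{\Cpt'}$ are $N$-closed, being built from homology/genus classes, cf.\ Definition~\ref{def:N_on_HAlg}).

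\textbf{Step 2: Commutativity in $\M$-degree $0$.} In $\M$-degree $0$ the claim reduces, exactly as before, to the assertion $\int_{\iota_*(\gamma)}\underline\omega = \int_\gamma\iota^*(\underline\omega)$ for all paths $\gamma$ in $\rCpt$ supported on $\Cpt$ and $\underline\omega\in\Shuf\H_1(\Graph)$, where $\iota^*\colon\H_1(\Graph)\twoheadrightarrow\H_1(\Cpt)$ is the projection adjoint to $\iota_*$ under the orthogonal block decomposition $\H_1(\Graph)=\bigoplus_\Cpt\H_1(\Cpt)$. By the composition and nilpotence properties of the higher cycle pairing this needs only be checked for $\gamma=e$ an edge of (a subdivision of) $\Cpt$ and $\underline\omega=\omega_1\cdots\omega_n$ a pure tensor, where it amounts to $\int_e\omega_i=\int_e\iota^*(\omega_i)$ for each $i$; this holds because the cycle pairing $\int_e(-)$ on $\H_1(\Graph)$ only sees the $\H_1(\Cpt)$-component of a homology class when $e\in\Cpt$, which is precisely the statement that $\iota_*$ is an orthogonal direct-summand inclusion for the cycle pairing (the block decomposition is orthogonal). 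The new half-edges $e_{\Cpt,v}$ contribute $e_{\Cpt,v}^*=0$ and so play no role here.

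\textbf{Step 3: Commutativity in negative $\M$-degrees.} As in the earlier proofs, commutativity in $\M$-degree $0$ shows $\gr^\W_\bullet\iota_*$ takes canonical paths of $\Cpt$ to canonical paths of $\Graph$ (a canonical path in $\Cpt$ between rational vertices of $\Cpt$ maps to the unique path in $\Graph$ pairing trivially with all of $\H_1(\Graph)=\bigoplus\H_1(\Cpt')$, using again orthogonality of the block decomposition), so both sides of~\eqref{eq:block_compatibility} may be regarded as constant complete Hopf algebras via the trivialisations of \S\ref{ss:trivialisation}, and commutativity may be checked on the generators of $\HAlg(\rCpt)$ transported to $\gr^\M_\bullet\O(\pi_1^\Q(\rCpt))^\dual$ via $\Hiso^{-1}$ from \S\ref{s:M-trivialisation}. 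The $\H^1(\Cpt)$-generators in $\M$-degree $0$ were handled in Step 2; the generators $\log(\beta_{v,i})$, $\log(\beta_{v,i}')$, and $\log(\delta_e)$ for half-edges $e$ of $\Cpt$ (including the original half-edges of $\rGraph$ lying in $\Cpt$) are matched on the nose by construction. The two cases requiring a short argument are: (i) the $\H_1(\Cpt)$-generators in $\M$-degree $-2$, where one uses Definition~\ref{def:inverse_iso_on_H_1} together with the fact that for $e\in\Edge{\Cpt}$ one has $\iota_*(e^*)=\iota(e)^*$ and $\iota_*(\log(\delta_e))=\log(\delta_{\iota(e)})$ with matching lengths, so the series $\frac{\exp(\length(e)\ad_{e^*})-1}{\ad_{e^*}}(\log(\delta_e))$ is carried across verbatim, exactly as in the corresponding step of Proposition~\ref{prop:resistance_compatibility}; and (ii) the new half-edge generators $\log(\delta_{e_{\Cpt,v}})$, where I must verify that $\Hiso^{-1}$ in $\rCpt$ sends $\alog(\delta_{e_{\Cpt,v}})$ to an element of $\gr^\M_{-2}\O(\pi_1^\Q(\rCpt))^\dual$ whose $\iota_*$-image agrees modulo $\W_{-3}$ (and then exactly, by a $\W$-filtration induction as in Proposition~\ref{prop:alog(delta)}) with $\sum_{\Cpt'\sim v\sim\Cpt}\Hiso^{-1}(\sigma_{\Cpt'})$. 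This last identity is the heart of the matter and follows from Proposition~\ref{prop:sigma_0} applied inside each $\rCpt'$ together with the surface and vertex relations: Proposition~\ref{prop:sigma_0} identifies $\Hiso^{-1}(\sum_i[\xi_i',\xi_i])$ for a $2$-connected component $\Cpt'$ with $\sum_{e\in\Edge{\Cpt'}}\log(\delta_e)$, and the telescoping over the subtree of $\bc(\rGraph)$ on the $v$-side of $\Cpt$, combined with the vertex relations in $\rGraph$, collapses this sum to the product of edge/half-edge elements at $v$ outside $\Cpt$ — which is exactly $\iota_*(\delta_{e_{\Cpt,v}})$ by Step 1.

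\textbf{Expected main obstacle.} The delicate point is entirely in the cutvertex bookkeeping of Step 3(ii): making the telescoping identity $\iota_*\Hiso^{-1}(\alog(\delta_{e_{\Cpt,v}})) \equiv \sum_{\Cpt'\sim v\sim\Cpt}\Hiso^{-1}(\sigma_{\Cpt'})$ precise requires keeping careful track of the (suppressed) orderings of edges at each vertex and of the direction in which the block--cutvertex tree is traversed, together with a $\W$-filtration induction to upgrade a mod-$\W_{-3}$ congruence to an honest equality. The rest is a routine, if lengthy, generator check, and the orthogonality of the block decomposition of $\H_1(\Graph)$ does all the work in $\M$-degree $0$.
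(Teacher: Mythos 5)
Your proposal follows essentially the same three-step template as the paper's proof and uses the same ingredients: the obvious definition of $\iota_*$ on all generators except $\delta_{e_{\Cpt,v}}$, orthogonality of the block decomposition of $\H_1(\Graph)$ for commutativity in $\M$-degree $0$, the canonical-path trivialisation to reduce $\M$-degrees $<0$ to a generator check, and the vertex relations together with the Proposition~\ref{prop:sigma_0}-type identity inside each block for the cutvertex generators. Your two deviations are cosmetic: you define $\iota_*(\delta_{e_{\Cpt,v}})$ as the product of the elements at $v$ \emph{outside} $\Cpt$, whereas the paper takes the inverse of the product of the edge elements \emph{inside} $\Cpt$ (these agree by the vertex relation at $v$, up to the suppressed orderings); and your direct telescoping over the subtree of $\bc(\rGraph)$ on the $v$-side of $\Cpt$ replaces the paper's reduction, via the tree property of $\bc(\rGraph)$, to the two summed identities over all cutvertices in a fixed block and over all blocks at a fixed cutvertex — both versions rest on exactly the same computation.

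Two small corrections. First, $\W$-filteredness of $\iota_*$ is \emph{not} clear from bidegrees: $\log(\delta_{e_{\Cpt,v}})$ lies in $\W_{-2}\O(\pi_1^\Q(\rCpt))^\dual$, but under either definition its image is assembled from elements $\log(\delta_e)$ for non-half-edges, which individually lie only in $\W_{-1}\O(\pi_1^\Q(\rGraph))^\dual$; one needs either the topological description of \S\ref{ss:graphs_of_groups_topology} or the a posteriori bootstrap from the commuting square (the $\M$-graded image lies in $\W_{-2}\gr^\M_{-2}$, whence the element lies in $\W_{-2}+\M_{-3}=\W_{-2}$), which is how the paper handles it. Second, your ``mod $\W_{-3}$ plus induction'' step in Step~3(ii) is unnecessary: after passing to $\gr^\M_{-2}$ and the trivialisation, the Baker--Campbell--Hausdorff corrections to $\log$ of the defining product lie in $\M_{-4}$ and so vanish, and the vertex relations and the $\sigma_\Cpt$-identity hold exactly in $\gr^\M_{-2}$, so the telescoping already gives an exact identity.
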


\subsubsection*{Step $1$: Constructions of $\iota_*$}

To begin the proof of Proposition~\ref{prop:block_compatibility}, we explain how to construct the pushforward map
\[
\iota_*\colon\pi_1^\Q(\rCpt)|_{\QVert{\Cpt}}\rightarrow\pi_1^\Q(\rGraph).
\]
This is defined in the obvious way on all the generators of $\pi_1^\Q(\rCpt)$ from Definition~\ref{def:pi1_reduction_graph} save the generators $\delta_{e_{\Cpt,v}}$ associated to the half-edges introduced into $\Cpt$ for each cutvertex $v\in\Cpt$. For these generators, we define
\[
\iota_*(\delta_{e_{\Cpt,v}})^{-1}:=
\begin{cases}
\prod_{\source(e)=v,e\in\Edge{\Cpt}}\delta_e & \text{if $\Cpt$ a $2$-connected component or bridge,} \\
\delta_e & \text{if $\Cpt$ is a half-edge $e$,} \\
\prod_i[\beta_{v,i}',\beta_{v,i}] & \text{if $\Cpt=\{v\}$ with $g(v)>0$.}
\end{cases}
\]

It is easy to check that this produces a well-defined morphism of $\Q$-pro-unipotent groupoids $\iota_*\colon\pi_1^\Q(\rCpt)|_{\QVert{\Cpt}} \rightarrow\pi_1^\Q(\rGraph)$, and that the induced map $\iota_*\colon\O(\pi_1^\Q(\rCpt))^\dual|_{\QVert{\Cpt}} \rightarrow\O(\pi_1^\Q(\rGraph))^\dual$ is $N$-equivariant and $\M$-filtered. It is also $\W$-filtered, as follows from the topological description in \S\ref{ss:graphs_of_groups_topology}. Alternatively, this follows automatically once we have proved commutativity of the square in Proposition~\ref{prop:block_compatibility}: the only thing we need check is that the elements $\iota_*(\log(\delta_{e_{\Cpt,v}}))$ lie in $\W_{-2}\O(\pi_1^\Q(\rGraph))^\dual$, but commutativity of the square shows that $\gr^\M_\bullet\iota_*(\log(\delta_{e_{\Cpt,v}}))\in\W_{-2}\gr^\M_{-2}\O(\pi_1^\Q(\rGraph))^\dual$, so that $\iota_*(\log(\delta_{e_{\Cpt,v}}))\in\W_{-2}+\M_{-3}=\W_{-2}$.

\subsubsection*{Step $2$: Commutativity in $\M$-degree $0$}

Next, we show commutativity of \eqref{eq:block_compatibility} in $\M$-degree $0$. As usual, this amounts to proving the identity\[\int_{\iota_*(\gamma)}\underline\omega=\int_\gamma\iota^*(\underline\omega)\]for all paths $\gamma$ in $\Cpt$ and all $\underline\omega\in\Shuf\H_1(\Graph)$, where $\iota^*\colon\H_1(\Graph)\rightarrow\H_1(\Cpt)$ is the adjoint to the pushforward on homology (the product projection from the direct sum decomposition $\H_1(\Graph)=\bigoplus_{\Cpt'}\H_1(\Cpt')$). It suffices to prove this when $\gamma=e$ is an edge of $\Cpt$ and $\underline\omega=\omega_1\dots\omega_n$ is a pure tensor of length $n$ with each $\omega_i\in\H_1(\Cpt_i)$ for some block $i$. But in this case, both sides of the desired identity are equal to $\frac1{n!}\prod_i\int_\gamma\omega_i$ if all $\Cpt_i$ are equal to $\Cpt$, and $0$ otherwise.

\subsubsection*{Step $3$: Commutativity in $\M$-degrees $<0$}

It remains to show that \eqref{eq:block_compatibility} commutes in all $\M$-degrees. As in the proofs of Propositions~\ref{prop:half-edge_compatibility} and~\ref{prop:block_compatibility}, the map\[\gr^\M_\bullet\iota_*\colon\gr^\M_\bullet\O(\pi_1^\Q(\rCpt))^\dual|_{\QVert{\Cpt}}\rightarrow\gr^\M_\bullet\O(\pi_1^\Q(\rGraph))^\dual\]can be viewed as a morphism of complete Hopf algebras (i.e.\ is compatible with the trivialisations of both sides provided by canonical paths as in \S\ref{ss:trivialisation}), and so we may check commutativity on generators of $\HAlg(\rCpt)$.

The generators $\H^1(\Cpt)$ in $\M$-degree $0$ have already been dealt with, and the generators arising from genus of vertices, from half-edges \emph{of $\rGraph$} and from $\H_1(\Cpt)$ are easy to check. For generators of the form $\log(\delta_{e_{\Cpt,v}})$, we want to verify that\[\Hiso\gr^\M_\bullet\iota_*(\log(\delta_{e_{\Cpt,v}}))=\iota_*\Hiso\log(\delta_{e_{\Cpt,v}}),\]for every block--cutvertex incidence $\Cpt\ni v$.

As in the proof of Lemma~\ref{lem:decomposition_of_HAlg}, it suffices to prove this identity after summing over all cutvertices $v$ contained in a fixed $\Cpt$, and after summing over all blocks $\Cpt$ containing a fixed $v$. According to the calculations in the proof of Lemma~\ref{lem:decomposition_of_HAlg}, we want to prove the identities
\begin{align*}
\sum_{v\in\Cpt}\Hiso\gr^\M_\bullet\iota_*\log(\delta_{e_{\Cpt,v}}) &= -\sigma_\Cpt, \\
\sum_{\Cpt\ni v}\Hiso\gr^\M_\bullet\iota_*\log(\delta_{e_{\Cpt,v}}) &= 0
\end{align*}
for every block $\Cpt$ and cutvertex $v$ respectively.

The first identity is clear when $\Cpt$ is a half-edge or higher-genus vertex of $\rGraph$; when $\Cpt$ is a $2$-connected component or bridge, it is verified by the calculation
\begin{align*}
\sum_{\substack{v\in\Cpt\\\text{$v$ cutvertex}}}\Hiso\gr^\M_\bullet\iota_*(\log(\delta_{e_{\Cpt,v}})) &= -\sum_{\substack{v\in\Cpt\\\text{$v$ cutvertex}}}\sum_{\substack{\source(e)=v\\e\in\Edge{\Cpt}}}\Hiso\log(\delta_e)\\
 &= -\sum_{u\in\Cpt}\sum_{\substack{\source(e)=u\\e\in\Edge{\Cpt}}}\Hiso\log(\delta_e)\\
 &= -\sum_i[\xi_i',\xi_i] = -\sigma_\Cpt
\end{align*}
where $(\xi_i)$ is a basis of $\H^1(\Cpt)$ with dual basis $(\xi_i')$ of $\H_1(\Cpt)$, as in Notation~\ref{notn:surface_relation_of_blocks}. The second line follows from the first by the vertex relations $\sum_{\source(e)=u}\log(\delta_e)=0$ in $\gr^\M_{-2}\O(\pi_1^\Q(\rGraph))^\dual$ at non-cutvertices $u$ in $\Cpt$, and the third follows from the second by the same argument as in Proposition~\ref{prop:sigma_0}.

The second identity follows from the vertex identity at vertex $v$. This completes the proof of Proposition~\ref{prop:block_compatibility}\qed

\subsection{Cut pairs and maximal cut systems}\index{maximal cut system}

In order to complete our study of the non-abelian Kummer map of a graph in weight $-2$, we will need to probe slightly further into the $2$-connected structure of our reduction graph $\rGraph$, exploiting \emph{pairs} of edges which together disconnect $\rGraph$. The notion of interest is already introduced in \cite{AMO}.

\begin{definition}[Maximal cut systems]\label{def:max_cuts}
A \emph{cut pair} (in $\rGraph$) is a pair of (different) non-bridge edges $e_0$, $e_1$ such that $\rGraph\setminus\{e_0^{\pm1},e_1^{\pm1}\}$ is disconnected, with $\source(e_0)$ and $\source(e_1)$ lying in different components. The relation on the non-bridge edges of $\rGraph$ where $e_0\sim e_1$ if and only if $(e_0,e_1)$ is a cut pair or $e_0=e_1$ is an equivalence relation \cite[Lemma 1.1]{AMO}, and we refer to its equivalence classes as \emph{maximal cut systems}\footnote{Our terminology here differs slightly from that of \cite[Definition 1.9]{AMO}, in that we permit maximal cut systems to have size $1$.}.

If $\e$ is a maximal cut system, then $\e^{-1}=\{e^{-1}\:|\:e\in\e\}$ is a (different) maximal cut system, and we write $\plMaxCut{\Graph}$ for the set of maximal cut systems up to identifying $\e\sim\e^{-1}$.
\end{definition}

\begin{remark}\label{rmk:cut_pairs_and_cohomology}
Non-bridge edges $e_0$, $e_1$ form a cut pair if and only if $e_0^*=e_1^*\in\H^1(\Graph)$. This is analogous to the obvious fact that $e^*=0$ if and only if $e$ is a bridge.
\end{remark}

There is a refinement of the block--cutvertex decomposition of a reduction graph $\rGraph$ (Proposition~\ref{prop:bc_decomp}) taking account of a single maximal cut system. In order to state this concisely, we adopt the following terminology.

\begin{definition}[Relative $2$-connected components]\label{def:rel_2-conn_cpt}
Fix a maximal cut system $\e$ in $\rGraph$. A \emph{$2$-connected component $\Cpt$ relative to $\e$} is a maximal $2$-connected subgraph of $\Graph\setminus\{\e^{\pm1}\}$. A \emph{block $\Cpt$ relative to $\e$} is a subgraph of $\Graph$ which is either:
\begin{itemize}
	\item an edge of $\rGraph$ in $\e$;
	\item a $2$-connected component relative to $\e$;
	\item a bridge of $\rGraph$;
	\item a half-edge of $\rGraph$ together with its endpoints; or
	\item a single vertex $\{v\}$ with $g(v)>0$ in $\rGraph$.
\end{itemize}

A vertex $v$ of $\rGraph$ is called a \emph{relative cutvertex} just when $v$ is an endpoint of an edge in $\e$, $g(v)>0$ or $\rGraph\setminus\{v\}$ is disconnected.
\end{definition}

\begin{proposition}[Relative block--cutvertex decomposition]\label{prop:rel_bc_decomp}
For any maximal cutsystem $\e$, $\rGraph$ is the union of its blocks relative to $\e$, any two different relative blocks intersect in $\leq1$ point, and these intersection points are exactly the relative cutvertices. If $\bc_\e(\rGraph)$ denotes the bipartite graph with one vertex-class the set of relative blocks, the other vertex-class the set of relative cutvertices, and an edge connecting each relative cutvertex to each of the relative blocks containing it, then $\bc_\e(\rGraph)$ is a graph with a unique cycle. If $\rGraph$ is already $2$-connected with $g(v)=0$ for all vertices $v$, then $\bc_\e(\rGraph)$ is a cycle.
\end{proposition}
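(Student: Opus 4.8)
The plan is to prove Proposition~\ref{prop:rel_bc_decomp} by mimicking the proof of the ordinary block--cutvertex decomposition (Proposition~\ref{prop:bc_decomp}), working with the auxiliary graph $\Graph^\circ$ obtained from $\Graph$ by contracting every edge in the chosen maximal cut system $\e$ to a point. First I would observe that contracting the edges of $\e$ (which are pairwise a cut pair, so all have the same image $e_0^*=\dots=e_n^*$ in $\H^1(\Graph)$ by Remark~\ref{rmk:cut_pairs_and_cohomology}) identifies the two endpoints of each edge in $\e$; since $\rGraph\setminus\{\e^{\pm1}\}$ is disconnected with $\source(e_0),\dots,\source(e_n)$ in distinct-or-matching components in the appropriate sense, the images of these endpoints all coincide to a single vertex $v_\e$ of $\Graph^\circ$. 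The key point is that the relative blocks of $\rGraph$ correspond bijectively to the ordinary blocks of the reduction graph $\rGraph^\circ$ (with genus function unchanged), together with the edges of $\e$ themselves, which become loops at $v_\e$ in $\Graph^\circ$; and the relative cutvertices of $\rGraph$ correspond to the cutvertices of $\rGraph^\circ$, together with $v_\e$ itself, the latter arising from "splitting" $v_\e$ back into the endpoints of the edges of $\e$.

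Concretely, I would proceed in the following steps. (1) Show $\Graph^\circ$ is connected (immediate, as contracting edges preserves connectivity) and has $\H_1(\Graph^\circ)=\H_1(\Graph)/\langle[\text{the common cycle class}]\rangle$, in particular $\dim\H_1(\Graph^\circ)=\dim\H_1(\Graph)-1$ because the edges of $\e$ all represent, up to sign, the same primitive class whose complement disconnects. (2) Apply the ordinary block--cutvertex decomposition (Proposition~\ref{prop:bc_decomp}) to $\rGraph^\circ$, so that $\rGraph^\circ$ is a tree-like union of its blocks glued at cutvertices, with $\bc(\rGraph^\circ)$ a tree. (3) Translate back: a $2$-connected subgraph of $\Graph\setminus\{\e^{\pm1}\}$ is the same as a $2$-connected subgraph of $\Graph^\circ$ not containing any of the loops coming from $\e$, and bridges/half-edges/positive-genus vertices are unaffected by the contraction; meanwhile each loop of $\Graph^\circ$ coming from an edge of $\e$ is its own $2$-connected component in $\Graph^\circ$ (Remark~\ref{rmk:cutvertices_and_subdivision}(b)) but in $\rGraph$ it is the relative block consisting of that single edge. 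Thus the relative blocks of $\rGraph$ biject with the blocks of $\rGraph^\circ$. (4) Check the incidence structure: the relative cutvertices of $\rGraph$ are the cutvertices of $\rGraph^\circ$ together with $v_\e$, where $v_\e$ has been un-contracted into the $2n{+}2$ endpoints (counted with the identifications forced by the block structure) of the edges of $\e$. The first four bullet conditions --- that $\rGraph$ is the union of its relative blocks, relative blocks meet in at most one point, these meeting points are exactly the relative cutvertices --- then follow directly from the corresponding statements for $\rGraph^\circ$, plus the observation that each edge of $\e$ is attached to the rest of $\rGraph$ at its two endpoints, which are precisely relative cutvertices.

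(5) For the statement about $\bc_\e(\rGraph)$: the graph $\bc_\e(\rGraph)$ is obtained from the tree $\bc(\rGraph^\circ)$ by replacing the single vertex $v_\e$ (a cutvertex of $\rGraph^\circ$, incident in $\bc(\rGraph^\circ)$ to the $\#\e$ loop-blocks and to whatever other blocks of $\rGraph^\circ$ met $v_\e$) with a configuration reflecting the fact that each edge $e\in\e$ has \emph{two} distinct endpoint-relative-cutvertices in $\rGraph$, joined through the two "sides" of $\Graph\setminus\{\e^{\pm1}\}$. Since there are exactly two components of $\Graph\setminus\{\e^{\pm1}\}$ (by the definition of a cut pair and the fact that a maximal cut system's edges are pairwise cut pairs, so removing all of them yields precisely two sides --- this I would verify carefully, citing \cite[Lemma~1.1]{AMO}), the $\#\e$ edges of $\e$ together with paths through the two sides form a unique cycle in $\bc_\e(\rGraph)$, and removing this cycle leaves a forest (the two sides each contribute trees by the block--cutvertex decomposition applied to each side). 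Hence $\bc_\e(\rGraph)$ has a unique cycle. Finally, if $\rGraph$ is $2$-connected with all genera zero, then $\Graph\setminus\{\e^{\pm1}\}$ consists of exactly two components each of which is a tree (no relative $2$-connected components, no bridges off the main structure, no half-edges, no positive-genus vertices --- anything else would give a $2$-connected subgraph surviving the removal of $\e$, contradicting maximality of $\e$ together with $2$-connectedness, a point I would spell out), so $\bc_\e(\rGraph)$ is exactly the cycle through the edges of $\e$ and the two sides, i.e.\ a cycle.

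The main obstacle I anticipate is Step~(5), specifically establishing cleanly that removing a maximal cut system from a $2$-connected graph yields exactly two connected components (not more), and organizing the "un-contraction" of $v_\e$ so that the incidences in $\bc_\e(\rGraph)$ are bookkept correctly --- this is where the combinatorics of \cite[Lemma~1.1]{AMO} must be used with care, and where the distinction between maximal cut systems of size $1$ (where $e_0$ alone is a cut pair element, so $\e=\{e_0\}$ and $\Graph\setminus\{e_0^{\pm1}\}$ has two components with $e_0^*\neq 0$ --- note $e_0$ is a non-bridge, so it is a genuine edge of a $2$-connected component, and the "cycle" in $\bc_\e(\rGraph)$ is a $2$-cycle through $e_0$ and both sides) versus larger ones needs a uniform treatment. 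Everything else reduces to the already-established Proposition~\ref{prop:bc_decomp} via the contraction $\Graph\rightsquigarrow\Graph^\circ$.
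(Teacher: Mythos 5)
The paper itself gives no proof of this proposition (it is stated as a routine relative analogue of the classical block--cutvertex decomposition cited from Bollob\'as), so I can only judge your argument on its own merits, and it has a genuine gap: the central structural claim on which everything rests is false. Identifying the two endpoints of each edge of $\e$ does \emph{not} in general merge all the endpoints of all the edges of $\e$ into a single vertex $v_\e$. Take two triangles $B_1$ (with marked vertices $u_1,v_1$) and $B_2$ (with $u_2,v_2$), joined by an edge $e_0$ from $v_1$ to $u_2$ and an edge $e_1$ from $v_2$ to $u_1$: then $\e=\{e_0,e_1\}$ is a maximal cut system, but contracting $e_0$ and $e_1$ produces two distinct vertices $\{v_1,u_2\}$ and $\{u_1,v_2\}$, and the resulting graph $\Graph^\circ$ is $2$-connected apart from the two loops, so it has a single non-loop block --- whereas $\rGraph$ has two relative $2$-connected components $B_1,B_2$. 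Thus your claimed bijection between relative blocks of $\rGraph$ and blocks of $\rGraph^\circ$ fails, and steps (3)--(5) do not go through as written. (Your Betti-number claim in step (1) is also wrong: identifying the endpoints of a non-loop edge raises $\dim\H_1$ by one per edge; plain contraction leaves it unchanged.)

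The difficulties you flag in step (5) are real and your proposed resolutions are incorrect. Removing a maximal cut system does not yield exactly two components: for $\#\e=1$ the edge is a non-bridge, so $\Graph\setminus\{e_0^{\pm1}\}$ is connected (your own parenthesis contradicts this), and for an $m$-cycle the whole edge set is one maximal cut system whose removal leaves $m$ components; the correct picture, which the paper itself uses in the proof of Lemma~\ref{lem:exterior_intersection_max_cut}, is that the edges of $\e$ can be enumerated $e_0,\dots,e_{m-1}$ so that $\target(e_i)$ and $\source(e_{i+1})$ lie in a common component, i.e.\ the components are arranged cyclically between consecutive edges of $\e$. Likewise, in the final case your assertion that the complementary pieces must be trees because ``anything else would give a $2$-connected subgraph surviving the removal of $\e$, contradicting maximality'' is not valid: maximality of $\e$ only says no further edge forms a cut pair with its members, and the two-triangles example above is $2$-connected with genus $0$ everywhere yet has non-tree relative $2$-connected components (its $\bc_\e$ is still a cycle, but not for your reason). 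A correct argument should instead apply the ordinary Proposition~\ref{prop:bc_decomp} to each component of $\Graph\setminus\{\e^{\pm1}\}$ and then analyse how the edges of $\e$ string these tree-like pieces together along the cyclic arrangement of their attachment endpoints, producing the unique cycle of $\bc_\e(\rGraph)$ with trees attached; the contraction trick as you set it up conflates relative cutvertices that must remain distinct.
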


\begin{remark}\label{rmk:max_cuts_in_2-conn_cpts}
It follows from the (relative) block--cutvertex decomposition that if $\Cpt$ is a $2$-connected component of $\rGraph$ (resp.\ $2$-connected component relative to some maximal cut system $\e$), then the maximal cut systems in $\Cpt$ are unions of maximal cut systems in $\rGraph$. (In other words, if $(e_0,e_1)$ is a cut pair in $\rGraph$ with $e_0\in\Edge{\Cpt}$, then $e_1\in\Edge{\Cpt}$ also and $(e_0,e_1)$ is a cut pair in $\Cpt$.)
\end{remark}

The final result we will need regarding maximal cut systems is the following result giving an explicit description of the ``exterior intersection pairing'' on $\rGraph$.

\begin{lemma}\label{lem:exterior_intersection_max_cut}
The image of the exterior intersection pairing
\begin{align*}
\H_1(\Graph)\otimes\H_1(\Graph) &\rightarrow \Q\cdot\Edge{\Graph}^+ \\
\gamma_0\otimes\gamma_1 &\mapsto \sum_{e\in\Edge{\Graph}}e^*(\gamma_0)e^*(\gamma_1)\cdot e
\end{align*}
is exactly the span of the formal sums of maximal cut systems.
\begin{proof}
It is clear that the image is contained in the span of the maximal cut systems. It remains to show that the formal sum of any maximal cut system $\e$ is in the image of the exterior intersection pairing. For the sake of simplicity, let us assume that $\rGraph=\Graph$ is $2$-connected with $g(v)=0$ for all vertices $v$ (the argument is easily adapted in the general case).

By the relative block--cutvertex decomposition (Proposition~\ref{prop:rel_bc_decomp}) we may enumerate $\e$ as $e_0,e_1,\dots,e_{m-1}$ such that $\target(e_i)$ and $\source(e_{i+1})$ lie in the same connected component of $\Graph\setminus\{\e^{\pm1}\}$ for every $i$ (with indices taken mod $m$). The components of $\Graph\setminus\{e^{\pm1}\}$ are unions of $2$-connected components of $\Graph$ relative to $\e$, and hence are $2$-edge-connected \cite[Chapter III.2]{bollobas}. By Menger's Theorem \cite[Theorem III.5(ii)]{bollobas}, for each $i$ we may choose edgewise-disjoint paths $\gamma_i$, $\gamma_i'$ in $\Graph\setminus\{\e^{\pm1}\}$ from $\target(e_i)$ to $\source(e_{i+1})$. Then $\gamma=\gamma_{m-1}e_{m-1}\dots\gamma_1e_1\gamma_0e_0$ and $\gamma'=\gamma'_{m-1}e_{m-1}\dots\gamma'_1e_1\gamma'_0e_0$ are loops in $\Graph$ whose exterior intersection is $\sum_ie_i$, as desired.
\end{proof}
\end{lemma}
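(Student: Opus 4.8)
The statement to be proven is Lemma~\ref{lem:exterior_intersection_max_cut}, describing the image of the exterior intersection pairing on $\H_1(\Graph)$.

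\smallskip

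\textbf{Plan of proof.} The strategy is to reduce to the $2$-connected genus-$0$ case and then use the relative block--cutvertex decomposition to construct, for each maximal cut system $\e$, two loops $\gamma,\gamma'$ whose exterior intersection is exactly the formal sum $\sum_{e\in\e}e$. The easy inclusion goes first: the element $\sum_{e\in\Edge{\Graph}}e^*(\gamma_0)e^*(\gamma_1)\cdot e$ depends only on the classes $e^*\in\H^1(\Graph)$, and by Remark~\ref{rmk:cut_pairs_and_cohomology} two non-bridge edges $e,e'$ have $e^*=e'^*$ precisely when they lie in a common cut pair (hence a common maximal cut system), while $e^*=0$ for bridges and half-edges; therefore the coefficients of $e$ and $e'$ in the exterior intersection of any two cycles agree whenever $e\sim e'$, and vanish on bridges. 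This shows the image lies in the span of the formal sums of maximal cut systems. The content is the reverse inclusion.

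\smallskip

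\textbf{Main construction.} Fix a maximal cut system $\e$ and assume (for exposition) $\rGraph=\Graph$ is $2$-connected with $g\equiv0$; I will remark that the general case follows by first restricting to the relevant $2$-connected component via Remark~\ref{rmk:max_cuts_in_2-conn_cpts} and the block--cutvertex decomposition, the contributions from other blocks being killed on $\H_1$. Applying the relative block--cutvertex decomposition (Proposition~\ref{prop:rel_bc_decomp}), $\bc_\e(\Graph)$ is a cycle, so I can enumerate the edges of $\e$ as $e_0,e_1,\dots,e_{m-1}$ cyclically so that $\target(e_i)$ and $\source(e_{i+1})$ (indices mod $m$) lie in the same connected component $\Graph_i$ of $\Graph\setminus\{\e^{\pm1}\}$. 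Each $\Graph_i$, being a union of $2$-connected components relative to $\e$ glued along cutvertices in a tree, is $2$-edge-connected (see \cite[Chapter III.2]{bollobas}). By Menger's Theorem in its edge form \cite[Theorem III.5(ii)]{bollobas}, I can choose two edgewise-disjoint paths $\gamma_i,\gamma_i'$ from $\target(e_i)$ to $\source(e_{i+1})$ inside $\Graph_i$. Concatenating gives loops $\gamma=\gamma_{m-1}e_{m-1}\cdots\gamma_1e_1\gamma_0e_0$ and $\gamma'=\gamma'_{m-1}e_{m-1}\cdots\gamma'_1e_1\gamma'_0e_0$. Now one computes the exterior intersection $\sum_e e^*(\gamma)e^*(\gamma')\cdot e$: for $e\in\e$, both $\gamma$ and $\gamma'$ traverse $e$ exactly once in the same direction, contributing $+e$; for $e\notin\{\e^{\pm1}\}$, $e$ lies in a single $\Graph_i$ and at most one of $\gamma_i,\gamma_i'$ uses it (edgewise-disjointness), so the product $e^*(\gamma)e^*(\gamma')$ vanishes. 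Hence the exterior intersection of $\gamma,\gamma'$ is precisely $\sum_{e\in\e}e$, as desired.

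\smallskip

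\textbf{Anticipated obstacle.} The routine part is the bookkeeping of orientations and the verification that $e^*(\gamma)e^*(\gamma')$ vanishes off $\e$; the only genuinely delicate point is making the enumeration $e_0,\dots,e_{m-1}$ and the identification of the components $\Graph_i$ precise, i.e.\ checking that the relative block--cutvertex tree structure really does let one traverse the edges of $\e$ in a cyclic order returning to the start — this is exactly what Proposition~\ref{prop:rel_bc_decomp} provides (the unique cycle in $\bc_\e(\rGraph)$ passing through all the $\e$-blocks). A secondary point requiring a sentence of care is the reduction to the $2$-connected genus-$0$ situation: one must observe that if $e_0\in\Edge{\Cpt}$ for a $2$-connected component $\Cpt$, then the whole of $\e$'s relevant part lies in $\Cpt$ (Remark~\ref{rmk:max_cuts_in_2-conn_cpts}), and that the pushforward $\H_1(\Cpt)\hookrightarrow\H_1(\Graph)$ is compatible with taking exterior intersections, so it suffices to realize $\sum_{e\in\e\cap\Edge{\Cpt}}e$ inside $\Cpt$.
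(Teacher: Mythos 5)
Your proposal is correct and follows essentially the same route as the paper: reduce to the $2$-connected genus-$0$ case, use the relative block--cutvertex decomposition to order the edges of $\e$ cyclically, invoke $2$-edge-connectedness of the components of $\Graph\setminus\{\e^{\pm1}\}$ and Menger's Theorem to find edgewise-disjoint connecting paths, and concatenate to obtain two loops with exterior intersection $\sum_{e\in\e}e$. The extra detail you supply on the easy inclusion and on the reduction to a single $2$-connected component is consistent with what the paper leaves implicit.
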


\section{Proof of the relative Oda--Tamagawa criterion}
\label{s:injectivity}

We finally turn our attention to the relative Oda--Tamagawa criterion (Theorem~\ref{thm:groupoid_oda}) asserting that the fibres of the non-abelian Kummer map $\classify\colon X(K)\rightarrow\H^1(G_K,\pi_1^{\Q_\ell}(X_{\overline K},b))$ associated to a hyperbolic curve $X/K$ are the same as the fibres of the reduction map (Definition~\ref{def:reduction_map}). In light of Theorem~\ref{thm:curve_kummer_is_graph_kummer}, this assertion (over all finite extensions $L/K$) is equivalent to the assertion that the non-abelian Kummer map $\classify\colon\QVert{\Graph}\rightarrow\V$ associated to the reduction graph of $X$ is injective. We will prove a more general version of this which precisely captures the fibres of the first $n$ weight-graded components of $\classify$ for every~$n$. This in particular also establishes the more refined version asserted in the introduction.

\begin{theorem}[Injectivity Theorem]\label{thm:combinatorial_injectivity}
Let $\rGraph$ be a stable reduction graph. If $x,y\in\QVert{\Graph}$ are two distinct rational vertices, then:
\begin{itemize}
	\item (constancy in weight $-1$) $\classifyleq1(x)=\classifyleq1(y)$;
	\item (near-injectivity in weight $-2$) $\classifyleq2(x)=\classifyleq2(y)$ if and only if $x$ and $y$ lie in the same $2$-connected component $\Cpt$ of $\rGraph$ and there is an isometric involution $\iota$ of $\Cpt$ fixing the cutvertices of $\rGraph$ in $\Cpt$ pointwise, interchanging $x$ and $y$, and such that the quotient $\Cpt/\iota$ is a tree; and
	\item (injectivity in weight $-3$) $\classifyleq3(x)\neq\classifyleq3(y)$.
\end{itemize}
Here, $\classifyleq n\colon\QVert{\Graph}\rightarrow\prod_{r=1}^n\gr^\W_{-r}\V$ denotes the non-abelian Kummer map in weight~$\geq -n$.
\end{theorem}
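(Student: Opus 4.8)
The plan is to reduce everything to an explicit analysis of the measures $\mu_2$ and $\mu_3$ from Construction~\ref{cons:measures}, using the simplification operations of Section~\ref{s:graph_ops} to cut the general problem down to the case of a single $2$-connected component with no half-edges and genus $0$ at every vertex. First I would dispose of weight $-1$: since $\gr^\W_{-1}\V=0$ by Remark~\ref{rmk:j_1}, $\classifyleq1$ is identically zero, so constancy in weight $-1$ is immediate. For the weight $-2$ statement, the key input is the explicit form of $\mu_2$, from which Theorem~\ref{thm:description_of_graph_kummer} gives $\classify_2(x)-\classify_2(y)=\langle x-y,\mu_2\rangle$ (interpreting this via the height pairing on $\bMeas^0$). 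Using Lemma~\ref{lem:half-edge_reduction} I can strip off all half-edges (the map $\rho_*$ is injective in weight $-2$ on the part of $\V$ that matters, since $\gr^\W_{-2}\V$ only involves the $\log(\delta_e)$ for half-edges $e$ and the $[\beta',\beta]$ terms, which $\rho_*$ handles transparently), and using Lemma~\ref{lem:block_reduction} together with the injectivity of $\iota_*$ in the semistable case (Corollary~\ref{cor:block_injectivity}) I can assume $x,y$ lie in a common block; if that block is a bridge, a half-edge, or a genus-$>0$ vertex then $\classify_2$ is already injective there by Corollary~\ref{cor:j_piecewise_polynomial} (linearity on bridges/half-edges) and a direct check, so the only interesting case is a $2$-connected component $\Cpt$ with trivial genus. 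There I would apply Lemma~\ref{lem:resistance_reduction} repeatedly to contract away everything not ``between'' $x$ and $y$, and then argue that $\langle x-y,\mu_2\rangle_\Cpt = 0$ forces, coefficient-by-coefficient in the basis of $\gr^\W_{-2}\LAlg$ dual to pairs $(e^*)^2$ indexed by maximal cut systems (Lemma~\ref{lem:exterior_intersection_max_cut}, Remark~\ref{rmk:cut_pairs_and_cohomology}), that the ``current flow'' from $x$ to $y$ is symmetric under an isometric involution $\iota$ of $\Cpt$ fixing the relative cutvertices and swapping $x,y$; the vanishing of $\int(e^*)^2\,\d s_e$-type terms over each cut system is exactly the statement that $\Cpt/\iota$ carries no homology, i.e.\ is a tree. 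Conversely, if such an involution exists then $\mu_2$ is $\iota$-anti-invariant (because $\mu_2$ is built from $\iota$-equivariant data and the height pairing is $\iota$-invariant) and hence $\langle x-y,\mu_2\rangle=0$.

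For the weight $-3$ statement — the hardest part — I would argue by contradiction: suppose $x\neq y$ with $\classifyleq3(x)=\classifyleq3(y)$. By the weight $-2$ analysis already done, $x$ and $y$ lie in a common $2$-connected component $\Cpt$ (with trivial genus, after reductions) and there is an isometric involution $\iota$ of $\Cpt$ fixing the relative cutvertices, swapping $x$ and $y$, with $\Cpt/\iota$ a tree. The strategy is to show that this forces $\classify_3(x)\neq\classify_3(y)$ by exhibiting a single coordinate of $\gr^\W_{-3}\V$ on which $\langle x-y,\mu_3\rangle$ is nonzero. Here I would use Proposition~\ref{prop:leading_coeffs}, which pins down the leading ($s_e^n$) coefficient of $\classify_n$ along each edge to be $\frac{n-1}{n!}\ad_{e^*}^{n-1}(N(e^*))$; for $n=3$ this is $\frac13\ad_{e^*}^2(N(e^*))$, a bracket-expression that is nonzero precisely when $e$ lies in a maximal cut system of size $\geq 2$ (so that $N(e^*)$ has a component transverse to $e^*$), and the sign/orientation data of this leading term cannot be cancelled by any $\iota$-symmetric contribution because $\iota$ reverses the orientation of the $x$-$y$ ``path'' through $\Cpt$ while preserving it through the cut system. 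Concretely: the involution $\iota$, having tree quotient, must act on the unique cycle in $\bc_\e(\rGraph)$ (Proposition~\ref{prop:rel_bc_decomp}) by a reflection through $x$ and $y$; following $\classify_3$ along a path from $x$ to $y$ through a maximal cut system $\e$ crossed by the cycle, the weight $-3$ increment picks up a term quadratic in the edge-lengths and odd under $\iota$, which therefore survives in $\classify_3(y)-\classify_3(x)$. Making this precise will require the explicit recursion for $\mu_3$ in Construction~\ref{cons:measures} together with Proposition~\ref{prop:diff_eqn_for_j} and Proposition~\ref{prop:integral_for_N_b} to compute $\langle x-y,\mu_3\rangle$ in terms of $\mu_2$-data along the relevant edges.

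The main obstacle, as I see it, is the last step: ruling out a conspiratorial cancellation in weight $-3$ when the weight $-2$ obstruction already vanishes. The weight $-2$ part is essentially a clean harmonic-analysis/electrical-circuit computation — symmetrise and use positive-definiteness of the cycle pairing on $\H_1$ — but in weight $-3$ one has a genuinely nonabelian (Lie-bracket-valued) quantity, and one must show that no amount of graph symmetry can make both the ``$\ad_{e^*}$ of the weight $-2$ flow'' term and the ``$\ad_{e^*}^2$ of $N(e^*)$'' term in the $\mu_3$ recursion cancel simultaneously. I expect the right bookkeeping device is to project $\gr^\W_{-3}\LAlg$ onto a well-chosen one-dimensional piece — e.g.\ the coefficient of $[e^*,[e^*,\log(\delta_{e'})]]$ for $e,e'$ in a common maximal cut system lying on the cycle of $\bc_\e(\rGraph)$ — and to observe that on this coefficient the $x$-$y$ increment is a nonzero polynomial in the (positive, rational) edge-lengths that cannot vanish. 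The operations of Section~\ref{s:graph_ops} (especially resistance reduction, Lemma~\ref{lem:resistance_reduction}, which is compatible with $\classify$ via $\rho_*$) should let me assume $\Cpt$ is as small as possible — ideally a single cut system's worth of parallel edges between $x$ and $y$, where the computation becomes entirely explicit — and then the general case follows since $\rho_*$ is injective on the relevant graded pieces.
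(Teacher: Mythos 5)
Your overall architecture matches the paper's: weight $-1$ is trivial since $\gr^\W_{-1}\V=0$; weight $-2$ is analysed via the decomposition of $\mu_2$ into contributions indexed by maximal cut systems, half-edges and higher-genus vertices (this is exactly Lemma~\ref{lem:harmonic_condition}, resting on Lemma~\ref{lem:exterior_intersection_max_cut}); the reduction to a single $2$-connected component uses Lemmas~\ref{lem:block_reduction} and~\ref{lem:resistance_reduction}; and weight $-3$ is settled by reducing to small explicit graphs and exploiting the leading coefficient $\tfrac13\ad_{e^*}^2(N(e^*))$ from Proposition~\ref{prop:leading_coeffs} together with the fact that the involution acts by $-1$ in weight $-3$. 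However, the two steps that carry essentially all the difficulty are not actually proved in your proposal. First, in weight $-2$ you assert that the vanishing of $\langle x-y,\mu_2\rangle$, read coefficient-by-coefficient over maximal cut systems, ``forces'' the current flow to be symmetric under an isometric involution $\iota$ with tree quotient; but this presupposes $\iota$, whereas constructing $\iota$ from the harmonic conditions (average of $\Phi$ over each cut system equal to a constant $\kappa$, and $\Phi=\kappa$ at half-edge endpoints and positive-genus vertices) is precisely the content of the paper's induction on Euler characteristic, with the delicate base case of banana graphs and the loop-with-half-edge (Propositions~\ref{prop:depth_2_bananas} and~\ref{prop:partial_inj_for_2-conn_graphs}), including the gluing of involutions across relative block--cutvertex decompositions. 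Your identification ``vanishing of the cut-system integrals $=$ $\Cpt/\iota$ has no homology'' is circular as stated. Relatedly, you cannot simply ``strip off all half-edges'' at the outset: $\rho_*$ kills $\log(\delta_{e_0})$, so it is not injective on $\gr^\W_{-2}\V$, and the half-edge endpoints impose the condition $\Phi(\source(e_0))=\kappa$ which is what forces the involution to fix them; the paper only removes a half-edge inside the inductive step, and then checks a posteriori that its endpoint is $\iota$-fixed so that $\iota$ extends.

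Second, in weight $-3$ your argument is explicitly left as a heuristic, and the heuristic contains an error: $\ad_{e^*}^2(N(e^*))$ is \emph{not} nonzero precisely when $e$ lies in a maximal cut system of size $\geq2$. In the paper's two terminal configurations (the $3$-banana with no half-edges, and a single loop with one half-edge) every maximal cut system has size $1$, and yet the relevant leading terms are nonzero --- indeed the whole contradiction in the paper rests on choosing a functional $\psi$ with $\psi(\ad_{e^*}^2(N(e^*)))\neq0$ (or, for the banana, killing two of the three such terms and keeping the third) and then counting zeros of the resulting cubic: $\iota$-oddness plus $\classifyeq3(x)=\classifyeq3(y)$ forces too many vanishing conditions (four zeros of a nonzero cubic in the loop case; linearity on $e_0,e_1$ and order-two vanishing at the endpoints of $e_2$ by harmonicity in the banana case). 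Your proposed terminal case (``a single cut system's worth of parallel edges between $x$ and $y$'') is not what the reductions produce, and without the explicit functional-plus-zero-counting computation the possibility of a ``conspiratorial cancellation'' that you worry about is not excluded. So the proposal is a correct outline of the paper's strategy, but both the construction of the involution in weight $-2$ and the terminal computation in weight $-3$ are genuine gaps that would need to be filled along the lines of Propositions~\ref{prop:depth_2_bananas}, \ref{prop:partial_inj_for_2-conn_graphs} and the final lemma of \S\ref{s:injectivity}.
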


\begin{remark}\label{rmk:injectivity_over_R}
The methods we use in the proof of Theorem~\ref{thm:combinatorial_injectivity} are essentially $\R$-linear in nature, and as such yield a slightly stronger result than stated above. Since the non-abelian Kummer map $\classify\colon\QVert{\Graph}\rightarrow\V$ associated to a stable reduction graph restricts to a componentwise polynomial function on edges, it extends uniquely to a continuous function $\Graph\rightarrow\V_\R$ on the underlying topological space $\Graph$ of $\rGraph$. This $\R$-linear non-abelian Kummer map satisfies the following properties:
\begin{itemize}
	\item $\classifyleq1$ is constant;
	\item $\classifyleq2$ factors through a closed immersion $\rGraph/\iota\hookrightarrow\gr^\W_{-2}\V_\R$, where $\rGraph/\iota$ denotes the quotient of $\rGraph$ by all isometric involutions of its $2$-connected components satisfying the properties in Theorem~\ref{thm:combinatorial_injectivity}; and
	\item $\classifyleq n$ is a closed immersion for all $n\geq3$.
\end{itemize}
Indeed, an easy $\R$-linearisation of our arguments in this section establishes that the relevant maps above are injective; they are then automatically closed immersions since they are linear on half-edges by Corollary~\ref{cor:j_piecewise_polynomial}.

We believe, though our argument here does not prove, that the following even stronger result should be true. If we let $\A(\Graph)$ denote the algebraisation of $\Graph$, i.e.\ the affine curve over $\Q$ given by taking a copy of $\A^1_\Q$ for each edge or half-edge and making the obvious identifications, then the non-abelian Kummer map extends to a morphism $\classify\colon\A(\Graph)\rightarrow\V$ of $\Q$-schemes, where $\V$ is viewed as an infinite-dimensional affine space in the usual way. We believe that this map should be a closed immersion, as should its finite-level analogues $\classifyleq n$ for all $n\geq3$.
\end{remark}

Recall that we write $\classifyeq r$ for the components of the non-abelian Kummer map $\classifyleq n\colon\QVert{\Graph}\rightarrow\prod_{r=1}^n\gr^\W_{-r}\V$. Since $\classifyeq1=0$ by Remark~\ref{rmk:j_1}, the content of Theorem~\ref{thm:combinatorial_injectivity} is that the maps $\classifyeq2$ and $\classifyeq3$ are jointly injective, and gives a precise characterisation of the failure of $\classifyeq2$ to be injective.

\subsection{An integral condition for triviality in weight $-2$}

The bulk of the work in the proof of Theorem~\ref{thm:combinatorial_injectivity} is contained in the final part, giving a precise description of the fibres of the non-abelian Kummer map in weight $-2$. The centrepiece of our proof is the following reformulation of the condition $\classifyeq2(x)=\classifyeq2(y)$ in the language of harmonic analysis.

\begin{lemma}\label{lem:harmonic_condition}
Let $\rGraph$ be a reduction graph and $x,y\in\QVert{\Graph}$ distinct rational vertices. Choose $\Phi\in\Omlog(\Graph)$ such that $\Lapl(\Phi)=y-x$ (cf.\ Remark~\ref{rmk:stabilised_laplacians}). Then $\classifyeq2(x)=\classifyeq2(y)$ if and only if there is a constant $\kappa$ such that\[\frac1{\sum_{e\in\e}\length(e)}\sum_{e\in\e}\int\Phi|\d s_e|=\kappa=\Phi(v)\]for all maximal cut systems $\e$ in $\rGraph$ and all vertices $v$ which are either the endpoint of a half-edge or satisfy $g(v)>0$. (The function $\Phi$ is well-defined up to constants; with an appropriate normalisation we may assume that $\Phi=0$.)
\begin{proof}
Let $\Phi\in\Omlog(\rGraph)$ be any function such that $\Lapl(\Phi)=y-x$ (uniquely defined up to additive constants). It follows from Theorem~\ref{thm:description_of_graph_kummer} that $\classifyeq2(x)=\classifyeq2(y)$ if and only\[\langle y-x,\mu_2\rangle:=\int\Phi\d\mu_2=0.\]

We now write the $\gr^\W_{-2}\V$-valued measure $\mu_2$ as\[\mu_2=\sum_{\e\in\plMaxCut{\Graph}}\length(\e)[N(\e^*),\e^*]\cdot\mu_\e+\sum_{\substack{v\in\Vert{\Graph}\\g(v)>0}}\log(\delta_v)\cdot v+\sum_{e\in\HEdge{\Graph}}\log(\delta_e)\cdot e,\]where for a maximal cut system $\e$ we write $\e^*:=e^*\in\H^1(\Graph)$ for any $e\in\e$ (this is independent of $e$ by Remark~\ref{rmk:cut_pairs_and_cohomology}), $\length(\e):=\sum_{e\in\e}\length(e)$ for the total length of $\e$, and\[\mu_\e:=\frac1{\length(\e)}\sum_{e\in\e}|\d s_e|\]for the uniform distribution on $\e$ of total mass $1$.

We claim that the coefficients $[N(\e^*),\e^*]$, $\log(\delta_v)$ and $\log(\delta_e)$ are linearly independent in $\gr^\W_{-2}\V$ up to the single relation\[\sum_{\e\in\plMaxCut{\Graph}}\length(\e)[N(\e^*),\e^*]+\sum_{\substack{v\in\Vert{\Graph}\\g(v)>0}}\log(\delta_v)+\sum_{e\in\HEdge{\Graph}}\log(\delta_e)=0.\]Indeed, Lemma~\ref{lem:exterior_intersection_max_cut} implies that the elements $\e^*\otimes\e^*$ (for $\e\in\plMaxCut{\Graph}$) are linearly independent in $\H^1(\Graph)\otimes\H^1(\Graph)$, so that the elements $[N(\e^*),\e^*]$, $\log(\delta_v)$ and $\log(\delta_e)$ are linearly independent in $\gr^\M_{-2}\gr^\W_{-2}\oHAlg$. Since $\gr^\W_{-2}\V$ is a quotient of a subspace of $\gr^\M_{-2}\gr^\W_{-2}\oHAlg$ by a one-dimensional subspace, it follows that there is at most one non-trivial relation between these elements in $\gr^\W_{-2}\V$. Since $\mu_2$ has total mass $0$, we have the non-trivial relation $\int1\d\mu_2=0$ asserted above.

Thus $\int\Phi\d\mu_2=0$ if and only if there is a constant $\kappa$ such that $\int\Phi\d\mu_\e=\kappa$ and $\Phi(v)=\kappa=\Phi(\source(e))$ for all maximal cut systems $\e$, vertices $v$ with $g(v)>0$, and half-edges $e$. If such a $\kappa$ exists, we may normalise $\Phi$ so that $\kappa=0$, and this condition with $\kappa=0$ is equivalent to the given one.
\end{proof}
\end{lemma}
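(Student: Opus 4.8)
The plan is to deduce Lemma~\ref{lem:harmonic_condition} directly from the explicit formula for $\mu_2$ given in Construction~\ref{cons:measures}, combined with the description of the height pairing in \S\ref{s:harmonic_analysis} and the combinatorial input on maximal cut systems from Lemma~\ref{lem:exterior_intersection_max_cut}. First I would fix $\Phi\in\Omlog(\Graph)$ with $\Lapl(\Phi)=y-x$ (well-defined up to an additive constant by Lemma~\ref{lem:laplacian_nearly_iso}), and invoke Theorem~\ref{thm:description_of_graph_kummer} together with Lemma~\ref{lem:height_pairing_x-b} to identify $\classifyeq2(x)-\classifyeq2(y)$ with $-\langle y-x,\mu_2\rangle=-\int\Phi\,\d\mu_2$. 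So the statement reduces to analysing when the $\gr^\W_{-2}\V$-valued quantity $\int\Phi\,\d\mu_2$ vanishes.

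Next I would rewrite $\mu_2$ in terms of maximal cut systems. Recall from Construction~\ref{cons:measures} that the edge-density term of $\mu_2$ is $-\sum_{e\in\Edge{\rGraph}}\ad_{e^*}(N(e^*))\cdot|\d s_e|$. Since $e^*$ depends only on the maximal cut system $\e$ of $e$ (Remark~\ref{rmk:cut_pairs_and_cohomology}), and bridges contribute $e^*=0$, this term regroups as $\sum_{\e\in\plMaxCut{\Graph}}\length(\e)\,[N(\e^*),\e^*]\cdot\mu_\e$, where $\mu_\e=\frac1{\length(\e)}\sum_{e\in\e}|\d s_e|$ is the uniform probability measure on $\e$ and I write $[N(\e^*),\e^*]=-\ad_{\e^*}(N(\e^*))$; the vertex and half-edge terms are carried over unchanged. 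The key algebraic point, which I expect to be the main obstacle, is the linear independence claim: the coefficients $[N(\e^*),\e^*]$ (for $\e\in\plMaxCut{\Graph}$), $\log(\delta_v)$ (for $g(v)>0$) and $\log(\delta_e)$ (for half-edges $e$) should be linearly independent in $\gr^\W_{-2}\V$ modulo exactly one relation. Here I would argue in two steps: first, in the free object $\gr^\M_{-2}\gr^\W_{-2}\oHAlg$, linear independence of the elements $\e^*\otimes\e^*\in\H^1(\Graph)\otimes\H^1(\Graph)$ (supplied by Lemma~\ref{lem:exterior_intersection_max_cut}, identifying these with the image of the exterior intersection pairing) forces linear independence of the $[N(\e^*),\e^*]$, and these are manifestly independent of the $\log(\delta_v)$ and $\log(\delta_e)$ since the latter are primitive generators in different "directions"; second, since $\gr^\W_{-2}\V$ is obtained from a subspace of $\gr^\M_{-2}\gr^\W_{-2}\oHAlg$ by quotienting by the one-dimensional span of the surface relation $\sigma$ (cf.\ Construction~\ref{cons:HAlg} and Definition~\ref{def:V}), at most one nontrivial relation survives, and $\int 1\,\d\mu_2=0$ (total mass $0$) provides precisely that relation, namely $\sum_{\e}\length(\e)[N(\e^*),\e^*]+\sum_{g(v)>0}\log(\delta_v)+\sum_{e\in\HEdge{\Graph}}\log(\delta_e)=0$.

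Finally, with this basis-with-one-relation description in hand, the vanishing of $\int\Phi\,\d\mu_2=\sum_{\e}\length(\e)[N(\e^*),\e^*]\bigl(\int\Phi\,\d\mu_\e\bigr)+\sum_{g(v)>0}\log(\delta_v)\,\Phi(v)+\sum_{e\in\HEdge{\Graph}}\log(\delta_e)\,\Phi(\source(e))$ is equivalent to saying that the scalar coefficients $\int\Phi\,\d\mu_\e$, $\Phi(v)$ and $\Phi(\source(e))$ all coincide, i.e.\ are all equal to a single constant $\kappa$. Since $\Phi$ is only determined up to an additive constant, I can normalise so that $\kappa=0$, which yields exactly the stated condition. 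I would also remark explicitly that $\int\Phi\,\d\mu_\e=\frac1{\sum_{e\in\e}\length(e)}\sum_{e\in\e}\int\Phi|\d s_e|$ by definition of $\mu_\e$, matching the displayed formula in the statement. The only genuinely delicate point is the linear independence/single-relation argument; everything else is bookkeeping with the definitions already set up in \S\ref{s:harmonic_analysis} and \S\ref{s:computation}.
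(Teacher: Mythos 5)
Your proposal is correct and follows essentially the same route as the paper's proof: reduce via Theorem~\ref{thm:description_of_graph_kummer} to the vanishing of $\int\Phi\,\d\mu_2$, regroup $\mu_2$ over maximal cut systems using Remark~\ref{rmk:cut_pairs_and_cohomology}, establish linear independence of the coefficients up to the single total-mass-zero relation via Lemma~\ref{lem:exterior_intersection_max_cut} and the one-dimensionality of the surface relation in $\oHAlg$, and then read off the common-constant condition and normalise $\kappa=0$. The one step you flag as delicate is handled exactly as in the paper, so there is nothing substantive to add.
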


For the reader's convenience, we collect a few facts about the function $\Phi$ appearing in Lemma~\ref{lem:harmonic_condition}. For details, see Remark~\ref{rmk:compare_laplacians}, Lemma~\ref{lem:laplacian_nearly_iso}, Remark~\ref{rmk:stabilised_laplacians}, Proposition~\ref{prop:bounds_on_potentials} and Definition~\ref{def:poly_laplacian} in \S\ref{s:harmonic_analysis}.

\begin{remark}[Properties of $\Phi$]\label{rmk:Phi}
The function $\Phi$ from Lemma~\ref{lem:harmonic_condition} is given by $\Phi(v)=\langle y-x,v-x\rangle+\text{const}$. It is a piecewise affine function on $\rGraph$, constant on half-edges not containing $x$ or $y$ in their interior, and independent of subdivision of $\rGraph$. It satisfies the inequality $\Phi(x)\leq\Phi(v)\leq\Phi(y)$, and for any rational vertex $v$ satisfies the harmonicity condition\[\sum_{\source(e)=v}\Phi_e'(0)=\begin{cases}1&\text{if $v=x$,}\\-1&\text{if $v=y$,}\\0&\text{otherwise,}\end{cases}\]where $\Phi_e'(0)$ denotes the outgoing derivative of $\Phi$ with respect to arc length along an edge or half-edge $e$.
\end{remark}

\subsection{Reduction to $2$-connected graphs}

As a first application of our translation of the vanishing of non-abelian Kummer maps into the language of harmonic analysis, we prove that we can reduce the problem to the case of $2$-connected graphs.

\begin{lemma}\label{lem:reduction_to_2-conn_cpts}
Suppose that $\rGraph$ is a semistable reduction graph and $x,y$ two distinct rational vertices of $\rGraph$. Fix any integer $n\geq2$.
\begin{enumerate}
	\item If $\classifyleq n(x)=\classifyleq n(y)$, then $x$ and $y$ lie in the same $2$-connected component $\Cpt$ of $\rGraph$. (In particular, neither $x$ nor $y$ lie in the interior of a bridge or half-edge.)
	\item Suppose that $x$ and $y$ lie in the same $2$-connected component $\Cpt$ of $\rGraph$. If $\rCpt$ denotes the (semistable) reduction graph corresponding to $\Cpt$ as in \S\ref{ss:blocks}, then $\classifyleq n(x)=\classifyleq n(y)$ if and only if $\classifyleq n^{\rCpt}(x)=\classifyleq n^{\rCpt}(y)$, where $\classify^{\rCpt}\colon\QVert{\Cpt}\rightarrow\V(\rCpt)$ denotes the non-abelian Kummer map associated to $\rCpt$.
\end{enumerate}
\begin{proof}
The second part is an immediate consequence of Lemma~\ref{lem:block_reduction} and Corollary~\ref{cor:block_injectivity}. For the first part, assume for contradiction that $\classifyeq2(x)=\classifyeq2(y)$ but that $x$ and $y$ do not lie in the same $2$-connected component of $\rGraph$. We let $\Phi\colon\QVert{\Graph}\rightarrow\Q$ denote the function from Lemma~\ref{lem:harmonic_condition}, normalised so that $\kappa=0$.

Subdividing $\rGraph$ if necessary, we may assume $x$ and $y$ are vertices of $\rGraph$ not lying in the same block of $\rGraph$. Hence there is a cutvertex $u$ separating $x$ from $y$, i.e.\ $x$ and $y$ lie in different connected components of $\Graph\setminus\{u\}$. Suppose without loss of generality that $\Phi(u)\geq0$. We have $\Phi(v)=\langle y-u,v-u\rangle-\langle x-u,v-u\rangle+\Phi(u)$, and so by Proposition~\ref{prop:bounds_on_potentials} we have $\Phi(v)>0$ for $v$ in the same connected component of $\Graph\setminus\{u\}$ as $y$. We thus find from Lemma~\ref{lem:harmonic_condition} that the component of $\Graph\setminus\{u\}$ containing $y$ contains no non-bridge edges, half-edges or vertices of genus $>0$. This contradicts semistability of $\rGraph$.
\end{proof}
\end{lemma}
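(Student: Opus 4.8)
\textbf{Plan of proof for Lemma~\ref{lem:reduction_to_2-conn_cpts}.}

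The second part is essentially immediate: Lemma~\ref{lem:block_reduction} produces a commuting square relating $\classify^{\rCpt}$ to $\classify^{\rGraph}$ via the pushforward $\iota_*\colon\V(\rCpt)\rightarrow\V(\rGraph)$, and Corollary~\ref{cor:block_injectivity} says this $\iota_*$ is injective (since $\rGraph$, hence $\rCpt$, is semistable). Thus $\classifyleq n(x)=\classifyleq n(y)$ if and only if $\classifyleq n^{\rCpt}(x)=\classifyleq n^{\rCpt}(y)$; the reduction to finite level $n$ is harmless since $\iota_*$ is $\W$-graded. So the only real work is the first part, and here it suffices to treat $n=2$: if $\classifyleq n(x)=\classifyleq n(y)$ for some $n\geq2$ then in particular $\classifyeq2(x)=\classifyeq2(y)$, so proving the claim for $\classifyeq2$ covers all $n\geq2$.

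For the first part, the plan is a contradiction argument using the harmonic-analytic reformulation in Lemma~\ref{lem:harmonic_condition}. Assume $\classifyeq2(x)=\classifyeq2(y)$ but that $x,y$ do not lie in a common $2$-connected component. After subdividing $\rGraph$ (harmless by Remark~\ref{rmk:stabilised_laplacians}) we may take $x,y$ to be genuine vertices lying in distinct blocks of $\rGraph$; by the block--cutvertex decomposition (Proposition~\ref{prop:bc_decomp}) there is then a cutvertex $u$ separating $x$ from $y$, i.e.\ $x$ and $y$ lie in different connected components of $\Graph\setminus\{u\}$. Let $\Phi\in\Omlog(\Graph)$ be the function of Lemma~\ref{lem:harmonic_condition} with $\Lapl(\Phi)=y-x$, normalised so that the constant $\kappa$ of that lemma is $0$; using Remark~\ref{rmk:Phi} we may write $\Phi(v)=\langle y-u,v-u\rangle-\langle x-u,v-u\rangle+\Phi(u)$. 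Replacing $\Phi$ by $-\Phi$ and swapping the roles of $x$ and $y$ if necessary, we may assume $\Phi(u)\geq0$. Then for any vertex $v$ in the same component of $\Graph\setminus\{u\}$ as $y$, Proposition~\ref{prop:bounds_on_potentials} gives $\langle y-u,v-u\rangle>0$ and $\langle x-u,v-u\rangle=0$ (since $x$ and $v$ lie in different components of $\Graph\setminus\{u\}$), so $\Phi(v)>0$ on all of that component.

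Now Lemma~\ref{lem:harmonic_condition} (with $\kappa=0$) forces $\Phi$ to vanish at every vertex of genus $>0$, at every endpoint of a half-edge, and to have vanishing average $\frac1{\length(\e)}\sum_{e\in\e}\int\Phi\,|\d s_e|$ over every maximal cut system $\e$. Since $\Phi$ is piecewise affine and strictly positive throughout the component $\Graph_y$ of $\Graph\setminus\{u\}$ containing $y$ (including at $u$ if $\Phi(u)=0$ one restricts to the open part, but strict positivity on a nonempty set suffices), such a cut system or genus/half-edge constraint living inside $\Graph_y$ is impossible: the average of a strictly positive function is strictly positive, and $\Phi>0$ at the relevant vertices. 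Hence $\Graph_y$ contains no non-bridge edges, no half-edges, and no vertices of genus $>0$. But that means every vertex of $\Graph_y$ has genus $0$ and meets only bridges (and possibly the cutvertex $u$); a short argument — $\Graph_y$ is a tree with no half-edges, so its non-$u$ leaves have degree $1$ and genus $0$ — shows $\rGraph$ fails the semistability inequality $2g(v)+\deg(v)\geq2$ at such a leaf, contradicting the hypothesis that $\rGraph$ is semistable. (One must check $\Graph_y$ is nonempty and contains a vertex $\neq u$: this holds because $y\in\Graph_y$ and $y\neq u$.)

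\textbf{Main obstacle.} The delicate point is not the harmonic-analysis input — Lemma~\ref{lem:harmonic_condition} packages it cleanly — but rather being careful with the combinatorial bookkeeping: ensuring the maximal cut systems, half-edges and higher-genus vertices produced by the failure of semistability actually lie \emph{inside} the component $\Graph_y$ (rather than straddling the cutvertex $u$), and handling the degenerate cases where $\Graph_y$ is a single edge or $x$ or $y$ sits in the interior of a bridge or half-edge. The cleanest way to dispatch these is to first subdivide so that $x$ and $y$ are vertices, observe that an interior point of a bridge or half-edge is automatically separated by a cutvertex from any other block, and then run the strict-positivity argument; the semistability hypothesis is exactly what is needed to rule out the resulting ``spindly'' component.
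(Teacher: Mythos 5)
Your proposal is correct and follows essentially the same route as the paper: part (2) via Lemma~\ref{lem:block_reduction} together with Corollary~\ref{cor:block_injectivity}, and part (1) by reducing to $\classifyeq2$, invoking Lemma~\ref{lem:harmonic_condition} with $\kappa=0$, separating $x$ from $y$ by a cutvertex $u$, using Proposition~\ref{prop:bounds_on_potentials} to get strict positivity of $\Phi$ on the component of $y$ in $\Graph\setminus\{u\}$, and then contradicting semistability. The small bookkeeping points you flag (cut systems not straddling $u$, existence of a genus-$0$ degree-$1$ leaf) are handled at the same level of terseness in the paper's own proof and your sketch of them is sound.
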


\subsection{Partial injectivity in weight $-2$}
\label{ss:partial_injectivity_weight_-2}

In light of Lemma~\ref{lem:reduction_to_2-conn_cpts}, in order to prove the second point of Theorem~\ref{thm:combinatorial_injectivity}, it suffices to consider only reduction graphs $\rGraph=\Graph$ satisfying the following property.
\begin{equation}\tag{$\ast$}\label{condn:2-conn}
\strut\parbox{\dimexpr\linewidth-4.25cm}{$\Graph$ is stable and its block--cutvertex decomposition consists of a single $2$-connected component and some number (possibly zero) of half-edges with distinct endpoints.}\strut
\end{equation}
Such a graph necessarily has $g(v)=0$ for all vertices $v$.

In working such graphs, we will routinely use the following facts about their isometric involutions, most of which are a straightforward consequence of the functoriality of our constructions under isomorphisms.

\begin{proposition}[Properties of involutions]\label{prop:iota}
Let $\Graph$ be a graph satisfying condition \condn{}, and let $\iota$ be an isometric involution of $\Graph$ such that the quotient $\Graph/\iota$ is a tree. Pick any $\iota$-fixed $b\in\QVert{\Graph}$ (such a point exists). Then:
\begin{itemize}
	\item the maximal cut systems in $\Graph$ have size $\leq2$ -- the maximal cut system containing an edge $e$ is $\{e,\iota(e)^{-1}\}$;
	\item the natural action $\iota_*$ of $\iota$ on $\gr^\W_{-n}\HAlg$ (resp.\ $\gr^\W_{-n}\LAlg$, resp.\ $\gr^\W_{-n}\V$) is multiplication by $(-1)^n$;
	\item if $\classify$ denotes the non-abelian Kummer map based at $b$, then $\classify\circ\iota=\iota_*\circ\classify$;
	\item for any $x\in\QVert{\Graph}$, the function $\Phi(z):=\langle z-b,\iota(x)-x\rangle$ satisfies:
	\begin{itemize}
		\item $\Phi(\iota(z))=-\Phi(z)$;
		\item $\Phi(z)=0$ if and only if $z$ is $\iota$-fixed; and
		\item $\Phi$ is not constant on any edge of $\Graph$.
	\end{itemize}
\end{itemize}
\end{proposition}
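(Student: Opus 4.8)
The plan is to verify the four bulleted properties in turn, using functoriality of all the constructions under graph isomorphisms together with the properties of the height pairing recalled in \S\ref{s:harmonic_analysis}. First, for the existence of an $\iota$-fixed point: since $\Graph/\iota$ is a tree and a tree has a fixed point under any involution (a central vertex or edge-midpoint), lifting back shows $\iota$ fixes some point of $\Graph$; rationally metrised graphs being what they are, this point can be taken in $\QVert{\Graph}$ after subdivision. For the structure of maximal cut systems: since $\Graph/\iota$ is a tree, every edge $e$ of $\Graph$ becomes a bridge in the quotient, hence removing the pair $\{e,\iota(e)\}$ disconnects $\Graph$ — more precisely $e^* = \iota^*(e)^*$ in $\H^1(\Graph)$ by Remark~\ref{rmk:cut_pairs_and_cohomology}, so $(e,\iota(e)^{-1})$ is a cut pair. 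Conversely a cut pair $(e_0,e_1)$ must satisfy $e_0^* = e_1^*$, and one checks that the only non-bridge edge with the same cohomology class as $e$ in a graph satisfying \condn{} whose $\iota$-quotient is a tree is $\iota(e)^{\pm1}$, using that $\Graph$ is $2$-connected so has no bridges. This gives the size-$\leq 2$ claim and identifies the maximal cut system through $e$ as $\{e,\iota(e)^{-1}\}$.

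Next, the sign of $\iota_*$ on graded pieces. Because $\Graph/\iota$ is a tree, $\iota$ acts on $\H_1(\Graph)$ by $-1$: every cycle $\gamma$ maps to $-\gamma$ since its image in $\H_1(\Graph/\iota)=0$ must come from the $(-1)$-eigenspace (more carefully, $\gamma + \iota_*\gamma$ pushes forward to a cycle in the tree $\Graph/\iota$, hence is null-homologous upstairs, being a symmetric combination). Dually $\iota$ acts by $-1$ on $\H^1(\Graph)$. Now $\gr^\W_\bullet\LAlg$ is generated, as a $\W$-graded Lie algebra, by $\H^1(\Graph)$ in $\W$-degree $-1$, by $\H_1(\Graph)$ in $\W$-degree $-1$, and (under \condn{}, where $g(v)=0$ everywhere) by the $\alog(\delta_e)$ for half-edges $e$ in $\W$-degree $-2$; on the half-edge generators $\iota_*$ acts by $+1 = (-1)^2$ since $\iota$ fixes each half-edge (distinct endpoints). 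An easy induction on $\W$-degree using that the Lie bracket is additive in $\W$-degree then gives $\iota_* = (-1)^n$ on $\gr^\W_{-n}\LAlg$, hence on $\gr^\W_{-n}\HAlg$ (generated by $\LAlg$) and on the subquotient $\gr^\W_{-n}\V$.

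The third bullet, $\classify\circ\iota = \iota_*\circ\classify$, is pure functoriality: the isomorphism $\iota$ of reduction graphs induces compatible isomorphisms of everything in sight — the fundamental groupoid $\pi_1^\Q(\Graph)$, the duality isomorphism $\Hiso$ of Theorem~\ref{thm:c-k_duality} (canonical paths go to canonical paths since $\Hiso$ is natural), the isomorphism of Theorem~\ref{thm:description_of_M-graded}, and the monodromy operator $N$ — so since $\classify(x) = \Hiso N(\gamma_{b,x}^\can)$ with $b$ fixed by $\iota$, applying $\iota$ throughout gives $\classify(\iota x) = \iota_* \Hiso N(\gamma_{b,x}^\can) = \iota_*\classify(x)$. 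For the last bullet, write $\Phi(z) = \langle z-b, \iota(x)-x\rangle$. The relation $\Phi(\iota z) = -\Phi(z)$ follows from $\iota$-invariance of the height pairing together with $\iota(b)=b$ and $\iota$ swapping the roles of $x,\iota(x)$ (so $\iota$ sends $\iota(x)-x$ to its negative). The equivalence ``$\Phi(z)=0 \iff z$ is $\iota$-fixed'' is the substantive point, and this is where I expect the main work: the forward direction is Proposition~\ref{prop:bounds_on_potentials} applied with the cutvertex/separation structure — if $z$ is not $\iota$-fixed, then $z$ and $\iota(z)$ are separated by some $\iota$-fixed point $u$ (using that $\Graph/\iota$ is a tree, so the image of $z$ is separated from itself... more precisely one locates a fixed point on the geodesic from $z$ to $\iota z$), and then $\Phi(z) = \langle z-u,\iota x - x\rangle + \Phi(u) = \langle z-u,\iota x - x\rangle$ is nonzero by the strict inequality in Proposition~\ref{prop:bounds_on_potentials} since $z$ and exactly one of $x,\iota(x)$ lie on the same side of $u$. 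The reverse direction is immediate from $\Phi(\iota z) = -\Phi(z)$. Finally $\Phi$ is non-constant on every edge: if it were constant on an edge $e$ then, by the harmonicity/Laplacian description of $\Phi$ (Remark~\ref{rmk:Phi}, $\Lapl(\Phi) = \iota(x)-x$), $\Phi$ would be constant on the whole $2$-connected component containing $e$, forcing $\iota(x)=x$, contradicting $x \neq \iota(x)$ (which we may assume, else there is nothing to prove). The main obstacle is thus the careful separation argument underpinning the ``$\Phi(z)=0 \iff z$ $\iota$-fixed'' equivalence; everything else is functoriality or a direct appeal to the circuit-theoretic inequalities of \S\ref{ss:on_divisors}.
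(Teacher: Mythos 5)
Your handling of the second and third bullets is fine and is exactly the routine functoriality the paper has in mind: the chain-level observation that an $\iota$-invariant cycle pushes forward injectively to the tree $\Graph/\iota$, so $\iota_*=-1$ on $\H_1(\Graph)$, together with the fact that $\LAlg$ is generated in $\W$-degrees $-1$ and $-2$, does give the sign $(-1)^n$, and $\classify\circ\iota=\iota_*\circ\classify$ is pure naturality. The genuine gaps are in the two combinatorial assertions. In the last bullet, your forward implication rests on the claim that a non-fixed $z$ is separated from $\iota(z)$ by a single $\iota$-fixed point $u$; under \condn{} the graph is $2$-connected, so no single vertex or interior point of an edge separates anything (already in the $2$-banana no point separates a point of one edge from its mirror image), and in any case Proposition~\ref{prop:bounds_on_potentials} concerns pairings of the shape $\langle x-u,v-u\rangle$, not $\langle z-u,\iota(x)-x\rangle$, so the strictness you invoke is not available. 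A workable route is global rather than local: show that $\mathrm{Fix}(\iota)$ disconnects $\Graph$ into exactly two pieces interchanged by $\iota$ (the complement of $\mathrm{Fix}(\iota)$ maps onto the complement of its image in the tree $\Graph/\iota$ as a two-sheeted covering of a simply connected space), and then apply the maximum principle to $\Phi$, which is harmonic away from $x,\iota(x)$ and vanishes on $\mathrm{Fix}(\iota)$; this yields $\Phi>0$ on one piece and $\Phi<0$ on the other. Similarly, your inference that constancy of $\Phi$ on one edge forces constancy on the whole $2$-connected component is not a valid principle for harmonic functions on graphs (a balanced Wheatstone bridge is a $2$-connected counterexample); here too the symmetry must enter, e.g.\ constancy on $e$ means no current crosses the cut pair $\{e,\iota(e)\}$, and one then gets a contradiction on the $\iota$-stable side not containing $x,\iota(x)$, which carries no current (so $\Phi$ is constant there) yet contains both a fixed point and endpoints of $e$ and of $\iota(e)$ with differing $\Phi$-values.

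The first bullet is likewise not something ``one checks'' from $2$-connectedness: it is the real content, and it needs both stability and the hypothesis --- implicit in how the proposition is used in Proposition~\ref{prop:partial_inj_for_2-conn_graphs} and Theorem~\ref{thm:combinatorial_injectivity}, but not a consequence of the stated hypotheses and certainly not of ``distinct endpoints'' --- that $\iota$ fixes each half-edge. Concretely, take the $4$-cycle with unit edge lengths and a half-edge at each vertex (this satisfies \condn{}) and let $\iota$ be the reflection through two opposite vertices: the quotient is a tree, yet all four edges lie in a single maximal cut system since they share the same class in $\H^1(\Graph)$ (Remark~\ref{rmk:cut_pairs_and_cohomology}), and $\iota$ swaps two half-edges, so your second bullet would also fail in $\W$-degree $-2$ for this $\iota$. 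Granting the half-edge hypothesis, the converse still requires an argument, for instance via the relative block--cutvertex decomposition (Proposition~\ref{prop:rel_bc_decomp}): $\iota$ acts on the unique cycle of $\bc_\e(\Graph)$ as a reflection with exactly two fixed cells, every relative $2$-connected component must be $\iota$-stable (otherwise the tree quotient acquires a cycle), and every relative cutvertex lying between two consecutive edges of $\e$ has a half-edge by stability and hence is fixed; counting fixed cells then gives $\#\e\le2$ and identifies the system as $\{e,\iota(e)^{-1}\}$. Finally, your justification of the existence of a fixed basepoint is circular ($\iota$ acts trivially on $\Graph/\iota$, so there is nothing to lift); argue instead that a fixed-point-free involution would exhibit $\Graph$ as a connected double cover of the tree $\Graph/\iota$, which does not exist.
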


One direction of the desired implication -- that the existence of a suitable involution implies $\classifyeq2(x)=\classifyeq2(y)$ -- is immediate from these observations.

\begin{corollary}
Let $\Graph$ be a reduction graph satisfying \condn{} and $\iota$ an isometric involution of $\Graph$ fixing the half-edges of $\Graph$ pointwise such that $\Graph/\iota$ is a tree. Then we have\[\classifyeq2(\iota(x))=\classifyeq2(x)\]for all $x\in\QVert{\Graph}$.
\begin{proof}
By Remark~\ref{rmk:n-a_kummer_indept_basepoint} we may assume that the implicit basepoint $b$ in the definition of the non-abelian Kummer map $\classify$ is fixed by $\iota$. By Proposition~\ref{prop:iota} the induced action of $\iota$ on $\gr^W_{-2}\V$ is trivial, and hence $\classifyeq2(\iota(x))=\classifyeq2(x)$ as desired.
\end{proof}
\end{corollary}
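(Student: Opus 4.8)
The final statement to prove is the Corollary: for a reduction graph $\Graph$ satisfying condition \condn{} with an isometric involution $\iota$ fixing the half-edges pointwise and such that $\Graph/\iota$ is a tree, we have $\classifyeq2(\iota(x))=\classifyeq2(x)$ for all $x\in\QVert{\Graph}$.

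The plan is to reduce this to two ingredients already set up in the excerpt: the basepoint-independence of the non-abelian Kummer map and the functoriality of the whole combinatorial package under isometries. First I would invoke Remark~\ref{rmk:n-a_kummer_indept_basepoint}, which says that changing the basepoint only shifts $\classify$ by an additive constant; combined with Proposition~\ref{prop:iota}, which asserts that an $\iota$-fixed point $b\in\QVert{\Graph}$ exists, I may assume without loss of generality that the implicit basepoint in the definition of $\classify$ is $\iota$-fixed. Next I would apply the equivariance statement $\classify\circ\iota=\iota_*\circ\classify$ from Proposition~\ref{prop:iota}, which holds precisely because the basepoint is $\iota$-fixed: all the structures ($\pi_1^\Q(\rGraph)$, the $\W$- and $\M$-filtrations, the monodromy operator $N$, the canonical paths $\gamma_{b,x}^\can$, and the isomorphism $\Hiso$ of Theorem~\ref{thm:description_of_M-graded}) are functorial under the isometry $\iota$, so $\iota$ acts compatibly on source and target of $\classify$ and intertwines the map.

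Then the key point is the computation of the action of $\iota_*$ on the graded piece $\gr^\W_{-2}\V$. By Proposition~\ref{prop:iota}, the natural action $\iota_*$ of $\iota$ on $\gr^\W_{-n}\V$ (and on $\gr^\W_{-n}\HAlg$, $\gr^\W_{-n}\LAlg$) is multiplication by $(-1)^n$; in particular, for $n=2$ it is the identity. Combining this with the equivariance from the previous step gives
\[
\classifyeq2(\iota(x)) = \iota_*\bigl(\classifyeq2(x)\bigr) = (-1)^2\classifyeq2(x) = \classifyeq2(x),
\]
which is exactly the claim.

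I do not expect any serious obstacle here, since this Corollary is a direct packaging of the functoriality inputs already proved. The only point requiring a little care is making sure the basepoint reduction is legitimate: the equivariance $\classify\circ\iota=\iota_*\circ\classify$ in Proposition~\ref{prop:iota} is stated with respect to a non-abelian Kummer map based at an $\iota$-fixed point $b$, so one must first make that normalization, and then note (via Remark~\ref{rmk:n-a_kummer_indept_basepoint}) that the validity of the identity $\classifyeq2(\iota(x))=\classifyeq2(x)$ is unaffected by the choice of basepoint, since changing the basepoint alters $\classifyeq2$ by a fixed constant which cancels on both sides. If anything, the subtlety worth flagging is that while $\classify$ itself depends on the basepoint only up to an additive constant in $\V$, the graded component $\classifyeq2$ inherits the same property, so the argument is clean.
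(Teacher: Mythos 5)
Your proposal is correct and follows essentially the same route as the paper: normalise the basepoint to be $\iota$-fixed via Remark~\ref{rmk:n-a_kummer_indept_basepoint}, then combine the equivariance $\classify\circ\iota=\iota_*\circ\classify$ with the fact from Proposition~\ref{prop:iota} that $\iota_*$ acts as $(-1)^2=1$ on $\gr^\W_{-2}\V$. The extra care you take about how the basepoint shift cancels is fine but not needed beyond what the paper states.
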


Proving the converse direction is considerably more fiddly, so we begin with a special case.

\begin{proposition}\label{prop:depth_2_bananas}
Let $\Graph$ be a reduction graph satisfying condition \condn{} with two distinct rational vertices $x,y$ satisfying $\classifyeq2(x)=\classifyeq2(y)$. Suppose moreover that every maximal cut system in $\Graph$ has size $1$. Then either:
\begin{itemize}
    \item $\Graph$ is a $k$-banana graph\footnote{Recall that the \emph{$k$-banana graph} is the graph with two vertices connected by $k$ parallel edges.} for some $k\geq3$, has no half-edges, and $x$ and $y$ are mapped to one another under the banana involution; or
    \item $\Graph$ has a single vertex, edge and half-edge, and $x$ and $y$ are mapped to one another under the unique involution of $\Graph$ inverting its edge.
\end{itemize}
\begin{proof}
Take a function $\Phi$ as in Lemma~\ref{lem:harmonic_condition}, normalised so that $\kappa=0$. Recall from Remark~\ref{rmk:Phi} that $\Phi(x)<0<\Phi(y)$.

Consider the graph $\Graph_0$ formed by removing from $\Graph$ the $\leq2$ edges containing $x$ or $y$ in their interior; this graph is connected (since $\Graph$ is bridgeless and has no cut pairs). Since $\Phi$ is linear on edges of $\Graph_0$ with average value $0$ on each edge, there are two cases: either $\Phi=0$ on $\Graph_0$; or $\Graph_0$ is bipartite, taking values $\pm\alpha\neq0$ on the two vertex-classes.

Let us say that a vertex $v$ is \emph{$y$-like} if it is either equal to $y$ or the endpoint of an edge $e$ containing $y$ in its interior such that $x$ does not lie in the interval $[v,y]\subseteq e$. Note that vertices $v$ with $\Phi(v)>0$ are necessarily $y$-like, by harmonicity. We define \emph{$x$-like} similarly.

Now we claim that $\Graph$ has exactly one $y$-like vertex $v$ (resp.\ $x$-like vertex $u$). Indeed, suppose that $v$ and $v'$ are different $y$-like vertices, so that they are the endpoints of an edge $e$ containing $y$ but not $x$ in its interior. In order that the average value of $\Phi$ on $e$ be $0$, we must have $\Phi(v)=\Phi(v')<0$. Hence by harmonicity, $v$ and $v'$ must also be $x$-like, and so $\Phi(v)=\Phi(v')>0$ by the same argument. This gives a contradiction.

If now $u=v$, then $\Graph$ must contain at least one loop $e$ based at $u=v$ (with $x$ or $y$ or both in its interior). It then follows from condition \condn{} that $\Graph$ consists exactly of this loop together with a single half-edge with source $u=v$, and that both $x$ and $y$ lie in the interior of $e$.

If instead $u\neq v$, then by harmonicity the values $\pm\alpha$ of $\Phi$ attained at vertices must be non-zero. It follows that $u$ and $v$ are the only vertices of $\Graph$, and hence that $\Graph_0$ is a banana graph. By $2$-connectedness, we see that $\Graph$ itself is a banana graph (it has no half-edges since $\Phi$ doesn't vanish at vertices) and hence has $\geq3$ edges by stability. Note that $x$ and $y$ are both contained in a single edge $e$ of $\Graph$, and that either $\{x,y\}=\{u,v\}$ or both $x$ and $y$ lie in the interior of $e$.

It remains to show in these two cases that $x$ and $y$ are laid out symmetrically on $e$ -- we may assume that $\source(e)=u$, and that both $x$ and $y$ lie in the interior of $e$, with $x$ before $y$.  Note that in both cases we have $\Phi(u)+\Phi(v)=0$ and $\Phi'_e(0)+\Phi'_{e^{-1}}(0)=0$ by harmonicity (see Remark~\ref{rmk:Phi}). If $x$ and $y$ lie a distance $s_x$ and $\length(e)-s_y$ along $e$, then we compute\[\int_e\Phi|\d s_e|=\frac{2\Phi(u)+\length(e)\Phi_e'(0)}2(s_x-s_y).\]Since $\Phi(u)\leq0$ and $\Phi'_e(0)<0$, we thus have $s_x=s_y$, so that $x$ and $y$ are laid out symmetrically on $e$, as desired.
\end{proof}
\end{proposition}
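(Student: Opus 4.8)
The plan is to use the harmonic-analytic criterion from Lemma~\ref{lem:harmonic_condition}. Given distinct rational vertices $x,y$ with $\classifyeq2(x)=\classifyeq2(y)$, we fix a function $\Phi\in\Omlog(\Graph)$ with $\Lapl(\Phi)=y-x$, normalised so that the constant $\kappa$ appearing in the lemma is $0$; thus $\int_e\Phi\,|\d s_e|=0$ for every edge $e$ in a size-$1$ maximal cut system (which by hypothesis means \emph{every} non-bridge edge, hence every edge since $\Graph$ is $2$-connected), and $\Phi$ vanishes at every half-edge endpoint. We also record from Remark~\ref{rmk:Phi} the harmonicity of $\Phi$ and the inequality $\Phi(x)\leq\Phi(v)\leq\Phi(y)$ for all $v$, together with $\Phi(x)<\Phi(y)$ since $x\neq y$ (in fact after normalisation $\Phi(x)<0<\Phi(y)$, because $\Phi$ is linear with zero average on every edge, so a non-constant $\Phi$ cannot be single-signed on all of $\Graph$).

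First I would analyse the graph $\Graph_0$ obtained from $\Graph$ by deleting the (at most two) open edges whose interiors contain $x$ or $y$. Because $\Graph$ is $2$-connected and has no cut pairs, $\Graph_0$ is connected. On $\Graph_0$ the function $\Phi$ is affine on each edge with average $0$, so $\Phi$ is in fact linear on each edge of $\Graph_0$, and a connected graph carrying such a function either has $\Phi\equiv0$ on $\Graph_0$ or is bipartite with $\Phi$ taking the two opposite values $\pm\alpha\neq0$ on the two colour classes. Next I would introduce the notions of \emph{$x$-like} and \emph{$y$-like} vertices as in the statement: $v$ is $y$-like if $v=y$ or $v$ is an endpoint of an edge through $y$ with $x\notin[v,y]$. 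Harmonicity of $\Phi$ (Remark~\ref{rmk:Phi}) forces every vertex with $\Phi(v)>0$ to be $y$-like and every vertex with $\Phi(v)<0$ to be $x$-like. The key combinatorial step is then to show $\Graph$ has exactly one $y$-like vertex $v$ and exactly one $x$-like vertex $u$: if $v,v'$ were distinct $y$-like vertices joined by an edge $e$ through $y$, the zero-average condition on $e$ gives $\Phi(v)=\Phi(v')$, and if this common value were positive we contradict $y$-likeness being the only source of positivity at a \emph{second} vertex, while harmonicity at $v$ and $v'$ then forces them to be $x$-like too, giving $\Phi(v)=\Phi(v')>0$ as well — a contradiction. (I expect this bookkeeping with the signs and the roles of $x,y$ to be the main obstacle: one has to carefully use both zero-average on edges and the harmonicity/extremality of $\Phi$, and track which of $x,y$ lies on which edge.)

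With uniqueness of $u$ and $v$ in hand I would split into the two cases $u=v$ and $u\neq v$. If $u=v$, then since $\Graph_0$ has a single vertex, the edges of $\Graph$ other than the ones through $x$ and $y$ are loops at $u$; stability \condn{} then forces $\Graph$ to be exactly one loop $e$ carrying both $x$ and $y$ in its interior, plus one half-edge at $u$, which is the second alternative. If $u\neq v$, then harmonicity forbids $\Phi$ from vanishing at $u$ or $v$, so $\alpha\neq0$, so $u$ and $v$ are the \emph{only} vertices of $\Graph$; hence $\Graph_0$ is a banana graph, $\Graph$ itself has no half-edges (as $\Phi$ does not vanish at vertices, contradicting the half-edge condition), and by stability it is a $k$-banana with $k\geq3$. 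In both cases $x$ and $y$ lie on a single edge $e$; it remains only to locate them symmetrically. For this I would use harmonicity to get $\Phi(u)+\Phi(v)=0$ and $\Phi_e'(0)+\Phi_{e^{-1}}'(0)=0$, compute $\int_e\Phi\,|\d s_e|=\tfrac12\bigl(2\Phi(u)+\length(e)\Phi_e'(0)\bigr)(s_x-s_y)$ directly from the piecewise-affine shape of $\Phi$ (it is affine on $[source(e),x]$, on $[x,y]$ with a jump in slope accounting for the mass at $x$, etc.), observe that $\Phi(u)\leq0$ and $\Phi_e'(0)<0$ so the prefactor is nonzero, and conclude $s_x=s_y$ from the vanishing of the integral. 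This exhibits the required involution of $\Graph$ (the banana involution, resp.\ the edge-inverting involution) interchanging $x$ and $y$, completing the proof.
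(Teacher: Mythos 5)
Your proposal is correct and follows essentially the same route as the paper's own proof: the normalisation of $\Phi$ via Lemma~\ref{lem:harmonic_condition}, the analysis of $\Graph_0$, the uniqueness of the $x$-like and $y$-like vertices, the case split $u=v$ versus $u\neq v$, and the final integral computation forcing $s_x=s_y$ all match the paper's argument. No substantive differences or gaps to report.
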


\begin{proposition}\label{prop:partial_inj_for_2-conn_graphs}
Let $\Graph$ be a reduction graph satisfying condition \condn{} with two distinct rational vertices $x,y$ such that $\classifyeq2(x)=\classifyeq2(y)$. Then there is an isometric involution $\iota$ of $\Graph$ taking $x$ to $y$, fixing every half-edge of $\Graph$, and such that the quotient $\Graph/\iota$ is a tree.
\begin{proof}
We proceed by strong induction based on the \emph{Euler characteristic}\[\chi(\Graph)=2-2g(\Graph)-\#\HEdge{\Graph}\]of $\Graph$; in other words, we will prove the existence of the desired involution $\iota$ on $\Graph$ under the assumption that the result holds for all reduction graphs $\Graph'$ satisfying condition \condn{} and with $\chi(\Graph')>\chi(\Graph)$.

Suppose firstly that $\Graph$ has a half-edge $e$. If $\Graph$ consists of a single loop together with $e$ then we are done by~\ref{prop:depth_2_bananas}; otherwise we write $\Graph'$ for the stabilisation of the graph formed by contracting $e$ to a point as in \S\ref{ss:half-edge_removal}. We write $\rho\colon\QVert{\Graph}\rightarrow\QVert{\Graph'}$ for the contraction map, so that $\rho(x)=x$ and $\rho(y)=y$ since neither $x$ nor $y$ lie in the interior of $e$ by Lemma~\ref{lem:reduction_to_2-conn_cpts}. We see that $\chi(\Graph')>\chi(\Graph)$ and $\Graph'$ satisfies condition \condn{}, so that by inductive assumption there is an isometric involution $\iota$ of $\Graph'$ taking $x$ to $y$, fixing every half-edge of $\Graph'$, and such that the quotient $\Graph'/\iota$ is a tree.

Let $\Phi'\colon\QVert{\Graph'}\rightarrow\Q$ and $\Phi\colon\QVert{\Graph}\rightarrow\Q$ be the functions in Lemma~\ref{lem:harmonic_condition} for $\Graph'$ and $\Graph$ respectively, normalised so that $\kappa=0$ and similarly for $\Graph'$; we claim that $\Phi=\Phi'\circ\rho$. Indeed, we see that $\Lapl(\Phi'\circ\rho)=y-x$, so that $\Phi$ and $\Phi'\circ\rho$ differ by a constant which the normalisation conditions imply is~$0$.

In particular, we deduce that $\Phi'$ vanishes at the endpoint of $e$, so that this endpoint is fixed by $\iota$ (see Proposition~\ref{prop:iota}), and hence $\iota$ extends to an involution of $\Graph$ fixing $e$ pointwise. This satisfies all the desired properties.

\smallskip

It remains to consider the case that $\Graph$ has no half-edges, so $2$-connected. If all maximal cut systems in $\Graph$ have size $1$, then we are done by Proposition~\ref{prop:depth_2_bananas}, so we may suppose that $\Graph$ has a maximal cut system $\e$ of size $\geq2$. There are two possibilities:
\begin{enumerate}
	\item there is a $2$-connected component $\Cpt$ of $\Graph$ relative to $\e$ containing neither $x$ nor $y$ in its interior; or
	\item there are exactly two $2$-connected components of $\Graph$ relative to $\e$, and $x$ and $y$ lie in these two components.
\end{enumerate}

In fact, the second case cannot occur. To see this, let us write $\Cpt$ for the $2$-connected component of $\Graph$ relative to $\e$ which contains $x$ but not $y$; the relative block--cutvertex decomposition (Proposition~\ref{prop:rel_bc_decomp}) ensures that $\Cpt$ meets the rest of $\Graph$ in exactly two vertices. We write $\Graph'$ for the stabilisation of the graph $\Graph/\Cpt$ formed by contracting $\Cpt$ to an edge as in \S\ref{ss:resistance_reduction} and $\rho\colon\QVert{\Graph}\rightarrow\QVert{\Graph'}$ for the contraction map. We see easily that $\chi(\Graph')>\chi(\Graph)$ and that $\Graph'$ satisfies condition \condn{}. Since $\rho(x)\neq y$ in $\Graph'$, there is an isometric involution $\iota$ of $\Graph'$ interchanging $\rho(x)$ and $y$ such that $\Graph'/\iota$ is a tree.

However, we see from the relative block--cutvertex decomposition that the edge $e$ of $\Graph'$ containing $\rho(\Cpt)$ is a maximal cut system of size $1$ (its complement is the relative $2$-connected component of $\Graph$ containing $y$), and hence is inverted by $\iota$ (see Proposition~\ref{prop:iota}). This is a contradiction since $e$ contains $\rho(x)$ but not $y$ in its interior.

\smallskip

It remains to deal with the first case. Here we write $\Cpt$ for a $2$-connected component of $\Graph$ relative to $\e$ which contains neither $x$ nor $y$ in its interior, writing $\Graph'$ for the stabilisation of $\Graph/\Cpt$ and $\rho\colon\QVert{\Graph}\rightarrow\QVert{\Graph'}$ as before. Once again, we deduce that $\Graph'$ possesses an isometric isomorphism $\iota$ interchanging $x$ and $y$ such that $\Graph'/\iota$ is a tree.

Let $\Phi'\colon\QVert{\Graph'}\rightarrow\Q$ and $\Phi\colon\QVert{\Graph}\rightarrow\Q$ be the functions in Lemma~\ref{lem:harmonic_condition} for $\Graph'$ and $\Graph$ respectively, normalised so that $\kappa=0$ and similarly for $\Graph'$; we claim that $\Phi=\Phi'\circ\rho$. To see this, note that up to a constant, $\Phi'\circ\rho$ is the function $v\mapsto\langle\rho_*(v-x),y-x\rangle_{\Graph'}=\langle v-x,\rho^*(y-x)\rangle_{\Graph}$, where $\rho^*\colon\Div^0(\Graph')\rightarrow\Div^0(\Graph)$ is the adjoint to $\rho_*\colon\Div^0(\Graph)\rightarrow\Div^0(\Graph')$ under the height pairing. It follows from Proposition~\ref{prop:harmonic_quotient} that this map fits into a commuting square
\begin{center}
\begin{tikzcd}
\Q\cdot\mnEdge{\Graph'} \arrow{r}{\rho^*}\arrow{d}{\partial} & \Q\cdot\mnEdge{\Graph} \arrow{d}{\partial} \\
\Div^0(\Graph') \arrow{r}{\rho^*} & \Div^0(\Graph).
\end{tikzcd}
\end{center}
If we choose a path $\gamma$ from $x$ to $y$ in $\Graph'$ not meeting the interior of $e_\Cpt=\rho(\Cpt)$ (and view $\gamma$ as an element of $\Q\cdot\mnEdge{\Graph'}$) then by chasing $\gamma$ around the above square we see $\rho^*(y-x)=\rho^*\partial(\gamma)=\partial\rho^*(\gamma)=\partial(\gamma)=y-x$. Hence $\Phi'\circ\rho$ and $\Phi$ differ by a constant which is easily checked to be~$0$.

Now consider the restriction $\Phi_\Cpt$ of $\Phi=\Phi'\circ\rho$ to $\Cpt$. This is non-constant by Proposition~\ref{prop:iota}, and is easily seen to satisfy $\Lapl(\Phi_\Cpt)=\lambda(w_1-w_0)$ for some $\lambda\in\Q^\times$, where $w_0$ and $w_1$ are the two boundary vertices of $\Cpt$. Moreover, from Remark~\ref{rmk:max_cuts_in_2-conn_cpts} we see that\[\sum_{e\in\e}\int\Phi_\Cpt|\d s_e|=0\]for all maximal cut systems $\e$ in $\Cpt$. Since $\chi(\Cpt)>\chi(\Graph)$ and the stabilisation of $\Cpt$ satisfies condition \condn{}, we deduce from Lemma~\ref{lem:harmonic_condition} (for the function $\lambda^{-1}\Phi$) and our inductive hypothesis that there is also an isometric involution $\iota$ of $\Cpt$ interchanging $w_0$ and $w_1$ such that $\Cpt/\iota$ is a tree\footnote{Technically, this argument fails in the single case that $\Cpt$ is a $2$-banana graph, i.e.\ a loop with two vertices on it, when the stabilisation does not exist. In this case, the existence of the desired involution $\iota$ is obvious.}.

It remains to show that the isometric involutions $\iota$ of $\Graph'$ and $\Cpt$ can be glued to produce an involution of $\Graph$ with the desired properties. The only thing to prove is that the involution $\iota$ of $\Graph'$ interchanges $w_0$ and $w_1$. But the involution on $\Cpt$ ensures that $\Phi(w_0)=-\Phi(w_1)$, so that the midpoint of the segment $\rho(\Cpt)$ is fixed by $\iota$ by Proposition~\ref{prop:iota}. It follows that $w_0$ and $w_1$ are interchanged by $\iota$ on $\Graph'$, as desired.
\end{proof}
\end{proposition}

\subsection{Injectivity in weight $-3$}

To complete the proof of the Injectivity Theorem~\ref{thm:combinatorial_injectivity} we finally address the third point: the injectivity of the non-abelian Kummer map associated to a stable reduction graph $\rGraph$ in weights $\geq-3$. With the strong restrictions on the fibres of $\classifyeq2$ already proved, we only need consider a very limited set of reduction graphs $\rGraph$ -- using the reduction operations in \S\ref{s:graph_ops} we can even reduce this to two particular examples. These examples we deal with by hand, exploiting their natural symmetries to simplify the argument.

\begin{lemma}
Let $\rGraph$ be a stable reduction graph and $x,y$ two distinct rational vertices of $\rGraph$. Then $\classifyleq3(x)\neq\classifyleq3(y)$.
\begin{proof}
Suppose for contradiction that $\classifyleq3(x)=\classifyleq3(y)$. By Lemma~\ref{lem:reduction_to_2-conn_cpts}, $x$ and $y$ lie in the same $2$-connected component of $\rGraph$, and we may assume that $\rGraph=\Graph$ satisfies \condn{}. Since $\classifyeq2(x)=\classifyeq2(y)$, we obtain by Proposition~\ref{prop:partial_inj_for_2-conn_graphs} an isometric involution $\iota$ of $\Graph$ fixing its half-edges pointwise, interchanging $x$ and $y$, and such that the quotient $\Graph/\iota$ is a tree. By a repeated application of the reduction operations in \S\ref{ss:half-edge_removal} and \S\ref{ss:resistance_reduction} (and stabilisation) we obtain a graph $\Graph'$ along with a map\[\rho\colon\QVert{\Graph}\rightarrow\QVert{\Graph'}\]where $\Graph'$ is either a $3$-banana graph with no half-edges or is a single loop with a single half-edge, $\rho(x)\neq\rho(y)$ are exchanged by the evident involution of $\Graph'$, and $\classifyleq3(\rho(x))=\classifyleq3(\rho(y))$ by Lemmas~\ref{lem:half-edge_reduction} and~\ref{lem:resistance_reduction}. It thus suffices to derive a contradiction in the case that $\Graph$ is either of these two graphs and $\iota$ its evident involution, exchanging $x$ and $y$.

\smallskip

We begin with the case that $\Graph$ is a loop $e$ with a single half-edge (so a single vertex $b$, which we take as the basepoint in the definition of the non-abelian Kummer map). Since $\iota_*$ acts as $-1$ on $\gr^\W_{-3}\V$ (see Proposition~\ref{prop:iota}), we have that $\classifyeq3\circ\iota=-\classifyeq3$ as $\Q$-valued functions on $\QVert{\Graph}$. Since by assumption $\classifyeq3(x)=\classifyeq3(y)$, we have see that $\classifyeq3(x)=\classifyeq3(y)=\classifyeq3(b)=0$.

Now pick any functional $\psi\colon\gr^\W_{-3}\HAlg\rightarrow\Q$ such that $\psi(\ad_{e^*}^2(N(e^*)))\neq0$. It follows from Proposition~\ref{prop:leading_coeffs} that the function\[\psi\circ\classifyeqe3e\colon s\mapsto\psi\circ\classifyeq3(v_{e,s})\]is a cubic function in $s$ with $s^3$ coefficient equal to $\frac13\psi(\ad_{e^*}^2(N(e^*)))\neq0$. However, this function vanishes for four values of $s$ (one corresponding to each of $x$ and $y$, and two corresponding to $b$), which is impossible.

\smallskip

It remains to treat the case that $\Graph$ is a $3$-banana graph with no half-edges. We may label its edges as $e_0,e_1,e_2$ such that $\source(e_0)=\source(e_1)=\source(e_2)$ and $x,y$ lie on $e_0$. Choosing a basepoint $b$ at the midpoint of $e_0$, we see just as before that the map $\classifyeq3$ satisfies $\classifyeq3\circ\iota=-\classifyeq3$, and hence by assumption $\classifyeq3(x)=\classifyeq3(y)=0$.

Now $\LAlg$ is the free bigraded Lie algebra generated by $e_0^*$ and $e_1^*$ in bidegree $(-1,0)$ (note that $e_2^*=-e_0^*-e_1^*$) and $N(e_0^*)$ and $N(e_1^*)$ in bidegree $(-1,-2)$, subject to the sole relation\[(\length_0+\length_2)[e_0^*,N(e_0^*)]+(\length_1+\length_2)[e_1^*,N(e_1^*)]+\length_2[e_0^*,N(e_1^*)]+\length_2[e_1^*,N(e_0^*)]=0\]in bidegree $(-2,-2)$, where we write $\length_i:=\length(e_i)$ for short. We now choose a functional $\psi\colon\gr^\W_{-3}\gr^\M_{-2}\LAlg\rightarrow\Q$ such that\[\psi(\ad_{e_0^*}^2(N(e_0^*)))=\psi(\ad_{e_1^*}^2(N(e_1^*)))=0\neq\psi(\ad_{e_2^*}^2(N(e_2^*)));\]for instance we may take the map given by\[\psi(\ad_{e_i^*}\ad_{e_j^*}(N(e_k^*)))=\begin{cases}\length_1+\length_2&\text{if $(i,j,k)=(0,1,0)$,}\\-\length_2&\text{if $(i,j,k)=(0,1,1)$,}\\0&\text{else.}\end{cases}\]

Now the function $\psi\circ\classifyeq3\colon\QVert{\Graph}\rightarrow\Q$ restricts to a cubic function on each edge, with $s^3$ coefficient equal to $\frac13\psi(\ad_{e_i^*}^2(N(e_i^*)))$ on edge $e_i$ by Proposition~\ref{prop:leading_coeffs}. In particular, $\psi\circ\classifyeq3$ is actually quadratic on edges $e_0$ and $e_1$. Since $\psi\circ\classifyeq3\circ\iota=-\psi\circ\classifyeq3$, it is actually linear on these edges. Since $\psi\circ\classifyeq3$ vanishes at $x$ and $y$, it vanishes uniformly on $e_0$, and hence also on $e_1$.

Hence the function $\psi\circ\classifyeq3$ vanishes at both endpoints of $e_2$. Since it is harmonic, it in fact vanishes to order $2$ at these endpoints, which is impossible since it is a non-zero cubic function on $e_2$. This concludes the proof.
\end{proof}
\end{lemma}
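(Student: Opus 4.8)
The goal is to prove that for a stable reduction graph $\rGraph$ and distinct rational vertices $x,y$, we have $\classifyleq3(x)\neq\classifyleq3(y)$. The plan is to assume for contradiction that $\classifyleq3(x)=\classifyleq3(y)$, and then whittle down to a manageable list of graphs using the machinery already developed. First I would invoke Lemma~\ref{lem:reduction_to_2-conn_cpts} to conclude that $x$ and $y$ lie in the same $2$-connected component, so that after passing to that component (using Lemma~\ref{lem:block_reduction} and Corollary~\ref{cor:block_injectivity}) we may assume $\rGraph=\Graph$ satisfies condition~\condn{}. Since $\classifyeq2(x)=\classifyeq2(y)$ in particular, Proposition~\ref{prop:partial_inj_for_2-conn_graphs} furnishes an isometric involution $\iota$ of $\Graph$ fixing all half-edges pointwise, swapping $x$ and $y$, with $\Graph/\iota$ a tree.

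The next step is the key reduction: using the simplification operations of \S\ref{s:graph_ops}---elimination of half-edges (Lemma~\ref{lem:half-edge_reduction}), resistance reduction (Lemma~\ref{lem:resistance_reduction}), and stabilisation---I would repeatedly contract $\Graph$ down, all the while preserving the involution $\iota$ and the fact that $\rho(x)\neq\rho(y)$ are swapped by the induced involution and satisfy $\classifyleq3(\rho(x))=\classifyleq3(\rho(y))$ by the functoriality in those lemmas. The outcome is that $\Graph'$ is one of exactly two graphs: a single loop with a single half-edge, or a $3$-banana graph with no half-edges (the minimal stable $2$-connected graphs with a tree-quotienting involution). It then suffices to derive a contradiction in each of these two cases directly.

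Each of the two base cases is handled by hand using the leading-coefficient formula of Proposition~\ref{prop:leading_coeffs} together with the anti-equivariance $\classifyeq3\circ\iota=-\classifyeq3$ (from Proposition~\ref{prop:iota}, since $\iota_*$ acts by $(-1)^3=-1$ on $\gr^\W_{-3}\V$). In the loop-with-half-edge case, choosing the basepoint at the unique vertex $b$, the anti-equivariance forces $\classifyeq3(b)=0$ and hence $\classifyeq3$ vanishes at the four points $x,y,b,b$ along the edge $e$; but composing with a functional $\psi$ with $\psi(\ad_{e^*}^2(N(e^*)))\neq0$ gives an honest cubic polynomial in arc-length with nonzero $s^3$-coefficient, which cannot vanish four times---contradiction. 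In the $3$-banana case, labelling the edges $e_0,e_1,e_2$ with $x,y$ on $e_0$ and basepoint at the midpoint of $e_0$, I would choose a functional $\psi$ with $\psi(\ad_{e_0^*}^2(N(e_0^*)))=\psi(\ad_{e_1^*}^2(N(e_1^*)))=0\neq\psi(\ad_{e_2^*}^2(N(e_2^*)))$; by Proposition~\ref{prop:leading_coeffs}, $\psi\circ\classifyeq3$ is then quadratic on $e_0,e_1$ and cubic on $e_2$, the anti-equivariance makes it linear on $e_0,e_1$, vanishing at $x,y$ forces it to vanish identically on $e_0$ and hence (by the vertex/harmonicity relations) on $e_1$, so it vanishes at both endpoints of $e_2$ and, being harmonic, vanishes to order $2$ there---impossible for a nonzero cubic.

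The main obstacle I anticipate is the reduction step producing the explicit finite list of base-case graphs: one must check carefully that the chain of half-edge eliminations and resistance reductions, applied to a $2$-connected stable graph admitting a tree-quotienting involution $\iota$ with $x,y$ swapped, can always be arranged to terminate at precisely the $3$-banana or the loop-with-half-edge, while keeping track of the involution and the non-collapse of $x$ and $y$ throughout (in particular, that $x,y$ never get contracted into the same point, which is where Lemma~\ref{lem:reduction_to_2-conn_cpts} and the structure of $\Graph/\iota$ as a tree are used). The polynomial-degree arguments in the two base cases, by contrast, are short once the right functional $\psi$ is written down.
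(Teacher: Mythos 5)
Your proposal is correct and follows essentially the same route as the paper: reduce to a $2$-connected graph satisfying \condn{} via Lemma~\ref{lem:reduction_to_2-conn_cpts}, extract the involution from Proposition~\ref{prop:partial_inj_for_2-conn_graphs}, contract to the $3$-banana or loop-with-half-edge using the operations of \S\ref{s:graph_ops}, and dispose of the two base cases with Proposition~\ref{prop:leading_coeffs}, the anti-equivariance $\classifyeq3\circ\iota=-\classifyeq3$, and the same choice of functionals $\psi$. No substantive differences from the paper's argument.
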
 
\section{The anabelian reconstruction theorem}\label{s:reconstruction}

We now turn to the proof of the anabelian construction theorem (Theorem~\ref{thm:anabelian_reconstruction}). As promised in Remark~\ref{rmk:mono-anabelian}, we will in fact give an explicit recipe for reconstructing the stable reduction graph $\rGraph$ of a hyperbolic curve $X/K$ from its $\Q_\ell$-pro-unipotent \'etale fundamental groupoid $\pi_1^{\Q_\ell}(X_{\overline K})$, whose object-set is $X(\overline K)$. This will be accomplished in the following sequence of three propositions, explaining how to recover the underlying graph, metric and genus function on $\rGraph$ respectively. The first of these is simply the observation that we can read off the underlying graph from the image of the reduction map via a join-the-dots process.

\begin{proposition}[Reconstruction of the graph]\label{prop:reconstruct_graph}
Let\[\classify\colon\ob(\pi_1^{\Q_\ell}(X_{\overline K}))\rightarrow\varinjlim_L\H^1(G_L,\pi_1^{\Q_\ell}(X_{\overline K},b))\]denote the non-abelian Kummer map defined in \S\ref{ss:nonab_Kummer}, at some basepoint $b\in\ob(\pi_1^{\Q_\ell}(X_{\overline K}))$. (This definition uses only the $G_K$-equivariant groupoid structure of $\pi_1^{\Q_\ell}(X_{\overline K})$.)

Choose an embedding $\Q_\ell\hookrightarrow\C$. Then the topological closure of the image of $\classify$ in $\varinjlim_L\H^1(G_L,U)(\C)$ for the $\C$-topology is the underlying topological space of the reduction graph $\rGraph$, and the map $\classify\colon\ob(\pi_1^{\Q_\ell}(X_{\overline K}))\rightarrow\overline{\im(\classify)}$ is the reduction map.
\begin{proof}
The non-abelian Kummer map fits into a commuting square
\begin{center}
\begin{tikzcd}
X(\overline K) \arrow{r}{\classify}\arrow[hook]{d}{\red} & \varinjlim_L\H^1(G_L,U)(\C)\arrow{d}{\iota} \\
\Graph \arrow{r}{\classify} & \V_\C
\end{tikzcd}
\end{center}
where the left-hand vertical arrow has dense image, the bottom map is a closed immersion by Remark~\ref{rmk:injectivity_over_R} and the right-hand vertical map is a linear map of pro-finite-dimensional $\C$-vector spaces. This establishes the proposition.
\end{proof}
\end{proposition}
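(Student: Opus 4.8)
The plan is to deduce the proposition formally from Theorem~\ref{thm:curve_kummer_is_graph_kummer} (identifying the non-abelian Kummer map of $X$ with the combinatorial one of $\rGraph$) together with the topological strengthening of the Injectivity Theorem recorded in Remark~\ref{rmk:injectivity_over_R}.

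First I would note that, since $X$ is hyperbolic, it has a stable model over the ring of integers of some finite $L/K$, so $\rGraph$ is a stable reduction graph and Theorem~\ref{thm:combinatorial_injectivity} and Remark~\ref{rmk:injectivity_over_R} apply to it. By Remark~\ref{rmk:injectivity_over_R}, for each $n\geq3$ the level-$n$ combinatorial Kummer map $\classifyleq n\colon\Graph\rightarrow(\V/\W_{-n-1})_\R$ is a closed immersion; passing to the inverse limit over such $n$, and using that $\classify$ is componentwise polynomial on edges and linear on half-edges by Corollary~\ref{cor:j_piecewise_polynomial} (so that the non-compact half-edges contribute honest, non-degenerate rays), one finds that the full combinatorial Kummer map $\classify\colon\Graph\rightarrow\V_\R$ is a proper continuous injection, hence a homeomorphism onto a closed subset. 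Since $\V_\R$ is closed in $\V_\C$ for the fixed embedding $\Q_\ell\hookrightarrow\C$, the composite $\classify\colon\Graph\rightarrow\V_\C$ remains a closed topological embedding.

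On the curve side, Theorem~\ref{thm:curve_kummer_is_graph_kummer} provides a commuting square whose top arrow is $\classify\colon X(\overline K)\rightarrow\varinjlim_L\H^1(G_L,U)$, whose bottom arrow is the combinatorial map $\classify_{\rGraph}\colon\QVert{\Graph}\rightarrow\V$, whose left arrow is the reduction map $\red$, and whose right arrow is the injection $\iota$; by Theorem~\ref{thm:description_of_kummer} this $\iota$ identifies $\varinjlim_L\H^1(G_L,U)$ with a graded pro-finite-dimensional subspace of $\V_{\Q_\ell}$, so that after $-\otimes_{\Q_\ell}\C$ the map $\iota$ is a closed topological embedding. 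Now I chase the square: by Definition~\ref{def:reduction_map} the reduction map surjects onto $\QVert{\Graph}$, which is dense in the metric graph $\Graph$, so commutativity gives $\iota(\im\classify)=\classify_{\rGraph}(\QVert{\Graph})$; applying the closed embedding $\classify_{\rGraph}$ to the dense subset $\QVert{\Graph}\subseteq\Graph$ yields $\overline{\classify_{\rGraph}(\QVert{\Graph})}=\classify_{\rGraph}(\Graph)$; and pulling back along the closed embedding $\iota$ shows that the closure of $\im\classify$ in $\varinjlim_L\H^1(G_L,U)(\C)$ is $\iota^{-1}(\classify_{\rGraph}(\Graph))$, which $\classify_{\rGraph}^{-1}\circ\iota$ carries homeomorphically onto $\Graph$. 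Under this homeomorphism the top arrow $\classify$ of the square becomes $\red$, which is the second assertion.

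I do not expect a serious obstacle in this proposition itself: the substantive content is the closed-embedding statement, which is already packaged into Remark~\ref{rmk:injectivity_over_R} (resting on Theorem~\ref{thm:combinatorial_injectivity} and the explicit formula of Theorem~\ref{thm:description_of_graph_kummer}). What remains is purely topological bookkeeping — reconciling the $\Q$-valued combinatorial map, the $\Q_\ell$-points of the cohomology scheme, and the $\C$-topology induced by $\Q_\ell\hookrightarrow\C$; checking that $\iota$ realises $\varinjlim_L\H^1(G_L,U)$ as a \emph{closed} subspace of $\V_\C$; and verifying that passing from the finite-level closed immersions of Remark~\ref{rmk:injectivity_over_R} to the limit preserves the closed-embedding property, which is where properness and the linearity on half-edges are used.
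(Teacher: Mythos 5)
Your proposal is correct and follows essentially the same route as the paper: both rest on the commuting square from Theorem~\ref{thm:curve_kummer_is_graph_kummer}, the closed-immersion statement of Remark~\ref{rmk:injectivity_over_R} for the combinatorial Kummer map, the density of the image of the reduction map in $\Graph$, and the fact that $\iota$ identifies $\varinjlim_L\H^1(G_L,U)$ with a (closed) linear subspace of $\V_\C$. Your extra bookkeeping (properness on half-edges, closedness of $\V_\R$ in $\V_\C$, pulling the closure back along $\iota$) just spells out details the paper leaves implicit.
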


\begin{remark}
In Remark~\ref{rmk:injectivity_over_R} we stated a conjecture that the non-abelian Kummer map should induce a closed immersion $\classify\colon\A(\Graph)\hookrightarrow\V$ of the algebraisation of the graph. This would imply a counterpart to Proposition~\ref{prop:reconstruct_graph} for the Zariski topology: the Zariski-closure of the image of $\classify\colon X(\overline K)\rightarrow\varinjlim_L\H^1(G_L,U)$ should be $\A(\Graph)_{\Q_\ell}$.
\end{remark}

Note that Proposition~\ref{prop:reconstruct_graph} reconstructs the graph structure on $\Graph=\overline{\im(\classify)}$: the vertices are exactly the points not having any neighbourhood homeomorphic to $(-\epsilon,\epsilon)$, the edges and the half-edges are the connected components of the complement of the vertices, and the endpoint(s) of an edge or half-edge are the points of its boundary. The metric on edges can be recovered as follows.

\begin{proposition}[Reconstruction of the metric]
Let $e$ be an edge of $\Graph$ (regarded as an open subset of $\overline{\im(\classify)}$). Then
\[
\length(e) = \lim_{L/K}\frac{\#\left(\classify\left(\ob(\pi_1^{\Q_\ell}(X_{\overline K}))^{G_L}\right)\cap e\right)}{e(L/K)}
\]
where the limit is taken over finite extensions $L/K$.
\begin{proof}
Suppose that $L/K$ is an extension over which $X$ acquires split semistable reduction, so that $e$ corresponds to a chain of $e(L/K)\cdot\length(e)-1$ copies of $\P^1_{k_L}$ in the special fibre of a split semistable model, where $k_L$ is the residue field of $L$. Each of these copies of $\P^1_{k_L}$ contains the reduction of an $L$-point of $X$, and hence $\#\left(\classify(X(L))\cap e\right)=e(L/K)\cdot\length(e)-1$. This rearranges to the desired equality.
\end{proof}
\end{proposition}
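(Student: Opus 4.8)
The statement to prove is the final "Reconstruction of the metric" proposition: for an edge $e$ of the reconstructed graph $\Graph = \overline{\im(\classify)}$, we have
\[
\length(e) = \lim_{L/K}\frac{\#\left(\classify\left(\ob(\pi_1^{\Q_\ell}(X_{\overline K}))^{G_L}\right)\cap e\right)}{e(L/K)}.
\]

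The plan is to reduce the count of Galois-fixed objects mapping into $e$ to a count of irreducible components of a split semistable model, via the Injectivity Theorem~\ref{thm:combinatorial_injectivity} (equivalently the relative Oda--Tamagawa criterion, Theorem~\ref{thm:groupoid_oda}), and then to count those components directly. First I would observe that, by Theorem~\ref{thm:curve_kummer_is_graph_kummer}, the non-abelian Kummer map $\classify$ on objects factors as $\classify = \iota\circ\classify^{\rGraph}\circ\red$ through the reduction map and the injective graph-theoretic Kummer map; since $\iota$ is injective and $\classify^{\rGraph}\colon\QVert\Graph\hookrightarrow\V$ is injective by Theorem~\ref{thm:combinatorial_injectivity}, a point $x\in\ob(\pi_1^{\Q_\ell}(X_{\overline K})) = X(\overline K)$ satisfies $\classify(x)\in e$ if and only if $\red(x)$ lies on the (open) edge $e$ of $\rGraph$, and distinct values of $\red(x)$ in $e$ give distinct values of $\classify(x)$. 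Moreover $x\in X(\overline K)^{G_L} = X(L)$. So the numerator equals $\#\{\red(X(L))\cap e\}$, the number of points on the open edge $e\subseteq\rGraph$ that arise as reductions of $L$-points.

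Next I would pass to a finite extension. Since edge-lengths in $\rGraph$ are of the form $\frac1{e(L_0/K)}$ for some fixed $L_0/K$, and since the limit is a genuine limit over the directed system of finite extensions, it suffices to compute the quantity cofinally, i.e.\ over extensions $L/K$ through which $X$ acquires split semistable reduction and with $e(L/K)$ divisible by all relevant denominators. For such $L$, the reduction graph $\rGraph(X_L)$ is the subdivision of $\rGraph = \rGraph(X_K)$ in which each edge is cut into $e(L/K)$ equal segments of length $\frac1{e(L/K)}$ (Definition~\ref{def:reduction_graphs_of_curves} and the discussion in \S\ref{ss:pic-lef_plus}). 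Hence the open edge $e$, which has $\length(e) = \frac{m}{e(L/K)}$ for the integer $m := e(L/K)\cdot\length(e)$, corresponds in a regular split semistable model $\mathcal X'/\O_L$ to a chain of $m-1$ copies of $\P^1_{k_L}$ together with the $m$ nodes joining them (and joining the ends of the chain to the rest of the special fibre). The interior vertices of this subdivided edge — the $m-1$ copies of $\P^1_{k_L}$ — are each in the image of the reduction map $\red\colon\mathcal X'(\O_L)\to\Vert{\rGraph(X_L)}$: a copy of $\P^1_{k_L}$ has at least two nodes but $\#\P^1(k_L)\geq 3$, so it contains a smooth $k_L$-point lying on no other component and avoiding $\mathcal D$, which lifts by smoothness of $\mathcal X'$ (Hensel) to an $\O_L$-point of $\mathcal X'$ reducing into that component, hence to an $L$-point of $X$. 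Conversely any $L$-point of $X$ whose Kummer class lies in $e$ reduces onto one of these $m-1$ interior components (it cannot reduce onto an endpoint, since endpoints of $e$ are vertices of $\rGraph$ itself, not interior points of the open edge). Therefore $\#(\classify(X(L))\cap e) = m-1 = e(L/K)\cdot\length(e) - 1$, and dividing by $e(L/K)$ and letting $L/K$ grow gives the claimed limit.

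The main obstacle — though it is already resolved by results proved earlier in the paper — is the injectivity input: one needs that $\classify$ genuinely separates the interior points of $e$, i.e.\ that $\classify(x) = \classify(x')$ forces $\red(x) = \red(x')$ when both reductions lie on $e$. This is exactly (the $\overline K$-points version of) Theorem~\ref{thm:groupoid_oda}, obtained via Theorem~\ref{thm:curve_kummer_is_graph_kummer} from the injectivity of the graph-theoretic Kummer map in Theorem~\ref{thm:combinatorial_injectivity}; without it one could only bound the numerator above by $m-1$ rather than compute it. The remaining points — the Hensel-lifting of smooth $k_L$-points (using that $\#\P^1(k_L)\geq 3 > \#\{\text{nodes on a chain component}\}$, hence that each chain component has a smooth rational point off $\mathcal D$ and off the other components), and the behaviour of $\rGraph$ under base change to a ramified extension — are routine and are already invoked in the surrounding text. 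I would also note for completeness that the edge case where $X$ itself has no $L$-point reducing onto a given component does not arise precisely because of the point-count inequality just mentioned, so no separate argument is needed there.
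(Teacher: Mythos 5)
Your proof is correct and follows essentially the same route as the paper's: pass to a cofinal family of extensions $L/K$ with split semistable reduction, identify the points of $\classify(X(L))$ in the open edge $e$ with the $e(L/K)\cdot\length(e)-1$ interior components of the corresponding chain of $\P^1_{k_L}$'s, and divide by $e(L/K)$. You merely make explicit the ingredients the paper leaves implicit (the injectivity input identifying $\classify$ with $\red$ on $e$, the Hensel-lifting of smooth points on each chain component, and the cofinality of split semistable extensions), all of which are correct.
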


It remains to recover the genus function at (rational) vertices of $\Graph$. For this we use the $\M$-filtration and monodromy operator $N$ on $\O(\pi_1^{\Q_\ell}(X_{\overline K}))^\dual$, which are determined by the Galois action as in \S\ref{ss:gal_coh_field}.

\begin{proposition}[Reconstruction of the genus function]
Let $v$ be a vertex of $\rGraph$ which is the reduction of some $x\in X(\overline K)$. Then
\[
g(v) = \frac12\dim_{\Q_\ell}\left(\left(\gr^\M_{-1}\O(\pi_1^{\Q_\ell}(X_{\overline K},x))^\dual\right)^{N=0}\right).
\]
\begin{proof}
We will prove that $V:=\left(\gr^\M_{-1}\O(\pi_1^\Q(\rGraph,v))^\dual\right)^{N=0}$ is spanned by $\log(\beta_{v,i})$ and $\log(\beta_{v,i}')$ for $1\leq i\leq g(v)$. That these elements are linearly independent and $N$-invariant is obvious, so we need only show that $V$ is contained in the span of these elements. We proceed in three steps.

Firstly, we consider the isomorphism $\Hiso\colon\gr^\M_\bullet\O(\pi_1^\Q(\rGraph,v))^\dual\isoarrow\HAlg$ from Theorem~\ref{thm:description_of_M-graded}, and claim that the image of $V$ is contained in $\gr^\W_{-1}\gr^\M_{-1}\HAlg$. To see this, consider any $\gamma\in V$ and write $\Hiso(\gamma)=\gamma_1+\gamma_2$ with $\gamma_1\in\gr^\W_{-1}\gr^\M_{-1}\HAlg$ and $\gamma_e\in\W_{-2}\gr^\M_{-1}\HAlg$. Since $\gamma$ is $N$-invariant, it follows from Lemma~\ref{lem:equivariance_coM} that $N\Hiso(\gamma)=N(\gamma_2)\in\coM_{-2}\gr^\M_{-3}\HAlg$, where $\coM_\bullet$ is the filtration on $\HAlg$ defined in Definition~\ref{def:coM}. By definition, this already implies that $\gamma_2\in\coM_{-2}\gr^\M_{-1}\HAlg$, and hence $\gamma_2=0$ by Proposition~\ref{prop:coM_basics}. Hence $\Hiso(\gamma)=\gamma_1\in\gr^\W_{-1}\gr^\M_{-1}\HAlg$ as claimed.

Next, for a vertex $u\in\Vert{\Graph}$ we write $V_u\leq\gr^\W_{-1}\gr^\M_{-1}\HAlg$ for the span of the elements $\alog(\beta_{u,i})$ and $\alog(\beta_{u,i}')$ for $1\leq i\leq g(u)$, and claim that the image of $V$ inside $\gr^\W_{-1}\gr^\M_{-1}\HAlg=\bigoplus_{u\in\Vert{\Graph}}V_u$ splits as a direct sum of some of the $V_u$. To see this, let $\MCG_u$ denote the group of automorphisms of the free group on $2g(u)$ generators $\beta_{u,i}$ and $\beta_{u,i}'$ ($1\leq i\leq g(u)$) which fix the element $\prod_{i=1}^{g(u)}[\beta_{u,i}',\beta_{u,i}]$. There is a natural representation $\rho_u\colon\MCG_u\rightarrow\GL_{2g(u)}(\Z)$ given by the action on the abelianisation. The image of this representation is $\Sp_{2g(u)}(\Z)$: we may identify $\MCG_u$ as the mapping class group of an oriented genus $g(u)$ surface with one puncture and one marked point by the Dehn--Nielsen--Baer Theorem \cite[Theorem 8.8 \& below]{primer}, and apply \cite[Theorem 6.4]{primer}.

Now there is an $N$-equivariant action of $\prod_u\MCG_u$ on $\O(\pi_1^\Q(\rGraph))^\dual$ acting in the obvious way on generators of the form $\beta_{u,i}$ and $\beta_{u,i}'$ from Definition~\ref{def:pi1_reduction_graph}, and acting trivially on edges $e$ and generators of the form $\delta_e$. The induced $N$-equivariant action on $\gr^\M_\bullet\O(\pi_1^\Q(\rGraph))^\dual$ then factors through $\prod_u\Sp_{2g(u)}(\Z)$ such that the isomorphism $\Hiso\colon\gr^\M_\bullet\O(\pi_1^\Q(\rGraph))^\dual\isoarrow\HAlg$ is $\prod_u\Sp_{2g(u)}(\Z)$-equivariant for the natural action on the right-hand side (acting on generators $\alog(\beta_{u,i})$ and $\alog(\beta_{u,i}')$ in the obvious manner, and fixing all other generators). Thus $\Hiso V$, being a $\prod_u\Sp_{2g(u)}(\Z)$-subrepresentation of the tautological representation $\bigoplus_uV_u$, splits as a direct sum of some of the components $V_u$.

Finally, we show that the image of $V$ does not contain $V_u$ for $u\neq v$ of genus $>0$, i.e.\ that $V$ does not contain all the elements $\gamma_{u,v}^\can\log(\beta_{u,i})\gamma_{v,u}^\can$ and $\gamma_{u,v}^\can\log(\beta_{u,i}')\gamma_{v,u}^\can$ for some such $u$. Indeed, if it did, then we would have
\[
0=\Hiso N\left(\gamma_{u,v}^\can\log(\beta_{u,i})\gamma_{v,u}^\can\right)=[\alog(\beta_{u,i}),\Hiso N(\gamma_{v,u}^\can)]\text{ in $\gr^\M_{-3}\HAlg$}
\]for all $1\leq i\leq g(u)$, and similarly for $\alog(\beta_{u,i}')$. But since the common centraliser of the elements $\alog(\beta_{u,i})$ and $\alog(\beta_{u,i}')$ in $\HAlg$ is just $\Q$, this forces $0=\Hiso N(\gamma_{v,u}^\can)=\classify(u)-\classify(v)$, contradicting the Injectivity Theorem~\ref{thm:combinatorial_injectivity}. This completes the proof.
\end{proof}
\end{proposition}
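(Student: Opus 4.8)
The plan is to transport the question to the reduction graph and there pin down the space of $N$-invariants in $\M$-degree $-1$. First I would pass to a finite extension over which $X$ acquires semistable reduction and apply the non-abelian Picard--Lefschetz Theorem~\ref{thm:graph_comparison} together with Lemma~\ref{lem:pic-lef_filtered} (see Remark~\ref{rmk:pic-lef_for_groupoids}) to obtain an $N$-equivariant, $\M$-filtered isomorphism identifying $\gr^\M_{-1}\O(\pi_1^{\Q_\ell}(X_{\overline K},x))^\dual$ with $\Q_\ell\otimes\gr^\M_{-1}\O(\pi_1^\Q(\rGraph,v))^\dual$, where $v=\red(x)$. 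Since $\Q_\ell$-dimensions are insensitive to the scalar extension $\Q\to\Q_\ell$, it then suffices to show that the $\Q$-vector space $V:=\bigl(\gr^\M_{-1}\O(\pi_1^\Q(\rGraph,v))^\dual\bigr)^{N=0}$ has dimension $2g(v)$, and in fact that it is spanned by $\log(\beta_{v,i})$ and $\log(\beta_{v,i}')$ for $1\le i\le g(v)$. One inclusion is immediate: these elements are linearly independent, lie in $\gr^\M_{-1}$, and are annihilated by $N$ (Proposition~\ref{prop:explicit_hopf_gpd}).

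For the reverse inclusion I would work through the canonical isomorphism $\Hiso\colon\gr^\M_\bullet\O(\pi_1^\Q(\rGraph,v))^\dual\isoarrow\HAlg$ of Theorem~\ref{thm:description_of_M-graded} in two stages. Stage one: for $\gamma\in V$ write $\Hiso(\gamma)=\gamma_1+\gamma_2$ with $\gamma_1\in\gr^\W_{-1}\gr^\M_{-1}\HAlg$ and $\gamma_2\in\W_{-2}\gr^\M_{-1}\HAlg$. Now $\gr^\W_{-1}\gr^\M_{-1}\HAlg=\bigoplus_{u\in\Vert{\Graph}}V_u$, where $V_u$ is the span of $\alog(\beta_{u,i}),\alog(\beta_{u,i}')$, and $N$ vanishes on this subspace, so $N\Hiso(\gamma)=N(\gamma_2)$ while $\Hiso N(\gamma)=0$; hence $N(\gamma_2)=(N\Hiso-\Hiso N)(\gamma)\in\coM_{-2}\gr^\M_{-3}\HAlg$ by Lemma~\ref{lem:equivariance_coM}. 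Unwinding Definition~\ref{def:coM} this forces $\gamma_2\in\coM_{-2}\gr^\M_{-1}\HAlg$, which is zero by Proposition~\ref{prop:coM_basics}, so $\Hiso(V)\subseteq\bigoplus_u V_u$. Stage two: I would use the $N$-equivariant action of $\prod_u\MCG_u$ on $\gr^\M_\bullet\O(\pi_1^\Q(\rGraph))^\dual$ (each $\MCG_u$ acting on the vertex generators $\beta_{u,i},\beta_{u,i}'$ of Definition~\ref{def:pi1_reduction_graph} as the mapping class group of a once-punctured, once-marked genus-$g(u)$ surface, and trivially elsewhere), which by Theorem~\ref{thm:description_of_M-graded} matches the evident action on $\HAlg$ and factors on $\bigoplus_u V_u$ through $\prod_u\Sp_{2g(u)}(\Z)$ as the external direct sum of the standard representations (Dehn--Nielsen--Baer, \cite[Theorems 8.8 and 6.4]{primer}). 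As $V$ is stable under this action and each standard representation of $\Sp_{2g(u)}(\Z)$ is $\Q$-irreducible, $\Hiso(V)=\bigoplus_{u\in S}V_u$ for some set $S$ of positive-genus vertices.

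It then remains to show $S=\{v\}$. The inclusion $v\in S$ is automatic since $\log(\beta_{v,i})\in V$. Conversely, if $u\in S$ with $u\ne v$ then $V$ contains all conjugates $\gamma_{u,v}^\can\log(\beta_{u,i})\gamma_{v,u}^\can$ and $\gamma_{u,v}^\can\log(\beta_{u,i}')\gamma_{v,u}^\can$; applying $\Hiso N$ and using that $N$ is a coderivation killing the vertex generators gives $[\alog(\beta_{u,i}),\Hiso N(\gamma_{v,u}^\can)]=0=[\alog(\beta_{u,i}'),\Hiso N(\gamma_{v,u}^\can)]$ in $\gr^\M_{-3}\HAlg$ for all $i$. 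Since $g(u)>0$, the common centraliser of the $\alog(\beta_{u,i}),\alog(\beta_{u,i}')$ in $\LAlg=\HAlg^\prim$ is just $\Q$, which forces $\Hiso N(\gamma_{v,u}^\can)=0$, i.e.\ $\classify(u)=\classify(v)$ for the graph-theoretic non-abelian Kummer map of Definition~\ref{def:n-a_kummer_graph}; this contradicts the Injectivity Theorem~\ref{thm:combinatorial_injectivity}. Hence $\Hiso(V)=V_v$, so $\dim_\Q V=2g(v)$ and the formula follows.

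The crux is this last step: everything before it is a fairly formal manipulation of the $\coM$-filtration and the symplectic symmetry of the $\M$-graded Hopf groupoid, but ruling out the ``off-diagonal'' invariant subspaces $V_u$ with $u\ne v$ genuinely requires the full force of the Injectivity Theorem~\ref{thm:combinatorial_injectivity}, together with the (standard, but needed) facts that the standard $\Sp_{2g}(\Z)$-representation is $\Q$-irreducible and that a symplectic generating set has trivial common centraliser in the relevant pro-nilpotent Lie algebra. A lesser point to handle carefully is the initial reduction in mixed characteristic, where one must extend the base so that $X$ is semistable in order to read off $N$ from the Galois action compatibly with the graph.
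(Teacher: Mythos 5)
Your proposal is correct and follows essentially the same route as the paper's own proof: the same reduction via $\Hiso$ and the $\coM$-filtration to land in $\bigoplus_u V_u$, the same $\prod_u\MCG_u$-action factoring through $\prod_u\Sp_{2g(u)}(\Z)$ to split off the $V_u$, and the same commutator/centraliser argument against the Injectivity Theorem to rule out $u\neq v$. The only additions are explicit details the paper leaves implicit (the semistable base-change step and the $\Q$-irreducibility of the standard symplectic representation), which are fine.
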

\section{Applications to the Chabauty--Kim method}\label{s:examples}
In this section we give a survey of applications of Theorem \ref{thm:description_of_graph_kummer}. We first give a general discussion of applications of Theorem \ref{thm:description_of_graph_kummer} to quadratic Chabauty algorithms. In the case of $X$ a projective curve minus a point, this leads to a relation to Zhang's canonical measure on the reduction graph of a pointed curve. In the case of a general affine curve, we explain how the explicit description of the image of $X(K)$ in $\H^1 (G_K ,U_n )$ leads to non-trivial refinements of the sets $X(\Z_p )_{S,n}$ of weakly global points defined in \cite{BDCKW}. For projective curves, it also gives improvements on the bounds on $\#X(\Q _\ell )_2 $ given in \cite{BD}.
\subsection{Application to the quadratic Chabauty method}\label{ss:application_QC}
In this subsection we assume that $X$ is either projective or projective minus a point. In either case, we have $\H ^1 (G_K ,U_1 )=0$. The setting of quadratic Chabauty, in the sense of \cite{BalakrishnanDogra1}, is that one has a Galois-equivariant surjection 
$
\gr _{\Cent} ^2 U_2  \twoheadrightarrow \Q _{\ell }(1).
$
Since $\gr _{\Cent} ^2 U_2 $ is central in $U_2$, this defines by pushout what we shall refer to as a $\Q _{\ell }(1)$-\textit{quotient} of~$U_2 $
\[
1\to \Q _{\ell }(1)\to U_Z \to U_1 \to 1.
\]
Since $\H ^1 (G_K ,U_Z )\simeq \H^1 (G_K ,\Q _{\ell }(1))$, the non-abelian Kummer map defines a map
\[
j_Z :X(\Q _{\ell })\to \H^1 (G_K ,\Q _{\ell }(1)).
\]
Via an isomorphism 
\begin{equation}\label{eqn:normalisation_character}
\H ^1 (G_K ,\Q _{\ell }(1))\simeq \Q _{\ell },
\end{equation}
we obtain a $\Q _{\ell }$-valued map. In this subsection we describe how Theorem \ref{thm:description_of_graph_kummer} allows us to explicitly compute $j_Z$ for a specific choice of \eqref{eqn:normalisation_character}. One motivation for such an explicit computation is for algorithms for determining rational points on curves. More precisely, by \cite[Theorem 1.2]{BalakrishnanDogra1}, to obtain an algorithm to compute the obstruction set $X(\Q _{\ell })_{U_Z }$, one needs an algorithm to compute the local $\ell $-adic heights $h_v (A_Z (x))$ for all $\ell $, where $A_Z (x)$ is a certain filtered Galois representation with graded pieces $\Q _p ,U_1 ,\Q _p (1)$, and $h_v$ is the local height associated to an idele class character $\chi =(\chi _v )_v $.  Algorithms for computing the contribution at $\ell $ are given in \cite{Balakrishnanetc}. As we explain below, we may recover the contributions at other primes from a computation of $j_Z$.

We briefly recall some properties of the passage from Selmer schemes to mixed extensions (see e.g. \cite[4.3.1]{BalakrishnanDogra1}). Given representations $V_0 ,V_1 ,V_2 $ of a profinite group $G$, a mixed extension with graded pieces $V_0 ,V_1 ,V_2 $ is a filtered $G$-representation
\[
M=M_0 \supset M_1 \supset \ldots \supset M_3 =0
\]
together with $G$-equivariant isomorphisms 
\[
\psi _i :V_i \stackrel{\simeq }{\longrightarrow }M_i /M_{i+1}
\]
There is a natural notion of a morphism of mixed extensions, and the set of isomorphism classes of mixed extensions is naturally in bijection with $\H^1 (G,U(V_0 ,V_1 ,V_2 ))$, where $U(V_0 ,V_1 ,V_2 )$ is the group of isomorphisms of $V_0 \oplus V_1 \oplus V_2 $ which act unipotently with respect to the filtration
\[
V_0 \oplus V_1 \oplus V_2  \supset V_1 \oplus V_2 \supset V_2 .
\]
More generally, for any mixed extension $M$ with graded pieces $V_0 ,V_1 ,V_2 $, if $\Aut (M)$ denotes the group of vector space automorphisms of $M$ which are unipotent with respect to the its filtration, then there is a bijection between the set of isomorphism classes of mixed extensions and $\H^1 (G,\Aut (M))$, given by sending a mixed extension $M'$ to the $G$-equivariant $\Aut (M)$-torsor of unipotent vector space isomorphisms between $M$ and $M'$.

The universal enveloping algebra of $\Lie (U_Z )$ has a quotient $A_Z (b)$ which is a mixed extension with graded pieces $\Q _\ell ,U_1 ,\Q _{\ell }(1)$. The action of $U_Z$ on $A_Z (b)$ by left multiplication defines a morphism
\[
\H^1 (G_K ,U_Z )\to \H^1 (G_K ,U(\Q _{\ell },U_1 ,\Q _{\ell }(1)))
\]
sending a $U_Z$-torsor $c$ to the twist of $A_Z (b)$ by the torsor $c$. This map also admits a description as the composite
\[
\H^1 (G_K ,U_Z )\to \H^1 (G_K ,\Aut (A_Z (b)))\isoarrow  \H^1 (G_K ,U(\Q _{\ell },U_1 ,\Q _{\ell }(1)))
\]
where the first map is pushforward by the homomorphism $U_Z \to \Aut (A_Z (b))$ and the second
 is induced by the $G$-equivariant $(\Aut (A_Z (b)),U(\Q _{\ell },U_1 ,\Q _{\ell }(1)))$-bitorsor of unipotent isomorphisms between $A_Z (b)$ and $\Q _{\ell }\oplus U_1 \oplus \Q _{\ell }(1)$. The \textit{local height} of a mixed extension $M$ with graded pieces $\Q _p ,U_1 ,\Q _p (1)$,  with respect to a character $\chi \in \H^1 (G_K ,\Q _{\ell })$, is $\chi \cup [M]$, where $[M]$ is a cohomology class in $ \H ^1 (G_K ,\Q _{\ell }(1))$, and $\H ^2 (G_K ,\Q _{\ell }(1))$ is identified with $\Q_\ell$ via local class field theory. By \cite[4.3.1]{BalakrishnanDogra1}, the cohomology class $[M]$ may be defined to be the image of the class of $M$ in $\H ^1 (G_K ,U(\Q _{\ell },U_1 ,\Q _{\ell }(1))$ in $\H^1 (G_K ,\Q _{\ell }(1))$ under the isomorphism
\begin{equation}\label{eqn:endless_waffle}
\H ^1 (G_K ,\Q _{\ell }(1))\simeq \H ^1 (G_K ,U(\Q _{\ell },U_1 ,\Q _{\ell }(1))
\end{equation}
induced by the short exact sequence
\[
1\to \Q _{\ell }(1)\to U(\Q _{\ell },U_1 ,\Q _{\ell }(1))\to U_1 \oplus \Hom (U_1 ,\Q _{\ell }(1))\to 1
\]
and the fact that $U_1 \simeq \Hom (U_1 ,\Q _{\ell }(1))$ have vanishing cohomology.

\begin{lemma}
For all $x\in X(\Q _p )$,
\[
h_p (A_Z (x))=\chi _p \cup j_Z (x),
\]
where the cup product is from viewing $\chi _p $ as an element of $\H ^1 (G_{\Q _p },\Q _{\ell }(1))$, and the identification of $\H ^2 (G_{\Q _p },\Q _{\ell }(1))$ with $\Q _\ell $ is from local class field theory.
\end{lemma}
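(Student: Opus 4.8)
The plan is to unwind the two parallel descriptions of the height pairing and observe that they are tautologically the same. Recall that $j_Z\colon X(\Q_p)\to\H^1(G_{\Q_p},\Q_\ell(1))$ is by construction the composite of the non-abelian Kummer map $j\colon X(\Q_p)\to\H^1(G_{\Q_p},U_Z)$ with the isomorphism $\H^1(G_{\Q_p},U_Z)\isoarrow\H^1(G_{\Q_p},\Q_\ell(1))$ coming from the extension $1\to\Q_\ell(1)\to U_Z\to U_1\to 1$ together with $\H^1(G_{\Q_p},U_1)=0$ (and $\H^0$ considerations). On the other hand, $h_p(A_Z(x))$ is, by \cite[4.3.1]{BalakrishnanDogra1} as recalled just above, $\chi_p\cup[A_Z(x)]$ where $[A_Z(x)]\in\H^1(G_{\Q_p},\Q_\ell(1))$ is the image of the class of the mixed extension $A_Z(x)$ under the chain of maps $\H^1(G_{\Q_p},U_Z)\to\H^1(G_{\Q_p},U(\Q_\ell,U_1,\Q_\ell(1)))\xleftarrow{\sim}\H^1(G_{\Q_p},\Q_\ell(1))$, where the first map is pushforward along $U_Z\to\Aut(A_Z(b))\iso U(\Q_\ell,U_1,\Q_\ell(1))$ and the second is the isomorphism \eqref{eqn:endless_waffle}.

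So the lemma reduces to a single compatibility: the two isomorphisms $\H^1(G_{\Q_p},\Q_\ell(1))\isoarrow\H^1(G_{\Q_p},U_Z)$ (used to define $j_Z$) and the composite $\H^1(G_{\Q_p},\Q_\ell(1))\xrightarrow{\sim}\H^1(G_{\Q_p},U(\Q_\ell,U_1,\Q_\ell(1)))$ via \eqref{eqn:endless_waffle}, precomposed with the pushforward $\H^1(G_{\Q_p},U_Z)\to\H^1(G_{\Q_p},U(\Q_\ell,U_1,\Q_\ell(1)))$, agree as identifications of $\H^1(G_{\Q_p},U_Z)$ with $\H^1(G_{\Q_p},\Q_\ell(1))$ on the relevant classes. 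Concretely, I would set up the commuting diagram of central extensions
\[
\begin{tikzcd}
1 \arrow{r} & \Q_\ell(1) \arrow{r}\arrow[equal]{d} & U_Z \arrow{r}\arrow{d} & U_1 \arrow{r}\arrow{d} & 1 \\
1 \arrow{r} & \Q_\ell(1) \arrow{r} & U(\Q_\ell,U_1,\Q_\ell(1)) \arrow{r} & U_1\oplus\Hom(U_1,\Q_\ell(1)) \arrow{r} & 1
\end{tikzcd}
\]
where the middle vertical arrow is the homomorphism $U_Z\to\Aut(A_Z(b))\iso U(\Q_\ell,U_1,\Q_\ell(1))$ from left multiplication and the right vertical arrow is the evident inclusion $U_1\hookrightarrow U_1\oplus\Hom(U_1,\Q_\ell(1))$ (using that $A_Z(b)$ has graded pieces $\Q_\ell,U_1,\Q_\ell(1)$, so that the unipotent part acting by left multiplication only sees the $U_1$-direction on the associated graded). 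The functoriality of the connecting-map bijections from Lemma~\ref{lem:7term}, together with $\H^1(G_{\Q_p},U_1)=\H^1(G_{\Q_p},U_1\oplus\Hom(U_1,\Q_\ell(1)))=0$ and $\H^0$-surjectivity (all by Frobenius weight considerations as in \S\ref{ss:gal_coh_field}, since $U_1$ is of weight $-1$), then forces the outer square on $\H^1$ to commute, which is exactly the required statement. Chasing $j_Z(x)$ (the preimage of $j(x)$ under the top identification) through this diagram yields $[A_Z(x)]$, and hence $h_p(A_Z(x))=\chi_p\cup[A_Z(x)]=\chi_p\cup j_Z(x)$.

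The main obstacle, such as it is, will be bookkeeping rather than anything substantive: one must be careful that the normalisation \eqref{eqn:normalisation_character} of the isomorphism $\H^1(G_{\Q_p},\Q_\ell(1))\iso\Q_\ell$ used in defining $j_Z$ matches the local-class-field-theory normalisation of $\H^2(G_{\Q_p},\Q_\ell(1))\iso\Q_\ell$ used in defining the cup-product pairing $\chi_p\cup-$; this is why the statement is phrased for ``a specific choice of \eqref{eqn:normalisation_character}''. I would simply record that we take \eqref{eqn:normalisation_character} to be the one compatible with the cup product and local duality, so that the identity holds on the nose. No deeper input is needed: everything is formal manipulation of the non-abelian $\H^1$'s of $\Q_\ell$-unipotent groups, using only the vanishing results already established in \S\ref{ss:gal_coh_field} and the mixed-extension formalism of \cite{BalakrishnanDogra1} recalled above.
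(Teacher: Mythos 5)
Your proof is correct and is essentially the paper's own argument: both reduce the lemma to the observation that the homomorphism $U_Z\to U(\Q_\ell,U_1,\Q_\ell(1))$ coming from left multiplication on $A_Z(b)$ restricts to the identity on the central copies of $\Q_\ell(1)$, so that (using the vanishing of $\H^0$ and $\H^1$ of the quotients by weight) the identification of $\H^1(G_{\Q_p},U_Z)$ with $\H^1(G_{\Q_p},\Q_\ell(1))$ defining $j_Z$ is compatible with the isomorphism \eqref{eqn:endless_waffle} under pushforward, whence $h_p(A_Z(x))=\chi_p\cup j_Z(x)$. One small inaccuracy, immaterial to the logic: the induced map on quotients is not the ``evident inclusion'' $U_1\hookrightarrow U_1\oplus\Hom(U_1,\Q_\ell(1))$, since left multiplication by a lift of $\bar u\in U_1$ also has a generally nonzero $\Hom(U_1,\Q_\ell(1))$-component coming from the product $U_1\otimes U_1\to\gr^2A_Z(b)=\Q_\ell(1)$; but your argument only uses that the map on centres is the identity and that the quotients have trivial $\H^0$ and $\H^1$, so nothing breaks, and likewise the worry about matching \eqref{eqn:normalisation_character} with local class field theory is moot here because $j_Z(x)$ enters the lemma as a class in $\H^1(G_{\Q_p},\Q_\ell(1))$ rather than as a number.
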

\begin{proof}
By the above, the map $x\mapsto h_p (A_Z (x))$ can be described as the image of $j_{U_Z}(x)$ under the composite map
\[
\H^1 (G_K ,U_Z )\to \H^1 (G_K ,U(\Q _{\ell },U_1 ,\Q _{\ell }(1))) \stackrel{\simeq }{\longrightarrow } \H^1 (G_K ,\Q _{\ell }(1))\stackrel{\cdot \cup \chi _p }{\longrightarrow } \Q _{\ell }.
\]
The first map is the pushforward induced by an inclusion $U_Z \hookrightarrow U(\Q _{\ell },U_1 ,\Q _{\ell }(1))$ which restricts to the identity on the subgroups $\Q _{\ell }(1)$. Putting these observations together, we deduce that $h_p (A_Z (x))=\chi _p \cup j_Z (x)$.
\end{proof}

Recall from \S\ref{s:M-trivialisation} we have an isomorphism
\[
\Lie(U_2 )^\can = \Lie (\gr _C ^2 U_2  )^{\can }\simeq \gr ^\W _{-2}\V_{\Q _{\ell }}.
\]
The quotient $U_Z$ satisfies \WM and the inclusion $\Q_\ell(1)\hookrightarrow U_Z$ induces an isomorphism $\Q_\ell(1)\simeq\Lie(U_Z)^\can$. Hence by \S\ref{ss:nonab_Kummer} we have a map
\[
\classify_Z\colon X(\overline K) \rightarrow \Hom(\Q_\ell(1),\Lie(U_Z)^\can)=\Q_\ell
\]
which is a coordinate of the non-abelian Kummer map in depth $2$.

By Theorem~\ref{thm:graph_comparison}, the surjection $Z$ corresponds to a surjection
\[
Z_*\colon\gr^\W_{-2}\LAlg_{\Q_\ell}\twoheadrightarrow\Q_\ell,
\]
and by Theorem~\ref{thm:description_of_kummer} the map $\classify_Z$ is given by height pairing against the measure $\mu_Z=Z_*(\mu_2)$. Our aim is to describe this measure, and hence the map $\classify_Z$, explicitly  for certain $\Q_\ell(1)$-quotients $U_Z$.

\subsubsection{$\Q _{\ell }(1)$-quotients from trace zero endomorphisms of the Jacobian}
We first recall how $\Q _\ell (1)$-quotients of $U_2 $ arise from endomorphisms of the Jacobian. For the remainder of this subsection we assume $X$ is projective.
Given an endomorphism $F$ of $U_1 =\H_1^\et(X_{\overline K},\Q_\ell)$, let $c( F)$ denote the corresponding homomorphism 
$
\Q _{\ell }(-1)\to \H^1 _{\et }(X_{\overline{K}},\Q _{\ell })^{\otimes 2}
$
defined as the composite
\begin{equation}\label{eq:chern_cup}
\Q _{\ell }(-1)\to \H^1 _{\et }(X_{\overline{K}},\Q _{\ell }) \otimes  \H ^1_\et(X_{\overline{K}},\Q _{\ell })^* (-1)\to  \H^1 _{\et }(X_{\overline{K}},\Q _{\ell })^{\otimes 2},
\end{equation}
where the first map is $\Q _{\ell }(-1)\otimes $ the endomorphism $F$, and the second is $\H^1 _{\et }(X_{\overline{K}},\Q _{\ell })\otimes $ Poincar\'e duality. Under Poincar\'e duality, the cup-product corresponds to the trace.
Hence given an $F$ of trace zero, we obtain a homomorphism $\gr _{\Cent} ^2 U_2 \to \Q _{\ell } (1)$ defined as the composite
\begin{align*}
& \gr ^2 _C U=\ker\left(\bigwedgesquare\H^1_\et(X_{\overline K},\Q_\ell)\stackrel\cup\longrightarrow\Q_\ell(-1)\right)^\dual  \\
& \hspace{2cm}\hookrightarrow 
\ker \left( \H ^1 _{\et }(X_{\overline{K}},\Q _{\ell })^{\otimes 2}\stackrel\cup\longrightarrow\Q_\ell(-1)\right)^\dual
\stackrel{c(F)^\dual }{\longrightarrow }\Q _{\ell }(1)
\end{align*}
(here and in what follows, for a vector space $V$ we take the map $\bigwedgesquare V\to V^{\otimes 2}$ to be the map $v\wedge w \mapsto \frac{1}{2}(v\otimes w-w\otimes v)$).
This morphism is zero if and only if $F$ is in the $(-1)$-eigenspace of the Rosati involution (see \cite[11.5.3]{BirkenhakeLange}). If $F$ is non-zero and fixed by the Rosati involution, 
 then $F$ induces a nonzero morphism
$
\gr ^2 _C U^{\can }\to \Q _{\ell }(1),
$
which we also denote $c(F)^* $,
and hence defines a $\Q_\ell(1)$-quotient of $U_2$. To ease notation we will denote $\mu _{c(F)^* }$ simply as $\mu _F$.

\subsubsection{$\Q _{\ell }$-quotients of $\gr ^{\W }_{-2}\V $}
We now explain how to determine the corresponding quotient of $\LAlg_{\Q _{\ell }}$, the corresponding surjection $\gr ^{\W }_{-2}\V _{\Q _{\ell }}\to \Q _{\ell }$, and hence the $\Q _{\ell }$-valued measure $\mu _F$.
Poincar\'e duality induces a map
\[
\gr _{\bullet}^{\M }\bigwedge \nolimits^2 \H^1 _{\et }(X_{\overline{K}},\Q _{\ell }) \to \Q_{\ell }(-1), 
\]
and the kernel is dual to $\gr _\Cent ^2 U_2 $.
Hence the kernel of `graded Poincar\'e duality'
\[
\gr _{\bullet}^{\M }\bigwedgesquare \H^1 _{\et }(X_{\overline{K}},\Q _{\ell }) \to \Q_{\ell }(-1), 
\]
is dual, via Theorem \ref{thm:description_of_M-graded}, to $\gr _\Cent ^2 \LAlg$. 
Recall that we have a distinguished choice of generator of the kernel of 
\[
\bigwedgesquare \LAlg^{\ab }\to \gr _\Cent ^2 \LAlg,
\]
given by
\begin{equation}\label{eqn:relation}
\sigma =\sum_{i=1}^\gGamma \xi_i' \wedge \xi_i +\sum_v\sum_{i=1}^{g(v)}\alog(\beta_{v,i}')\wedge \alog(\beta_{v,i})
\end{equation}
(see Construction \ref{cons:HAlg}). Viewing \eqref{eqn:relation} as an element of $(\mathcal{L}^{\ab })^{\otimes 2}$, we obtain an isomorphism
\[
(\mathcal{L}^{\ab})^* \stackrel{\sim }{\longrightarrow }\mathcal{L}^{\ab },
\]
given by sending $v \in (\mathcal{L}^{\ab })^*$ to the functional sending $w \in (\mathcal{L}^{\ab })^*$ to
\begin{align*}
2v \otimes w (\sigma )= & \sum_{i=1}^\gGamma v (\xi_i' ) w^* ( \xi_i )-w (\xi_i' ) v ( \xi_i ) \\
& +\sum_v\sum_{i=1}^{g(v)}v (\alog(\beta_{v,i}')) w (\alog(\beta_{v,i}))-w (\alog(\beta_{v,i}')) v (\alog(\beta_{v,i})).
\end{align*}
Hence, $\beta _{v,i}\mapsto (\beta _{v,i}')^* $ and $\beta _{v,i}'\mapsto -\beta _{v,i}^* $. By Remark \ref{rmk:e*e_is_identity}, $e^*$ is sent to the projection of $e$ onto $\H _1 (\Gamma )$, and the orthogonal projection of $e$ onto $\H _1 (\Gamma )$ is sent to $-e^*$. Tensoring with $\mathcal{L}^{\ab }$, we obtain isomorphisms
\begin{equation}\label{eqn:gr2_gr0}
\gr _{-2}^{\M }(\LAlg^{\ab \otimes 2 }_{\Q _{\ell }})\stackrel{\sim }{\longrightarrow }\gr _0 ^{\M }\left( \LAlg^{\ab }_{\Q _{\ell } }\otimes (\LAlg^{\ab }_{\Q _{\ell }})^* \right).
\end{equation}
(here and in what follows, for a vector space $V$ we make the usual identification of $V \otimes V^* $ with the endomorphism algebra of $V$).
The isomorphism \eqref{eqn:gr2_gr0} induces an injection
\[
\gr _{-2}^{\M }(\gr ^2 _{\Cent }\LAlg )\hookrightarrow \left(\gr _0 ^{\M }\left(\mathrm{End} (\LAlg )^{\ab }\right)^{\Tr =0}\right)^* .
\]
Dualising, an $\M$-graded endomorphism of $\LAlg^\ab_{\Q_\ell}$ of trace zero hence defines a morphism
$
\gr _\Cent ^2 \LAlg_{\Q _{\ell }}\to \Q _{\ell }$,
inducing a commutative diagram whose vertical maps are isomorphisms
\[
\begin{tikzpicture}
\matrix (m) [matrix of math nodes, row sep=3em,
column sep=3em, text height=1.5ex, text depth=0.25ex]
{\gr _\bullet ^{\M }\gr ^2 _\Cent U & \gr _\bullet ^{\M} \End (U^{\ab })^{\Tr =0} & \Q _{\ell }(1) \\
 \gr ^2 _\Cent \LAlg_{\Q_\ell} & \End (\LAlg  ^{\ab }_{\Q _{\ell }})^{\Tr =0} & \Q _{\ell } \\};
\path[->]
(m-1-1) edge[auto] node[auto]{} (m-2-1)
edge[auto] node[auto] {  } (m-1-2)
(m-1-2) edge[auto] node[auto] {} (m-2-2)
edge[auto] node[auto] {  } (m-1-3)
(m-2-1) edge[auto] node[auto] {} (m-2-2)
(m-1-3) edge[auto] node[auto] {} (m-2-3)
(m-2-2) edge[auto] node[auto] {} (m-2-3);
\end{tikzpicture}
\]
We obtain the following explicit formula for the $\Q _{\ell }(1)$-quotients associated to trace zero endomorphisms.
\begin{lemma}\label{lemma:gr_Qell1}
Given $F\in \gr _0 ^{\M }(\LAlg^{\ab }_{\Q _{\ell }}) \otimes (\LAlg^{\ab }_{\Q _{\ell }})^* $, the induced morphism $\bigwedgesquare H(v)\to \Q _{\ell }$ sends $\sum_{i=1}^{g(v)}\alog(\beta_{v,i}')\wedge\alog(\beta_{v,i})$ to $\frac{1}{2}\Tr (F|H(v))$. The induced morphisms $\End (\H ^1 (\Gamma ))\to \Q _{\ell }$ and $\End (\H _1 (\Gamma ))\to \Q _{\ell }$ are given by the dual of the morphisms
$
\Q _{\ell }\to \End (\H ^1 (\Gamma ,\Q _{\ell }))
$
and $\Q _{\ell }\to \End (\H _1 (\Gamma ,\Q _{\ell }))$
sending $1$ to $F|_{\H^1 (\Gamma ,\Q _{\ell })}$ and $F|_{\H_1 (\Gamma ,\Q _{\ell })}$ respectively.
\end{lemma}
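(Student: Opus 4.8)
The statement is essentially a bookkeeping exercise, tracking the explicit identifications set up earlier in the subsection through the isomorphisms of Theorem~\ref{thm:description_of_M-graded} and Remark~\ref{rmk:e*e_is_identity}. The plan is to reduce everything to the formula for the ``pairing-induced'' isomorphism $(\LAlg^{\ab})^\ast\xrightarrow{\sim}\LAlg^{\ab}$ coming from the element $\sigma$ of \eqref{eqn:relation}. Concretely, I would first unwind the chain of maps
\[
\gr^\M_{-2}\bigl(\gr^2_\Cent\LAlg\bigr) \hookrightarrow \gr^\M_0\bigl(\LAlg^{\ab}\otimes(\LAlg^{\ab})^\ast\bigr)^\ast = \gr^\M_0\bigl(\End(\LAlg^{\ab})\bigr)^\ast
\]
and its dual, so that an $\M$-graded trace-zero endomorphism $F$ is mapped to the functional on $\gr^2_\Cent\LAlg_{\Q_\ell}$ obtained by pairing against the image of $F$ under \eqref{eqn:gr2_gr0}. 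The commutative diagram with vertical isomorphisms is then immediate from functoriality of \eqref{eqn:gr2_gr0} and the compatibility of Theorem~\ref{thm:description_of_M-graded} with the $\M$-grading, so the only real content is the displayed formula.

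\textbf{Key steps.} First, using the explicit description of the isomorphism $(\LAlg^{\ab})^\ast\xrightarrow{\sim}\LAlg^{\ab}$ given just above the statement (namely $v\mapsto (w\mapsto 2\,v\otimes w(\sigma))$, with $\beta_{v,i}\mapsto(\beta_{v,i}')^\ast$, $\beta_{v,i}'\mapsto-\beta_{v,i}^\ast$, $e^\ast\mapsto$ projection of $e$ to $\H_1(\Graph)$, and the orthogonal projection of $e$ to $\H_1(\Graph)$ sent to $-e^\ast$), I would compute the pairing of $F$, viewed via \eqref{eqn:gr2_gr0} as an element of $\gr^\M_0(\End(\LAlg^{\ab}))$, against the distinguished element $\sum_{i=1}^{g(v)}\alog(\beta_{v,i}')\wedge\alog(\beta_{v,i})$. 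Writing this wedge as $\frac12\sum_i(\beta_{v,i}'\otimes\beta_{v,i}-\beta_{v,i}\otimes\beta_{v,i}')$ and applying the formula for the pairing, the antisymmetrisation produces exactly $\frac12\sum_i\langle F\beta_{v,i},\beta_{v,i}\rangle$-type terms summing to $\frac12\Tr(F|H(v))$; this is the first assertion. Second, for the pieces coming from $\H^1(\Graph)$ and $\H_1(\Graph)$, I would use Remark~\ref{rmk:e*e_is_identity}: the element $\sum_{e\in\plEdge{\Graph}}e^\ast\otimes e\in\H^1(\Graph)\otimes\H_1(\Graph)$ corresponds to the identity on $\H_1(\Graph)$, and correspondingly $\sum_i\xi_i'\wedge\xi_i$ represents the copy of the identity inside $\bigwedgesquare\LAlg^{\ab}$. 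Pairing $F$ against this via \eqref{eqn:gr2_gr0} and transporting through the identification $V\otimes V^\ast\cong\End(V)$ gives precisely the trace of $F$ restricted to $\H^1(\Graph)$ (resp.\ $\H_1(\Graph)$), i.e.\ the dual of the map $1\mapsto F|_{\H^1(\Graph)}$ (resp.\ $1\mapsto F|_{\H_1(\Graph)}$). Finally I would assemble these computations into the claimed commutative diagram, noting that the vertical maps are isomorphisms by Theorem~\ref{thm:description_of_M-graded} together with the self-duality of $\LAlg^{\ab}_{\Q_\ell}$ induced by $\sigma$.

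\textbf{Main obstacle.} The only genuinely delicate point is keeping the sign and factor-of-$\tfrac12$ conventions straight: the paper uses the convention $v\wedge w\mapsto\frac12(v\otimes w-w\otimes v)$ for $\bigwedgesquare V\to V^{\otimes2}$, the pairing formula already carries a factor $2$, Poincar\'e duality introduces its own normalisation, and the images of $\beta_{v,i}$ and $\beta_{v,i}'$ under the $\sigma$-duality differ by a sign. I would therefore do the $H(v)$ computation carefully and symmetrically (using a basis adapted to the symplectic form so that $F$ fixed by the Rosati involution makes the relevant sums visibly trace-like), and then double-check the normalisation against the known special case where $F$ is a scalar, so that $\Tr(F|H(v))=2g(v)\cdot\lambda$ matches the coefficient appearing in $\mu_2$ in Construction~\ref{cons:measures}. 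Everything else is formal naturality of the identifications from Theorem~\ref{thm:description_of_M-graded} and Remark~\ref{rmk:e*e_is_identity}.
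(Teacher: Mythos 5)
Your proposal is correct and takes essentially the same route as the paper, which in fact states Lemma~\ref{lemma:gr_Qell1} without a separate proof, treating it as an immediate unwinding of the $\sigma$-duality formulas derived just above it ($\beta_{v,i}\mapsto(\beta_{v,i}')^*$, $\beta_{v,i}'\mapsto-\beta_{v,i}^*$, and the $e^*$/projection statements via Remark~\ref{rmk:e*e_is_identity}), with the same bookkeeping reappearing explicitly in the proof of Corollary~\ref{cor:explicit_formula}. Your computation that $\sum_i\alog(\beta_{v,i}')\wedge\alog(\beta_{v,i})$ maps to $\tfrac12\cdot 1_{H(v)}$, hence pairs with $F$ to $\tfrac12\Tr(F|H(v))$, and that on the $\H^1(\Graph)$ and $\H_1(\Graph)$ blocks the pairing is the trace pairing (the dual of $1\mapsto F|_{\H^1}$, resp.\ $1\mapsto F|_{\H_1}$), matches the paper's conventions and normalisations.
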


We now describe the local height explicitly in terms of the endomorphism. We denote by
$\pi :\Q \cdot \Edge{\Gamma }\to \H _1 (\Gamma )$ the orthogonal projection.

\begin{corollary}\label{cor:explicit_formula}
Given an endomorphism $F$ of $\Jac(X)$ which is fixed by the Rosati involution, with corresponding endomorphism $\gr^\M_\bullet F$ on $\LAlg^{\ab }_{\Q _{\ell }}$, the corresponding measure $\mu _F$ is given by
\[
\sum _{e\in\Edge{\Graph} }\frac{1}{\length (e)} \langle e,F(\pi (e)) \rangle \cdot |\d s_e |+\frac{1}{2}\sum _{v\in\Vert{\Graph}} \Tr (F|H(v))\cdot v.
\]
\end{corollary}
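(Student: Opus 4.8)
The plan is to derive Corollary~\ref{cor:explicit_formula} by substituting the explicit form of the quotient map $Z_* = c(F)^*$ from Lemma~\ref{lemma:gr_Qell1} into the general formula for $\mu_2$ from Construction~\ref{cons:measures}. Recall that $\mu_F := Z_*(\mu_2)$, where $Z_*\colon \gr^\W_{-2}\LAlg_{\Q_\ell} \to \Q_\ell$ is the $\Q_\ell$-linearisation of the $\Q_\ell(1)$-quotient associated to $F$. Since $X$ is projective here, $\rGraph$ has no half-edges, so the measure $\mu_2$ simplifies to
\[
\mu_2 = -\sum_{e\in\Edge{\rGraph}} \ad_{e^*}(N(e^*))\cdot |\d s_e| + \sum_{v\in\Vert{\rGraph}} \log(\delta_v)\cdot v,
\]
with $\log(\delta_v) = \sum_{i=1}^{g(v)}[\log(\beta_{v,i}'),\log(\beta_{v,i})]$. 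Applying $Z_*$ termwise, the corollary reduces to the two identities
\[
Z_*\bigl(\ad_{e^*}(N(e^*))\bigr) = -\tfrac1{\length(e)}\langle e, F(\pi(e))\rangle
\quad\text{and}\quad
Z_*\bigl(\log(\delta_v)\bigr) = \tfrac12\Tr(F|H(v)).
\]

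First I would handle the vertex terms, which are essentially immediate from Lemma~\ref{lemma:gr_Qell1}: that lemma states directly that the morphism $\bigwedgesquare H(v)\to\Q_\ell$ induced by $F$ sends $\sum_{i=1}^{g(v)}\alog(\beta_{v,i}')\wedge\alog(\beta_{v,i})$ to $\frac12\Tr(F|H(v))$; unravelling the identification of $\bigwedgesquare\LAlg^\ab$ with a subspace of $\gr^2_\Cent\LAlg$ via $v\wedge w\mapsto \frac12(vw-wv)$ (the convention fixed just before the lemma) and comparing with the definition $\log(\delta_v) = \sum_i[\log(\beta_{v,i}'),\log(\beta_{v,i})]$ gives the claim. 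Second, for the edge terms I would use Corollary~\ref{cor:computing_N(e*)}, which expresses $N(e^*) = \sum_{e'\in\plEdge{\rGraph}} \lambda_{e,e'}\cdot e' = \pi(\tfrac1{\length(e)}e)$ as the orthogonal projection of $\tfrac1{\length(e)}e$ to $\H_1(\Graph)$, together with Lemma~\ref{lemma:gr_Qell1}'s description of the induced map $\End(\H_1(\Graph))\to\Q_\ell$ as the dual of $1\mapsto F|_{\H_1(\Graph)}$. Concretely, $\ad_{e^*}(N(e^*)) = [e^*, \pi(\tfrac1{\length(e)}e)] \in \gr^\M_{-2}\gr^2_\Cent\LAlg$, and under the isomorphism \eqref{eqn:gr2_gr0} with $\gr^\M_0(\End(\LAlg^\ab))$ this element corresponds (up to the identification $e^*\leftrightarrow \pi(e)$, $\pi(e)\leftrightarrow -e^*$ spelled out via $\sigma$) to $\tfrac1{\length(e)}$ times the rank-one endomorphism $\pi(e)\otimes (\pi(e))^\vee$, i.e. $w\mapsto \tfrac1{\length(e)}\langle w,\pi(e)\rangle \pi(e)$ — wait, more carefully it pairs against $F$ to give $\tfrac1{\length(e)}\langle e, F(\pi(e))\rangle$ after using that the pairing restricted to the edge $e$ picks out $\langle e,-\rangle$ and $\pi$ is self-adjoint. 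The sign works out because the isomorphism sends $\pi(e)\mapsto -e^*$, cancelling the overall minus sign in $\mu_2$.

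The main obstacle — and the only step requiring genuine care — is tracking signs and the precise form of the duality isomorphism \eqref{eqn:gr2_gr0} through the composition. One must match three conventions: the factor of $\frac12$ in the wedge-to-tensor map $\bigwedgesquare V\to V^{\otimes 2}$, the definition of the relation element $\sigma$ in \eqref{eqn:relation} and the resulting self-duality $(\LAlg^\ab)^*\xrightarrow{\sim}\LAlg^\ab$ (which by Remark~\ref{rmk:e*e_is_identity} sends $e^*$ to $\pi(e)$ and $\pi(e)$ to $-e^*$), and the identification of $\H^1(\Graph)\otimes\H_1(\Graph)$ with $\End(\H_1(\Graph))$. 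The cleanest way to organise this is to choose an orthogonal basis of $\H^1(\Graph)$ with respect to the cycle pairing (as suggested in Definition~\ref{def:N_on_HAlg} and the proof of Corollary~\ref{cor:graph_w-m}), express everything in coordinates, and verify the two identities on basis elements; the general case then follows by bilinearity. Once the two termwise identities are established, assembling them into the stated formula for $\mu_F$ is purely formal, completing the proof via Theorem~\ref{thm:description_of_kummer} and Theorem~\ref{thm:description_of_graph_kummer} exactly as in the proof of Corollary~\ref{not_even_cor}.
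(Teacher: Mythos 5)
Your proposal follows the paper's own proof essentially step for step: both obtain $\mu_F$ as the image of $\mu_2$ under the surjection $\gr^\W_{-2}\V_{\Q_\ell}\to\Q_\ell$ determined by $F$ (via Theorem~\ref{thm:description_of_graph_kummer}), use Lemma~\ref{lemma:gr_Qell1} to describe that surjection, and evaluate it termwise on $\ad_{e^*}(N(e^*))$ (using Corollary~\ref{cor:computing_N(e*)}) and on $\log(\delta_v)$. The one bookkeeping point the paper makes explicit and you gloss over is that $\ad_{e^*}(N(e^*))$ contributes equal halves through $\End(\H_1(\Graph))$ and $\End(\H^1(\Graph))$, combined using that the image of $F$ lies in $\bigwedgesquare\LAlg^{\ab}_{\Q_\ell}$ (Rosati-fixedness); this is exactly the self-adjointness step your proposed orthogonal-basis verification would need to carry out.
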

\begin{proof}
By Theorem \ref{thm:description_of_graph_kummer} the measure $\mu _F $ is the image of 
\[
\mu_2:=-\sum_{e\in\Edge{\rGraph}}\ad_{e^*}(N(e^*))\cdot|\d s_e|+\sum_{v\in\Vert{\rGraph}}\log(\delta_v)\cdot v+\sum_{e\in\HEdge{\rGraph}}\log(\delta_e)\cdot e
\]
under the surjection $\gr^\W_{-2}\V _{\Q _{\ell }}\otimes\bMeas^0(\Graph)\to \bMeas^0(\Graph)_{\Q _{\ell }}$ defined by $F$. We use the description of this surjection give in Lemma \ref{lemma:gr_Qell1}. Note that the dual of a morphism
$
F:\Q _{\ell }\to \mathrm{End} (\H _1 (\Gamma )_{\Q _{\ell }})
$
is given by the morphism 
\[
\mathrm{End} (\H _1 (\Gamma )_{\Q _{\ell }})\to \Q _{\ell }
\]
sending $G$ to $\Tr (F\circ G)$. 

In $\End (\H _1 (\Gamma ))$, $\ad _{e^* }(N(e^* ))$ is sent to $\frac{1}{2}N(e^* )\otimes \pi (e)^* $.  In $\End (\H ^1 (\Gamma ))$, $\ad _{e^* }(N(e^* ))$ is sent to $\frac{1}{2}e^* \otimes N(e^* )$. In $\End (H(v))$, $\log (\delta _e )$ is sent to $\frac{1}{2}\sum_{i=1}^{g(v)}\alog(\beta_{v,i})^* \otimes \alog(\beta_{v,i})+\alog(\beta_{v,i}')^* \otimes \alog(\beta_{v,i}')$, which is $\frac{1}{2}\cdot 1_{H(v)}$, where $1_{H(v)}$ denotes the idempotent projection onto $H(v)$.

Note that the image of $N(e^* ) \otimes e^* $ in $\End (\H_1 (\Gamma ))$ is, by definition, $\frac{1}{\length (e)}\cdot 1_{\pi (e)}$, where $1_{\pi (e)}$ denotes the idempotent projection onto the subspace spanned by $\pi (e)$. We have $\Tr (F\circ 1_{\pi (e)})=\langle \pi (e), F(\pi (e)) \rangle $, and (since $\H _1 (\Gamma )$ is an orthogonal direct summand)  $\langle \pi (e), F(\pi (e)) \rangle  =\langle e, F(\pi (e)) \rangle $. The image of $e^* \otimes N(e^* )$ in $\End (\H ^1 (\Gamma ))$ is given by the dual of $\frac{1}{\length (e)}\cdot 1_{\pi (e)}$, and since the image of $F$ in $(\mathcal{L}^{\ab }_{\Q _{\ell }})^{\otimes 2}$ lands in $\bigwedgesquare \mathcal{L}^{\ab }_{\Q _{\ell }}$, we have $\Tr (1_{\pi (e)}\circ F)= \Tr (1_{\pi (e)^* } \circ F)$.
\end{proof}

\begin{remark}
By \cite[\S 5.2 and Theorem 6.3]{BalakrishnanDogra1}, if $Z$ comes from a correspondence $C\subset X\times X$, then the function $\classify_C\colon X(\Q _p )\to \Q _{\ell }$ may be identified with the following function. For $b,z\in X(K)$, let $D_C (b,z)\in \Div ^0 (X)(\overline{K})$ denote the divisor $(i_{\Delta }^* -i_1 ^* -i_2 ^* )(C)$, where $i_1 ,i_2 ,i_{\Delta }:X\hookrightarrow X\times X$ are the closed immersions given by $\{ b\} \times X,X\times \{z\}$ and the diagonal respectively. Then, for all $z$ disjoint from the support of $i_{\Delta }^* (C)$, $\classify_C$ is equal to
\[
z\mapsto h_p (z-b,D_C (b,z)),
\]
up to a scalar. Hence we obtain a formula for this height pairing in terms of harmonic analysis on graphs.
\end{remark}

\subsection{Examples}\label{ss:examples}
\subsubsection{The case of a tree}
If $\Graph $ is a tree, then $\LAlg^{\ab }_{\Q_\ell} \simeq \bigoplus _v H(v)$, and we deduce that
\[
\mu _F =\frac{1}{2}\sum \Tr (F|H(v))\cdot v.
\]
In particular, this covers the case of $X_{\mathrm{ns} }^+ (p)$ when $p>3$ is congruent to 3 modulo 4, by Edixhoven--Parent \cite[Theorem 3.5]{EdixhovenParent}. We deduce that if $F$ is an endomorphism which acts with trace zero on the \'etale cohomology of each irreducible component of the special fibre of $X_{\mathrm{ns} }^+ (p)$, then $\mu _F =0$, and the function $j_F$ is trivial.

\subsubsection{The case of a banana}
Suppose $\rGraph $ is a banana graph (i.e. $\Vert{\Gamma}=\{ v,w\}$ and $\partial _0 (e)\neq \partial _1 (e)$ for all edges $e$) with edges of possibly varying lengths and both vertices having genus zero. For example, these graphs arise as the the reduction graphs of $X_0 (p)$ at primes above $p$, by Deligne--Rapoport \cite{DeligneRapoport}. Then one can calculate the projection $\pi (e)$ explicitly, giving the formula
\[
\mu _F =\sum _{e\in E(\Gamma ) }\frac{1}{l(e)}\langle e,F(e-\sum _{e' \in \Edge{\Gamma } }\frac{1}{\nu \cdot \length (e')}e')\rangle \cdot |\d s_e| ,
\]
where $\nu :=\sum _{e' \in \Edge{\Gamma }}\length (e')^{-1} $.

\subsubsection{Banana graphs of higher genus}
Suppose we have $n+2$ vertices $v_1, v_2 ,h_1 ,\ldots ,h_n$, with each $h_i$ connected to $v_1 $ and $v_2 $ by edges of length $\ell$, and suppose that now the $v_i$ may have higher genus. To compute the pseudo-inverse of the Laplacian, we can replace the underlying graph with a banana graph $\Gamma '$ with $n$ edges, giving
\[
\mu _F =\frac{1}{2}\sum \Tr (F|H(v))\cdot v+\sum _{e\in E(\Gamma ' ) }\frac{1}{l(e)}\langle e,F(e-\sum _{e' \in \Edge{\Gamma } }\frac{1}{\nu \cdot \length (e')}e')\rangle \cdot |\d s_e| .
\]
This describes $j_F$ on $X_{\mathrm{ns}}^+ (p)$ when $p\cong 1 mod 4$, by \cite[Theorem 3.5]{EdixhovenParent}.

To use these formulae for explicit computations on the modular curves above, one needs a way of computing the action of an endomorphism on the homology of the dual graph. In the case of modular curves one usually takes the endomorphism $F$ to be a linear combination of Hecke operators. Algorithms for computing the action of $F$ on the homology of the dual graph are then well-known (see e.g. \cite[\S 3]{edixhoven1989stable}).

\subsection{Relation to the canonical measure when $X$ is projective minus a point}\label{ss:proj_minus_pt}
Suppose $X$ is non-projective, and $\overline{X}-X=\{ x_0 \}$ for $x_0\in\overline X(K)$. Then the reduction graph  $\rGraph$ is a reduction graph with exactly one half-edge $e_0 $, located at the vertex $v_0 =\red(x_0)$. The discussion of $\Q _{\ell }(1)$-quotients in this case is very similar to section \ref{ss:application_QC}, except that because the fundamental group is free, there is no `trace zero' condition for the endomorphisms. We have an isomorphism
\begin{equation}\label{eqn:iso_punctured}
\gr ^2 _\Cent U\simeq \bigwedgesquare U_1  \subset  \H _1 ^{\et }(X_{\overline{K}},\Q _{\ell })^{\otimes 2}.
\end{equation}
We have a surjection
\[
\bigwedgesquare U_1 \to \Q _{\ell }(1)
\]
defined as the composite
\[
\bigwedgesquare U_1 \hookrightarrow \H _1 ^{\et }(X_{\overline{K}},\Q _{\ell })\otimes \H ^1 _{\et } (X_{\overline{K}},\Q _{\ell })(1)\to \Q _{\ell }(1)
\]
where the first map is the composite of \eqref{eqn:iso_punctured} with Poincar\'e duality and the second is $\Q _{\ell }(1)\otimes $ the trace (this is just the construction from the previous subsection, with the endomorphism taken to be the identity).
Since every surjection $\gr ^2 _\Cent U\to \Q _{\ell }(1)$ is a linear combination of this surjection and one of the ones from trace zero endomorphisms discussed above, in this subsection we focus on discussing this case, and the relation with the canonical measure defined by Zhang and Chinburg--Rumely.

Arguing as in the proof of Corollary \ref{cor:explicit_formula}, we see that the image of $\mu _2 $ is given by
\begin{equation}\label{eq:mu_Z_primitive}
\mu_Z:=\sum _{e\in E(\Gamma )}\frac{1}{\length (e)}\langle e,\pi (e) \rangle \cdot |\d s_e| +\sum _v g(v) \cdot v -g(X)\cdot e_0 .
\end{equation}

We now recall the definition of the canonical measure, following Zhang \cite{zhang}.
For $u,v \in \Vert \Graph $, we let $r(u,v)$ denote the resistance between $u$ and $v$, viewing $\Graph $ as an electrical circuit with edges $\Edge{\Graph}^+$, where an edge $e$ has resistance $\length (e)$ (we forget about half-edges in this definition).

For an edge $e\in \Edge{\Graph}$, we define $R_e$ to be the resistance $r_{\Graph -e^{\pm 1}}(\source (e),\target (e))$ in the metrised graph $\Graph -e^{\pm 1}$ obtained by deleting the edge $e$ and its inverse (if $\Graph-e^{\pm1}$ is disconnected, we set $R_e=\infty$). Formally, we can define this, as in \cite[\S 3]{zhang} or \cite{CR}, as follows: let $g_p (q,\cdot)$ be the unique function in $\Omega _{\log }^0 (\Graph )$ satisfying
\begin{itemize}
\item $\nabla ^2 g_p (q,\cdot) =q-p $;
\item $g_p (q,p)=0$.
\end{itemize}
Then $r(p,q):=g_p (q,q)$. Note from this definition that $g_z(x,y)=\langle x-z,y-z\rangle$ with $\langle\cdot,\cdot\rangle$ the height pairing on divisors, so that $r(x,y)=\langle x-y,x-y\rangle$.

We have the identity
\begin{equation}\label{eqn:zhang_resist}
\frac1{\langle\partial(e),\partial(e)\rangle}=\frac{1}{r(\source (e),\target (e))}=\frac{1}{R_e} +\frac{1}{\length (e)}.
\end{equation}
(see \cite[Proposition 3.3]{zhang}).

In \cite[Theorem 2.11]{CR}, the \textit{canonical measure} $\mu _{\can }$ of a metrised graph is defined. It is proved that $\mu _{\can }$ is a measure of total mass $1$ and satisfies the identity
\begin{equation}\label{eq:canonical_identity}
\mu _{\can }=\sum _{v\in\Vert{\Graph}}\left(1-\frac{1}{2}\val (v)\right)\cdot v +\sum _{e\in \Edge{\Graph} }\frac{|\d s_e| }{l(e)+R_e },
\end{equation}
so that the discrete part of $\mu_\can$ is $-\frac12K_\Graph$ with $K_\Graph=\sum_v(\val(v)-2)\cdot v$ the canonical divisor of the graph $\Graph$ \cite[\S2.1]{zhang}. Combining ~\eqref{eq:mu_Z_primitive} and~\eqref{eq:canonical_identity} we deduce the following relation between the canonical measure and $\mu _Z$.

\begin{lemma}
The measure $\mu _Z$ from \eqref{eq:mu_Z_primitive} satisfies
\[
\mu_Z=\mu_\can + \frac12K_{\rGraph} - g(X)\cdot e_0
\]
where $K_{\rGraph}:=\sum_v(2g(v)+\val(v)-2)\cdot v$ denotes the canonical divisor of the reduction graph $\rGraph$ (cf.\ \cite[\S2.1]{zhang}).
\end{lemma}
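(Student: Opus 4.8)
The plan is to prove the identity by comparing the two formulas~\eqref{eq:mu_Z_primitive} and~\eqref{eq:canonical_identity} term-by-term, using the resistance identity~\eqref{eqn:zhang_resist} to translate the ``circuit-theoretic'' coefficients $\frac1{\length(e)+R_e}$ of the canonical measure into the height-pairing coefficients $\frac1{\length(e)}\langle e,\pi(e)\rangle$ appearing in $\mu_Z$. The statement is an equality of measures in $\bMeas^0(\rGraph)$, so it suffices to check separately the density on each edge and the mass at each vertex (the half-edge contribution is manifestly $-g(X)\cdot e_0$ on the left, and $K_{\rGraph}$ has no half-edge component, so the only half-edge term on the right is the explicit $-g(X)\cdot e_0$).

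First I would handle the edge densities. On an edge $e$ the density of $\mu_Z$ is $\frac1{\length(e)}\langle e,\pi(e)\rangle\cdot|\d s_e|$, where $\pi$ is the orthogonal projection $\Q\cdot\mnEdge{\Graph}\to\H_1(\Graph)$. By Lemma~\ref{lem:height_vs_cycle} the complementary projection to $\Div^0(\Graph)$ sends $e$ to $\partial^*(\partial(e))$ up to the factor $\length(e)$, and more precisely $\langle e,\pi(e)\rangle = \langle e,e\rangle - \langle e, (1-\pi)(e)\rangle = \length(e) - \frac{\langle\partial(e),\partial(e)\rangle}{\length(e)}\langle\partial(e),\partial(e)\rangle\cdot(\text{normalisation})$; the clean way to organise this is to note that $\pi$ restricted to the line $\Q\cdot e$ has the explicit form coming from Corollary~\ref{cor:computing_N(e*)}, giving $\frac1{\length(e)}\langle e,\pi(e)\rangle = 1 - \frac{\langle\partial(e),\partial(e)\rangle}{\length(e)} = 1 - \frac{r(\source(e),\target(e))}{\length(e)}$, using $r(\source(e),\target(e))=\langle\partial(e),\partial(e)\rangle$ as recorded after~\eqref{eqn:zhang_resist}. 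Now~\eqref{eqn:zhang_resist} gives $\frac1{r(\source(e),\target(e))} = \frac1{R_e}+\frac1{\length(e)}$, from which a short manipulation yields $1 - \frac{r(\source(e),\target(e))}{\length(e)} = \frac{\length(e)}{\length(e)+R_e}\cdot\frac1{\length(e)}\cdot\length(e)$; carrying out this routine algebra shows the edge density of $\mu_Z$ equals $\frac1{\length(e)+R_e}\cdot|\d s_e|$, which is exactly the edge density of $\mu_\can$ in~\eqref{eq:canonical_identity}, and $K_{\rGraph}$ and $g(X)\cdot e_0$ contribute nothing on edges. This settles the edge part.

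Next I would compare the vertex masses. The vertex mass of $\mu_Z$ at $v$ is $g(v)$ by~\eqref{eq:mu_Z_primitive}. The vertex mass of the right-hand side at $v$ is the sum of the mass of $\mu_\can$, which is $1-\tfrac12\val(v)$ by~\eqref{eq:canonical_identity}, and $\tfrac12$ times the coefficient of $v$ in $K_{\rGraph}=\sum_v(2g(v)+\val(v)-2)\cdot v$, namely $\tfrac12(2g(v)+\val(v)-2)$. Adding these gives $\bigl(1-\tfrac12\val(v)\bigr)+\bigl(g(v)+\tfrac12\val(v)-1\bigr)=g(v)$, matching. Since all three pieces (edge densities, vertex masses, half-edge masses) agree, the two measures coincide. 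The only genuinely delicate point is getting the edge-density computation honest: one must be careful about the conventions relating $\langle\partial(e),\partial(e)\rangle_\Graph$ (the height pairing on divisors), $r(\source(e),\target(e))$, and the normalisation of the orthogonal projection $\pi$ — in particular that $\pi$ here is projection in $\Q\cdot\mnEdge{\Graph}$ with its pairing where $\langle e,e\rangle=\length(e)$, not in an unweighted inner product. I expect this bookkeeping — reconciling the sign and normalisation conventions of \S\ref{s:cheng-katz}, \S\ref{s:harmonic_analysis} and Zhang's paper — to be the main (and only) obstacle; once it is pinned down the identity falls out of~\eqref{eqn:zhang_resist} immediately.
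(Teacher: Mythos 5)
Your overall strategy — prove the identity by termwise comparison of~\eqref{eq:mu_Z_primitive} with~\eqref{eq:canonical_identity}, using~\eqref{eqn:zhang_resist} on edges and a trivial check at vertices and on the half-edge — is exactly the proof the paper intends (the paper offers nothing beyond "combining the two formulas"), and your vertex-mass computation $(1-\tfrac12\val(v))+\tfrac12(2g(v)+\val(v)-2)=g(v)$ is correct.

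The gap is in the edge-density step, which you yourself single out as the only real content and then get wrong. With the length-weighted pairing you explicitly insist on ($\langle e,e\rangle=\length(e)$), one has $\langle e,\pi(e)\rangle=\langle\pi(e),\pi(e)\rangle=\length(e)-\langle\partial(e),\partial(e)\rangle=\length(e)-r(\source(e),\target(e))$, and then~\eqref{eqn:zhang_resist} gives
\[
\frac{1}{\length(e)}\langle e,\pi(e)\rangle \;=\; 1-\frac{r(\source(e),\target(e))}{\length(e)} \;=\; \frac{\length(e)}{\length(e)+R_e},
\]
which is $\length(e)$ times the density $\frac{1}{\length(e)+R_e}$ of $\mu_\can$, not equal to it; your displayed ``routine algebra'' (whose intermediate expression already simplifies to $\frac{\length(e)}{\length(e)+R_e}$) silently discards this factor. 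The identity only closes if the edge coefficient in~\eqref{eq:mu_Z_primitive} is read as $\frac1{\length(e)}e^*(\pi(e))=\frac{\langle\pi(e),\pi(e)\rangle}{\length(e)^2}$, i.e.\ Foster's coefficient $e^*(\pi(e))=\frac{\length(e)}{\length(e)+R_e}$ divided by $\length(e)$ — which is precisely the opposite of the normalisation you settle on in your final sentence. A cheap way to pin this down, which you do not use, is the total-mass constraint: $\mu_Z$ is the image of $\mu_2$ and so has total mass $0$; the correct density integrates over the edges to $\sum_{e\in\plEdge{\Graph}}\frac{\length(e)}{\length(e)+R_e}=g(\Graph)$ (Foster's theorem), exactly cancelling $\sum_vg(v)-g(X)=-g(\Graph)$, whereas your density would give the dimensionally wrong total $\sum_{e\in\plEdge{\Graph}}\frac{\length(e)^2}{\length(e)+R_e}$. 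So the skeleton of the argument matches the paper, but the one step carrying the content is not established as written.
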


\subsection{A refinement of the Chabauty--Kim obstruction for $S$-integral points}
The piecewise polynomial structure of the non-abelian Kummer map has the following curious consequence. Let $\overline{X}/\Q$ be the smooth compactification of a smooth hyperbolic curve $X/\Q $, and define $D:=\overline{X}-X$. Let $\overline{\mathcal{X}}$ be the minimal regular model of $\overline{X}$, and $\mathcal{X}=\overline{\mathcal{X}}-\mathcal{D}$, where $\mathcal{D}$ is a reduced horizontal divisor with fibre $D$ over $\Q $. Let $T_0 $ be the set of primes of bad reduction for $X$ (i.e. the primes for which $\overline{\mathcal{X}}$ has bad reduction or $\mathcal{D}$ is not \'etale), let $S$ be a finite set of primes, let $\ell $ be a prime not in $T_0 \cup S$ and let $T=T_0 \cup S \cup \{ \ell\}$. Let $G_{\Q ,T}$ denote the maximal quotient of $G_\Q$ unramified outside $T$.

The Chabauty--Kim method aims to locate $\mathcal{X}(\Z_S )$ inside $\prod _{p\in S}X(\Q _p )\times $ \\$\prod _{p\in T-S}\mathcal{X}(\Z_p )$ using the commutative diagram
\begin{center}
\begin{tikzcd}
\mathcal X(\Z_S) \arrow{r}{\classify}\arrow[hook]{d} & \H^1(G_{\Q,T},U_n) \arrow{d}{\prod\loc_p} \\
\prod_{p\in S}X(\Q_p)\times\prod_{p\in T-S}\mathcal X(\Z_p) \arrow{r}{\prod\classify_{n,p}} & \prod_{p\in T}\H^1(G_{\Q_p},U_n).
\end{tikzcd}
\end{center}
One can use this diagram to define a subset $\mathcal{X}(\Z_{\ell } )_{S,n}$ of $\mathcal{X}(\Z_{\ell })$ as follows. For $p$ in $T_0 -S$, the map $j_{n,p}$ has finite image. One defines $\mathcal{X}(\Z_{\ell } )_{S,n}$ to be the subset of $\mathcal{X}(\Z_{\ell } )$ consisting of points $x_{\ell } $ which extend to a point $(x_p )_{p\in T-S}\in\prod_{p\in T-S}\mathcal X(\Z_p)$ whose image in $\prod _{p\in T-S}\H^1 (G_{\Q _p },U_n )$ lies in the image of $\H ^1 (G_{\Q ,T},U_n )$. By Kim \cite[\S 3]{selmer_varieties}, $\mathcal{X}(\Z_{\ell } )_{S,n}$ will be finite whenever 
\begin{equation}\label{bad}
\sum _{i=1}^n \dim \H ^1 _{f,S}(G_{\Q ,T},\gr ^i _\Cent U)< \sum _{i=1}^n \dim \H ^1 _f (G_{\Q _{\ell } },\gr ^i _\Cent U),
\end{equation}
where $\gr ^i _\Cent U$ denotes the graded pieces of the $\Q_\ell$-pro-unipotent \'etale fundamental group with respect to the descending central series.

By Theorem \ref{thm:description_of_graph_kummer}, we know that, for all $p\in S$, the Zariski closure $j_{n,p}(X(\Q _p ))^{\mathrm{Zar}}$ of $j_{n,p}(X(\Q _p ))$ in $\H ^1 (G_{\Q _p },U_n )$ is at most one dimensional (and exactly one dimensional for all $n\geq 2$ if $X$ has a $\Q_p$-rational cusp). Hence, we define $\mathcal{X}(\Z_{\ell } )_{S,n}^{\min }$ to be the subset of $\ell $-adic points $x_{\ell } $ such that $\classify_{n,\ell}(x_\ell)$ extends to a tuple $(c_p)_{p\in T}$ which lies in the Zariski closure of the image of $\H^1(G_{\Q,T},U_n)$ and each $c_p$ lies in the Zariski closure of the image of $\classify_{n,p}$ (i.e.\ $\classify_{n,p}(X(\Q_p))^\Zar$ for $p\in S$ and $\classify_{n,p}(\mathcal X(\Z_p))^\Zar$ for $p\in T-S$). 
\begin{lemma}
$\mathcal{X}(\Z_{\ell })_{S,n}^{\min }$ is finite whenever
\begin{equation}\label{good}
\#S + \sum _{i=1}^n \dim \H ^1 _{f}(G_{\Q ,T},\gr ^i _\Cent U)< \sum _{i=1}^n \dim \H ^1 _f (G_{\Q _{\ell } },\gr ^i _\Cent U).
\end{equation}
\end{lemma}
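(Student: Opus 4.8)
The statement to prove is that $\mathcal X(\Z_\ell)_{S,n}^{\min}$ is finite whenever the inequality~\eqref{good} holds. The strategy mirrors Kim's dimension-count argument establishing finiteness of $\mathcal X(\Z_\ell)_{S,n}$ under~\eqref{bad}, but now I would incorporate the extra constraints at the primes $p \in S$ coming from Theorem~\ref{thm:description_of_graph_kummer} (via Proposition~\ref{prop:kummer_bounds}\eqref{proppart:dimension_bound}). First I would recall that $\mathcal X(\Z_\ell)_{S,n}^{\min}$ is, by definition, the set of $\ell$-adic points $x_\ell$ such that $\classify_{n,\ell}(x_\ell)$ extends to a tuple $(c_p)_{p\in T}$ lying in the scheme-theoretic image $\overline{\mathrm{loc}}\bigl(\H^1(G_{\Q,T},U_n)\bigr)^\Zar$ inside $\prod_{p\in T}\H^1(G_{\Q_p},U_n)$, subject to the additional conditions $c_p \in \classify_{n,p}(X(\Q_p))^\Zar$ for $p\in S$ and $c_p\in\classify_{n,p}(\mathcal X(\Z_p))^\Zar$ for $p \in T\setminus S$. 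Thus $\mathcal X(\Z_\ell)_{S,n}^{\min}$ is contained in the preimage under $\classify_{n,\ell}$ of the scheme-theoretic image of a certain subscheme $Z \subseteq \H^1(G_{\Q,T},U_n)$ — namely the locus of global classes that are ``everywhere locally geometric'' — under the localisation map $\mathrm{loc}_\ell\colon \H^1(G_{\Q,T},U_n)\to \H^1(G_{\Q_\ell},U_n)$.

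**Key steps.** (1) Bound $\dim Z$ from above. The defining conditions at $p\in T_0\setminus S$ cut down to a finite set (so contribute dimension $0$), and the conditions at $p\in S$ force $c_p$ into the at-most-$1$-dimensional variety $\classify_{n,p}(X(\Q_p))^\Zar$ by Proposition~\ref{prop:kummer_bounds}\eqref{proppart:dimension_bound} — here is where the hypothesis that $\ell\notin S\cup T_0$ and the explicit graph-theoretic description of the local Kummer maps enter. Concretely, $Z$ is the intersection of $\H^1(G_{\Q,T},U_n)$ with $\prod_{p\in T}$ of these local geometric loci; computing in the associated graded and using that $\H^1_f$ conditions are defined by linear subspaces, one gets
\[
\dim Z \;\leq\; \sum_{i=1}^n\dim_{\Q_\ell}\H^1_{f}(G_{\Q,T},\gr^i_\Cent U) \;+\; \#S,
\]
the $\#S$ accounting for one extra dimension of freedom at each bad place in $S$ (the codimension of the ``unramified at $p$'' condition dropping by one when we only require the weaker ``locally geometric'' condition). (2) Combine this with the hypothesis~\eqref{good}, which says precisely that the right-hand side of the above inequality is strictly less than $\sum_{i=1}^n\dim_{\Q_\ell}\H^1_f(G_{\Q_\ell},\gr^i_\Cent U) = \dim \classify_{n,\ell}(\mathcal X(\Z_\ell))^\Zar$. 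Therefore $\dim Z < \dim \H^1_f(G_{\Q_\ell},U_n)$, so the scheme-theoretic image $\mathrm{loc}_\ell(Z)$ is a proper closed subscheme of $\classify_{n,\ell}(\mathcal X(\Z_\ell))^\Zar$. (3) Conclude via the standard $p$-adic-analytic argument (as in \cite[\S3]{selmer_varieties}): $\classify_{n,\ell}\colon\mathcal X(\Z_\ell)\to\H^1_f(G_{\Q_\ell},U_n)$ has Zariski-dense, hence analytically non-thin, image, so the preimage of any proper closed subscheme is a finite union of residue discs intersected with zero loci of nonzero $\ell$-adic analytic functions, hence finite.

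**Main obstacle.** The crux — and the only place requiring genuine care beyond bookkeeping — is Step (1): justifying that imposing the ``locally geometric at $p$'' condition for $p\in S$ (rather than ``unramified at $p$'') increases the dimension by at most $1$ per prime, and that the conditions at $p \in T_0\setminus S$ still contribute $0$. This is a linear-algebra computation in $\H^1$ of $G_{\Q,T}$ and the local Galois groups, comparing the Selmer-type conditions used in~\eqref{bad} with the refined ones; one must check that the local geometric locus $\classify_{n,p}(X(\Q_p))^\Zar$, being $\leq 1$-dimensional by Proposition~\ref{prop:kummer_bounds}, contributes at most $1$ to the fibre dimension of $Z\to\prod_{p\notin S}(\text{finite})$, and that this is compatible with passing to graded pieces $\gr^i_\Cent U$ as in the derivation of~\eqref{eq:selmer_ineq}. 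Everything else is a direct transcription of the finiteness mechanism already used to deduce finiteness from~\eqref{bad}, now fed the sharper input~\eqref{good}.
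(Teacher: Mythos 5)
Your overall architecture matches the paper's: reduce finiteness to showing that the refined Selmer locus has dimension strictly less than $\dim\H^1_f(G_{\Q_\ell},U_n)$, invoke the bound $\#S+\sum_{i=1}^n\dim\H^1_f(G_{\Q,T},\gr^i_\Cent U)$, and finish by the standard Kim/Coleman non-dominance argument. The problem is that the inequality $\dim Z\leq\#S+\sum_i\dim\H^1_f(G_{\Q,T},\gr^i_\Cent U)$ \emph{is} the lemma; everything else is routine, and you have asserted it rather than proved it. You flag it yourself as the ``main obstacle'', but the justification you sketch --- ``computing in the associated graded and using that $\H^1_f$ conditions are defined by linear subspaces'', with the locally-geometric condition at $p\in S$ ``dropping the codimension by one'' --- does not go through as stated. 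The set $\H^1(G_{\Q,T},U_n)$ is a non-abelian cohomology scheme, not a vector space, so one cannot compare codimensions of local conditions by passing to graded pieces directly; and the local geometric loci $\classify_{n,p}(X(\Q_p))^\Zar$ for $p\in S$ are $\leq1$-dimensional subvarieties (typically nonlinear, cf.\ the locus~\eqref{eqn:zar_closure_jnp} for $\P^1\setminus\{0,1,\infty\}$), so they are not cut out by linear conditions on $\gr^i_\Cent U$-valued cohomology.

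The paper closes exactly this gap by a fibration-plus-twisting device which your sketch is missing. One fibres $\Sel_{S,U_n}^{\min}(\mathcal X)$ over the finite-type scheme $\prod_{p\in T\setminus\{\ell\}}\im(\classify_{n,p})^\Zar$, which has dimension $\#S$ (finite images at $p\in T_0\setminus S$ by Proposition~\ref{prop:kummer_bounds}\eqref{proppart:size_bound}, $\leq1$-dimensional images at $p\in S$). The fibre over an $F$-point $(c_p)$ is identified (as in \cite[Lemma~3.1]{dogra2019unlikely}) with $\H^1_f(G_{\Q,T},(U_n)^c_F)$, the Selmer scheme of the \emph{twist} of $U_n$ by a global class $c$ lifting $(c_p)$. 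Its dimension is then bounded by $\sum_{i=1}^n\dim\H^1_f(G_{\Q,T},\gr^i_\Cent(U)^c_F)$ via the usual torsor structure along the central series, and the crucial point making the bound uniform in $c$ is that $U_n$ acts trivially by conjugation on $\gr^i_\Cent U$, so the twist does not change the graded pieces and $\H^1_f(G_{\Q,T},\gr^i_\Cent(U)^c_F)\cong\H^1_f(G_{\Q,T},\gr^i_\Cent U)_F$. Without this twisting step (or an equivalent substitute) your claimed dimension bound, and hence the lemma, is not established.
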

\begin{proof}
Let $\Sel_{S,U_n}^{\min}(\mathcal X) \subset \H^1 (G_{\Q ,T},U_n )$ denote the fibre of $\prod_{p\in T}\im(\classify_{n,p})^\Zar$ under the map $\prod _{p\in T}\loc _p $. Then it will be enough to show that the dimension of $\Sel_{S,U_n}^{\min}(\mathcal X)$ is less than
\[
\dim \H^1 _f (G_{\Q _{\ell }},U_n )= \sum _{i=1}^n \dim \H ^1 _f (G_{\Q _{\ell } },\gr ^i _\Cent U)
\]
whenever \eqref{good} is satisfied. The $\Q_\ell$-scheme $\prod _{p\in T-\{\ell \}}\im(\classify_{n,p})^\Zar$ is finite type of dimension $\# S$. Let $(c_p )\in 
\prod _{p\in T-\{\ell \}}\im(\classify_{n,p})^{\Zar }(F)$ be an $F$-point in the image of $\Sel (U_n )^{\min }$ for some finite extension $F|\Q_\ell$. 
Enlarging $F$ if necessary, let $c\in \Sel_{S,U_n}^{\min}(\mathcal X)(F)$ be a point in the fibre of $(c_p)$, for some finite extension $F|\Q _{\ell }$.
Arguing exactly as in \cite[Lemma 3.1]{dogra2019unlikely},  we see that the fibre of $(c_p)$ in $\Sel_{S,U_n}^{\min}(\mathcal X)$ is isomorphic, over $F$, to
$
\H^1 _f (G_{\Q ,T},(U_n )_F ^c )
$
where $(U_n )_F ^c$ is the twist of $U_n$, base changed to $F$, by $c$. For all $2\leq i\leq n$, $\H ^1 _f (G_{\Q ,T},(U_i )_F ^c )$ is an $\H^1 _f (G_{\Q ,T},\gr _\Cent ^i (U)_F ^c )$-torsor over $\H ^1 _f (G_{\Q ,T},(U_{i-1} )_F ^c )$, hence
\[
\dim_F \H^1 _f (G_{\Q ,T},(U_n )_F ^c)\leq \sum _{i=1}^n \dim_F \H ^1 _f (G_{\Q ,T},\gr _\Cent ^i (U)_F ^c ).
\]
Since the action of $U_n$ on itself by conjugation is trivial on $\gr ^i _{\Cent }$, we have 
\[
\H^1 _f (G_{\Q _{\ell }},\gr ^i _{\Cent }(U)_F ^c )\iso \H^1 _f (G_{\Q _{\ell }},\gr ^i _{\Cent }U)_F .
\] Hence the dimension of $\Sel_{S,U_n}^{\min}(\mathcal X)$ is bounded by $\#S + \sum _{i=1}^n \dim \H ^1 _{f}(G_{\Q ,T},\gr ^i _\Cent U)$, as required.
\end{proof}

As illustrated below, there are several simple examples where \eqref{good} is satisfied but \eqref{bad} is not.
In the examples below, there were already alternative approaches to proving algebraicity of the maps $\classify_{n,p}$, however they are the simplest non-trivial examples of the phenomenon.

\subsubsection{Example: $\mathbb{P}^1 -\{ 0,1,\infty \}$}\label{sss:P1_finite}
In this case, $j_{n,p}\simeq j_{1,p}$ for all $p\in S$ and $n\geq 1$. If we identify the map $j_{1,p}$ with the map
\begin{align*}
\Q _p ^\times -\{ 1\} & \to \mathbb{A}^2 _{\Q _{\ell }}, \\
x & \mapsto (v_p (x),v_p(1-x)),
\end{align*}
then the Zariski closure of $j_{n,p}(X(\Q _p ))$ is simply 
\begin{equation}\label{eqn:zar_closure_jnp}
\{0 \}\times \mathbb{A}^1 _{\Q _{\ell } }\cup \mathbb{A}^1 _{\Q _{\ell } } \times \{0\} \cup \Delta _{\mathbb{A}^1 _{\Q _{\ell } }},
\end{equation}
(of course here the fact that $j_{n,p}$ is piecewise polynomial is elementary).
For example, when $\# S=n=1$, then $\mathcal{X}(\Z_{\ell })_{S,n}$ will equal $\mathcal{X}(\Z_{\ell })$, whereas $\mathcal{X}(\Z_{\ell })_{S,n}^{\min}$ will be finite, equal to the set of $x\in \mathcal{X}(\Z_{\ell })$ satisfying
\[
\log _{\ell }(x)\log _{\ell }(1-x)(\log _{\ell }(x)-\log _{\ell}(1-x))=0.
\]
(in this case, it even happens that $\mathcal{X}(\Z_{\ell })_{S,1}^{\mathrm{min}}$ is the same for all $S$ of size one, but this phenomenon will not hold in general).

When $\# S=2$, $\mathcal{X}(\Z_{\ell } )_{S,n}$ would not be expected to be
finite when $n\leq 4$, whereas $\mathcal{X}(\Z_{\ell } )_{S,n}^{\mathrm{min}}$ is finite for $n\geq 2$: let $S=\{p,q\}$ denote the two distinct primes. As we have not discussed tangential basepoints thus far, we will take the basepoint to be a $\mathbb{Z}[\frac{1}{pq}]$ point of $\mathcal{X}$.

We let $U=U_2$ be the maximal $2$-step unipotent quotient of the $\Q_\ell$-pro-unipotent \'etale fundamental group of $X_{\overline\Q}$ for $\ell\notin\{p,q\}$, which is a central extension of $\Q_\ell(1)^{\oplus2}$ by $\Q_\ell(2)$. It follows from \cite{DCW} that the Selmer scheme $\Sel_{S,U}(\mathcal X)$ defined in section \ref{ss:application_CK} is isomorphic to $\A^4_{\Q_\ell}$ 
, with the non-abelian Kummer map $\classify\colon\mathcal X(\Z_S)\rightarrow\Sel_{S,U}(\mathcal X)$ being given by
\[u\mapsto\left(v_p(u/b),v_p\Bigl(\frac{1-u}{1-b}\Bigr),v_q(u/b),v_q\Bigl(\frac{1-u}{1-b}\Bigr)\right).\]
However, the local Selmer scheme $\H^1_f(G_{\Q_\ell},U)\simeq\A^3_{\Q_\ell}$ is three-dimensional and so the usual Chabauty inequality \eqref{bad} fails. 

\subsubsection{Example: An elliptic curve minus the origin}\label{sss:E_finite}
The next simplest case is that of $S$-integral points on an elliptic curve, i.e. $S$-integral points of $\mathcal{X}:=\mathcal{E}-O$, where $\mathcal{E}$ is the N\'eron model of an elliptic curve $E$ and $O$ is the identity section. Here, when $S$ is empty and $E$ has rank one, finiteness of $\mathcal{X}(\mathbb{Z}_{\ell })_{S,2}$ is proved in \cite{Kim10}, by verifying \eqref{bad} for $n=2$. When $S$ is non-empty, refined Selmer schemes allow one to generalise this to larger $S$ and $n$.

\begin{corollary}
Let $(E,O)/\Q $ be an elliptic curve of analytic rank one. Let $\mathcal{X}$ denote the N\'eron model minus the identity section. Let $S=\{p\}$.
\begin{enumerate}
\item For all $p$, \eqref{good} is satisfied and hence $\mathcal{X}(\mathbb{Z}_{\ell })_{S,3}$ is finite.
\item Inequality \eqref{bad} is satisified if and only if $p$ is not a prime of split multiplicative reduction.
\end{enumerate}
\end{corollary}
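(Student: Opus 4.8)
**The plan is to reduce both statements to standard Euler characteristic computations for Galois cohomology of the graded pieces $\gr^i_\Cent U$, using the description of the reduction graph and the dimension formulas from Lemma~\ref{lem:euler-char}.**

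First I would make the reduction graph of $\mathcal X = \mathcal E - O$ explicit. The curve $X = E - O$ is projective-minus-a-point, so $\rGraph$ has a single half-edge $e_0$ located at $v_0 = \red(O)$. At a prime $p$ of good reduction, $\rGraph$ is a single vertex of genus $1$ with one half-edge, and $\H^1(G_{\Q_p}, U_n) = 0$ for the relevant graded pieces after the first, so $\classify_{n,p}(\mathcal X(\Z_p))$ is a point; at a prime $p$ of multiplicative reduction, $\rGraph$ is a cycle of length $m = v_p(\Delta_E)$ (possibly with the half-edge attached at one vertex), genus $0$ at every vertex. For split multiplicative reduction $X$ has a $\Q_p$-rational cusp (the puncture $O$ is $\Q_p$-rational and the reduction graph is split), while for non-split multiplicative reduction it does not. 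This is the geometric input.

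Next, for part (2), I would compute $\dim\H^1(G_{\Q_p}, \gr^i_\Cent U)$ for $p \in S$. By Corollary~\ref{cor:mod_M-3} only $i = 1, 2$ contribute; $\gr^1_\Cent U = \H^1_\et(E_{\overline\Q},\Q_\ell)^\dual \oplus \Q_\ell(1)$ (the extra $\Q_\ell(1)$ from the puncture) and $\gr^2_\Cent U = \bigwedge^2 \gr^1_\Cent U$. Using Lemma~\ref{lem:euler-char} and the weight--monodromy structure, $\dim\H^1(G_{\Q_p}, \gr^2_\Cent U)$ picks up an extra dimension precisely when there is a copy of $\Q_\ell(1)$ appearing as a subquotient fixed by $G_{\Q_p}$ up to the cyclotomic twist in the right weight — which happens exactly for split multiplicative reduction, by the analysis of $\gr^\M_{-2}$ in terms of $\log(\delta_e)$ and the structure of $\H_1(\Graph)$ when $\Graph$ is a nontrivial cycle. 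Concretely, $\classify_{2,p}(X(\Q_p))^\Zar$ is one-dimensional for split multiplicative reduction and zero-dimensional otherwise, by Proposition~\ref{prop:kummer_bounds}(2) and the rational-cusp criterion, and a dimension count of the two sides of~\eqref{bad} for $n = 2$ (where the first nontrivial term appears) shows~\eqref{bad} holds iff $p$ is not of split multiplicative reduction. For $n = 3$ one checks the inequality is not disturbed.

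For part (1), I would invoke the analytic rank one hypothesis: by the Gross--Zagier--Kolyvagin theorem the Mordell--Weil rank of $E/\Q$ is $1$, so $\dim\H^1_f(G_{\Q,T}, \gr^1_\Cent U) \leq 2$ (the rank-$1$ part from $E(\Q)$ plus the $\Q_\ell(1)$-part, which is controlled by the $S$-unit group and $T$), while $\dim\H^1_f(G_{\Q_\ell}, \gr^1_\Cent U) = 2$ and the weight-$2$ and weight-$3$ local terms at $\ell$ grow faster than the global ones by the standard Euler characteristic estimates used in \cite{Kim10}. Adding $\#S = 1$ on the left and summing over $i = 1, 2, 3$, inequality~\eqref{good} follows; the previous Lemma then gives finiteness of $\mathcal X(\Z_\ell)^{\min}_{S,3}$. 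I expect the main obstacle to be bookkeeping the exact dimensions of the global Selmer groups $\H^1_{f,S}(G_{\Q,T}, \gr^i_\Cent U)$ versus $\H^1_f(G_{\Q,T}, \gr^i_\Cent U)$ — the difference between~\eqref{bad} and~\eqref{good} is precisely the $\#S$ term versus the gap between the two global conditions, and verifying that this gap is exactly $\#S$ (so that the refinement genuinely buys one dimension per prime in $S$) requires care with the local conditions at primes of $T_0 \setminus S$ and with the $\Q_\ell(1)$ and $\Q_\ell(2)$ contributions to the unramified-outside-$T$ cohomology.
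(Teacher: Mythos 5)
Your overall strategy, comparing dimensions of Selmer groups of the graded pieces with Gross--Zagier--Kolyvagin supplying the rank, is broadly the paper's, but two of your key steps are wrong. First, the graded pieces are misidentified: since $X=E-O$ has a single puncture, $\H^1_\et(X_{\overline\Q},\Q_\ell)=\H^1_\et(E_{\overline\Q},\Q_\ell)$, so $\gr^1_\Cent U=V_\ell E$ (not $V_\ell E\oplus\Q_\ell(1)$), $\gr^2_\Cent U=\bigwedge^2V_\ell E=\Q_\ell(1)$, and $\gr^3_\Cent U=V_\ell E(1)$. Corollary~\ref{cor:mod_M-3} concerns the $\M$-filtration in \emph{local} cohomology and does not let you drop $i=3$ from the sums in \eqref{bad} and \eqref{good}; on the contrary, the piece $V_\ell E(1)$ is where everything happens. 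For part (1) the count is: $\dim\H^1_f(G_{\Q,T},V_\ell E)=1$ (Gross--Zagier--Kolyvagin), $\dim\H^1_f(G_{\Q,T},\Q_\ell(1))=\dim(\Z^\times\otimes\Q_\ell)=0$, and $\dim\H^1_f(G_{\Q,T},V_\ell E(1))=1$ by Kato's theorem --- an input your sketch never secures, since Euler characteristic formulas only bound global $\H^1$ from below --- against $\dim\H^1_f(G_{\Q_\ell},U_3)=\dim D_\dR(U_3)/\Fil^0=1+1+2=4$, so \eqref{good} reads $1+(1+0+1)<4$.

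Second, your mechanism for part (2) fails. Inequality \eqref{bad} involves the relaxed global Selmer groups $\H^1_{f,S}$ and makes no reference to the images of the local Kummer maps at $p$, so Proposition~\ref{prop:kummer_bounds}(2) is beside the point; moreover $X=E-O$ has the $\Q_p$-rational cusp $O$ for \emph{every} reduction type, so the rational-cusp criterion cannot distinguish split from non-split multiplicative reduction. The actual dichotomy lives in weight $-3$: relaxing the condition at $p$ gives $\H^1_{f,S}(G_{\Q,T},\Q_\ell(1))\simeq\Z_S^\times\otimes\Q_\ell$ (one extra dimension for every $p$, independent of reduction type), while by Poitou--Tate duality $\H^1_{f,S}(G_{\Q,T},V_\ell E(1))/\H^1_f(G_{\Q,T},V_\ell E(1))\simeq\H^1(G_{\Q_p},V_\ell E(1))$, whose dimension by Lemma~\ref{lem:euler-char} is $\dim\H^0(G_{\Q_p},V_\ell E(1))+\dim\H^0(G_{\Q_p},V_\ell E(-1))$, equal to $1$ exactly when $E$ is a Tate curve at $p$ (split multiplicative reduction) and $0$ otherwise. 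Hence the left-hand side of \eqref{bad} is $3$ or $4$ according as $p$ is not or is of split multiplicative reduction, which is part (2). The reduction-graph discussion in your first step is not needed for this corollary.
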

\begin{proof}
Let $V_{\ell }E$ denote the $\Q _{\ell }$ Tate module of $E$. The graded pieces of $U_3$ with respect to the central series filtration are $V_{\ell }E$, $\Q _{\ell }(1)=\wedge ^2 V_{\ell }E$ and $V_{\ell }(E)(1)$.
By Kato's theorem \cite[Theorem 18.4]{kato:secret}, we have 
\[
\dim \H ^1 _f (G_{\Q ,T},V_{\ell }(E)(1))=1.
\]
If $E$ has analytic rank 1, then by Gross--Zagier--Kolyvagin \cite{Kolyvagin90}, we have \[
\rk \H ^1 _f (G_{\Q ,T},V_{\ell }(E))=1.
\] On the other hand, we have
\begin{align*}
\dim \H ^1 _f (G_{\Q _{\ell }},U_3 ) & =\dim D_{\dR}(U_3 )/\Fil ^0 \\
& = \dim D_{\dR}(V_{\ell }(E))/\Fil ^0+\dim D_{\dR}(\Q _{\ell }(1))/\Fil ^0 \\ & +\dim D_{\dR}(V_{\ell }(E)(1))/\Fil ^0 \\
& = 4.\\
\end{align*}
Finally, note that 
\[
\H^1 _{f,S} (G_{\Q ,T},\Q _{\ell }(1))\simeq \mathbb{Z}_S ^\times \otimes \Q _{\ell },
\]
while by Poitou--Tate duality,
\[
\H^1 _{f,S}(G_{\Q ,T}V_{\ell }E(1))/\H^1 _f (G_{\Q ,T},V_{\ell }E(1))\simeq \H^1 (G_{\Q _p},V_{\ell }(1)),
\]
which has dimension one if $p$ is a prime of split multiplicative reduction, and zero otherwise.
\end{proof}

\subsection{Applications to the effective Chabauty--Kim method}
The final application is to \textit{bounding} the number of rational points on curves.
In \cite{BDeff} it was shown that
By \cite[Theorem 1.1]{BDeff}, we have
\begin{equation}\label{eqn:effective_zero}
\# X(\Q _{\ell } )_ 2 \leq \kappa _{\ell }\left( \prod _{p\in T_0 }n_p \right) \# X(\mathbb{F}_{\ell } )(16g^3+15g^2-16g+10),
\end{equation}
where $n_p = \# j_U (\Q _p )$. Hence the corollary says that we can improve the constants $n_p $.

We say a smooth projective curve over $\Q $ has irredeemably bad reduction at a prime $p$ if it does not have potential good reduction at $p$.
\begin{corollary}\label{cor:effective}
Let $X$ be smooth projective of genus $g$ over $\Q $. Let $T_0$ be the set of primes of irredeemably bad reduction. Let $T_1 $ be the set of irredeemably bad reduction at which the dual graph of the special fibre of a stable model is a tree. Let $T_2 =T_0 \backslash T_1 $. For $p\in T_0$, let $e_p$ denote the number of edges of the dual graph of the special fibre of a stable model of $X$ at $p$ (i.e. over a finite extension of $\Q _p$), and let $i_p$ denote the number of irreducible components of a regular semi-stable model over a choice of finite extension of $\Q _p $. Let $J:=\Jac (X)$, $r:=\rk J(\Q )$, $\rho (J):=\rk \NS (J)(\Q )$. For $i=1,2$, let $S_i $ be a subset of $T_i $, and let $S_0 =S_1 \cup S_2 $. 
Suppose that $r=g$ and 
\[
\#S_1 +2\# S_2 <\rho (J)-1,
\]
or $X\times X$ satisfies the Bloch--Kato conjecture \cite[Conjecture 5.3]{blochkato} and 
\[
\# S_1 +2\# S_2 < g^2 -g.
\]
Then
\[
\# X(\Q _{\ell })_2 <\kappa_{\ell } \left(\prod _{p\in S_0 }e_p \right) \left(\prod _{p\in T_0 \backslash S_0 }i_p \right)\# X(\mathbb{F}_{\ell } )(16g^3+15g^2-16g+10),
\]
where $\kappa _{\ell} :=1+\frac{\ell -1}{\ell -2}\frac{1}{\log (\ell )}$.
\end{corollary}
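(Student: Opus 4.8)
The plan is to combine the effective Chabauty--Kim estimate \eqref{eqn:effective_zero} of \cite[Theorem~1.1]{BDeff} with the refined Chabauty--Kim method (the refined Selmer scheme $\Sel_{S,U}^{\min}(\mathcal X)$), feeding in the explicit control on local non-abelian Kummer maps supplied by Proposition~\ref{prop:kummer_bounds} and Theorem~\ref{thm:description_of_graph_kummer}. Recall \eqref{eqn:effective_zero} reads $\#X(\Q_\ell)_2\le\kappa_\ell\bigl(\prod_{p}n_p\bigr)\#X(\F_\ell)(16g^3+15g^2-16g+10)$, with $n_p$ the size of the depth-$2$ local non-abelian Kummer image and the product over all bad primes. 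The first step is to observe that $n_p=1$ whenever $p$ is of potential good reduction, the local Kummer map being trivial then (cf.\ the Proposition of \cite{kimtamagawa} recalled in \S\ref{ss:application_CK}); so the product may be taken over $T_0$. For $p\in T_0\setminus S_0$ we simply invoke Proposition~\ref{prop:kummer_bounds}\eqref{proppart:size_bound}: applied to a regular semistable model of $X_{\Q_p}$ over the integers of a finite extension of $\Q_p$, it gives $n_p\le i_p$.

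The substantive step is to improve the contribution of each $p\in S_0$ from $i_p$ to $e_p$ by adjoining $S_0$ to the auxiliary set $S$ and passing to $\Sel_{S_0,U_2}^{\min}(\mathcal X)$, which still contains the image of $X(\Q)=\mathcal X(\Z_{S_0})$. Here one uses that, by Theorem~\ref{thm:description_of_kummer} and its combinatorial incarnations (Theorem~\ref{thm:description_of_graph_kummer}, Corollary~\ref{cor:j_piecewise_polynomial} and the Injectivity Theorem~\ref{thm:combinatorial_injectivity}), the local geometric locus $\classify_{2,p}(X(\Q_p))^\Zar$ is governed entirely by the stable reduction graph of $X_{\Q_p}$: the depth-$2$ local Kummer map factors through the reduction map and is a piecewise polynomial of degree $\le 2$ along the $e_p$ edges of that graph (of degree $\le 1$ exactly when the graph is a tree, by Corollary~\ref{cor:j_piecewise_polynomial} and the ``case of a tree'' of \S\ref{ss:examples}). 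Consequently, in the effective zero-counting of \cite{BDeff}, once $p$ has been moved into $S$ the local condition ``$\loc_p(\xi)\in\classify_{2,p}(X(\Q_p))^\Zar$'' is a condition along a piecewise-algebraic curve with $e_p$ pieces of bounded degree, and contributes a factor $e_p$ (the bounded degree being absorbed into the geometric constant) in place of $i_p$, at the price of admitting extra ramification at $p$ in the global class.

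It then remains to check that this trade-off keeps the refined Selmer scheme of dimension $<\dim\H^1_f(G_{\Q_\ell},U_2)$, i.e.\ that the finiteness criterion \eqref{good} for $n=2$, $U=U_2$ still holds; this is precisely what the stated numerical hypotheses encode. With $r=g$ the weight-$-1$ contributions cancel, since $\dim\H^1_f(G_{\Q_\ell},V_\ell J)=g$ and, under the rank hypothesis, $\dim\H^1_f(G_\Q,V_\ell J)=g$; so the criterion reduces to $\#S_1+2\#S_2<\dim\H^1_f(G_{\Q_\ell},\gr^2_\Cent U_2)-\dim\H^1_f(G_\Q,\gr^2_\Cent U_2)$, where the weights $1$ and $2$ attached to the primes reflect that the extra local room created at $p$ (the $\Q_\ell(1)$-isotypic, i.e.\ $\gr^\M_{-2}$, part of $\gr^2_\Cent U_2$ over $\Q_p$, cf.\ \S\ref{s:galois_cohomology} and Lemma~\ref{lem:description_of_cohomology_basic}) is one dimension larger when the reduction graph carries a cycle than when it is a tree. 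Bounding $\dim\H^1_f(G_\Q,\gr^2_\Cent U_2)$ by $g^2-\rho(J)+1$ unconditionally, using the classes coming from $\NS(J)\otimes\Q_\ell$, yields the first hypothesis $\#S_1+2\#S_2<\rho(J)-1$ (which for $S_0=\emptyset$ recovers the usual finiteness condition $r<\rho(J)+g-1$), while bounding it by $g^2-1$ under the Bloch--Kato conjecture for $X\times X$ (\cite[Conjecture~5.3]{blochkato}) yields the second, $\#S_1+2\#S_2<g^2-g$. Feeding the refined Selmer scheme into the zero-counting machinery of \cite{BDeff}, which depends only on $X_{\Q_\ell}$ and produces the geometric factor $\kappa_\ell\,\#X(\F_\ell)(16g^3+15g^2-16g+10)$, and collecting the local factors $\prod_{p\in S_0}e_p$ and $\prod_{p\in T_0\setminus S_0}i_p$, gives the claimed inequality.

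The main obstacle is the second step: one must check carefully that the refined accounting underlying \cite[Theorem~1.1]{BDeff} --- whose local input at bad primes is literally the cardinality of the depth-$2$ Kummer image --- genuinely permits the substitution of $e_p$ for $i_p$ once $p$ lies in $S$, and that the cost of doing so is exactly the increment $1$ for $p\in T_1$ and $2$ for $p\in T_2$. This hinges on understanding how the doubly-toric ($\gr^\M_{-2}$) part of $\gr^2_\Cent U_2$ at $p$, which by the harmonic analysis of \S\ref{s:harmonic_analysis} is controlled by the maximal cut systems of the reduction graph of $X_{\Q_p}$, interacts with the local Bloch--Kato conditions --- an analysis of exactly the flavour of the computations for $X_{\mathrm{ns}}^+(p)$ and $X_0(p)$ in \S\ref{ss:examples}.
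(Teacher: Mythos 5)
There is a genuine gap, and you in fact flag it yourself as ``the main obstacle'': the mechanism by which the factor $e_p$ replaces $i_p$ at primes $p\in S_0$. Your proposal runs this through the refined Selmer scheme, but for a \emph{projective} curve that move is vacuous: $\mathcal X(\Z_p)=X(\Q_p)$ for all $p$, so $\Sel^{\min}_{S_0,U_2}(\mathcal X)$ coincides with the ordinary Selmer scheme and ``moving $p$ into $S$'' changes nothing about the set $X(\Q_\ell)_2$ being bounded. Moreover, your accounting of the gain --- that the local condition at $p\in S_0$ becomes ``a piecewise-algebraic curve with $e_p$ pieces of bounded degree'' whose degree can be ``absorbed into the geometric constant'' --- does not interface with how the constant $\kappa_\ell\,\#X(\F_\ell)(16g^3+15g^2-16g+10)$ of \cite{BDeff} is produced. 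In \eqref{eqn:effective_zero} the factor $n_p$ arises because the quadratic Chabauty equation has the shape $h_{\ell,\varphi}(x)-h_\varphi(S\circ T(\classify_{1,\ell}(x)))=-\sum_{v}h_{v,\varphi}(\alpha_v)$, with a \emph{constant} right-hand side ranging over the finitely many local height values; if $h_{\varphi,p}\circ\classify_p$ were merely a nonconstant polynomial of bounded degree along an edge, the right-hand side would range over an infinite set and the per-disc zero count would not survive. Similarly, your dimension count is routed through the refined finiteness inequality \eqref{good}, where each prime of $S$ contributes $1$ regardless of reduction type, so the weights $1$ for $T_1$ and $2$ for $T_2$ do not come out of that inequality; they live elsewhere.

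The paper's actual argument works residue disc by residue disc at $\ell$ with the functional machinery of \cite{BD,BDeff}: for each tuple of edges $(e_p)_{p\in S_0}$ one chooses a \emph{nonzero} functional $\varphi\in\H^1_s(G_{\Q,T},(\gr^2_\Cent U)^*(1))$ such that $h_{\varphi,p}\circ\classify_p$ vanishes identically along the chosen edge for every $p\in S_0$. This is possible because, by Theorem~\ref{thm:description_of_graph_kummer} and Corollary~\ref{cor:j_piecewise_polynomial}, that function is affine along edges when the dual graph at $p$ is a tree and quadratic otherwise, and it depends linearly on $\varphi$; forcing it to vanish on the chosen edges therefore imposes $\#S_1+2\#S_2$ linear conditions, which the hypotheses (via \cite[\S3]{BalakrishnanDogra1} unconditionally when $r=g$, or \cite[\S2]{BD} under Bloch--Kato) guarantee can be met nontrivially since $\dim\H^1_s>\#S_1+2\#S_2$. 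With such a $\varphi$, the right-hand side of the equation \eqref{eqn:local_height_formula} takes at most $\prod_{p\in T_0\setminus S_0}i_p$ values (here Proposition~\ref{prop:kummer_bounds}\eqref{proppart:size_bound} enters, as in your step for $T_0\setminus S_0$), and the zero-counting of \cite{BDeff} applies to each value, giving the factor $\prod_{p\in S_0}e_p\prod_{p\in T_0\setminus S_0}i_p$ per disc. So while your ingredients (the bound $n_p\le i_p$, the piecewise linear/quadratic structure of $\classify_2$ on reduction graphs, and a dimension hypothesis) are the right ones, the central step --- trading local-image cardinality for edge count by spending dimensions of the space of quadratic Chabauty functionals, not dimensions of a refined Selmer scheme --- is missing, and without it the claimed inequality is not established.
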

\begin{remark}
Note that $e_p$ can be bounded in terms of the genus: since each vertex $v$ satisfies $\val (v)+2g(v)\geq 3$, and $g=1+\sum _v \frac{1}{2}\val (v)+g(v)-1$, we have
\[
e_p \leq 3(g-1).
\]
Hence when $S_0 =T_0$, the bound obtained is purely in terms of $\ell $, $g$ and the number of primes of irredeemably bad reduction for $X$.
\end{remark}

\begin{proof}[Proof of Corollary \ref{cor:effective}]
We only describe the parts of the arguments which are different from \cite{BDeff}. 
For $\alpha =(\alpha _p )_{p\in T_0 }\in \prod _{p\in T_0 }\H^1 (G_{\Q _p },U)$, we define $\Sel (U)_{\alpha }\subset \H ^1 _{f,T_0 }(G_{\Q ,T},U)$ to be the fibre of $(\alpha _p )$ under $\prod _{p\in T_0 }\H ^1 (G_{\Q _p },U)$. We define $X(\Q _{\ell } )_{\alpha }$ to be the pre-image of $\Sel (U)_{\alpha }$ in $X(\Q _{\ell } )_{\alpha }$. For an $\ell$-adic $G_{\Q ,T}$ representation $V$, we define $\H^1 _s (G_{\Q ,T},V):=\H^1 (G_{\Q ,T},V)/\H^1 _f (G_{\Q ,T},V)$. If $D=\sum n_i x_i $ is an effective divisor with $x_i$ pairwise distinct, let $D[1]$ denote the divisor $D+|D|=\sum (n_i +1)x_i $.

It will be enough to prove that, for all $z_0 \in X(\F _{\ell })$, 
\[
\# X(\Q _{\ell } )_ 2 \cap ] z_0 [ \leq \kappa _{\ell }\left(\prod _{p\in S_0 }e_p \right) \left(\prod _{p\in T_0 \backslash S_0 }i_p \right) (16g^3+15g^2-16g+10),
\]
where $]z_0[ \subset X(\Q _{\ell })$ denotes the set of points reducing to $z_0 $ modulo $\ell $. We fix $z_0 \in X(\F _{\ell })$. Let $D$ be a non-zero divisor with support disjoint from $z_0 $ modulo $\ell $, such that $\omega _1 ,\ldots ,\omega _{2g}$ are differentials in $\H^0 (X,\Omega (D))$ forming a basis of $\H^1 _{\mathrm{dR}}(X)$, let $Y:=X-|D|$. 

If the $\ell$-adic closure of $\Jac (X)(\Q )$ in $\Jac (X)(\Q _{\ell })$ is not finite index, then the bound in the statement of the corollary follows froms Coleman's bound \cite{coleman}. Hence we assume that it is finite index. In this case \cite[Proposition 4.1]{BD} implies that, for all $\varphi \in \H^1 _s (G_{\Q ,T},(\gr ^2 _\Cent U)^* (1))$,
\[
X(\Q _\ell )_{\alpha } \subset \left\{ x\in X(\Q _{\ell }):h_{\ell ,\varphi }(x)+\sum _{v\in T-\{\ell \}}h_{v,\varphi }(\alpha _v )=h_{\varphi }(S\circ T (j_{1,\ell}(x)))\right\} .
\]
The only facts we need to recall about $h_{\ell ,\varphi },h_{v,\varphi }$, $h_{\varphi }$, $S$ and $T$ are
\begin{enumerate}
\item By \cite[Proposition 6.4]{BD}, for any choice of $\varphi $ there are constants $a_{ij},a_i  $ and $h\in \H^0 (X,\mathcal{O}(D[1]))$ such that for all 
\begin{equation}\label{eqn:local_height_formula}
h_{\ell ,\varphi }(x)-h_{\varphi }(S\circ T (j_{1,\ell}(x)))=\sum a_{ij}\int ^z _b \omega _i \omega _j +\sum a_i \int ^z _b \omega _i +\int ^z _b \xi +h(z) . 
\end{equation}
\item For $v\in T-\{ \ell \}$, the function $h_{v,\varphi }:\H^1 (G_{\Q _v },\gr ^2 _{\Cent }(U))\to \Q _{\ell }$ is linear in $\varphi $.
\end{enumerate}

For $p\in S_0$, let $\Graph _p $ denote the dual graph of the special fibre of a stable model of $X$ at $p$.
By \cite{BDeff}, for $z_0 \in X(\Q _p )$ we may choose $\omega _i ,\xi ,h$ as above, so that none of the points of $X-Z$ reduce to $z_0$ modulo $p$, and for any $\lambda \in \Q _p $, the number of zeroes of 
\[
\sum a_{ij}\int ^z _b \omega _i \omega _j +\sum a_i \int ^z _b \omega _i +\int ^z _b \xi +h(z) +\lambda ,
\]
for $z$ reducing to $z_0 $ modulo $p$, is at most $\kappa_{\ell }(16g^3+15g^2-16g+10)$. Hence it is enough to prove that for each tuple of edges $(e_p )\in \prod _{p\in S_0 }\Graph _p $, there is a quotient $U$ of $U_2 $, and a non-zero $\varphi \in \H^1 _ s (G_{\Q ,T},(\gr ^2 _\Cent U)^* (1))$ such that $h_{\varphi ,p}(x_p )=0$ for all $x_p \in X(\Q _p )$ whose reduction lies on the edge $e_p$. Under either of the two assumptions of the theorem, we know that there is a quotient $U$ such that $\dim \H^1 _s (G_{\Q ,T_0 \cup \{\ell \}},(\gr ^2 _\Cent U)^* (1))$ has dimension greater than $\# S_1 +2\# S_2 $ (see \cite[\S 3]{BalakrishnanDogra1} and \cite[\S 2]{BD} respectively). By Theorem \ref{thm:description_of_graph_kummer}, for any choice of $\varphi $, $h_{\varphi ,p}$ is linear along the edge $e_p$ when $p$ is in $T_1$ and quadratic when $p$ is in $T_2$. Since $h_{\varphi ,p}$ is linear in $\varphi $, we deduce that there is a non-zero choice of $\varphi $ such that $h_{\varphi ,p}(x_p )$ is identically zero on $e_p$ for all $p\in S_0$.
\end{proof}
\appendix
\section{Groupoids, Hopf groupoids and Lie algebras}
\label{appx:hopf_gpds}

In this appendix, we give background on some of the main linear algebraic objects appearing in this paper. A standard tool in the study of affine group-schemes $\pi$ is their commutative Hopf algebras $\O(\pi)$ \cite{milne}, or equivalently their complete cocommutative Hopf algebras $\O(\pi)^\dual$ \cite[\S3.2.6]{javier}. When $\pi$ is replaced by an affine groupoid-scheme, there is a multi-object generalisation of this tool, known as the \emph{complete (cocommutative) Hopf groupoid} $\O(\pi)^\dual$ associated to $\pi$, which is built out of the natural operations on the objects $\O(\pi(x,y))^\dual$. It is through these linear algebraic objects that we will analyse the affine groupoid-schemes appearing in this paper, along with all the extra structures (Galois actions, filtrations, etc.) that they carry. Indeed, certain structures are better behaved on the side of complete Hopf groupoids, for instance the definition of a filtration is more natural, and there is a self-evident notion of the associated graded of a filtration.

Our aim in this appendix, then, is to outline the basic theory of complete Hopf groupoids and their relation to affine groupoid-schemes, with particular attention to the extra structures which these objects might carry. In order to work uniformly with all possible extra structures and combinations thereof, we will follow the approach of \cite{homotopy_theory_operads} and set up the theory in categorical language, into which one can substitute, for example, the category of bifiltered vector spaces with a bifiltered endomorphism of bidegree $(0,-2)$ to obtain the desired results. Note that the results from \cite{homotopy_theory_operads} are not directly applicable, since we will generally work in categories in which limits, rather than colimits, are well-behaved. Nonetheless, we borrow heavily from the approach in \cite{homotopy_theory_operads}, mimicking the same strategies in several key proofs.

\subsection{Groupoids and Hopf groupoids}

\begin{definition}\label{def:gpoid}\index{groupoid}
Let $\Cat$ be a category with finite products. A \emph{groupoid} $\pi$ in $\Cat$ consists of a set $\ob(\pi)$ and an object $\pi(x,y)\in\ob(\Cat)$ for every $x,y\in\ob(\pi)$, endowed with the following structure maps:
\begin{itemize}
	\item \emph{identities} $\eta\colon \ast\rightarrow\pi(x):=\pi(x,x)$ for every $x\in\ob(\Cat)$;
	\item \emph{composition maps} $\mu\colon\pi(y,z)\times\pi(x,y)\rightarrow\pi(x,z)$ for all $x,y,z\in\ob(\Cat)$; and
	\item \emph{inversion maps} $S\colon\pi(x,y)\rightarrow\pi(y,x)$ for all $x,y\in\ob(\Cat)$
\end{itemize}
which make the following diagrams commute:
\begin{center}
\begin{tikzcd}
\pi(y,z)\times\pi(x,y)\times\pi(w,x) \arrow{r}{\mu\times1}\arrow{d}[swap]{1\times\mu} & \pi(x,z)\times\pi(w,x) \arrow{d}{\mu} \\
\pi(y,z)\times\pi(w,y) \arrow{r}{\mu} & \pi(w,z)
\end{tikzcd}
\vspace{0.4cm}
\begin{tikzcd}
\pi(x,y) \arrow{r}{1\times\eta}\arrow[equal]{rd} & \pi(x,y)\times\pi(x) \arrow{d}{\mu} & \pi(y)\times\pi(x,y) \arrow{d}[swap]{\mu} & \pi(x,y) \arrow{l}[swap]{\eta\times1}\arrow[equal]{ld} \\
 & \pi(x,y) & \pi(x,y) &
\end{tikzcd}
\vspace{0.4cm}
\begin{tikzcd}
{} & \pi(x,y)\times\pi(x,y) \arrow{r}{1\times S} & \pi(x,y)\times\pi(y,x) \arrow{r}{\mu} & \pi(x) \\
\pi(x,y) \arrow{ru}{\Delta}\arrow{rd}[swap]{\Delta}\arrow{rr}{\varepsilon} & & \ast \arrow{ru}[swap]{\eta}\arrow{rd}{\eta} & \\
 & \pi(x,y)\times\pi(x,y) \arrow{r}[swap]{S\times 1} & \pi(y,x)\times\pi(x,y) \arrow{r}[swap]{\mu} & \pi(y),
\end{tikzcd}
\end{center}
where $\Delta$ and $\varepsilon$ denote the diagonal map and the unique map to the terminal object, respectively. A groupoid in $\Cat$ with a single object is a \emph{group} in~$\Cat$.

There is an evident notion of a \emph{morphism} $f\colon\pi'\rightarrow\pi$ of groupoids in $\Cat$, namely a function $f\colon\ob(\pi')\rightarrow\ob(\pi)$ together with morphisms $f\colon\pi'(x,y)\rightarrow\pi(f(x),f(y))$ which are compatible with all the structure maps. We will write $\GPD(\Cat)$ for the (1-)category of groupoids in $\Cat$. A product-preserving functor $F\colon\Dat\rightarrow\Cat$ between categories with finite products also induces a functor $F\colon\GPD(\Dat)\rightarrow\GPD(\Cat)$.
\end{definition}

\begin{example}
\leavevmode
\begin{itemize}
	\item Groupoids in $\SET$ are groupoids in the usual sense; we write simply $\GPD$ for the category of groupoids in $\SET$. Generally, for any locally small category $\Cat$ with finite products, the Yoneda functor $\Cat\hookrightarrow[\Cat^\op,\SET]$ gives a fully faithful embedding $\GPD(\Cat)\hookrightarrow[\Cat^\op,\GPD]$, meaning that groupoids in $\Cat$ satisfy all the usual identities satisfied by groupoids in $\SET$.
	\item If $\F$ is a field of characteristic $0$, groupoids in the category of affine $\F$-schemes will be referred to as \emph{affine groupoid-schemes}. By a \emph{pro-unipotent groupoid}\index{groupoid!\dots pro-unipotent}, we will mean an affine groupoid-scheme $\pi$ for which each $\pi(x)$ is a pro-unipotent group over $\F$. We will write $\GPD_\F$ and $\GPD_{\F,\uni}$ for the category of affine groupoid-schemes and the full subcategory of pro-unipotent groupoids, respectively.
\end{itemize}
\end{example}

The notion of a \emph{Hopf groupoid} makes sense in any symmetric monoidal category, and is simultaneously a generalisation and a special case of Definition~\ref{def:gpoid}.

\begin{definition}
Let $\TCat=(\Cat,\otimes,\1)$ be a symmetric monoidal category. A \emph{(cocommutative) coalgebra} in $\TCat$ is an object $\acoAlg$ of $\TCat$ endowed with a comultiplication map $\Delta\colon\acoAlg\rightarrow\acoAlg\otimes\acoAlg$ and a counit map $\varepsilon\colon \acoAlg\rightarrow\1$ such that $\Delta$ is coassociative, cocommutative and $\varepsilon$ is a counit for $\Delta$.

The coalgebras in $\TCat$ form a category $\coALG(\TCat)$. This category has finite products: the binary products are given by the natural coalgebra structure on the monoidal product $\acoAlg\otimes\acoAlg'$ of two coalgebras, and the terminal object is $\1$ with its natural coalgebra structure. For a coalgebra $\acoAlg$, the structure morphisms $\Delta\colon\acoAlg\rightarrow\acoAlg\otimes\acoAlg$ and $\varepsilon\colon\acoAlg\rightarrow\1$ are morphisms of coalgebras, namely the diagonal map and unique map to the terminal object in $\coALG(\TCat)$ respectively.
\end{definition}

\begin{definition}[{\cite[\S9]{homotopy_theory_operads}}]\label{def:hopf_gpoid}\index{Hopf groupoid}
Let $\TCat$ be a symmetric monoidal category. A \emph{(cocommutative) Hopf groupoid} in $\TCat$ (also called a \emph{Hopf category} in \cite{hopf_cats}) is a groupoid in $\coALG(\TCat)$; the category of Hopf groupoids in $\TCat$ is denoted $\HGPD(\TCat)$. A \emph{Hopf algebra} in $\TCat$ is a Hopf groupoid with one object; the category of Hopf algebras in $\TCat$ is denoted $\HALG(\TCat)$. A symmetric monoidal functor $F\colon\TDat\rightarrow\TCat$ induces also a functor $F\colon\HGPD(\TDat)\rightarrow\HGPD(\TCat)$ and similarly for $\HALG$.
\end{definition}

\begin{example}\label{ex:some_gpds}
\leavevmode
\begin{itemize}
	\item Suppose that $\Cat$ is a category with finite products, and let $\carTCat$ denote $\Cat$ with its Cartesian monoidal structure. Then a coalgebra in $\carTCat$ is the same as an object of $\Cat$ (with its comultiplication and counit given by the diagonal map and unique map to the terminal object, respectively). Hence $\HGPD(\carTCat)=\GPD(\Cat)$.
	\item Suppose that $\TCat=\VEC_\F$ is the category of vector spaces over a field $\F$, endowed with its usual tensor product. Then a Hopf algebra in $\VEC_\F$ is a Hopf algebra in the usual sense. We write $\HGPD_\F=\HGPD(\VEC_\F)$ and $\HALG_\F=\HALG(\VEC_\F)$ for the categories of Hopf groupoids and Hopf algebras over $\F$, respectively.
	
	\smallskip
	
	\noindent There is a functor $\F\cdot\colon\SET\rightarrow\VEC_\F$ sending a set $S$ to the vector space $\F S$ on basis $S$. This induces a functor $\F\cdot\colon\GPD\rightarrow\HGPD_\F$ sending a groupoid $\pi$ to its \emph{groupoid algebra} $\F\pi$. This is a multi-object generalisation of the functor sending a group $\pi$ to its group algebra $\F\pi$.
	\item Taking $\TCat=\fVEC_\F$, resp.\ $\gr\VEC_\F$, to be the category of filtered, resp.\ graded\footnote{In this paper, the symmetric monoidal structure on graded vector spaces will always be that of weight-graded vector spaces; i.e.\ we take the symmetric structure where no signs intervene.}, vector spaces over a field $\F$, we obtain the categories $\fHGPD_\F=\HGPD(\fVEC_\F)$ and $\grHGPD_\F=\HGPD(\grVEC_\F)$ of \emph{filtered, resp.\ graded, Hopf groupoids over $\F$}. There is a functor $\gr_\bullet\colon\fHGPD_\F\rightarrow\grHGPD_\F$ given by taking the associated graded.
\end{itemize}
\end{example}

Note that if $\pi$ is an affine groupoid-scheme over a field $\F$, the duals $\O(\pi(x,y))^\dual$ of the affine rings of the $\pi(x,y)$ do \emph{not} in general form a Hopf groupoid in $\VEC_\F$, since the comultiplication may not take values in $\O(\pi(x,y))^\dual\otimes\O(\pi(x,y))^\dual$ \cite[Example~3.61]{javier}. Instead, this is our first example of a \emph{complete Hopf groupoid}, i.e.\ a Hopf groupoid in a category of suitably completed vector spaces. There are two natural candidates for such a category of vector spaces: the category of \emph{complete filtered vector spaces} as in \cite[Appendix~A]{quillen1969rational} and \cite[\S7.3]{homotopy_theory_operads}; or the category of \emph{pro-finite dimensional vector spaces} as in \cite[\S3.2.6]{javier} and \cite[\S2.1]{betts2017motivic}. We will work here with the latter category, which has better abstract properties for our applications.

\begin{definition}\index{pro-finite dimensional vector space}
Let $\F$ be a field. The category $\CVEC_\F$ of \emph{pro-finite dimensional vector spaces (over $\F$)} is the category of pro-objects in the category of finite dimensional vector spaces. Explicitly, the objects of $\CVEC_\F$ are formal inverse limits $\varprojlim_iV_i$ of finite dimensional vector spaces indexed by a cofiltered poset\footnote{More generally, one can allow $\mathcal I$ to be a cofiltered category. This defines the same category $\CVEC_\F$ up to equivalence.} $\mathcal I$. The morphisms are given by
\[
\Hom_{\CVEC_\F}(\varprojlim\nolimits_jW_j,\varprojlim\nolimits_iV_i) := \varprojlim\nolimits_i\varinjlim\nolimits_j\Hom_{\VEC_\F}(W_j,V_i).
\]
The category $\CVEC_\F$ carries a symmetric \emph{completed tensor product} $\hatotimes$, defined by
\[
\Bigl(\varprojlim\nolimits_iV_i\Bigr)\hatotimes\Bigl(\varprojlim\nolimits_jW_j\Bigr) := \varprojlim\nolimits_{i,j}\bigl(V_i\otimes W_j\bigr),
\]
with unit the one-dimensional vector space $\1=\F$.

\smallskip

A \emph{filtration} $\M_\bullet$ on a pro-finite dimensional vector space $V$ is an increasing sequence
\[
\dots\leq\M_{-1}V\leq\M_0V\leq\M_1V\leq\dots
\]
of pro-finite dimensional subspaces; $\M_\bullet$ is called \emph{separated} just when $\bigcap_k\M_{-k}V=0$. The collection of filtered pro-finite dimensional vector spaces forms a symmetric monoidal category $(\fCVEC_\F,\hatotimes,\1)$ by making the usual definitions, and the objects with separated filtrations form a symmetric monoidal subcategory $(\fsCVEC_\F,\hatotimes,\1)$.

\smallskip

A \emph{grading} $\gr^\M_\bullet$ on a pro-finite dimensional vector space $V$ is a product decomposition
\[
V = \prod_{k\in\Z}\gr^\M_{-k}V.
\]
Again, the collection of graded pro-finite dimensional vector spaces assembles into a symmetric monoidal category $(\grCVEC_\F,\hatotimes,\1)$ by making the usual definitions.
\end{definition}

\begin{remark}
The symmetric monoidal category $\CVEC_\F$ is canonically dual to the symmetric monoidal category $\VEC_\F$ of all vector spaces. The duality takes a pro-finite dimensional vector space $V=\varprojlim_i(V_i)$ to its \emph{decompleted dual} $V^\reddual:=\varinjlim_i(V_i^\dual)$ and takes a vector space $W$ to its dual $W^\dual=\varprojlim_i(W_i^\dual)$, where $W_i$ runs through the finite dimensional subspaces of $W$. This description makes obvious many nice properties of $\CVEC_\F$: it is abelian and satisfies Grothendieck's axioms AB3--4 and AB3*--5*.
\end{remark}

\begin{example}\label{ex:affine_vs_complete_hopf}
The category $\CHGPD_\F$ of \emph{complete Hopf groupoids}\index{Hopf groupoid!\dots complete} over a field $\F$ is defined to be the category $\HGPD(\CVEC_\F)$ of Hopf groupoids in pro-finite dimensional vector spaces (cf.\ \cite[Definition~3.60]{javier}).

There is a (covariant) equivalence of categories $\AFF_\F\isoarrow\coALG(\CVEC_\F)$ between the category of affine schemes and the category of complete coalgebras (coalgebras in $\CVEC_\F$), taking an affine scheme $\pi$ to its dual affine ring $\O(\pi)^\dual$ and taking a complete coalgebra $\acoAlg$ to the affine scheme $\Spec(\acoAlg^\reddual)$ representing its functor of \emph{grouplike elements}\index{grouplike element}
\[
\Lambda \mapsto \acoAlg^\gplike(\Lambda) := \{\gamma\in\varprojlim\nolimits_i\left(\Lambda\otimes\acoAlg_i\right)\::\:\Delta(\gamma)=\gamma\hatotimes\gamma\text{ and }\varepsilon(\gamma)=1\},
\]
where $\acoAlg=\varprojlim_i(\acoAlg_i)$ and the inner inverse limit is taken in $\SET$. It follows purely formally that there is an equivalence of categories
\[
\GPD_\F \isoarrow \CHGPD_\F
\]
between the categories of affine groupoid-schemes and complete Hopf groupoids.
\end{example}

\subsection{Hopf groupoids in a tensor category}

We now examine in more detail the algebraic structure of Hopf groupoids when the base category $\TCat$ is suitably linear-algebraic in nature. We have in mind chiefly the categories $\VEC_\F$, $\fVEC_\F$, $\grVEC_\F$, $\CVEC_\F$, $\fCVEC_\F$, $\grCVEC_\F$ above, so in particular we are interested in settings where the underlying category of $\TCat$ may not be abelian. We thus restrict attention to categories satisfying the following slight weakening of the axioms of an abelian category.

\begin{definition}
For us, a \emph{tensor category} is a symmetric monoidal category $\TCat$ satisfying the following (self-dual) conditions:
\begin{itemize}
	\item the underlying category $\Cat$ is an additive category with kernels and cokernels (hence all finite limits and colimits);
	\item epimorphisms and strict epimorphisms in $\Cat$ are stable under base change, i.e.\ in a pullback square
	\begin{center}
	\begin{tikzcd}
	A' \arrow{r}{f'}\arrow{d} & B' \arrow{d} \\
	A \arrow{r}{f} & B,
	\end{tikzcd}
	\end{center}
	if $f$ is an epimorphism or strict epimorphism (i.e.\ a cokernel), then so is $f'$;
	\item monomorphisms and strict monomorphisms in $\Cat$ are stable under co-base change (in the dual sense to the above); and
	\item the monoidal product $\otimes$ preserves kernels and cokernels separately in each variable (hence is biadditive).
\end{itemize}
The first three axioms assert that $\Cat$ is \emph{almost abelian and integral} in the terminology of \cite[\S1]{rump}; the former property is more usually referred to as being \emph{quasi-abelian} \cite[Definition~1.1.3]{schneiders}. Note that in general monomorphisms and strict monomorphisms are stable under base change, and epimorphisms and strict epimorphisms are stable under co-base change \cite[Proposition~4]{rump}.
\end{definition}

\begin{remark}
The axioms of a tensor category $\TCat$ ensure that it possesses a well-behaved notion of subobjects. Specifically, by a \emph{subobject} of an object $B$, we shall mean an isomorphism class of strict monomorphisms $A\rightarrowtail B$. The collection of subobjects of $B$ is partially ordered by \emph{containment}, where $A$ contains $A'$ if and only if the inclusion $A'\rightarrowtail B$ factors (uniquely) through $A\rightarrowtail B$. If $A$ is a subobject of $B$, then the subobjects of $A$ are exactly the subobjects of $B$ contained in $A$ \cite[Propositions~1.1.7~\&~1.1.8]{schneiders}.

There are two important examples of subobjects, namely kernels of maps $B\rightarrow C$ and images of maps $f\colon C\rightarrow B$. Here, the \emph{image} of $f$ is the kernel of $\coker(f)$, or equivalently the unique subobject of $B$ such that $f$ factors through an epimorphism $C\rightarrow\im(f)$ \cite[Corollary~1]{rump}. Subobjects are preserved by any limits that exist in $\TCat$, as well as tensor products.

The poset of subobjects of $B$ is a lattice, with the meet of two subobjects $A,A'$ being their pullback $A\cap A':=A\times_BA'$ and their join being $A+A':=\im\left(A\oplus A'\rightarrow B\right)$. The poset of subobjects is both covariant and contravariant functorial: given a morphism $f\colon B\rightarrow C$ and subobjects $A\leq B$ and $D\leq C$, one can form the \emph{image} $f(A)=\im\left(A\rightarrow C\right)$ and \emph{preimage} $f^{-1}(D)=D\times_CB$. These operations form a Galois connection, i.e.\ $A\leq f^{-1}(D)$ if and only if $f(A)\leq D$; in particular $f(-)$ preserves joins and $f^{-1}(-)$ preserves intersections.

There is a dual notion of a \emph{quotient} of an object $B$, namely an isomorphism class of strict epimorphisms $B\twoheadrightarrow C$, which is partially ordered by domination. There is an order-reversing bijection between subobjects and quotients of $B$, sending a subobject $A$ to $B/A=\coker\left(A\rightarrowtail B\right)$ \cite[Remark~1.1.2]{schneiders}.
\end{remark}

\begin{definition}
Let $\aHAlg$ be a Hopf groupoid in a tensor category $\TCat$. An \emph{ideal} $\aHIdeal\unlhd\aHAlg$ consists of a collection of subobjects $\aHIdeal(x,y)\leq\aHAlg(x,y)$ in $\TCat$ such that, for every $x,y,z$, the images of the composition maps $\aHIdeal(y,z)\otimes\aHAlg(x,y)\rightarrow\aHAlg(x,z)$ and $\aHAlg(y,z)\otimes\aHIdeal(x,y)\rightarrow\aHAlg(x,z)$ are contained in $\aHIdeal(x,z)$. An ideal $\aHIdeal\unlhd\aHAlg$ is called a \emph{Hopf ideal} just when the image of each $\Delta\colon\aHIdeal(x,y) \rightarrow \aHAlg(x,y)\otimes\aHAlg(x,y)$ is contained in $\aHIdeal(x,y)\otimes\aHAlg(x,y)+\aHAlg(x,y)\otimes\aHIdeal(x,y)$. If $\aHIdeal$ is a Hopf ideal of $\aHAlg$, there is a natural Hopf groupoid structure on the quotients $(\aHAlg/\aHIdeal)(x,y):=\aHAlg(x,y)/\aHIdeal(x,y)$.

If $\aHIdeal_0$ and $\aHIdeal_1$ are two ideals of $\aHAlg$, their \emph{product} $\aHIdeal_0\aHIdeal_1$ is the ideal defined by
\begin{align*}
\aHIdeal_0\aHIdeal_1(x,y) &:= \im\left(\aHIdeal_0(x,y)\otimes\aHIdeal_1(x)\rightarrow\aHAlg(x,y)\right) \\
 &\;= \im\left(\aHIdeal_0(y)\otimes\aHAlg(x,y)\otimes\aHIdeal_1(x)\rightarrow\aHAlg(x,y)\right) \\
 &\;= \im\left(\aHIdeal_0(y)\otimes\aHIdeal_1(x,y)\rightarrow\aHAlg(x,y)\right).
\end{align*}
This operation defines an associative product on ideals whose unit is the unit ideal~$\aHAlg$. The product of two Hopf ideals is not in general a Hopf ideal.
\end{definition}

\begin{example}\label{ex:aug_ideal}
Let $f\colon\aHAlg\rightarrow\aHAlg'$ be a morphism of Hopf groupoids in a tensor category $\TCat$. Its \emph{kernel} $\ker(f)$ is, by definition, the collection of subobjects
\[
\ker(f)(x,y) := \ker\left(f\colon\aHAlg(x,y)\rightarrow\aHAlg'(f(x),f(y))\right).
\]
This is a Hopf ideal of $\aHAlg$.

In particular, taking $\aHAlg'=\1$ to be the terminal Hopf groupoid, for every Hopf groupoid $\aHAlg$ in $\TCat$, the collection of subspaces
\[
\aHIdeal(x,y) := \ker\left(\varepsilon\colon\aHAlg(x,y)\rightarrow\1\right)
\]
forms a Hopf ideal of $\aHAlg$, called the \emph{augmentation ideal}. In the particular case of a Hopf algebra, the counit $\varepsilon\colon\aHAlg\rightarrow\1$ is split by the unit map $\eta\colon\1\rightarrow\aHAlg$, and hence we have a direct sum decomposition $\aHAlg=\1\oplus\aHIdeal$.
\end{example}

\subsubsection{Pro-nilpotent Hopf algebras}

The powers of the augmentation ideal in a Hopf groupoid allow us to isolate a particular class of Hopf groupoids which are very closely related to pro-unipotent groupoids. In order to make sense of the definition, we need to further restrict attention to tensor categories $\CTCat$ in which limits are well-behaved, for example any of the categories $\CVEC_\F$, $\fCVEC_\F$, $\grCVEC_\F$ above.

\begin{definition}
A tensor category $\CTCat$ is called \emph{complete} just when:
\begin{itemize}
	\item $\CTCat$ has products (hence all limits);
	\item cofiltered limits in $\CTCat$ preserve cokernels; and
	\item the monoidal product $\hatotimes$ preserves products separately in each variable.
\end{itemize}
It follows from the existence of products that one can form the intersection of any set of subobjects of an object in $\CTCat$, and that nested intersections are preserved by taking images under morphisms.
\end{definition}

\begin{definition}\index{Hopf groupoid!\dots pro-nilpotent}
Let $\aHAlg$ be a Hopf groupoid in a complete tensor category $\CTCat$, and let $\aHIdeal^k$ denote the powers of the augmentation ideal from Example~\ref{ex:aug_ideal}. We say that $\aHAlg$ is \emph{pro-nilpotent} just when
\[
\bigcap_{k\geq0}\aHIdeal^k(x,y) = 0
\]
for all $x,y$, or equivalently when the natural map
\[
\aHAlg(x,y) \longrightarrow \varprojlim\nolimits_k\left(\aHAlg(x,y)/\aHIdeal^k(x,y)\right)
\]
is an isomorphism for all $x,y$. We write $\HGPD(\CTCat)_\nil$ for the full subcategory of pro-nilpotent Hopf groupoids.
\end{definition}

\begin{proposition}\label{prop:check_pro-nilpotence_locally}
Let $\aHAlg$ be a Hopf groupoid in a complete tensor category $\CTCat$. Then $\aHAlg$ is pro-nilpotent if and only if all of the Hopf algebras $\aHAlg(x)$ are pro-nilpotent.
\begin{proof}
If $\aHIdeal^k$ denotes the powers of the augmentation ideal of $\aHAlg$, then $\aHIdeal^k(x)$ are the powers of the augmentation ideal of the Hopf algebra $\aHAlg(x)$ for all $x$. Hence $\aHAlg$ being pro-nilpotent implies that each $\aHAlg(x)$ is pro-nilpotent. In the other direction, each $\aHIdeal^k(x,y)$ is the image of the multiplication map $\aHIdeal^k(x)\hatotimes\aHAlg(x,y)\rightarrow\aHAlg(x,y)$, and hence each $\aHAlg(x)$ being pro-nilpotent implies that $\aHAlg$ is pro-nilpotent, since images preserve nested intersections of subobjects. 
\end{proof}
\end{proposition}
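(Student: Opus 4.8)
The plan is to reduce everything to the fact that the definitions of the augmentation ideal and of products of ideals are \emph{local} at objects: for any single object $x$, specialising every object-index to $x$ in Example~\ref{ex:aug_ideal} and in the defining formulas for the ideal product shows that $\aHIdeal(x)$ is precisely the augmentation ideal of the Hopf algebra $\aHAlg(x)$ and that $\aHIdeal^k(x)$ is its $k$th power. Granting this, the forward direction is immediate: if $\aHAlg$ is pro-nilpotent then in particular $\bigcap_{k\geq0}\aHIdeal^k(x)=\bigcap_{k\geq0}\aHIdeal^k(x,x)=0$, so every $\aHAlg(x)$ is pro-nilpotent.

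For the converse, suppose each $\aHAlg(x)$ is pro-nilpotent and fix objects $x,y$; we must show $\bigcap_{k\geq0}\aHIdeal^k(x,y)=0$. Writing $\aHIdeal^k=\aHIdeal^{k-1}\cdot\aHIdeal$ and unwinding the first of the three equivalent formulas for the product of ideals, an easy induction on $k$ gives
\[
\aHIdeal^k(x,y) = \im\Bigl(\aHIdeal(x,y)\hatotimes\aHIdeal^{k-1}(x)\longrightarrow\aHAlg(x,y)\Bigr),
\]
the arrow being the map $\aHAlg(x,y)\hatotimes\aHAlg(x)\to\aHAlg(x,y)$ given by composition. Since $\aHIdeal(x,y)$ is a subobject of $\aHAlg(x,y)$, this is contained in $\im\bigl(\aHAlg(x,y)\hatotimes\aHIdeal^{k-1}(x)\to\aHAlg(x,y)\bigr)$. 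Now I invoke completeness of $\CTCat$ twice. First, $\hatotimes$ preserves products separately in each variable, so the nested intersection $\bigcap_{k}\bigl(\aHAlg(x,y)\hatotimes\aHIdeal^{k-1}(x)\bigr)$ equals $\aHAlg(x,y)\hatotimes\bigcap_{k}\aHIdeal^{k-1}(x)=\aHAlg(x,y)\hatotimes 0=0$, using pro-nilpotence of $\aHAlg(x)$. Second, taking images under a morphism preserves nested intersections of subobjects (one of the consequences of the completeness axioms recorded just before the definition of pro-nilpotence). Combining, $\bigcap_{k}\aHIdeal^k(x,y)\subseteq\im\bigl(0\to\aHAlg(x,y)\bigr)=0$, as desired, so $\aHAlg$ is pro-nilpotent.

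The argument is essentially formal, so there is no single hard step; the points that need care are (i) keeping the telescoping bookkeeping straight and checking that the composition map appearing in the first product-of-ideals formula really is the right-module map $\aHAlg(x,y)\hatotimes\aHAlg(x)\to\aHAlg(x,y)$ induced by $\mu$, and (ii) the fact that $\CTCat$ is only quasi-abelian, so everything must be phrased in terms of strict monomorphisms (images) and cofiltered limits — but this is exactly the regime for which the completeness axioms were imposed, and the two facts used above (``$\hatotimes$ preserves products'' and ``images preserve nested intersections'') are precisely the relevant ones, so nothing deeper is needed. I note in passing that the sharper identity $\aHIdeal^k(x,y)=\im\bigl(\aHAlg(x,y)\hatotimes\aHIdeal^k(x)\to\aHAlg(x,y)\bigr)$ also holds, via the torsor identity $\aHIdeal(x,y)=\im\bigl(\aHAlg(x,y)\hatotimes\aHIdeal(x)\to\aHAlg(x,y)\bigr)$ proved by transporting along the translation isomorphism $\aHAlg(x,y)\hatotimes\aHAlg(x)\isoarrow\aHAlg(x,y)\hatotimes\aHAlg(x,y)$ (dual to $(g,u)\mapsto(g,gu)$ on affine groupoid-schemes, with inverse built from the antipode); but this refinement is not needed for the proposition.
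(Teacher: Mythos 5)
Your proof is correct and follows essentially the same route as the paper: identify $\aHIdeal^k(x)$ with the powers of the augmentation ideal of $\aHAlg(x)$ for the forward direction, and for the converse express $\aHIdeal^k(x,y)$ inside the image of a composition map $\aHAlg(x,y)\hatotimes\aHIdeal^{k-1}(x)\rightarrow\aHAlg(x,y)$, then use that $\hatotimes$ commutes with nested intersections and that images preserve nested intersections of subobjects. The only difference is cosmetic: you work with a containment where the paper quotes the exact identity $\aHIdeal^k(x,y)=\im\bigl(\aHAlg(x,y)\hatotimes\aHIdeal^k(x)\rightarrow\aHAlg(x,y)\bigr)$ (your "sharper identity"), and you are somewhat more explicit about which completeness axioms are being invoked.
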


For us, the relevance of the pro-nilpotence condition is that it corresponds to pro-unipotence on the side of affine groupoid-schemes.

\begin{lemma}\label{lem:unipotent_vs_nilpotent_hopf}
Let $\F$ be a field of characteristic $0$. Under the equivalence
\[
\GPD_\F \isoarrow \CHGPD_\F
\]
between affine groupoid-schemes and complete Hopf groupoids from Example~\ref{ex:affine_vs_complete_hopf}, the pro-unipotent groupoids correspond to the pro-nilpotent Hopf groupoids.
\begin{proof}
By Proposition~\ref{prop:check_pro-nilpotence_locally}, it suffices to check that under the equivalence between affine group-schemes and complete Hopf algebras, the pro-unipotent groups correspond to the pro-nilpotent Hopf algebras. This is provided by \cite[Proposition~3.94]{javier}.
\end{proof}
\end{lemma}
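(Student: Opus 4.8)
The plan is to reduce immediately to the single-object case and then appeal to the classical dictionary between pro-unipotent groups and pro-nilpotent complete Hopf algebras. By definition an affine groupoid-scheme $\pi$ is pro-unipotent exactly when each vertex group $\pi(x)$ is pro-unipotent, and by Proposition~\ref{prop:check_pro-nilpotence_locally} the complete Hopf groupoid associated to $\pi$ is pro-nilpotent exactly when each vertex Hopf algebra $\O(\pi(x))^\dual$ is pro-nilpotent. Since the equivalence $\GPD_\F \isoarrow \CHGPD_\F$ of Example~\ref{ex:affine_vs_complete_hopf} restricts, on vertices, to the usual covariant equivalence between affine group-schemes and complete Hopf algebras sending $G$ to $\O(G)^\dual$, the lemma reduces to the following statement: for an affine group-scheme $G$ over a field $\F$ of characteristic $0$, $G$ is pro-unipotent if and only if $\O(G)^\dual$ is pro-nilpotent.

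For this single-object statement I would invoke \cite[Proposition~3.94]{javier}. Were a self-contained argument wanted, the mechanism is the exponential isomorphism: for $G$ a unipotent algebraic group in characteristic $0$, the map $\exp\colon\Lie(G)\to G$ is an isomorphism of schemes intertwining the Baker--Campbell--Hausdorff product with the group law, and dually identifies $\O(G)^\dual$ with the universal enveloping algebra $\CUEnv(\Lie(G))$ of the nilpotent Lie algebra $\Lie(G)$, completed along the powers of its augmentation ideal; nilpotence of $\Lie(G)$ then gives pro-nilpotence of $\O(G)^\dual$, and the converse is the assertion that a finite-dimensional complete Hopf algebra with nilpotent augmentation ideal arises this way. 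One then passes to inverse limits: a pro-unipotent $G=\varprojlim_iG_i$ has $\O(G)^\dual=\varprojlim_i\O(G_i)^\dual$, a cofiltered limit of pro-nilpotent Hopf algebras, and such a limit is again pro-nilpotent since cofiltered limits in $\CVEC_\F$ preserve cokernels, so the augmentation ideal of the limit and all its powers surject onto the corresponding objects for the $G_i$; conversely a pro-nilpotent $\aHAlg=\O(G)^\dual$ exhibits $G$ as the inverse limit over $k$ of the group-schemes attached to the quotient Hopf algebras $\aHAlg/\aHIdeal^k$, each of which is unipotent.

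I do not expect a genuine obstacle: the new work here --- the tensor-categorical framework together with Proposition~\ref{prop:check_pro-nilpotence_locally}, which allows pro-nilpotence to be checked one vertex at a time --- has already been carried out, and the remaining single-object assertion is standard. In a self-contained treatment the only delicate point is the inverse-limit step in the single-object case, namely checking that the pro-finite-dimensional structure on $\O(G)^\dual$ is compatible with its filtration by powers of the augmentation ideal, so that the quotients $\aHAlg/\aHIdeal^k$ genuinely correspond to unipotent (or at worst pro-unipotent) group-schemes. Since this compatibility is exactly what the cited reference supplies, I would simply cite it rather than reprove it.
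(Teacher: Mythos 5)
Your proposal is correct and follows the paper's own argument: reduce to the single-object case via Proposition~\ref{prop:check_pro-nilpotence_locally} and then cite \cite[Proposition~3.94]{javier} for the correspondence between pro-unipotent groups and pro-nilpotent complete Hopf algebras. The additional sketch via the exponential and completed enveloping algebra is a reasonable expansion of the citation but is not needed, and you rightly defer the delicate inverse-limit compatibility to the reference.
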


\subsection{Pro-nilpotent Lie algebras}

By specialising Lemma~\ref{lem:unipotent_vs_nilpotent_hopf}, we recover the classical equivalence of categories between pro-unipotent groups and pro-nilpotent Hopf algebras. As is well-known, this correspondence extends to a third, even simpler, type of linear algebraic object, namely pro-nilpotent Lie algebras.

\begin{definition}\index{Lie algebra}
A \emph{Lie algebra} in a $\Q$-linear tensor category $\TCat$ is an object $\aLAlg$ endowed with a Lie bracket
\[
[\cdot,\cdot]\colon\aLAlg\otimes\aLAlg\rightarrow\aLAlg
\]
which is antisymmetric and satisfies the Jacobi identity. With respect to the evident notion of morphism of Lie algebras, these form a category $\LIE(\TCat)$.

The \emph{descending central series} of a Lie algebra is the sequence of subobjects $\Cent^k\aLAlg\leq\aLAlg$ given by the images of the iterated commutator map $\aLAlg^{\otimes k}\rightarrow\aLAlg$.

We say that a Lie algebra $\aLAlg$ in a $\Q$-linear complete tensor category $\CTCat$ is \emph{pro-nilpotent}\index{Lie algebra!\dots pro-nilpotent} just when $\bigcap_{k\geq1}\Cent^k\aLAlg=0$, or equivalently just when the natural map $\aLAlg\rightarrow\varprojlim_k\left(\aLAlg/\Cent^k\aLAlg\right)$ is an isomorphism. We denote by $\LIE(\CTCat)_\nil$ the full subcategory of pro-nilpotent Lie algebras.
\end{definition}

\begin{example}
One can show that the category $\CLIE_{\F,\nil}:=\LIE(\CVEC_\F)_\nil$ of pro-nilpotent Lie algebras over a characteristic $0$ field $\F$ is equivalent to the category of pro-objects in the category of finite dimensional nilpotent Lie algebras. The same is not true without the assumption of pro-nilpotence: there are Lie algebras in $\CVEC_\F$ which are not inverse limits of finite dimensional Lie algebras (dualise \cite[p9]{michaelis}).
\end{example}

\begin{example}
Let $\TCat$ be a $\Q$-linear tensor category and $\aHAlg$ a Hopf algebra in $\TCat$. Endowed with the algebra commutator bracket, $\aHAlg$ has the structure of a Lie algebra. Its Lie subalgebra $\aHAlg^\prim$ of \emph{primitive elements}\index{primitive element} is, by definition, the kernel of the map
\[
\Delta-\eta\otimes1-1\otimes\eta\colon\aHAlg\rightarrow\aHAlg^{\otimes2}.
\]
If $\aHAlg$ is a pro-nilpotent Hopf algebra in a $\Q$-linear complete tensor category $\CTCat$, then $\aHAlg^\prim$ is a pro-nilpotent Lie algebra.
\end{example}

The main theorem of this section, the completed Milnor--Moore Theorem~\ref{thm:cmm}, asserts that the functor $\aHAlg\mapsto\aHAlg^\prim$ is an equivalence of categories. The inverse functor is given by a suitably completed analogue of the usual universal enveloping algebra functor.

\begin{definition}\label{def:CUEnv}\index{completed universal enveloping algebra}
Let $\CTCat$ be a $\Q$-linear complete tensor category. The \emph{complete tensor algebra} $\CCTensor V$ on an object $V$ of $\CTCat$ is the pro-nilpotent Hopf algebra
\[
\CCTensor V := \prod_{k\geq0}V^{\hatotimes k}
\]
whose multiplication is induced from the tensor product and whose comultiplication is induced from the map $\eta\hatotimes1+1\hatotimes\eta\colon V\rightarrow \1\hatotimes V\oplus V\hatotimes\1$.

If $\aLAlg$ is a pro-nilpotent Lie algebra in $\CTCat$, its \emph{completed universal enveloping algebra} $\CUEnv(\aLAlg)$ is, by definition, the quotient of $\CCTensor\aLAlg$ by the ideal generated by the image of the map
\begin{align*}
\aLAlg^{\hatotimes2} &\rightarrow \CCTensor\aLAlg \\
x\hatotimes y &\mapsto [x,y]-xy-yx.
\end{align*}
This image is a Hopf ideal of $\CCTensor\aLAlg$, and hence $\CUEnv(\aLAlg)$ inherits the structure of a pro-nilpotent Hopf algebra.

The completed tensor algebra functor $\CCTensor\colon\Cat\rightarrow\HALG(\CTCat)_\nil$ and completed universal enveloping algebra functor $\CUEnv\colon\LIE(\CTCat)_\nil\rightarrow\HALG(\CTCat)_\nil$ are left adjoint to the primitive elements functor: any map $V\rightarrow\aHAlg^\prim$, resp.\ any Lie algebra map $\aLAlg\rightarrow\aHAlg^\prim$, extends uniquely to a Hopf algebra map $\CCTensor(V)\rightarrow\aHAlg$, resp.\ $\CUEnv(\aLAlg)\rightarrow\aHAlg$.
\end{definition}

\begin{example}
If $\aLAlg$ is a finite dimensional nilpotent Lie algebra over a characteristic $0$ field $\F$, then its completed universal enveloping algebra $\CUEnv(\aLAlg)$ is the completion of the usual enveloping algebra $\UEnv(\aLAlg)$ with respect to its augmentation ideal, as in \cite[Definition~3.73]{javier}.
\end{example}

\begin{theorem}[Completed Milnor--Moore]\label{thm:cmm}
Let $\CTCat$ be a $\Q$-linear complete tensor category and let $\aLAlg$ and $\aHAlg$ be a pro-nilpotent Lie algebra and a pro-nilpotent Hopf algebra in $\CTCat$, respectively. Then the natural maps
\[
\aLAlg \rightarrow \CUEnv(\aLAlg)^\prim \:\:\text{ and }\:\: \CUEnv(\aHAlg^\prim) \rightarrow \aHAlg,
\]
are isomorphisms. The functors $\CUEnv\colon \LIE(\CTCat)_\nil \leftrightarrows \HALG(\CTCat)_\nil : (-)^\prim$ form an adjoint equivalence of categories.
\end{theorem}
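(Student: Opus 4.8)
The statement is the completed Milnor--Moore Theorem, and the natural strategy is to reduce it to the classical finite-dimensional Milnor--Moore theorem by a limit argument, exploiting the pro-nilpotent hypotheses. The key observation is that every pro-nilpotent Lie algebra $\aLAlg$ is, by definition, the cofiltered limit $\varprojlim_k(\aLAlg/\Cent^k\aLAlg)$ of its finite-step nilpotent quotients, and similarly every pro-nilpotent Hopf algebra $\aHAlg$ is $\varprojlim_k(\aHAlg/\aHIdeal^k)$. First I would establish that $\CUEnv$ and $(-)^\prim$ each commute with the relevant cofiltered limits: for $(-)^\prim$ this is because the primitive-elements functor is defined as a kernel (of $\Delta-\eta\otimes1-1\otimes\eta$), and in a complete tensor category cofiltered limits preserve kernels (indeed they preserve cokernels by hypothesis, and preserve limits trivially); for $\CUEnv$ one checks that $\CUEnv(\varprojlim_k\aLAlg_k)\iso\varprojlim_k\CUEnv(\aLAlg_k)$ using that $\hatotimes$ commutes with products and that the defining Hopf ideal is compatible with the tower.

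\textbf{Reduction to the classical case.} Once both functors are continuous in this sense, it suffices to prove the two natural maps $\aLAlg\to\CUEnv(\aLAlg)^\prim$ and $\CUEnv(\aHAlg^\prim)\to\aHAlg$ are isomorphisms after passing to each finite-step quotient. For a finite-dimensional nilpotent Lie algebra $\aLAlg_0$ over a field of characteristic $0$, the completed enveloping algebra $\CUEnv(\aLAlg_0)$ is the augmentation-ideal completion of the usual $\UEnv(\aLAlg_0)$ (as noted in the excerpt, citing \cite[Definition~3.73]{javier}), and the classical Milnor--Moore theorem (or equivalently the Poincar\'e--Birkhoff--Witt theorem together with the identification of primitives) gives $\aLAlg_0\iso\UEnv(\aLAlg_0)^\prim$; since primitive elements of $\UEnv(\aLAlg_0)$ all lie in degree $\leq1$ of the PBW filtration, completing does not introduce new primitives, so $\aLAlg_0\iso\CUEnv(\aLAlg_0)^\prim$. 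Dually, any pro-nilpotent Hopf algebra $\aHAlg$ in $\VEC_\F$ is a conilpotent cocommutative Hopf algebra, and the classical result identifies it with $\UEnv$ of its primitives; in the complete setting this transfers along the completion to give $\CUEnv(\aHAlg^\prim)\iso\aHAlg$ at each finite stage. The general statements follow by taking limits.

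\textbf{The adjoint equivalence and the main obstacle.} That $\CUEnv\dashv(-)^\prim$ is already stated as an adjunction in Definition~\ref{def:CUEnv}; the two natural isomorphisms just established are precisely the unit and counit of this adjunction being invertible, which by a standard formal argument upgrades the adjunction to an adjoint equivalence of categories. The genuinely delicate point --- the one I expect to be the main obstacle --- is the handling of the ambient tensor category $\CTCat$: it need not be abelian, only quasi-abelian (almost abelian and integral), so one must be careful that the constructions ($V^{\hatotimes k}$, the Hopf ideals, the kernel defining primitives) behave well with respect to strict monomorphisms, images, and cofiltered limits, and in particular that the image of the Lie-ideal-generating map in Definition~\ref{def:CUEnv} is genuinely a Hopf ideal with a well-defined quotient. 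One should verify that PBW-type arguments, which in the classical case use exactness, only require the preservation of kernels/cokernels by $\hatotimes$ and by cofiltered limits that is built into the definition of a complete tensor category; since $\CTCat$ is $\Q$-linear one can symmetrize, so the characteristic-$0$ hypothesis is available exactly where PBW needs it. I would isolate these categorical compatibilities as lemmas up front, after which the limit argument and the invocation of the classical Milnor--Moore theorem go through cleanly.
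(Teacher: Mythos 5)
Your central reduction does not go through, for two concrete reasons. First, there is a category error: $\CTCat$ is an \emph{arbitrary} $\Q$-linear complete tensor category (e.g.\ filtered or graded pro-finite dimensional vector spaces, possibly with a monodromy operator or group action), so the nilpotent quotients $\aLAlg/\Cent^k\aLAlg$ are nilpotent Lie algebras \emph{internal} to $\CTCat$, not finite-dimensional Lie algebras over a field, and likewise $\aHAlg$ is not a Hopf algebra in $\VEC_\F$. The "classical Milnor--Moore theorem over a field of characteristic $0$'' that you invoke is therefore not available; what is needed is precisely a basis-free, internal Milnor--Moore/PBW statement valid in a quasi-abelian tensor category, which is the actual content of the theorem and which your final paragraph acknowledges but defers ("isolate these categorical compatibilities as lemmas'') rather than supplies. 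Second, the limit argument on the Hopf side fails at the first step: for $k\geq2$ the power $\aHIdeal^k$ of the augmentation ideal is an ideal but \emph{not} a Hopf ideal (one only has $\Delta(\aHIdeal^k)\subseteq\sum_{i+j=k}\aHIdeal^i\hatotimes\aHIdeal^j$, which is not contained in $\aHIdeal^k\hatotimes\aHAlg+\aHAlg\hatotimes\aHIdeal^k$ -- already for a group algebra $\Delta((g-1)(h-1))$ has the nonzero cross term $\bar g\otimes\bar h+\bar h\otimes\bar g$ modulo $\aHIdeal^2\hatotimes\aHAlg+\aHAlg\hatotimes\aHIdeal^2$), so $\aHAlg/\aHIdeal^k$ is not a Hopf algebra and "passing to each finite-step quotient'' produces objects to which no Milnor--Moore statement applies. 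In addition, your claim $\CUEnv(\varprojlim_k\aLAlg_k)\iso\varprojlim_k\CUEnv(\aLAlg_k)$ is asserted but not formal: $\CUEnv$ is a \emph{left} adjoint, so it preserves colimits, and its interaction with the limit of the lower central series tower (in particular with "the ideal generated by'' inside a limit) would itself need a nontrivial argument.

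For comparison, the paper never reduces to nilpotent quotients; pro-nilpotence is used directly to make convolution series converge. Lemma~\ref{lem:structure_of_hopf} shows, via the Eulerian idempotent $e^1=\log_*(1)$ (a series that makes sense exactly because $\bigcap_k\aHIdeal^k=0$) together with an explicitly constructed inverse, that the symmetrised product $\s\colon\CSym^\bullet(\aHAlg^\prim)\rightarrow\aHAlg$ is a coalgebra isomorphism for \emph{any} pro-nilpotent Hopf algebra in $\CTCat$; Lemma~\ref{lem:cpbw} (completed PBW) proves $\CSym^\bullet(\aLAlg)\isoarrow\CUEnv(\aLAlg)$ first for free pro-nilpotent Lie algebras and then in general by presenting $\aLAlg$ as a reflexive coequaliser of free ones and using that $\CSym^\bullet$ and $\CUEnv$, being left adjoints, preserve reflexive coequalisers. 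Theorem~\ref{thm:cmm} then follows formally by combining the two isomorphisms. If you want to salvage your strategy, the missing ingredient is exactly such an internal, basis-free structure theorem; once you have it, it applies directly in the pro-nilpotent setting and the limit reduction becomes unnecessary.
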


\begin{corollary}[to the proof]\label{cor:complete_tensor_functor}
Let $F\colon \CTDat \rightarrow \CTCat$ be a product-preserving symmetric monoidal functor between $\Q$-linear complete tensor categories, and let $\aLAlg$ and $\aHAlg$ be a pro-nilpotent Lie algebra and a pro-nilpotent Hopf algebra, respectively, in $\CTDat$. Then the natural maps
\[
\CUEnv(F(\aLAlg)) \rightarrow F(\CUEnv(\aLAlg)) \:\:\text{and}\:\: F(\aHAlg^\prim) \rightarrow F(\aHAlg)^\prim
\]
are isomorphisms.
\end{corollary}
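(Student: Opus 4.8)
The strategy is to revisit the proof of Theorem~\ref{thm:cmm} and observe that the comparison isomorphisms produced there are assembled entirely from operations -- finite products, the monoidal product $\hatotimes$, the symmetric braidings, and $\Q$-linear combinations of structure maps -- all of which are preserved by $F$; applying $F$ then yields the corollary. Concretely, the proof of Theorem~\ref{thm:cmm} furnishes, for every pro-nilpotent Lie algebra $\aLAlg$ in a $\Q$-linear complete tensor category, a natural isomorphism of complete coalgebras $\CSym(\aLAlg)\isoarrow\CUEnv(\aLAlg)$ (the completed Poincar\'e--Birkhoff--Witt symmetrization), where $\CSym(V)=\prod_{n\geq0}(V^{\hatotimes n})^{S_n}$ and $(V^{\hatotimes n})^{S_n}$ is the image of the symmetrization idempotent $e_n=\frac1{n!}\sum_{\sigma\in S_n}\sigma$; and, for every pro-nilpotent Hopf algebra $\aHAlg$, a natural idempotent endomorphism $\pi_\aHAlg$ of $\aHAlg$ whose image is $\aHAlg^\prim$ and which is given by a universal formula with rational coefficients in the structure maps (the Eulerian/logarithmic idempotent $\sum_{k\geq1}\frac{(-1)^{k-1}}{k}\mu_{(k)}\circ(\mathrm{id}-\eta\varepsilon)^{\hatotimes k}\circ\Delta_{(k)}$, which converges since its $k$th term factors through the $k$th power of the augmentation ideal). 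Thus $\aHAlg^\prim$ is a natural retract of $\aHAlg$, and $\CUEnv(\aLAlg)$ is, as an object, a natural retract of $\CCTensor\aLAlg=\prod_n\aLAlg^{\hatotimes n}$.

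The first step is the key lemma that $F$ commutes with these constructions. Since $F$ preserves finite products it preserves biproducts, hence is additive, hence (the ambient categories being $\Q$-linear, so with $\Q$-vector space hom-groups) automatically $\Q$-linear. Being strong symmetric monoidal and product-preserving, $F$ gives $F(V^{\hatotimes n})\iso(FV)^{\hatotimes n}$ compatibly with the $S_n$-actions, and $F(\prod_n(-))\iso\prod_nF(-)$; being $\Q$-linear it sends $e_n$ to the symmetrization idempotent on $(FV)^{\hatotimes n}$; and any functor preserves split idempotents, so $F$ carries the splitting $(V^{\hatotimes n})^{S_n}\hookrightarrow V^{\hatotimes n}\twoheadrightarrow(V^{\hatotimes n})^{S_n}$ to the analogous one for $FV$. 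Hence the canonical map $\CSym(FV)\to F(\CSym V)$ is an isomorphism, naturally in $V$. For the same reasons $F$ carries the universal formula defining $\pi_\aHAlg$ to the one defining $\pi_{F(\aHAlg)}$, so $F(\pi_\aHAlg)=\pi_{F(\aHAlg)}$; here $F(\aHAlg)$ is again a pro-nilpotent Hopf algebra -- it is a Hopf algebra because $F$ is monoidal, and applying $F$ to the coalgebra isomorphism $\aHAlg\iso\CSym(\aHAlg^\prim)$ from the proof of Theorem~\ref{thm:cmm} exhibits it, via the previous point, as $\CSym\bigl((F\aHAlg)^\prim\bigr)$, which is conilpotent and hence pro-nilpotent.

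It remains to conclude. For primitives, $F(\aHAlg^\prim)=F(\im\pi_\aHAlg)=\im F(\pi_\aHAlg)=\im\pi_{F(\aHAlg)}=F(\aHAlg)^\prim$, and one checks along the way that this chain of identifications is precisely the natural map named in the statement. For the enveloping algebra, since $(-)^\prim\colon\HALG(\CTCat)_\nil\to\LIE(\CTCat)_\nil$ is an equivalence by Theorem~\ref{thm:cmm}, it suffices to show the natural comparison map $\CUEnv(F\aLAlg)\to F(\CUEnv\aLAlg)$ induces an isomorphism on primitive elements; but $\CUEnv(F\aLAlg)^\prim\iso F\aLAlg$ by Theorem~\ref{thm:cmm}, while $F(\CUEnv\aLAlg)^\prim\iso F\bigl((\CUEnv\aLAlg)^\prim\bigr)\iso F\aLAlg$ by the primitives case just established together with Theorem~\ref{thm:cmm}, and a diagram chase identifies the induced map with $\mathrm{id}_{F\aLAlg}$.

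I expect the genuine difficulty to lie entirely in the bookkeeping of the last paragraph: checking that the isomorphisms manufactured by transporting the PBW data through $F$ are \emph{the} canonical comparison maps of the statement, rather than merely abstractly isomorphic to them, so that the conclusion ``is an isomorphism'' applies to the maps actually named. This is a naturality diagram chase, not a new idea. A secondary point requiring care is that one must route the argument through the coalgebra-level description $\aHAlg\iso\CSym(\aHAlg^\prim)$ from the proof of Theorem~\ref{thm:cmm} rather than through the generators-and-relations presentation of $\CUEnv$, since a product-preserving functor need not preserve cokernels and hence need not preserve quotients by Hopf ideals.
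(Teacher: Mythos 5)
Your overall strategy---reduce everything to the fact that $F$ commutes with $\CSym^\bullet$ because it preserves products and split idempotents, and then exploit the structural results established on the way to Theorem~\ref{thm:cmm}---is essentially the paper's: the paper places the two completed PBW isomorphisms $\s\colon\CSym^\bullet(F\aLAlg)\isoarrow\CUEnv(F\aLAlg)$ and $F\s\colon F\CSym^\bullet(\aLAlg)\isoarrow F\CUEnv(\aLAlg)$ of Lemma~\ref{lem:cpbw} into a commuting square with the natural comparison map and concludes by two-out-of-three, and it treats primitives ``similarly'', i.e.\ by transporting the coalgebra isomorphism $\aHAlg\iso\CSym^\bullet(\aHAlg^\prim)$ of Lemma~\ref{lem:structure_of_hopf} through $F$ and using $\CSym^\bullet(V)^\prim=V$. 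Your variant for $\CUEnv$ (reduce to primitives via the Milnor--Moore equivalence) is a workable reshuffling of the same ingredients.

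However, your primitives argument as written has a genuine gap: the asserted identity $F(\pi_{\aHAlg})=\pi_{F(\aHAlg)}$ for the Eulerian idempotent does not follow from $F$ being product-preserving and symmetric monoidal. The idempotent $e^1=\log_*(1)$ is an infinite convolution sum, defined as a limit along the $J$-adic tower; for $\pi_{F(\aHAlg)}$ even to be defined you need $F(\aHAlg)$ to be pro-nilpotent (your ``conilpotent hence pro-nilpotent'' aside is itself only a sketch), and for $F(\pi_{\aHAlg})$ to agree with it you need $F$ to commute with that limit and with the formation of the ideals $J^k$, which are \emph{images} of maps. A merely product-preserving functor need not preserve cofiltered limits, epimorphisms or images in these categories---your own closing remark about $F$ not preserving cokernels is exactly the issue: the tail $F\bigl(e^1-\sum_{k\le n}\tfrac{(-1)^{k+1}}{k}\pi^{\ast k}\bigr)$ is only seen to land in the image of $F(J^{n+1})$ in $F(\aHAlg)$, which need not be contained in $J_{F\aHAlg}^{n+1}$, so the defining property of $\pi_{F(\aHAlg)}$ is not verified. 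The repair is the route the paper takes, for which you already have every piece: $F(\aHAlg)\iso F(\CSym^\bullet(\aHAlg^\prim))\iso\CSym^\bullet(F(\aHAlg^\prim))$ as coaugmented coalgebras, whence $F(\aHAlg)^\prim\iso F(\aHAlg^\prim)$, with no idempotent and no pro-nilpotence of $F(\aHAlg)$ required; relatedly, your Milnor--Moore reduction for $\CUEnv$ needs pro-nilpotence of both $\CUEnv(F\aLAlg)$ and $F(\CUEnv(\aLAlg))$ to invoke the equivalence, which the paper's direct square avoids for the latter (both you and the paper do implicitly use that $F(\aLAlg)$ is pro-nilpotent so that Lemma~\ref{lem:cpbw} applies to it).
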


\begin{example}\label{ex:graded_CUEnv}
Let $\F$ be a characteristic $0$ field and $\aLAlg$ a filtered pro-nilpotent Lie algebra over $\F$. Then there is a canonical isomorphism
\[
\CUEnv(\gr_\bullet\aLAlg) \isoarrow \gr_\bullet\CUEnv(\aLAlg)
\]
of graded pro-nilpotent Hopf algebras. Note that this doesn't immediately follow from the definition of $\CUEnv$, since the functor $\gr_\bullet$ doesn't preserve cokernels.
\end{example}

\subsubsection{Structure of pro-nilpotent Hopf algebras}

We now turn to the proof of the completed Milnor--Moore Theorem~\ref{thm:cmm}. The argument, which is the most complicated in this section, is essentially the same as the proof of \cite[Theorem~3.2.19]{homotopy_theory_operads}, using limits and our pro-nilpotence property in place of colimits and a conilpotence property.

For this section, we fix a $\Q$-linear complete tensor category $\CTCat$. For an object $V\in\CTCat$ we denote by $\CSym^\bullet(V)=\prod_{n\geq0}\CSym^n(V)$ the complete symmetric algebra on $V$, where $\CSym^n(V):=V^{\hatotimes n}/S_n$. This is a pro-nilpotent Hopf algebra in $\CTCat$, which is moreover commutative. A straightforward calculation similar to \cite[Proposition~7.2.14(a)]{homotopy_theory_operads} verifies that $\CSym^\bullet(V)^\prim=V$; this is the special case of Theorem~\ref{thm:cmm} for $V$ endowed with the zero Lie bracket.

We will repeatedly use the following construction. If $\aHAlg$ is a pro-nilpotent Hopf algebra and $\phi\colon V\rightarrow\aHAlg$ is a map in $\CTCat$ whose image is contained in the augmentation ideal, we get a symmetrised product map
\[
\s\colon\CSym^\bullet(V)\rightarrow\aHAlg
\]
characterised by the fact that it acts on $\CSym^n(V)$ by $\frac1{n!}\sum_{\sigma\in S_n}\mu^{(n)}\circ\phi^{\hatotimes n}\circ\sigma_*$ where $\mu^{(n)}$ denotes the $n$-fold multiplication in $\aHAlg$ and $\sigma_*$ denotes the natural action of $S_n$ on $V^{\hatotimes n}$. When the image of $V\rightarrow\aHAlg$ is contained in the primitive elements, the symmetrised product map is a morphism of coaugmented coalgebras with involution.

The symmetrised product map affords us very precise control of the structure of pro-nilpotent Hopf algebras. This constitutes the bulk of the work in the proof of Theorem~\ref{thm:cmm}.

\begin{lemma}\label{lem:structure_of_hopf}
Let $\aHAlg$ be a pro-nilpotent Hopf algebra in $\CTCat$. Then the symmetrised product map
\[
\s\colon\CSym^\bullet(\aHAlg^\prim) \rightarrow \aHAlg
\]
is an isomorphism of coaugmented coalgebras with involution.
\begin{proof}[Proof, following {\cite[Theorem~3.2.16]{homotopy_theory_operads}}]
The endomorphism algebra of $\aHAlg$ carries an associative \emph{convolution product} defined by $f\ast g:=\mu\circ(f\hatotimes g)\circ\Delta$. The endomorphism $e^0=\eta\circ\varepsilon$ is a unit for the convolution product, and the identity endomorphism decomposes as $1=e^0+\pi$ where $\pi\colon\aHAlg\rightarrow\aHIdeal$ is the projection onto the augmentation ideal. Pro-nilpotence of $\aHAlg$ ensures that we can make sense of certain infinite sums of endomorphisms: if $(f_k)_{k\geq0}$ are endomorphisms such that $\im(f_k)\leq\aHIdeal^k$ for all $k$, then $\sum_{k\geq0}f_k$ defines an endomorphism of $\aHAlg$, namely the inverse limit of the morphisms $\sum_{k=0}^nf_k\colon\aHAlg\rightarrow\aHAlg/\aHIdeal^{n+1}$.

We thus define an endomorphism $e^1$ by
\[
e^1=\log_*(1)=\sum_{k\geq1}\frac{(-1)^{k+1}}k\pi^{\ast k}=\sum_{k\geq1}\frac{(-1)^{k+1}}k\mu^{(k)}\circ\pi^{\hatotimes k}\circ\Delta^{(k)}
\]
where $\mu^{(k)}$ and $\Delta^{(k)}$ denote the $k$-fold multiplication and comultiplication maps respectively.

The endomorphisms $e^0,e^1$ satisfy the identities
\begin{align}
\Delta\circ e^0 &= (e^0\hatotimes e^0)\circ\Delta & \Delta\circ e^1 &= (e^1\hatotimes e^0+e^0\hatotimes e^1)\circ\Delta \nonumber \\
\varepsilon\circ e^0 &= \varepsilon & \varepsilon\circ e^1 &= 0 \label{eq:e^1_identities}\\
e^0\circ\eta &= \eta & e^1\circ\eta &= 0. \nonumber
\end{align}
The identities for $e^0$ are easy to verify. For those for $e^1$, we note that the structure morphisms $\Delta\colon\aHAlg\rightarrow\aHAlg\hatotimes\aHAlg$, $\varepsilon\colon\aHAlg\rightarrow\1$ and $\eta\colon\1\rightarrow\aHAlg$ are morphisms of Hopf algebras (since $\aHAlg$ is cocommutative), and hence commute with the action of $\log_*(1)$ on either side. An easy computation verifies that $\log_*(1_\1)=0$ and $\log_*(1_{\aHAlg\hatotimes\aHAlg})=\log_*(1_\aHAlg\hatotimes1_\aHAlg)=\log_*(1_\aHAlg)\hatotimes e^0+e^0\hatotimes\log_*(1_\aHAlg)$, which gives the desired identities.

The comultiplication identity implies in particular that $(\Delta-1\hatotimes\eta-\eta\hatotimes1)\circ e^1=0$, so that $e^1\aHAlg\leq\aHAlg^\prim$. We will show that this inclusion is an equality, and that the symmetrised product map
\[
\s\colon\CSym^\bullet(e^1\aHAlg) \rightarrow \aHAlg
\]
is an isomorphism. Indeed, it suffices to prove the latter claim, since then $\s$, being an isomorphism of coaugmented coalgebras, induces an isomorphism $e^1\aHAlg=\CSym^\bullet(e^1\aHAlg)^\prim\isoarrow\aHAlg^\prim$.

We construct a map
\[
\overline\s\colon\aHAlg \rightarrow \CSym^\bullet(e^1\aHAlg)
\]
by taking the product of the maps $\frac1{r!}(e^1)^{\hatotimes r}\circ\Delta^{(r)}\colon\aHAlg\rightarrow\CSym^r(e^1\aHAlg)$; it follows purely formally that $\overline\s$ is a morphism of coaugmented coalgebras. On the one hand, the composite $\s\overline\s$ is given by
\begin{align*}
\s\overline\s=\sum_{k\geq0}\frac1{(k!)^2}\sum_{\sigma\in S_k}\mu^{(k)}\circ\sigma_*\circ(e^1)^{\hatotimes k}\circ\Delta^{(k)} &= \sum_{k\geq0}\frac1{k!}\mu^{(k)}\circ(e^1)^{\hatotimes k}\circ\Delta^{(k)} \\
 &= \sum_{k\geq0}\frac1{k!}(e^1)^{\ast k} = \exp_*(e^1) = 1
\end{align*}
since $\Delta$ is cocommutative.

On the other hand, to show that $\overline\s\s=1$, it suffices to prove that it acts as the identity on each $\CSym^n(e^1\aHAlg)$. For  $n=0,1$, this follows from~\eqref{eq:e^1_identities}, since for instance
\begin{align*}
\overline\s\s\circ e^1 &= \sum_{k\geq0}\frac1{k!}(e^1)^{\hatotimes k}\circ\Delta^{(k)}\circ e^1 \\
 &= \sum_{k\geq0}\frac1{k!}(e^1)^{\hatotimes k}\circ\left(\sum_{i=0}^{k-1}(e^0)^{\hatotimes i}\hatotimes e^1\hatotimes(e^0)^{\hatotimes k-i-1}\right)\circ\Delta^{(k)} = e^1,
\end{align*}
where in the final equality we use the fact, again a consequence of~\eqref{eq:e^1_identities}, that $e^0$ and $e^1$ are orthogonal idempotents.

We now show by induction that $\overline\s\s$ acts as the identity on $\CSym^{\leq n}(e^1\aHAlg):=\prod_{r=0}^n\CSym^r(e^1\aHAlg)$ for all $n$. To do this, we note that the map $\Delta-1\hatotimes\eta-\eta\hatotimes1$ takes $\CSym^{\leq n}(e^1\aHAlg)$ into $\CSym^{\leq n-1}\hatotimes\CSym^{\leq n-1}$, so by our inductive assumption and the fact that $\overline\s\s$ is a morphism of coaugmented coalgebras, we have that
\[
(\Delta-1\hatotimes\eta-\eta\hatotimes1)\circ\overline\s\s=\Delta-1\hatotimes\eta-\eta\hatotimes1
\]
on $\CSym^{\leq n}(e^1\aHAlg)$. Since $\ker(\Delta-1\hatotimes\eta-\eta\hatotimes1)=\CSym^\bullet(e^1\aHAlg)^\prim=e^1\aHAlg$, this says that $\overline\s\s|_{\CSym^{\leq n}(e^1\aHAlg)}=1+\psi$ for some $\psi\colon\CSym^{\leq n}(e^1\aHAlg)\rightarrow e^1\aHAlg$. In particular, $\overline\s\s(\CSym^{\leq n}(e^1\aHAlg))\leq\CSym^{\leq n}(e^1\aHAlg)$. But then we have
\[
1+\psi = \overline\s\s|_{\CSym^{\leq n}(e^1\aHAlg)}=\left(\overline\s\s|_{\CSym^{\leq n}(e^1\aHAlg)}\right)^{\circ2} = 1+2\psi
\]
since $\s\overline\s=1$, and hence $\psi=0$. This completes the induction, and hence the proof that $\s$ and $\overline\s$ are mutually inverse.
\end{proof}
\end{lemma}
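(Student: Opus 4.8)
The plan is to follow the strategy of \cite[Theorem~3.2.16]{homotopy_theory_operads}, adapted to the pro-nilpotent (limit-based) setting. The central device is the \emph{convolution product} $f\ast g:=\mu\circ(f\hatotimes g)\circ\Delta$ on the endomorphism monoid of $\aHAlg$, whose unit is the idempotent $e^0:=\eta\circ\varepsilon$. Writing $1=e^0+\pi$ where $\pi\colon\aHAlg\twoheadrightarrow\aHIdeal$ is the projection onto the augmentation ideal, the first task is to check that $\im(\pi^{\ast k})\leq\aHIdeal^k$ for all $k$: this holds because $\Delta^{(k)}$ lands in $\aHAlg^{\hatotimes k}$, applying $\pi^{\hatotimes k}$ produces a tensor of augmentation-ideal elements, and then $\mu^{(k)}$ yields a $k$-fold product in $\aHIdeal$. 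By pro-nilpotence the infinite convolution sum $e^1:=\log_*(1)=\sum_{k\geq1}\tfrac{(-1)^{k+1}}{k}\pi^{\ast k}$ therefore defines a well-posed endomorphism, namely the inverse limit of its truncations modulo $\aHIdeal^{n+1}$.

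Next I would establish the compatibility identities for $e^0$ and $e^1$ with the Hopf structure: $\varepsilon\circ e^1=0$, $e^1\circ\eta=0$, and the coproduct identity $\Delta\circ e^1=(e^1\hatotimes e^0+e^0\hatotimes e^1)\circ\Delta$. The point is that $\Delta$, $\varepsilon$, $\eta$ are themselves morphisms of Hopf algebras --- here is where cocommutativity of $\aHAlg$ is used --- so they intertwine the convolution-logarithms on source and target; a short computation then gives $\log_*(1_\1)=0$ and $\log_*(1_{\aHAlg\hatotimes\aHAlg})=\log_*(1_\aHAlg)\hatotimes e^0+e^0\hatotimes\log_*(1_\aHAlg)$. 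These identities show in particular that $e^0,e^1$ are orthogonal idempotents and that $e^1\aHAlg\leq\aHAlg^\prim$. I would then reduce the lemma to showing that the symmetrised product map $\s\colon\CSym^\bullet(e^1\aHAlg)\to\aHAlg$ is an isomorphism of coaugmented coalgebras with involution: once that is known, taking primitive elements of both sides and using the already-established identity $\CSym^\bullet(V)^\prim=V$ gives $e^1\aHAlg=\aHAlg^\prim$, and the map in question coincides with $\s$.

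The heart of the argument is an explicit inverse $\overline\s\colon\aHAlg\to\CSym^\bullet(e^1\aHAlg)$, defined as the product over $r\geq0$ of the maps $\tfrac{1}{r!}(e^1)^{\hatotimes r}\circ\Delta^{(r)}\colon\aHAlg\to\CSym^r(e^1\aHAlg)$; this is automatically a morphism of coaugmented coalgebras. One direction, $\s\overline\s=1$, is a formal convolution computation: the symmetrisation collapses by cocommutativity of $\Delta$, leaving $\sum_{k\geq0}\tfrac{1}{k!}(e^1)^{\ast k}=\exp_*(\log_*(1))=1$. For $\overline\s\s=1$ I would argue by induction on $n$ that $\overline\s\s$ restricts to the identity on $\CSym^{\leq n}(e^1\aHAlg):=\prod_{r=0}^n\CSym^r(e^1\aHAlg)$; the cases $n=0,1$ follow from the orthogonal-idempotent identities, and for the inductive step the key observation is that $\Delta-1\hatotimes\eta-\eta\hatotimes1$ carries $\CSym^{\leq n}$ into $\CSym^{\leq n-1}\hatotimes\CSym^{\leq n-1}$, so combining the inductive hypothesis with the fact that $\overline\s\s$ is a coaugmented-coalgebra morphism shows $\overline\s\s|_{\CSym^{\leq n}}=1+\psi$ with $\psi$ landing in the primitives $e^1\aHAlg$; then $1+\psi=(1+\psi)^{\circ2}=1+2\psi$ by $\s\overline\s=1$, forcing $\psi=0$. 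Compatibility with the involution is immediate from naturality.

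The main obstacle I anticipate is foundational bookkeeping rather than a deep idea: one must take care that all the infinite convolution sums genuinely define morphisms in the quasi-abelian complete tensor category $\CTCat$, so that $\aHIdeal^k$-adic convergence is legitimate and commutes with $\mu^{(k)}$, $\Delta^{(k)}$ and with taking images; and that the exhaustive ``filtration'' $\CSym^{\leq n}$ together with the subobject $e^1\aHAlg$ interacts correctly with the cofiltered limits defining $\CSym^\bullet$ --- in particular, that a coaugmented-coalgebra endomorphism fixing every $\CSym^{\leq n}$ is the identity. These are exactly the places where the completeness axioms on $\CTCat$ (cofiltered limits preserving cokernels, $\hatotimes$ preserving products) are doing the work, and I would invoke them explicitly; the conceptual content, namely the $e^0,e^1$ decomposition and the $\exp_*/\log_*$ inverse pair, transfers verbatim from the classical Cartier--Milnor--Moore argument.
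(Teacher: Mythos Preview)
Your proposal is correct and follows essentially the same approach as the paper's proof: the convolution logarithm $e^1=\log_*(1)$, the identities for $e^0,e^1$ deduced from the fact that $\Delta,\varepsilon,\eta$ are Hopf algebra morphisms, the explicit inverse $\overline\s$ via $\tfrac{1}{r!}(e^1)^{\hatotimes r}\circ\Delta^{(r)}$, the identity $\s\overline\s=\exp_*(\log_*(1))=1$, and the induction on $\CSym^{\leq n}$ using the $1+\psi=(1+\psi)^{\circ2}$ trick are all exactly as in the paper. Your additional remarks on the foundational bookkeeping (convergence of convolution sums, behaviour of $\CSym^{\leq n}$ under the completeness axioms) are appropriate and do not diverge from the paper's implicit treatment.
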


\begin{remark}
As the notation suggests, the endomorphisms $e^0,e^1$ are part of a complete orthogonal system of idempotents $(e^r)_{r\geq0}$, called the \emph{Eulerian idempotents} \cite[p250]{homotopy_theory_operads}. The corresponding product decomposition is exactly the natural one on the complete symmetric algebra.
\end{remark}

To complete the proof of Theorem~\ref{thm:cmm}, we describe the decomposition from Lemma~\ref{lem:structure_of_hopf} in the particular case of the completed universal enveloping algebra of a pro-nilpotent Lie algebra. This is an analogue of the Poincar\'e--Birkhoff--Witt Theorem.

\begin{lemma}[Completed Poincar\'e--Birkhoff--Witt]\label{lem:cpbw}
Let $\aLAlg$ be a pro-nilpotent Lie algebra in $\CTCat$. Then the symmetrised product map
\[
\s\colon\CSym^\bullet(\aLAlg) \rightarrow \CUEnv(\aLAlg)
\]
is an isomorphism of coaugmented coalgebras with involution. The map $\aLAlg\rightarrow\CUEnv(\aLAlg)$ induces an isomorphism $\aLAlg\isoarrow\CUEnv(\aLAlg)^\prim$.
\begin{proof}[Proof, following {\cite[Theorem~7.2.17]{homotopy_theory_operads}}]
The two statements in the lemma are equivalent by Lemma~\ref{lem:structure_of_hopf}.

For an object $V$ of $\CTCat$, we let $\CFLie(V)$ denote the free pro-nilpotent Lie algebra, i.e.\ the pro-nilpotent Lie algebra representing maps in $\CTCat$ from $V$ to pro-nilpotent Lie algebras. This can be constructed, for example, as a quotient of the product $\prod_w V^{\hatotimes|w|}$ indexed by words $w$ of the free magma on one generator. It follows from the universal properties (Definition~\ref{def:CUEnv}) that the completed universal enveloping algebra of $\CFLie(V)$ is the completed tensor algebra $\CCTensor(V)$. An argument similar to that of \cite[Propositions~7.2.8~\&~7.2.14(b)]{homotopy_theory_operads} shows that the map $\CFLie(V)\rightarrow\CUEnv(\CFLie(V))=\CTensor(V)$ admits a splitting in $\CTCat$ and sets up an isomorphism $\CFLie(V)\isoarrow\CTensor(V)^\prim$. The lemma is thus proved in this case.

We deduce the general case from the free case. Any pro-nilpotent Lie algebra $\aLAlg$ admits a presentation
\begin{center}
\begin{tikzcd}
\CFLie(\CFLie(\aLAlg)) \arrow[shift left=1.2ex]{r}\arrow[shift right=1.2ex]{r} & \CFLie(\aLAlg) \arrow{l}\arrow{r} & \aLAlg
\end{tikzcd}
\end{center}
as a reflexive coequaliser in $\LIE(\CTCat)_\nil$. This is also a reflexive coequaliser in $\Cat$. Since the completed symmetric algebra functor and completed universal enveloping algebra functors are left adjoints, they preserve coequalisers, and so we deduce a diagram
\begin{center}
\begin{tikzcd}
\CSym^\bullet(\CFLie^2(\aLAlg)) \arrow[shift left=1.2ex]{r}\arrow[shift right=1.2ex]{r}\arrow{d}{\s}[swap]{\viso} & \CSym^\bullet(\CFLie(\aLAlg)) \arrow{l}\arrow{r}\arrow{d}{\s}[swap]{\viso} & \CSym^\bullet(\aLAlg) \arrow{d}{\s} \\
\CCTensor(\CFLie(\aLAlg)) \arrow[shift left=1.2ex]{r}\arrow[shift right=1.2ex]{r} & \CCTensor(\aLAlg) \arrow{l}\arrow{r} & \CUEnv(\aLAlg)
\end{tikzcd}
\end{center}
in which both rows are reflexive coequalisers in $\HALG(\CTCat)_\nil$, hence in $\Cat$, and the obvious squares commute. Since the two left-hand vertical maps are isomorphisms by the free pro-nilpotent Lie algebra case, so too is the right-hand vertical map, as desired.
\end{proof}
\end{lemma}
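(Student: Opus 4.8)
The plan is to reduce the statement to the case of a free pro-nilpotent Lie algebra, and then bootstrap to the general case by presenting an arbitrary pro-nilpotent Lie algebra as a coequaliser of free ones. First I would record that the two assertions in the lemma are equivalent, exactly as in the proof of Lemma~\ref{lem:structure_of_hopf}. If $\s$ is an isomorphism of coaugmented coalgebras, then it induces an isomorphism on primitives; combined with the computation $\CSym^\bullet(\aLAlg)^\prim=\aLAlg$ (the special case of Theorem~\ref{thm:cmm} for the zero bracket, proved by a direct calculation on the completed symmetric algebra in the style of \cite[Proposition~7.2.14(a)]{homotopy_theory_operads}), this shows $\aLAlg\isoarrow\CUEnv(\aLAlg)^\prim$. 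Conversely, applying Lemma~\ref{lem:structure_of_hopf} to $\aHAlg=\CUEnv(\aLAlg)$ gives an isomorphism $\CSym^\bullet(\CUEnv(\aLAlg)^\prim)\isoarrow\CUEnv(\aLAlg)$, which, under the hypothesised isomorphism $\aLAlg\cong\CUEnv(\aLAlg)^\prim$, is identified with $\s$.

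Next I would treat the free case. For an object $V$ of $\CTCat$, let $\CFLie(V)$ be the free pro-nilpotent Lie algebra on $V$, constructed as a suitable quotient of $\prod_w V^{\hatotimes|w|}$ indexed by words $w$ in the free magma on one generator. The universal properties in Definition~\ref{def:CUEnv} identify $\CUEnv(\CFLie(V))$ with the completed tensor algebra $\CCTensor(V)$. The key input, a completed analogue of Friedrichs' criterion / the Poincar\'e--Birkhoff--Witt theorem, is that the canonical map $\CFLie(V)\to\CCTensor(V)$ admits a splitting in $\CTCat$ and sets up an isomorphism $\CFLie(V)\isoarrow\CCTensor(V)^\prim$. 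I would prove this by mimicking \cite[Propositions~7.2.8 \& 7.2.14(b)]{homotopy_theory_operads}, using pro-nilpotence to make sense of the relevant infinite sums of projectors, just as the Eulerian-type idempotents $e^0,e^1$ were handled in the proof of Lemma~\ref{lem:structure_of_hopf}. Together with the equivalence of the two assertions above, this establishes the lemma for $\aLAlg=\CFLie(V)$.

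Finally I would deduce the general case. Any pro-nilpotent Lie algebra $\aLAlg$ in $\CTCat$ admits a canonical presentation as a reflexive coequaliser
\[
\CFLie(\CFLie(\aLAlg)) \rightrightarrows \CFLie(\aLAlg) \to \aLAlg
\]
in $\LIE(\CTCat)_\nil$, which is moreover a reflexive coequaliser in the underlying category $\Cat$. Since $\CSym^\bullet$ and $\CUEnv$ are left adjoints (to the forgetful functor and to $(-)^\prim$, respectively), they preserve coequalisers, so applying them produces a commuting ladder whose two rows are reflexive coequalisers in $\Cat$ and whose first two vertical maps are the isomorphisms $\s$ for the free Lie algebras $\CFLie(\CFLie(\aLAlg))$ and $\CFLie(\aLAlg)$ supplied by the previous paragraph. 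A diagram chase then forces the remaining vertical map $\s\colon\CSym^\bullet(\aLAlg)\to\CUEnv(\aLAlg)$ to be an isomorphism, and it is automatically compatible with the coaugmented-coalgebra-with-involution structures since all the maps in the ladder are.

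I expect the main obstacle to be the free case: checking carefully that, in the completed pro-finite-dimensional setting, the primitives of $\CCTensor(V)$ are exactly $\CFLie(V)$ and that the inclusion splits. The classical arguments rely on infinite series of projectors whose convergence must be controlled by the pro-nilpotent filtration, and one must verify that the completed symmetric algebra and completed free Lie algebra functors interact correctly with these series. A secondary technical point is confirming that the presentations of $\aLAlg$ by free pro-nilpotent Lie algebras are reflexive coequalisers already in $\Cat$, so that the preservation of coequalisers by the left adjoints $\CSym^\bullet$ and $\CUEnv$ can genuinely be brought to bear.
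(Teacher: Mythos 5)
Your proposal is correct and follows essentially the same route as the paper's proof: the equivalence of the two assertions via Lemma~\ref{lem:structure_of_hopf}, the free case via the completed analogue of \cite[Propositions~7.2.8~\&~7.2.14(b)]{homotopy_theory_operads}, and the general case by applying the left adjoints $\CSym^\bullet$ and $\CUEnv$ to the reflexive coequaliser presentation by free pro-nilpotent Lie algebras. The extra details you supply (spelling out both directions of the equivalence, and flagging the convergence of the idempotent series in the completed setting) are consistent with, and merely elaborate on, the paper's argument.
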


\begin{remark}
There is an alternative formulation of the classical Poincar\'e--Birkhoff--Witt Theorem, which says that if $(x_i)_{i\in I}$ is an ordered basis of a Lie algebra $\aLAlg$ over a characteristic $0$ field $\F$, then a basis of the universal enveloping algebra $\UEnv(\aLAlg)$ is given by the products $x_{i_1}\dots x_{i_k}$ indexed by the set $\mathbf I$ of increasing sequences $i_1\leq\dots\leq i_k$ in $I$. The reader should be cautioned that the natural analogue of this theorem in the context of pro-nilpotent Lie algebras is untrue: given an isomorphism $\F^I\isoarrow\aLAlg$ of pro-finite-dimensional vector spaces with $\aLAlg$ a pro-nilpotent Lie algebra over a characteristic $0$ field $\F$, the induced morphism $\F^{\mathbf I}\rightarrow\CUEnv(\aLAlg)$ can fail to be an isomorphism.

For example, let us take $\aLAlg$ to be the three-dimensional Lie algebra with ordered basis $x,y,z$ and bracket defined by\[[x,y]=[y,z]=-x-z\:,\:[x,z]=0,\]so that $\aLAlg$ is the three-dimensional Heisenberg Lie algebra and hence is two-step nilpotent. It is easy to check that in the completed universal enveloping algebra $\CUEnv(\aLAlg)$ one has the identity $x\cdot\exp(y)+\exp(y)\cdot z=0$, which asserts an infinite linear dependence between the elements $xy^j$ and $y^jz$. This ensures that the map $\F^{\mathbf I}\rightarrow\CUEnv(\aLAlg)$ has non-trivial kernel.
\end{remark}

\begin{proof}[Proof of Theorem~\ref{thm:cmm}]
One of the two assertions is already contained in the statement of Lemma~\ref{lem:cpbw}. The other follows from Lemmas~\ref{lem:structure_of_hopf} and~\ref{lem:cpbw} by combining the isomorphisms $\CUEnv(\aHAlg^\prim)\iso\CSym^\bullet(\aHAlg^\prim)\iso\aHAlg$.
\end{proof}

\begin{proof}[Proof of Corollary~\ref{cor:complete_tensor_functor}]
We deal with the case of the completed universal enveloping algebra, the case of primitive elements being similar. The functor $F$, preserving products and direct summands, commutes with the construction of completed symmetric algebras. It is easy to see that the square
\begin{center}
\begin{tikzcd}
\CSym^\bullet(F\aLAlg) \arrow{r}{\s}[swap]{\hiso}\arrow{d}{\viso} & \CUEnv(F\aLAlg) \arrow{d} \\
F\CSym^\bullet(\aLAlg) \arrow{r}{F\s}[swap]{\hiso} & F\CUEnv(\aLAlg)
\end{tikzcd}
\end{center}
commutes, so that the right-hand vertical map is an isomorphism as desired.
\end{proof}

We will need one further corollary of Lemma~\ref{lem:structure_of_hopf} in what follows. Let $\aHAlg$ be a pro-nilpotent Hopf algebra, with corresponding pro-nilpotent Lie algebra $\aLAlg=\aHAlg^\prim$. For any coalgebra $\acoAlg$, we have natural bijections
\begin{equation}\label{eq:homs_into_hopf_algs}
\Hom_{\coALG(\CTCat)}(\acoAlg,\aHAlg) \iso \Hom_{\coALG(\CTCat)}(\acoAlg,\CSym^\bullet(\aLAlg)) \iso \Hom_{\Cat}(\acoAlg,\aLAlg)
\end{equation}
from Lemma~\ref{lem:structure_of_hopf} and the fact that $\CSym^\bullet$ is right adjoint to the forgetful functor $\coALG(\CTCat)\rightarrow\Cat$. The left-hand side of~\eqref{eq:homs_into_hopf_algs} carries a group structure (as $\aHAlg$ is a group object in $\coALG(\CTCat)$), and the right-hand side carries a $\Q$-Lie algebra structure, where the Lie bracket of $x,y\colon\acoAlg\rightarrow\aLAlg$ is given by the composite
\[
\acoAlg \overset\Delta\longrightarrow \acoAlg\hatotimes\acoAlg \overset{x\hatotimes y}\longrightarrow \aLAlg\hatotimes\aLAlg \overset{[\cdot,\cdot]}\longrightarrow \aLAlg.
\]
These two structures are related as follows.

\begin{proposition}\label{prop:super-general_bch}
Under the bijection~\eqref{eq:homs_into_hopf_algs} above, the group law on the set $\Hom_{\coALG(\CTCat)}(\acoAlg,\aHAlg)$ corresponds to the group law on $\Hom_{\Cat}(\acoAlg,\aLAlg)$ given by the Baker--Campbell--Hausdorff formula
\[
x\cdot y = x + y + \frac12[x,y] + \frac1{12}[x,[x,y]] - \frac1{12}[y,[x,y]] + \dots
\]
which, for maps $x,y\colon\acoAlg\rightarrow\aLAlg$ in $\CTCat$, converges to a well-defined map $x\cdot y\colon\acoAlg\rightarrow\aLAlg$ by pro-nilpotence.
\begin{proof}
The bijection $\Hom_{\Cat}(\acoAlg,\aLAlg)\iso\Hom_{\coALG(\CTCat)}(\acoAlg,\aHAlg)$ sends $x\colon\acoAlg\rightarrow\aLAlg\hookrightarrow\aHAlg$ to
\[
\exp_\ast(x) := \sum_{k\geq0}\frac1{k!}x^{\ast k} = \sum_{k\geq0}\frac1{k!}\mu^{(k)}\circ x^{\hatotimes k}\circ\Delta^{(k)}
\]
where $\ast$ denotes the convolution product on $\Hom_{\Cat}(\acoAlg,\aHAlg)$. Thus we have that $\exp_\ast(x\cdot y)=\exp_\ast(x)\ast\exp_*(y)$, and hence $x\cdot y$ is given by the Baker--Campbell--Hausdorff formula.
\end{proof}
\end{proposition}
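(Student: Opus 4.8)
The stated bijection \eqref{eq:homs_into_hopf_algs} has inverse the map $x\mapsto\exp_\ast(x):=\sum_{k\geq0}\frac1{k!}x^{\ast k}$, where $x^{\ast k}:=\mu^{(k)}\circ x^{\hatotimes k}\circ\Delta^{(k)}$ is the $k$-fold convolution power, computed in the associative convolution algebra $A:=\Hom_{\Cat}(\acoAlg,\aHAlg)$ with product $f\ast g:=\mu\circ(f\hatotimes g)\circ\Delta_{\acoAlg}$ and unit $\eta\circ\varepsilon$. The first step of the plan is to verify this identification: unwinding the adjunction isomorphism $\Hom_{\Cat}(\acoAlg,\aLAlg)\iso\Hom_{\coALG(\CTCat)}(\acoAlg,\CSym^\bullet(\aLAlg))$ (here $\CSym^\bullet(\aLAlg)$, equipped with its primitive comultiplication, is the cofree conilpotent cocommutative coalgebra on $\aLAlg$), one sees that $x\colon\acoAlg\to\aLAlg$ corresponds to the coalgebra map whose $n$th component is $\frac1{n!}$ times the image of $x^{\hatotimes n}\circ\Delta^{(n)}$ in $\CSym^n(\aLAlg)$; post-composing with the coalgebra isomorphism $\s\colon\CSym^\bullet(\aLAlg)\iso\aHAlg$ of Lemma~\ref{lem:structure_of_hopf} and using cocommutativity of $\Delta^{(n)}$ to collapse the symmetrisation that defines $\s$ on $\CSym^n$, the $n$th contribution becomes exactly $\frac1{n!}x^{\ast n}$, so the composite is $\exp_\ast$. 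Convergence is by pro-nilpotence: $x$ factors through $\aLAlg\subseteq\aHIdeal$, so $x^{\ast k}$ factors through $\aHIdeal^k$, and $A$ is complete for the filtration by those $f$ factoring through $\aHIdeal^k$.

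Next I would identify the two relevant algebraic structures inside $A$. On the group side: for the group object $\aHAlg$ in $\coALG(\CTCat)$, the group law on $\Hom_{\coALG(\CTCat)}(\acoAlg,\aHAlg)$ is $f\cdot g=m_{\aHAlg}\circ(f,g)$, where the product $\aHAlg\times\aHAlg$ in $\coALG(\CTCat)$ is $\aHAlg\hatotimes\aHAlg$, the pairing $(f,g)$ is $(f\hatotimes g)\circ\Delta_{\acoAlg}$, and $m_{\aHAlg}=\mu$; thus $f\cdot g=f\ast g$. This makes sense because cocommutativity of $\aHAlg$ ensures $\mu$ and $\eta$ are morphisms of coalgebras, so $\Hom_{\coALG(\CTCat)}(\acoAlg,\aHAlg)$ is a subgroup of the unit group of $A$. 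On the Lie side: the bracket on $\Hom_{\Cat}(\acoAlg,\aLAlg)$ defined in the text is the restriction of the commutator bracket of $A$. Indeed, cocommutativity of $\Delta_{\acoAlg}$ gives $y\ast x=\mu^{\op}\circ(x\hatotimes y)\circ\Delta_{\acoAlg}$, and $\mu-\mu^{\op}$ restricted to $\aLAlg\hatotimes\aLAlg$ is precisely the Lie bracket $[\cdot,\cdot]\colon\aLAlg\hatotimes\aLAlg\to\aLAlg$; hence $x\ast y-y\ast x=[\cdot,\cdot]\circ(x\hatotimes y)\circ\Delta_{\acoAlg}$, and $\Hom_{\Cat}(\acoAlg,\aLAlg)$ is a pro-nilpotent Lie subalgebra of $A$ closed under iterated commutators, with $k$-fold brackets landing in the $k$th term of its descending central series.

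Finally, I would invoke the formal Baker--Campbell--Hausdorff identity. In the pro-nilpotent associative algebra $A$ one has the universal identity $\exp_\ast(x)\ast\exp_\ast(y)=\exp_\ast(\mathrm{BCH}(x,y))$, where $\mathrm{BCH}$ is the universal Lie series $x+y+\tfrac12[x,y]+\cdots$ in the commutator bracket, whose partial sums converge in $A$ for $x,y$ factoring through $\aHIdeal$. Since $\exp_\ast(x)$ and $\exp_\ast(y)$ lie in the subgroup $\Hom_{\coALG(\CTCat)}(\acoAlg,\aHAlg)$, so does their product, so it equals $\exp_\ast(z)$ for a unique $z\in\Hom_{\Cat}(\acoAlg,\aLAlg)$; injectivity of $\exp_\ast$ forces $z=\mathrm{BCH}(x,y)$. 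By the second step the commutator bracket restricts to the stated Lie bracket, so $z$ is the value of the Baker--Campbell--Hausdorff series of that Lie algebra, with terms converging there by pro-nilpotence; this also yields the asserted well-definedness of $x\cdot y$. The main obstacle is really the combination of the first step and the group-law identification in the second: one must carefully unwind the adjunction and the definition of $\s$ to see that the composite bijection is literally $\exp_\ast$ with the correct normalising constants, and one must check that the group-object structure on $\Hom_{\coALG(\CTCat)}(\acoAlg,\aHAlg)$ unwinds to convolution, which is exactly where cocommutativity of $\aHAlg$ and the description of products in $\coALG(\CTCat)$ as completed tensor products are used; after that the argument is the standard $\exp$/$\log$/$\mathrm{BCH}$ formalism in a pro-nilpotent associative algebra and requires no further idea.
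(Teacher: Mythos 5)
Your proposal is correct and follows essentially the same route as the paper's proof: identify the bijection with $x\mapsto\exp_\ast(x)$ in the convolution algebra $\Hom_{\Cat}(\acoAlg,\aHAlg)$, note that the group law on coalgebra morphisms is convolution and the bracket is the convolution commutator, and then apply the universal Baker--Campbell--Hausdorff identity, with convergence from pro-nilpotence. You simply spell out in more detail the identifications (via the $\CSym^\bullet$ adjunction and the map $\s$ of Lemma~\ref{lem:structure_of_hopf}) that the paper's proof takes for granted.
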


\subsection{Filtered groupoids}\label{ss:filtered_gpoids}\index{groupoid!\dots filtered}

The pro-unipotent groupoids we study in this paper frequently carry extra structures, such as Galois actions, which are of interest to us. Via the correspondence from Lemma~\ref{lem:unipotent_vs_nilpotent_hopf}, such extra structures are reflected on the side of pro-nilpotent Hopf algebras. For example, if $G$ is an affine monoid-scheme, then the correspondence from Lemma~\ref{lem:unipotent_vs_nilpotent_hopf} lifts to an equivalence of categories between $G$-equivariant pro-unipotent groupoids and pro-nilpotent Hopf algebras in the category of pro-finite dimensional representations of $G$ (interpreted in the obvious sense). This is a formal consequence of the equivalence of categories between affine schemes and complete coalgebras from Example~\ref{ex:affine_vs_complete_hopf}.

There is one kind of extra structure, however, for which the correspondence between these structures on groupoids and on Hopf groupoids is not formal, namely filtrations. On the side of pro-nilpotent Hopf groupoids, the notion of filtration is more or less obvious, but the definition on the side of groups is slightly less natural.

\begin{definition}\label{def:filtered_gpd}
Let $\pi$ be a group in a category $\Cat$ with finite products. By a \emph{{non-positive} filtration} on $\pi$, we mean an increasing sequence
\[
\dots\leq\M_{-2}\pi\leq\M_{-1}\pi\leq\M_0\pi=\pi
\]
of normal subgroups (i.e.\ kernels of morphisms $\pi\rightarrow\pi'$) such that the commutator map $\M_{-i}\pi\times\M_{-j}\pi\rightarrow\pi$ factors through $\M_{-i-j}\pi$ for all $i,j\geq0$. A \emph{morphism} $f\colon\pi'\rightarrow\pi$ of filtered groups is a morphism of the underlying groups such that $f\colon\M_{-i}\pi'\rightarrow\pi$ factors through $\M_{-i}\pi$ for all $i\geq0$.

Let now $\pi$ be a groupoid in $\Cat$. By a \emph{non-positive filtration} on $\pi$, we mean a non-positive filtration on each of the groups $\pi(x)$ such that the conjugation map
\[
\pi(x,y)\times\M_{-i}\pi(x) \rightarrow \pi(y)
\]
factors through $\M_{-i}\pi(y)$ for every $x,y$ and all $i\geq0$. Equivalently, the isomorphism $\pi(x,y)\times\pi(x) \isoarrow \pi(y)\times\pi(x,y)$ induces isomorphisms
\[
\pi(x,y)\times\M_{-i}\pi(x) \isoarrow \M_{-i}\pi(y)\times\pi(x,y)
\]
for every $x,y$ and all $i\geq0$.

We denote by $\fleqGPD(\Cat)$ the category of non-positively filtered groupoids in~$\Cat$.
\end{definition}

\begin{example}\label{ex:normal_subalgebra}
Suppose that $\TCat$ is a tensor category, and that $\aHAlg$ is a Hopf algebra in $\TCat$ (so a group in $\coALG(\TCat)$). The normal Hopf subalgebras of $\aHAlg$ (normal subgroups in $\coALG(\TCat)$) are exactly the Hopf subalgebras of the form $\1\oplus\aHIdeal$ for $\aHIdeal$ a Hopf ideal of $\aHAlg$.
\end{example}

If $\TCat$ is a tensor category, we define $\fleqTCat$ to be the category of non-positively filtered objects in $\TCat$, i.e.\ the category whose objects are objects $V$ of $\TCat$ endowed with an increasing sequence
\[
\dots\leq\M_{-2}V\leq\M_{-1}V\leq\M_0V=V
\]
of subobjects, and whose morphisms are morphisms in $\TCat$ preserving these subobjects. The usual definition of tensor product  filtrations endows $\fleqTCat$ with the structure of a symmetric monoidal category.

\begin{proposition}
Let $\TCat$ be a tensor category. Then $\fleqTCat$ is also a tensor category, which is complete if $\TCat$ is.
\begin{proof}
It is easy to see that $\fleqTCat$ is additive and has kernels and cokernels, and that limits in $\fleqTCat$ are computed pointwise. A morphism $f\colon A\rightarrow B$ in $\fleqTCat$ is monic (resp.\ epic) in $\fleqTCat$ if and only if it is monic (resp.\ epic) in $\TCat$. The morphism $f$ is strictly monic in $\fleqTCat$ if and only if it is strictly monic in $\CTCat$ and each $\M_{-i}A\rightarrow\M_{-i}B$ is strictly monic (i.e.\ $A$ has the subspace filtration); it is strictly epic in $\fleqTCat$ if and only if it is strictly epic in $\TCat$ and each $\M_{-i}A\rightarrow\M_{-i}B$ is epic in $\TCat$ (i.e.\ $A$ has the image filtration).

It follows immediately that monomorphisms in $\fleqTCat$ are stable under co-base change and epimorphisms are stable under base change. To show that strict epimorphisms are stable under base change, consider a pullback square
\begin{center}
\begin{tikzcd}
A' \arrow{r}{f'}\arrow{d} & B' \arrow{d} \\
A \arrow[two heads]{r}{f} & B
\end{tikzcd}
\end{center}
in $\fleqTCat$ where $f$ is a strict epimorphism. Thus $f'$ is a strict epimorphism in $\TCat$, and from the pullback squares
\begin{center}
\begin{tikzcd}
\M_{-i}A' \arrow{r}{f'}\arrow{d} & \M_{-i}B' \arrow{d} \\
\M_{-i}A \arrow{r}{f} & \M_{-i}B
\end{tikzcd}
\end{center}
with $f$ epic we deduce also that $f'\colon\M_{-i}A'\rightarrow\M_{-i}B'$ is epic in $\TCat$. Hence $f'$ is strictly epic in $\fleqTCat$. Stability of monomorphisms under co-base change follows by running the same argument in the opposite category $(\fleqTCat)^\op=\fgeq((\TCat)^\op)$ (where $\fgeq$ denotes non-negatively filtered objects).

It is easy to check that $\otimes$ preserves strict epimorphisms, and hence that it preserves cokernels in each argument. That it preserves kernels in each argument follows by the same argument in the opposite category.

Completeness of $\fleqCTCat$ for $\CTCat$ a complete tensor category is also easy to check, since limits of filtered objects are computed pointwise and cofiltered limits in $\CTCat$ commute with images.
\end{proof}
\end{proposition}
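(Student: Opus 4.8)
The plan is to verify the tensor-category axioms for $\fleqTCat$ one at a time, in each case reducing to the corresponding property of $\TCat$ by computing limits and colimits termwise. First I would note that $\fleqTCat$ is additive and that kernels are computed pointwise: $\M_{-i}\ker(f)=\ker(\M_{-i}f)$. Cokernels are the first genuinely non-formal point: the cokernel of $f\colon A\to B$ in $\fleqTCat$ is $\coker(f)$ in $\TCat$ equipped with the filtration whose $i$th term is the \emph{image} of $\M_{-i}B$. With this in hand one reads off the characterizations of the various classes of maps: $f$ is monic (resp.\ epic) in $\fleqTCat$ iff it is monic (resp.\ epic) in $\TCat$; $f$ is \emph{strictly} monic in $\fleqTCat$ iff it is strictly monic in $\TCat$ and each $\M_{-i}A\to\M_{-i}B$ is strictly monic (so that $A$ carries the subspace filtration); and dually $f$ is strictly epic in $\fleqTCat$ iff it is strictly epic in $\TCat$ and each $\M_{-i}A\to\M_{-i}B$ is epic in $\TCat$ (so that the quotient carries the image filtration).

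Next I would establish stability of (strict) monomorphisms under co-base change and of (strict) epimorphisms under base change. Since pullbacks and pushouts in $\fleqTCat$ are again termwise, a pullback square restricts to a pullback square on each $\M_{-i}$, so the statements for ordinary monos and epis follow at once from the same properties in $\TCat$. For strict epis I would chase these termwise squares: given a strict epi $f\colon A\twoheadrightarrow B$ and the pullback $A'=A\times_B B'$, the map $f'\colon A'\to B'$ is strictly epic in $\TCat$, and each $\M_{-i}A'\to\M_{-i}B'$ is epic in $\TCat$ because it is a base change of the epi $\M_{-i}f$; hence $A'$ carries the image filtration over $B'$ and $f'$ is strictly epic in $\fleqTCat$. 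Stability of strict monos under co-base change is then the same argument run in the opposite category $(\fleqTCat)^\op=\fgeq(\TCat^\op)$ — this opposite-category symmetry is the main economy of the argument.

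Finally I would check that $\otimes$ preserves kernels and cokernels separately in each variable. It is routine that the filtered tensor product preserves strict epimorphisms (using both that $\otimes$ preserves epis in $\TCat$ and that the tensor-product filtration is defined by images), and a functor preserving strict epis preserves cokernels; one more application of the $(-)^\op$ trick yields preservation of kernels. Completeness of $\fleqCTCat$ when $\CTCat$ is complete goes the same way: arbitrary products exist and are termwise, cofiltered limits commute with images since the termwise $\M_{-i}$ of a cofiltered limit is the cofiltered limit of the $\M_{-i}$'s, and $\hatotimes$ preserves products termwise. I expect the only real subtlety to be pinning down the strict-mono and strict-epi characterizations — precisely, keeping track of whether a subobject or quotient carries the subspace or the image filtration — after which everything reduces mechanically to $\TCat$.
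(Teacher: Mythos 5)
Your proposal is correct and follows essentially the same route as the paper: pointwise limits, the same characterisations of (strict) monos and epis via subspace/image filtrations, the same pullback-square chase for stability of strict epimorphisms under base change, the opposite-category trick $(\fleqTCat)^\op=\fgeq(\TCat^\op)$ for the dual statements and for kernels, and the same remarks on completeness. The only slight overstatement is the claim that pushouts are computed termwise on the filtration (the filtration on a pushout is the image filtration, not the termwise pushout), but this is harmless since only the underlying objects in $\TCat$ are needed for stability of ordinary monomorphisms under co-base change.
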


\begin{remark}
There are many variants one can make of the above definition, for instance filtrations which are bounded above or below, or (in the case of a complete tensor category $\CTCat$) unbounded filtrations or separated filtrations. These all also define tensor categories and complete tensor categories respectively.
\end{remark}

We thus have two possible notions of a non-positive filtration on a pro-nilpotent Hopf groupoid in a complete tensor category $\CTCat$: either a filtration in the sense of Definition~\ref{def:filtered_gpd} or a lift to a Hopf groupoid in $\fleqCTCat$. These two notions are equivalent to one another, which for example allows us to interpret filtrations on pro-unipotent groupoids on the side of pro-nilpotent Hopf groupoids.

\begin{lemma}\label{lem:filtrations_are_filtrations}
Let $\CTCat$ be a $\Q$-linear complete tensor category. Then there is an equivalence of categories
\[
\fleqHGPD(\CTCat)_\nil \isoarrow \HGPD(\fleqCTCat)_\nil
\]
compatible with the forgetful functors to $\HGPD(\CTCat)_\nil$.
\end{lemma}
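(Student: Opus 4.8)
The plan is to factor the equivalence through pro-nilpotent Lie algebras, where non-positive filtrations are transparent, and then to reduce the groupoid statement to the single-vertex case by torsor bookkeeping. The key preliminary observation is that $\fleqCTCat$ is itself a $\Q$-linear complete tensor category (by the proposition just proved) and that the forgetful functor $\fleqCTCat\to\CTCat$ is product-preserving and symmetric monoidal; hence the completed Milnor--Moore Theorem~\ref{thm:cmm} holds verbatim inside $\fleqCTCat$, and by Corollary~\ref{cor:complete_tensor_functor} the functors $\CUEnv$ and $(-)^\prim$ computed in $\fleqCTCat$ agree, after forgetting filtrations, with those computed in $\CTCat$.

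First I would treat the vertex Hopf algebras. Applying Theorem~\ref{thm:cmm} in $\fleqCTCat$, a pro-nilpotent Hopf algebra in $\fleqCTCat$ is the same as a pro-nilpotent Lie algebra in $\fleqCTCat$, which unwinds to a pro-nilpotent Lie algebra $\aLAlg$ in $\CTCat$ together with an increasing chain of subobjects $\M_{-\bullet}\aLAlg$ with $\M_0\aLAlg=\aLAlg$ and $[\M_{-i}\aLAlg,\M_{-j}\aLAlg]\leq\M_{-i-j}\aLAlg$ for all $i,j\geq0$; non-positivity forces each $\M_{-i}\aLAlg$ to be a Lie ideal, since $\M_{-2i}\aLAlg\leq\M_{-i}\aLAlg$. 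On the other side, by Lemma~\ref{lem:unipotent_vs_nilpotent_hopf} and the Baker--Campbell--Hausdorff description of the group law (Proposition~\ref{prop:super-general_bch}), a non-positive filtration in the sense of Definition~\ref{def:filtered_gpd} on the pro-unipotent group $\pi$ with $\O(\pi)^\dual=\aHAlg$ --- an increasing chain of normal subgroups $\M_{-\bullet}\pi$ with $[\M_{-i}\pi,\M_{-j}\pi]\leq\M_{-i-j}\pi$ --- corresponds bijectively, via $\M_{-i}\pi=\exp(\M_{-i}\aLAlg)$, to such a Lie-ideal filtration of $\aLAlg=\aHAlg^\prim$: normality of $\M_{-i}\pi$ is equivalent to $\M_{-i}\aLAlg$ being an ideal, and the group-commutator condition is equivalent to the bracket condition by comparing lowest-order terms in the BCH expansion. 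This yields the equivalence at a single vertex, with the functor $\HALG(\fleqCTCat)_\nil\to\fleqHALG(\CTCat)_\nil$ sending a filtered Hopf algebra $\aHAlg$ to its underlying Hopf algebra filtered by the normal subgroups $\exp(\M_{-i}\aHAlg^\prim)$ --- this is a different filtration of $\aHAlg$ than the one started with --- and its inverse sending a filtered group to $\CUEnv$ of the corresponding filtered Lie algebra with its PBW-type filtration.

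Next I would globalise: reduce to a connected groupoid and fix a basepoint $b$. On the left, Definition~\ref{def:filtered_gpd} says a groupoid filtration is a non-positive filtration on each $\pi(x)$ satisfying the conjugation-compatibility condition; since conjugation by any $\Q$-point of $\pi(b,x)$ is an isomorphism $\pi(b)\isoarrow\pi(x)$ and two such differ by an inner automorphism preserving normal subgroups, this data is equivalent to a single non-positive filtration on $\pi(b)$. On the right, each $\aHAlg(b,x)$ is a coalgebra-torsor under $\aHAlg(b)$; once one knows it admits a grouplike section $\gamma_x$, right composition $\gamma_x\cdot(-)\colon\aHAlg(b)\to\aHAlg(b,x)$ is a filtered isomorphism (because $\gamma_x\in\M_0\aHAlg(b,x)$ and composition is filtered), so $\M_{-i}\aHAlg(b,x)=\gamma_x\cdot\M_{-i}\aHAlg(b)$, independently of $\gamma_x$, and likewise every $\M_{-i}\aHAlg(x,y)$ is determined by the filtration on $\aHAlg(b)$ alone; conversely these formulas build a filtration on the whole Hopf groupoid, the remaining filteredness checks for composition, path-reversal and comultiplication transporting along the $\gamma_x$ to the vertex case. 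Thus both $\fleqHGPD(\CTCat)_\nil$ and $\HGPD(\fleqCTCat)_\nil$ are, over $\HGPD(\CTCat)_\nil$, equivalent to the category of pro-nilpotent Hopf groupoids in $\CTCat$ equipped with a non-positive filtration on one vertex group per connected component, and the vertex equivalence identifies these compatibly with the forgetful functors.

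The hard part will be the torsor bookkeeping in this last step --- pinning down that the filtration on the off-diagonal hom-objects $\aHAlg(x,y)$ of a Hopf groupoid in $\fleqCTCat$ is forced by the vertex filtrations. The cleanest route is via grouplike sections of the path-torsors, which exist in the concrete categories of interest ($\CVEC_\F$, $\grCVEC_\F$, etc., where torsors under pro-unipotent groups split) but over an arbitrary $\Q$-linear complete tensor category must be handled either by dévissage along the lower central series, or, avoiding sections entirely, by exploiting the torsor isomorphism $(\mu,\mathrm{pr})\colon\aHAlg(x,y)\hatotimes\aHAlg(x)\isoarrow\aHAlg(x,y)\hatotimes\aHAlg(x,y)$ together with stability of images and strict epimorphisms under base change in a tensor category to identify $\M_{-i}\aHAlg(x,y)$ with $\im\bigl(\aHAlg(x,y)\hatotimes\M_{-i}\aHAlg(x)\to\aHAlg(x,y)\bigr)$ directly. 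Everything else --- the vertex equivalence and the BCH comparison of the two commutator conditions --- is routine once Milnor--Moore is available in $\fleqCTCat$.
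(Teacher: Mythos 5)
Your argument is, in substance, the paper's: the vertex-level step (completed Milnor--Moore applied inside $\fleqCTCat$, plus the Baker--Campbell--Hausdorff comparison of the group-commutator condition with the Lie-bracket condition via Proposition~\ref{prop:super-general_bch}) is exactly how the paper establishes the single-object equivalence $\fleqHALG(\CTCat)_\nil\simeq\HALG(\fleqCTCat)_\nil$, and your ``fallback'' globalisation is precisely the paper's Proposition~\ref{prop:when_do_filtrations_extend}: a lift of a Hopf groupoid $\aHAlg$ to $\fleqCTCat$ is uniquely determined by the vertex filtrations through $\M_{-k}\aHAlg(x,y)=\im\bigl(\aHAlg(x,y)\hatotimes\M_{-k}\aHAlg(x)\rightarrow\aHAlg(x,y)\bigr)$, and exists if and only if the torsor isomorphism $\aHAlg(x,y)\hatotimes\aHAlg(x)\isoarrow\aHAlg(y)\hatotimes\aHAlg(x,y)$ respects the filtered pieces; the remaining checks (composition, antipode, comultiplication) are routine, as you say. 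One caveat on phrasing: statements like ``conjugation by a $\Q$-point of $\pi(b,x)$'' or ``$\M_{-i}\pi=\exp(\M_{-i}\aLAlg)$'' have no literal meaning over an abstract $\Q$-linear complete tensor category, where there are no elements; they must be read through maps out of coalgebras (the Yoneda-style argument that Proposition~\ref{prop:super-general_bch} is designed for), which is how the paper runs the BCH comparison.

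Your preferred route, by contrast, cannot carry the proof at this level of generality, for the reasons you half-anticipate: a Hopf groupoid in $\CTCat$ has no underlying groupoid of points, so ``connected component'' and ``basepoint'' are not available notions, grouplike sections $\1\rightarrow\aHAlg(b,x)$ need not exist (already in $\fCVEC_\F$, $\grCVEC_\F$ or equivariant variants, where path-torsors are typically nonsplit), and basepoint-dependent constructions would also complicate functoriality of the asserted equivalence. The one step your sketch leaves genuinely implicit even in the fallback is why the Hopf-level compatibility of Proposition~\ref{prop:when_do_filtrations_extend} is equivalent to the group-level conjugation condition of Definition~\ref{def:filtered_gpd}, which is what actually defines the objects of $\fleqHGPD(\CTCat)_\nil$. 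The paper closes this by restricting the same conjugation isomorphism to primitives in the second factor, $\aHAlg(x,y)\hatotimes\aLAlg(x)\isoarrow\aLAlg(y)\hatotimes\aHAlg(x,y)$, so that both conditions become the single condition that $\aHAlg(x,y)\hatotimes\M_{-k}\aLAlg(x)$ is carried onto $\M_{-k}\aLAlg(y)\hatotimes\aHAlg(x,y)$; for that purpose the conjugation-type isomorphism is the right one to use rather than your $(\mu,\mathrm{pr})$ variant, since Definition~\ref{def:filtered_gpd} is stated in terms of it.
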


\begin{example}
Let $\F$ be a characteristic $0$ field. Then there is an equivalence of categories
\[
\fleqGPD_{\F,\uni} \isoarrow \HGPD(\fleqCVEC_\F)_\nil.
\]
\end{example}

\subsubsection{Filtrations on pro-nilpotent Hopf algebras and Lie algebras}\label{sss:filtrations_on_liegebras}

To begin with, we prove Lemma~\ref{lem:filtrations_are_filtrations} in the single-object case, constructing an equivalence
\begin{equation}\label{eq:one_filtration_is_filtration}
\fleqHALG(\CTCat)_\nil\isoarrow\HALG(\fleqCTCat)_\nil.
\end{equation}
By the completed Milnor--Moore Theorem~\ref{thm:cmm}, the right-hand side of~\eqref{eq:one_filtration_is_filtration} is equivalent to $\LIE(\fleqCTCat)_\nil$, while the left-hand side is equivalent to the category of pro-nilpotent Lie algebras $\aLAlg$ in $\CTCat$ endowed with a sequence
\[
\dots\leq\M_{-2}\aLAlg\leq\M_{-1}\aLAlg\leq\M_0\aLAlg=\aLAlg
\]
of Lie ideals satisfying a certain commutation relation. To construct the equivalence~\eqref{eq:one_filtration_is_filtration}, it suffices to check that this commutation relation is equivalent to the sequence of Lie ideals being compatible with the Lie bracket.

\begin{proposition}
Let $\aHAlg$ be a pro-nilpotent Hopf algebra in $\CTCat$ with corresponding pro-nilpotent Lie algebra $\aLAlg=\aHAlg^\prim$. Let
\[
\dots\leq\aHAlg_{-2}\leq\aHAlg_{-1}\leq\aHAlg_0=\aHAlg
\]
be an increasing sequence of normal subalgebras (in the sense of Example~\ref{ex:normal_subalgebra}), and write $\M_{-k}\aLAlg:=\aHAlg_{-k}\cap\aLAlg$ for the corresponding sequence of Lie ideals. Then the subalgebras define a filtration on $\aHAlg$ (in the sense of Definition~\ref{def:filtered_gpd}) if and only if the $\M_{-k}\aLAlg$ endow $\aLAlg$ with the structure of a pro-nilpotent Lie algebra in $\fleqCTCat$.
\begin{proof}
The ideals $\M_{-k}\aLAlg$ endow $\aLAlg$ with the structure of a Lie algebra in $\fleqCTCat$ if and only if they satisfy $[\M_{-i},\M_{-j}]\leq\M_{-i-j}$ for all $i,j$, while the subalgebras $\aHAlg_{-k}$ define a filtration on $\aHAlg$ if and only if the image of $\aHAlg_{-i}\hatotimes\aHAlg_{-j}$ under the group commutator map
\begin{equation}\label{eq:silly_commutator}
\aHAlg^{\hatotimes2} \xrightarrow{\Delta^{\hatotimes2}} \aHAlg^{\hatotimes4}\xrightarrow{1\hatotimes\tau\hatotimes1} \aHAlg^{\hatotimes4}\xrightarrow{1^{\hatotimes2}\hatotimes S^{\hatotimes2}} \aHAlg^{\hatotimes4}\xrightarrow{\mu^{(4)}} \aHAlg
\end{equation}
is contained in $\aHAlg_{-i-j}$ for all $i,j$, where $\tau$ denotes the interchange of factors map. One implication is straightforward: the group commutator map~\eqref{eq:silly_commutator} restricts to the Lie bracket $\aLAlg^{\hatotimes2}\rightarrow\aLAlg$, and hence if the $\aHAlg_{-k}$ define a filtration on $\aHAlg$, then the $\M_{-k}\aLAlg$ are compatible with the Lie bracket.

For the converse implication, we argue via the Yoneda embedding. The subalgebras $\aHAlg_{-k}$ define a filtration on $\aHAlg$ if and only if, for every coalgebra $\acoAlg$, the group commutator map~\eqref{eq:silly_commutator} takes $\Hom_{\coALG(\CTCat)}(\acoAlg,\aHAlg_{-i})\times\Hom_{\coALG(\CTCat)}(\acoAlg,\aHAlg_{-j})$ into $\Hom_{\coALG(\CTCat)}(\acoAlg,\aHAlg_{-i-j})$ for all $i,j$. By Proposition~\ref{prop:super-general_bch}, this is equivalent to $\Hom_{\Cat}(\acoAlg,\M_{-i}\aLAlg)\times\Hom_{\Cat}(\acoAlg,\M_{-j}\aLAlg)$ being taken into $\Hom_{\Cat}(\acoAlg,\M_{-i-j}\aLAlg)$ by the commutator for the group law on $\Hom_{\Cat}(\acoAlg,\aLAlg)$ defined by the Baker--Campbell--Hausdorff formula by Proposition~\ref{prop:super-general_bch}. But this commutator is given by
\[
x\cdot y\cdot x^{-1}\cdot y^{-1} = [x,y] + \frac12[x,[x,y]] + \frac12[y,[x,y]] + \dots
\]
so this is certainly implied by the $\M_{-k}\aLAlg$ being compatible with the Lie bracket.
\end{proof}
\end{proposition}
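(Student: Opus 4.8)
The two conditions in the statement differ only in their commutation clauses: the hypothesis already supplies an increasing chain of normal subalgebras $\aHAlg_{-k}=\1\oplus\aHIdeal_{-k}$, and correspondingly the $\M_{-k}\aLAlg=\aHAlg_{-k}\cap\aLAlg$ are automatically subobjects of $\aLAlg$ (being intersections, hence limits, in $\CTCat$) forming an increasing chain, while pro-nilpotence of $\aLAlg$ in $\fleqCTCat$ is inherited from its pro-nilpotence in $\CTCat$ since the underlying object and its descending central series are unchanged. So the plan is to prove that the remaining filtration axiom for $\aHAlg$ — that the group-commutator map $\aHAlg^{\hatotimes2}\to\aHAlg$, built as usual from the comultiplication, antipode and multiplication, carries the subobject $\aHAlg_{-i}\hatotimes\aHAlg_{-j}$ into $\aHAlg_{-i-j}$ for all $i,j\geq0$ — is equivalent to the Lie-ideal compatibility $[\M_{-i}\aLAlg,\M_{-j}\aLAlg]\leq\M_{-i-j}\aLAlg$.

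One implication is immediate: restricted to primitive elements the group-commutator map is exactly the Lie bracket $\aLAlg^{\hatotimes2}\to\aLAlg$, and a sub-Hopf-algebra satisfies $(\aHAlg_{-k})^\prim=\aHAlg_{-k}\cap\aLAlg=\M_{-k}\aLAlg$; so if the $\aHAlg_{-k}$ define a filtration on $\aHAlg$, then intersecting with $\aLAlg$ shows the $\M_{-k}\aLAlg$ are compatible with the bracket.

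For the converse I would pass to the functor of points, which is the clean way to handle an assertion of the form ``the image of one subobject lands in another'' inside the quasi-abelian category $\CTCat$. Testing against an arbitrary complete coalgebra $\acoAlg$, the filtration axiom for $\aHAlg$ is equivalent to saying that for every $\acoAlg$ the group-commutator in $\Hom_{\coALG(\CTCat)}(\acoAlg,\aHAlg)$ carries $\Hom(\acoAlg,\aHAlg_{-i})\times\Hom(\acoAlg,\aHAlg_{-j})$ into $\Hom(\acoAlg,\aHAlg_{-i-j})$. Now the bijection~\eqref{eq:homs_into_hopf_algs} coming from Lemma~\ref{lem:structure_of_hopf}, applied to $\aHAlg$ and, compatibly, to each normal subalgebra $\aHAlg_{-k}$ (which is again pro-nilpotent, with $(\aHAlg_{-k})^\prim=\M_{-k}\aLAlg$), identifies this group with $\Hom_\Cat(\acoAlg,\aLAlg)$ equipped with the Baker--Campbell--Hausdorff law of Proposition~\ref{prop:super-general_bch}, in such a way that $\Hom(\acoAlg,\aHAlg_{-k})$ corresponds to the subset $\Hom_\Cat(\acoAlg,\M_{-k}\aLAlg)$. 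Under this identification the group-commutator of $x,y$ is the nested-bracket series $[x,y]+\tfrac12[x,[x,y]]+\tfrac12[y,[x,y]]+\dots$, which converges by pro-nilpotence; and if $x$ factors through $\M_{-i}\aLAlg$ and $y$ through $\M_{-j}\aLAlg$, then each term of this series factors through $\M_{-i-j}\aLAlg$ by repeated use of the Lie-ideal condition, so the commutator factors through $\M_{-i-j}\aLAlg$. Running this for every $\acoAlg$ recovers the filtration axiom for $\aHAlg$.

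The main obstacle is the bookkeeping in this last step. One must verify that the Milnor--Moore identification of $\Hom_{\coALG(\CTCat)}(\acoAlg,-)$ with $\Hom_\Cat(\acoAlg,-)$ is genuinely natural in the sub-Hopf-algebras, so that ``factoring through $\aHAlg_{-k}$'' on the Hopf side really does become ``factoring through $\M_{-k}\aLAlg$'' on the Lie side — this uses that the symmetrised-product isomorphism $\s\colon\CSym^\bullet(\aLAlg)\isoarrow\aHAlg$ of Lemma~\ref{lem:structure_of_hopf} restricts to the corresponding isomorphism for $\aHAlg_{-k}$ (cf.\ Lemma~\ref{lem:cpbw} and Definition~\ref{def:CUEnv}). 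One must also check that the BCH series, a priori only convergent, has each of its coefficientwise nested-bracket terms landing in the correct subobject; this is precisely where the commutation clause of ``pro-nilpotent Lie algebra in $\fleqCTCat$'' and the completeness axioms on $\CTCat$ (in particular that cofiltered limits commute with images) get used, and phrasing everything via maps out of test coalgebras rather than manipulating subobjects of $\aHAlg$ directly is what keeps the argument honest.
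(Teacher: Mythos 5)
Your proposal is correct and follows essentially the same route as the paper: the forward direction by restricting the group commutator to primitives, and the converse by testing against coalgebras $\acoAlg$ and transporting the question along the identification of Lemma~\ref{lem:structure_of_hopf}/Proposition~\ref{prop:super-general_bch} to the Baker--Campbell--Hausdorff commutator, whose nested-bracket terms land in $\M_{-i-j}\aLAlg$. The only difference is that you spell out the naturality/bookkeeping (the symmetrised-product isomorphism restricting to each $\aHAlg_{-k}$, convergence of the series termwise in the correct subobject) that the paper leaves implicit in its citation of Proposition~\ref{prop:super-general_bch}.
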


\subsubsection{Filtrations on Hopf groupoids}

Having proved Lemma~\ref{lem:filtrations_are_filtrations} in the single-object context, we now explain how to extend this to the multi-object context. This essentially amounts to relating filtrations on a Hopf groupoid $\aHAlg$ to filtrations on the Hopf algebras $\aHAlg(x)$.

\begin{proposition}\label{prop:when_do_filtrations_extend}
Let $\TCat$ be a tensor category and $\aHAlg$ a Hopf groupoid in $\TCat$. Suppose that we have endowed each Hopf algebra $\aHAlg(x)$ with the structure of an object in $\HALG(\fleqTCat)$. Then $\aHAlg$ lifts to an object of $\HGPD(\fleqTCat)$ if and only the isomorphisms
\begin{equation}\label{eq:hopf_compatibility}
\aHAlg(x,y)\otimes\aHAlg(x) \isoarrow \aHAlg(y)\otimes\aHAlg(x,y)
\end{equation}
induce isomorphisms
\[
\aHAlg(x,y)\otimes\M_{-k}\aHAlg(x) \isoarrow \M_{-k}\aHAlg(y)\otimes\aHAlg(x,y)
\]
for all $x,y$ and all $k$; in this case the lift is unique up to unique isomorphism. If $\aHAlg$ and $\aHAlg'$ are Hopf groupoids in $\fleqTCat$ and $f\colon\aHAlg'\rightarrow\aHAlg$ is a morphism of Hopf groupoids in $\TCat$, then $f$ is a morphism in $\HGPD(\fleqTCat)$ if and only if each morphism $f\colon\aHAlg'(x)\rightarrow\aHAlg(f(x))$ is a morphism in $\HALG(\fleqTCat)$.
\begin{proof}
If $\aHAlg$ is a Hopf groupoid in $\fleqTCat$, then in particular each $\M_{-k}\aHAlg$ is an ideal in $\aHAlg$, whence it follows that~\eqref{eq:hopf_compatibility} identifies the subobjects claimed, and that for every $x,y$, we have
\begin{equation}\tag{$\ast$}\label{eq:filtrations_determined}
\M_{-k}\aHAlg(x,y) = \im\left(\aHAlg(x,y)\otimes\M_{-k}\aHAlg(x)\rightarrow\aHAlg(x,y)\right).
\end{equation}
This establishes the ``only if'' part of the first claim, uniqueness of lifts, and both directions of the final claim.

For the remaining part, that the compatibility condition implies the existence of a lift, we check that~\eqref{eq:filtrations_determined} defines on $\aHAlg$ the structure of an object of $\HGPD(\fleqTCat)$, i.e.\ that the structure morphisms of $\aHAlg$ lie in $\fleqTCat$. Compatibility with units is automatic, and compatibility with composition is the assertion that the ideals $\M_{-k}\aHAlg$ satisfy $\M_{-i}\cdot\M_{-j}\leq\M_{-i-j}$ for all $i,j$, which is obvious. Compatibility with the coalgebra structure on $\aHAlg(x,y)$ follows from the fact that the composition map $\aHAlg(x,y)\otimes\aHAlg(x)\rightarrow\aHAlg(x,y)$ is a morphism of coalgebras. Compatibility with the antipode follows from the commuting square
\begin{center}
\begin{tikzcd}
\aHAlg(x,y)\otimes\aHAlg(x) \arrow{r}\arrow{d}{\viso} & \aHAlg(x,y) \arrow{d}{S}[swap]{\viso} \\
\aHAlg(y,x)\otimes\aHAlg(y) \arrow{r} & \aHAlg(y,x).
\end{tikzcd}
\end{center}
\vspace{-0.9cm}
\end{proof}
\end{proposition}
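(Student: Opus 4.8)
The plan is to reduce everything to one observation: in any Hopf groupoid $\aHAlg$ in $\fleqTCat$, each term $\M_{-k}\aHAlg$ of the filtration is automatically an ideal (the composition maps of $\aHAlg$ lie in $\fleqTCat$, so composing an element of $\M_{-k}$ with anything arbitrary stays in $\M_{-k}$), and consequently the filtration on the morphism-objects is forced by the formula
\[
\M_{-k}\aHAlg(x,y) = \im\!\left(\aHAlg(x,y)\otimes\M_{-k}\aHAlg(x)\to\aHAlg(x,y)\right),
\]
the arrow being composition. First I would record this, using that images of morphisms in a tensor category are strict subobjects and that image formation is functorial and tensor-compatible. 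This identity immediately gives the ``only if'' half of the first claim — if $\aHAlg$ lifts then, since $\M_{-k}\aHAlg$ is an ideal, the isomorphism $\aHAlg(x,y)\otimes\aHAlg(x)\isoarrow\aHAlg(y)\otimes\aHAlg(x,y)$ carries $\aHAlg(x,y)\otimes\M_{-k}\aHAlg(x)$ onto $\M_{-k}\aHAlg(y)\otimes\aHAlg(x,y)$ — and simultaneously gives the uniqueness of the lift (the filtration on each $\aHAlg(x,y)$ is determined by the $\M_\bullet\aHAlg(x)$) and both implications of the final statement about morphisms (a map of Hopf groupoids in $\TCat$ respects these image subobjects on morphism-objects precisely when it does so on the endomorphism Hopf algebras, by functoriality of images).

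For the one substantive point — that the compatibility condition produces a genuine lift — I would \emph{define} $\M_{-k}\aHAlg(x,y)$ by the displayed image formula and verify the axioms for $\aHAlg$ to be an object of $\HGPD(\fleqTCat)$. That $\M_\bullet$ is an exhaustive increasing filtration is easy: $\M_0\aHAlg(x,y)=\aHAlg(x,y)$ because the composition map is split by $\gamma\mapsto\gamma\otimes\eta$, and $\M_{-k-1}\aHAlg(x,y)\leq\M_{-k}\aHAlg(x,y)$ follows from $\M_{-k-1}\aHAlg(x)\leq\M_{-k}\aHAlg(x)$ by functoriality of images. Compatibility of the unit maps with the filtration is immediate; compatibility of composition amounts to the ideal inequality $\M_{-i}\cdot\M_{-j}\leq\M_{-i-j}$, which is inherited from the corresponding statement in the Hopf algebras $\aHAlg(x)$; and compatibility of the antipode follows from the commuting square relating the composition map $\aHAlg(x,y)\otimes\aHAlg(x)\to\aHAlg(x,y)$, the antipode $\aHAlg(x,y)\isoarrow\aHAlg(y,x)$, and the composition map $\aHAlg(y,x)\otimes\aHAlg(y)\to\aHAlg(y,x)$.

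The step I expect to be the main obstacle — though still routine — is checking that the comultiplication $\Delta\colon\aHAlg(x,y)\to\aHAlg(x,y)\otimes\aHAlg(x,y)$ is a filtered morphism, i.e.\ sends $\M_{-k}\aHAlg(x,y)$ into the appropriate step of the tensor-product filtration on $\aHAlg(x,y)\otimes\aHAlg(x,y)$. Here I would use that the composition map $\aHAlg(x,y)\otimes\aHAlg(x)\to\aHAlg(x,y)$ is a morphism of coalgebras, so that $\M_{-k}\aHAlg(x,y)$, being the image of $\aHAlg(x,y)\otimes\M_{-k}\aHAlg(x)$, has comultiplication landing in the image of the comultiplication of $\aHAlg(x,y)\otimes\M_{-k}\aHAlg(x)$; invoking the hypothesis that $\M_\bullet\aHAlg(x)$ is a filtration on the Hopf algebra $\aHAlg(x)$ then puts this image into the required filtration step. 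Throughout, the feature that makes all of this go through is that $\fleqTCat$ is again a tensor category (proved just above in the excerpt), so the entire subobject and ideal calculus developed in the appendix is available inside it; for that reason I would phrase the whole argument in terms of strict mono- and epimorphisms and images rather than elements.
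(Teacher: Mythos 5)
Your proposal is correct and follows essentially the same route as the paper: the filtration on the morphism-objects is forced by the ideal formula $\M_{-k}\aHAlg(x,y)=\im\left(\aHAlg(x,y)\otimes\M_{-k}\aHAlg(x)\rightarrow\aHAlg(x,y)\right)$, which gives the ``only if'' direction, uniqueness, and the morphism criterion at once, and for existence you define the filtration by this formula and verify units, composition (via $\M_{-i}\cdot\M_{-j}\leq\M_{-i-j}$), the coalgebra structure (via the composition map being a coalgebra morphism), and the antipode (via the same commuting square). Your elaboration of the comultiplication check is a harmless expansion of the paper's one-line remark; no gaps.
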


\begin{proof}[Proof of Lemma~\ref{lem:filtrations_are_filtrations}]
By the discussion in \S\ref{sss:filtrations_on_liegebras}, it suffices to verify that if $\aHAlg$ is a pro-nilpotent Hopf groupoid in $\CTCat$ such that each $\aHAlg(x)$ is endowed with the structure of a pro-nilpotent Hopf algebra in $\fleqCTCat$, then these structures are compatible in the sense of Proposition~\ref{prop:when_do_filtrations_extend} if and only if the corresponding normal subalgebras $\aHAlg_{-k}(x)$ are compatible in the sense of Definition~\ref{def:filtered_gpd}. For this, we simply note that the isomorphism
\[
\aHAlg(x,y)\hatotimes\aHAlg(x) \isoarrow \aHAlg(y)\hatotimes\aHAlg(x,y)
\]
is compatible with Hopf algebra structures on $\aHAlg(x)$ and $\aHAlg(y)$, and hence restricts to an isomorphism
\begin{equation}\label{eq:lie_compatibility}
\aHAlg(x,y)\hatotimes\aLAlg(x) \isoarrow \aLAlg(y)\hatotimes\aHAlg(x,y)
\end{equation}
where $\aLAlg(x)$ and $\aLAlg(y)$ denote the Lie algebras of $\aHAlg(x)$ and $\aHAlg(y)$ respectively. It follows that both of the compatibility conditions are equivalent to the condition that~\eqref{eq:lie_compatibility} identifies $\aHAlg(x,y)\hatotimes\M_{-k}\aLAlg(x)$ and $\M_{-k}\aLAlg(y)\hatotimes\aHAlg(x,y)$ for all $x,y$ and for all $k$.
\end{proof}

\subsection{Mal\u cev completion}\index{Mal\u cev completion}

Finally, to conclude this appendix, we turn our attention to the main algebraisation process by which we will produce the pro-unipotent groupoids in this paper: Mal\u cev completion. The properties of this operation are well-known in the single-object context (i.e.\ for pro-unipotent groups), and we will sketch here the multi-object generalisation of this theory, with particular attention to the behaviour of filtrations under the operation. For the duration of this section, fix a characteristic $0$ field $\F$.

\begin{definition}[cf.\ {\cite[Definition 3.95]{javier}} and {\cite[Chapter 9.2]{homotopy_theory_operads}}]
Let $\pi$ be a groupoid (in $\SET$). Its \emph{$\F$-linear Mal\u cev completion} (or \emph{$\F$-pro-unipotent completion}) $\pi^\F$ is the pro-unipotent groupoid on object-set $\ob(\pi)$ admitting a morphism $\pi\rightarrow\pi^\F(\F)$ of groupoids which is initial among maps from $\pi$ into the $\F$-points of pro-unipotent groupoids. More precisely, for every pro-unipotent groupoid $U$, every morphism $\pi\rightarrow U(\F)$ of groupoids extends uniquely to a morphism $\pi^\F\rightarrow U$ of pro-unipotent groupoids. The Mal\u cev completion exists and is functorial by abstract nonsense. It is easy to check that $\ob(\pi^\F)=\ob(\pi)$ and that $\pi^\F(x)$ is the Mal\u cev completion of the group $\pi(x)$ for all $x$.

A filtration $\M_\bullet$ on a groupoid $\pi$ induces a filtration on $\pi^\F$, whereby $\M_{-n}\pi^\F(x)$ is the Zariski-closure of the image of $\M_{-n}\pi(x)\rightarrow\pi^\F(x)(\F)$, or equivalently the kernel of $\pi^\F(x)\rightarrow(\pi/\M_{-n})^\F(x)$.
\end{definition}

\begin{remark}\label{rmk:malcev_of_nilpotent}
The theory of Mal\u cev completions is particularly well-behaved for finitely generated nilpotent groups $\pi$. Indeed, the Mal\u cev completion of such a group is always finite-dimensional, of dimension equal to the Hirsch length of $\pi$ \cite[Definition 4.4]{CMZ}, and the kernel of the Mal\u cev completion map $\pi\rightarrow\pi^\F(\F)$ is exactly the torsion subgroup of $\pi$. It follows from additivity of Hirsch length that Mal\u cev completion is exact on finitely generated nilpotent groups.
\end{remark}

In this paper, we use two explicit descriptions of the Mal\u cev completion. The first describes the dual affine ring of the Mal\u cev completion as a completed groupoid-algebra.

\begin{lemma}\label{lem:J-adic_and_malcev_completion}
Let $\pi$ be a groupoid for which each group $\pi(x)$ is finitely generated, and write $\aHIdeal$ for the augmentation ideal of the groupoid-algebra $\F\pi$ from Example~\ref{ex:some_gpds}. Each vector space $\F\pi(x,y)/\aHIdeal^{n+1}(x,y)$ is finite dimensional, so that $\Fhat\pi=\varprojlim\left(\F\pi/\aHIdeal^{n+1}\right)$ has the structure of a pro-nilpotent Hopf groupoid. Then there is a canonical isomorphism $\O(\pi^\F)^\dual\iso\Fhat\pi$ induced by the evident map $\pi\rightarrow\Fhat\pi^\gplike(\F)$. If $\pi$ carries a filtration, this isomorphism is compatible with the filtrations induced on either side.
\begin{proof}
This is an easy multi-object generalisation of \cite[Theorem 3.99]{javier}. In the filtered case, $\M_{-n}\O(\pi^\F)^\dual$ is the span of the elements $\log(\gamma_1)\dots\log(\gamma_k)$ for $\gamma_i\in\M_{-n_i}\pi^\F(\F)$ with $\sum_in_i\geq n$, while $\M_{-n}\Fhat\pi$ is the span of the same elements with $\gamma_i\in\M_{-n_i}\pi$. Since $\M_{-n_i}\pi$ is Zariski-dense in $\M_{-n_i}\pi^\F(\F)$, the spans of these elements are the same.
\end{proof}
\end{lemma}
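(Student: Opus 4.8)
The plan is to reduce the entire statement to the single-object case, which is \cite[Theorem 3.99]{javier}, using that every hom-space $\pi(x,y)$ becomes a free rank-one module over a vertex group once a path is chosen. Concretely: if $x$ and $y$ lie in different components of $\pi$ then $\F\pi(x,y)=0$ and there is nothing to prove, so fix a path $p\in\pi(x,y)$; composition with $p$ gives an isomorphism $\F\pi(x)\isoarrow\F\pi(x,y)$ of right $\F\pi(x)$-modules carrying $\aHIdeal(x)$ to $\aHIdeal(x,y)$, and hence — using the description of products of ideals from the appendix — carrying $\aHIdeal^{n+1}(x)$ to $\aHIdeal^{n+1}(x,y)$. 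Thus $\F\pi(x,y)/\aHIdeal^{n+1}(x,y)\iso\F\pi(x)/\aHIdeal^{n+1}(x)$ as vector spaces, and finite-dimensionality follows from the standard fact that for a finitely generated group $\pi(x)$ the multiplication maps $(\aHIdeal(x)/\aHIdeal^2(x))^{\otimes n}\twoheadrightarrow\aHIdeal^n(x)/\aHIdeal^{n+1}(x)$ are surjective while $\aHIdeal(x)/\aHIdeal^2(x)\iso\F\otimes_\Z\pi(x)^\ab$ is finite-dimensional.

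To see that $\Fhat\pi$ is a pro-nilpotent Hopf groupoid I would check that the structure maps of $\F\pi$ descend appropriately to the pro-system $(\F\pi/\aHIdeal^{n+1})_n$. The composition and path-reversal maps are strictly compatible with the $\aHIdeal$-adic filtrations, so pass to the limit without difficulty; for the comultiplication one uses that $\aHIdeal(x,y)$ is a coideal, whence a binomial estimate gives $\Delta(\aHIdeal^{m+n-1}(x,y))\subseteq\aHIdeal^m(x,y)\otimes\F\pi(x,y)+\F\pi(x,y)\otimes\aHIdeal^n(x,y)$, so that $\Delta$ induces a well-defined completed comultiplication $\Fhat\pi\to\Fhat\pi\hatotimes\Fhat\pi$. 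Pro-nilpotence is built in, since $\bigcap_n\aHIdeal^n(x,y)=0$ by construction of the inverse limit (and Proposition~\ref{prop:check_pro-nilpotence_locally} reduces it to the vertex groups anyway). Each $g\in\pi(x,y)$, being a unit of $\F\pi$ with $\Delta(g)=g\otimes g$ and $\varepsilon(g)=1$, maps to a grouplike element of $\Fhat\pi$, giving a morphism of groupoids $\pi\to\Fhat\pi^\gplike(\F)$; since $\Fhat\pi$ is pro-nilpotent it is the complete Hopf groupoid of a pro-unipotent groupoid $P$ by Lemma~\ref{lem:unipotent_vs_nilpotent_hopf}, so the universal property of Mal\u cev completion yields a unique morphism $\pi^\F\to P$ lying over this map, i.e.\ a morphism $\O(\pi^\F)^\dual\to\Fhat\pi$ of complete Hopf groupoids.

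It then remains to see this morphism is an isomorphism and to handle filtrations. For isomorphy I would again reduce to the vertices: at a single object $x$ the map is precisely the completion isomorphism of \cite[Theorem 3.99]{javier} for the finitely generated group $\pi(x)$; and at a general pair $(x,y)$, the grouplike element attached to a path $p$ is invertible in both $\O(\pi^\F(x,y))^\dual$ and $\Fhat\pi(x,y)$ and is respected by the canonical map (which is a morphism of Hopf groupoids, hence intertwines composition), so the $(x,y)$-component of the map is conjugate, via composition with $p$, to the $x$-component, hence an isomorphism. For the filtered statement one unwinds both filtrations into spans of products $\log(\gamma_1)\cdots\log(\gamma_k)$: on $\Fhat\pi$ one takes $\gamma_i\in\M_{-n_i}\pi$ with $\sum_i n_i\geq n$, and on $\O(\pi^\F)^\dual$ the same with $\gamma_i\in\M_{-n_i}\pi^\F(\F)$ — this being the filtration attached to the filtered pro-unipotent groupoid $\pi^\F$ via Lemma~\ref{lem:filtrations_are_filtrations}, using that $\M_{-n}\pi^\F$ is by definition the Zariski closure of the image of $\M_{-n}\pi$; since $\M_{-n_i}\pi$ is Zariski-dense in $\M_{-n_i}\pi^\F(\F)$ and $\log$ and multiplication are continuous, these spans agree, so the isomorphism is filtered. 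I expect the only real work to be bookkeeping: verifying that the path-twisting identifications used in the reduction are simultaneously compatible with the coalgebra structures, the $\aHIdeal$-adic filtrations, and the canonical Mal\u cev map, so that the single-object result transfers cleanly — there is no essential new difficulty beyond \cite{javier}.
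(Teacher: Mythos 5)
Your proposal is correct and follows essentially the same route as the paper: the unfiltered statement is handled as a multi-object reduction to \cite[Theorem 3.99]{javier} (your path-twisting identification $\F\pi(x)\iso\F\pi(x,y)$ carrying $\aHIdeal^{n+1}(x)$ to $\aHIdeal^{n+1}(x,y)$ is exactly the bookkeeping the paper leaves implicit in ``easy multi-object generalisation''), and your filtered argument — both filtrations as spans of products $\log(\gamma_1)\cdots\log(\gamma_k)$, identified via Zariski density of $\M_{-n_i}\pi$ in $\M_{-n_i}\pi^\F(\F)$ — is precisely the paper's proof.
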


The second explicit description we will use relates the Mal\u cev Lie algebra $\Lie(\pi^\F)$ of a finitely generated group $\pi$ to another Lie algebra constructed classically from $\pi$, namely its graded Lie algebra with respect to a connected filtration as in \cite{labute}.

\begin{proposition}\label{prop:graded_malcev_liegebra}
Let $\pi$ be a finitely generated group endowed with a filtration $\W_\bullet$ such that $\W_{-1}\pi=\pi$. Then the $\W$-graded Mal\u cev Lie algebra $\gr^\W_\bullet\Lie(\pi^\F)$ is canonically isomorphic to the $\W$-graded pro-nilpotent Lie algebra\[\gr^\W_\bullet\Lie_\F(\pi):=\prod_{n>0}\left(\F\otimes_\Z\frac{\W_{-n}\pi}{\W_{-n-1}\pi}\right)\]with Lie bracket induced by the commutator maps $\W_{-n}\pi\times\W_{-m}\pi\rightarrow\W_{-n-m}\pi$.
\begin{proof}
The desired isomorphism $\gr^\W_\bullet\Lie_\F(\pi)\isoarrow\gr^\W_\bullet\Lie(\pi^\F)$ is the map sending $\gamma\in\W_{-n}\pi$ to $\log(\gamma)\in\gr^\W_{-n}\Lie(\pi^\F)$. It is easy to see that this map is a morphism of Lie algebras; we will show it is an isomorphism in two steps.

Suppose first that the quotients $\pi/\W_{-n}$ are torsion-free for all $n$. Pick a basis $\gamma^{(n)}_1,\dots,\gamma^{(n)}_{k_n}$ of the free $\Z$-module $\frac{\W_{-n}\pi}{\W_{-n-1}\pi}$ for each $n$, and lift each $\gamma^{(n)}_i$ to an element of $\pi$. It follows that the elements $\gamma^{(m)}_i$ with $m\leq n$ comprise a Mal\u cev basis of $\pi/\W_{-n-1}$ for every $n$ \cite[Definition~4.5]{CMZ}. It is then a theorem of Jennings \cite[Theorem~6.7]{CMZ} that the elements $\log(\gamma^{(m)}_i)$ with $m\leq n$ comprise a basis of the Mal\u cev Lie algebra $\Lie\left((\pi/\W_{-n-1})^\F\right)$ for every $n$. In particular, the elements $\log(\gamma^{(n)}_i)$ comprise a basis of\[\gr^\W_{-n}\Lie(\pi^\F)=\ker\left(\Lie\left((\pi/\W_{-n-1})^\F\right)\rightarrow\Lie\left((\pi/\W_{-n})^\F\right)\right).\]This shows that the desired map is an isomorphism in every degree $-n$ as desired.

Now suppose that $\W_\bullet$ is general. We write $\W'_\bullet$ for the filtration induced on $\pi$ by the map $\pi\rightarrow\pi^\F(\F)$, so that $\W'_{-n}\pi$ is the kernel of the map $\pi\rightarrow\left(\pi/\W_{-n}\right)^\F$. It follows from Remark~\ref{rmk:malcev_of_nilpotent} that $\W'_{-n}\pi/\W_{-n}\pi$ is the torsion subgroup of $\pi/\W_{-n}\pi$, and hence that each $\pi/\W'_{-n}$ is torsion-free. We see from this that the natural maps $\gr^\W_\bullet\Lie(\pi^\F)\rightarrow\gr^{\W'}_\bullet\Lie(\pi^\F)$ and $\gr^\W_\bullet\Lie_\F(\pi)\rightarrow\gr^{\W'}_\bullet\Lie_\F(\pi)$ are isomorphisms. Hence we reduce to the first case.
\end{proof}
\end{proposition}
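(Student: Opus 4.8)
The plan is to write the comparison map down explicitly as the logarithm and then prove it is a degreewise isomorphism by reducing to a torsion-free situation where classical results apply. First I would equip $\Lie(\pi^\F)$ with the filtration $\W_{-k}\Lie(\pi^\F):=\Lie(\W_{-k}\pi^\F)$ and define, for each $n>0$, a map $\gr^\W_{-n}\pi\to\gr^\W_{-n}\Lie(\pi^\F)$ by sending the class of $\gamma\in\W_{-n}\pi$ to the class of $\log(\gamma)$ (using that $\pi^\F$ is pro-unipotent, so $\log$ is defined). Well-definedness and $\Z$-linearity modulo $\W_{-n-1}$, as well as compatibility with brackets, all follow from the Baker--Campbell--Hausdorff formula (cf.\ Proposition~\ref{prop:super-general_bch}): if $\gamma\in\W_{-n}\pi$ and $\delta\in\W_{-m}\pi$, then the correction terms in $\log(\gamma\delta)-\log(\gamma)-\log(\delta)$ and in $\log([\gamma,\delta])-[\log(\gamma),\log(\delta)]$ lie in $\W_{-n-m-1}\Lie(\pi^\F)$ because the $\W$-filtration satisfies $[\W_{-i},\W_{-j}]\leq\W_{-i-j}$. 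Extending $\Z$-linearly and using that the target is an $\F$-vector space yields $\F$-linear maps $\F\otimes_\Z\gr^\W_{-n}\pi\to\gr^\W_{-n}\Lie(\pi^\F)$, and assembling over $n$ a morphism of $\W$-graded pro-nilpotent Lie algebras; it remains to check bijectivity in each degree.

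A preliminary step is the identification
\[
\gr^\W_{-n}\Lie(\pi^\F)\;\cong\;\ker\bigl(\Lie((\pi/\W_{-n-1})^\F)\to\Lie((\pi/\W_{-n})^\F)\bigr),
\]
which comes from the definition of the filtration on $\Lie(\pi^\F)$ as the kernels of $\Lie(\pi^\F)\to\Lie((\pi/\W_{-k})^\F)$, together with surjectivity of these maps. Surjectivity uses that each $\pi/\W_{-k}$ is a finitely generated nilpotent group — the $\W$-filtration is a central series modulo $\W_{-k}$ and $\pi$ is finitely generated — that Mal\u cev completion is right exact on finitely generated nilpotent groups (Remark~\ref{rmk:malcev_of_nilpotent}), and that $\Lie$ is exact on pro-unipotent groups.

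I would then prove the result first under the extra hypothesis that every $\pi/\W_{-n}$ is torsion-free. Choose a $\Z$-basis $\gamma^{(n)}_1,\dots,\gamma^{(n)}_{k_n}$ of each finitely generated free abelian group $\gr^\W_{-n}\pi$ and lift it to $\pi$; the concatenation of these lifts for $m\le n$ is a Mal\u cev basis of the torsion-free finitely generated nilpotent group $\pi/\W_{-n-1}$, so by Jennings' theorem in the form of \cite[Theorem~6.7]{CMZ} the elements $\log(\gamma^{(m)}_i)$ with $m\le n$ form an $\F$-basis of $\Lie((\pi/\W_{-n-1})^\F)$. Combined with the kernel description above, the $\log(\gamma^{(n)}_i)$ form an $\F$-basis of $\gr^\W_{-n}\Lie(\pi^\F)$; since the comparison map carries the basis $\{\gamma^{(n)}_i\}$ of $\F\otimes_\Z\gr^\W_{-n}\pi$ to exactly this basis, it is an isomorphism in degree $-n$.

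Finally I would remove the torsion-free hypothesis. Let $\W'_{-n}\pi$ be the kernel of $\pi\to(\pi/\W_{-n})^\F(\F)$, the filtration pulled back from $\pi^\F$; by Remark~\ref{rmk:malcev_of_nilpotent} the group $\W'_{-n}\pi/\W_{-n}\pi$ is the torsion subgroup of $\pi/\W_{-n}\pi$, so every $\pi/\W'_{-n}$ is torsion-free and $\W'_\bullet$ is covered by the previous case. Because $\pi/\W_{-n}$ and $\pi/\W'_{-n}$ have the same Mal\u cev completion, the natural maps $\gr^\W_\bullet\Lie(\pi^\F)\to\gr^{\W'}_\bullet\Lie(\pi^\F)$ are isomorphisms; and by flatness of $\F$ over $\Z$ together with the identification of the kernels and cokernels of the maps $\gr^\W_{-n}\pi\to\gr^{\W'}_{-n}\pi$ with torsion abelian groups, $\F\otimes_\Z\gr^\W_\bullet\pi\to\F\otimes_\Z\gr^{\W'}_\bullet\pi$ is an isomorphism as well. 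This reduces the general case to the torsion-free one, compatibly with the comparison maps. I expect the main obstacle to be precisely this reduction: one has to pin down how $\W$ and $\W'$ differ, verify that the discrepancy is entirely torsion on the associated graded pieces, and confirm it disappears after $\otimes_\Z\F$ — the torsion-free case itself being essentially a citation to \cite{CMZ}.
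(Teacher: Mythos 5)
Your proposal is correct and follows essentially the same route as the paper's proof: the logarithm map, the identification of $\gr^\W_{-n}\Lie(\pi^\F)$ with $\ker\bigl(\Lie((\pi/\W_{-n-1})^\F)\to\Lie((\pi/\W_{-n})^\F)\bigr)$, Mal\u cev bases and Jennings' theorem \cite[Theorem~6.7]{CMZ} in the torsion-free case, and the reduction to that case via the filtration $\W'_\bullet$ induced from $\pi\to\pi^\F$ together with Remark~\ref{rmk:malcev_of_nilpotent}. The extra details you supply (Baker--Campbell--Hausdorff for the morphism property, torsion kernel and cokernel killed by $\otimes_\Z\F$) are fine; the one small imprecision is that the surjectivity of $\Lie(\pi^\F)\to\Lie((\pi/\W_{-k})^\F)$ is better justified by Zariski-density of the image of $\pi$ in $(\pi/\W_{-k})^\F$ (as in the paper's definition of the induced filtration) than by exactness of Mal\u cev completion on finitely generated nilpotent groups, since $\pi$ itself need not be nilpotent.
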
 

\bibliographystyle{alpha}
\printindex

\bibliography{references}

\end{document}